\tikzset{%
  dummy/.style    = {circle,draw,inner sep=0pt,minimum size=2mm}%
}%
\def\@testdef #1#2#3{%
  \def\reserved@a{#3}\expandafter \ifx \csname #1@#2\endcsname
  \reserved@a  \else
  \typeout{^^Jlabel #2 changed:^^J%
    \meaning\reserved@a^^J%
    \expandafter\meaning\csname #1@#2\endcsname^^J}%
  \@tempswatrue \fi}
\numberwithin{equation}{section} 
\numberwithin{figure}{section}
\newtheorem{theorem}[equation]{Theorem}%
\newtheorem*{theorem*}{Theorem}%
\newtheorem{lemma}[equation]{Lemma}%
\newtheorem{proposition}[equation]{Proposition}%
\newtheorem{corollary}[equation]{Corollary}%
\newtheorem{conjecture}[equation]{Conjecture}%
\newtheorem*{conjecture*}{Conjecture}%
\providecommand{\customgenericname}{}
\newcommand{\newcustomtheorem}[2]{%
  \newenvironment{#1}[1]
  {%
   \renewcommand\customgenericname{#2}%
   \renewcommand\theinnercustomgeneric{##1}%
   \innercustomgeneric
  }
  {\endinnercustomgeneric}
}
\theoremstyle{definition} 
\newtheorem{definition}[equation]{Definition}%
\newtheorem*{definition*}{Definition}%
\newtheorem{example}[equation]{Example}%
\newtheorem{remark}[equation]{Remark}%
\newtheorem{notation}[equation]{Notation}%
\newtheorem{convention}[equation]{Convention}%
\newcommand{\set}[1]{\left\{#1\right\}}%
\newcommand{\Sym}{\ensuremath{\mathsf{Sym}}}%
\newcommand{\Fin}{\mathsf{F}}%
\newcommand{\sSet}{\ensuremath{\mathsf{sSet}}}%
\newcommand{\Op}{\mathsf{Op}}%
\newcommand{\sOp}{\ensuremath{\mathsf{sOp}}}%
\newcommand{\Alg}{\mathsf{Alg}}
\DeclareMathOperator{\colim}{colim}%
\DeclareMathOperator{\Lan}{Lan}%
\newcommand{\F}{\ensuremath{\mathcal F}}
\newcommand{\V}{\ensuremath{\mathcal V}}
\renewcommand{\O}{\ensuremath{\mathcal O}}
\renewcommand{\P}{\ensuremath{\mathcal P}}
\newcommand{\C}{\ensuremath{\mathcal C}}
\newcommand{\D}{\mathcal D}
\newcommand{\mcI}{\ensuremath{\mathcal{I}}}%
\author{Peter Bonventre, Lu\'is A. Pereira}%
\title{Genuine equivariant operads}%
\begin{document}

\maketitle%

\abstract{We build new algebraic structures, which we call genuine equivariant operads and which can be thought of as a hybrid between operads and coefficient systems.
We then prove an Elmendorf-Piacenza type theorem stating that equivariant operads, with their graph model structure, are equivalent to genuine equivariant operads, with their projective model structure.

As an application, we build explicit models for the $N_{\infty}$-operads of Blumberg and Hill.}

\tableofcontents

\section{Introduction}

A surprising feature of topological algebra is that
the category of (connected) topological commutative monoids is quite small,
consisting only of products of Eilenberg-MacLane spaces (e.g. \cite[4K.6]{Hatcher}).
Instead, the more interesting structures are
those monoids which are commutative and associative only up to homotopy and, moreover, up to ``all higher homotopies''.
To capture these more subtle algebraic notions, Boardman-Vogt \cite{BV73} and May \cite{May72} developed
the theory of \textit{operads} \index{categories!of operads/symmetric sequences!Operad@$\Op(\V)$}.
Informally, a (simplicial) operad $\O \in \sOp = \Op(\sSet)$
\index{categories!of operads/symmetric sequences!operadsi@$\sOp = \Op(\sSet)$}
consists of a sequence of
spaces $\O(n) \in \sSet$
of ``$n$-ary operations'' carrying a $\Sigma_n$-action
(recording ``reordering of the inputs of the operations''),
and a suitable notion of ``composition of operations''.
The purpose of the theory is then the study of 
``objects $X$ with operations indexed by $\O$'',
referred to as \textit{algebras}, with the notions of monoid, commutative monoid, Lie algebra, algebra with a module, and more,
all being recovered as algebras over some fixed operad in an appropriate category.
Of special importance are the 
$E_\infty$-operads, introduced by 
May in \cite{May72}, which are 
homotopical replacements for the commutative operad
 and encode the aforementioned
 ``commutative monoids up to homotopy''.
In particular, while an
 $E_\infty$-algebra structure on $X$ does not specify unique maps $X^n \to X$, 
 it nonetheless specifies such maps ``uniquely up to homotopy''.

$E_\infty$-operads are characterized by the homotopy type of their levels $\O(n)$: $\O$ is $E_\infty$ if and only if each $\O(n)$ is $\Sigma_n$-free and contractible. That is, for each subgroup $\Gamma \leq \Sigma_n$,
the homotopy type of the $\Gamma$-fixed points is
\[
\O(n)^\Gamma \sim
\begin{cases}
        * & \text{if } \Gamma = \{*\} , \\
        \varnothing \qquad & \text{otherwise.}
\end{cases}
\]
Notably, when studying the homotopy theory of operads in topological spaces
the preferred notion of weak equivalence is 
usually that of ``naive equivalence'',
with a map of operads 
$\O \to \O'$
deemed a weak equivalence if each of the maps
$\O(n) \to \O'(n)$
is a weak equivalence of spaces
upon forgetting the $\Sigma_n$-actions (e.g. \cite[3.2]{BM03}).
In this context, $E_\infty$-operads are then equivalent to the commutative operad $\mathsf{Com}$
and, moreover, 
any cofibrant replacement of $\mathsf{Com}$
is $E_{\infty}$.
These naive equivalences differ from the equivalences in ``genuine equivariant homotopy theory'',
where (for $G$ a group) a map of $G$-spaces $X \to Y$ is deemed a $G$-equivalence only if
the induced fix point maps $X^H \to Y^H$ are weak equivalences for all $H \leq G$.
This contrast hints at a number of novel subtleties that appear in the study of equivariant operads, which we now discuss.

First, note that for a finite group $G$ and $G$-operad $\O$ (i.e. an operad $\O$ together with a $G$-action commuting with all the structure),
the $n$-th level $\O(n)$ has a $G \times \Sigma_n$-action.
As such, one might guess that a map of $G$-operads
$\O \to \O'$
should be called a weak equivalence if each of the maps
$\O(n) \to \O'(n)$
is a $G$-equivalence after forgetting the $\Sigma_n$-actions, 
i.e. if the maps
\begin{equation}\label{NAIVEOPEQ EQ}
	\O(n)^H \xrightarrow{\sim} \O'(n)^H, \qquad H \leq G \leq G\times \Sigma_n,
\end{equation}
are weak equivalences of spaces. 
However, the notion of equivalence suggested in \eqref{NAIVEOPEQ EQ} turns out to not be ``genuine enough''.
To see why, we consider a homotopical replacement for $\mathsf{Com}$ using this theory: 
if one simply equips an $E_{\infty}$-operad $\O$ with a trivial $G$-action, the resulting $G$-operad has fixed points for each subgroup $\Gamma \leq G \times \Sigma_n$ described by
\begin{equation}\label{NAIVEGEINFTY EQ}
	\O(n)^{\Gamma} \sim 
\begin{cases}
	\** & \text{if } \Gamma \leq G,
\\
	\emptyset & \text{otherwise}.
\end{cases}
\end{equation}
However, as first noted by Costenoble-Waner \cite{CW91} in their study of equivariant infinite loop spaces,
the \textit{$G$-trivial $E_\infty$-operads} of \eqref{NAIVEGEINFTY EQ} do not provide 
the correct replacement of $\mathsf{Com}$
 in the $G$-equivariant context. 
Rather, that replacement is provided instead by the 
\textit{$G$-$E_{\infty}$-operads}, characterized by the fixed point conditions
\begin{equation}\label{GENGEINFTY EQ}
	\O(n)^{\Gamma} \sim 
\begin{cases}
	\** & \text{if } \Gamma \cap \Sigma_n = \{\**\},
\\
	\emptyset & \text{otherwise}.
\end{cases}
\end{equation}
In contrasting 
\eqref{NAIVEGEINFTY EQ} and \eqref{GENGEINFTY EQ},
we note first that the subgroups $\Gamma \leq G \times \Sigma_n$ such that $\Gamma \cap \Sigma_n = \{\**\}$
are characterized as being the 
\emph{graph subgroups},
i.e. the subgroups of the form
\begin{equation}\label{GRAPHSUBIN EQ}
	\Gamma = \left\{(h,\phi(h)) \in G \times \Sigma_n
	| h \in H\right\}
\end{equation}
for some subgroup $H \leq G$ and homomorphism
$\phi \colon H \to \Sigma_n$.
On the other hand, $\Gamma \leq G$ if and only if $\Gamma$ is the graph subgroup \eqref{GRAPHSUBIN EQ} for $\phi$ a trivial homomorphism. 
As it turns out,
the notion of weak equivalence described in \eqref{NAIVEOPEQ EQ} fails to distinguish
\eqref{NAIVEGEINFTY EQ} and \eqref{GENGEINFTY EQ}, 
and indeed it is possible
to build maps $\O \to \O'$ where
$\O$ is a $G$-trivial $E_{\infty}$-operad (as in \eqref{NAIVEGEINFTY EQ})
and $\O'$ is a $G$-$E_{\infty}$-operad 
(as in \eqref{GENGEINFTY EQ}).
Therefore, in order to differentiate such operads, one needs to replace the notion of weak equivalence in \eqref{NAIVEOPEQ EQ} 
with the finer notion of \textit{graph equivalence}, 
so that $\O \to \O'$ is considered a weak equivalence only if
the maps
\begin{equation}\label{GENEOPEQ EQ}
	\O(n)^{\Gamma} \xrightarrow{\sim} \O'(n)^{\Gamma}, \qquad
	\Gamma \leq G\times \Sigma_n, \Gamma \cap \Sigma_n = \{\**\}.
\end{equation}
are all weak equivalences.

As mentioned above, the original evidence \cite{CW91}
that \eqref{GENGEINFTY EQ}, 
rather than \eqref{NAIVEGEINFTY EQ}, 
provides the best up-to-homotopy replacement for $\mathsf{Com}$ in the equivariant context comes from the study of equivariant infinite loop spaces. 
For our purposes, however, we instead focus on the perspective of Blumberg-Hill in \cite{BH15},
which concerns the Hill-Hopkins-Ravenel norm maps featured in the solution of the Kervaire Invariant One Problem \cite{HHR}.

Given a $G$-spectrum $R$ and finite $G$-set $X$ with $n$ elements, 
the corresponding \textit{norm} is another $G$-spectrum $N^X \! R$,
whose underlying spectrum is 
$R^{\wedge X} \simeq R^{\wedge n}$,
but equipped with a ``mixed $G$-action''
which both permutes wedge factors via the action on $X$ 
and acts diagonally on each factor
(alternatively, $N^X \! R$ can be described via graph subgroups; see the next paragraph).
Moreover, for any $\mathsf{Com}$-algebra $R$, i.e.
any strictly commutative $G$-ring spectrum, 
ring multiplication further induces
$G$-equivariant \textit{norm maps}
\begin{equation}\label{NORMMAPS EQ}
	N^X \! R \to R.
\end{equation}
Furthermore, by restricting the structure on $R$,
the maps \eqref{NORMMAPS EQ} are also defined when $X$ is only an $H$-set for some subgroup $H \leq G$, and the maps \eqref{NORMMAPS EQ}
then satisfy a number of 
natural equivariance and associativity conditions.
Crucially,  we note that the more interesting of these associativity conditions involve $H$-sets for various $H$ simultaneously
(for an example packaged in operadic language,
see \eqref{INTFIXPTCOMP EQ} below).

The key observation at the source of the work in 
\cite{BH15} is then that, operadically, 
norm maps are encoded by the graph fixed points 
appearing in \eqref{GENEOPEQ EQ}.
More explicitly, noting that, for $H \leq G$, an $H$-set $X$ with $n$ elements 
is encoded by a
homomorphism 
$H \to \Sigma_n$,
one obtains an associated graph subgroup 
$\Gamma_X \leq G \times \Sigma_n$,
well-defined up to conjugation\index{f@graph subgroups!graphsub@$\Gamma_X \leq G \times \Sigma_n$}.
Next,
using the natural 
$(G \times \Sigma_n)$-action on $R^{\wedge n}$,
the $H$-action on $N^X \! R \simeq R^{\wedge n}$
is obtained via the obvious identification
$H \simeq \Gamma_X$, cf. \eqref{GRAPHSUBIN EQ}.
It then follows that,
for any $\mathcal{O}$-algebra $R$,
maps of the form \eqref{NORMMAPS EQ}
are parametrized by the fixed point space
$\mathcal{O}(n)^{\Gamma_X}$.
The flaw of the $G$-trivial $E_{\infty}$-operads
described in \eqref{NAIVEGEINFTY EQ} is then that
they lack all norms maps other than those for $H$-trivial $X$, thus lacking 
some of the data encoded by $\mathsf{Com}$.
Further, from this perspective one may regard the more naive notion of weak equivalence in \eqref{NAIVEOPEQ EQ},
according to which \eqref{NAIVEGEINFTY EQ} and \eqref{GENGEINFTY EQ} are equivalent,
as studying ``operads without norm maps''
(in the sense that equivalences ignore norm maps), 
while the equivalences \eqref{GENEOPEQ EQ}
study ``operads with norm maps''.

Our first main result, Theorem \ref{MAINEXIST1 THM}, 
establishes the existence of a model structure on $G$-operads with weak equivalences the graph equivalences of \eqref{GENEOPEQ EQ},
though our analysis goes significantly further, again guided by Blumberg and Hill's work in \cite{BH15}.

The main novelty of \cite{BH15} is the definition, for each finite group $G$, of a finite lattice of new types of equivariant operads, which they dub $N_{\infty}$-operads.
The minimal type of $N_{\infty}$-operads is that of the 
$G$-trivial $E_{\infty}$-operads in \eqref{NAIVEGEINFTY EQ} 
while the maximal type is that of the $G$-$E_{\infty}$-operads in \eqref{GENGEINFTY EQ}.
The remaining types, which interpolate between 
the two,
can hence be thought of as encoding varying degrees of ``equivariant commutativity up to homotopy''.
More concretely, each type of $N_{\infty}$-operad is determined by a collection
$\mathcal{F} = \{\mathcal{F}_n\}_{n \geq 0}$
\index{indexing systems!F@$\mathcal F = \set{\mathcal F_n}_{n \geq 0}$}
where each $\mathcal{F}_n$ is itself a collection of graph subgroups of $G \times \Sigma_n$,
with an operad $\O$ being called a
\textit{$N \mathcal{F}$-operad}
\index{indexing systems!NF@$N \mathcal F$-operad}
if it satisfies the fixed point condition
\begin{equation}\label{NFINFTY EQ}
	\O(n)^{\Gamma} \sim 
\begin{cases}
	\** & \text{if } \Gamma \in \mathcal{F}_n,
\\
	\emptyset & \text{otherwise}.
\end{cases}
\end{equation}
Such collections $\mathcal{F}$ are, however, far from arbitrary, with much of the work in \cite[\S 3]{BH15} spent cataloging a number of closure conditions that these $\mathcal{F}$ must satisfy.
The simplest of these conditions
state that each $\mathcal{F}_n$ is a \textit{family}, i.e. closed under subgroups and conjugation. These first two conditions, 
which are ubiquitous in equivariant homotopy theory,
are a simple consequence of each $\O(n)$ being a space.
However, the remaining conditions, 
all of which involve $\mathcal{F}_n$ for various $n$ simultaneously and are a consequence of operadic multiplication,
are both novel and subtle.
In loose terms, these conditions, 
which are more easily described in terms of the 
$H$-sets $X$ associated to the graph subgroups,
concern closure of those under 
disjoint union, cartesian product, subobjects,
and an entirely new key condition called \textit{self-induction}.
The precise conditions are collected in
\cite[Def. 3.22]{BH15},
which also introduces the term \textit{indexing system} for 
an $\mathcal{F}$ satisfying all of those conditions.
A main result of \cite[\S 4]{BH15} is then that,
whenever an $N\mathcal F$-operad $\O$ as in \eqref{NFINFTY EQ} exists,
the associated collection $\mathcal{F}$ must be an indexing system.
However, the converse statement, that given any indexing system $\mathcal{F}$ such an $\O$ can be produced, was left as a conjecture.

One of the key motivating goals of the present work was to verify this conjecture of Blumberg-Hill, which we obtain in
Corollary \ref{NINFTY_REAL_COR_MAIN}.
We note here that this conjecture has also been concurrently verified
by Guti\'{e}rrez-White in \cite{GW18} 
and by Rubin in \cite{Rub17}, 
with each of their approaches having different advantages:
Guti\'{e}rrez-White's
model for $N \mathcal{F}$
is cofibrant 
while Rubin's model is explicit.
Our model, which emerges from a broader framework, satisfies both of these desiderata.


To motivate our approach, we first recall the solution of a closely related but simpler problem: that of building universal spaces for families of subgroups. 
Given a family $\mathcal{F}$ of subgroups of $G$
(i.e. a collection closed under conjugation and subgroups), 
a \textit{universal space} $X$ for $\mathcal{F}$, 
also called an \textit{$E \mathcal{F}$-space},
is a space with fixed points $X^H$ characterized  just as in \eqref{NFINFTY EQ}.
In particular, whenever $\O$ is an $N \mathcal{F}$-operad, 
each $\O(n)$ is necessarily an $E \mathcal{F}_n$-space.
The existence of $E \mathcal{F}$-spaces for any
choice of the family $\mathcal{F}$ is 
best understood in light of Elmendorf's classical result from \cite{Elm83}
(modernized by Piacenza in \cite{Pia91})
stating that there is a Quillen equivalence
(recall that $\mathsf{O}_G$ is the \textit{orbit} category, formed by the transitive $G$-sets $G/H$)
\begin{equation}\label{COFADJINT EQ}
\begin{tikzcd}[column sep =5em,row sep=0.3em]
	\mathsf{Top}^{\mathsf{O}_G^{op}}
	\ar[shift left=1]{r}{\iota^{\**}} 
&
	\mathsf{Top}^G
	\ar[shift left=1]{l}{\iota_{\**}}
\\
	\left(G/H \mapsto Y(G/H)\right)  \ar[mapsto]{r}&
	Y(G)
\\
	(G/H \mapsto X^H) &
	X \ar[mapsto]{l}
\end{tikzcd}
\end{equation}
where the weak equivalences (and fibrations)
in (topological) $G$-spaces $\mathsf{Top}^G$
are detected on all fixed points and
the weak equivalences (and fibrations)
on the category $\mathsf{Top}^{\mathsf{O}_G^{op}}$ of 
\textit{coefficient systems}
are detected at each level of the presheaves.
Noting that the fixed point characterization of $E \mathcal{F}$-spaces defines a natural object 
$\delta_{\mathcal{F}} \in \mathsf{Top}^{\mathsf{O}_G^{op}}$ by
\index{indexing systems!DeltaF@$\delta_{\mathcal F}$}
$\delta_{\mathcal{F}}(G/H)=\**$ if $H \in \mathcal{F}$ and
$\delta_{\mathcal{F}}(G/H)=\emptyset$ otherwise, 
$E \mathcal{F}$-spaces can then be built as
$\iota^{\**}(C \delta_{\mathcal{F}}) = 
C \delta_{\mathcal{F}}(G)$, where $C$ denotes cofibrant replacement in $\mathsf{Top}^{\mathsf{O}_G^{op}}$.
Moreover, we note that, as in \cite[\S 3]{Elm83}, these cofibrant replacements can be built via explicit simplicial realizations.

The overarching goal of this paper is then that of proving the analogue of Elmendorf-Piacenza's Theorem \eqref{COFADJINT EQ}
in the context of operads with norm maps (i.e. with equivalences as in \eqref{GENEOPEQ EQ}),
which we state as our main result, Theorem \ref{MAINQUILLENEQUIV THM}.
However, in trying to formulate such a result one immediately runs into a fundamental issue: 
it is unclear which category should take the role of the coefficient systems $\mathsf{Top}^{\mathsf{O}_G^{op}}$ in this context.
This last remark likely requires justification. 
Indeed, it may at first seem tempting to simply 
employ one of the known formal generalizations of Elmendorf-Piacenza's result (see, e.g. \cite[Thm. 3.17]{Ste16}) which simply replace
$\mathsf{Top}$ on either side of $\eqref{COFADJINT EQ}$
with a more general model category $\mathcal{V}$.
However, if one applies such a result when 
$\mathcal{V}=\mathsf{sOp}$
to establish a Quillen equivalence
$\mathsf{sOp}^{\mathsf{O}_G^{op}}
\rightleftarrows
\mathsf{sOp}^G$
(the existence of this equivalence is due to
upcoming work of Bergner-Guti\'{e}rrez), 
the fact that 
the levels of each 
$\mathcal{P} \in \mathsf{sOp}^{\mathsf{O}_G^{op}}$
correspond only to those fixed-point spaces appearing in \eqref{NAIVEOPEQ EQ}
would require working in the context of operads \textit{without} norm maps,
and thereby forgo the ability to distinguish 
the many types of $N \mathcal{F}$-operads.

As such, to obtain an Elemendorf-Piacenza Theorem
in the context of operads with norm maps, we will need to replace 
$\mathsf{Top}^{\mathsf{O}_G^{op}}$
with a category
$\mathsf{sOp}_G$
\index{categories!of operads/symmetric sequences!OpG@$\Op_G(\V)$}
of new algebraic objects we dub 
\textit{genuine equivariant operads}
(as opposed to (regular) equivariant operads
$\mathsf{sOp}^G$).
Each genuine equivariant operad 
$\mathcal{P} \in \mathsf{sOp}_G$
will consist of a list of spaces,
indexed in the same way as in 
\eqref{GENEOPEQ EQ},
along with obvious restriction maps and, more importantly, 
suitable \textit{composition maps}. Precisely identifying the required composition maps is one of the main challenges of this theory, and again we turn to \cite{BH15} for motivation.

Analyzing the proofs of the results in 
\cite[\S 4]{BH15}
concerning the closure properties for indexing systems $\mathcal F$,
a common motif emerges:
when performing an operadic composition
\begin{equation}\label{OPMULT EQ}
\begin{tikzcd}[row sep=0]
	\O(n) \times \O(m_1) \times \cdots \times \O(m_n) \ar{r} &
	\O(m_1 + \cdots + m_n),
\\
	(f,g_1,\cdots,g_n) \ar[mapsto]{r} &
	f(g_1,\cdots,g_n)
\end{tikzcd}
\end{equation}
careful choices of fixed point conditions on the operations $f,g_1,\cdots,g_n$ 
yield a fixed point condition on the composite operation
$f(g_1,\cdots,g_n)$.
The desired multiplication maps for a genuine equivariant operad
$\mathcal{P} \in \mathsf{sOp}_G$
will then abstract such interactions between multiplication and fixed points for an equivariant operad 
$\mathcal{O} \in \mathsf{sOp}^G$.
However, these interactions can be challenging to write down explicitly and indeed, 
the arguments in \cite[\S 4]{BH15}
do not quite provide the sort of unified conceptual approach
to these interactions needed for our purposes.
The cornerstone of the current work was then the 
joint discovery by the authors of
such a conceptual framework: equivariant trees.

Non-equivariantly, it has long been known that
the combinatorics of operadic composition is best visualized by means of tree diagrams. For instance, 
the tree $T$ on the right below
\[%
	\begin{tikzpicture}[auto,grow=up, every node/.style = {font=\footnotesize},level distance = 1.9em]%
	\tikzstyle{level 2}=[sibling distance=5.5em]%
	\tikzstyle{level 3}=[sibling distance=2em]%
		\node [font=\normalsize] {$T$}%
			child{node [dummy] {}%
				child{node [dummy] {}
				edge from parent node [swap] 
				{$h$}}%
				child{node [dummy] {}%
					child{
					edge from parent node [swap,very near end] 
					{$e$}}%
					child{
					edge from parent node [swap,very near end] 
					{$d$}}%
					child{
					edge from parent node [very near end] 
					{$c$}}%
				edge from parent node [swap] 
				{$g$}}%
				child{node [dummy] {}%
					child{
					edge from parent node [swap,very near end] 
					{$b$}}%
					child{
					edge from parent node [very near end] 
					{$\phantom{b}a$}}%
				edge from parent node 
				{$f$}}%
			edge from parent node [swap] 
			{$i$}};%
	\begin{scope}[xshift=-20em]
	\tikzstyle{level 2}=[sibling distance=2.5em]
		\node [font=\normalsize] {$C$}%
	child{node [dummy] {}%
		child{
		edge from parent node [swap,very near end] {$e$}}%
		child[level distance = 2.9em]{
		edge from parent node [swap,very near end] {$d$}}%
		child[level distance = 3.3em]{
		edge from parent node [swap,very near end] {$c$}}%
		child[level distance = 2.9em]{
		edge from parent node [very near end] {$b$}}%
		child{
		edge from parent node [very near end] {$\phantom{b}a$}}%
	edge from parent node [swap] {$i$}};%
	\end{scope}
	\end{tikzpicture}%
\]
encodes the operadic composition
\begin{equation}\label{COMPEX EQ}
	\O(3) \times \O(2) \times \O(3) \times \O(0) \to \O(5)
\end{equation}
where the inputs $\O(3), \O(2), \O(3), \O(0)$ correspond to the nodes/vertices (i.e. circles) in the tree $T$, with arity given by number of incoming edges (i.e. edges immediately above),
and the arity of the output $\O(5)$ is given by counting leaves
(i.e. edges at the top, not capped by a node).
Before recalling equivariant trees, 
it is worth making the connection between 
$T$ and \eqref{COMPEX EQ}
more precise.
Recall \cite[\S 3]{MW07} that $T$ gives rise to 
a colored operad\footnote{
Recall that colored operads, 
also known as multicategories,
are a generalization of the notion of category
where each arrow/operation
$(c_1,\cdots,c_n) \to c_0$
has multiple inputs but a single output.}
$\Omega(T)$,
as follows.
The colors/objects of $\Omega(T)$
are the edges $a,b,c,\cdots,i$
while the generating operations,
determined by the nodes,
are
$(a,b) \to f$, $(c,d,e) \to g$, $() \to h$, $(f,g,h) \to i$
(i.e., for each node, incoming edges are viewed as inputs 
and the outgoing edge as an output).
Let $C$ be the corolla (i.e. tree with a single node) above,
which is formed by the leaves and root of $T$. 
There is then a natural map of colored operads
$\Omega(C) \to \Omega(T)$
so that,
writing $\mathsf{Op}_{\bullet}$ for the category of colored operads (of sets),
\eqref{COMPEX EQ}
is the induced map of mapping sets
$\mathsf{Op}_{\bullet}
\left(\Omega(T),\O\right)
\to 
\mathsf{Op}_{\bullet}
\left(\Omega(C),\O\right)$.
Indeed,
$\mathsf{Op}_{\bullet}
\left(\Omega(T),\O\right)
\simeq
\O(3) \times \O(2) \times \O(3) \times \O(0)$
and
$\mathsf{Op}_{\bullet}\left(\Omega(C),\O\right) 
\simeq
\O(5)$
since maps 
$\Omega(T) \to \O$
and 
$\Omega(C) \to \O$
are determined by the image of the generating operations.

Analogously, the role of equivariant trees is,
in the context of equivariant operads,
to encode operadic compositions as in \eqref{COMPEX EQ}
together with fixed point compatibilities.
Briefly, 
a $G$-tree \cite[Def. 5.44]{Pe16b}
is a forest diagram 
(i.e. a collection of trees)
together with a $G$-action
that is transitive on tree components.
A detailed introduction to (and motivation for) 
equivariant trees can be found in \cite[\S 4]{Pe17}, where the second author develops the theory of equivariant dendroidal sets 
(a parallel approach to equivariant operads), 
though here we include only a single representative example.

Let $G = \{ \pm 1, \pm i, \pm j, \pm k\}$ be the group of quaternionic units 
and $G \geq H \geq K \geq L$ be the subgroups %
$H = \langle j \rangle$, %
$K = \langle -1 \rangle$, %
$L = \{1\}$.
One has a $G$-tree $T$ with 
\textit{expanded representation} given by the two leftmost trees 
below and
\textit{orbital representation} given by the rightmost tree.
\begin{equation}\label{D6SMALLER EQ}
	\begin{tikzpicture}[auto,grow=up, level distance = 2.2em,
	every node/.style={font=\scriptsize,inner sep = 2pt}]%
		\tikzstyle{level 2}=[sibling distance=7em]%
		\tikzstyle{level 3}=[sibling distance=2.25em]%
			\node at (4.75,0){}%
				child{node [dummy] {}%
					child{node [dummy] {}%
						child{node {}%
						edge from parent node [swap,very near end] {$-k a$}}%
						child[level distance = 2.4em]{node {}%
						edge from parent node [swap,near end] {$k b$}}%
						child{node {}%
						edge from parent node [very near end] {$k a$}}%
					edge from parent node [swap] {$k c$}}%
					child{node [dummy] {}%
						child{node {}%
						edge from parent node [swap,very near end] {$-i a$}}%
						child[level distance = 2.4em]{node {}%
						edge from parent node [swap,near end] {$i b$}}%
						child{node {}%
						edge from parent node [very near end] {$i a$}}%
					edge from parent node  {$i c$}}%
				edge from parent node [swap] {$i d$}};%
			\node at (0,0){}%
				child{node [dummy] {}%
					child{node [dummy] {}%
						child{node {}%
						edge from parent node [swap,very near end] {$-j a$}}%
						child[level distance = 2.4em]{node {}%
						edge from parent node [swap,near end] {$j b$}}%
						child{node {}%
						edge from parent node [very near end] {$j a$}}%
					edge from parent node [swap] {$j c$}}%
					child{node [dummy] {}%
						child{node {}%
						edge from parent node [swap,very near end] {$-a\phantom{j}$}}%
						child[level distance = 2.4em]{node {}%
						edge from parent node [swap,near end] {$b\phantom{j}$}}%
						child{node {}%
						edge from parent node [very near end] {$\phantom{-j}a$}}%
					edge from parent node  {$\phantom{j}c$}}%
				edge from parent node [swap] {$d$}};%
		\begin{scope}[every node/.style={font=\footnotesize}]%
			\node at (9.15,0){}%
				child{node [dummy] {}%
					child{node [dummy] {}%
						child{node {}%
						edge from parent node [swap,very near end] {$(G/K) \cdot b$}}%
						child{node {}%
						edge from parent node [very near end] {$(G/L) \cdot a$}}%
					edge from parent node [right] {$(G/K) \cdot c$}}%
				edge from parent node [right] {$(G/H) \cdot d$}};%
		\end{scope}%
		\draw[decorate,decoration={brace,amplitude=2.5pt}] (4.85,0) -- (-0.1,0) node[midway,inner sep=4pt,font = \normalsize]{$T$}; %
		\node at (9.15,-0.15) [font = \normalsize] {$T$};
	\end{tikzpicture}%
\end{equation}%
Here, the expanded representation of $T$ is just
a forest with edge labels that indicate the $G$-action.
Note that all edges are conjugate to one of the 
edges $a,b,c,d$ which have, respectively, stabilizers $L, K, K, H$.
For example, the labels of $T$ imply that 
$\pm j d = \pm d$ and $\pm i d = \pm k d$.
Given the expanded representation,
the orbital representation is  
obtained by collapsing each edge orbit into a single edge,
which is labeled by the corresponding orbit set of edges in the expanded representation
(one may also reverse this process, though we will not need to do so).
We note that orbital representations always ``look like a tree''.

As explained in \cite[Example 4.9]{Pe17},
the $G$-tree $T$ encodes the fact that,
for $\O \in \mathsf{sOp}^G$ a $G$-operad,
the composition 
$\mathcal{O}(2) \times \mathcal{O}(3)^{\times 2} \to 
\mathcal{O}(6)$ restricts to a fixed point composition
\begin{equation}\label{INTFIXPTCOMP EQ}
\O(H/K)^{H} \times \O(K/L \amalg K/K)^{K} \to
\O(H/L \amalg H/K)^{H}
\end{equation}
(we discuss how \eqref{INTFIXPTCOMP EQ}
is obtained in the next paragraph)
where $\O(X)$ for $X$ an $H$-set denotes $\O(|X|)$
with the $H$-action
given by the identification $H \simeq \Gamma_X$
(the graph subgroup $\Gamma_X \leq G \times \Sigma_n$ is as discussed after 
\eqref{NORMMAPS EQ}),
and likewise for $K$-sets.
In particular, $\O(X)^H \simeq \O(|X|)^{\Gamma_X}$.

We recall the exact connection between $T$ and \eqref{INTFIXPTCOMP EQ}. 
Let $\mathsf{Op}_{\bullet}^G$
be the category of 
$G$-objects in colored operads (of sets).
As in the non-equivariant case,
one builds 
$\Omega(T)$ in $\mathsf{Op}_{\bullet}^G$
and a map
$\Omega(C) \to \Omega(T)$
in $\mathsf{Op}_{\bullet}^G$,
where $C$ is the $G$-corolla (i.e. $G$-tree composed of corollas)
formed by the leaves and roots of $T$.
The composition
\eqref{INTFIXPTCOMP EQ}
is then the induced map
$\mathsf{Op}_{\bullet}^G
\left(\Omega(T),\mathcal{O}\right)
\to
\mathsf{Op}_{\bullet}^G
\left(\Omega(C),\mathcal{O}\right)$.
The implicit claim  
$\mathsf{Op}_{\bullet}^G
\left(\Omega(T),\mathcal{O}\right)
\simeq
\O(H/K)^{H} \times \O(K/L \amalg K/K)^{K}
$
follows since:
by equivariance, a $G$-map
$\phi \colon \Omega(T) \to \O$
is determined by the images of the operations
$(a,b,-a) \to c$ and
$(c,jc) \to d$;
the operation 
$\phi\left((a,b,-a) \to c\right)$
must be in $\O(K/L \amalg K/K)^{K}$,
since $K$ is the isotropy of $c$ and
$\{a,b,-a\} \simeq K/L \amalg K/K$
as $K$-sets; 
likewise $\phi\left((c,jc) \to d\right)$
must be in $\O(H/K)^{H}$.
The claim 
$\mathsf{Op}_{\bullet}^G
\left(\Omega(C),\mathcal{O}\right)
\simeq 
\O(H/L \amalg H/K)^{H}$
is similar.

We note that the two inputs 
$\O(H/K)^{H}$, $\O(K/L \amalg K/K)^{K}$ in
\eqref{INTFIXPTCOMP EQ}
correspond to the two nodes of the orbital representation
in \eqref{D6SMALLER EQ}.
Notice that now the arity (i.e. the associated ``type of input'')
of such a node does not just count incoming edge orbits,
but depends on the labels of both incoming and outgoing edge orbits
(in particular, the fixed point condition depends on the latter).
Similarly, the output 
$\O(H/L \amalg H/K)^{H}$
is determined by both the leaf and root edge orbits.
The existence of maps of the form \eqref{INTFIXPTCOMP EQ} is essentially tantamount to the subtlest 
closure property for indexing systems $\mathcal{F}$,
self-induction (cf. \cite[Def. 3.20]{BH15}),
and similar tree descriptions exist for all other closure properties, as detailed in 
\cite[\S 9]{Pe17}.

We can now at last give a full informal description of the category $\mathsf{Op}_G$ featured in 
our main result, Theorem \ref{MAINQUILLENEQUIV THM}.
A genuine equivariant operad
$\mathcal{P} \in \mathsf{sOp}_G$
has levels $\mathcal{P}(X)$ for each $H$-set $X$, $H\leq G$, 
that mimic the role of the fixed points $\O(X)^H \simeq \O(|X|)^{\Gamma_X}$ for 
$\mathcal{O} \in \mathsf{Op}^G$.
More explicitly, there are restriction maps 
$\mathcal{P}(X) \to \mathcal{P}(X|_{K})$ for $K \leq H$,
isomorphisms
$\mathcal{P}(X)\simeq \mathcal{P}(g X)$
where $gX$ denotes the conjugate $gHg^{-1}$-set,
and composition maps given by
\[
\P(H/K) \times \P(K/L \amalg K/K) \to \P(H/L \amalg H/K)
\]
in the case of the abstraction of \eqref{INTFIXPTCOMP EQ}, and more generally by
\begin{equation}\label{GENGENMULT EQ}
\begin{tikzcd}
  \mathcal{P}(H/K_1 \amalg \cdots \amalg H/K_n)
  \times
  \mathcal{P}(K_1 / L_{11} \amalg \cdots \amalg K_1/L_{1 m_1})
  \times \cdots \times
  \mathcal{P}(K_n / L_{n1} \amalg \cdots \amalg K_n / L_{n m_n})
  \arrow[d]
  \\
  \mathcal{P}(H / L_{11} \amalg \cdots \amalg H/L_{1 m_1}
  \amalg \cdots \amalg
  H / L_{n1} \amalg \cdots \amalg H/L_{n m_n}
  ).
\end{tikzcd}
\end{equation}
Lastly, these composition maps 
must satisfy associativity, unitality, compatibility with restriction maps, and equivariance conditions, as encoded by the theory of $G$-trees. 
Rather than making such compatibilities explicit, however, we will find it preferable for our purposes to simply define genuine equivariant operads intrinsically in terms of $G$-trees.

We end this introduction with an alternative perspective
(further expounded in \S \ref{CONTEX SEC})
on the role of genuine operads.
The Elmendorf-Piacenza theorem in 
\eqref{COFADJINT EQ}
is ultimately a strengthening of the basic observation that the homotopy groups
$\pi_n(X)$ of a $G$-space $X$ are coefficient systems rather than just $G$-objects.
Similarly, the generalized 
Elmendorf-Piacenza result \cite[Thm. 3.17]{Ste16}
applied to the category 
$\mathcal{V}=\mathsf{sCat}$
of simplicial categories strengthens the observation that, 
for a $G$-simplicial category $\C$,
the associated homotopy category
$\text{ho}(\mathcal{C})$
is a coefficient system of categories rather than just a $G$-category.
Likewise, Theorem \ref{MAINQUILLENEQUIV THM}
strengthens the (not so basic) observation that, for a $G$-simplicial operad $\O$, the associated homotopy operad 
$\text{ho}(\mathcal{O})$
is neither just a $G$-operad nor just a coefficient system of operads,
but rather the richer algebraic structure that we refer to as a ``genuine equivariant operad''.

\subsection{Main results}

We now discuss our main results.

Fixing a finite group $G$, we
recall that 
$\mathsf{Op}^G(\mathcal{V})
=
\left(\mathsf{Op}(\mathcal{V})\right)^G$
denotes $G$-objects in 
$\mathsf{Op}(\mathcal{V})$.




\begin{customthm}{I}\label{MAINEXIST1 THM}
Let $(\mathcal{V},\otimes)$
denote either 
$(\mathsf{sSet}, \times)$
or
$(\mathsf{sSet}_{\**}, \wedge)$.

Then there exists a model category structure on 
$\mathsf{Op}^G(\mathcal{V})$ such that 
$\O \to \O'$ is a weak equivalence (resp. fibration)
if all the maps
\begin{equation}\label{GENEOPEQMT EQ}
	\O(n)^{\Gamma} \to \O'(n)^{\Gamma}
\end{equation}
for 
$\Gamma \leq G\times \Sigma_n, \Gamma \cap \Sigma_n = \{\**\}$, 
are weak equivalences (fibrations) in $\mathcal{V}$.

More generally, for $\mathcal{F} = \{\mathcal{F}_n\}_{n \geq 0}$ with $\mathcal{F}_n$ an arbitrary collection of subgroups of $G \times \Sigma_n$ there exists a model category structure on 
$\mathsf{Op}^G(\mathcal{V})$,
which we denote
$\mathsf{Op}^G_{\mathcal{F}}(\mathcal{V})$,
with weak equivalences (resp. fibrations)
determined by \eqref{GENEOPEQMT EQ} for $\Gamma \in \mathcal{F}_n$.

Lastly, 
analogous 
semi-model category structures
$\mathsf{Op}^G(\mathcal{V})$,
$\mathsf{Op}^G_{\mathcal{F}}(\mathcal{V})$
exist
provided that
$(\mathcal{V},\otimes)$:
\begin{enumerate*}
\item[(i)] is a cofibrantly generated model category;
\item [(ii)] is a closed monoidal model category with cofibrant unit;
\item[(iii)] has cellular fixed points;
\item[(iv)] has cofibrant symmetric pushout powers.
\end{enumerate*}
\end{customthm}

We note that a similar result has also been proven by Guti\'{e}rrez-White in \cite{GW18}.

Theorem \ref{MAINEXIST1 THM}
is proven in \S \ref{MAINEXIST SEC}.
Condition (i) can be found in 
\cite[Def. 2.1.17]{Ho98},
while (ii) can be found in \cite[Def. 4.2.6]{Ho98}.
The additional conditions (iii) and (iv),
which are less standard, are discussed in 
\S \ref{FAMILY_SEC} and
\S \ref{PUSHPOW SEC}, respectively.
Further, by \textit{semi-model category}
we mean the notion in
\cite[Def. 2.1.1]{BW}\footnote{
  This agrees with the original notion of
  a \textit{$J$-semi-model category (over $\**$)} from \cite[Def. 1]{Spi01},
  as well as, e.g. 
  a \textit{semi-model category (over $\**$)} from \cite[Def. 2.2.1]{WY18}.  
  In practice, the purpose of choosing some $\mathcal M$ distinct from $\**$ in these definitions
  is that the existence of the semi-model structure on $\mathcal D$ therein
  is typically established via transfer from $\mathcal{M}$;
  however, the more powerful of these transfer theorems (e.g. \cite[Thm. 2.2.2]{WY18} and \cite[Thm. 2.2.2]{BW})
  cannot be applied to the context in this paper.
  As a final note
  we caution that, when $\mathcal{M} \neq \**$,
  the notion in \cite{Spi01}
  is more demanding than that in \cite{WY18}.
 },
which relaxes the definition of model structure by requiring that some of the axioms need only apply
if the domains of certain cofibrations are cofibrant. 

Our next result concerns the model structure
on the new category 
$\mathsf{Op}_G (\mathcal{V})$ of genuine equivariant operads
introduced in this paper. Before stating the result, we must first outline how 
$\mathsf{Op}_G (\mathcal{V})$
itself is built.
Firstly, the levels of each 
$\mathcal{P} \in \mathsf{Op}_G(\mathcal{V})$,
i.e. the $H$-sets in \eqref{GENGENMULT EQ},
are encoded by a category $\Sigma_G$ of 
\textit{$G$-corollas}, introduced in \S \ref{LRVERT SEC},
which generalizes the usual category 
$\Sigma$ of finite sets and isomorphisms.
We then define $G$-symmetric sequences by
$\mathsf{Sym}_G(\mathcal{V})=
\mathcal{V}^{\Sigma_G^{op}}$
\index{categories!of operads/symmetric sequences!SymG@$\Sym_G(\mathcal V) = \V^{\Sigma_G^{op}}$}
and,
whenever $\mathcal{V}$ is a closed symmetric monoidal category with diagonals 
(cf. Remark \ref{FINSURJ REM}),
we define in \S \ref{FGMON SEC}
a \textit{free genuine equivariant operad} monad 
$\mathbb{F}_G$ on
\index{monads!genopmonad@$\mathbb{F}_G$}
$\mathsf{Sym}_G(\mathcal{V})$
whose algebras form the desired category 
$\mathsf{Op}_G(\mathcal{V})$.

Moreover, inspired by the analogues
$\mathsf{Top}^{\mathsf{O}_{\mathcal{F}}^{op}}
	\rightleftarrows 
\mathsf{Top}^G_{\mathcal{F}}$
of the Elmendorf-Piacenza equivalence
where 
$\mathsf{Top}^{\mathsf{O}_{\mathcal{F}}^{op}}$
are partial coefficient systems determined by a family $\mathcal{F}$, 
we show in \S \ref{INDEXING_SECTION}
that (a slight generalization of)
Blumberg-Hill's indexing systems $\mathcal{F}$
give rise to sieves 
$\Sigma_{\mathcal{F}} \hookrightarrow \Sigma_G$
\index{indexing systems!SigmaF@$\Sigma_{\mathcal F} \hookrightarrow \Sigma_G$}
and partial $G$-symmetric sequences
$\mathsf{Sym_{\mathcal{F}}}(\mathcal{V})
=
\mathcal{V}^{\Sigma_{\mathcal{F}}^{op}}$
\index{categories!of operads/symmetric sequences!SymF@$\mathsf{Sym_{\mathcal{F}}}(\mathcal{F})=\mathcal{V}^{\Sigma_{\mathcal{F}}^{op}}$}.
Further, these 
$\mathsf{Sym_{\mathcal{F}}}(\mathcal{V})$
are suitably compatible with the monad
$\mathbb{F}_G$,
thus giving rise to categories
$\mathsf{Op}_{\mathcal{F}}(\mathcal{V})$
of \textit{partial genuine equivariant operads}
\index{categories!of operads/symmetric sequences!OpF@$\mathsf{Op}_{\mathcal{F}}(\mathcal{V})$}.

\begin{customthm}{II}\label{MAINEXIST2 THM}
Let $(\mathcal{V},\otimes)$
denote either 
$(\mathsf{sSet}, \times)$
or
$(\mathsf{sSet}_{\**}, \wedge)$.
Then the projective model structure on 
$\mathsf{Op}_G(\mathcal{V})$ exists. Explicitly,
a map $\mathcal{P} \to \mathcal{P}'$ is a weak equivalence (resp. fibration) if all maps
\begin{equation}\label{GENEQTHM EQ}
\mathcal{P}(C)
	\to
\mathcal{P}'(C)
\end{equation}
are weak equivalences (fibrations) in $\mathcal{V}$
for each $C \in \Sigma_G$.

More generally, for $\mathcal{F}$ a weak indexing system, the projective model structure on 
$\mathsf{Op}_{\mathcal{F}}(\mathcal{V})$ exists. Explicitly, weak equivalences (resp. fibrations) are
determined by \eqref{GENEQTHM EQ}
for $C \in \Sigma_{\mathcal{F}}$.

Lastly, 
analogous 
semi-model structures on
$\mathsf{Op}_G(\mathcal{V})$,
$\mathsf{Op}_{\mathcal{F}}(\mathcal{V})$
exist
provided that
$(\mathcal{V},\otimes)$:
\begin{enumerate*}
\item[(i)] is a cofibrantly generated model category;
\item [(ii)] is a closed monoidal model category with cofibrant unit;
\item[(iii)] has cellular fixed points;
\item[(iv)] has cofibrant symmetric pushout powers;
\item[(v)] has diagonals.
\end{enumerate*}
\end{customthm}

Theorem \ref{MAINEXIST2 THM} is proven in 
\S \ref{MAINEXIST SEC}
in parallel with Theorem \ref{MAINEXIST1 THM}.
We note that the condition (v) that 
$(\mathcal{V},\otimes)$ has diagonals (cf. Remark \ref{FINSURJ REM}), which is not needed in 
Theorem \ref{MAINEXIST1 THM}, is required to build
the monad $\mathbb F_G$, and hence the categories
$\mathsf{Op}_G(\mathcal{V})$,
$\mathsf{Op}_{\mathcal{F}}(\mathcal{V})$.

The following is our main result.
The explicit formulas for
the functors 
$\iota^{\**},\iota_{\**}$
are found in \eqref{IOTAFUNS EQ}
(also, see Corollaries \ref{TWOADJOINTSOP_COR}
and \ref{TWOADJOINTSOPF COR}).

\begin{customthm}{III}\label{MAINQUILLENEQUIV THM}
Let $(\mathcal{V},\otimes)$
denote either 
$(\mathsf{sSet}, \times)$
or
$(\mathsf{sSet}_{\**}, \wedge)$.

Then the adjunctions, 
where in the more general rightmost case 
$\mathcal{F}$ is a weak indexing system,
\begin{equation}
\begin{tikzcd}[column sep =4em]
	\mathsf{Op}_G(\mathcal{V}) \ar[shift left=1.5]{r}{\iota^{\**}} 
	&
	\mathsf{Op}^G(\mathcal{V})
	\ar[shift left=1.5]{l}{\iota_{\**}},
&
	\mathsf{Op}_{\mathcal F}(\mathcal{V}) 
	\ar[shift left=1.5]{r}{\iota^{\**}} 
	&
	\mathsf{Op}^G_{\mathcal{F}}(\mathcal{V})
	\ar[shift left=1.5]{l}{\iota_{\**}}.
\end{tikzcd}
\end{equation}
are Quillen equivalences.

Morover, 
analogous Quillen equivalences of
semi-model structures\footnote{
	An adjunction 
	$L \colon \mathcal{C} 
	\rightleftarrows 
	\mathcal{D} \colon R$
	between semi-model categories
	is a Quillen equivalence if 
	$R$ preserves (trivial) fibrations and,
	for $A\in \mathcal{C}$ cofibrant and
	$X \in \mathcal{D}$ fibrant,
	$LA \to X$ is a weak equivalence iff 
	$A \to RX$ is.
}
$\mathsf{Op}_{\mathcal F}(\mathcal{V}) \simeq
\mathsf{Op}^G_{\mathcal{F}}(\mathcal{V})$
exist
provided that
$(\mathcal{V},\otimes)$:
\begin{enumerate*}
\item[(i)] is a cofibrantly generated model category;
\item [(ii)] is a closed monoidal model category with cofibrant unit;
\item[(iii)] has cellular fixed points;
\item[(iv)] has cofibrant symmetric pushout powers;
\item[(v)] has diagonals;
\item[(vi)] has cartesian fixed points.
\end{enumerate*}
\end{customthm}

Theorem \ref{MAINQUILLENEQUIV THM}
is proven in 
\S \ref{MAINTHM_PROOF_SECTION}.
Condition (vi), 
which is not needed in either of
Theorems \ref{MAINEXIST1 THM},\ref{MAINEXIST2 THM},
is discussed in 
\S \ref{PUSHPOW SEC}.

Lastly, our techniques also verify 
the main conjecture of \cite{BH15},
which we discuss in \S \ref{NINFTY_SECTION}.
Moreover, we note that our models for
$N \mathcal{F}$-operads are given by explicit bar constructions.

\begin{customcor}{IV}\label{NINFTY_REAL_COR_MAIN}
For $\V = \sSet$ or $\mathsf{Top}$ and 
$\mathcal{F} = \{\mathcal{F}_n\}_{n \geq 0}$
any weak indexing system,
$N \mathcal{F}$-operads exist. That is, there exist explicit operads $\O$
such that
\begin{equation}\label{NFINFTY2 EQ}
	\O(n)^{\Gamma} \sim 
\begin{cases}
	\** & \text{if } \Gamma \in \mathcal{F}_n
\\
	\emptyset & \text{otherwise}.
\end{cases}
\end{equation}  
In particular, the map $\mathrm{Ho}(N_\infty$-$\Op) \to \mathcal I$
in \cite[Cor. 5.6]{BH15}
 is an equivalence of categories.
 
 Moreover, if $\mathcal{O}'$ has fixed points as in 
 \eqref{NFINFTY2 EQ} for some collection of graph subgroups 
 $\mathcal{F} = \{\mathcal{F}_n\}_{n \geq 0}$, then 
 $\mathcal{F}$ must be a weak indexing system.
\end{customcor}


\subsection{Context, applications and future work}
\label{CONTEX SEC}

\subsubsection*{Models for equivariant operads with norm maps}

This article is closely linked to the authors project in
\cite{Pe17,BP_edss,BP_HGOP,BP_TAS},
which culminates in the existence of a Quillen equivalence
\cite[Thm. I]{BP_TAS}
\begin{equation}\label{DSETSOP_EQ}
\begin{tikzcd}[column sep =4em]
	\mathsf{dSet}^G \ar[shift left=1.5]{r} 
	&
	\mathsf{sOp}_{\bullet}^G.
\ar[shift left=1.5]{l}
\end{tikzcd}
\end{equation} 
Here $\mathsf{sOp}_{\bullet}^G = \Op_\bullet(\sSet)^G$
is the model category of $G$-equivariant colored simplicial operads
with norm maps (in $\mathsf{sSet}$)
given by \cite[Thm. III]{BP_HGOP},
which is the colored extension of Theorem \ref{MAINEXIST1 THM},
while 
$\mathsf{dSet}^G = \mathsf{Set}^{\Omega^{op} \times G}$
(for $\Omega$ the category of trees)
is the model category of
$G$-$\infty$-operads \cite[Thm. 2.1]{Pe17},
whose model structure is defined using the 
category $\Omega_G$ of $G$-trees.

The equivalence \eqref{DSETSOP_EQ}
generalizes the 
equivalence
$\mathsf{dSet} \rightleftarrows \mathsf{sOp}_{\bullet}$
\cite[Thm. 8.15]{CM13b},
which culminates the project
of Cisinski, Moerdijk, Weiss in \cite{MW07,MW08,CM11,CM13a,CM13b}.
Crucially, we note that, 
while the underlying categories in \eqref{DSETSOP_EQ}
are obtained from those in 
\cite[Thm. 8.15]{CM13b}
by taking $G$-objects,
the model structures in \eqref{DSETSOP_EQ}
are more subtle, needing the use of $G$-trees. 

As a result, when generalizing the arguments and constructions in
\cite{MW07,MW08,CM11,CM13a,CM13b}
one must often think in terms of genuine operads.
For example, 
in \cite[\S 8.2]{Pe17},
to understand the homotopy theory of $G$-$\infty$-operads
in $\mathsf{dSet}^G$,
which are ``$G$-operads with norm maps up to homotopy'',
one considers objects \cite[Not. 8.11]{Pe17} in 
$\mathsf{dSet}_G = \mathsf{Set}^{\Omega_G^{op}}$
that are ``genuine operads up to homotopy''.
Similarly, in
\cite[\S 5]{BP_edss} one must consider the
``homotopy genuine operad of a Segal space'' 
\cite[Def. 5.8]{BP_edss}
(see Remark \ref{COLEREDGEN REM}).
Heuristically, this need for genuine operads comes from the 
observation that taking fixed points does not commute with taking 
homotopy/isomorphism classes
(compare with \eqref{ALGTREECOM EQ} below).
As such, while the categories in \eqref{DSETSOP_EQ}
are ``described in terms of $\Omega^{op} \times G$'',
any construction in those categories involving homotopy classes
needs to be ``described in terms of $\Omega_G^{op}$'',
so as to account for norm maps.

\begin{remark}\label{COLEREDGEN REM}
	To simplify our discussion, 
	this paper focuses only on the theory of 
	\emph{single colored} (genuine) equivariant operads.
	This is due to technical subtleties that emerge in the colored context, 
	such as the fact that colored genuine operads
	have a \emph{coefficient system} of objects 
	rather than just a $G-$set of objects.
	
	In \cite{BP_edss} we give an alternative description 
	of genuine operads (of sets) \cite[Def. 3.35]{BP_edss},
	which includes the colored case, 
	as the objects of $\mathsf{dSet}_G$
	satisfying a strict Segal condition.
	The connection between the two descriptions
	is given by the nerve functor in Theorem \ref{NERVE THM}.
\end{remark}

\subsubsection*{Algebras over genuine operads}

Just like usual operads, genuine operads admit a notion of algebra.
The full formal definition of such algebras is forthcoming, 
but the following example illustrates the main idea.

\begin{example}\label{GENALG EX}
	Let $G = \mathbb{Z}_{/2}$,
	$\mathcal{O} \in \mathsf{sOp}^G$
	be a $G$-operad
	and $X \in \mathsf{sSet}^G$
	be an $\mathcal{O}$-algebra\footnote{Note that  
		the algebra structure is $G$-equivariant.
		That is,
		for $p \in \mathcal{O}(n)$,
		$x_i \in X$, $g\in G$,
		one has
		$(gp)(gx_1,\cdots,gx_n)
		=g\left(p(x_1,\cdots,x_n)\right)
		$.}.
	It is immediate that 
	$\pi_0(X)$ is a $\pi_0(\mathcal{O})$-algebra
	while 
	$\pi_0(X^G)$ is a $\pi_0(\mathcal{O}^G)$-algebra.
	However, the sets $\pi_0(X)$ and $\pi_0(X^G)$ admit additional structure.
	Consider the following $G$-tree
\begin{equation}\label{ALGTREEEX EQ}
\begin{tikzpicture}
[grow=up, auto, every node/.style = {font=\footnotesize},level distance = 2.5em]%
\tikzstyle{level 2}=[sibling distance=3.75em]%
\tikzstyle{level 3}=[sibling distance=1.5em]%
\node at (10,0) [font=\normalsize]  {$T$}%
child{node [dummy] {}
	child{node [dummy,fill=black] {}
		edge from parent node [swap] {$G \cdot a$}
	}
	edge from parent node [swap] {$(G/G)\cdot r$}
};
\tikzstyle{level 2}=[sibling distance=3em]%

\node at (6,0) [font=\normalsize] {$T$}
child{node [dummy] {}
	child{node [dummy,fill=black] {}
		edge from parent node [swap, near end] {$a+1$}
	}
	child{node [dummy,fill=black] {}
		edge from parent node [near end] {$\phantom{-}a$}
	}
	edge from parent node [swap] {$r$}
};
\end{tikzpicture}
\end{equation}
where we regard white (resp. black) nodes as corresponding to $\mathcal{O}$ (resp. $X$).
Then, as in \eqref{INTFIXPTCOMP EQ},
the tree in \eqref{ALGTREEEX EQ}
encodes a multiplication
(note that $\Gamma_G \leq G \times \Sigma_2$ is the diagonal)
\begin{equation}\label{ALGTREECOM EQ}
	\begin{tikzcd}[row sep=0]
	\pi_0\left(\mathcal{O}(2)^
	{\Gamma_{G}}\right)
	\times
	\pi_0(X)
	\ar{r}
	&
	\pi_0\left(X^{G}\right)
\\
	{([p],[x])}
	\ar[mapsto]{r}
	&
	{[p(x,x+1)]}
	\end{tikzcd}
\end{equation}
More generally, analogues of
\eqref{ALGTREECOM EQ} are obtained for any 
$G$-tree which, as in \eqref{ALGTREEEX EQ},
has a single white node topped by black $0$-ary nodes.

Writing 
$\pi_0 (\iota_{\**} X) \in \mathsf{Set}^{\mathsf{O}_G^{op}}$
for the coefficient system
$H \mapsto \pi_0 (X^H)$,
the multiplications \eqref{ALGTREECOM EQ} for such $G$-trees  
describe the algebra structure of 
$\pi_0 (\iota_{\**} X)$
over the genuine operad
$\pi_0 \left(\iota_{\**} \mathcal{O} \right) \in
\mathsf{Op}_G(\mathsf{Set})$,
where $\iota_{\**}$ is now as in Theorem \ref{MAINQUILLENEQUIV THM}.
\end{example}

\begin{remark}
	One way to formalize algebras over genuine operads
	is to adapt the composition product $\circ$ on symmetric sequences
	\cite[Def. 1.4]{GJ94}
	to a product on $G$-symmetric sequences $\mathsf{Sym}_G(\mathcal{V}) = 
	\mathcal{V}^{\Sigma_G^{op}}$
	(here $\Sigma_G$ is the category of $G$-corollas, 
	cf. Definition \ref{GSYMCAT DEF}).
	Loosely, this $G$-composition product is encoded by $G$-trees as in \eqref{ALGTREEEX EQ} but where black nodes need not be $0$-ary.
	
	The composition product approach has some advantages over the
	``free genuine operad monad on $\mathsf{Sym}_G(\mathcal{V})$''
	approach described in \S \ref{FGMON SEC}.
	Namely, one can both define genuine operads as ``algebras over $\circ$''
	and algebras over genuine operads as ``left modules in arity $0$''.
	However, it is hard to describe 
	\emph{free} (genuine) operads using the composition product, 
	making such an approach poorly suited 
	for proving Theorems \ref{MAINEXIST1 THM} and \ref{MAINEXIST2 THM}.
\end{remark}

Based on Theorem \ref{MAINQUILLENEQUIV THM}
and the Elmendorf-Piacenza theorem in \eqref{COFADJINT EQ},
we conjecture the following.

\begin{conjecture}\label{EPGEN CONJ}
	Let $\mathcal{V}$ be as in 
	Theorem \ref{MAINQUILLENEQUIV THM},
	$\mathcal{O} \in \mathsf{Op}^G(\mathcal{V})$
	be a suitably cofibrant $G$-operad,
	and $\iota_{\**}\mathcal{O} \in \mathsf{Op}_G(\mathcal{V})$
	be the associated genuine operad. Then the adjunction 
	\eqref{COFADJINT EQ} lifts to a Quillen adjunction
\[
\begin{tikzcd}[column sep =4em]
	\mathsf{Alg}_{\iota_{\**} \mathcal{O}}
	\ar[shift left=1.5]{r}{\iota^{\**}} 
&
	\mathsf{Alg}_{\mathcal{O}}
	\ar[shift left=1.5]{l}{\iota_{\**}}.
\end{tikzcd}
\]
\end{conjecture}

The key ingredient needed to establish Conjecture \ref{EPGEN CONJ}
is an analogue of Lemma \ref{MAINLEM LEM}, which we believe holds by a similar analysis.

\begin{remark}\label{ALGELMCOL REM}
	We also expect Conjecture \ref{EPGEN CONJ}
	to hold for $\mathcal{O}$
	a $G$-equivariant colored operad,
	although, in light of Remark \ref{COLEREDGEN REM},
	defining the genuine colored operad $\iota_{\**} \mathcal{O}$
	requires care.
	
	In fact, it turns out that 
	Theorem \ref{MAINQUILLENEQUIV THM} 
	is essentially a particular case of Conjecture \ref{EPGEN CONJ}
	in the colored case, as follows.
	First, and non-equivariantly, single colored operads
	$\mathsf{Op}(\mathcal{V})$
	are the algebras over a colored operad $\mathcal{T}$,
	described by trees
	($\mathcal{T}$ specializes the operad $S^C$ in \cite[\S 3.2]{GV12}
	for the set of colors $C= \{\**\}$).
	Adapting the construction in \cite[\S 3.2]{GV12},
	one obtains a colored genuine operad 
	$\mathcal{T}_G$, described by $G$-trees, 
	whose algebras are $\mathsf{Op}_G(\mathcal{V})$.
	Moreover,
	$\mathcal{T}_G = \iota_{\**} \mathcal{T}^{\mathsf{fr}}_G$,
	where $\mathcal{T}^{\mathsf{fr}}_G$ is
	a $G$-equivariant colored operad, 
	described by free $G$-trees, 
	again adapting \cite{GV12}.
	In addition, 
	giving $\mathcal{T}$ the trivial $G$-action,
	there is a $G$-equivariant map 
	$\mathcal{T} \to \mathcal{T}^{\mathsf{fr}}_G$ 
	which, while not an isomorphism, 
	induces an equivalence of categories of algebras.
	
	Summarizing the above, 
	Theorem \ref{MAINQUILLENEQUIV THM} 
	hence verifies Conjecture \eqref{EPGEN CONJ}
	for the operad $\mathcal{T}^{\mathsf{fr}}_G$. 
	
	For some extra detail, 
	including a more detailed description of $\mathcal{T}^{\mathsf{fr}}_G$ ,
	see Remark \ref{SOMEMOREDET REM},
	which discusses the role that the map
	$\mathcal{T} \to \mathcal{T}^{\mathsf{fr}}_G$ 
	plays ``behind the scenes''  in \S \ref{COMPARISON_REGULAR_SECTION}.
\end{remark}

\subsubsection*{Comparison with parametrized $G$-$\infty$-operads}

The comparison between simplicial $G$-operads $\sOp^G$ and the parametrized $G$-$\infty$-operads of \cite{BDGNS}
factors most naturally through the category of genuine $G$-operads $\sOp_G$.
Non-equivariantly, this comparison is given by
the operadic nerve functor $N^\otimes \colon \sOp \to \Op_\infty$ \cite[Def. 2.1.1.3]{Lu09}.
This construction first converts a simplicial operad $\O$ into
a simplicial category
$\O^\otimes \to \Fin_{\**}$
equipped with a functor to the category of pointed finite sets, which behaves like a fibration over a certain wide subcategory,
and then takes the homotopy coherent nerve
$hcN(\O^\otimes) = N^\otimes(\O)$.
This process motivates Lurie's definition of an $\infty$-operad in $\sSet$.

In \cite{BNerve}, the first author generalizes this process,
by first building, from a genuine equivariant operad $\P$, 
a simplicial category
$\P^{\otimes} \to \underline{\Fin}_{\**,G}$
equipped with a partial fibration to the coefficient system of pointed finite $G$-sets (e.g. \cite[Def. 3.3]{BNerve}),
and then showing that the homotopy coherent nerve $hcN(\P^{\otimes}) = N^{\otimes \P}$
yields a $G$-$\infty$-operad in the sense of \cite{BDGNS}.
Moreover, this transformation induces a functor on the categories of algebras $\Alg(\O) \to \Alg(N^{\otimes}\O)$.
This has been applied by Horev in \cite{Hor} when $\O = \mathcal D_V$ is the equivariant little disks operad over a $G$-representation $V$.
Specifically, he shows \cite[\S 3.9]{Hor} that $N^\otimes(\mathcal D_V)$ is equivalent to the
$G$-$\infty$-operad of $V$-framed representations,
which allows for $\mathcal D_V$-algebras to be used as input into his genuine equivariant factorization homology machinery,
in particular producing new notions of equivariant topological Hochschild homology.


\subsection{Outline}

This paper is comprised of two major halves, 
with 
\S \ref{PLANAR_SECTION},\S \ref{GENUINE_OP_MONAD_SECTION}
addressing the definition of the novel structure of
genuine equivariant operads,
and 
\S \ref{FREE_EXTENSIONS_SECTION},\S \ref{COFIB SEC}
addressing the proofs of the main results,
Theorems \ref{MAINEXIST1 THM},\ref{MAINEXIST2 THM},\ref{MAINQUILLENEQUIV THM}.
A more detailed outline follows.

\S \ref{PRELIM_SECTION}
discusses some preliminary notions and notation that will be used throughout.
Of particular importance are the notions of 
split Grothendieck fibrations,
which we recall in \S \ref{GROTHFIB REF},
and the categorical wreath product defined in 
\S \ref{WREATH SEC}, which we use to define
symmetric monoidal categories with diagonals
(Remark \ref{FINSURJ REM}).

\S \ref{PLANAR_SECTION} lays the groundwork for the definition of genuine equivariant operads in 
\S \ref{GENUINE_OP_MONAD_SECTION} by discussing the concept of node substitution (which is at the core of the definition of free operads)
in the context of equivariant trees. The key idea, which is captured in diagram
\eqref{SUBSDATUMTREES EQ} and Proposition \ref{SUBSASPULL PROP},
is that such substitution data are encoded by special maps of $G$-trees that we call planar tall maps. The bulk of the section is spent studying these types of maps, culminating in the concept of planar strings in \S \ref{PLANARSTRING SEC}, which encode iterated substitution.

\S \ref{GENUINE_OP_MONAD_SECTION} then uses planar strings to provide the formal definition of the category 
$\mathsf{Op}_G(\mathcal{V})$
of genuine equivariant operads in a 
two step process in 
\S \ref{MONSPAN SEC} and \S \ref{FGMON SEC}.
\S \ref{COMPARISON_REGULAR_SECTION} compares 
the genuine equivariant operad category
$\mathsf{Op}_G(\mathcal{V})$
with the usual equivariant operad category
$\mathsf{Op}^G(\mathcal{V})$,
establishing the necessary adjunction to formulate
Theorem \ref{MAINQUILLENEQUIV THM}.
\S \ref{INDEXING_SECTION}
discusses the notion of partial genuine equivariant operads, which are very closely related to the indexing systems of Blumberg-Hill.

\S \ref{FREE_EXTENSIONS_SECTION} 
proves 
Theorems \ref{MAINEXIST1 THM} and \ref{MAINEXIST2 THM}.
As is often the case when proving existence of projective model structures,
the key to this section is a careful analysis of  the free extensions in $\mathsf{Op}_G$ as in diagram
\eqref{FREE_FG_EXT_EQ},
with 
\S \ref{LABELSTRI SEC}, 
\S \ref{EXTTREE SEC},
\S \ref{FILTRATION_SECTION}
dedicated to providing a suitable filtration of such free extensions,
and \S \ref{MAINEXIST SEC} concluding the proofs.

\S \ref{COFIB SEC} proves our main result,
Theorem \ref{MAINQUILLENEQUIV THM}.
The core of the technical analysis 
is given in 
\S \ref{FAMILY_SEC},
\S \ref{PUSHPOW SEC}
and \S \ref{G_GRAPH_SECTION}, 
which carefully study the interplay between families of subgroups, fixed points, and pushout products,
and provide the necessary ingredients for
the characterization of the cofibrant objects
in $\mathsf{Op}_G (\mathcal{V})$
given in Lemma \ref{MAINLEM LEM},
and from which 
Theorem \ref{MAINQUILLENEQUIV THM}
easily follows.
\S \ref{NINFTY_SECTION}
then establishes Corollary 
\ref{NINFTY_REAL_COR_MAIN}
by using the theory of genuine equivariant operads
to build explicit cofibrant models for 
$N \mathcal{F}$-operads.

Appendix \ref{TRANSKAN AP}
provides the proof of a lengthy technical result needed when establishing the filtrations in \S \ref{FREE_EXTENSIONS_SECTION}.

Lastly, Appendix \ref{NERVE AP} proves Theorem \ref{NERVE THM},
which compares the description of genuine operads used in this paper 
with the description used in \cite{BP_edss}.

\section{Preliminaries}
\label{PRELIM_SECTION}


\subsection{Grothendieck fibrations}\label{GROTHFIB REF}

Recall that a functor 
$\pi \colon \mathcal{E} \to \mathcal{B}$
is called a \textit{Grothendieck fibration} \cite[\S 8.1]{Bo94}
\index{Grothendieck fibrations!AGrothendieckFibration@$\pi \colon \mathcal{E} \to \mathcal{B}$}
if, for every arrow 
$f \colon b' \to b$ in $\mathcal{B}$
and $e \in \mathcal{E}$ such that $\pi(e)=b$,
there exists a \emph{cartesian arrow}
$f^{\**} e \to e$ lifting $f$,
\index{Grothendieck fibrations!Cartesian@$f^{\**} e \to e$}
i.e. an arrow such that for any choice of horizontal arrows 
\[
\begin{tikzcd}[row sep=7pt]
	e'' \ar{rr} \ar[dashed]{rd}[swap]{\exists !} & & 
	e
&
	b'' \ar{rr} \ar{rd} & & 
	b
\\
	& f^{\**} e \ar{ru} &
&
	& b' \ar{ru}[swap]{f} &
\end{tikzcd}
\]
for which the rightmost diagram commutes and 
$e'' \to e$ lifts $b'' \to b$,
there exists a unique dashed arrow
$e'' \to f^{\**} e$ lifting $b'' \to b'$ and making the leftmost diagram commute.

In most contexts the cartesian arrows $f^{\**} e \to e$ are assumed to be defined only up to unique isomorphism.
However, in all examples considered in this paper,
we will be able to identify preferred choices of cartesian arrows, and we will refer to those preferred choices as \textit{pullbacks}.
Moreover, pullbacks will be compatible with composition and units in the obvious way, i.e. $g^{\**} f^{\**} e = (fg)^{\**}e$ and $id_b^{\**}e=e$.
On a terminological note, 
the data of a Grothendieck fibration together with 
such choices of pullbacks is sometimes called a 
\textit{split fibration}, but we will have no need to distinguish the two concepts outside
of the present discussion.

A map of Grothendieck fibrations (resp. split fibrations) is then a commutative diagram
\begin{equation}
\begin{tikzcd}\label{GROTHFIBMAP EQ}
	\mathcal{E} \ar{rr}{\delta} \ar{rd}[swap]{\pi} &&
	\bar{\mathcal{E}} \ar{dl}{\bar{\pi}}
\\
	& \mathcal{B}
\end{tikzcd}
\end{equation}
such that $\delta$ preserves cartesian arrows (resp. pullbacks).

There is a well known equivalence between Grothendieck fibrations over $\mathcal{B}$ and contravariant pseudo-functors
$\mathcal{B}^{op} \to \mathsf{Cat}$,
with split fibrations corresponding to (regular) contravariant functors. We recall how this works in the split case, starting with the covariant version.

\begin{definition}\label{GROTHCONS DEF}
Given a small category $\mathcal{B}$ and functor $\mathcal{E}_{\bullet}$
\begin{equation}
\begin{tikzcd}[row sep = 0em]
	\mathcal{B} \ar{r}{\mathcal{E}_{\bullet}} & \mathsf{Cat} \\
	b \ar[mapsto]{r} & \mathcal{E}_b
\end{tikzcd}
\end{equation}
the \textit{covariant Grothendieck construction}
$\mathcal{B} \ltimes \mathcal{E}_{\bullet}$ (over $\mathcal B$)
\index{Grothendieck fibrations!GrothendieckConstruction@$\mathcal{B} \ltimes \mathcal{E}_{\bullet}$}
has objects pairs $(b,e)$ with $b \in \mathcal{B}$,
$e \in \mathcal{E}_b$ and 
arrows $(b,e) \to (b',e')$ given by pairs
\[(f\colon b \to b', g \colon f_{\**}(e) \to e'),\]
where $f_{\**}\colon \mathcal{E}_b \to \mathcal{E}_{b'}$ is a shorthand for the functor $\mathcal{E}_{\bullet}(f)$.

Note that the chosen pushforward of $(b,e)$ along 
$f \colon b \to b'$ is then $(b',f_{\**} e)$.

Further, for a contravariant functor
$\mathcal{E}_{\bullet} \colon
\mathcal{B}^{op} \to \mathsf{Cat}$,
the \textit{contravariant Grothendieck construction} is
$(\mathcal{B}^{op} \ltimes 
\mathcal{E}_{\bullet}^{op})^{op}$
(over $\mathcal B$).
\end{definition}

One useful property of Grothendieck fibrations
$\pi \colon \mathcal{E} \to \mathcal{B}$
is that right Kan extensions can be computed using fibers, i.e., 
given a functor $F \colon \mathcal{E} \to \mathcal{V}$ into a complete category $\mathcal{V}$ one has
\begin{equation}\label{FIBERKAN EQ}
	\mathsf{Ran}_{\pi}F (b)
\simeq
	\mathsf{lim} F{|_{b \downarrow \mathcal{E}}}
\simeq
	\mathsf{lim} F|_{\mathcal{E}_b}
\end{equation}
where the first identification is the usual pointwise formula for Kan extensions (cf. \cite[X.3 Thm. 1]{McL}),
and the second identification follows by noting that,
due to the existence of cartesian arrows,
the fibers $\mathcal{E}_b$ are initial (in the sense of \cite[IX.3]{McL})
in the undercategories $b \downarrow \mathcal{E}$.
In fact, a little more is true: a choice of cartesian arrows 
yields a right adjoint to the inclusion
$\mathcal{E}_b \hookrightarrow b \downarrow \mathcal{E}$, so that $\mathcal{E}_b$ is a coreflexive subcategory of 
$b \downarrow \mathcal{E}$,
a well known sufficient condition for initiality.
In practice, we will also need a generalization of the Kan extension formula \eqref{FIBERKAN EQ} for maps of Grothendieck fibrations as in \eqref{GROTHFIBMAP EQ}.
Keeping the notation therein, 
given $\bar{e} \in \bar{\mathcal{E}}$ we write 
$\bar{e} \downarrow_{\mathcal{B}} \mathcal{E} \hookrightarrow
\bar{e} \downarrow \mathcal{E}$
\index{Grothendieck fibrations!undercategory@$\bar{e} \downarrow_{\mathcal{B}} \mathcal{E} \hookrightarrow \bar{e} \downarrow \mathcal{E}$}
for the full subcategory of those pairs 
$\left(e,f \colon \bar{e} \to \delta(e)\right)$
such that $\bar{\pi}(f) = id_{\bar{\pi}(\bar{e})}$.

\begin{proposition}\label{FIBERKANMAP PROP}
	Given a map of Grothendieck fibrations
	as in \eqref{GROTHFIBMAP EQ},
	each subcategory $\bar{e} \downarrow_{\mathcal{B}} \mathcal{E}$
	for $\bar{e} \in \bar{\mathcal{E}}$
	is an initial subcategory of $\bar{e} \downarrow \mathcal{E}$
	and hence for each functor 
	$\mathcal{E} \to \mathcal{V}$
	with $\mathcal{V}$ complete one has
\begin{equation}\label{FIBERKANMAP EQ}
	\mathsf{Ran}_{\delta}F (\bar{e})
\simeq
	\mathsf{lim} F{|_{\bar{e} \downarrow \mathcal{E}}}
\simeq
	\mathsf{lim} F|_{\bar{e} \downarrow_{\mathcal{B}} \mathcal{E}}.
\end{equation}	
\end{proposition}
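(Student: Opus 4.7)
The plan is to show that the inclusion $\bar{e} \downarrow_\pi \mathcal{E} \hookrightarrow \bar{e} \downarrow \mathcal{E}$ admits a right adjoint, making it a coreflexive subcategory; initiality (and hence the limit identification) then follows from the well-known sufficient condition already invoked in the text for the single-fibration case.

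First, I would construct the right adjoint explicitly. Given an object $(e, f\colon \bar{e} \to \delta(e))$ of $\bar{e} \downarrow \mathcal{E}$, set $b = \bar\pi(\bar{e})$, $b' = \pi(e) = \bar\pi(\delta(e))$, and let $\phi = \bar\pi(f)\colon b \to b'$. Because $\pi$ is a split fibration I can form the pullback $\phi^{\ast} e \to e$ in $\mathcal{E}$, whose image $\delta(\phi^{\ast} e) \to \delta(e)$ is cartesian in $\bar{\mathcal{E}}$ (again lifting $\phi$) since $\delta$ preserves cartesian arrows. The arrow $f\colon \bar e \to \delta(e)$ also lifts $\phi$, so its universal factorization through this cartesian arrow produces a unique $\tilde f\colon \bar{e} \to \delta(\phi^{\ast} e)$ lifting $\mathrm{id}_b$. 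The pair $(\phi^{\ast} e, \tilde f)$ then lies in $\bar{e} \downarrow_\pi \mathcal{E}$, and comes equipped with a canonical map to $(e,f)$ given by the cartesian arrow $\phi^{\ast} e \to e$. I will take this assignment to be the candidate right adjoint.

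Next I would verify adjointness. Given $(e', f') \in \bar{e} \downarrow_\pi \mathcal{E}$, so $\pi(e') = b$ and $f'$ lifts $\mathrm{id}_b$, a morphism in $\bar{e} \downarrow \mathcal{E}$ to $(e,f)$ is a map $g\colon e'\to e$ in $\mathcal{E}$ with $\delta(g) \circ f' = f$; taking $\bar\pi$ forces $\pi(g) = \phi$. The cartesian property of $\phi^{\ast} e \to e$ in $\mathcal{E}$ then yields a unique $\hat g\colon e' \to \phi^{\ast} e$ lifting $\mathrm{id}_b$ whose composite with $\phi^{\ast} e \to e$ is $g$. To see that $\hat g$ defines a morphism in $\bar{e} \downarrow_\pi \mathcal{E}$, i.e. $\delta(\hat g)\circ f' = \tilde f$, note that both sides lift $\mathrm{id}_b$ and agree after composition with the cartesian arrow $\delta(\phi^{\ast}e) \to \delta(e)$ (yielding $f$), so uniqueness in the universal property of that cartesian arrow forces equality. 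This establishes the required bijection.

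Finally, coreflexivity implies initiality (see \cite[IX.3]{McL} as cited), and the pointwise formula for right Kan extensions gives the first identification in (\ref{FIBERKANMAP EQ}), while initiality gives the second. The main obstacle is keeping track of which arrows lift which base morphisms; once the bookkeeping is set up correctly the argument is a direct double application of the cartesian lift property, first in $\mathcal{E}$ (to produce $\hat g$) and then in $\bar{\mathcal{E}}$ (to enforce compatibility with $\tilde f$), using crucially that $\delta$ preserves cartesian arrows.
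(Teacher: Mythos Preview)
Your proposal is correct and follows essentially the same approach as the paper's proof: construct the right adjoint to the inclusion $\bar{e}\downarrow_\pi\mathcal{E}\hookrightarrow\bar{e}\downarrow\mathcal{E}$ via the assignment $(e,f)\mapsto(\bar\pi(f)^{\ast}e,\tilde f)$ and invoke coreflexivity. The paper compresses the verification of adjointness into a single ``one readily checks'', whereas you spell out the two uses of the cartesian lifting property (once in $\mathcal{E}$ to produce $\hat g$, once in $\bar{\mathcal{E}}$ to identify $\delta(\hat g)\circ f'$ with $\tilde f$), but the argument is identical.
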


\begin{proof}
One readily checks that the assignment
$
	(e,f\colon\bar{e} \to \delta(e))
\mapsto
	\left(
	\pi(f)^{\**}e, \bar{e} \to \delta \pi(f)^{\**}(e)
	\right)
$
(where $\delta \pi(f)^{\**} = \bar{\pi}^{\**}(f) \delta$) is  right adjoint to the inclusion
$\bar{e} \downarrow_{\mathcal{B}} \mathcal{E} \hookrightarrow
\bar{e} \downarrow \mathcal{E}$, so that the claim follows by coreflexivity 
(note that if we are not in the split case, pullbacks may be chosen arbitrarily).
\end{proof}



We also record the following, the proof of which is straightforward.

\begin{proposition}\label{GROTHSTAB PROP}
	Suppose that $\mathcal{E} \to \mathcal{B}$ is a (split) Grothendieck fibration. Then so is the map of functor categories 
	$\mathcal{E}^{\mathcal{C}} \to \mathcal{B}^{\mathcal{C}}$ for any category $\mathcal{C}$,
        as well as the map 
	$\bar{\mathcal{E}} \to \bar{\mathcal{B}}$ in any pullback of categories
        \[
              \begin{tikzcd}
                    \bar{\mathcal{E}} \ar{r} \ar{d} & \mathcal{E} \ar{d}
                    \\
                    \bar{\mathcal{B}} \ar{r} & \mathcal{B}.
              \end{tikzcd}
        \]	
\end{proposition}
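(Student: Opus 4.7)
The plan is to verify the two claims separately; the functor-category statement requires the bulk of the work, while the pullback statement is essentially formal.

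For $\mathcal{E}^{\mathcal{C}} \to \mathcal{B}^{\mathcal{C}}$, I would start with a natural transformation $\alpha \colon F' \to F$ in $\mathcal{B}^{\mathcal{C}}$ and a functor $G \colon \mathcal{C} \to \mathcal{E}$ lifting $F$ (i.e.\ with $\pi G = F$), and construct a cartesian lift of $\alpha$ ending at $G$. Pointwise, each $\alpha_c \colon F'(c) \to F(c)$ has a chosen cartesian lift $p_c \colon \alpha_c^{\**} G(c) \to G(c)$, so I set $G'(c) := \alpha_c^{\**} G(c)$. To promote $G'$ to a functor, note that for every arrow $c \to c'$ in $\mathcal{C}$, the composite $G'(c) \to G(c) \to G(c')$ sits over $F(c \to c') \circ \alpha_c = \alpha_{c'} \circ F'(c \to c')$ by naturality of $\alpha$; the cartesian property of $p_{c'}$ then produces a unique arrow $G'(c) \to G'(c')$ over $F'(c \to c')$. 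Uniqueness forces both identity- and composition-compatibility, so $G'$ is indeed a functor and the components $p_c$ assemble into a natural transformation $p \colon G' \to G$. Cartesianness of $p$ in $\mathcal{E}^{\mathcal{C}}$ is checked levelwise: a candidate factorization in $\mathcal{E}^{\mathcal{C}}$ is determined pointwise, and each pointwise factorization exists uniquely by cartesianness of $p_c$; the naturality of the resulting components follows again from a uniqueness argument in the cartesian property at each $c'$. In the split case, taking $G'(c) := \alpha_c^{\**} G(c)$ and $G'(c \to c')$ as the unique lift above produces a split structure, with strict compatibility with composition and identities inherited from the split structure on $\pi$.

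For the pullback square, given $f \colon \bar{b}' \to \bar{b}$ in $\bar{\mathcal{B}}$ and $\bar{e} \in \bar{\mathcal{E}}$ over $\bar{b}$, let $e$ and $b$ denote the images of $\bar{e}$ and $\bar{b}$ in $\mathcal{E}$ and $\mathcal{B}$, let $f_0$ denote the image of $f$ in $\mathcal{B}$, and take the cartesian lift $f_0^{\**} e \to e$. Since $\bar{\mathcal{E}} = \bar{\mathcal{B}} \times_{\mathcal{B}} \mathcal{E}$, the pair $(\bar{b}', f_0^{\**} e)$ defines an object $f^{\**} \bar{e} \in \bar{\mathcal{E}}$ together with a canonical arrow $f^{\**} \bar{e} \to \bar{e}$ lifting $f$. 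Cartesianness of this arrow in $\bar{\mathcal{E}}$ reduces directly to cartesianness of $f_0^{\**} e \to e$ in $\mathcal{E}$, combined with the universal property of the pullback of categories applied to any factorization problem in $\bar{\mathcal{E}}$. Splitness is preserved for the same reason, as the chosen pullbacks in $\mathcal{E}$ transport along the pullback square.

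The main (minor) obstacle is the functoriality verification for $G'$: one must be careful that the pointwise cartesian lifts indeed cohere, but this is guaranteed by the uniqueness clause in the definition of a cartesian arrow, applied to the two sides of any composition triangle in $\mathcal{C}$.
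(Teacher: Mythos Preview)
Your proof is correct and is precisely the standard argument the paper has in mind; the paper itself omits the proof entirely, recording only that it is ``straightforward.'' Your pointwise construction of the cartesian lift in $\mathcal{E}^{\mathcal{C}}$ and the componentwise reduction for the pullback square are exactly the expected verifications.
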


\subsection{Wreath product over finite sets}
\label{WREATH SEC}

Throughout we will let $\Fin$\index{wreath products!FIN@$\mathsf{F}$}
denote the usual skeleton of the category of (ordered) finite sets and all set maps. Explicitly, its objects are the finite sets $\{1,2,\cdots,n\}$ for $n\geq 0$.

\begin{definition}\index{wreath products!FIN@$\mathsf{F}\wr (-)$}
	For a category $\C$, we write 
	$\Fin \wr \C = (\Fin^{op} \ltimes (\C^{op})^{\times \bullet})^{op}$ 
	for the contravariant Grothendieck construction (cf. Definition \ref{GROTHCONS DEF}) of the functor
\[
\begin{tikzcd}[row sep=0pt]
	\Fin^{op} \ar{r} & \mathsf{Cat}
\\
	I \ar[r,mapsto] & \C^{\times I}
\end{tikzcd}	
 \]
Explicitly, the objects of $\Fin \wr \C$ are tuples $(c_i)_{i \in I}$ and a map 
$(c_i)_{i \in I} \to (d_j)_{j \in J}$ consists of a pair 
\[(\phi \colon I \to J, (f_i\colon c_i \to d_{\phi(i)})_{i\in I}),\]
 henceforth abbreviated as $(\phi,(f_i))$.
\end{definition}

\begin{remark}\label{WREATHFIXED REM}
Let $(c_i)_{i \in I} \in \Fin \wr \mathcal{C}$
and write $\lambda$ for the partition 
$I = \lambda_1 \amalg \cdots \amalg \lambda_k$
such that $1 \leq i_1, i_2 \leq n$ are in the same class iff
$c_{i_1}, c_{i_2} \in \mathcal{C}$ are isomorphic.
Writing 
$\Sigma_{\lambda} = \Sigma_{\lambda_1} \times \cdots \times
\Sigma_{\lambda_k}$
and picking representatives $i_j \in \lambda_j$,
the automorphism group of  
$(c_i)_{i \in I}$ is given by
\begin{equation}
	\mathsf{Aut}\left( (c_i)_{i \in I} \right)
\simeq
	\Sigma_{\lambda} \wr \prod_{i} \mathsf{Aut}(c_i)
\simeq
	\Sigma_{|\lambda_1|} \wr 
	\mathsf{Aut}(c_{i_1})
		\times \cdots \times	
	\Sigma_{|\lambda_k|} \wr 
	\mathsf{Aut}(c_{i_k}).
\end{equation}
\end{remark}

\begin{notation}
\index{wreath products!DELTAI@$\delta^i$}
\index{wreath products!SIGMAI@$\sigma^i$}
      \label{FIN_COA_COS_NOT}
Using the coproduct functor $\Fin^{\wr 2} = \Fin^{\wr \{{0,1\}}} =\Fin \wr \Fin \xrightarrow{\amalg} \Fin$ (where $\coprod_{i\in I} J_i$ is ordered lexicographically) and the singleton $\{1\} \in \Fin$,
one can regard the collection of categories 
$\Fin^{\wr n+1 }\wr \C = \Fin^{\wr \{0,\cdots,n\}} \wr \C $ for $n \geq -1$
 as a coaugmented cosimplicial object in $\mathsf{Cat}$.
As such, we will denote by
\[
	\delta^i\colon \Fin^{\wr n } \wr \C \to \Fin^{n+1} \wr \C, \qquad 0 \leq i \leq n
\]
the cofaces obtained by inserting singletons $\{1\} \in \Fin$ and by 
\[
	\sigma^i \colon \Fin^{n+2} \wr \C \to \Fin^{n+1} \wr \C, \qquad 0 \leq i \leq n
\]
the codegeneracies obtained by applying the coproduct 
$\Fin^{\wr 2} \xrightarrow{\amalg} \Fin$ to adjacent 
$\Fin$ coordinates.

Further, note that there are identifications
$\Fin \wr \delta^{i} = \delta^{i+1}$, 
$\Fin \wr \sigma^{i} = \sigma^{i+1}$.
\end{notation}

\begin{remark}
	If $\mathcal{V}$ has all finite coproducts then injections and fold maps assemble into a functor as on the left below.
Dually, if $\mathcal{V}$ has all finite products then projections and diagonals assemble into a functor as on the right.
\begin{equation}\label{WREATHPROD EQ}
\begin{tikzcd}[row sep=0pt]
	\Fin \wr \mathcal{V} \ar{r}{\coprod} & \mathcal{V} & &
	(\Fin \wr \mathcal{V}^{op})^{op} \ar{r}{\prod} & \mathcal{V}
\\
	(v_i)_{i \in I} \ar[mapsto]{r} & \coprod_{i \in I}{v_i} & &
	(v_i)_{i \in I} \ar[mapsto]{r} & \prod_{i \in I}{v_i}
\end{tikzcd}
\end{equation}

Moreover, these functors satisfy a number of additional coherence conditions.
Firstly, there is a natural isomorphism $\alpha$ as on the left below
\begin{equation}\label{COHER EQ}
\begin{tikzcd}
	\Fin^{\wr 2} \wr \mathcal{V} 
	\ar{r}{\Fin \wr \coprod} \ar{d}[swap]{\sigma^0}&
	\Fin \wr \mathcal{V} \ar{r}{\coprod} &
	|[alias=Vt]|
	\mathcal{V} \ar[equal]{d}
& &
	\mathcal{V} \ar{d}[swap]{\delta^0} \ar[equal]{rd}
\\
	|[alias=FV]|
	\Fin \wr \mathcal{V} \ar{rr}[swap]{\coprod} &&
	\mathcal{V}
& &
	\Fin \wr \mathcal{V} \ar{r}[swap]{\coprod} &
	\mathcal{V}	
\arrow[Leftrightarrow, from=Vt, to=FV,shorten <=0.10cm,shorten >=0.10cm,"\alpha"]
\end{tikzcd}
\end{equation}
that encodes both reparenthesizing of coproducts and removal of initial objects 
(note that the empty tuple $()_{i \in \emptyset}\in \Fin \wr \mathcal{V}$ is mapped under $\coprod$ to an initial object of $\mathcal{V}$). Additionally, we are free to assume that the triangle on the right of \eqref{COHER EQ} strictly commutes, i.e. 
that ``unary coproducts'' of singletons $(v)$ are given simply by $v$ itself.
The transformation $\alpha$ is then associative in the sense that the composite natural isomorphisms between the two functors
$\Fin^{\wr 3} \wr \mathcal{V} \to \mathcal{V}$
in the diagrams below coincide.
\begin{equation}\label{COHER2 EQ}
\begin{tikzcd}
	\Fin^{\wr 3} \wr \mathcal{V} \ar{d}[swap]{\sigma^0} 
	\ar{r}{\Fin^{\wr 2} \wr \coprod} \ar{d}[swap]{\sigma^0}&
	\Fin^{\wr 2} \wr \mathcal{V} \ar{r}{\Fin \wr \coprod}
	\ar{d}[swap]{\sigma^0}&
	\Fin \wr \mathcal{V} \ar{r}{\coprod} &
	|[alias=Vtt]|
	\mathcal{V} \ar[equal]{d}
&
	\Fin^{\wr 3} \wr \mathcal{V} \ar{d}[swap]{\sigma^0} 
	\ar{r}{\Fin^{\wr 2} \wr \coprod} &
	\Fin^{\wr 2} \wr \mathcal{V} \ar{r}{\Fin \wr \coprod}&
	|[alias=Vtt2]|
	\Fin \wr \mathcal{V} \ar{r}{\coprod} \ar[equal]{d}&
	\mathcal{V} \ar[equal]{d}
\\
	\Fin^{\wr 2} \wr \mathcal{V} 
	\ar{r}{\Fin \wr \coprod} \ar{d}[swap]{\sigma^1}&
	|[alias=FVt]|
	\Fin \wr \mathcal{V} \ar{rr}{\coprod} &&
	|[alias=Vt]|
	\mathcal{V} \ar[equal]{d}
&
	|[alias=FFV]|	
	\Fin^{\wr 2} \wr \mathcal{V} 
	\ar{rr}{\Fin \wr \coprod} \ar{d}[swap]{\sigma^0}&&
	\Fin \wr \mathcal{V} \ar{r}{\coprod} &
	|[alias=Vt2]|
	\mathcal{V} \ar[equal]{d}
\\
	|[alias=FV]|
	\Fin \wr \mathcal{V} \ar{rrr}[swap]{\coprod} &&&
	\mathcal{V}
&
	|[alias=FV2]|
	\Fin \wr \mathcal{V} \ar{rrr}[swap]{\coprod} &&&
	\mathcal{V}
\arrow[Leftrightarrow, from=Vt, to=FV,shorten <=0.10cm,shorten >=0.10cm,"\alpha"]
\arrow[Leftrightarrow, from=Vtt, to=FVt,shorten <=0.10cm,shorten >=0.10cm,"\alpha"]
\arrow[Leftrightarrow, from=Vt2, to=FV2,shorten <=0.10cm,shorten >=0.10cm,"\alpha"]
\arrow[Leftrightarrow, from=Vtt2, to=FFV, shorten <=0.10cm,shorten >=0.10cm,"\Fin \wr \alpha"]
\end{tikzcd}
\end{equation}
Similarly, $\alpha$ is unital in the sense that the diagrams below commute or, more precisely,
the composite natural transformation in either diagram is the identity for the functor 
$\coprod \colon \Fin \wr \mathcal{V} \to \mathcal{V}$.
\begin{equation}\label{COHER3 EQ}
\begin{tikzcd}
	\Fin \wr \mathcal{V} \ar{d}[swap]{\delta^0} \ar{r}{\coprod}&
	\mathcal{V} \ar{d}[swap]{\delta^0} \ar[equal]{r}&
	\mathcal{V} \ar[equal]{d}
& &
	\Fin \wr \mathcal{V} \ar{d}[swap]{\delta^1} \ar[equal]{r}&
	\Fin \wr \mathcal{V}\ar[equal]{d} \ar{r}{\coprod}&
	\mathcal{V} \ar[equal]{d}
\\
	\Fin^{\wr 2} \wr \mathcal{V} 
	\ar{r}{\Fin \wr \coprod} \ar{d}[swap]{\sigma^0}&
	\Fin \wr \mathcal{V} \ar{r}{\coprod} &
	|[alias=Vt]|
	\mathcal{V} \ar[equal]{d}
& &
	\Fin^{\wr 2} \wr \mathcal{V} 
	\ar{r}{\Fin \wr \coprod} \ar{d}[swap]{\sigma^0}&
	\Fin \wr \mathcal{V} \ar{r}{\coprod} &
	|[alias=Vt2]|
	\mathcal{V} \ar[equal]{d}
\\
	|[alias=FV]|
	\Fin \wr \mathcal{V} \ar{rr}[swap]{\coprod} &&
	\mathcal{V}
& &
	|[alias=FV2]|
	\Fin \wr \mathcal{V} \ar{rr}[swap]{\coprod} &&
	\mathcal{V}
\arrow[Leftrightarrow, from=Vt, to=FV,shorten <=0.10cm,shorten >=0.10cm,"\alpha"]
\arrow[Leftrightarrow, from=Vt2, to=FV2,shorten <=0.10cm,shorten >=0.10cm,"\alpha"]
\end{tikzcd}
\end{equation}
\end{remark}

\begin{remark}
\label{SIGMA_WR_REM}
More generally, if $\mathcal{V}$ is an arbitrary
symmetric monoidal category, one has a functor 
$\Sigma \wr \mathcal{V} \xrightarrow{\otimes} \mathcal{V}$
(where as usual $\Sigma \hookrightarrow \Fin$ denotes the skeleton of finite sets and isomorphisms) satisfying the obvious analogues of
\eqref{COHER EQ}, \eqref{COHER2 EQ}, \eqref{COHER3 EQ},
as is readily shown using the standard coherence results for symmetric monoidal categories 
(moreover, we note that $\alpha$ itself encodes all associativity, unital and symmetry isomorphisms, with the 
right side of \eqref{COHER EQ} and \eqref{COHER3 EQ}
being mere common sense desiderata for ``unary products'').

It is likely no surprise that the converse is also true, i.e. 
that a functor 
$\Sigma \wr \mathcal{V} \xrightarrow{\otimes} \mathcal{V}$
satisfying the analogues of 
\eqref{COHER EQ}, \eqref{COHER2 EQ}, \eqref{COHER3 EQ}
endows $\mathcal{V}$ with a symmetric monoidal structure.
We will however have no direct need to use this fact, and as such include only a few pointers concerning the associativity pentagon axiom (the hardest condition to check) that the interested reader may find useful. 
Firstly, it becomes convenient to write expressions such as
$(A \otimes B) \otimes C$ instead as 
$(A \otimes B) \otimes (C)$, so as to encode notationally the fact that this is the image of 
$((A,B),(C)) \in \Sigma^{\wr 2} \wr \mathcal{V}$ under the top map in \eqref{COHER EQ}. The associativity isomorphisms are hence given by the composites
$
(A \otimes B) \otimes (C) \xrightarrow{\simeq} 
A \otimes B \otimes C \xleftarrow{\simeq}
(A) \otimes (B \otimes C)
$
obtained by combining 
$\alpha_{((A,B),(C))}$ 
and
$\alpha_{((A),(B,C))}$.
The pentagon axiom is then checked by combining \textit{six} instances of each of the squares in \eqref{COHER2 EQ} 
(i.e. twelve squares total), most of which are obvious except for the fact that the $(A\otimes B) \otimes (C \otimes D)$ vertex of the pentagon contributes two pairs of squares 
as in \eqref{COHER2 EQ} rather than just one, 
with each pair corresponding to the two alternate expressions 
$((A \otimes B)) \otimes ((C) \otimes (D))$ and 
$((A) \otimes (B)) \otimes ((C \otimes D))$.
\end{remark}

\begin{remark}\label{FINSURJ REM}
	\index{wreath products!FINS@$\mathsf{F}_s$}
	\index{wreath products!FINSWR@$\mathsf{F}_s \wr (-)$}
	In light of the two previous remarks,
	and writing $\Fin_s \hookrightarrow \Fin$ 
	for the subcategory of surjections,
	we define a 
	\textit{symmetric monoidal category with fold maps}
	as a category $\mathcal{V}$ together with a functor
	$\Fin_s \wr \mathcal{V} \xrightarrow{\otimes} \mathcal{V}$
	satisfying the analogues of  
	\eqref{COHER EQ}, \eqref{COHER2 EQ}, \eqref{COHER3 EQ}.
	Further, the dual of such $\mathcal{V}$ is called a 
	\textit{symmetric monoidal category with diagonals}\footnote{
	These have also been called \textit{relevant monoidal categories} \cite{DP07}.}.
	
	Similarly, replacing $\Fin_s$ with the subcategory
$\Fin_i \hookrightarrow \Fin$ of injections yields the notion of a \textit{symmetric monoidal category with injection maps} or, dually, \textit{symmetric monoidal category with projections}\footnote{
These are equivalent to \textit{semicartesian symmetric monoidal categories} \cite{Lei16}.}.

Finally, we note that if a symmetric monoidal category has both diagonals and projections, it must in fact be \textit{cartesian monoidal} \cite[IV.2]{EK66}.
\end{remark}

\begin{remark}
	Extending Notation \ref{FIN_COA_COS_NOT}, one sees that 
	$\Fin \wr (\minus)$, 
	$\Fin_i \wr (\minus)$,
	$\Fin_s \wr (\minus)$,
	$\Sigma \wr (\minus)$
define monads in the category of categories.
%
\end{remark}

We end this section by collecting some straightforward lemmas
that will be used in \S \ref{GENUINE_OP_MONAD_SECTION}.

\begin{lemma}\label{FWRGROTH LEM}
	If $\mathcal{E} \to \mathcal{B}$ a (split) Grothendieck fibration then so is 
	$\Fin_s \wr \mathcal{E} \to \Fin_s \wr \mathcal{B}$.

	Moreover, if 
	$\mathcal{E} \to \bar{\mathcal{E}}$ is a map of (split) Grothendieck fibrations over $\mathcal{B}$ then
	$\Fin_s \wr \mathcal{E} \to \Fin_s \wr \bar{\mathcal{E}}$ is a map of (split) Grothendieck fibrations over $\Fin_s \wr \mathcal{B}$.
\end{lemma}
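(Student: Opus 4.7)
The plan is to unpack the explicit description of $\Fin \wr \mathcal{E}$ and reduce everything to the cartesian lifts already provided in $\mathcal{E}$. Given a map
$(\phi,(f_i)) \colon (b_i)_{i\in I} \to (b'_j)_{j \in J}$
in $\Fin \wr \mathcal{B}$ (so $\phi \colon I \to J$ and $f_i \colon b_i \to b'_{\phi(i)}$) and an object $(e'_j)_{j \in J} \in \Fin \wr \mathcal{E}$ lying over $(b'_j)$, I will declare the candidate pullback to be
\[
(\phi,(f_i))^{\**} (e'_j)_{j \in J} \;=\; \bigl(f_i^{\**} e'_{\phi(i)}\bigr)_{i \in I},
\]
with cartesian arrow $(\phi,(\tilde f_i))$ whose components $\tilde f_i \colon f_i^{\**} e'_{\phi(i)} \to e'_{\phi(i)}$ are the chosen cartesian lifts of $f_i$ in $\mathcal{E}$.

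The main step is then to verify the universal property. Given solid data
$(\psi,(g_k)) \colon (e''_k)_{k \in K} \to (e'_j)_{j \in J}$ in $\Fin \wr \mathcal{E}$ together with a factorization $(\psi,(h_k)) = (\phi,(f_i)) \circ (\alpha,(\ell_k))$ of its image in $\Fin \wr \mathcal{B}$, the underlying map of indexing sets is forced to be $\alpha \colon K \to I$, and for each $k \in K$ one needs a unique lift $\tilde \ell_k \colon e''_k \to f_{\alpha(k)}^{\**} e'_{\phi \alpha(k)} = f_{\alpha(k)}^{\**} e'_{\psi(k)}$ of $\ell_k$ such that $\tilde f_{\alpha(k)} \circ \tilde \ell_k = g_k$. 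This is exactly the universal property of the cartesian arrow $\tilde f_{\alpha(k)}$ in $\mathcal{E}$, applied componentwise; in the split case the identifications $g^{\**} f^{\**} = (fg)^{\**}$ and $\mathrm{id}^{\**} = \mathrm{id}$ in $\mathcal{E}$ lift verbatim to $\Fin \wr \mathcal{E}$, so the chosen pullbacks remain strictly functorial.

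For the second assertion, given a map $\delta \colon \mathcal{E} \to \bar{\mathcal{E}}$ of (split) Grothendieck fibrations over $\mathcal{B}$, the induced functor $\Fin \wr \delta$ sends $(\phi,(\tilde f_i))$ to $(\phi,(\delta \tilde f_i))$. Since $\delta$ preserves cartesian arrows componentwise (and in the split case preserves the chosen pullbacks on the nose), this is again cartesian by the componentwise criterion established above. Hence $\Fin \wr \delta$ is a map of (split) Grothendieck fibrations over $\Fin \wr \mathcal{B}$.

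The only genuinely non-formal ingredient is the componentwise reasoning for the universal property, and I expect no real obstacle there: the wreath structure factors cleanly through the base $\Fin$, so the cartesian lift in the total category is determined by the base map $\phi$ and the componentwise cartesian lifts, with no cross-interaction between distinct indices.
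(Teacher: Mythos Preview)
Your proposal is correct and follows exactly the same approach as the paper: define the pullback of $(e'_j)_{j\in J}$ along $(\phi,(f_i))$ componentwise as $(f_i^{\**} e'_{\phi(i)})_{i\in I}$, and verify the cartesian property indexwise. The paper's proof is a one-liner stating only this formula and leaving the verification to the reader, so you have simply filled in the details the paper omits.
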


\begin{proof}
Given a map 
$(\phi,(f_i)) \colon
(b'_i)_{i \in I} \to (b_j)_{j \in J}$ 
in $\Fin \wr \mathcal{B}$ and object $(e_j)_{j \in J}$,
one readily checks that its pullback can be defined by $(f^{\**}_{\phi(i)}e_{\phi(i)})_{i \in I}$.
\end{proof}

\begin{lemma}\label{FINWREATPRODLIM LEM}
Suppose that $\mathcal{V}$ is a bicomplete monoidal category with fold maps such that
the monoidal product 
commutes with limits in each variable. If the leftmost diagram
\begin{equation}\label{WRRAN EQ}
	\begin{tikzcd}[column sep = 3.5em]
	\mathcal{C} \ar{r}[swap,name=F]{}{G} \ar{d}[swap]{k} &
	\mathcal{V} & 
	& 
	\Fin_s \wr \mathcal{C} \ar{r}[swap,name=FF]{}{\Fin_s \wr G} \ar{d}[swap]{\Fin_s \wr k}&
	\Fin_s \wr \mathcal{V} \ar{r}{\otimes} &
	\mathcal{V}
		\\
	|[alias=D]|\mathcal{D} \ar{ru}[swap]{H} &
	& & 
	|[alias=FD]|\Fin_s \wr \mathcal{D} \ar{ru}[swap]{\Fin_s \wr H}
	\ar[bend right=13]{rru}[swap]{\otimes \circ \Fin_s \wr H}
	\arrow[Rightarrow, from=D, to=F,shorten <=0.10cm,"\eta"]
	\arrow[Rightarrow, from=FD, to=FF,shorten <=0.10cm,"\Fin_s \wr \eta"]
	\end{tikzcd}
\end{equation}
is a right Kan extension diagram then so is the composite of the rightmost diagram. 

Dually, if $\mathcal{V}$ has diagonals,
the monoidal product 
commutes with colimits in each variable, and the leftmost diagram
\begin{equation}\label{WRLAN EQ}
	\begin{tikzcd}[column sep = 4.5em]
	\mathcal{C}^{op} \ar{r}[swap,name=F]{}{G} \ar{d}[swap]{k^{op}} & 
	\mathcal{V} & 
	(\Fin_s \wr \mathcal{C})^{op} \ar{d}[swap]{(\Fin_s \wr k)^{op}} 
	\ar{r}[swap,name=FF]{}{(\Fin_s \wr G^{op})^{op}} & 
	(\Fin_s \wr \mathcal{V}^{op})^{op} \ar{r}{\otimes} &
	\mathcal{V}
\\
	|[alias=D]|\mathcal{D}^{op} \ar{ru}[swap]{H} &
	& 
	|[alias=FD]|(\Fin_s \wr \mathcal{D})^{op} 
	\ar{ru}[swap]{(\Fin_s \wr H^{op})^{op}}
	\ar[bend right=13]{rru}[swap]{\otimes \circ (\Fin_s \wr H^{op})^{op}}
	&
	\arrow[Leftarrow, from=D, to=F,shorten <=0.10cm,"\epsilon"]
	\arrow[Leftarrow, from=FD, to=FF,shorten <=0.10cm]
	\end{tikzcd}
\end{equation}
is a left Kan extension diagram,
then so is the composite of the rightmost diagram. 
\end{lemma}

\begin{proof}
	Unpacking definitions using the pointwise formula for Kan extensions (cf. \cite[X.3 Thm. 1]{McL} or \eqref{FIBERKAN EQ}), the claim concerning \eqref{WRRAN EQ} amounts to showing that,
	for each $(d_i) \in \Fin_s \wr \mathcal{D}$,
	one has natural isomorphisms
	\begin{equation}\label{POINTKAN EQ}
	\underset{((d_i) \to (kc_j))\in
	\left( (d_i) \downarrow \Fin_s \wr \C \right) }{\lim} {\left(\bigotimes_j{G(c_j)}\right)}
		\simeq	
	\bigotimes_i \underset{(d_i  \to kc_i) \in 
		(d_i \downarrow \C)}{\lim}
	\left(G(c_i)\right).
	\end{equation}
Proposition \ref{FIBERKANMAP PROP} now applies to 
the map $\Fin_s \wr \mathcal{C} \to \Fin_s \wr \mathcal{D}$ of Grothendieck fibrations over $\Fin_s$ and one readily checks that
$(d_i)\downarrow_{\Fin_s} \Fin_s \wr \mathcal{C} \simeq
\prod_{i}{(d_i\downarrow \mathcal{C})}
$
so that 
	\[
	\underset{((d_i) \to (kc_j))\in
	\left( (d_i)\downarrow \Fin_s \wr \C \right) }{\lim} {\left(\bigotimes_j{G(c_j)}\right)}
		\simeq	
	\underset{(d_i \to kc_i)\in
	\prod_{i} \left( {d_i \downarrow \mathcal{D}} \right)}{\lim}
	\left(\bigotimes_i{G(c_i)}\right)
	\]
and the isomorphisms \eqref{POINTKAN EQ} now follow from the assumption that the monoidal product commutes with limits in each variable.
\end{proof}

\begin{remark}
      The previous results also hold if we replace $\Fin_s$ with $\Fin$, $\Fin_i$, $\Sigma$.
\end{remark}

\subsection{Monads and adjunctions}


In \S 4 we will make use of the following straightforward results concerning the transfer of monads along adjunctions
(note that $L$ (resp. $R$) denotes the left (right) adjoint).

\begin{proposition}\label{MONADADJ1 PROP}
Let
$
L \colon \mathcal{C} \rightleftarrows \mathcal{D} \colon R
$
be an adjunction and $T$ a monad on $\mathcal{D}$.
Then:
\begin{itemize}
\item[(i)] $RTL$ is a monad and $R$ induces a functor
$R \colon \mathsf{Alg}_T(\mathcal{D}) \to \mathsf{Alg}_{RTL}(\mathcal{C})$;
\item[(ii)] if $LRTL \xrightarrow{\epsilon} TL$ is an isomorphism one further has an induced adjunction
\[
L \colon \mathsf{Alg}_{RTL}(\mathcal{C})
	\rightleftarrows
\mathsf{Alg}_{T}(\mathcal{D}) \colon R.
\]
\end{itemize}
\end{proposition}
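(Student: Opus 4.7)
For part (i), I would construct the monad structure on $RTL$ by the standard recipe for transferring monads across adjunctions. Writing $\eta, \epsilon$ for the unit and counit of $L \dashv R$ and $\eta^T, \mu^T$ for the unit and multiplication of $T$, set
\[
  \eta^{RTL} \colon \mathrm{Id}_{\mathcal{C}} \xrightarrow{\eta} RL \xrightarrow{R \eta^T L} RTL,
  \qquad
  \mu^{RTL} \colon RTLRTL \xrightarrow{RT \epsilon TL} RTTL \xrightarrow{R \mu^T L} RTL.
\]
Associativity and unitality of $\mu^{RTL}$ then reduce, via naturality of $\epsilon$ and the triangle identities of the adjunction, to the monad axioms for $T$; these are routine diagram chases. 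For the functor on algebras, given $(X,\alpha \colon TX \to X) \in \mathsf{Alg}_T(\mathcal{D})$, equip $RX$ with the $RTL$-action
\[
  RTLRX \xrightarrow{RT \epsilon_X} RTX \xrightarrow{R \alpha} RX,
\]
and check the two algebra axioms in the same spirit (unit axiom uses a single triangle identity, associativity uses naturality of $\epsilon$ combined with the associativity of $\alpha$).

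For part (ii), the extra hypothesis gives a natural isomorphism $\epsilon TL \colon LRTL \xrightarrow{\simeq} TL$, which I would use to lift $L$ to algebras. Given $(C, \beta \colon RTLC \to C) \in \mathsf{Alg}_{RTL}(\mathcal{C})$, define a $T$-action on $LC$ by
\[
  TLC \xleftarrow[\simeq]{(\epsilon TL)_C} LRTLC \xrightarrow{L \beta} LC.
\]
The unit axiom for this $T$-action follows from the unit axiom of $\beta$ together with the triangle identity $\epsilon L \circ L \eta = \mathrm{id}_L$; the associativity axiom follows from naturality of $\epsilon$, the definition of $\mu^{RTL}$, and the associativity axiom for $\beta$, where the crucial point is that one can rewrite $LRT\epsilon TL$ using naturality so that the inverted copies of $\epsilon TL$ cancel appropriately.

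To promote this assignment to a left adjoint of the functor $R$ from (i), I would show that the unit $\eta_C \colon C \to RLC$ is automatically a map of $RTL$-algebras (with $LC$ carrying the above $T$-algebra structure, so that $RLC$ carries its induced $RTL$-structure) and, dually, that the counit $\epsilon_X \colon LRX \to X$ is a map of $T$-algebras; both checks reduce to naturality squares and triangle identities. The triangle identities for the lifted adjunction then follow immediately from those of $L \dashv R$, since the forgetful functors to $\mathcal{C}$ and $\mathcal{D}$ are faithful.

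The one place where care is genuinely required, rather than purely formal, is the associativity check for the lifted $T$-action on $LC$: one must carefully invert the isomorphism $\epsilon TL$ in the correct position and invoke naturality of $\epsilon$ to line up $LRT\epsilon TL$ with $T \epsilon TL$. Everything else is bookkeeping in commutative diagrams built from $\eta$, $\epsilon$, $\eta^T$, $\mu^T$, and the monad/algebra axioms.
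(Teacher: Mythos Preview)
Your proposal is correct and follows the standard route. The paper itself does not supply a proof of this proposition: it is introduced as one of ``the following straightforward results concerning the transfer of monads along adjunctions'' and stated without argument, so your sketch in fact goes beyond what the paper provides.
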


\begin{proposition}\label{MONADADJ PROP}
Let
$
L \colon \mathcal{C} \rightleftarrows \mathcal{D} \colon R
$
be an adjunction, $T$ a monad on $\mathcal{C}$, and suppose further that
\[
	LR \xrightarrow{\epsilon} id_{\mathcal{D}}, 
\qquad
	LT \xrightarrow{\eta} LTRL
\]
are natural isomorphisms 
(so that in particular $\mathcal{D}$ is a reflexive subcategory of $\mathcal{C}$).
Then:
\begin{itemize}
\item[(i)] $LTR$ is a monad, with multiplication and unit given by
\[LTRLTR \xrightarrow{\eta^{-1}} LTTR \to LTR,\qquad
id_{\mathcal{D}} \xrightarrow{\epsilon^{-1}} LR \to LTR;
\]
\item[(ii)]
$d \in \mathcal{D}$ is a $LTR$-algebra iff $Rd$ is a $T$-algebra;
\item[(iii)] there is an induced adjunction
\[
L \colon \mathsf{Alg}_{T}(\mathcal{C})
	\rightleftarrows
\mathsf{Alg}_{LTR}(\mathcal{D}) \colon R.
\]
\end{itemize}
\end{proposition}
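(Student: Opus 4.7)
The plan is to verify (i), (ii), (iii) in turn. Part (i) is the only piece that requires actual bookkeeping; (ii) and (iii) follow essentially formally from (i) together with the hypothesis that $\epsilon$ and $\eta$ are invertible.

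For (i), I would directly verify the associativity and unitality axioms for the candidate monad structure on $LTR$. For associativity, the two routes $LTRLTRLTR \rightrightarrows LTR$ each use two applications of $\eta^{-1}$ (to rewrite each adjacent $LTRL$ as $LT$) followed by the monad multiplication $\mu\colon LTTTR \to LTR$ of $T$ applied with a trailing $R$. Inverting the $\eta$'s, the claim reduces, by naturality of $\eta$, to the associativity of $T$ itself. The unit axioms are similar: the relevant diagram uses naturality of $\epsilon$ and/or $\eta$ to commute $\epsilon^{-1}$ past $LTR$, after which the axiom reduces to the unit law for $T$.

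For (ii), I would construct mutually inverse bijections between $LTR$-actions $LTRd \to d$ on an object $d\in\mathcal{D}$ and $T$-actions $TRd \to Rd$ on $Rd\in\mathcal{C}$. From an $LTR$-action, apply $R$ and precompose with the adjunction unit $TRd \to RLTRd$; from a $T$-action, apply $L$ and postcompose with $\epsilon_d\colon LRd\to d$. That these are mutually inverse, and that they convert the algebra axioms for one monad into the algebra axioms for the other, is a direct diagram chase using the triangle identities together with invertibility of $\epsilon$ and (implicitly, through the definition of the multiplication of $LTR$) invertibility of $\eta$. Part (iii) then follows formally: (ii) identifies the lifts of $L$ and $R$ to the algebra categories on underlying objects, and the unit and counit of $L\dashv R$ lift directly because they are compatible with the algebra structures exhibited by (ii).

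The main obstacle is just the bookkeeping in (i), specifically making sure each $\eta^{-1}$ is inserted in the correct position among the iterated $LTRL\cdots$ strings; no conceptual difficulty arises beyond recognizing that the hypotheses precisely encode the compatibility needed to transfer $T$ along the reflection $L\dashv R$.
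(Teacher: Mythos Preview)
The paper does not give a proof of this proposition; it is introduced as one of the ``straightforward results concerning the transfer of monads along adjunctions'' and stated without argument. Your proposal is a correct and standard way to supply the omitted details, and there is nothing in the paper to compare it against beyond the implicit claim that the verification is routine.

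One minor point: in (iii) you say that ``(ii) identifies the lifts of $L$ and $R$ to the algebra categories on underlying objects,'' but (ii) as stated only tells you that $R$ lifts. For the lift of $L$ you need to equip $Lc$ with an $LTR$-action directly, namely $LTRLc \xrightarrow{\eta^{-1}} LTc \xrightarrow{L\beta} Lc$ for $(c,\beta)$ a $T$-algebra, and then check separately that this is an algebra structure and that the unit $c \to RLc$ is a $T$-algebra map (the latter using the $T$-algebra structure on $RLc$ furnished by (ii)). This is still routine, but it is a small extra step beyond what (ii) literally provides.
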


Any monad $T$ on $\C$ induces obvious monads $T^{\times l}$ on $\C^{\times l}$.
More generally, and 
letting $I$ denote the identity monad,
a partition 
$\{1,\cdots,l\} = \lambda_a \amalg \lambda_i$,
which we denote by $\lambda$,
determines a monad 
$T^{\times \lambda} = T^{\times \lambda_a} \times I^{\times \lambda_i}$ on $\mathcal{C}^{\times l}$.
Here ``$a$'' stands for ``active'' and ``$i$'' for ``inert''.

Such monads satisfy a number of compatibility conditions. 
First, if $\lambda'_a \subseteq \lambda_a$
there is a monad map
$T^{\times \lambda'} \Rightarrow T^{\times \lambda}$,
and we write $\lambda' \leq \lambda$.
Next, writing $\alpha^{\**} \colon 
\C^{\times l} \to \C^{\times m}$
for the forgetful functor induced by
a map $\alpha \colon \{1,\cdots,m\} \to \{1,\cdots,l\}$,
one has an equality
$T^{\times \alpha^{\**}\lambda} \alpha^{\**} =
\alpha^{\**} T^{\times \lambda}$,
where $\alpha^{\**}\lambda$ is the pullback partition
$\alpha^{-1} \lambda_a \amalg \alpha^{-1} \lambda_i$.
The following is straightforward.

\begin{proposition}\label{MONADICFUN PROP}
	Suppose $\C$ has finite coproducts.
	Let $T$ be a monad on $\C$,
	$\alpha \colon \{1,\cdots,m\} \to \{1,\cdots,l\}$ be a map of sets,
	and $\lambda$ be a partition $\{1,\cdots,l\} = \lambda_a \amalg \lambda_i$.
	Write $\alpha_{!} \colon \mathcal{C}^{\times m} \to 
	\mathcal{C}^{\times l}$
	for the left adjoint to 
	$\alpha^{\**} \colon \mathcal{C}^{\times l} \to 
	\mathcal{C}^{\times m}$.
	Then the map
\begin{equation}\label{MONADFUNCTORALPHA EQ}
	T^{\times \alpha^{\**} \lambda} \Rightarrow \alpha^{\**} T^{\times \lambda} \alpha_{!}
\end{equation}
adjoint to the identity 
$T^{\times \alpha^{\**}\lambda} \alpha^{\**} =
\alpha^{\**} T^{\times \lambda}$
is a map of monads on $\C^{\times m}$.

Hence, since $T^{\times \lambda} \alpha_!$ is a right 
$\alpha^{\**} T^{\times \lambda}\alpha_{!}$-module\footnote{
	Recall that a right (resp. left) module
	over a monad $T$ on $\mathcal{C}$
	is a functor $M \colon \mathcal{C} \to \mathcal{D}$
	(resp. $N \colon \mathcal{D} \to \mathcal{C}$)
	together with an action map $M \circ T \Rightarrow M$
	(resp. $T \circ N \Rightarrow N$)
	that is suitably associative and unital.}, 
it is also a right $T^{\times \lambda'}$-module
whenever
$\lambda' \leq \alpha^{\**} \lambda$.
Finally, the natural map 
\begin{equation}\label{MONADFUNCTORALPHADOU EQ}
	\alpha_{!} T^{\times \alpha^{\**} \lambda} \Rightarrow  T^{\times \lambda} \alpha_{!}
\end{equation}
is a map of right $T^{\times \alpha^{\**} \lambda}$-modules, 
and thus also a map of right 
$T^{\times \lambda'}$-modules
whenever $\lambda' \leq \alpha^{\**} \lambda$.
\end{proposition}

\begin{remark}\label{PRECOMPPOSTCOMP REM}
We unpack 
\eqref{MONADFUNCTORALPHADOU EQ} for 
$\alpha \colon \{1,\cdots,m\} \to \**$ a map
to the singleton $\**$
with the partition making $\**$ active.
We can then write
$\alpha_{!} = \coprod$,
so that 
\eqref{MONADFUNCTORALPHADOU EQ} becomes
$\coprod \circ T^{\times m} 
\Rightarrow 
T \circ \coprod$.
For $\lambda$ any partition 
$\{1,\cdots,m\} = \lambda_a \amalg \lambda_i$
we thus have a map 
$\coprod \circ T^{\times \lambda} 
\Rightarrow 
T \circ \coprod$
between right $T^{\times \lambda}$-modules
which,
for each collection $\left( A_j \right)_{1\leq j \leq m}$ in $\mathcal{C}^{\times m}$,
gives commutative diagrams
\begin{equation}\label{RIGHTMODULETMAPAUX EQ}
\begin{tikzcd}
	\coprod_{j \in \lambda_a} TT A_j \amalg \coprod_{j \in \lambda_i} A_j
	\ar{r} \ar{d} &
	T\left( \coprod_{j \in \lambda_a} T A_j \amalg \coprod_{j \in \lambda_i} A_j \right) \ar{d}
\\
	\coprod_{j \in \lambda_a} T A_j \amalg \coprod_{j \in \lambda_i} A_j
	\ar{r} &
		T\left( \coprod_{j \in \lambda_a} A_j \amalg \coprod_{j \in \lambda_i} A_j \right)
\end{tikzcd}
\end{equation}
where the vertical maps
come from the right $T^{\times \lambda}$-module structure.
Writing $\mathbin{\check{\amalg}}$ for the coproduct of $T$-algebras and recalling the canonical identifications 
$\mathbin{\check{\coprod}}_{k \in K} (T A_k) \simeq T
\left( \coprod_{k \in K} A_k \right)$, 
\eqref{RIGHTMODULETMAPAUX EQ} shows that the 
right $T^{\times \lambda}$-module structure on $T \circ \coprod$
codifies the multiplication maps
\[
\mathbin{\check{\coprod}}_{j \in \lambda_a} TT A_j 
	\mathbin{\check{\amalg}} 
\mathbin{\check{\coprod}}_{j \in \lambda_i} T A_j
	\to
\mathbin{\check{\coprod}}_{j \in \lambda_a} T A_j 
	\mathbin{\check{\amalg}}  
\mathbin{\check{\coprod}}_{j \in \lambda_i} T A_j.
\] 
\end{remark}

\section{Planar and tall maps, and substitution}\label{PLANAR_SECTION}

Throughout, we will assume that the reader is familiar with the category $\Omega$ of trees.
A good introduction to $\Omega$ is given by 
\cite[\S 3]{MW07}, where arrows are described both via 
the ``colored operad generated by a tree''  and by identifying explicit generating arrows, called faces and degeneracies.
Alternatively, $\Omega$ can also be described 
using the algebraic model of 
\textit{broad posets}
introduced by Weiss in \cite{We12} and further worked out by the second author in \cite[\S 5]{Pe17}.
This latter approach will be our ``official'' model,
though a detailed understanding of broad posets is needed only
to follow our formal discussion of planar structures in \S \ref{PLASTR SEC}.
Otherwise, the reader willing to accept the results of \S \ref{PLASTR SEC} should 
need only an intuitive grasp of the notations 
$\underline{e} \leq e$,
$f \leq_d e$ and $e^{\uparrow}$
to read the remainder of the paper.
Such understanding can be obtained from 
\cite[Example 5.10]{Pe17}
and Example \ref{PLANAREX EX} below 
(see also Example \ref{OUTERTREE EX}).

Given a finite group $G$, there is also a category $\Omega_G$
\index{categories!of trees!Gtrees1@$\Omega_G$}
of $G$-trees, jointly discovered by the authors and first discussed by the second author in 
\cite[\S 4.3,\S 5.3]{Pe17}, which we now recall.
Firstly, we let $\Phi$
\index{categories!of trees!GtreesForest@$\Phi = \Fin \wr \Omega$}
denote the category of forests, 
i.e. ``formal coproducts of trees''.
A broad poset description of $\Phi$ is found in \cite[\S 5.2]{Pe17},
but here we prefer the alternative definition $\Phi = \Fin \wr \Omega$.
The category of $G$-forests is then 
$\Phi^G$, i.e. the category of $G$-objects in $\Phi$. 
Similarly, writing
$\Fin^G$ for the category of $G$-objects in $\Fin$ and
identifying the $G$-orbit category as the subcategory
$\mathsf{O}_G \hookrightarrow \Fin^G$
\index{categories!other!OrbitG@$\mathsf O_G$}
of those sets with transitive actions, $\Omega_G$ can be described
by the pullback of categories
\begin{equation}\label{OGDEF EQ}
\begin{tikzcd}
	\Omega_G \ar{r} \ar{d}[swap]{\mathsf{r}} & 
	\Phi^G \arrow{d}{\mathsf{r}}
\\
	\mathsf{O}_G \ar{r} & \Fin^G
\end{tikzcd}
\end{equation}
(where $\mathsf{r} \colon \Phi \to \Fin$ is the \emph{root functor},
\index{key functors!Gtreesleafroot0@$\mathsf{r}$}
sending a forest to its set of roots),
which is a repackaging of \cite[Def. 5.44]{Pe17}.
Explicitly, a $G$-tree $T$ is then a tuple 
$T = (T_x)_{x \in X}$ with $X \in \mathsf{O}_G$
together with isomorphisms
$T_x \to T_{g x}$ that are suitably associative and unital.

\subsection{Planar structures}\label{PLASTR SEC}

The specific model for the orbit category $\mathsf{O}_G$
used in \eqref{OGDEF EQ} has extra structure not found in the usual model (i.e. that of the $G$-sets $G/H$ for $H \leq G$),
namely the fact that each $X \in \mathsf{O}_G$
comes with a canonical total order 
(the underlying set of $X$ being one of the sets $\{1,\cdots,n\}$).

We will find it convenient to use a model of $\Omega$ with similar extra structure, given by planar structures on trees.
Intuitively, a planar structure on a tree is the data of a planar representation of the tree, and 
definitions of \textit{planar trees} along those lines
are found throughout the literature.
However, to allow for precise proofs of some key results 
concerning the interaction of planar structures with the maps in $\Omega$ 
(namely Propositions \ref{PLANARPULL PROP},  \ref{SUBDATAUNDERPLAN PROP})
we will instead use a combinatorial definition 
of planar structures in the context of broad posets.

In what follows a tree will be a 
\textit{dendroidally ordered broad poset}
as in \cite{We12}, \cite[Def. 5.9]{Pe17}.

\begin{definition}\label{PLANARIZE DEF}
      Let $T \in \Omega$ be a tree. A \textit{planar structure} of $T$ is an extension of the descendancy partial order $\leq_d$ to a total order $\leq_p$ such that:
      \index{structure on trees!relations!Gtreesorderplanar@$\leq_p$}
      \index{structure on trees!relations!Gtreesorderdescendancy@$\leq_d$}
	\begin{itemize}
		\item \textit{Planar}: if $e \leq_p f$ and $e \nleq_d f$ then 
		$g \leq_d f$ implies $e \leq_p g$.
          \end{itemize}          
\end{definition}

\begin{example}\label{PLANAREX EX}
An example of a planar structure on a tree $T$ follows, with $\leq_p$ encoded by the hexadecimal number labels
(so that $9<a<b<c<d$).
\[
	\begin{tikzpicture}[grow=up,auto,level distance=2.1em,
	every node/.style = {font=\footnotesize,inner sep=2pt},
	dummy/.style={circle,draw,inner sep=0pt,minimum size=1.75mm}]
		\node[font=\normalsize] at (0,0) {$T$}
			child{node [dummy] {}
				child[sibling distance = 9em]{node [dummy] {}
					child[sibling distance = 2.5em]{
					edge from parent node [near end,swap] {$b$}}
					child[level distance=2.5em]{
					edge from parent node [very near end,swap] {$a$}}				
					child[sibling distance = 2.3em]{
					edge from parent node [near end] {$9$}}
				edge from parent node [swap] {c}}
				child[level distance =2.5em]{
				edge from parent node [swap] {$8$}}
				child[sibling distance = 8em]{node [dummy] {}
					child[sibling distance =3em, level distance = 1.5 em]{node [dummy] {}
					edge from parent node [swap] {$6$}}
					child[sibling distance = 1.5em]{node [dummy] {}
						child[sibling distance =1em]{
						edge from parent node [swap,near end] {$4$}}
						child[sibling distance =1em]{
						edge from parent node [near end] {$3$}}
					edge from parent node [very near end,swap] {$5$}}
					child[sibling distance =1.5em]{node [dummy] {}
					edge from parent node [very near end] {$2$}}
					child[sibling distance =3em,level distance =1.5em]{node [dummy] {}
					edge from parent node {$1$}}
				edge from parent node {$7$}}
			edge from parent node [swap] {$d$}};
	\end{tikzpicture}
\]
Intuitively, given a planar depiction of a tree $T$, $e \leq_d f$ holds when the downward path from $e$ passes through $f$.
For example, $3 \leq_d 7$ but $7 \not \leq_d 9$.
On the other hand, $e \leq_p f$ holds if either
$e \leq_d f$ or if the downward path from $e$ is to the left of the downward path from $f$ (as measured at the node where the paths intersect).

For each edge $e$ topped by a vertex, the notation $e^{\uparrow}$ denotes the tuple of edges immediately above $e$.
In our example, 
$d^{\uparrow} = 78c$,
$7^{\uparrow} = 1256$,
$2^{\uparrow} = \epsilon$ 
(where $\epsilon$ is the empty tuple),
and $9^{\uparrow}$ is undefined.
The vertex above $e$ is then encoded by the \emph{broad relation}
$e^{\uparrow} \leq e$\index{structure on trees!relations!Broadvert@$e^{\uparrow} \leq e$}.

The broad relation notation is meant to suggest a form of 
\emph{broad associativity}. For example,
$78c \leq d$ and $1256 \leq 7$ combine to yield
$12568c \leq d$,
which
in turn combines with $\epsilon \leq 2$
to yield $1568c \leq d$.
The broad relations of $T$ are those relations that are obtained from the vertex relations $e^{\uparrow} \leq e$
via broad transitivity, together with reflexive relations
$e \leq e$.
Pictorially, a relation 
$\underline{e} \leq e$\index{structure on trees!relations!Broadvert@$\underline{e} \leq e$} holds 
if there is an outer subtree
(i.e. a tree subdiagram 
which contains all edges of $T$ adjacent to its vertices;
see \S \ref{OUTTALL SEC} for a rigorous discussion)
with leaf tuple 
$\underline{e}$ and root $e$.
For an illustration, see Example \ref{OUTERTREE EX}.
\end{example}

It is visually clear that a planar depiction of a tree amounts to choosing a total order for each of the sets of \textit{input edges} of each node (i.e. those edges immediately above that node).

While we will not need to make this last statement precise, we will nonetheless find it convenient to show that our Definition \ref{PLANARIZE DEF} of planarity is equivalent to such choices of total orders for each of the sets of input edges.
To do so, we first introduce some notation.

\begin{notation}\label{INPUTPATH NOT}
	Let $T \in \Omega$ be a tree and $e \in T$ an edge. We will denote
	\[ I(e) =\{f \in T \colon e \leq_d f \} \]
        and refer to this poset as the \textit{input path of $e$}.
        \index{structure on trees!other!inputpath@$I(e)$}
\end{notation}

We will repeatedly use the following, which is a consequence of \cite[Cor. 5.25]{Pe17}.

\begin{lemma}\label{INCOMPNOTOP}
If $e \leq_d f$, $e \leq_d f'$, then $f,f'$ are $\leq_d$-comparable. 
\end{lemma}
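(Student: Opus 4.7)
The plan is essentially to point out that this lemma is a direct translation of \cite[Cor. 5.26]{Pe17} into the present notation, so the proof reduces to identifying the correspondence rather than doing any new work.

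First, I would rephrase the conclusion in terms of the notation $I(e) = \{f \in T : e \leq_d f\}$ introduced in Notation \ref{INPUTPATH NOT} immediately below: the statement that any $f, f'$ with $e \leq_d f$ and $e \leq_d f'$ are $\leq_d$-comparable is precisely the assertion that $I(e)$ is totally ordered by $\leq_d$, i.e.\ forms a chain. Thus the content of the lemma is that the input path $I(e)$ is a $\leq_d$-chain for every edge $e$.

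Second, I would invoke \cite[Cor. 5.26]{Pe17}, which in the broad-poset model of $\Omega$ establishes exactly this chain property of $I(e)$. Geometrically this is the familiar fact that the ancestors of any edge lie on a single descent path from $e$ to the root; at the level of broad posets it is a consequence of the dendroidal axiom, which guarantees that each non-root edge $e$ has a \emph{unique} immediate $\leq_d$-successor, namely the edge sitting directly below the node whose input tuple contains $e$. Iterating this successor operation starting at $e$ produces a finite linear sequence $e <_d e_1 <_d e_2 <_d \cdots <_d e_N$ ending at the root, and any $f \in I(e)$ must appear in this sequence.

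The only genuine obstacle is verifying that the broad-poset axioms force this ``unique successor'' property (so that $I(e)$ collapses to a single chain), but this is precisely what \cite[Cor. 5.26]{Pe17} carries out; given that reference the lemma is immediate, and no further argument is required here.
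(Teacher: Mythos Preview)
Your proposal is correct and matches the paper's approach exactly: the paper does not give a standalone proof but simply states that the lemma ``is a consequence of \cite[Cor. 5.26]{Pe17}.'' Your additional unpacking in terms of $I(e)$ and the unique-successor intuition is helpful elaboration, but the logical content is identical.
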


\begin{proposition}\label{INPUTPATHS PROP}
	Let $T \in \Omega$ be a tree. Then
	\begin{itemize}
		\item[(a)] for any $e \in T$ the finite poset $I(e)$ is totally ordered;
		\item[(b)] the poset $(T,\leq_d)$ has all joins, denoted $\vee$. In fact, $\bigvee_{i} e_i = \min (\bigcap_{i} I(e_i))$.
	\end{itemize}
\end{proposition}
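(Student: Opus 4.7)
The plan is to use Lemma \ref{INCOMPNOTOP} as the sole nontrivial input and otherwise proceed by direct order-theoretic manipulation in a finite setting.

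For part (a), I would argue directly from the lemma: given any $f, f' \in I(e)$, the definition of $I(e)$ yields $e \leq_d f$ and $e \leq_d f'$, so Lemma \ref{INCOMPNOTOP} immediately asserts that $f$ and $f'$ are $\leq_d$-comparable. Hence $\leq_d$ restricts to a total order on $I(e)$, which is finite since $T$ is.

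For part (b), I would fix a nonempty family $\{e_i\}$ and consider the intersection $J := \bigcap_i I(e_i)$. Each $I(e_i)$ is totally ordered by part (a), so the subposet $J$ is totally ordered as well, and as a finite nonempty totally ordered set it has a minimum $m$ with respect to $\leq_d$. It then remains to check that $m = \bigvee_i e_i$: the membership $m \in I(e_i)$ gives $e_i \leq_d m$ for every $i$, so $m$ is an upper bound for the $e_i$; and any other upper bound $m'$ satisfies $e_i \leq_d m'$ for all $i$, whence $m' \in J$ and therefore $m \leq_d m'$ by minimality of $m$.

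The only step that requires an outside ingredient is the nonemptiness of $J$, which I expect to be the main (and only mild) obstacle. It follows from the existence of a root edge in $T$, built into the broad poset axioms defining $\Omega$: since the root lies $\leq_d$-above every edge, it belongs to each $I(e_i)$ and hence to $J$. Apart from confirming that this feature of the broad poset model is already available at this point in the paper, the argument amounts to unpacking the definitions once Lemma \ref{INCOMPNOTOP} is in hand.
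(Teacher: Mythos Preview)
Your proposal is correct and follows essentially the same approach as the paper. The paper's own proof is terse---``(a) is immediate from Lemma~\ref{INCOMPNOTOP}'' and for (b) ``$\min(\bigcap_i I(e_i))$ exists by (a), and this is clearly the join''---while you spell out the upper-bound and least-upper-bound verifications and make the nonemptiness of the intersection (via the root) explicit, but the underlying argument is identical.
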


\begin{proof}
	(a) is immediate from Lemma \ref{INCOMPNOTOP}.
	To prove (b) we note that
	the root edge is in every input path.
	Hence $\min (\bigcap_{i} I(e_i))$ exists by (a), 
	and this is clearly the join $\bigvee_i {e_i}$.
\end{proof}

\begin{notation}
	Let $T \in \Omega$ be a tree and suppose that $e <_d b$. We will denote by $b^{\uparrow}_e \in T$ the predecessor of $b$ in $I(e)$.
\end{notation}

\begin{proposition}\label{INPUTPREDECESSORPROP PROP}
Suppose $e,f$ are $\leq_d$-incomparable edges of $T$ and write $b= e \vee f$. Then
\begin{itemize}
\item [(a)] $e <_d b$, $f<_d b$ and $b^{\uparrow}_e \neq b^{\uparrow}_f$;
\item [(b)] $b^{\uparrow}_e, b^{\uparrow}_f \in b^{\uparrow}$. In fact $\{b^{\uparrow}_e\} = I(e) \cap b^{\uparrow}$,
$\{b^{\uparrow}_f\} = I(f) \cap b^{\uparrow}$;
\item[(c)] if $e' \leq_d e$, $f' \leq_d f$ then 
$b = e' \vee f'$ and $b^{\uparrow}_{e'} = b^{\uparrow}_{e}$, $b^{\uparrow}_{f'} = b^{\uparrow}_{f}$.
\end{itemize}
\end{proposition}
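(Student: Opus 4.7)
My plan is to establish the three parts in order, with (a) and (b) being largely bookkeeping once the right observations are isolated, and (c) containing the substantive argument.

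For part (a), I would observe that $e,f$ being $\leq_d$-incomparable forces $e,f \neq b$, so $e <_d b$, $f <_d b$ and the predecessors $b^{\uparrow}_e$, $b^{\uparrow}_f$ exist. For distinctness, a common predecessor $c = b^{\uparrow}_e = b^{\uparrow}_f$ would lie in $I(e) \cap I(f)$, making $c$ an upper bound of $\{e,f\}$ strictly below $b$ and contradicting $b$ being the join. For part (b), the essential input is the \emph{interval-closure} of $I(e)$ under $\leq_d$ via transitivity: if $e \leq_d c \leq_d d$ then $e \leq_d d$. This immediately yields $b^{\uparrow}_e \in b^{\uparrow}$, since any edge strictly between $b^{\uparrow}_e$ and $b$ would lie in $I(e)$ and contradict $b^{\uparrow}_e$ being the immediate predecessor of $b$ in the totally ordered set $I(e)$. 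For the identity $\{b^{\uparrow}_e\} = I(e) \cap b^{\uparrow}$, any $c$ in the intersection must coincide with $b^{\uparrow}_e$ by maximality of the predecessor together with the adjacency condition imposed by $c \in b^{\uparrow}$.

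For part (c), the bulk of the work is showing $b = e' \vee f'$. That $b$ is an upper bound is immediate from $e' \leq_d e \leq_d b$ and $f' \leq_d f \leq_d b$. To prove minimality, I would take an arbitrary upper bound $c$ of $\{e',f'\}$ and argue $e \leq_d c$ (and symmetrically $f \leq_d c$). Suppose for contradiction that $e \not\leq_d c$: then $c,e \in I(e')$ (since $e' \leq_d c$ and $e' \leq_d e$), so Proposition \ref{INPUTPATHS PROP}(a) forces $c <_d e$. But then $f' \leq_d c <_d e$ places $e$ in $I(f')$ together with $f$, and the total ordering of $I(f')$ would make $e,f$ comparable, contradicting the hypothesis. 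Hence $e,f \leq_d c$ and so $b = e \vee f \leq_d c$.

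The predecessor identities then drop out. I would first verify that $e', f'$ are themselves incomparable: if, say, $e' \leq_d f'$, then $e' \leq_d f' \leq_d f$ places $f \in I(e')$ alongside $e$, and total ordering of $I(e')$ would make $e,f$ comparable, a contradiction. Thus part (b) applies to the pair $(e',f')$, giving $\{b^{\uparrow}_{e'}\} = I(e') \cap b^{\uparrow}$. The inclusion $I(e) \subseteq I(e')$ (from $e' \leq_d e$ by transitivity) then yields $b^{\uparrow}_e \in I(e) \cap b^{\uparrow} \subseteq I(e') \cap b^{\uparrow} = \{b^{\uparrow}_{e'}\}$, so $b^{\uparrow}_e = b^{\uparrow}_{e'}$, and symmetrically for $f$. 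The main obstacle is the minimality argument in (c): it requires simultaneously juggling total orderings on the two input paths $I(e')$ and $I(f')$ against the incomparability of $e,f$, which is the one place in the proposition where the reasoning is not purely formal.
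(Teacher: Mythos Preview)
Your proof is correct and follows essentially the same approach as the paper: parts (a) and (b) are argued identically, and in (c) both proofs derive a contradiction by forcing $e,f$ to become $\leq_d$-comparable via the total ordering of an appropriate input path. The only cosmetic difference is that the paper reduces (c) to the case $e'=e$ and then shows the set equality $I(e)\cap I(f')=I(e)\cap I(f)$, whereas you argue directly that any upper bound of $\{e',f'\}$ is already an upper bound of $\{e,f\}$; these are equivalent packagings of the same contradiction.
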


\begin{proof}
(a) is immediate: the condition $e = b$ (resp. $f = b$) would imply $f \leq_d e$ (resp. $e \leq_d f$)
while the condition $b^{\uparrow}_e = b^{\uparrow}_f$ would provide a predecessor of $b$ in $I(e) \cap I(f)$. 

For (b), note that any relation $a <_d b$ factors as 
$a \leq_d b^{\star}_a <_d b$ for some unique $b^{\**}_a \in b^{\uparrow}$, where uniqueness follows from Lemma \ref{INCOMPNOTOP}. Choosing $a=e$ implies $I(e) \cap b^{\uparrow} = \{b^{\**}_e\}$ and letting $a$ range over edges such that $e \leq_d a <_d b$ shows that $b^{\**}_e$ is in fact the predecessor of $b$ in $I(e)$.

To prove (c) one reduces to the case $e'=e$, in which case it suffices to check $I(e) \cap I(f') = I(e) \cap I(f)$. But if it were otherwise there would exist an edge $a$ satisfying
$f' \leq_d a <_d f$ and $e \leq_d a$, and this would imply $e \leq_d f$, contradicting our hypothesis.
\end{proof}

\begin{proposition}
\label{TERNARYJOIN PROP}
Let $c = e_1 \vee e_2 \vee e_3$.
Then $c = e_i \vee e_j$ iff $c^{\uparrow}_{e_i} \neq c^{\uparrow}_{e_j}$.

Therefore, all ternary joins in $(T,\leq_d)$ are binary, i.e.
\begin{equation}\label{TERNJOIN EQ}
	c = e_1 \vee e_2 \vee e_3 = e_i \vee e_j
\end{equation}
for some $1\leq i <j \leq 3$, and
\eqref{TERNJOIN EQ} fails for 
 at most one choice of $1\leq i <j \leq 3$.
\end{proposition}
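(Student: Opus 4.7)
The plan is to establish the biconditional $c = e_i \vee e_j \iff c^{\uparrow}_{e_i} \neq c^{\uparrow}_{e_j}$ and then deduce the ``at most one failure'' conclusion by a transitivity-of-equality argument applied to the three edges $c^{\uparrow}_{e_1}, c^{\uparrow}_{e_2}, c^{\uparrow}_{e_3}$.

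First I would reduce to the case $e_i <_d c$ for every $i$: if some $e_i = c$, then every other $e_j \leq_d c = e_i$, so the binary join statement is immediate and there is nothing further to check. Under this reduction each $c^{\uparrow}_{e_i}$ is well-defined, and one checks it lies in $c^{\uparrow}$: any edge $y$ with $c^{\uparrow}_{e_i} <_d y <_d c$ would automatically lie in $I(e_i)$ by transitivity of $\leq_d$, contradicting that $c^{\uparrow}_{e_i}$ is the immediate predecessor of $c$ in $I(e_i)$.

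For the biconditional, when $e_i, e_j$ are $\leq_d$-incomparable the forward direction is precisely Proposition \ref{INPUTPREDECESSORPROP PROP}(a). Conversely, if $x := c^{\uparrow}_{e_i} = c^{\uparrow}_{e_j}$, then $x \in I(e_i) \cap I(e_j)$ and $x <_d c$, so by Proposition \ref{INPUTPATHS PROP}(b) we have $e_i \vee e_j = \min(I(e_i) \cap I(e_j)) \leq_d x <_d c$, whence $c \neq e_i \vee e_j$. In the complementary case $e_i \leq_d e_j <_d c$, the inclusion $I(e_j) \subseteq I(e_i)$ of totally ordered chains forces the immediate predecessor of $c$ in $I(e_i)$ to lie in $I(e_j)$ (otherwise $e_j$ itself would violate its maximality), so $c^{\uparrow}_{e_i} = c^{\uparrow}_{e_j}$; this is consistent with $e_i \vee e_j = e_j \neq c$, so the biconditional holds in this case as well.

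With the biconditional in place, suppose for contradiction that two of the three pairs fail. After relabeling we may assume $c^{\uparrow}_{e_1} = c^{\uparrow}_{e_2}$ and $c^{\uparrow}_{e_1} = c^{\uparrow}_{e_3}$, so all three edges coincide at a common $x \in c^{\uparrow}$. Then $e_1, e_2, e_3 \leq_d x$ with $x <_d c$, so $x$ is a common upper bound strictly less than $c$, contradicting the characterization $c = \min \bigcap_i I(e_i)$ from Proposition \ref{INPUTPATHS PROP}(b). Hence at most one pair fails, and in particular $c$ equals some binary join $e_i \vee e_j$. The main obstacle is the bookkeeping for the comparable sub-case of the biconditional, since Proposition \ref{INPUTPREDECESSORPROP PROP} is phrased only for incomparable pairs; once that sub-case is isolated, everything else reduces to a three-element pigeonhole on $c^{\uparrow}$.
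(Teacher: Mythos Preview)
Your overall strategy matches the paper's, and your extra care with the degenerate case $e_i = c$ and with the $\leq_d$-comparable sub-case is fine. However, there is a logical slip in the incomparable case that leaves one direction of the biconditional unproven, and that missing direction is exactly the one your final paragraph relies on.

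Write the biconditional as $P \Leftrightarrow Q$ with $P \equiv (c = e_i \vee e_j)$ and $Q \equiv (c^{\uparrow}_{e_i} \neq c^{\uparrow}_{e_j})$. Your ``forward direction'' via Proposition~\ref{INPUTPREDECESSORPROP PROP}(a) establishes $P \Rightarrow Q$: assuming $c = e_i \vee e_j$, apply (a) with $b = c$. But your ``Conversely'' sentence then assumes $\neg Q$ (namely $c^{\uparrow}_{e_i} = c^{\uparrow}_{e_j}$) and deduces $\neg P$; this is again $P \Rightarrow Q$, now by contrapositive, so you have proved the same implication twice. The implication $Q \Rightarrow P$ is never shown. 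Yet it is precisely this direction, in the contrapositive form $\neg P \Rightarrow \neg Q$, that you invoke in the last paragraph when you pass from ``two of the three pairs fail'' to ``$c^{\uparrow}_{e_1} = c^{\uparrow}_{e_2}$ and $c^{\uparrow}_{e_1} = c^{\uparrow}_{e_3}$''.

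The missing direction is what the paper proves first and directly: if $c^{\uparrow}_{e_i} \neq c^{\uparrow}_{e_j}$, then any $y \in I(e_i)\cap I(e_j)$ with $y <_d c$ would satisfy $y \leq_d c^{\uparrow}_{e_i}$ and $y \leq_d c^{\uparrow}_{e_j}$, forcing (by Lemma~\ref{INCOMPNOTOP}) two distinct elements of $c^{\uparrow}$ to be $\leq_d$-comparable; hence $c = \min\bigl(I(e_i)\cap I(e_j)\bigr) = e_i \vee e_j$. With this one-line replacement for your ``Conversely'' sentence, the proof is complete and essentially identical to the paper's.
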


\begin{proof}
If $c^{\uparrow}_{e_i} \neq c^{\uparrow}_{e_j}$ then
$c = \min\left(I(e_i) \cap I(e_j)\right) = e_i \vee e_j$, whereas the converse follows from Proposition \ref{INPUTPREDECESSORPROP PROP}(a).
The ``therefore'' part follows by noting that 
$c^{\uparrow}_{e_1}$, $c^{\uparrow}_{e_2}$, $c^{\uparrow}_{e_3}$
can not all coincide, or else $c$ would not be the minimum of
$I(e_1) \cap I(e_2) \cap I(e_3)$. 
\end{proof}

\begin{example} In the following example $b = e \vee f$, $c = e \vee f \vee g$, $c^{\uparrow}_e= c^{\uparrow}_f =b$.
\[
	\begin{tikzpicture}[grow=up,auto,level distance=1.9em,
	every node/.style = {font=\footnotesize,inner sep=2pt},
	dummy/.style={circle,draw,inner sep=0pt,minimum size=1.75mm}]
		\node at (0,0) {}
			child{node [dummy] {}
				child[sibling distance = 10em]{node [dummy] {}
					child[sibling distance = 4.5em]{
					edge from parent node [swap] {$g$}}
					child[sibling distance = 4.5em]{node [dummy] {}}
				edge from parent node [swap] {$c_g^{\uparrow}$}}
				child[sibling distance = 10em]{node [dummy] {}
					child[sibling distance = 4em,level distance=1.7em]
					child[sibling distance = 1.5em,level distance=2.7em]{node [dummy] {}
						child[level distance=2.1em,sibling distance = 2.3em]{node [dummy] {}
						edge from parent node [near end,swap] {$f$}}		
						child[level distance=2.1em,sibling distance = 2.3em]{
						edge from parent node [near end] {}}
					edge from parent node [swap] {$b^{\uparrow}_{f}$}}
					child[sibling distance = 4em]{node [dummy] {}
						child[sibling distance =1.3em, level distance = 1.5 em]{node [dummy]  {}
						edge from parent node [swap] {}}
						child[sibling distance = 1.3em]{
						edge from parent node [very near end,swap] {}}
						child[sibling distance =1.3em]{
						edge from parent node [very near end] {$e$}}
					edge from parent node {$b^{\uparrow}_e$}}
				edge from parent node {$b$}}
			edge from parent node [swap] {$c$}};
	\end{tikzpicture}
\]
\end{example}

Given a set $S$ of size $n$ we write
$\textsf{Ord}(S) = \mathsf{Iso}(S,\{1,\cdots,n\})$. 
We will also abuse notation by regarding its objects as pairs $(S,\leq)$ where $\leq$ is a total order on $S$.

\begin{proposition}\label{PLANARIZATIONCHAR PROP}
	Let $T \in \Omega$ be a tree, 
	with $V(T)$ its set of vertices.
        \index{structure on trees!vertices!vertices@$V(T)$}
	There is a bijection
\[
	\begin{tikzcd}[row sep = 0em]
		\{\text{planar structures }(T,\leq_p)\} \ar{r}{\simeq} &
		\prod_{(a^{\uparrow} \leq a) \in V(T)} \mathsf{Ord}(a^{\uparrow}) \\
		\leq_p \ar[mapsto]{r} & (\leq_p|_{a^{\uparrow}})
	\end{tikzcd}	
\]
\end{proposition}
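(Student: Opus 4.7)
The plan is to show that the restriction map $\leq_p \mapsto (\leq_p|_{a^{\uparrow}})_{v \in V(T)}$ is a bijection; well-definedness is immediate since the restriction of a total order is a total order. Both directions will reduce to a Key Lemma comparing any $\leq_d$-incomparable pair via the ``local'' data at their join.

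\textbf{Key Lemma.} For $\leq_d$-incomparable $e, f \in T$ with join $b = e \vee f$, one has $e \leq_p f$ iff $b^{\uparrow}_e \leq_p b^{\uparrow}_f$. By Proposition \ref{INPUTPREDECESSORPROP PROP}(a) these predecessors are distinct elements of $b^{\uparrow}$, and hence $\leq_d$-incomparable (two distinct immediate inputs of a common vertex cannot lie on a common path to the root). For the forward direction, I assume $e \leq_p f$ and, toward contradiction, $b^{\uparrow}_f <_p b^{\uparrow}_e$: applying the planar axiom to $b^{\uparrow}_f \leq_p b^{\uparrow}_e$ with witness $g := e$ (valid since $e \leq_d b^{\uparrow}_e$) yields $b^{\uparrow}_f \leq_p e \leq_p f$; but $f \leq_d b^{\uparrow}_f$ gives $f \leq_p b^{\uparrow}_f$, whence $f = b^{\uparrow}_f$ by antisymmetry, and then $f = b^{\uparrow}_f \leq_p e \leq_p f$ forces $e = f$, contradicting incomparability. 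The reverse direction follows by applying the forward direction with $e, f$ swapped, combined with $b^{\uparrow}_e \neq b^{\uparrow}_f$.

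\textbf{Injectivity} of the restriction map is then immediate: if $\leq_p$ and $\leq_p'$ restrict to the same orders on each $a^{\uparrow}$, for any $e, f \in T$ the $\leq_p$- and $\leq_p'$-comparison coincide, being determined either by $\leq_d$ (if comparable) or by the common restriction to $b^{\uparrow}$ via the Key Lemma (otherwise). For \textbf{surjectivity}, given a family $(\leq_{a^{\uparrow}})_v$ I define $e \leq_p f$ iff $e \leq_d f$, or else $e, f$ are $\leq_d$-incomparable and $b^{\uparrow}_e \leq_{b^{\uparrow}} b^{\uparrow}_f$ where $b = e \vee f$. Reflexivity, totality, antisymmetry (using $b^{\uparrow}_e \neq b^{\uparrow}_f$), and extension of $\leq_d$ are immediate; the planar axiom is also routine, since if $e \leq_p f$ via the incomparable clause and $g \leq_d f$ with $e, g$ still $\leq_d$-incomparable, Proposition \ref{INPUTPREDECESSORPROP PROP}(c) yields $e \vee g = e \vee f = b$ and $b^{\uparrow}_g = b^{\uparrow}_f$, so $b^{\uparrow}_e \leq_{b^{\uparrow}} b^{\uparrow}_g$.

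\textbf{The main obstacle is transitivity.} Given $e \leq_p f \leq_p g$, the mixed cases (one relation given by $\leq_d$-comparability, the other by the incomparable clause) reduce via Proposition \ref{INPUTPREDECESSORPROP PROP}(c), which identifies the relevant joins and predecessors after replacing $f$ by the smaller of its two comparability partners. The remaining case, where both $\{e,f\}$ and $\{f,g\}$ are $\leq_d$-incomparable, is handled via the ternary join $c = e \vee f \vee g$ together with Proposition \ref{TERNARYJOIN PROP}: if all three binary joins equal $c$ the hypotheses translate to $c^{\uparrow}_e \leq c^{\uparrow}_f \leq c^{\uparrow}_g$ in the total order on $c^{\uparrow}$, and the conclusion follows; if exactly one binary join differs from $c$ and it is $\{e, f\}$ or $\{f, g\}$, the forced identification $c^{\uparrow}_e = c^{\uparrow}_f$ (resp.\ $c^{\uparrow}_f = c^{\uparrow}_g$) propagates the remaining hypothesis to $c^{\uparrow}_e \leq c^{\uparrow}_g$; finally, the configuration where $\{e, g\}$ is the failing pair is vacuous under the transitivity hypothesis, as it would force the contradictory chain $c^{\uparrow}_e \leq c^{\uparrow}_f \leq c^{\uparrow}_g = c^{\uparrow}_e$ with $c^{\uparrow}_e \neq c^{\uparrow}_f$. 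This completes transitivity, and hence the construction of the inverse.
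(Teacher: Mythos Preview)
Your proof is correct and follows essentially the same approach as the paper's: both establish the biconditional $e <_p f \Leftrightarrow b^{\uparrow}_e <_p b^{\uparrow}_f$ at the join $b = e \vee f$ (your Key Lemma, the paper's injectivity step), then construct the inverse by the same formula and verify the axioms, with transitivity as the main obstacle handled via Proposition~\ref{INPUTPREDECESSORPROP PROP}(c) for the mixed cases and Proposition~\ref{TERNARYJOIN PROP} for the doubly-incomparable case. The only cosmetic differences are that you prove the forward direction of the Key Lemma by contradiction (the paper derives the reverse direction directly from one application of the planar axiom plus transitivity), and that you split the paper's case (iii) into ``all three joins equal $c$'' versus ``$\{e,g\}$ fails'', showing the latter vacuous rather than deriving $e \vee g = c$ as a consequence.
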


\begin{proof}
We will keep the notation in Proposition \ref{INPUTPREDECESSORPROP PROP} throughout,
i.e. $e, f$ are $\leq_d$-incomparable edges and we write $b = e \vee f$. 

	We first show injectivity,
	i.e. that the restrictions $\leq_p|_{a^{\uparrow}}$ determine if 
	$e <_p f$ holds or not.
If $b^{\uparrow}_e <_p b^{\uparrow}_f$, the relations
$e \leq_d b^{\uparrow}_e <_p b^{\uparrow}_f \geq_d f$
and Definition \ref{PLANARIZE DEF} imply it must be $e <_p f$.
Dually, if $b^{\uparrow}_f <_p b^{\uparrow}_e$ then 
$f <_p e$. Thus 
$b^{\uparrow}_e <_p b^{\uparrow}_f \Leftrightarrow e <_p f$ and injectivity follows.

To check surjectivity, 
 it suffices (recall that $e,f$ are assumed $\leq_d$-incomparable) to check that
defining $e \leq_p f$ to hold iff $b^{\uparrow}_e < b^{\uparrow}_f$ holds in $b^{\uparrow}$ yields a planar structure.

Antisymmetry and the total order conditions are immediate, and it thus remains to check the transitivity and planar conditions.
Transitivity of $\leq_p$ in the case $e' \leq_d e <_p f$ and the planar condition, which is the case $e <_p f \geq_d f'$, follow from Proposition \ref{INPUTPREDECESSORPROP PROP}(c). Transitivity of $\leq_p$ in the case $e <_p f \leq_d f'$
follows since either $e \leq_d f'$ or else $e,f'$ are $\leq_d$-incomparable, in which case one can apply Proposition \ref{INPUTPREDECESSORPROP PROP}(c) with the roles of $f,f'$ reversed.

It remains to check transitivity in the hardest case, that of 
$e <_p f <_p g$ with $\leq_d$-incomparable $f,g$.
We write $c = e \vee f \vee g$.
By the ``therefore'' part of Proposition \ref{TERNARYJOIN PROP}, either:
\begin{enumerate*}
	\item[(i)] $e \vee f <_d c$, in which case 
	Proposition \ref{TERNARYJOIN PROP}
	implies 
	$c=e \vee g$,
	$c^{\uparrow}_e = c^{\uparrow}_f$ and transitivity follows;
	\item[(ii)] $f \vee g <_d c$, which follows just as (i);
	\item[(iii)]  
$e \vee f = f \vee g =c$, in which case 
$c^{\uparrow}_e <
c^{\uparrow}_f <
c^{\uparrow}_g $ in $c^{\uparrow}$
so that $c^{\uparrow}_e \neq c^{\uparrow}_g$ and by Proposition \ref{TERNARYJOIN PROP} it is also 
$c = e \vee g$ and transitivity follows.
\end{enumerate*}
\end{proof}

\begin{remark}\label{CLOSURE REM}
Proposition \ref{PLANARIZATIONCHAR PROP} states, in particular,
that $\leq_p$ is the closure of the $\leq_d$ relations  
and the $\leq_p$ relations within each $a^{\uparrow}$
under the planar condition in 
Definition \ref{PLANARIZE DEF}.
\end{remark}

The discussion of the substitution procedure in \S \ref{SUBS SEC} 
will be simplified by working with 
a model for the category $\Omega$
with exactly one representative
of each possible planar structure on each tree or, more precisely, a model where the only isomorphisms preserving the planar structure are the identities.
On the other hand, exclusively using such a model for $\Omega$ throughout would, among other issues, make the discussion of faces in \S \ref{OUTTALL SEC} rather awkward.
We now describe our conventions to address such issues.

Let $\Omega^p$ denote the category of \textit{planarized trees},
with objects pairs $T_{\leq_p}=(T,\leq_p)$ of trees together with a planar structure,
and morphisms \textit{underlying} maps of trees (i.e. ignoring the planar structures).
There is a full subcategory $\Omega^s \hookrightarrow \Omega^p$, whose objects we call \textit{standard models}, of those $T_{\leq_p}$ whose underlying set is one of the sets $\underline{n} = \{1,2,\cdots,n\}$ and for which $\leq_p$ coincides with the canonical order.

\begin{example}\label{STANDMODEL EX}
	Some examples of standard models, i.e. objects of $\Omega^s$, follow (further, Example \ref{PLANAREX EX} can also be interpreted as such an example).
\[
	\begin{tikzpicture}[grow=up,auto,level distance=2.1em,
	every node/.style = {font=\footnotesize,inner sep=2pt},
	dummy/.style={circle,draw,inner sep=0pt,minimum size=1.75mm}]
		\node at (-0.25,0) {$C$};
		\node at (-0.25,0.3) {}
			child{node [dummy] {}
				child[sibling distance = 1.5em,level distance= 2em]{
				edge from parent node [swap, near end] {$3$}}
				child[sibling distance = 1.5em,level distance= 2.5em]{
				edge from parent node [swap, near end] {$2$}}
				child[sibling distance = 1.5em,level distance= 2em]{
				edge from parent node [near end] {$1$}}
			edge from parent node [swap] {$4$}};
		\node at (3,0) {$T_1$}
			child{node [dummy] {}
				child[sibling distance = 5em, level distance=1.8em]{
				edge from parent node [swap] {$4$}}
				child[sibling distance = 5em]{node [dummy] {}
					child[sibling distance = 1.5em]{
					edge from parent node [swap,near end] {$2$}}
					child[sibling distance = 1.5em]{
					edge from parent node [near end] {$1$}}
				edge from parent node {$3$}}
			edge from parent node [swap] {$5$}};
		\node at (6,0) {$T_2$}
			child{node [dummy] {}
				child[sibling distance = 5em]{node [dummy] {}
					child[sibling distance = 1.5em]{
					edge from parent node [swap,near end] {$3$}}
					child[sibling distance = 1.5em]{
					edge from parent node [near end] {$2$}}
				edge from parent node [swap] {$4$}}
				child[sibling distance = 5em, level distance=1.8em]{
				edge from parent node {$1$}}
			edge from parent node [swap] {$5$}};
		\node at  (9.5,0) {$U$}
			child{node [dummy] {}
				child[sibling distance = 5em]{node [dummy] {}
					child[sibling distance = 1.5em]{
					edge from parent node [swap,near end] {$5$}}
					child[sibling distance = 1.5em]{
					edge from parent node [near end] {$4$}}
				edge from parent node [swap] {$6$}}
				child[sibling distance = 5em]{node [dummy] {}
					child[sibling distance = 1.5em]{
					edge from parent node [swap,near end] {$2$}}
					child[sibling distance = 1.5em]{
					edge from parent node [near end] {$1$}}
				edge from parent node {$3$}}
			edge from parent node [swap] {$7$}};
	\end{tikzpicture}
\]
Here $T_1$ and $T_2$ are isomorphic to each other but not isomorphic to any other standard model in $\Omega^s$ while both $C$ and $U$ are the unique objects in their isomorphism classes. 
\end{example}

Given $T_{\leq_p} \in \Omega^p$ there is an obvious standard model $T_{\leq_p}^s \in \Omega^s$ given by replacing each edge by its order following $\leq_p$. Indeed, this defines a retraction 
$(\minus)^s \colon \Omega^p \to \Omega^s$
and a natural transformation 
$\sigma \colon id \Rightarrow (\minus)^s$
given by isomorphisms preserving the planar structure
(in fact, the pair $\left((\minus)^s, \sigma \right)$ is  uniquely characterized by this property).

\begin{remark}\label{FORESTPLAN REM}
	Definition \ref{PLANARIZE DEF} readily extends to 
	the broad poset definition of forests $F \in \Phi$ 
	in \cite[Def. 5.27]{Pe17}, with the analogue of
	Proposition \ref{PLANARIZATIONCHAR PROP}
	then stating that a planar structure is 
equivalent to total orderings of the nodes of $F$ together with a total ordering of its set of roots.
There are thus two competing notions of standard forests: the \cite[Def. 5.27]{Pe17} model $\Phi^s$ whose objects are planar forest structures on one of the standard sets $\{1,\cdots,n\}$ and (following the discussion at the start of \S \ref{PLANAR_SECTION})
the model $\Fin \wr \Omega^s$, whose objects are tuples, indexed by a standard set, of planar tree structures on standard sets.
An illustration follows.
\[
	\begin{tikzpicture}[grow=up,auto,level distance=2.1em,
	every node/.style = {font=\footnotesize,inner sep=2pt},
	dummy/.style={circle,draw,inner sep=0pt,minimum size=1.75mm}]
		\node at (2,0.2) {}
			child{node [dummy] {}
				child[sibling distance = 1.5em,level distance= 2em]{
				edge from parent node [swap, near end] {$8$}}
				child[sibling distance = 1.5em,level distance= 2.5em]{
				edge from parent node [swap, near end] {$7$}}
				child[sibling distance = 1.5em,level distance= 2em]{
				edge from parent node [near end] {$6$}}
			edge from parent node [swap] {$9$}};
		\node at (0,0) {}
			child{node [dummy] {}
				child[sibling distance = 5em, level distance=1.8em]{
				edge from parent node [swap] {$4$}}
				child[sibling distance = 5em]{node [dummy] {}
					child[sibling distance = 1.5em]{
					edge from parent node [swap,near end] {$2$}}
					child[sibling distance = 1.5em]{
					edge from parent node [near end] {$1$}}
				edge from parent node {$3$}}
			edge from parent node [swap] {$5$}};
		\node at (8,0) {$2$};
		\node at (8,0.2) {}
			child{node [dummy] {}
				child[sibling distance = 1.5em,level distance= 2em]{
				edge from parent node [swap, near end] {$3$}}
				child[sibling distance = 1.5em,level distance= 2.5em]{
				edge from parent node [swap, near end] {$2$}}
				child[sibling distance = 1.5em,level distance= 2em]{
				edge from parent node [near end] {$1$}}
			edge from parent node [swap] {$4$}};
		\node at (6,0) {$1$}
			child{node [dummy] {}
				child[sibling distance = 5em, level distance=1.8em]{
				edge from parent node [swap] {$4$}}
				child[sibling distance = 5em]{node [dummy] {}
					child[sibling distance = 1.5em]{
					edge from parent node [swap,near end] {$2$}}
					child[sibling distance = 1.5em]{
					edge from parent node [near end] {$1$}}
				edge from parent node {$3$}}
			edge from parent node [swap] {$5$}};
		\draw[decorate,decoration={brace,amplitude=2.5pt}] (2.1,-0.2) -- (-0.1,-0.2) node[midway,inner sep=4pt]{$F$}; %
		\draw[decorate,decoration={brace,amplitude=2.5pt}] (8.1,-0.2) -- (5.9,-0.2) node[midway,inner sep=4pt]{$F$}; %
	\end{tikzpicture}
\]
However, there is a 
\textit{canonical} isomorphism $\Phi^s \simeq \Fin \wr \Omega^s$ 
(with both sides of the diagram above then
depicting the same planar forest). 
Moreover, while the similarly defined categories $\Phi^p$
and $\Fin \wr \Omega^p$ are only equivalent (rather than isomorphic), their retractions onto $\Phi^s \simeq \Fin \wr \Omega^s$ are compatible, and we will thus henceforth not distinguish between 
$\Phi^s$ and $\Fin \wr \Omega^s$.
\end{remark}

\begin{convention}\label{PLANARCONV CON}
      From now on we write simply $\Omega$, $\Omega_G$ to denote the categories $\Omega^s$, $\Omega_G^s$ of standard models (where planar structures are defined in the underlying forest as in Remark \ref{FORESTPLAN REM}).
      \index{categories!of trees!Gtrees1@$\Omega_G$}
      Therefore, whenever a construction produces an object or diagram in $\Omega^p$ or $\Omega^p_G$,
      we always implicitly reinterpret it by using the standardization functor $(\minus)^s$.
      
      Similarly, any finite set (resp. orbital finite $G$-set) together with a total order is implicitly reinterpreted as an object of
      $\Fin$ (resp. $\mathsf{O}_G$).
\end{convention}

\begin{example}
To illustrate our convention, consider the trees in Example \ref{STANDMODEL EX}. 

There are subtrees
$F_1 \hookrightarrow F_2 \hookrightarrow U$,
where $F_1$ is the subtree with edge set $\{1,2,6,7\}$,
and $F_2$ is the subtree with edge set $\{1,2,3,6,7\}$, both with inherited tree and planar structures. 
Applying $(\minus)^s$ to the inclusion diagram on the left below then yields a diagram as on the right.
\[
\begin{tikzcd}[row sep = 0.5em,column sep =1.3em]
	F_1 \ar[hookrightarrow]{rr} \ar[hookrightarrow]{rd} & & U & &&
	C \ar{rr} \ar{rd} & & U
\\
	& F_2 \ar[hookrightarrow]{ru} & & &&
	& T_1 \ar{ru}
\end{tikzcd}
\]
Similarly, let $\leq_{(12)}$ and $\leq_{(45)}$ denote alternate planar structures for $U$ exchanging the orders of the pairs $1,2$ and $4,5$, so that one has objects 
$U_{\leq_{(12)}}$, $U_{\leq_{(45)}}$ in $\Omega^p$. 
Applying $(\minus)^s$ to the diagram of underlying identities on the left yields the permutation diagram on the right.
\[
\begin{tikzcd}[row sep = 0.5em,column sep =1.3em]
	U \ar{rr}{id} \ar{rd}[swap]{id} & & U_{\leq_{(45)}} & & &
	U \ar{rr}{(45)} \ar{rd}[swap]{(12)} & & U
\\
	& U_{\leq_{(12)}} \ar{ru}[swap]{id} & & & &
	& U \ar{ru}[swap]{(12)(45)}
\end{tikzcd}
\]
\end{example}

\begin{example}
An additional reason to leave the use of $(\minus)^s$ implicit
as described in Convention \ref{PLANARCONV CON} is that when depicting $G$-trees it is preferable to choose edge labels that describe the $G$-action rather than the planarization (which is already implicit anyway).

For example, for the two groups 
$G = \mathbb{Z}_{/4}$ and 
$\bar{G} = \mathbb{Z}_{/3}$, in both diagrams below the orbital representation on the left represents the isomorphism class consisting only of the two trees 
$T_1,T_2 \in \Omega_G$ and
$\bar{T}_1,\bar{T}_2 \in \Omega_{\bar{G}}$
on the right.
\[
	\begin{tikzpicture}[grow=up,auto,level distance=2.1em,
	every node/.style = {font=\footnotesize,inner sep=2pt},
	dummy/.style={circle,draw,inner sep=0pt,minimum size=1.75mm}]
%
		\node at (-1,-2) {}
			child{node [dummy] {}
				child[sibling distance=1.75em]{
				edge from parent node [swap]  {$a+G$}}
			edge from parent node [swap] {$b+G/2G$}};
		\node at (2.5,-2) {}
			child{node [dummy] {}
				child[sibling distance=1.75em]{
				edge from parent node [swap,near end] {$a+2$}}
				child[sibling distance=1.75em]{
				edge from parent node [near end]  {$\phantom{1+}a$}}
			edge from parent node [swap] {$b$}};
		\node at (4.75,-2) {}
			child{node [dummy] {}
				child[sibling distance=1.75em]{
				edge from parent node [swap,near end] {$a+3$}}
				child[sibling distance=1.75em]{
				edge from parent node [near end]  {$a+1$}}
			edge from parent node [swap] {$b+1$}};
		\draw[decorate,decoration={brace,amplitude=2.5pt}] (4.85,-2) -- (2.4,-2) node[midway,inner sep=4pt]{$T_1$};
		\node at (7.5,-2) {}
			child{node [dummy] {}
				child[sibling distance=1.75em]{
				edge from parent node [swap,near end] {$a+2$}}
				child[sibling distance=1.75em]{
				edge from parent node [near end]  {$\phantom{1+}a$}}
			edge from parent node [swap] {$b$}};
		\node at (9.75,-2) {}
			child{node [dummy] {}
				child[sibling distance=1.75em]{
				edge from parent node [swap,near end] {$a+1$}}
				child[sibling distance=1.75em]{
				edge from parent node [near end]  {$a+3$}}
			edge from parent node [swap] {$b+1$}};
		\draw[decorate,decoration={brace,amplitude=2.5pt}] (9.85,-2) -- (7.4,-2) node[midway,inner sep=4pt]{$T_2$};
		\node at (-1,-4.25) {}
			child{node [dummy] {}
				child[sibling distance=1.75em]{
				edge from parent node [swap]  {$a+\bar{G}$}}
			edge from parent node [swap] {$b+\bar{G}/\bar{G}$}};
		\node at (3.6725,-4.25) {$\bar{T}_1$}
			child{node [dummy] {}
				child[sibling distance = 2.75em,level distance= 1.75em]{
				edge from parent node [swap, near end] {$a+2$}}
				child[sibling distance = 2.75em,level distance= 2.5em]{
				edge from parent node [swap, near end] {$a+1$}}
				child[sibling distance = 2.75em,level distance= 1.75em]{
				edge from parent node [near end] {$\phantom{1+}a$}}
			edge from parent node [swap] {$b$}};
		\node at (8.6725,-4.25) {$\bar{T}_2$}
			child{node [dummy] {}
				child[sibling distance = 2.75em,level distance= 1.75em]{
				edge from parent node [swap, near end] {$a+1$}}
				child[sibling distance = 2.75em,level distance= 2.5em]{
				edge from parent node [swap, near end] {$a+2$}}
				child[sibling distance = 2.75em,level distance= 1.75em]{
				edge from parent node [near end] {$\phantom{1+}a$}}
			edge from parent node [swap] {$b$}};
	\end{tikzpicture}
\]
In general, isomorphism classes are of course far bigger.
The interested reader may show that there are 
$3 \cdot 3! \cdot 2 \cdot 3! \cdot 3!$
trees in the isomorphism class of the tree depicted in 
\eqref{D6SMALLER EQ}.
\end{example}

We now turn to the notion of \emph{planar map}.
In order to cover the case of forests, 
we need to recall
the notion of \emph{independent map} of forests
introduced in \cite[Def. 5.28]{Pe17}.
However, rather than work with the definition in 
\cite{Pe17}, we prefer a different characterization, as follows.

\begin{proposition}\label{INDMAPCHAR PROP}
	Let $F \xrightarrow{\varphi} F'$ be a map of forests.
	The following are equivalent:
\begin{enumerate}
	\item[(i)] $\varphi$ is an independent map in the sense of
	\cite[Def. 5.28]{Pe17};
	\item[(ii)] for any edges $e,\bar{e}$ of $F$,
	the edges
	$\varphi(e),\varphi(\bar{e})$ of $F'$
	are $\leq_d$-incomparable iff 
	$e,\bar{e}$ are;
	\item[(iii)] for distinct roots $r,\bar{r}$ of $F$,
	the edges
	$\varphi(r),\varphi(\bar{r})$ of $F'$
	are $\leq_d$-incomparable.
\end{enumerate}
\end{proposition}

\begin{proof}
	$(i) \Rightarrow (ii)$
	is the content of \cite[Lemma 5.32]{Pe17}.
	$(ii) \Rightarrow (iii)$ is clear.
	Lastly, 
	$(iii) \Rightarrow (i)$ follows by applying 
	\cite[Lemma 5.24]{Pe17} to each of the tree components of $F'$.
\end{proof}

\begin{remark}
	By (iii) above,
	a map $F \xrightarrow{\varphi} F'$ is independent whenever $F$ is a tree. 
	More generally, (ii) can hence only fail
	if $e,\bar{e}$ are in distinct tree components of $F$.
	Thus, independent maps admit the following informal description:
	$\varphi$ is independent if, for any two tree components
	$T,\bar{T}$ of $F$,
	the images of $T,\bar{T}$ are ``in separate branches of $F'$'',
	in the sense that the image of $T$ contains no edges above (or on) the image of $\bar{T}$, and vice versa.
\end{remark}

\begin{definition}\label{PLANARMAP_DEF}
	A map $S \xrightarrow{\varphi} T$ in the category $\Omega$ of forests preserving the planar structure $\leq_p$
	is called a \textit{planar map}.
	
	More generally, a map $F \xrightarrow{\varphi} F'$ in one of the categories $\Phi$, $\Phi^G$, $\Omega_G$ of forests, $G$-forests, $G$-trees is called a \textit{planar map} if it is an independent map that preserves the planar structures $\leq_p$.
\end{definition}

\begin{remark}
The need for independence is justified by
condition (iii) in Proposition \ref{INDMAPCHAR PROP}.
\end{remark}

\begin{remark}\label{INDOMGALT REM}
In the case of $\Omega_G$,
independence admits simpler characterizations:
$\varphi$ is independent iff $\varphi$ is injective on each edge orbit iff $\varphi$ is injective on the root orbit.

To see this, note first that distinct edges $e, g e$ in the same orbit must be $\leq_d$-incomparable.
Indeed, if it were 
$e \leq_d g e$ (the $g e \leq_d e$ case is similar)
it would be
$e \leq_d g e \leq_d g^2 e \leq_d \cdots
\leq_d g^n e = e$ (here $n$ is the order of $g$),
requiring $e=ge$.
The given characterizations now follow from
Proposition \ref{INDMAPCHAR PROP}(ii)(iii)
and the fact that for $F \in \Omega_G$ the roots form a single orbit.
\end{remark}

\begin{proposition}
\label{PLANARPULL PROP}
	Let $F \xrightarrow{\varphi} F'$ be an independent map in $\Phi$ (or $\Omega$, $\Omega_G$, $\Phi^G$). 
	One has a unique factorization 
	\[F \xrightarrow{\simeq} \bar{F} \to F'\]
	such that $F \xrightarrow{\simeq} \bar{F}$ is an isomorphism and $\bar{F} \to F'$ is planar.
\end{proposition}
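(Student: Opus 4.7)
The plan is to invoke Proposition \ref{PLANARIZATIONCHAR PROP}, which characterizes planar structures as families of total orderings on each input-edge set $a^{\uparrow}$, and to produce the required structure on (the underlying tree of) $F$ by pulling back the orderings from $G$ along $\varphi$.

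I would first handle the case $F,G \in \Omega$. Given a node $a^{\uparrow} \leq a$ of $F$, I claim the restriction $\varphi|_{a^{\uparrow}}$ is injective: any two distinct elements of $a^{\uparrow}$ are $\leq_d$-incomparable in $F$, so if they had equal image they would witness a failure of $\varphi$ to reflect $\leq_d$-comparability, contradicting independence (cf.\ \cite[Lemma 5.33]{Pe17}). Pulling back the restriction of $\leq_p$ to $\varphi(a^{\uparrow}) \subset G$ thus yields a total ordering on $a^{\uparrow}$. By Proposition \ref{PLANARIZATIONCHAR PROP}, the collection of such orderings (as $a$ ranges over the nodes of $F$) is equivalent to the data of a unique planar structure $\leq_p'$ on the underlying tree of $F$. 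Setting $\bar{F}$ to be the standard model associated to $(F,\leq_p')$ via $(-)^s$ produces an isomorphism $F \xrightarrow{\simeq} \bar{F}$ in $\Omega$ together with an induced map $\bar{F} \to G$ whose composite with the isomorphism recovers $\varphi$, and by construction (and Remark \ref{CLOSURE REM}) the map $\bar{F} \to G$ is planar, since the orderings on each $a^{\uparrow}$ of $\bar{F}$ are exactly the pulled-back orderings from $G$.

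For uniqueness, suppose $F \xrightarrow{\simeq} \bar{F}'' \to G$ is a second such factorization with $\bar{F}'' \to G$ planar. Planarity forces the ordering on each input-edge set of $\bar{F}''$ to coincide with the pullback from $G$ along the composite, i.e.\ with the data computed above. By Proposition \ref{PLANARIZATIONCHAR PROP} the planar structures on $\bar{F}''$ and $\bar{F}$ agree, and since both are standard models in $\Omega$, they coincide; the isomorphism is then also unique.

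The extensions to $\Phi$, $\Omega_G$, and $\Phi_G$ are routine: for forests, one additionally pulls back the root ordering (using that independence implies injectivity on the set of connected components, cf.\ the characterizations recalled just before Proposition \ref{PLANARPULL PROP}); for the $G$-equivariant cases, the pullback description (\ref{OGDEF EQ}) and its analogue for $\Phi_G$ reduce matters to the non-equivariant case pointwise, applied simultaneously to the orbital component map $X \to Y$ induced by $\varphi$ and to each tree $F_x$ mapping into $G_{\varphi(x)}$. The one mild subtlety is verifying that the pulled-back orderings are compatible with the $G$-action, but this is automatic from naturality of the pullback construction in $\varphi$, together with the fact that the planar structure on $G$ is itself encoded equivariantly via the standard-model convention (Convention \ref{PLANARCONV CON}).
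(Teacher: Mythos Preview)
Your proof is correct and follows essentially the same approach as the paper's: pull back the input-edge orderings along $\varphi$, invoke Proposition \ref{PLANARIZATIONCHAR PROP} for existence and uniqueness of the resulting planar structure, and use Remark \ref{CLOSURE REM} together with \cite[Lemma 5.33]{Pe17} to verify planarity. The only minor differences are that the paper obtains injectivity of $\varphi|_{a^{\uparrow}}$ from simplicity of the target broad poset $G$ rather than from reflection of $\leq_d$-comparability, and treats the forest case directly rather than reducing to trees first.
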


\begin{proof}
We need to show that there is a unique planar structure 
$\leq_p^{\bar{F}}$ on the underlying forest of $F$ making the underlying map a planar map.
Simplicity \cite[Def. 5.3]{Pe17}
of the broad poset $F'$ ensures that for any vertex $e^{\uparrow} \leq e$ of $F$ the edges in $\varphi(e^{\uparrow})$ are all distinct while independence of $\varphi$ likewise ensures that the edges in $\varphi(\underline{r}_F)$ are distinct.
By (the forest version of) Proposition
\ref{PLANARIZATIONCHAR PROP},
the only possible planar structure $\leq_p^{\bar{F}}$
is the one which orders each set $e^{\uparrow}$ and the root tuple $\underline{r}_F$ according to their images.
The claim that $\varphi$ is then planar follows from 
Remark \ref{CLOSURE REM}
together with the
fact that $\varphi$ reflects $\leq_d$-comparability,
cf. Proposition \ref{INDMAPCHAR PROP}(ii).
\end{proof}

\begin{remark}\label{PULLPLANAR REM}
Proposition \ref{PLANARPULL PROP} says that planar structures can be pulled back along independent maps. However, they can not always be pushed forward. As a counter-example, in the setting of Example \ref{STANDMODEL EX}, consider the map $C \to T_1$ given by $1 \mapsto 1$, $2 \mapsto 4$, $3 \mapsto 2$, $4 \mapsto 5$.
\end{remark}

We end this section by discussing a different type of pullback.
The reader may have noticed that it follows from 
Proposition \ref{GROTHSTAB PROP}
that both vertical maps in \eqref{OGDEF EQ}
are split Grothendieck fibrations. We now introduce some terminology.

\begin{definition}\label{ROOTPULL DEF}
The map $\mathsf{r} \colon \Omega_G \to \mathsf{O}_G$
in \eqref{OGDEF EQ} is called the \textit{root functor}.
\index{key functors!Gtreesleafroot0@$\mathsf{r}$}

Further, fiber maps (those maps inducing identities, i.e. ordered bijections, on $\mathsf{r}(\minus)$) are called \textit{rooted maps}, and pullbacks with respect to $\mathsf{r}$ are called
\textit{root pullbacks}.
\end{definition}

To motivate the terminology, 
note first that unpacking definitions shows that 
$\mathsf{r}(T)$ is the ordered set of tree components of  
$T\in \Omega_G$,
which coincides with the ordered set of roots.
The exact name choice is meant to accentuate the connection with another key functor
described in \S \ref{LRVERT SEC},
which we call the \textit{leaf-root functor}.

Further, unpacking the construction in \eqref{OGDEF EQ}, one sees that,
for a $G$-tree $T = (T_x)_{x \in X}$
with structure maps $T_x \to T_{g x}$,
the pullback of $T$
along the map 
$\varphi \colon Y \to X$ in $\mathsf{O}_G$
is simply the $G$-tree
$\varphi^{\**}T=(T_{\varphi(y)})_{y \in Y}$
with structure maps 
$T_{\varphi(y)} \to T_{g \varphi(y)} = T_{\varphi(g y)}$.

\begin{example}\label{ROOTPULL EX}
Let $G=\{\pm 1, \pm i, \pm j, \pm k\}$ be the group of quaternionic units, 
$H = \langle j \rangle$ and $K = \langle -1 \rangle$.
Figure \ref{FIGURE} illustrates the pullbacks of two $G$-trees, 
$T$ and $S$,
along the 
twist map $\tau \colon G/H \to G/H$
and the unique map $\pi \colon G/H \to G/G$, respectively
(or, more precisely, noting that in our model the underlying set 
of $G/H$ is actually $\{1,2\}$,
$\tau$ is the permutation $(12)$).
We note that the stabilizers of $a,b,c$ are $\{1\},K,H$ for $T$
and $K,H,G$ for $S$.

The pullback $\tau^{\**} T$ along the map $\tau$
is obtained by interchanging the two tree components of $T$,
as in the top depiction of $\tau^{\**} T$. 
However, one drawback of this top depiction 
is that the edge orbit generators $a,b,c$ now
appear in the middle of the forest.
By choosing the leftmost 
edge orbit generators
$d = i a$, $e = i b$, $f = i c$,
one obtains the bottom depiction of $\tau^{\**} T$. 

For the pullback $\pi^{\**} S$,
since $\pi$ folds two points into one,
the underlying forest of $\pi^{\**} S$
consists of two copies of the underlying tree of $S$,
with $\pi^{\**}S \to S$
folding those copies while respecting the planarizations.
The top depiction of $\pi^{\**} S$ then
chooses edge orbit generators 
$a,b,c,\bar{a},\bar{b}$
that are as left as possible while also 
lifting the generators $a,b,c$ of $S$.
The bottom depiction of $\pi^{\**} S$,
which sets $d = i \bar{a}$, $e = i \bar{b}$, 
chooses the leftmost possible generators.
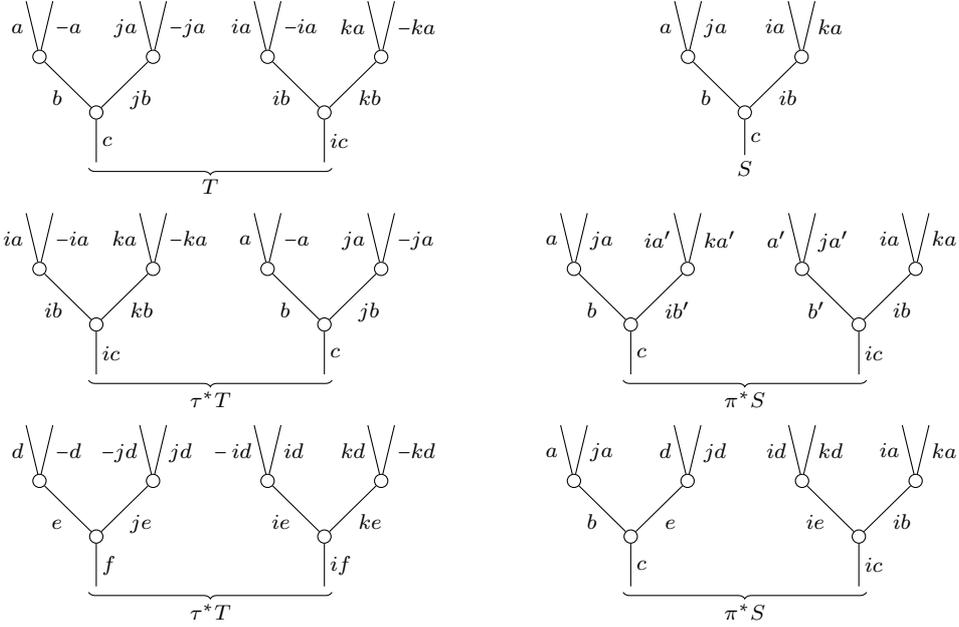
\begin{figure}[ht]
\[
	\begin{tikzpicture}[grow=up,auto,level distance=2.1em,
	every node/.style = {font=\footnotesize,inner sep =2pt},
	dummy/.style={circle,draw,inner sep=0pt,minimum size=1.75mm}]
	\begin{scope}[yshift=12em]
		\node at  (0,0) {}
			child{node [dummy] {}
				child[sibling distance = 4.25em]{node [dummy] {}
					child[sibling distance = 1em]{
					edge from parent node [swap,near end] {$-j a$}}
					child[sibling distance = 1em]{
					edge from parent node [near end] {$j a$}}
				edge from parent node [swap] {$j b$}}
				child[sibling distance = 4.25em]{node [dummy] {}
					child[sibling distance = 1em]{
					edge from parent node [swap,near end] {$-a\phantom{j}$}}
					child[sibling distance = 1em]{
					edge from parent node [near end] {$\phantom{j}a$}}
				edge from parent node {$b$}}
			edge from parent node [swap] {$c$}};
		\node at  (3,0) {}
			child{node [dummy] {}
				child[sibling distance = 4.25em]{node [dummy] {}
					child[sibling distance = 1em]{
					edge from parent node [swap,near end] {$- k a$}}
					child[sibling distance = 1em]{
					edge from parent node [near end] {$k a$}}
				edge from parent node [swap] {$k b$}}
				child[sibling distance = 4.25em]{node [dummy] {}
					child[sibling distance = 1em]{
					edge from parent node [swap,near end] {$- i a$}}
					child[sibling distance = 1em]{
					edge from parent node [near end] {$i a$}}
				edge from parent node {$i b$}}
			edge from parent node [swap] {$i c$}};
		\draw[decorate,decoration={brace,amplitude=2.5pt}] (3.1,0) -- (-0.1,0) node[midway,inner sep=4pt,font=\normalsize]{$T$};
	\end{scope}
	\begin{scope}[yshift=4em]
		\node at  (3,0) {}
			child{node [dummy] {}
				child[sibling distance = 4.25em]{node [dummy] {}
					child[sibling distance = 1em]{
					edge from parent node [swap,near end] {$-j a$}}
					child[sibling distance = 1em]{
					edge from parent node [near end] {$j a$}}
				edge from parent node [swap] {$j b$}}
				child[sibling distance = 4.25em]{node [dummy] {}
					child[sibling distance = 1em]{
					edge from parent node [swap,near end] {$-a\phantom{j}$}}
					child[sibling distance = 1em]{
					edge from parent node [near end] {$\phantom{+1}a$}}
				edge from parent node {$b$}}
			edge from parent node [swap] {$c$}};
		\node at  (0,0) {}
			child{node [dummy] {}
				child[sibling distance = 4.25em]{node [dummy] {}
					child[sibling distance = 1em]{
					edge from parent node [swap,near end] {$- k a$}}
					child[sibling distance = 1em]{
					edge from parent node [near end] {$k a$}}
				edge from parent node [swap] {$k b$}}
				child[sibling distance = 4.25em]{node [dummy] {}
					child[sibling distance = 1em]{
					edge from parent node [swap,near end] {$- i a$}}
					child[sibling distance = 1em]{
					edge from parent node [near end] {$i a$}}
				edge from parent node {$i b$}}
			edge from parent node [swap] {$i c$}};
		\draw[decorate,decoration={brace,amplitude=2.5pt}] (3.1,0) -- (-0.1,0) node[midway,inner sep=4pt,font=\normalsize]{$\tau^{\**}T$};
	\end{scope}
	\begin{scope}[yshift=-4em]
		\node at  (0,0) {}
			child{node [dummy] {}
				child[sibling distance = 4.25em]{node [dummy] {}
					child[sibling distance = 1em]{
					edge from parent node [swap,near end] {$j d$}}
					child[sibling distance = 1em]{
					edge from parent node [near end] {$-j d$}}
				edge from parent node [swap] {$j e$}}
				child[sibling distance = 4.25em]{node [dummy] {}
					child[sibling distance = 1em]{
					edge from parent node [swap,near end] {$-d\phantom{j}$}}
					child[sibling distance = 1em]{
					edge from parent node [near end] {$\phantom{+1}d$}}
				edge from parent node {$\phantom{j}e$}}
			edge from parent node [swap] {$f$}};
		\node at  (3,0) {}
			child{node [dummy] {}
				child[sibling distance = 4.25em]{node [dummy] {}
					child[sibling distance = 1em]{
					edge from parent node [swap,near end] {$-k d$}}
					child[sibling distance = 1em]{
					edge from parent node [near end] {$k d$}}
				edge from parent node [swap] {$k e$}}
				child[sibling distance = 4.25em]{node [dummy] {}
					child[sibling distance = 1em]{
					edge from parent node [swap,near end] {$i d\phantom{j}$}}
					child[sibling distance = 1em]{
					edge from parent node [near end] {$\phantom{+1}-i d$}}
				edge from parent node {$i e$}}
			edge from parent node [swap] {$i f$}};
		\draw[decorate,decoration={brace,amplitude=2.5pt}] (3.1,0) -- (-0.1,0) node[midway,inner sep=4pt,font=\normalsize]{$\tau^{\**}T$};
	\end{scope}
	\begin{scope}[yshift=12em,xshift=20em]
		\node at  (1.5,0) [font=\normalsize] {$S$}
			child{node [dummy] {}
				child[sibling distance = 4.25em]{node [dummy] {}
					child[sibling distance = 1em]{
					edge from parent node [swap,near end] {$k a$}}
					child[sibling distance = 1em]{
					edge from parent node [near end] {$i a$}}
				edge from parent node [swap] {$i b$}}
				child[sibling distance = 4.25em]{node [dummy] {}
					child[sibling distance = 1em]{
					edge from parent node [swap,near end] {$j a\phantom{j}$}}
					child[sibling distance = 1em]{
					edge from parent node [near end] {$\phantom{j}a$}}
				edge from parent node {$b$}}
			edge from parent node [swap] {$c$}};
	\end{scope}
	\begin{scope}[yshift=4em,xshift=20em]
		\node at  (0,0) {}
			child{node [dummy] {}
				child[sibling distance = 4.25em]{node [dummy] {}
					child[sibling distance = 1em]{
					edge from parent node [swap,near end] {$k \bar{a}$}}
					child[sibling distance = 1em]{
					edge from parent node [near end] {$i \bar{a}$}}
				edge from parent node [swap] {$i \bar{b}$}}
				child[sibling distance = 4.25em]{node [dummy] {}
					child[sibling distance = 1em]{
					edge from parent node [swap,near end] {$j a\phantom{j}$}}
					child[sibling distance = 1em]{
					edge from parent node [near end] {$\phantom{+1}a$}}
				edge from parent node {$\phantom{\bar{b}j}b$}}
			edge from parent node [swap] {$c$}};
		\node at  (3,0) {}
			child{node [dummy] {}
				child[sibling distance = 4.25em]{node [dummy] {}
					child[sibling distance = 1em]{
					edge from parent node [swap,near end] {$k a$}}
					child[sibling distance = 1em]{
					edge from parent node [near end] {$i a$}}
				edge from parent node [swap] {$i b\phantom{\bar{b}}$}}
				child[sibling distance = 4.25em]{node [dummy] {}
					child[sibling distance = 1em]{
					edge from parent node [swap,near end] {$j \bar{a}\phantom{j}$}}
					child[sibling distance = 1em]{
					edge from parent node [near end] {$\phantom{j}\bar{a}$}}
				edge from parent node {$\bar{b}$}}
			edge from parent node [swap] {$i c$}};
		\draw[decorate,decoration={brace,amplitude=2.5pt}] (3.1,0) -- (-0.1,0) node[midway,inner sep=4pt,font=\normalsize]{$\pi^{\**}S$};
	\end{scope}
	\begin{scope}[yshift=-4em,xshift=20em]
		\node at  (0,0) {}
			child{node [dummy] {}
				child[sibling distance = 4.25em]{node [dummy] {}
					child[sibling distance = 1em]{
					edge from parent node [swap,near end] {$j d$}}
					child[sibling distance = 1em]{
					edge from parent node [near end] {$\phantom{j}d$}}
				edge from parent node [swap] {$e{\phantom{b}}$}}
				child[sibling distance = 4.25em]{node [dummy] {}
					child[sibling distance = 1em]{
					edge from parent node [swap,near end] {$j a\phantom{j}$}}
					child[sibling distance = 1em]{
					edge from parent node [near end] {$\phantom{j}a$}}
				edge from parent node {$\phantom{j}b$}}
			edge from parent node [swap] {$c$}};
		\node at  (3,0) {}
			child{node [dummy] {}
				child[sibling distance = 4.25em]{node [dummy] {}
					child[sibling distance = 1em]{
					edge from parent node [swap,near end] {$k a$}}
					child[sibling distance = 1em]{
					edge from parent node [near end] {$i a$}}
				edge from parent node [swap] {$i b$}}
				child[sibling distance = 4.25em]{node [dummy] {}
					child[sibling distance = 1em]{
					edge from parent node [swap,near end] {$k d\phantom{j}$}}
					child[sibling distance = 1em]{
					edge from parent node [near end] {$\phantom{+1}i d$}}
				edge from parent node {$i e$}}
			edge from parent node [swap] {$i c$}};
		\draw[decorate,decoration={brace,amplitude=2.5pt}] (3.1,0) -- (-0.1,0) node[midway,inner sep=4pt,font=\normalsize]{$\pi^{\**}S$};
	\end{scope}
	\end{tikzpicture}
\]
\caption{Root pullbacks}
\label{FIGURE}
\end{figure}
\end{example}

\subsection{Outer faces, tall maps, and substitution}\label{OUTTALL SEC}
\label{SUBS SEC}

One of the key ideas needed 
to describe the free operad monad is
the notion of \textit{substitution} of tree nodes,
a process that we will prefer to repackage in terms of maps of trees.

In preparation for that discussion,
we first recall some basic definitions and results concerning outer subtrees and tree grafting, as in \cite[\S 5]{Pe17}.

\begin{definition}\label{OUTFACE DEF}
	Let $T \in \Omega$ be a tree and 
	$e_1 \cdots e_n =\underline{e} \leq e$ a broad relation in $T$.
	
	We define the \textit{planar outer face $T_{\underline{e} \leq e}$}
	to be the subtree with underlying set those edges $f \in T$ such that
\begin{equation}\label{OUTERFACE EQ}
	f \leq_d e,\quad \forall_i f \nless_d e_i,
\end{equation}
\index{structure on trees!outer faces!planarouterface@$T_{\underline{e} \leq e}$}
with generating broad relations the relations $f^{\uparrow} \leq f$ for those $f \in T$ satisfying
$\forall_i f\neq e_i$
in addition to \eqref{OUTERFACE EQ},
and planar structure pulled back from $T$ (in the sense of Remark \ref{PULLPLANAR REM}).

Moreover, inclusions of the form 
$T_{\underline{e} \leq e}
\hookrightarrow T$
are called \emph{planar outer face maps}.
\end{definition}

\begin{remark}
If one forgoes the requirement that $T_{\underline{e} \leq e}$ be equipped with the pulled back planar structure, the inclusion $T_{\underline{e} \leq e} \hookrightarrow T$ is usually called simply an \textit{outer face map}.
\end{remark}

\begin{example}\label{OUTERTREE EX}
	The following illustrates some planar outer faces of the tree $T$ in Example \ref{PLANAREX EX}.
	Pictorially, Definition \ref{OUTFACE DEF}
	says that $T_{\underline{e} \leq e}$
	is obtained from $T$ by removing those edges and vertices that are either
	not above $e$ or
	above one of the $e_i$ in $\underline{e}$.
	\[
	\begin{tikzpicture}[grow=up,auto,level distance=2.3em,
	every node/.style = {font=\footnotesize,inner sep=2pt},
	dummy/.style={circle,draw,inner sep=0pt,minimum size=1.75mm}]
	\node[font=\normalsize] at (0,0) {$T_{34 \leq 7}$}
		child[sibling distance = 8em]{node [dummy] {}
			child[sibling distance =2.25em, level distance = 1.25 em]{node [dummy] {}
			edge from parent node [swap] {$6$}}
			child[sibling distance = 1.5em]{node [dummy] {}
				child[sibling distance =1em]{
				edge from parent node [swap,near end] {$4$}}
				child[sibling distance =1em]{
				edge from parent node [near end] {$3$}}
			edge from parent node [very near end,swap] {$5$}}
			child[sibling distance =1.5em]{node [dummy] {}
			edge from parent node [very near end] {$2$}}
			child[sibling distance =2.25em,level distance =1.25em]{node [dummy] {}
			edge from parent node {$1$}}
		edge from parent node [swap] {$7$}};
\begin{scope}[xshift=8.5em]
\node[font=\normalsize] at (0,0) {$T_{2346 \leq 7}$}
child[sibling distance = 8em]{node [dummy] {}
	child[sibling distance =2.25em, level distance = 1.25 em]{
	edge from parent node [swap] {$6$}}
	child[sibling distance = 1.5em]{node [dummy] {}
		child[sibling distance =1em]{
		edge from parent node [swap,near end] {$4$}}
		child[sibling distance =1em]{
		edge from parent node [near end] {$3$}}
	edge from parent node [very near end,swap] {$5$}}
	child[sibling distance =1.5em]{node {}
	edge from parent node [very near end] {$2$}}
	child[sibling distance =2.25em,level distance =1.25em]{node [dummy] {}
	edge from parent node {$1$}}
edge from parent node [swap] {$7$}};
\end{scope}
\begin{scope}[xshift=17em]
\node[font=\normalsize] at (0,0) {$T_{1256 \leq 7}$}
child[sibling distance = 8em]{node [dummy] {}
	child[sibling distance =2.25em, level distance = 1.25 em]{node {}
	edge from parent node [swap] {$6$}}
	child[sibling distance = 1.5em]{node {}
	edge from parent node [very near end,swap] {$5$}}
	child[sibling distance =1.5em]{node {}
	edge from parent node [very near end] {$2$}}
	child[sibling distance =2.25em,level distance =1.25em]{node {}
	edge from parent node {$1$}}
edge from parent node [swap] {$7$}};
\end{scope}
\begin{scope}[xshift=22em]
\node[font=\normalsize] at (0,0) {$T_{\epsilon \leq 2}$}
child[sibling distance = 8em]{node [dummy] {}
edge from parent node [swap] {$2$}};
\end{scope}
\begin{scope}[xshift=25em]
\node[font=\normalsize] at (0,0) {$T_{2 \leq 2}$}
child[sibling distance = 8em]{node {}
	edge from parent node [swap] {$2$}};
\end{scope}
\begin{scope}[xshift=30em]
\node[font=\normalsize] at (0,0) {$T_{789ab \leq d}$}
child{node [dummy] {}
	child[sibling distance = 3em]{node [dummy] {}
		child[sibling distance = 1.75em]{
		edge from parent node [near end,swap] {$b$}}
		child[level distance=2.5em]{
		edge from parent node [very near end,swap] {$a$}}				
		child[sibling distance = 1.75em]{
		edge from parent node [near end] {$9$}}
	edge from parent node [swap] {c}}
	child[level distance =2.5em]{
	edge from parent node [swap] {$8$}}
	child[sibling distance = 3em]{
	edge from parent node {$7$}}
edge from parent node [swap] {$d$}};
\end{scope}
	\end{tikzpicture}
	\]
\end{example}

We now recap some basic results.

\begin{notation}\label{STICKTRE NOT}
	We write $\eta \in \Omega$
	for the \emph{stick tree} 
	consisting of a single edge and no vertices.
\end{notation}

\begin{proposition}
Let $T \in \Omega$ be a tree.
\begin{itemize}
\item[(a)] $T_{\underline{e} \leq e}$ is a tree with root $e$
and leaf tuple $\underline{e}$;
\item[(b)] there is a bijection
\[
	\{\text{planar outer faces of $T$} \} 
\leftrightarrow 
	\{\text{broad relations of $T$}\};
\]
\item[(c)] if $R \to S$ and $S \to T$ are (planar) outer face maps then so is $R \to T$;
\item[(d)] any pair of broad relations $\underline{g} \leq v$, $\underline{f}v \leq e$ induces a grafting pushout diagram
\begin{equation}\label{GRAPTPUSH EQ}
\begin{tikzcd}
	\eta \ar{r}{v} \ar{d}[swap]{v} & T_{\underline{g} \leq v} \ar{d}
\\
	T_{\underline{f}v \leq e} \ar{r} & T_{\underline{f}\underline{g} \leq e}.
\end{tikzcd}
\end{equation}
Further, $T_{\underline{f} \underline{g} \leq e}$ is the
unique choice of pushout that makes the maps in \eqref{GRAPTPUSH EQ} planar.
\end{itemize}
\end{proposition}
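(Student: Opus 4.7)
The plan is to verify the four parts largely by unpacking the defining set condition \eqref{OUTERFACE EQ} and using the broad-poset structure of $T$, treating (d) as the substantive point.

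For (a), I would show that the subset $S := \{f \in T : f \leq_d e,\, \forall_i\, f \nless_d e_i\}$ has $e$ as its $\leq_d$-maximum (so $e$ is the root) and the $e_i$'s as its minimal elements (so $\underline{e}$ is the edge tuple of leaves). This uses that $\underline{e} \leq e$ being a broad relation forces $e_i \leq_d e$ and the $e_i$'s pairwise $\leq_d$-incomparable, so each $e_i \in S$ is indeed minimal there. The broad relations $f^\uparrow \leq f$ for $f \in S \setminus \underline{e}$ match those of $T$ restricted to $S$, and the dendroidal order axioms are inherited from $T$; joins in $S$ are computed as in $T$ by Proposition \ref{INPUTPATHS PROP}(b), so the result is indeed a tree. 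Part (b) is then immediate: the construction $(\underline{e} \leq e) \mapsto T_{\underline{e} \leq e}$ reads off the root and edge tuple as its two-sided inverse, and the planar structure is uniquely determined as the pullback along the inclusion via Proposition \ref{PLANARPULL PROP}.

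For (c), given $R = S_{\underline{r} \leq r}$ with $S = T_{\underline{s} \leq s}$, I would view $r$ and $\underline{r}$ as edges of $T$ via the inclusion and verify $R = T_{\underline{r} \leq r}$ by chasing the defining condition. The forward inclusion $R \subseteq T_{\underline{r} \leq r}$ is immediate since $\leq_d$ and incomparability are preserved by the inclusion $S \hookrightarrow T$. For the reverse, given $f \in T$ with $f \leq_d r$ and $\forall_i\, f \nless_d r_i$, transitivity yields $f \leq_d s$, so it remains to show $\forall_j\, f \nless_d s_j$; if $f <_d s_j$ for some $j$, then both $r$ and $s_j$ lie in the totally ordered input path $I(f)$, and since $r$ is not below any $s_j$ (as $r \in S$) while $s_j \not\leq_d r$ (else $r_i$'s would not be in $S$), we get a contradiction. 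Finally, planarity of $R \hookrightarrow T$ as an outer face follows from Proposition \ref{PLANARPULL PROP}.

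The heart of the proof is (d). Using the set-level description, write $U = T_{\underline{f}\underline{g} \leq e}$, $V_1 = T_{\underline{g} \leq v}$, $V_2 = T_{\underline{f}v \leq e}$. I would first show the underlying edge sets satisfy $U = V_1 \cup V_2$ and $V_1 \cap V_2 = \{v\}$: if $x \in U$ and $x \leq_d v$, then $x \in V_1$; otherwise $x \nless_d v$, and since any $g_i$ satisfies $g_i \leq_d v$ we get $x \nless_d g_i$ automatically, so $x \in V_2$. Next, I would check the broad relations decompose accordingly: for $x \in V_1$ with $x \neq g_i$, the set $x^\uparrow$ computed in $T$ lies entirely in $V_1$, and similarly for $x \in V_2 \setminus \underline{f}v$ other than $v$ the set $x^\uparrow$ lies in $V_2$; the edge $v$ is a leaf of $V_2$ and the root of $V_1$, so its broad relations only appear in $V_1$. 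This realizes $U$ as the pushout of broad posets (and hence of trees) along the edge $v$. For the uniqueness of the planar structure, I would invoke Proposition \ref{PLANARIZATIONCHAR PROP}: a planar structure is determined by orderings on each $a^\uparrow$, and since these node orderings in $U$ restrict to those of $V_1$ and $V_2$ compatibly, requiring both grafting maps to be planar forces the planar structure pulled back from $T$.

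The main obstacle I expect is the careful verification in (d) that the broad relations of $U$ are exactly generated by those of $V_1$ and $V_2$ glued at $v$, rather than the set-theoretic decomposition, which is straightforward; this is where one has to handle the subtle interaction between the edge $v$ (which switches role from root to leaf under the two inclusions) and the broad-poset axioms. Once this is done, the planar uniqueness is a formal consequence of Proposition \ref{PLANARPULL PROP}.
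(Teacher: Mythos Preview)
Your approach is sound and essentially correct, but it differs in style from the paper's proof. The paper treats this proposition largely as a repackaging of results already established in \cite{Pe17}: part (a) cites \cite[Prop.~5.20 and Remark~5.23]{Pe17}, part (c) invokes \cite[Lemma~5.33]{Pe17} (which says that the $\leq_d$ relations in an outer face and the ambient tree coincide), and for (d) the paper first observes that by (b) and (c) both $T_{\underline g\leq v}$ and $T_{\underline f v\leq e}$ are outer faces of $T_{\underline f\underline g\leq e}$, then cites \cite[Prop.~5.15]{Pe17} for the grafting pushout, with the planar uniqueness obtained from Proposition~\ref{PLANARIZATIONCHAR PROP} together with the vertex decomposition $V(T_{\underline f\underline g\leq e})=V(T_{\underline f v\leq e})\amalg V(T_{\underline g\leq v})$. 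You instead prove everything directly from the defining set condition~\eqref{OUTERFACE EQ}, using only the internal results of the present paper. This buys you a self-contained argument, at the cost of reproving facts the authors consider already on record.

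One small gap to watch in your (c): the parenthetical ``else $r_i$'s would not be in $S$'' is not quite the reason $s_j\not\leq_d r$ fails to hold. If $s_j\leq_d r$, use that $s_j$ is a leaf of $S$ together with the broad relation $\underline r\leq r$ to conclude that either $s_j=r_i$ for some $i$ or $s_j<_d r_i$ for some $i$; in either case $f<_d s_j$ forces $f<_d r_i$, contradicting your hypothesis on $f$. With that fix, your argument for (c) goes through. Your treatment of (d) is correct; you might streamline it by first noting, as the paper does, that $V_1$ and $V_2$ are outer faces of $U$ (a direct consequence of your (c)), which immediately gives the edge and vertex decompositions.
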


\begin{proof}
We first show (a). That $T_{\underline{e} \leq e}$ is indeed a tree is the content of \cite[Prop. 5.20]{Pe17}. More precisely, 
$T_{\underline{e} \leq e} = (T^{\leq e})_{\less \underline{e}}$
in the notation therein. That the root of $T_{\underline{e} \leq e}$ is $e$ is clear and that the leaf tuple is $\underline{e}$ follows from \cite[Remark 5.23]{Pe17}.

 (b) follows from (a), which shows that $\underline{e} \leq e$ can be recovered from
$T_{\underline{e} \leq e}$.

 (c) follows from the definition of outer face together with \cite[Lemma 5.33]{Pe17}, which states that the $\leq_d$ relations on $S,T$ coincide.
 
  Since by (b) and (c) both $T_{\underline{g} \leq v}$ and $T_{\underline{f}v \leq e}$ are outer faces of $T_{\underline{f} \underline{g} \leq e}$, 
the first part of (d) is a restatement of \cite[Prop. 5.15]{Pe17}, while the additional planarity claim 
follows by Proposition \ref{PLANARIZATIONCHAR PROP}
together with the vertex identification
$V(T_{\underline{f} \underline{g} \leq e})=
V(T_{\underline{f} v \leq e}) \amalg V(T_{\underline{g} \leq v})$.
\end{proof}

\begin{definition}
	A map $S \xrightarrow{\varphi} T$ in $\Omega$ is called a \textit{tall map} if 
	\[\varphi(\underline{l}_S) = \underline{l}_T, 
		\qquad
	\varphi(r_S)= r_T,\]
where $l_{(\minus)}$ denotes the (unordered) leaf tuple and $r_{(\minus)}$ the root.
\end{definition}

The following is a restatement of \cite[Cor. 5.24]{Pe17}

\begin{proposition}\label{TALLOUTERDEC PROP}
	Any map $S \xrightarrow{\varphi} T$ in $\Omega$ has a factorization, unique up to unique isomorphism,
	\[
		S \xrightarrow{\varphi^t} U \xrightarrow{\varphi^u} T
	\]
	as a tall map followed by an outer face (in fact, 
	$U= T_{\varphi(\underline{l}_S) \leq \varphi(r_S)}$).
\end{proposition}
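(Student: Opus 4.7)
The plan is to prove existence by showing that $U := T_{\varphi(\underline{l}_S) \leq \varphi(r_S)}$ makes sense as an outer face of $T$, that $\varphi$ factors through this outer face, and that the induced factor is tall; uniqueness will then come from the fact that the leaf tuple and root of the middle tree are forced.

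First, for existence, I would verify that $\varphi(\underline{l}_S) \leq \varphi(r_S)$ is a valid broad relation of $T$. Since $\varphi$ is a morphism of dendroidally ordered broad posets, it preserves broad relations, and $\underline{l}_S \leq r_S$ is always a broad relation of $S$ (its total leaf-root relation). By Definition \ref{OUTFACE DEF} this broad relation defines the planar outer face $U = T_{\varphi(\underline{l}_S) \leq \varphi(r_S)}$, whose underlying edges are those $f \in T$ with $f \leq_d \varphi(r_S)$ and $f \nless_d \varphi(l)$ for every leaf $l$ of $S$. By part (a) of the proposition preceding Proposition \ref{TALLOUTERDEC PROP}, the root of $U$ is $\varphi(r_S)$ and its leaf tuple is $\varphi(\underline{l}_S)$.

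Next I would show that $\varphi$ factors as $S \xrightarrow{\varphi^t} U \hookrightarrow T$. The key point is that every edge of $S$ is mapped into $U$: for any $f \in S$ one has $f \leq_d r_S$, so $\varphi(f) \leq_d \varphi(r_S)$; and for any leaf $l$ of $S$, the relation $\varphi(f) <_d \varphi(l)$ would pull back (using that $\varphi$ reflects $\leq_d$-comparability on any edge already known to lie below the same root, per \cite[Lemma 5.33]{Pe17}) to force $f <_d l$, contradicting that $l$ is a leaf. Hence $\varphi(f)$ satisfies the defining conditions (\ref{OUTERFACE EQ}) of $U$, so $\varphi$ corestricts to a map $\varphi^t \colon S \to U$. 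By construction, $\varphi^t$ sends $r_S$ to $r_U = \varphi(r_S)$ and $\underline{l}_S$ to $\underline{l}_U = \varphi(\underline{l}_S)$, hence is tall.

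For uniqueness, suppose $S \xrightarrow{\psi^t} U' \xrightarrow{\psi^u} T$ is another factorization into a tall map followed by an outer face. Since $\psi^u$ is an outer face, by part (b) of the preceding proposition it corresponds to the broad relation $\underline{l}_{U'} \leq r_{U'}$; since $\psi^t$ is tall, $\psi^t$ sends $\underline{l}_S \mapsto \underline{l}_{U'}$ and $r_S \mapsto r_{U'}$, so applying $\psi^u$ identifies this broad relation of $T$ with $\varphi(\underline{l}_S) \leq \varphi(r_S)$. The correspondence in (b) then yields a unique isomorphism $U' \xrightarrow{\simeq} U$ over $T$ compatible with the tall maps out of $S$.

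The main obstacle is step two: confirming that an arbitrary edge $f \in S$ cannot be mapped strictly below the image of some leaf $\varphi(l)$. This is where one must invoke that broad poset morphisms reflect the descendancy order $\leq_d$ in the appropriate sense (as in \cite[Lemma 5.33]{Pe17}); once that is in hand the rest of the argument is essentially bookkeeping with the broad relation/outer face correspondence already established.
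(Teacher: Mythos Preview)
Your proposal is correct. The paper itself does not argue the result directly; it simply records it as a restatement of \cite[Cor.~5.24]{Pe17}. Your argument supplies exactly the kind of proof one expects behind that citation: construct $U$ explicitly as the outer face $T_{\varphi(\underline{l}_S)\leq\varphi(r_S)}$, check that $\varphi$ lands in $U$ edge-by-edge, and derive uniqueness from the bijection between outer faces and broad relations in part (b) of the preceding proposition.

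One small clarification regarding your invocation of \cite[Lemma~5.33]{Pe17}: the paper uses that lemma to say that \emph{independent} maps reflect $\leq_d$-comparability. This suffices here because every map in $\Omega$ is independent (a tree has a single root, so injectivity on the root tuple is automatic), so your hedge about ``edges already known to lie below the same root'' is unnecessary. Once comparability of $f$ and $l$ is established, your case analysis (either $f=l$, contradicting strictness, or $l<_d f$, which combined with $\varphi(f)<_d\varphi(l)$ and antisymmetry again forces equality) is clean. The remaining step you gloss over --- that the factoring map $\varphi^t$ is a genuine map of broad posets, not just of underlying edge sets --- follows since $U$ inherits its generating broad relations from $T$, so any broad relation of $T$ with all edges in $U$ lies in $U$.
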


Recall that a map 
$S \to T$ in $\Omega$
is called a face (resp. degeneracy)
if it is injective on edges (surjective on edges and preserves leaves).
Moreover, a face $F \to T$ is called \textit{inner} if it is obtained by iteratively removing inner edges (edges other than the root or the leaves). In particular, a face is inner if and only if it is tall. 
The usual degeneracy-face decomposition
\cite[Lemma 3.1]{MW07},\cite[Prop. 5.37]{Pe17}
thus combines with Proposition \ref{TALLOUTERDEC PROP} to give the following.

\begin{corollary}
	Any map $S \xrightarrow{\varphi} T$ in $\Omega$ has a factorization, unique up to unique isomorphisms,
\[
	S \xrightarrow{\varphi^-} U
	\xrightarrow{\varphi^i} V
	\xrightarrow{\varphi^u} T
\]
	as a degeneracy followed by an inner face followed by an outer face.
\end{corollary}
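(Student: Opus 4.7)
The plan is to chain together Proposition \ref{TALLOUTERDEC PROP} with the standard face--degeneracy decomposition in $\Omega$, exactly as the preceding paragraph suggests. Starting from $\varphi \colon S \to T$, I would first apply Proposition \ref{TALLOUTERDEC PROP} to obtain a factorization
\[
S \xrightarrow{\varphi^t} V \xrightarrow{\varphi^u} T
\]
with $\varphi^t$ tall and $\varphi^u$ an outer face, unique up to unique isomorphism. Then I would apply the classical face--degeneracy decomposition in $\Omega$ (as reviewed e.g.\ in \cite[\S 3]{MW07}) to $\varphi^t$, yielding a further factorization $S \xrightarrow{\varphi^-} U \xrightarrow{\varphi^f} V$ as a degeneracy followed by a face.

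The only substantive point is then to argue that the face $\varphi^f$ produced in step two is \emph{inner}. By the observation recorded in the paragraph preceding the corollary (a face is inner iff it is tall), it suffices to show that $\varphi^f$ is tall. Since a degeneracy only collapses unary vertices, $\varphi^-$ is a bijection on the leaf tuple and the root, so $\varphi^-(\underline{l}_S) = \underline{l}_U$ and $\varphi^-(r_S) = r_U$. Combined with tallness of $\varphi^t = \varphi^f \circ \varphi^-$, this forces $\varphi^f(\underline{l}_U) = \underline{l}_V$ and $\varphi^f(r_U) = r_V$, i.e.\ $\varphi^f$ is tall, hence inner.

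Uniqueness of the full three-step factorization up to unique isomorphism is then inherited in two stages: outer uniqueness comes from Proposition \ref{TALLOUTERDEC PROP}, and the uniqueness of the remaining decomposition $\varphi^t = \varphi^f \circ \varphi^-$ comes from the standard uniqueness of the face--degeneracy decomposition in $\Omega$. I expect the main (mild) obstacle to be exactly the verification that $\varphi^f$ is tall; everything else in the argument is formal consequences of already-established factorization results.
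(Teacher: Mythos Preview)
Your proof is correct. You chain the two factorization results in the opposite order from the paper: you first apply the tall--outer decomposition (Proposition \ref{TALLOUTERDEC PROP}) and then the degeneracy--face decomposition to the tall part, whereas the paper first performs the degeneracy--face decomposition and then applies Proposition \ref{TALLOUTERDEC PROP} to the resulting face. Both orders work; the only nontrivial check in each case is that the middle map lands in the correct class. In your order, this is the verification (which you carry out) that degeneracies are tall, forcing the face part of a tall map to again be tall and hence inner. In the paper's order, the analogous check is that the tall part of a face is again a face (since faces are injective on edges and the outer factor is injective), hence an inner face. Neither order has any real advantage over the other.
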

	

We will find it convenient  throughout to regard the 
groupoid $\Sigma$ of finite sets 
as the subcategory 
$\Sigma \hookrightarrow \Omega$
consisting of \textit{corollas}
(i.e. trees with a single vertex)
and isomorphisms.

\begin{notation}\label{UNIQCOR NOT}
	Given a tree $T \in \Omega$, there is a unique corolla $\mathsf{lr}(T) \in \Sigma$ and planar tall map 
	$\mathsf{lr}(T) \to T$, which we call the 
	\textit{leaf-root} of $T$ (this name is motivated by the equivariant analogue, discussed in \S \ref{LRVERT SEC}).
        \index{key functors!Gtreesleafroot1@$\mathsf{lr}$}
	Explicitly, the number of leaves of $\mathsf{lr}(T)$ matches that of $T$, together with the inherited order. 
\end{notation}

We now turn to discussing the substitution operation. We start with an example focused on the closely related notion of 
 iterated graftings of trees (as described in \eqref{GRAPTPUSH EQ}).

\begin{example}\label{GRAFTSUB EX}
The trees $U_1, U_2,\cdots, U_6$ on the left below can be grafted to obtain the tree $U$ in the middle.
More precisely (among other possible grafting orders), one has
\begin{equation}\label{UFORMULA EQ}
U = \left(
		\left(
			\left(
				\left(
					\left(U_6 \amalg_a U_2 \right)
				\right) \amalg_a U_1
			\right) \amalg_b U_3
		\right) \amalg_d U_5
	\right) \amalg_c U_4
\end{equation}
\begin{equation}\label{SUBSDATUMTREES EQ}
	\begin{tikzpicture}[grow=up,auto,level distance=2.1em,
	every node/.style = {font=\footnotesize,inner sep=2pt},
	dummy/.style={circle,draw,inner sep=0pt,minimum size=1.375mm}]
\begin{scope}[xshift=-2em]
	\begin{scope}
	\tikzstyle{level 2}=[sibling distance=2.25em]%
	\tikzstyle{level 3}=[sibling distance=1.25em]%
		\node at (-0.25,3.2) {$U_1$}
			child{node [dummy] {}
				child
				child{node [dummy] {}
					child
					child
				}
			edge from parent node {$a$}};
	\end{scope}
		\node at (-0.25,1.5) {$U_2$}
			child{
		edge from parent node {$a$}};
		\node at (1.15,1.5) {$U_3$}
			child{node [dummy] {}
		edge from parent node {$b$}};
	\begin{scope}
	\tikzstyle{level 2}=[sibling distance=0.875em]%
		\node at (2.2,3.2) {$U_4$}
			child{node [dummy] {}
				child{node [dummy] {}}
				child{node [dummy] {}}
			edge from parent node {$c$}};
	\end{scope}
	\begin{scope}
		\tikzstyle{level 2}=[sibling distance=1.25em]%
		\node at (2.5,1.5) {$U_5$}
			child{node [dummy] {}
				child{node[dummy] {}}
				child{
				edge from parent node {$c$}}
			edge from parent node [swap] {$d$}};
	\end{scope}
	\begin{scope}
	\tikzstyle{level 2}=[sibling distance=3.5em]%
	\tikzstyle{level 3}=[sibling distance=2.25em]%
		\node at (1,-1) {$U_6$}
			child{node [dummy] {}
				child[sibling distance = 5em]{node [dummy] {}
					child[sibling distance = 3.5em]{edge from parent node [swap,near end] {$d$} }
					child[sibling distance = 3.5em]{edge from parent node [near end] {$b$} }
				}
				child[sibling distance = 7em]{ edge from parent node {$a$} }
			edge from parent node [swap] {$e$}};
	\end{scope}
\end{scope}
\begin{scope}[yshift=1em]
	\begin{scope}[level distance=2.3em]
	\tikzstyle{level 2}=[sibling distance=3.5em]%
	\tikzstyle{level 3}=[sibling distance=2.25em]%
	\tikzstyle{level 4}=[sibling distance=1.25em]%
	\tikzstyle{level 5}=[sibling distance=0.875em]%
		\node at (5.5,0) {$U$}
			child{node [dummy] {}
				child[sibling distance =5em]{node [dummy] {}
					child[sibling distance =3.5em]{node [dummy] {}
						child{node [dummy] {}
						}
						child{node [dummy] {}
							child{node [dummy] {}}
							child{node [dummy] {}}
						edge from parent node [near end] {$c$}}
					edge from parent node [swap, near end] {$d$}}
					child[sibling distance =3.5em]{node [dummy] {}
					edge from parent node [near end] {$b$}}
				}
				child[sibling distance =7em]{node [dummy] {}
					child
					child{node [dummy] {}
						child
						child
					}
				edge from parent node {$a$}}
			edge from parent node [swap] {$e$}};
	\end{scope}
	\begin{scope}[level distance=2.3em]
	\tikzstyle{level 2}=[sibling distance=2.3em]%
	\tikzstyle{level 4}=[sibling distance=1em]%
		\node at (10,0.3) {$T$}
			child{node [dummy] {}
				child{node [dummy] {}
					child{node [dummy] {}
					edge from parent node [swap] {$c$}}	
				edge from parent node [swap] {$d$}}
				child{node [dummy] {}
				edge from parent node [near end,swap] {$b$}}
				child{node [dummy] {}
					child{node [dummy] {}
						child
						child
						child
					edge from parent node {$a_1$}}
				edge from parent node {$a_2$}}
			edge from parent node [swap] {$e$}};
	\end{scope}
	\draw [->,dashed] (8.6,1.25) -- node {$\varphi$} (7.1,1.25);
\end{scope}
	\end{tikzpicture}
\end{equation}
We now consider the tree $T$, which is built by converting each $U_i$ into the corolla $\mathsf{lr}(U_i)$, and then performing the same grafting operations as in \eqref{UFORMULA EQ}. $T$ can then be regarded as encoding the combinatorics of the iterated grafting in \eqref{UFORMULA EQ}, with alternative ways to reparenthesize
operations in \eqref{UFORMULA EQ} in bijection with ways to assemble $T$ out of its nodes.

One can now therefore think of the iterated grafting \eqref{UFORMULA EQ} as being instead encoded by the tree $T$ together with the (unique) planar tall maps $\varphi_i$ below.
\begin{equation}\label{SUBSDATUMTREES2 EQ}
	\begin{tikzpicture}[grow=up,auto,level distance=2.25em,
	every node/.style = {font=\footnotesize, inner sep=2pt},
	dummy/.style={circle,draw,inner sep=0pt,minimum size=1.375mm}]
	\begin{scope}
	\tikzstyle{level 2}=[sibling distance=0.75em]%
		\node at (0,0) {$T_{a_1^{\uparrow}\leq a_1}$}
			child{node [dummy] {}
				child
				child
				child
			edge from parent node [swap] {$a_1$}};
	\end{scope}	
	\begin{scope}
	\tikzstyle{level 2}=[sibling distance=2.25em]%
	\tikzstyle{level 3}=[sibling distance=1.25em]%
		\node at (2,0) {$U_1$}
			child{node [dummy] {}
				child
				child{node [dummy] {}
					child
					child
				}
			edge from parent node [swap] {$a$}};
	\end{scope}
		\draw [->] (0.75,0.7) -- node [swap]{$\varphi_1$} (1.25,0.7);
		\node at (4.5,0) {$T_{a_2^{\uparrow}\leq a_2}$}
			child{node [dummy] {}
				child{
				edge from parent node [swap] {$a_1$}}
			edge from parent node [swap] {$a_2$}};
		\node at (6.5,0) {$U_2$}
			child{
			edge from parent node [swap] {$a$}};
		\draw [->] (5.25,0.7) -- node [swap]{$\varphi_2$} (5.75,0.7);
		\node at (9,0) {$T_{b^{\uparrow}\leq b}$}
			child{node [dummy] {}
			edge from parent node [swap] {$b$}};
		\node at (11,0) {$U_3$}
			child{node [dummy] {}
			edge from parent node [swap] {$b$}};
		\draw [->] (9.75,0.7) -- node [swap]{$\varphi_3$} (10.25,0.7);
	\begin{scope}[yshift=-2.55cm]
		\node at (0,0) {$T_{c^{\uparrow}\leq c}$}
			child{node [dummy] {}
			edge from parent node [swap] {$c$}};
	\begin{scope}
	\tikzstyle{level 2}=[sibling distance=0.875em]%
		\node at (2,0) {$U_4$}
			child{node [dummy] {}
				child{node [dummy] {}}
				child{node [dummy] {}}
			edge from parent node  [swap]{$c$}};
	\end{scope}
	\draw [->] (0.75,0.7) -- node [swap]{$\varphi_4$} (1.25,0.7);
		\node at (4.5,0) {$T_{d^{\uparrow}\leq d}$}
			child{node [dummy] {}
				child{
				edge from parent node [swap] {$c$}}
			edge from parent node [swap] {$d$}};
	\begin{scope}
	\tikzstyle{level 2}=[sibling distance=1.25em]%
		\node at (6.5,0) {$U_5$}
			child{node [dummy] {}
				child{node[dummy] {}}
				child{
				edge from parent node {$c$}}
			edge from parent node [swap] {$d$}};
	\end{scope}
	\draw [->] (5.25,0.7) -- node [swap]{$\varphi_5$} (5.75,0.7);
	\begin{scope}
	\tikzstyle{level 2}=[sibling distance=2em]%
		\node at (9,0) {$T_{e^{\uparrow}\leq e}$}
			child{node [dummy] {}
				child{ edge from parent node [swap] {$d$} }
				child[level distance=2.75em]{ edge from parent node [near end,swap] {$b$} }
				child{ edge from parent node {$a_2$} }
			edge from parent node [swap] {$e$}};
	\end{scope}
	\begin{scope}
	\tikzstyle{level 2}=[sibling distance=3.5em]%
	\tikzstyle{level 3}=[sibling distance=2.25em]%
		\node at (11,0) {$U_6$}
			child{node [dummy] {}
				child{node [dummy] {}
					child{ edge from parent node [swap,near end] {$d$} }
					child{ edge from parent node [near end]{$b$} }
				}
				child{ edge from parent node {$a$} }
			edge from parent node {$e$}};
	\end{scope}
	\draw [->] (9.75,0.7) -- node [swap]{$\varphi_6$} (10.25,0.7);
	\end{scope}
	\end{tikzpicture}
\end{equation}
From this perspective, $U$ can now be thought of as obtained from $T$ by \textit{substituting} each of its nodes with the corresponding $U_i$. Moreover, the $\varphi_i$ assemble to a planar tall map 
$\varphi \colon T \to U$ (such that $a_i \mapsto a,b \mapsto b,\cdots,e \mapsto e$), which likewise encodes the same information.

\end{example}

One of the fundamental ideas shaping our perspective on operads
is then that substitution data as in \eqref{SUBSDATUMTREES2 EQ}
can equivalently be repackaged using planar tall maps. 

\begin{definition}\label{SUBSTITUTIONDATUM}
	Let $T \in \Omega$ be a tree.
	
	A \textit{$T$-substitution datum} is a tuple 
	$\left(U_{e^{\uparrow} \leq e}\right)_{(e^{\uparrow} \leq e)\in V(T)}$ together with tall maps
	$T_{e^{\uparrow}\leq e} \to U_{e^{\uparrow}\leq e}$.
	Further, a map of $T$-substitution data 
	$\left(U_{e^{\uparrow} \leq e}\right) \to \left(V_{e^{\uparrow} \leq e}\right)$ is a tuple of tall maps $\left(U_{e^{\uparrow} \leq e}\to V_{e^{\uparrow} \leq e}\right)$ compatible with the substitution maps.
	
	Lastly, a substitution datum is called \textit{planar}
        if the chosen maps are planar (so that 
	$\mathsf{lr}(U_{e^{\uparrow} \leq e}) = T_{e^{\uparrow} \leq e}$),
        and a morphism between planar data is called a \textit{planar morphism} if it consists of a tuple of planar maps.
	
	We denote the category of (resp. planar) $T$-substitution data 
	by $\mathsf{Sub}(T)$ (resp. $\mathsf{Sub}_{\mathsf{p}}(T)$).
\end{definition}

\begin{definition}
	Let $T \in \Omega$ be a tree. 
	The \textit{Segal core poset $\mathsf{Sc}(T)$} is the poset with objects the single edge subtrees $\eta_e$ and vertex subtrees $T_{e^{\uparrow} \leq e}$, ordered by inclusion.
\end{definition}

\begin{remark}\label{SCTARR REM}
Note that the only non-identity arrows in $\mathsf{Sc}(T)$ are inclusions of the form $\eta_a \hookrightarrow T_{e^{\uparrow}\leq e}$.
In particular, one can not compose non-identity arrows in $\mathsf{Sc}(T)$.
\end{remark}

Given a $T$-substitution datum 
$\left(U_{e^{\uparrow} \leq e}\right)_{(e^{\uparrow} \leq e)\in V(T)}$
we abuse notation by writing
\[U_{(\minus)} \colon \mathsf{Sc}(T) \to \Omega\]
for the functor $\eta_a \mapsto \eta$, $T_{e^{\uparrow} \leq e} \mapsto U_{e^{\uparrow} \leq e}$  
and sending the inclusions $\eta_a \subset T_{e^{\uparrow} \leq e}$
to the composites
\[
\eta \xrightarrow{a} T_{e^{\uparrow} \leq e}  \to 
U_{e^{\uparrow} \leq e}.\]

\begin{proposition}\label{SUBDATAUNDERPLAN PROP}
Let $T \in \Omega$ be a tree. There is an isomorphism of categories
\[
\begin{tikzcd}[row sep =0pt]
	\mathsf{Sub}_{\mathsf{p}}(T) \ar[r,shift left=2pt] &
	T \downarrow \Omega^{\mathsf{pt}} \ar[l,shift left=2pt]
\\
	\left(U_{e^{\uparrow} \leq e}\right) \ar[r,mapsto] & 
	\left(T \to \colim_{\mathsf{Sc}(T)} U_{(\minus)}\right)
\\
	\left(U_{\varphi(e^{\uparrow}) \leq \varphi(e)}\right) &
	(T \xrightarrow{\varphi} U) \ar[l,mapsto]
\end{tikzcd}
\]
where $T \downarrow \Omega^{\mathsf{pt}}$ denotes
the category of planar tall maps under $T$
and $\colim_{\mathsf{Sc}(T)} U_{(\minus)}$ is chosen in the unique way that makes the inclusions of the $U_{e^{\uparrow} \leq e}$ planar. 
\end{proposition}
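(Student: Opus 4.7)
The plan is to verify that the two candidate functors are well defined, then check they are mutually inverse.

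For the forward functor, given a planar $T$-substitution datum $(U_{e^{\uparrow}\leq e})$, I would compute $U := \colim_{\mathsf{Sc}(T)} U_{(\minus)}$ by iteration along a linear extension of the descendancy order on $V(T)$ starting at the root. At each stage, the new $U_{e^{\uparrow}\leq e}$ is glued to the partial colimit along the single edge $\eta_e$, which is exactly a grafting pushout of the form (\ref{GRAPTPUSH EQ}); the planar uniqueness clause there propagates planar structures inductively, producing a canonical planar tree $U$ into which each $U_{e^{\uparrow}\leq e}$ embeds as a planar outer face. Since the structure maps $T_{e^{\uparrow}\leq e}\to U_{e^{\uparrow}\leq e}$ are tall and jointly exhaust the leaves and root of $T$, the induced map $T\to U$ is planar tall; functoriality in the datum is then automatic from the universal property of the colimit.

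For the inverse functor, given a planar tall map $\varphi\colon T\to U$, for each vertex $e^{\uparrow}\leq e$ of $T$ I would consider the outer face $U_{\varphi(e^{\uparrow})\leq \varphi(e)}$, equipped with the planar structure pulled back from $U$ via Proposition \ref{PLANARPULL PROP}. Planarity of $\varphi$ ensures the restriction $T_{e^{\uparrow}\leq e}\to U_{\varphi(e^{\uparrow})\leq \varphi(e)}$ is planar, and because $T_{e^{\uparrow}\leq e}$ is a corolla whose leaf tuple and root match those of the target, this restriction is also tall, identifying $T_{e^{\uparrow}\leq e}$ with $\mathsf{lr}(U_{\varphi(e^{\uparrow})\leq \varphi(e)})$ as demanded by the planar condition in Definition \ref{SUBSTITUTIONDATUM}.

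For the two compositions, starting from $(U_{e^{\uparrow}\leq e})$ and passing through $U$ then back recovers each $U_{e^{\uparrow}\leq e}$ as the outer face of $U$ at the vertex $e^{\uparrow}\leq e$; this is immediate from the iterative grafting description, since grafting is compatible with taking outer faces. Conversely, starting from $\varphi\colon T\to U$ and passing through $(U_{\varphi(e^{\uparrow})\leq \varphi(e)})$ then back, one observes that these outer faces jointly cover $U$ and overlap only along the single-edge subtrees $\eta_{\varphi(a)}$ indexed by the inner edges $a$ of $T$; the universal property of $\colim_{\mathsf{Sc}(T)}$ together with the planar uniqueness of (\ref{GRAPTPUSH EQ}) then recovers $U$ along with $\varphi$.

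The main obstacle is making the iterated-grafting computation of the Segal core colimit precise and verifying its canonical planar structure. Granting a single grafting pushout (\ref{GRAPTPUSH EQ}) as the base case, I must check that the assembled planar structure on $U$ is independent of the chosen linear extension of $V(T)$; this reduces, via Proposition \ref{PLANARIZATIONCHAR PROP}, to the commutativity of two adjacent graftings at $\leq_d$-incomparable vertices, which is itself a direct application of (\ref{GRAPTPUSH EQ}) performed in either order on the disjoint edges being grafted along.
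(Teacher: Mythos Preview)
Your proposal is correct and follows the same strategy as the paper: both compute the Segal-core colimit via iterated grafting pushouts (\ref{GRAPTPUSH EQ}), with the paper organizing the induction through a binary decomposition $T=R\amalg_e S$ (so that $\mathsf{Sc}(T)=\mathsf{Sc}(R)\cup_{\{\eta_e\}}\mathsf{Sc}(S)$) while you iterate one vertex at a time along a linear extension. One small point the paper records explicitly and you leave implicit is that obtaining the induced map $T\to U$ requires $T=\colim_{\mathsf{Sc}(T)} T_{(\minus)}$, which is part of the paper's parallel inductive package (alongside existence of the colimit, the vertex decomposition $V(U)=\coprod_{V(T)}V(U_{e^{\uparrow}\leq e})$, and planar-tallness of $T\to U$) and follows from your same grafting argument applied to the identity datum.
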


\begin{proof}
We first show in parallel that:
\begin{enumerate*}
\item[(i)] $\colim_{\mathsf{Sc}(T)} U_{(\minus)}$, which we denote $U_T$, exists;
\item[(ii)] for the datum $\left(T_{e^{\uparrow}\leq e}\right)$, it is $T = \colim_{\mathsf{Sc}(T)} T_{(\minus)}$;
\item[(iii)] $V(U_T) = \coprod_{V(T)} V(U_{e^{\uparrow} \leq e})$;
\item[(iv)] the induced map
$T \to U_T$ is planar tall.
\end{enumerate*}
 
The argument is by induction on the number of vertices of $T$, with the base cases of $T$ with $0$ or $1$ vertices being immediate, since then $T$ is the terminal object of $\mathsf{Sc}(T)$.
Otherwise, one can choose a non trivial grafting decomposition so as to write $T = R \amalg_e S$, resulting 
in identifications 
$\mathsf{Sc}(R) \subset \mathsf{Sc}(T)$, 
$\mathsf{Sc}(S) \subset \mathsf{Sc}(T)$
with  
$\mathsf{Sc}(R) \cup \mathsf{Sc}(S) = \mathsf{Sc}(T)$
and 
$\mathsf{Sc}(R) \cap \mathsf{Sc}(S) = \{\eta_e \}$.
The existence of $U_T = \colim_{\mathsf{Sc}(T)}U_{(\minus)}$
is thus equivalent to the existence of the pushout below
(where the rightmost diagram merely simplifies notation).
\begin{equation}\label{ASSEMBLYGRAFT EQ}
\begin{tikzcd}
	\eta \ar{r}{e} \ar{d}[swap]{e} & \colim_{\mathsf{Sc}(R)}U_{(\minus)} \ar[dashed,d] &
	\eta \ar{r}{e} \ar{d}[swap]{e} & U_R \ar[dashed,d]	
\\
	\colim_{\mathsf{Sc}(S)}U_{(\minus)} \ar[dashed,r] &
	\colim_{\mathsf{Sc}(T)}U_{(\minus)} &
	U_S \ar[dashed,r] &
	U_T
\end{tikzcd}
\end{equation}
By induction, $U_R$ and $U_S$ exist for any $U_{(\minus)}$, 
equal $R$ and $S$ in the case $U_{(\minus)} = T_{(\minus)}$,
$V(U_R) = \coprod_{V(R)} V(U_{e^{\uparrow} \leq e})$
and likewise for $S$ (so that there are unique choices of $U_R$, $U_S$ making the inclusions of $U_{e^{\uparrow} \leq e}$ planar),
and the maps 
$R \to \colim_{\mathsf{Sc}(R)}U_{(\minus)}$,
$S \to \colim_{\mathsf{Sc}(S)}U_{(\minus)}$
are planar tall.
But it now follows that \eqref{ASSEMBLYGRAFT EQ} is a grafting pushout diagram (cf. \eqref{GRAPTPUSH EQ}), so that the pushout indeed exists. The conditions
$T = \colim_{\mathsf{Sc}(T)}T_{(\minus)}$,
$V(U_T) = \coprod_{V(T)} V(U_{e^{\uparrow} \leq e})$, 
and that
$T \to \colim_{\mathsf{Sc}(T)}U_{(\minus)}$
is planar tall follow.

The fact that the two functors in the statement
are inverse to each other is clear from the same inductive argument.
\end{proof}

\begin{corollary}\label{SUBDATAUNDERPLAN COR}
Let $T \in \Omega$ be a tree. The formulas in
Proposition \ref{SUBDATAUNDERPLAN PROP}
give an isomorphism of categories
\[
\begin{tikzcd}[row sep =0pt]
	\mathsf{Sub}(T) \ar[r,shift left=2pt] &
	T \downarrow \Omega^{\mathsf{t}} \ar[l,shift left=2pt]
\end{tikzcd}
\]
where $T \downarrow \Omega^{\mathsf{t}}$ denotes
the category of tall maps under $T$.
\end{corollary}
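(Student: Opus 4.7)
The plan is to deduce Corollary \ref{SUBDATAUNDERPLAN COR} from Proposition \ref{SUBDATAUNDERPLAN PROP} by a re-planarization argument built on Proposition \ref{PLANARPULL PROP}. The forward functor $T \downarrow \Omega^{\mathsf{t}} \to \mathsf{Sub}(T)$ is defined exactly as in the planar case: a tall map $\varphi \colon T \to U$ restricts over each vertex $(e^{\uparrow}\leq e)$ of $T$ to a tall map $T_{e^{\uparrow}\leq e} \to U_{\varphi(e^{\uparrow})\leq \varphi(e)}$ (tall because $\varphi$ preserves both leaves and roots), and the resulting tuple is manifestly functorial in $\varphi$.

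For the inverse, I would convert each tall substitution datum to a planar one on an isomorphic tree. Given $(U_{e^{\uparrow}\leq e}, \psi_e \colon T_{e^{\uparrow}\leq e} \to U_{e^{\uparrow}\leq e})$, each $\psi_e$ is tall and hence independent, so Proposition \ref{PLANARPULL PROP} yields a unique factorization
\[
T_{e^{\uparrow}\leq e} \xrightarrow{\sigma_e} \tilde T_{e^{\uparrow}\leq e} \xrightarrow{\bar\psi_e} U_{e^{\uparrow}\leq e}
\]
with $\sigma_e$ an isomorphism of corollas (reordering the inputs to agree with their image ordering in $U_{e^{\uparrow}\leq e}$) and $\bar\psi_e$ planar. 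Let $\tilde T$ denote the tree with the same underlying broad poset as $T$ but with vertex input orderings coming from the $\tilde T_{e^{\uparrow}\leq e}$. The $\sigma_e$'s assemble into a single isomorphism $\sigma \colon T \xrightarrow{\simeq} \tilde T$ in $\Omega$, and $(U_{e^{\uparrow}\leq e}, \bar\psi_e)$ is a planar $\tilde T$-substitution datum. Proposition \ref{SUBDATAUNDERPLAN PROP} then supplies a planar tall map $\tilde\varphi \colon \tilde T \to U_T$, where $U_T = \colim_{\mathsf{Sc}(\tilde T)} U_{(-)}$, and I set $\varphi := \tilde\varphi \circ \sigma \colon T \to U_T$.

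The two functors are mutually inverse almost by construction: the forward functor applied to $\varphi = \tilde\varphi \circ \sigma$ recovers the original $\psi_e$'s since the factorization $\bar\psi_e \circ \sigma_e$ is unique, and going the other way the planar case of Proposition \ref{SUBDATAUNDERPLAN PROP} produces exactly the restrictions of $\tilde\varphi$. Morphisms in $\mathsf{Sub}(T)$ are tuples of (not necessarily planar) tall maps and can be handled by the same re-planarization applied componentwise.

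The main obstacle to watch out for is verifying that the tree $U_T$ depends only on the input datum and not on auxiliary choices — this follows from the uniqueness half of Proposition \ref{PLANARPULL PROP} together with the fact that the inductive construction of $U_T$ via iterated grafting pushouts in the proof of Proposition \ref{SUBDATAUNDERPLAN PROP} works in $\Omega$ without any planarity hypothesis; the passage through $\tilde T$ is purely a bookkeeping device to legally invoke the planar proposition, and the underlying colimit is intrinsic to the substitution datum.
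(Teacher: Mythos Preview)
Your proposal is correct and follows essentially the same approach as the paper: both reduce to the planar case via Proposition \ref{PLANARPULL PROP}, using the observation (which the paper phrases as a restatement of Proposition \ref{PLANARIZATIONCHAR PROP}) that isomorphisms $T \to T'$ correspond to substitution data consisting of isomorphisms. Your version is more explicit about constructing the inverse functor, while the paper compresses the argument into two sentences, but the underlying idea is identical.
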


\begin{proof}
	This is a consequence of Proposition \ref{PLANARPULL PROP} together with the previous result.
	Indeed, Proposition \ref{PLANARIZATIONCHAR PROP} can be restated as saying that isomorphisms $T \to T'$ are in bijection with substitution data consisting of isomorphisms, and thus  bijectiveness of $\mathsf{Sub}(T) \to T \downarrow \Omega^{\mathsf{t}}$ reduces to that in the previous result.
\end{proof}

\begin{notation}\label{UEUPE NOT}
	Following the previous results, 
	given a map of trees 
	$\varphi \colon T \to U$
	and vertex
	$(e^{\uparrow} \leq e) \in V(T)$
	we will abbreviate
	$U_{\varphi(e^{\uparrow}) \leq \varphi(e)}$
	as simply
	$U_{e^{\uparrow} \leq e}$\index{structure on trees!outer faces!UEUPE@$U_{e^{\uparrow} \leq e}$}.
\end{notation}

\begin{remark}\label{VERTEXDECOMP REM}
As noted in the proof of Proposition \ref{SUBDATAUNDERPLAN PROP}, writing $U = \colim_{\mathsf{Sc}(T)}U_{(\minus)}$,
	one has 
\begin{equation}\label{VERTEXDECOMP EQ}
	V(U) = \coprod_{(e^{\uparrow} \leq e) \in V(T)}
	V(U_{e^{\uparrow} \leq e}).
  \end{equation}
    Alternatively, \eqref{VERTEXDECOMP EQ} can be regarded as a map 
    $\varphi^{\**} \colon V(U) \to V(T)$ induced by the planar tall map 
    $\varphi \colon T \to U$.
    Explicitly, $\varphi^{\**}(U_{u^{\uparrow} \leq u})$ 
    is the unique $T_{t^{\uparrow}\leq t}$ such that
    there is an inclusion of outer faces $U_{u^{\uparrow} \leq u} \hookrightarrow U_{t^{\uparrow} \leq t}$,
    so that $\varphi^{\**}$ indeed depends contravariantly on the tall map $\varphi$.
\end{remark}

\begin{remark}\label{INPPATH REM}
Suppose that $e \in T$ has input path
$I_T(e) = (e=e_n < e_{n-1} < \cdots < e_0)$.
It is intuitively clear that for a tall map 
$\varphi \colon T \to U$ the input path of $\varphi(e)$ is built by gluing input paths in the $U_{t^{\uparrow} \leq t}$. More precisely (and omitting $\varphi$ for readability), one has
\[
	I_U\left(e_n \right) \simeq 
	I_{n-1}(e_n) \amalg_{e_{n-1}} I_{n-2}(e_{n-1})
	\amalg_{e_{n-2}} \cdots
	\amalg_{e_1} I_1(e_0).
\]
where $I_k(\minus)$ denotes the input path in $U_{e_k^{\uparrow} \leq e_k}$.
More formally, this follows from the characterization of 
predecessors in Proposition \ref{INPUTPREDECESSORPROP PROP}(b).
\end{remark}

We end this section with a pair of results that will allow us to reverse the substitution procedure of 
Proposition \ref{SUBDATAUNDERPLAN PROP}
and will be needed in \S \ref{EXTTREE SEC}.
Recall that the single edge tree $\eta \in \Omega$ is called the stick tree,
cf. Notation \ref{STICKTRE NOT}.

\begin{proposition}\label{BUILDABLE PROP}
	Let $U \in \Omega$ be a tree. Then:
\begin{itemize}
	\item[(i)] given non-stick outer subtrees $U_i$ such that 
	$V(U) = \coprod_i V(U_i)$ there is a unique tree $T$ and planar inner face $T \to U$ such that the sets $\{U_i\}$, $\{U_{e^{\uparrow}\leq e}\}$ coincide;
	\item[(ii)] given multiplicities $m_e \geq 1$ for each edge $e \in U$, there is a unique planar degeneracy $\rho \colon T \to U$ such that $\rho^{-1}(e)$ has $m_e$ elements;
	\item[(iii)] planar tall maps $T \to U$ are in bijection with collections $\{U_i\}$ of outer subtrees such that $V(U) = \coprod_i V(U_i)$ and $U_j$ is not an inner edge of any $U_i$ whenever $U_j \simeq \eta$ is a stick.
\end{itemize}
\end{proposition}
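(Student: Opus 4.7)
The plan is to prove (i), (ii), (iii) in order, with (iii) being the synthesis of the first two via the tall/degeneracy splitting. Throughout, uniqueness will follow immediately from the bijections of Proposition \ref{SUBDATAUNDERPLAN PROP} and Corollary \ref{SUBDATAUNDERPLAN COR}, which identify (planar) tall maps out of $T$ with $T$-substitution data.

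For (i), the first step is the key combinatorial observation that, since $V(U) = \coprod_i V(U_i)$, each inner edge $e$ of $U$ is either interior to a unique $U_i$ or is simultaneously the root of exactly one $U_i$ and a leaf of exactly one other $U_j$: indeed, if $e$ were the root of $U_i$ and interior to $U_j$, then the vertex with root $e$ would lie in $V(U_i) \cap V(U_j)$, violating disjointness. I then construct $T$ explicitly with vertex set $\{U_i\}$, arity at $U_i$ equal to $|\underline{l}_{U_i}|$, and edge set in bijection with the root of $U$, the leaves of $U$, and the shared-boundary inner edges identified above. The planar structure on $T$ is transferred via Proposition \ref{PLANARIZATIONCHAR PROP}, ordering the input edges at each vertex $U_i$ by their order as leaves of $U_i$ in $U$. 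The planar tall map $\varphi \colon T \to U$ is then the evident inclusion on edges, and planarity is checked directly. Uniqueness of the pair $(T,\varphi)$ follows from Proposition \ref{SUBDATAUNDERPLAN PROP}, as $\{U_i\}$ is precisely the planar substitution datum of $\varphi$.

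For (ii), the plan is an explicit construction of $T$: replace each edge $e \in U$ with a linear chain of $m_e$ edges joined by $m_e - 1$ unary vertices, inheriting the planar structure from $U$. The evident collapsing map $\rho \colon T \to U$ is then a planar degeneracy by construction. Uniqueness is clear since a planar degeneracy is pinned down up to unique isomorphism by its fiber sizes $|\rho^{-1}(e)|$.

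For (iii), the plan is to glue the previous two constructions. Given the collection $\{U_i\}$ with $V(U) = \coprod_i V(U_i)$ and sticks allowed (subject to the stated condition), isolate the non-stick subtrees $\{U_i^{ns}\}$. These still cover $V(U)$ (sticks have no vertices), so by (i) they produce a unique planar tall map $T' \to U$ with no unary vertices. The constraint that no stick $U_j = \eta_{e_j}$ is an inner edge of any $U_i^{ns}$ guarantees that $e_j$ corresponds to a genuine edge of $T'$ (a leaf, root, or shared boundary, rather than an edge absorbed into the interior of some non-stick $U_i$). Define multiplicities on the edges of $T'$ by one plus the count of sticks at each edge, and apply (ii) to obtain a planar degeneracy $T \to T'$, whose composite with $T' \to U$ is the sought planar tall map. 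Conversely, any planar tall $\varphi \colon T \to U$ admits an essentially unique factorization as a planar degeneracy followed by a planar tall map with no unary vertices (via the standard face/degeneracy decomposition), producing the sticks and non-sticks respectively. The main obstacle will be the bookkeeping of (iii): one must verify that the stick condition is both necessary and sufficient for consistently placing stick decorations on $T'$, and that the composite planar structure remains coherent through $T \to T' \to U$; both follow from Proposition \ref{PLANARIZATIONCHAR PROP} together with the edge classification established in (i).
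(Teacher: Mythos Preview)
Your plan is broadly correct and reaches the same conclusion as the paper, but the route is genuinely different. The paper argues (i) and (ii) by induction on grafting decompositions: for (i) it picks an inner edge $e$ of $U$ that is the root of some $U_i$, splits $U$ as a grafting pushout $U^{\leq e} \amalg_\eta U_{\nless e}$, partitions the collection $\{U_i\}$ accordingly, and recurses (and similarly for (ii) via nested induction on $|V(U)|$ and $\sum m_e$). This has the advantage that both existence and uniqueness of $T$ come for free from the established grafting machinery, and one never has to verify directly that the object built is a dendroidally ordered broad poset. Your direct construction is more transparent conceptually, but you would still owe the reader a check that the edge and vertex data you write down actually assemble into a tree in the sense of \cite[Def.~5.9]{Pe17}; this is where the paper's inductive approach pays off.

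Two localized issues in your plan are worth flagging. First, your appeal to Proposition~\ref{SUBDATAUNDERPLAN PROP} for uniqueness in (i) is not quite what that proposition gives: it establishes a bijection $\mathsf{Sub}_{\mathsf p}(T) \simeq T \downarrow \Omega^{\mathsf{pt}}$ for a \emph{fixed} $T$, so it yields uniqueness of $\varphi$ once $T$ is known, but not uniqueness of $T$ itself. You need a separate (easy) argument that any other $(T',\varphi')$ with the same substitution data must have the same edge set and incidence relations as your constructed $T$; the paper handles this via the inductive decomposition instead. Second, in (iii) the phrase ``with no unary vertices'' for $T'$ is incorrect: if some non-stick $U_i$ has a single leaf, the corresponding vertex of $T'$ is unary. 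What you actually need (and what your argument uses) is only that $T' \to U$ is a planar \emph{inner face}, i.e.\ injective on edges; the rest of your (iii) goes through unchanged and matches the paper's degeneracy--inner-face factorization.
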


\begin{proof}
	We first show (i) by induction on the number of subtrees $U_i$. The base case $\{U_i\}=\{U\}$ is immediate, setting 
	$T= \mathsf{lr}(U)$. Otherwise, $U$ must not be a corolla.
	Letting $e$ be an edge that is both an inner edge of $U$ and a root of some $U_i$, one can form a grafting pushout diagram
\begin{equation} \label{DECOMPPROOF EQ}
\begin{tikzcd}
	\eta \ar{r}{e} \ar{d}[swap]{e} & U^{\leq e} \ar{d}
\\
	U_{\nless e} \ar{r} & U
\end{tikzcd}
\end{equation}
where $U^{\leq e}$ (resp. $U_{\nless e}$) is the outer face consisting of the edges $u \leq_d e$ (resp. $u \nless_d e$).
Since there is a unique $U_i$ containing the vertex $e^{\uparrow} \leq e$, 
it follows from the definition of outer face that there is a
non trivial partition 
$\{U_i\} = \{U_i|U_i \hookrightarrow U^{\leq e}\} 
\amalg \{U_i|U_i \hookrightarrow U_{\nless e}\}$. 
The existence of $T \to U$ now follows from the induction hypothesis. For uniqueness, the condition that no $U_i$ is a stick guarantees that $T$ possesses a single inner edge mapping to $e$, and thus admits a compatible decomposition as in \eqref{DECOMPPROOF EQ}, so that uniqueness too follows from the induction hypothesis.

For (ii), we argue existence by nested induction on the number of vertices $|V(U)|$ and the sum of the multiplicities $m_e$. The base case $|V(U)|=0$, i.e., $U = \eta$ is immediate. Otherwise, writing $m_e = m'_e +1$, one can form a decomposition \eqref{DECOMPPROOF EQ} where either $|V(U^{\leq e})|,|V(U_{\nless e})|<|V(U)|$ or one of $U^{\leq e},U_{\nless e}$ is $\eta$, so that $T \to U$ can be built via the induction hypothesis. For uniqueness, note first that 
by \cite[Lemma 5.33]{Pe17} each pre-image $\rho^{-1}(e)$ is linearly ordered and by the ``further'' claim in 
\cite[Cor. 5.39]{Pe17} the remaining broad relations are precisely the pre-image of the non-identity relations in $U$, showing that the underlying broad poset of the tree $T$ is unique  up to isomorphism. Strict uniqueness is then 
Proposition \ref{PLANARPULL PROP}.

(iii) follows by combining (i) and (ii). Indeed, any planar tall map $T \to U$ has a unique factorization 
$T \twoheadrightarrow \bar{T} \hookrightarrow U$
as a planar degeneracy followed by a planar inner face, and each  of these maps is classified by the data in (b) and (a).
\end{proof}

\begin{lemma}\label{OUTERFACEUNION LEM}
	Suppose $T_1,T_2 \hookrightarrow T$ are two outer faces with at least one common edge $e$. Then there exists an unique outer face $T_1 \cup T_2$ such that 
	$V(T_1 \cup T_2) = V(T_1) \cup V(T_2)$.
\end{lemma}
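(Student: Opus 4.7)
Uniqueness is immediate: via the bijection (b) between outer faces and broad relations, an outer face is determined by its vertex set, since the root (unique $\leq_d$-maximum edge) and leaf tuple ($\leq_d$-minimal edges) can be read off from $V(T_{\underline{e}\leq e})$; in the corner case $V(T_1)=V(T_2)=\emptyset$, both are sticks and the common-edge hypothesis forces $T_1=T_2=\eta_e$.

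For existence, first handle sticks: if $T_i=\eta_e$ then $V(T_i)=\emptyset$, and since $\eta_e$ is automatically an outer face of $T_{3-i}$, set $T_1\cup T_2:=T_{3-i}$. Assume henceforth neither is a stick. By Lemma \ref{INCOMPNOTOP} applied to $e\leq_d e_1$ and $e\leq_d e_2$ the roots are $\leq_d$-comparable, so WLOG $e_1\leq_d e_2$. I then note that $e_1\in E(T_2)$: we have $e_1\leq_d e_2$ by WLOG, and $e_1<_d e_{2,j}$ would combine with $e\leq_d e_1$ to yield $e<_d e_{2,j}$, contradicting $e\in E(T_2)$.

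Define $\underline{e_3}$ to consist of the $\leq_d$-minimal elements of $\underline{e_1}\cup\underline{e_2}$, endowed with the planar structure inherited from $T$ via Proposition \ref{PLANARPULL PROP}, and set $T_1\cup T_2:=T_{\underline{e_3}\leq e_2}$. Pairwise $\leq_d$-incomparability of $\underline{e_3}$ is immediate by construction. Two verifications then remain: (i) $\underline{e_3}\leq e_2$ is a valid broad relation in $T$ (so that the outer face $T_{\underline{e_3}\leq e_2}$ exists), and (ii) $V(T_{\underline{e_3}\leq e_2})=V(T_1)\cup V(T_2)$. Part (ii) is a direct double-containment computation using the edge formula $f\in T_{\underline{e}\leq e}\iff f\leq_d e \text{ and } \forall_k\, f\nless_d e_k$ of Definition \ref{OUTFACE DEF}, leveraging $e_1\in E(T_2)$ to match edges across the two sides.

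The main obstacle is (i), specifically showing $\bigvee\underline{e_3}=e_2$, since removing arbitrary non-minimal elements from a set of edges can a priori decrease its join. The approach is to verify that $\underline{e_3}$ is a valid cut of the subtree of $T$ above $e_2$, i.e., every leaf $\ell$ of $T$ with $\ell\leq_d e_2$ lies above a unique element of $\underline{e_3}$. When $\ell\leq_d e_1$, trace $\ell$ to its unique cover $e_{1,i}\in\underline{e_1}$: either $e_{1,i}\in\underline{e_3}$, or some $e_{2,j}<_d e_{1,i}$ witnesses non-minimality, and then a short argument using Lemma \ref{INCOMPNOTOP} and the pairwise incomparability inside $\underline{e_1}$ produces $e_{2,j'}\in\underline{e_3}$ with $\ell\leq_d e_{2,j'}$. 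When $\ell\nleq_d e_1$, the cover comes directly from $\underline{e_2}$, with the constraint $e_1\in E(T_2)$ ruling out extraneous elements of $\underline{e_1}$ that could spoil minimality.
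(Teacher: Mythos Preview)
Your proof is correct but takes a genuinely different route from the paper's. The paper argues by induction on $|V(T)|$: after disposing of the trivial cases ($T$ a corolla, or one of the $T_i$ a root or leaf stick), one can always choose the common edge $e$ to be an inner edge of $T$, split $T$ at $e$ via a grafting decomposition as in (\ref{DECOMPPROOF EQ}), observe that $T_1$ and $T_2$ split compatibly, and invoke the induction hypothesis on each piece. This is a three-line argument leveraging the grafting machinery just established.

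Your approach instead builds $T_1\cup T_2$ explicitly as $T_{\underline{e_3}\leq e_2}$, where $e_2$ is the deeper of the two roots and $\underline{e_3}$ consists of the $\leq_d$-minimal elements of $\underline{e_1}\cup\underline{e_2}$. This is more hands-on and yields a concrete formula, at the cost of a longer verification. Two remarks: first, your step (i) relies on the characterization of broad relations $\underline{e}\leq e$ as ``cuts'' (every leaf $\ell\leq_d e$ of $T$ lies above a unique $e_i$), which is standard in the broad poset formalism of \cite{Pe17} but not stated explicitly here---you should cite or derive it. Second, in your Case~1 sketch the element you call $e_{2,j'}$ is in fact $e_{2,j}$ itself: once $e_{2,j}<_d e_{1,i}$, any $e_{1,i'}<_d e_{2,j}$ would give $e_{1,i'}<_d e_{1,i}$, violating incomparability inside $\underline{e_1}$, so $e_{2,j}\in\underline{e_3}$ directly. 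The paper's inductive argument sidesteps all such casework by reducing to smaller trees, while yours has the merit of making the union visible as a single explicit broad relation.
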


\begin{proof}
	The result is obvious if either
	$T$ is a corolla or if one of $T_1,T_2$
	is a stick subtree for a leaf or root.
	Otherwise, one can necessarily choose $e$ to be an inner edge of $T$, in which case all three of $T_1,T_2,T$ admit compatible decompositions as in \eqref{DECOMPPROOF EQ} and the result follows by induction on $|V(T)|$.
\end{proof}

\subsection{Equivariant leaf-root and vertex functors}\label{LRVERT SEC}

This section introduces two functors that are central to our definition of the category $\mathsf{Op}_G$ of
genuine equivariant operads: the leaf-root and vertex functors.

We start by recalling a key class of maps of $G$-trees.

\begin{definition}\label{QUOT DEF}
	Let $S = (S_y)_{y \in Y}$ and $T = (T_x)_{x \in X}$
	be $G$-trees.
	A map of $G$-trees 
	\[
	\varphi = (\phi, (\varphi_y))\colon S \to T
	\]
	is called a \textit{quotient} if each of the constituent tree maps
	\[
	\varphi_y \colon S_y \to T_{\phi(y)}
	\]
is an isomorphism of trees.	

The category of $G$-trees and quotients is denoted $\Omega_{G}^0$ (this notation is justified in \S \ref{PLANARSTRING SEC}).
\index{categories!of trees!Gtrees0stringsn@$\Omega_G^n$}
\end{definition}

\begin{remark}
	Quotients can alternatively be described as the cartesian arrows for the Grothendieck fibration
	$\Omega_G \xrightarrow{\mathsf{r}} \mathsf{O}_G$.
	We note that this is more general than the notion of root pullbacks (Figure \ref{FIGURE}), which are the \textit{chosen} cartesian arrows.
	More explicitly,
	root pullbacks are those quotientes for which
	$\varphi_y \colon S_y \to T_{\phi(y)}$
	is a planar isomorphism, i.e., an identity.
\end{remark}

\begin{definition}\label{GSYMCAT DEF}
	\index{categories!of operads/symmetric sequences!SymG@$\Sym_G(\mathcal V) = \V^{\Sigma_G^{op}}$}
	The \textit{$G$-symmetric category},
	whose objects we call \textit{$G$-corollas}, is the full subcategory 
	$\Sigma_G \hookrightarrow \Omega_{G}^0$ of those $G$-trees
	$C = (C_x)_{x \in X}$ such that some (and thus all) $C_x$ is a corolla $C_x \in \Sigma \hookrightarrow \Omega$
	(cf. Notation \ref{UNIQCOR NOT}).
        \index{categories!of trees!SigmaG@$\Sigma_G$}
\end{definition}

\begin{definition}
      The \textit{leaf-root functor} is the functor $\Omega_{G}^0 \xrightarrow{\mathsf{lr}} \Sigma_G$ defined by 
      \[
            \mathsf{lr}\left((T_x)_{x \in X}\right)=
            \left(\mathsf{lr}(T_x)\right)_{x \in X}.
      \]
      \index{key functors!Gtreesleafroot1@$\mathsf{lr}$}
\end{definition}

\begin{remark}
	The leaf-root functor extends 
	to a functor $\mathsf{lr} \colon \Omega^{\mathsf{t}}_G \to \Sigma_G$, 
	where $\Omega^{\mathsf{t}}_G$ is the category of tall maps, defined exactly as in Definition \ref{QUOT DEF}, but not to a functor defined on all arrows of $\Omega_G$.
        \index{categories!of trees!Gtrees0tall@$\Omega_G^{\mathsf t}$}
	Nonetheless, we will be primarily interested in the 
 restriction  
	$\Omega_{G}^0 \xrightarrow{\mathsf{lr}} \Sigma_G$.
\end{remark}

\begin{remark}\label{LEAFROOTEXAMP REM}
	Generalizing the remark in Notation \ref{UNIQCOR NOT},
	$\mathsf{lr}(T)$ can alternatively be characterized as being the \textit{unique} $G$-corolla which admits a (likewise unique) planar tall map $\mathsf{lr}(T) \to T$. Moreover, $\mathsf{lr}(T)$ can usually be regarded as the ``smallest inner face'' of $T$, obtained by removing all the inner edges, although this characterization fails when 
	$T=(\eta_x)_{x \in X}$ is a stick $G$-tree. Some examples with $G=\mathbb{Z}_{/4}$ follow.
\[
	\begin{tikzpicture}[grow=up,auto,level distance=2.3em,
	every node/.style = {font=\footnotesize,inner sep=3pt},
	dummy/.style={circle,draw,inner sep=0pt,minimum size=1.75mm}]
		\node at (0,0) {$T$}
			child{node [dummy] {}
				child[level distance=1.25em,sibling distance = 3em]{node [dummy] {}
					child[level distance=2.3em,sibling distance =0.75em]{
					edge from parent node [swap,near end] {$a+2$}}
					child[level distance=2.3em,sibling distance =0.75em]{
					edge from parent node [near end] {$a+3$}}
				edge from parent node [swap] {$b+1$}}
				child[sibling distance = 0.75em]{node [dummy] {}
				edge from parent node [swap, very near end] {$c+1$}}
				child[sibling distance = 1.25em]{node [dummy] {}
					child[sibling distance = 1em]{
					edge from parent node [swap,very near end] {$a+1$}}
					child[sibling distance = 1em]{
					edge from parent node [very near end] {$\phantom{1+}a$}}
				edge from parent node [very near end] {$b$}}
				child[level distance=1.25em,sibling distance = 2.5em]{node [dummy] {}
				edge from parent node {$c$}}
			edge from parent node [swap] {$d$}};
		\node at (0,-2) {$\mathsf{lr}(T)$}
			child{node [dummy] {}
				child[sibling distance=2em, level distance=1em]{
				edge from parent node [swap] {$a+2$}}
				child[sibling distance=0.75em,level distance=2.75em]{
				edge from parent node [very near end,swap] {$a+3$}}
				child[sibling distance=0.75em,level distance=2.75em]{
				edge from parent node [very near end] {$a+1$}}
				child[sibling distance=2em, level distance=1em]{
				edge from parent node {$\phantom{1+}a$}}
			edge from parent node [swap] {$d$}};
	\begin{scope}[xshift=-0.5em]
		\node at (4,0) {}
			child{node [dummy] {}
				child[level distance=1.3em,sibling distance=2.75em]{
				edge from parent node [swap] {$c+2$}}
				child{node [dummy] {}
					child[sibling distance=1em]{
					edge from parent node [near end,swap] {$a+2$}}
					child[sibling distance=1em]{
					edge from parent node [near end] {$\phantom{1}a$}}
				edge from parent node [swap, near end] {$b$}}
				child[level distance=1.3em,sibling distance=2.75em]{
				edge from parent node {$c$}}
			edge from parent node [swap] {$d$}};
		\node at (7,0) {}
			child{node [dummy] {}
				child[level distance=1.3em,sibling distance=2.75em]{
				edge from parent node [swap] {$c+1$}}
				child{node [dummy] {}
					child[sibling distance=1em]{
					edge from parent node [near end,swap] {$a+3$}}
					child[sibling distance=1em]{
					edge from parent node [near end] {$a+1$}}
				edge from parent node [swap, near end] {$b+1$}}
				child[level distance=1.3em,sibling distance=2.75em]{
				edge from parent node {$c+3$}}
			edge from parent node [swap] {$d+1$}};
			\draw[decorate,decoration={brace,amplitude=2.5pt}] (7.1,0) -- (3.9,0) node[midway,inner sep=4pt]{$U$};
		\node at (4,-2) {}
			child{node [dummy] {}
				child[sibling distance=2em, level distance=1em]{
				edge from parent node [swap] {$c+2$}}
				child[sibling distance=1em,level distance=2.75em]{
				edge from parent node [very near end,swap] {$a+2$}}
				child[sibling distance=1em,level distance=2.75em]{
				edge from parent node [very near end] {$\phantom{1+}a$}}
				child[sibling distance=2em, level distance=1em]{
				edge from parent node {$\phantom{1+}c$}}
			edge from parent node [swap] {$d$}};
		\node at (7,-2) {}
			child{node [dummy] {}
				child[sibling distance=2em, level distance=1em]{
				edge from parent node [swap] {$c+1$}}
				child[sibling distance=1em,level distance=2.75em]{
				edge from parent node [very near end,swap] {$a+3$}}
				child[sibling distance=1em,level distance=2.75em]{
				edge from parent node [very near end] {$a+1$}}
				child[sibling distance=2em, level distance=1em]{
				edge from parent node {$\phantom{1+}c+3$}}
			edge from parent node [swap] {$d+1$}};
\draw[decorate,decoration={brace,amplitude=2.5pt}] (7.1,-2) -- (3.9,-2) node[midway,inner sep=4pt]{$\mathsf{lr}(U)$};
	\end{scope}
	\begin{scope}[xshift=-2em]
			\node at (10,0) {}
				child{
				edge from parent node [swap] {$a$}};
			\node at (11,0) {}
				child{
				edge from parent node [swap] {$a+1$}};
\draw[decorate,decoration={brace,amplitude=2.5pt}] (11.1,0) -- (9.9,0) node[midway,inner sep=4pt]{$V$};
			\node at (10,-2) {}
				child{node[dummy]{}
					child{
					edge from parent node [swap] {$a$}}
				edge from parent node [swap] {$\bar{a}$}};
			\node at (11,-2) {}
				child{node[dummy]{}
					child{
					edge from parent node [swap] {$a+1$}}
				edge from parent node [swap] {$\bar{a}+1$}};
\draw[decorate,decoration={brace,amplitude=2.5pt}] (11.1,-2) -- (9.9,-2) node[midway,inner sep=4pt]{$\mathsf{lr}(V)$};
	\end{scope}
	\end{tikzpicture}
\]	
\end{remark}

\begin{remark}\label{LRROOTMAP REM}
	Since planarizations can not be pushed forward along tree maps (cf. Remark \ref{PULLPLANAR REM}),
	the leaf-root functor $\mathsf{lr} \colon \Omega_{G}^0 \to \Sigma_G$ is not a Grothendieck fibration,
	but instead only a map of Grothendieck fibrations over $\mathsf{O}_G$ 
	(for the obvious root functor $\mathsf{r} \colon \Sigma_G \to \mathsf{O}_G$).
\end{remark}

\begin{definition}\label{VG DEF}
Given $T = (T_x)_{x \in X} \in \Omega_G$ we define its set of \textit{vertices} to be $V(T) = \coprod_{x \in X} V(T_x)$
and its set of
\textit{$G$-vertices} $V_G(T)$ to be the orbit set $V(T)/G$.
\index{structure on trees!vertices!vertices@$V(T)$}
\index{structure on trees!vertices!verticesG@$V_G(T) = V(T)/G$}

Furthermore, we will regard 
$V(T)$ as an object of $\Fin$ by using the induced planar order
(with $e^{\uparrow}\leq e$ ordered according to $e$)
and likewise $V_G(T)$ will be regarded as an object of $\Fin$ by using the lexicographic order: i.e. vertex equivalence classes 
$[e^{\uparrow} \leq e]$ are ordered according to the planar order $\leq_p$ of the smallest representative $ge$, $g \in G$.
\end{definition}

\begin{remark}\label{VERTEXDECOMPG REM}
	Following Remark \ref{VERTEXDECOMP REM},
	a tall map $\varphi \colon T \to U$ of $G$-trees
	induces a $G$-equivariant map
	$\varphi^{\**} \colon V(U) \to V(T)$
	and thus also a map of orbits
	$\varphi^{\**} \colon V_G(U) \to V_G(T)$.
	We note, however, that $\varphi^{\**}$ is not in general compatible with the order on $V_G(\minus)$ even if $\varphi$ is planar, as is indeed the case even in the non-equivariant setting.

A minimal example follows.
		\[
		\begin{tikzpicture}[grow=up,auto,level distance=2.2em,
		every node/.style = {font=\footnotesize},
		dummy/.style={circle,draw,inner sep=0pt,minimum size=1.75mm}]
		\node at (0,0) {$T$}
			child{node [dummy] {}
				child[sibling distance = 3.5em]{node [dummy] {}
					child
				edge from parent node[swap] {$c$}}
				child[level distance=2.9em]{
				edge from parent node [swap] {$b$}}
				child[sibling distance = 3.5em]{node [dummy] {}
					child
				edge from parent node {$a$}}		
			edge from parent node [swap] {$d$}};
		\node at (6,0) {$U$}
			child{node [dummy] {}
				child[sibling distance = 5em]{node [dummy] {}
					child
				edge from parent node [swap] {$c$}}
				child[sibling distance = 5em]{node [dummy] {}
					child[sibling distance = 3.5em]{
					edge from parent node [swap,near end] {$b$}}
					child[sibling distance = 3.5em]{node [dummy] {}
						child
					edge from parent node [near end]{$a$}}
				edge from parent node {$e$}}
			edge from parent node [swap] {$d$}};
		\draw[->] (2.1,1) -- node [above] {$\varphi$} (3.8,1) ;
		\end{tikzpicture}
		\]
In $V(T)$ the vertices are ordered as $a<c<d$ while in $V(U)$ they are ordered as $a<e<c<d$ but the map 
$\varphi^{\**} \colon V(U) \to V(T)$ is given by 
$a \mapsto a, c \mapsto c, d \mapsto d, e \mapsto d$.
\end{remark}

\begin{notation}\label{GVERT NOT}
Given $T=(T_x)_{x \in X} \in \Omega_G$
and $(e^{\uparrow} \leq e) \in V(T)$ 
we write $T_{e^{\uparrow}\leq e}$
as a shorthand for $T_{x,e^{\uparrow}\leq e}$, where $e \in T_x$.

Further, each element of $V_G(T)$ corresponds to an unique edge orbit $Ge$ for $e$ not a leaf.
We will prefer to write $G$-vertices as $v_{Ge}$\index{structure on trees!vertices!Gvertex@$v_{Ge}$}, 
and write
\begin{equation}\label{TVGE DEF}
	T_{v_{Ge}}\index{structure on trees!vertices!Gvertexsubtree@$T_{v_{Ge}}$} = (T_{f^{\uparrow} \leq f})_{f \in Ge}
\end{equation}
where $Ge$ inherits the planar order.
\end{notation}

Note that $T_{v_{Ge}}$ is always a $G$-corolla, leading to the following definition.

\begin{definition}\label{GVERTFUN DEF}
The \textit{$G$-vertex functor} is the functor
\begin{equation}
        \label{GVERT_EQ}
  	  \begin{tikzcd}[row sep=0]
	  \Omega_{G}^0 \ar{r}{\boldsymbol{V}_G} & \Fin_s \wr \Sigma_G \\
	  T \ar[mapsto]{r} & (T_{v_{Ge}})_{v_{Ge} \in V_G(T)},
	  \end{tikzcd}	
\end{equation}
\index{key functors!vertices@$\boldsymbol{V}_G$}
where $\Fin_s$ is the category of finite sets and surjections of
Remark \ref{FINSURJ REM}.
\end{definition}

\begin{remark}
	Note that, though the composite
	$\Omega_G^0 \to \Fin_s \wr \Sigma_G \to \Fin_s$
	coincides on objects with the functor described in Remark \ref{VERTEXDECOMPG REM},
	the variance is now reversed. 
\end{remark}

\begin{remark}\label{NEED_WREATH_REMARK}
	In the non-equivariant case the vertex functor can be defined to land instead in $\Sigma \wr \Sigma$.
	The need to introduce the $\Fin_s \wr (\minus)$ construction comes from the fact that
	in general quotient maps do not preserve the number of $G$-vertices.
	To illustrate,
	and keeping the set-up of Example \ref{ROOTPULL EX},
	let $G=\{\pm 1, \pm i, \pm j, \pm k\}$ and consider the pullback map 
	$\varphi \colon \pi^{\**} S \to S$
	given by 
	$a \mapsto a$, $b \mapsto b$, $c \mapsto c$, $d \mapsto i a$, $e \mapsto i b$,	
	which we present below in orbital notation. 
		\[
		\begin{tikzpicture}[grow=up,auto,level distance=2.4em,every node/.style = {font=\footnotesize},dummy/.style={circle,draw,inner sep=0pt,minimum size=1.75mm}]
		\node at (0,0) {$\pi^{\**}S$}
			child{node [dummy] {}
				child{node [dummy] {}
					child{
					edge from parent node [swap] {$G/\langle j \rangle  \cdot d$}}
				edge from parent node[near end,swap] {$G/\langle -1 \rangle  \cdot e$}}
				child{node [dummy] {}
					child{
					edge from parent node {$G/\langle j \rangle  \cdot a$}}
				edge from parent node [near end] {$G/\langle -1 \rangle  \cdot b$}}		
			edge from parent node[swap] {$G/\langle -1 \rangle  \cdot c$}};
		\node at (4.53,0) {$S$}
			child{node [dummy] {}
				child{node [dummy] {}
					child{
					edge from parent node [swap] {$G/\langle j \rangle \cdot a$}}
				edge from parent node [swap] {$G / \langle -1 \rangle \cdot b$}}
			edge from parent node [swap] {$G/G \cdot c$}};
		\draw[->] (2.6,1) -- node [above] {$\varphi$} (3.6,1) ;
		\end{tikzpicture}
		\]
Note that $T = \pi^{\**} S$ has three $G$-vertices $v_{G c}$, $v_{G b}$, $v_{G e}$ while $S$ has only two $G$-vertices $v_{G c}$ and $v_{G b}$. $\boldsymbol{V}_G(\varphi)$ then maps the two $G$-corollas 
$T_{v_{G b}}$ and $T_{v_{G e}}$
isomorphically onto $S_{v_{G b}}$
and the $G$-corolla $T_{v_{Gc}}$ by a non-isomorphism quotient onto $S_{v_{G c}}$.
\end{remark}

The following elementary statement will play an important auxiliary role.

\begin{lemma}\label{VGPULL LEM}
The $G$-vertex functor 
$
 	\boldsymbol{V}_G \colon \Omega_{G}^0 \to \Fin_s \wr \Sigma_G
$
sends pullbacks over $\mathsf{O}_G$ (i.e. root pullbacks)
to pullbacks over $\Fin_s \wr \mathsf{O}_G$
(cf. Lemma \ref{FWRGROTH LEM}).
\end{lemma}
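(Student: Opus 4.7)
My plan is to reduce the claim to a termwise verification by leveraging the Grothendieck fibration structure on $\Fin_s \wr \Sigma_G \to \Fin_s \wr \mathsf{O}_G$. First, I would observe that the proof of Lemma \ref{FWRGROTH LEM} goes through verbatim with $\Fin_s$ in place of $\Fin$, since surjections in $\Fin$ are closed under composition and under pullback along surjections, and since $\Sigma_G \hookrightarrow \Omega_G$ is itself closed under root pullbacks. This makes $\Fin_s \wr \Sigma_G \to \Fin_s \wr \mathsf{O}_G$ a split Grothendieck fibration whose specified cartesian lifts are assembled componentwise from specified root pullbacks in $\Sigma_G$. Thus showing that $V_G$ sends a root pullback $\varphi\colon \tilde T \to T$ to a specified pullback reduces to (a) checking that the underlying $\Fin_s$-map is surjective and (b) checking that each component $G$-corolla map is itself a root pullback.

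Given a root pullback $\varphi\colon \tilde T \to T$ along some $\phi\colon Y \to X$ in $\mathsf{O}_G$, I would next unpack $V_G(\varphi)$. The underlying $\Fin_s$-map $\alpha \colon V_G(\tilde T) \to V_G(T)$ is the $G$-quotient of the $G$-equivariant edge map $V(\varphi)\colon V(\tilde T) \to V(T)$; this is surjective because $\phi$ is (every map of transitive $G$-sets is) and because, for a root pullback, each tree-component map of $\varphi$ is an identity. For the componentwise step, pick $\tilde v \in V_G(\tilde T)$ represented by $(y,\tilde e^\uparrow \leq \tilde e)$ with $\tilde e \in \tilde T_y$, set $e = V(\varphi)(\tilde e)$, so $\alpha(\tilde v) = v_{Ge}$. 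Then I need
\[
\tilde T_{\tilde v} = (\tilde T_{f^\uparrow\leq f})_{f \in G\tilde e}
\longrightarrow
T_{v_{Ge}} = (T_{f_0^\uparrow \leq f_0})_{f_0 \in Ge}
\]
to coincide with the specified root pullback in $\Sigma_G$ along $G\tilde e \to Ge$. Writing $f = (y_f,\tilde f_0) \in \tilde T$, the root-pullback formula $\tilde T = (T_{\phi(y)})_{y \in Y}$ gives $\tilde T_{f^\uparrow \leq f} = T_{\phi(y_f),\,\tilde f_0^\uparrow \leq \tilde f_0}$, which matches $T_{V(\varphi)(f)^\uparrow \leq V(\varphi)(f)}$ on the nose; hence the tuples agree termwise.

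The main subtlety I anticipate is the planar bookkeeping. One must confirm that the termwise identifications above respect the canonical orderings built into Notation \ref{GVERT NOT} and Definition \ref{VG DEF}, so that $V_G(\varphi)$ equals, rather than is merely isomorphic to, the specified pullback in $\Fin_s \wr \Sigma_G$. This boils down to checking that $V(\varphi)$ restricts to an order-preserving bijection on each orbit $G\tilde e \to Ge$ and that the induced order on $V_G(\tilde T)$ maps to that on $V_G(T)$ compatibly; both facts are immediate because root pullbacks act as identities on each tree-component, so the planar structure on $\tilde T$ is literally inherited from that on $T$.
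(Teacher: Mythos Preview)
Your approach is correct and essentially matches the paper's proof: both reduce to checking that each constituent $G$-corolla map $(\psi^* T)_{v_{G\tilde e}} \to T_{v_{G\varphi(\tilde e)}}$ is a root pullback in $\Sigma_G$, and both verify this by observing that the root-pullback formula $\psi^*T = (T_{\psi(y)})_{y \in Y}$ makes each component corolla map a literal identity of planar trees.

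One correction to your final paragraph, though: the map $G\tilde e \to Ge$ on root orbits is generally \emph{not} a bijection (see e.g.\ the map $\pi^* S \to S$ in Remark~\ref{NEED_WREATH_REMARK}, or the example immediately following the lemma, where the root orbit of $(\tau^* T)_{v_{Ge}}$ maps onto that of $T_{v_{Gb}}$ via a nontrivial permutation---and in the $\pi^*$ case the orbit sizes genuinely differ). What makes $(\psi^* T)_{v_{G\tilde e}} \to T_{v_{Ge}}$ a root pullback is not any bijectivity of the root-orbit map, but rather that for each $\tilde f \in G\tilde e$ the individual component map $(\psi^*T)_{\tilde f^\uparrow \leq \tilde f} \to T_{\varphi(\tilde f)^\uparrow \leq \varphi(\tilde f)}$ is a planar identity---which is precisely what your termwise argument in the second paragraph establishes. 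So the misstatement is harmless to the argument, and the reason you give at the very end (``root pullbacks act as identities on each tree-component, so the planar structure on $\tilde T$ is literally inherited from that on $T$'') is the correct one; just drop the bijection claim.
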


\begin{proof}
Note first that an arrow 
$(\phi,(\varphi_i))\colon (C_i)_{i \in I} \to (C'_j)_{j\in J}$
is a pullback for the split fibration 
$\Fin_s \wr \Sigma_G \to \Fin_s \wr \mathsf{O}_G$
iff each of the constituent arrows
$\varphi_i \colon C_i \to C'_{\phi(i)}$
are pullbacks for the split fibration $\Sigma_G \to \mathsf{O}_G$.

The pullback
$\psi^{\**} T \xrightarrow{\bar{\psi}} T$
of $T = (T_x)_{x \in X} \in \Omega_{G}^{0}$
over $\psi \colon Y \to X$
has the form 
$(T_{\psi(y)})_{y \in Y} \to (T_x)_{x \in X}$
and it now suffices to check that each of the vertex maps
$
	(\psi^{\**} T)_{v_{G e}} \to T_{v_{G \bar{\psi}(e)}}
$
is itself a pullback.
By (\ref{TVGE DEF}), this is the statement that for 
$f \in G e$ the induced map
\begin{equation}\label{VGPULL EQ}
	(\psi^{\**}T)_{f^{\uparrow} \leq f} \to 
	T_{\bar{\psi}(f^{\uparrow}) \leq \bar{\psi} (f)}
\end{equation}
is an identity (i.e. planar isomorphism),
and letting $y$ be such that $f \in T_{\psi(y)}$
one sees that \eqref{VGPULL EQ}
is the identity
$T_{\psi(y),f^{\uparrow} \leq f} = 
T_{x,\bar{\psi}(f)^{\uparrow} \leq \bar{\psi}(f)}$, where $x=\psi(y)$, finishing the proof.
\end{proof}

\begin{example}
The following depicts one of the maps 
\eqref{VGPULL EQ}
for the pullback $\tau^{\**} T \to T$
in Example \ref{ROOTPULL EX}.
\[
	\begin{tikzpicture}[grow=up,auto,level distance=2.1em,
	every node/.style = {font=\footnotesize,inner sep=2pt},
	dummy/.style={circle,draw,inner sep=0pt,minimum size=1.75mm}]
	\begin{scope}[xshift=7cm]
		\node at (0,0) {}
			child{node [dummy] {}
				child[sibling distance=1.25em]{
				edge from parent node [swap,near end] {$-a\phantom{j}$}}
				child[sibling distance=1.25em]{
				edge from parent node [near end]  {$\phantom{j}a$}}
			edge from parent node [swap] {$b$}};
		\node at (1.5,0) {}
			child{node [dummy] {}
				child[sibling distance=1.25em]{
				edge from parent node [swap,near end] {$-j a$}}
				child[sibling distance=1.25em]{
				edge from parent node [near end]  {$j a$}}
			edge from parent node [swap] {$j b$}};
		\node at (3,0) {}
			child{node [dummy] {}
				child[sibling distance=1.25em]{
				edge from parent node [swap,near end] {$-i a$}}
				child[sibling distance=1.25em]{
				edge from parent node [near end]  {$i a$}}
			edge from parent node [swap] {$i b$}};
		\node at (4.5,0) {}
			child{node [dummy] {}
				child[sibling distance=1.25em]{
				edge from parent node [swap,near end] {$-k a$}}
				child[sibling distance=1.25em]{
				edge from parent node [near end]  {$k a$}}
			edge from parent node [swap] {$k b$}};
		\draw[decorate,decoration={brace,amplitude=2.5pt}] (4.6,0) -- (-0.1,0) node[midway]{$T_{v_{G b}}$};
	\end{scope}
		\node at (0,0) {}
			child{node [dummy] {}
				child[sibling distance=1.25em]{
				edge from parent node [swap,near end] {$-d\phantom{j}$}}
				child[sibling distance=1.25em]{
				edge from parent node [near end]  {$\phantom{j}d$}}
			edge from parent node [swap] {$e$}};
		\node at (1.5,0) {}
			child{node [dummy] {}
				child[sibling distance=1.25em]{
				edge from parent node [swap,near end] {$j d$}}
				child[sibling distance=1.25em]{
				edge from parent node [near end]  {$-j d$}}
			edge from parent node [swap] {$j e$}};
		\node at (3,0) {}
			child{node [dummy] {}
				child[sibling distance=1.25em]{
				edge from parent node [swap,near end] {$i d$}}
				child[sibling distance=1.25em]{
				edge from parent node [near end]  {$-i d$}}
			edge from parent node [swap] {$i e$}};
		\node at (4.5,0) {}
			child{node [dummy] {}
				child[sibling distance=1.25em]{
				edge from parent node [swap,near end] {$-k d$}}
				child[sibling distance=1.25em]{
				edge from parent node [near end]  {$k d$}}
			edge from parent node [swap] {$k e$}};
		\draw[decorate,decoration={brace,amplitude=2.5pt}] (4.6,0) -- (-0.1,0) node[midway]{$(\tau^{\**}T)_{v_{G e}}$};
	\draw[->] (5.3,0.6) -- node [below] {$\substack{d \mapsto i a \\ e \mapsto i b}{}$} (6.2,0.6) ;
	\end{tikzpicture}
\]
Note that 
$(\tau^{\**} T)_{v_{G e}} = \rho^{\**} T_{v_{G b}}$
for $\rho$ the map
$\{e,j e, i e, k e\} \to \{b, j b, i b, k b\}$ 
defined by $e \mapsto i b$ so that,
accounting for orders,
$\rho$ is the block permutation $\rho = (13)(24)$.
\end{example}

We are now in a position to generalize 
Definition \ref{SUBSTITUTIONDATUM}.

\begin{definition}\label{SUBSTITUTIONDATUMG DEF}
	Let $T \in \Omega_G$ be a $G$-tree.
	
	A \textit{(resp. planar) $T$-substitution datum} is a tuple 
	$\left(S_{f^{\uparrow} \leq f} \right)_{V(T)}$ of trees together with
\begin{itemize}	
\item[(i)] associative and unital $G$-action maps
	$S_{f^{\uparrow} \leq f} \to S_{g f^{\uparrow} \leq g f}$; 
\item[(ii)]	(planar) tall maps 
	$T_{f^{\uparrow} \leq f} \to S_{f^{\uparrow} \leq f}$ compatible with the $G$-action maps.
\end{itemize}	
	Further, a map of (planar) $T$-substitution data 
	$\left(S_{f^{\uparrow} \leq f}\right) \to
	\left(R_{f^{\uparrow} \leq f}\right)$ is a compatible tuple of (planar) tall maps 
	$\left(S_{f^{\uparrow} \leq f} \to R_{f^{\uparrow} \leq f} \right)$.
	
	We denote the category of (planar) $T$-substitution data 
	by $\mathsf{Sub}(T)$
	(resp. $\mathsf{Sub}_{\mathsf{p}}(T)$).
\end{definition}

Recall that a map of $G$-trees is called 
\textit{rooted} if it induces an ordered isomorphism on the root orbit (cf. Definition \ref{ROOTPULL DEF}),
and we note that, by Definition \ref{PLANARMAP_DEF}, planar tall maps of $G$-trees are always rooted.

\begin{remark}\label{SUBSGREF DEF}
Writing $S_{v_{G e}} = (S_{f^{\uparrow} \leq f})_{f \in Ge}$,
a $T$-substitution datum can equivalently be encoded by the tuple
$\left(S_{v_{G e}}\right)_{V_G(T)}$ together with \textit{rooted} tall maps 
$T_{v_{Ge}} \to S_{v_{G e}}$.

Further, the $T$-substitution datum is planar iff the maps $T_{v_{Ge}} \to S_{v_{G e}}$ are as well.

We caution that, in the non-planar case, the
$S_{v_{G e}}$ notation requires some care,
as discussed in Remark \ref{WHYALT REM}. 
\end{remark}

\begin{remark}\label{SUBSDATUMCONV REM}
	Writing $T = (T_x)_{x \in X}$ as usual,
	one obtains (non-equivariant) $T_x$-substitution data 
	$S_{x,(\minus)}$ for each $T_x$.
	As in the discussion after Remark \ref{SCTARR REM},
	we again write
	$S_{x,(\minus)} \colon \mathsf{Sc}(T_x) \to \Omega$
	and note that these are compatible with the $G$-action, in the sense that the obvious diagram
\[
\begin{tikzcd}[row sep =3pt]
	\mathsf{Sc}(T_x) \ar{rr}{S_{x,(\minus)}} \ar{rd}[swap]{g} &&
	\Omega
\\
	& \mathsf{Sc}(T_{gx}) \ar{ru}[swap]{S_{gx,(\minus)}}
\end{tikzcd}
\]
commutes.
Writing $\mathsf{Sc}(T) = \coprod_x \mathsf{Sc}(T_x)$,
these diagrams assemble into a functor
$G \ltimes \mathsf{Sc}(T) \to \Omega$,
where $G \ltimes \mathsf{Sc}(T)$ is the Grothendieck construction for the $G$-action
(which, explicitly, adds arrows 
$\eta_a \to \eta_{ga}$, 
$T_{e^{\uparrow} \leq e} \to T_{ge^{\uparrow} \leq ge}$
to $\mathsf{Sc}(T)$ that satisfy obvious compatibilities).
\end{remark}

In the following, we write
$\colim_{\mathsf{Sc}(T)}S_{(\minus)}$
to mean
$(\colim_{\mathsf{Sc}(T_x)}S_{x,(\minus)})_{x \in X}$ or, in other words, we take the colimit 
in $\Phi = \Fin \wr \Omega$ rather than in $\Omega$
(as is needed since $\Omega$ lacks coproducts).

\begin{corollary}\label{SUBDATAUNDERPLANG COR}
Let $T \in \Omega_G$ be a $G$-tree. There are isomorphisms of categories
\[
\begin{tikzcd}[row sep =0pt]
	\mathsf{Sub}_{\mathsf{p}}(T) \ar[r,shift left=2pt] &
	T \downarrow \Omega_{G}^{\mathsf{pt}} \ar[l,shift left=2pt] &
	\mathsf{Sub}(T) \ar[r,shift left=2pt] &
	T \downarrow \Omega_{G}^{\mathsf{rt}} \ar[l,shift left=2pt]
\\
	\left(S_{f^{\uparrow} \leq f}\right)_{V(T)} \ar[r,mapsto] & 
	\left(T \to \colim_{\mathsf{Sc}(T)} S_{(\minus)}\right) &
	\left(S_{f^{\uparrow} \leq f}\right)_{V(T)} \ar[r,mapsto] & 
	\left(T \to \colim_{\mathsf{Sc}(T)} S_{(\minus)}\right)
\end{tikzcd}
\]
where $T \downarrow \Omega_G^{\mathsf{pt}}$ 
(resp. $T \downarrow \Omega_G^{\mathsf{rt}}$)
is the category of planar tall (resp. rooted tall) maps under $T$.
\end{corollary}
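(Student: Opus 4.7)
The plan is to reduce to the non-equivariant statements already established in Proposition \ref{SUBDATAUNDERPLAN PROP} and Corollary \ref{SUBDATAUNDERPLAN COR} by carefully unpacking the $G$-equivariance. First, writing $T = (T_x)_{x \in X}$, I would use Remark \ref{SUBSDATUMCONV REM} to repackage a $T$-substitution datum as a family of (non-equivariant) $T_x$-substitution data $U_{x,(\minus)} \colon \mathsf{Sc}(T_x) \to \Omega$, one for each $x \in X$, together with the $G$-equivariance encoded by a single functor $G \ltimes \mathsf{Sc}(T) \to \Omega$. A morphism of $T$-substitution data then unpacks similarly, as a natural transformation of such functors whose constituents at each $x$ are tall maps (and planar tall maps in the planar case).

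Next, I would apply Proposition \ref{SUBDATAUNDERPLAN PROP} (planar case) or Corollary \ref{SUBDATAUNDERPLAN COR} (general case) componentwise to obtain planar tall (resp.\ tall) maps $T_x \to U_x$ under $T_x$, where $U_x = \colim_{\mathsf{Sc}(T_x)} U_{x,(\minus)}$ with the unique planar structure making the inclusions of the $U_{x,e^{\uparrow} \leq e}$ planar (using Convention \ref{PLANARCONV CON} to land in the standard category). The $G$-action on the substitution datum, via functoriality of these colimits, assembles into structure isomorphisms $U_x \xrightarrow{\simeq} U_{gx}$, which inherit associativity and unitality from the original $G$-action, thereby endowing $U = (U_x)_{x \in X}$ with the structure of a $G$-tree in $\Omega_G$. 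Crucially, the indexing set $X$ of $U$ is the same as that of $T$ by construction, so the assembled map $T \to U$ is automatically rooted (inducing the identity on $\mathsf{r}(\minus) = X$), and is additionally planar on each component (hence planar overall) in the planar case.

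For the inverse correspondence, given a rooted tall (resp.\ planar tall) map $\varphi \colon T \to U$ in $\Omega_G$, the rooted condition forces $U = (U_x)_{x \in X}$ with the same $X$, and $\varphi$ decomposes into a tuple of tall (resp.\ planar tall) maps $T_x \to U_x$. Applying the inverse of Proposition \ref{SUBDATAUNDERPLAN PROP} / Corollary \ref{SUBDATAUNDERPLAN COR} componentwise recovers $T_x$-substitution data $U_{x,(\minus)}$, and the $G$-equivariance of $\varphi$ transfers to yield the required compatibility of these data under the $G$-action, producing an object of $\mathsf{Sub}(T)$ (resp.\ $\mathsf{Sub}_{\mathsf{p}}(T)$) as demanded by Definition \ref{SUBSTITUTIONDATUMG DEF}. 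That the two constructions are mutually inverse follows immediately from the corresponding non-equivariant statement applied componentwise.

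The main subtlety I anticipate is not conceptual but bookkeeping: ensuring that the planar structures assemble correctly across the $G$-action under the implicit standardization $(\minus)^s$ of Convention \ref{PLANARCONV CON}, and verifying that "rooted tall" is the precise output of the non-planar case (as opposed to merely "tall"). The rooted condition is forced precisely because our substitution data are indexed by the vertices of the fixed $T$, so no permutation of root orbits is introduced; this is exactly the role played by using $\Omega_G^{\mathsf{rt}}$ rather than $\Omega_G^{\mathsf{t}}$ in the statement.
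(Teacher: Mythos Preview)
Your proposal is correct and follows essentially the same approach as the paper: apply the non-equivariant results Proposition~\ref{SUBDATAUNDERPLAN PROP} and Corollary~\ref{SUBDATAUNDERPLAN COR} componentwise to each $T_x$, and then invoke the equivariance analysis of Remark~\ref{SUBSDATUMCONV REM} to assemble the $G$-action. The paper's proof is a one-line sketch of precisely this strategy, and your elaboration of the bookkeeping (including the observation that rootedness is forced because the indexing set $X$ is preserved) is accurate.
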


\begin{proof}
	This is a direct consequence of the 
	non-equivariant analogues Proposition \ref{SUBDATAUNDERPLAN PROP} and Corollary \ref{SUBDATAUNDERPLAN COR}
	applied to each individual $T_x$ together with the equivariance analysis in
	Remark \ref{SUBSDATUMCONV REM}.
\end{proof}

In the following, 
note that tall rooted maps are independent maps,
cf. Proposition \ref{INDMAPCHAR PROP}.

\begin{notation}\label{UEUPEG NOT}
	Combining Notations \ref{UEUPE NOT} and \ref{GVERT NOT}
	(and in accordance with Remark \ref{SUBSGREF DEF}),
	given an independent map
	$\varphi \colon T \to S$ of $G$-trees
	and $G$-vertex $v_{Ge} \in V_G(T)$
	we write
	\begin{equation}\label{UEUPEG EQ}
	S_{v_{Ge}} = 
	\left(S_{\varphi(f^{\uparrow}) \leq \varphi(f)}\right)_{f \in Ge}.\index{structure on trees!outer faces!SVGE@$S_{v_{Ge}}$}
	\end{equation}
\end{notation}

\begin{remark}\label{WHYALT REM}
	In Notation \ref{UEUPE NOT} one has,
	by definition,
	that the maps $U_{e^{\uparrow} \leq e} \to U$
	are planar maps, regardless of whether the map
	$\varphi \colon T \to U$ therein is planar.

	However, Notation \ref{UEUPEG NOT} is subtler.
	When $\varphi \colon T \to S$ is not a planar map,
	the maps $S_{v_{Ge}} \to S$ need to be planar on each tree component of $S_{v_{Ge}}$, but need not respect the order of the roots of $S_{v_{Ge}}$.
	This is because the tuple \eqref{UEUPEG EQ} 
	is ordered according to the edge orbit $Ge$ of $T$
	(this is needed for Remark \ref{SUBSGREF DEF}),
	rather than the edge orbit $G\varphi(e)$ of $S$.
	
	To make this more precise, write 
	$\varphi_{Ge} \colon Ge \to G \varphi(e)$
	for the induced map, which
	is an isomorphism by
	Proposition \ref{INDMAPCHAR PROP},
	and write $S_{v_{Ge}}'$ for the tuple \eqref{UEUPEG EQ},
	except reordered according to $G \varphi(e)$.
	One then has that the map
	$S_{v_{Ge}}' \to S$ is planar
	and $S_{v_{Ge}} = \varphi_{Ge}^{\**} S_{v_{Ge}}'$.
\end{remark}

\begin{remark}\label{PULLCOMP REM}
	The isomorphisms in Corollary \ref{SUBDATAUNDERPLANG COR}
	are compatible with root pullbacks of trees. 
	More concretely, as in the proof of Lemma \ref{VGPULL LEM},
	each pullback 
	$\bar{\psi} \colon \psi^{\**} T \to T$
	determines pullback maps
	$\bar{\psi}_{G e} \colon
	(\psi^{\**} T)_{v_{Ge}} \to T_{v_{G \bar{\psi}(e)}}$,
	which we note are pullbacks over the maps
	$\bar{\psi}_{G e} \colon Ge \to G \bar{\psi}(e)$
	in $\mathsf{O}_G$. The definition of pullback then allows us to uniquely fill any diagram (where we reformulate substitution data as in Remark \ref{SUBSGREF DEF})
\[
\begin{tikzcd}[row sep =10pt]
	(\psi^{\**} T)_{v_{Ge}} \ar{d} \ar[dashed]{r} &
	\bar{\psi}_{G e}^{\**}S_{v_{G \bar{\psi}(e)}} \ar{d}
\\
	T_{v_{G \bar{\psi}(e)}} \ar{r} &
	S_{v_{G \bar{\psi}(e)}}
\end{tikzcd}	
\]
defining the left vertical functors (with the right functors defined analogously) in each of the commutative diagrams below.
\begin{equation}\label{SUBDATAUNDERPLANG2 EQ}
\begin{tikzcd}[row sep =12pt,column sep= 14pt]
	\mathsf{Sub}_{\mathsf{p}}(\psi^{\**} T) \ar[r,shift left=2pt] &
	\psi^{\**} T \downarrow \Omega_{G}^{\mathsf{pt}} \ar[l,shift left=2pt] & &
	\mathsf{Sub}(\psi^{\**} T) \ar[r,shift left=2pt] &
	\psi^{\**} T \downarrow \Omega_{G}^{\mathsf{rt}} \ar[l,shift left=2pt]
\\
	\mathsf{Sub}_{\mathsf{p}}(T) \ar[r,shift left=2pt] \ar{u}{(\bar{\psi}_{Ge}^{\**})} &
	T \downarrow \Omega_{G}^{\mathsf{pt}} \ar[l,shift left=2pt] \ar{u}[swap]{\psi^{\**}} & &
	\mathsf{Sub}(T) \ar[r,shift left=2pt] \ar{u}{(\bar{\psi}_{Ge}^{\**})} &
	T \downarrow \Omega_{G}^{\mathsf{rt}} \ar[l,shift left=2pt] \ar{u}[swap]{\psi^{\**}}
\end{tikzcd}
\end{equation}
\end{remark}

\subsection{Planar strings}\label{PLANARSTRING SEC}

We now use the leaf-root and vertex functors in \S \ref{LRVERT SEC}
to repackage our substitution results in a format 
that will be more convenient for our definition of genuine equivariant operads in \S \ref{GENUINE_OP_MONAD_SECTION}.

\begin{definition}\label{PLANSTR DEF}
	The category $\Omega_{G}^n$ of 
	\textit{planar $n$-strings} is the category whose objects are strings
        \index{categories!of trees!Gtrees0stringsn@$\Omega_G^n$}
        \begin{equation}\label{STRINGOBJ EQ}
	\begin{tikzcd}
	T_0 \ar{r}{\varphi_1} & T_1 \ar{r}{\varphi_2} & \cdots \ar{r}{\varphi_n} & T_n
	\end{tikzcd}	
\end{equation}
	where $T_i \in \Omega_G$ and the $\varphi_i$ are planar \textit{tall} maps, while arrows are commutative diagrams 
	\begin{equation} \label{PTNARROW EQ}
	\begin{tikzcd}
	T_0 \ar{r}{\varphi_1} \ar{d}[swap]{\rho_0} & T_1 \ar{r}{\varphi_2} \ar{d}[swap]{\rho_1} & \cdots \ar{r}{\varphi_n} & T_n \ar{d}[swap]{\rho_n}
\\
	T'_0 \ar{r}[swap]{\varphi'_1} & T'_1 \ar{r}[swap]{\varphi'_2} & \cdots \ar{r}[swap]{\varphi'_n} & T'_n
	\end{tikzcd}	
	\end{equation}
where each $\rho_i$ is a quotient map.
\end{definition}


\begin{notation}\label{SIMPOPERATORS NOT}
	Since compositions of planar tall arrows are planar tall
	and identity arrows are planar tall,	
	it follows that $\Omega_{G}^{\bullet}$
	forms a simplicial object in $\mathsf{Cat}$, 
	with faces given by composition and degeneracies by inserting identities.
        \index{key functors!Gtreessimpdegen@$s_i$}
        \index{key functors!Gtreessimpface@$d_i$}

	Further setting 
	$\Omega_{G}^{-1} = \Sigma_G$,
        \index{categories!of trees!Gtrees0stringsn@$\Omega_G^n$}
        the leaf-root functor $\Omega_{G}^{0} \xrightarrow{\mathsf{lr}} \Sigma_G$ makes 
	$\Omega_{G}^{\bullet}$ into an augmented simplicial object and, furthermore, the maps 
	$s_{-1} \colon \Omega_{G}^{n} \to \Omega_{G}^{n+1}$
        sending $T_0 \to T_1 \to \cdots \to T_n$ to
        $\mathsf{lr}(T_0) \to T_0 \to T_1 \to \cdots \to T_n$ equip it with extra degeneracies.
  \end{notation}

\begin{remark}
      The identification $\Omega_{G}^{-1} = \Sigma_G$ can be understood by noting that a string as in \eqref{STRINGOBJ EQ} is equivalent to a string
      \begin{equation}\label{STRINGOBJALT EQ}
            \begin{tikzcd}
                  T_{-1} \ar{r}{\varphi_0} & T_0 \ar{r}{\varphi_1} & T_1 \ar{r}{\varphi_2} & \cdots \ar{r}{\varphi_n} & T_n
            \end{tikzcd}	
      \end{equation}
      where $T_{-1} = \mathsf{lr}(T_0) = \cdots = \mathsf{lr}(T_n)$.
\end{remark}

\begin{remark}\label{ALLSPLITMAPS REM}
Since for any planar $n$-string we have 
$\mathsf{r}(T_i) = \mathsf{r}(T_j)$
for any $1 \leq i,j \leq n$, 
there is a well-defined \emph{root functor}
$\mathsf{r} \colon \Omega_{G}^{n} \to \mathsf{O}_G$,
\index{key functors!Gtreesleafroot0@$\mathsf{r}$}
which is readily seen to be a split Grothendieck fibration.
Furthermore, generalizing Remark \ref{LRROOTMAP REM},
all operators $d_i$, $s_j$ 
are maps of split Grothendieck fibrations.
\end{remark}

\begin{notation}\label{VGDEF NOT}
We extend the vertex functor 
in Definition \ref{GVERTFUN DEF} to a functor 
$\boldsymbol{V}_G \colon \Omega_{G}^{n} \to \Fin_s \wr \Omega_{G}^{n-1}$
by
\begin{equation}\label{VGDEF EQ}
	\boldsymbol{V}_G(T_0 \to T_1 \to \cdots \to T_n) = 
	(T_{1,v_{Ge}} \to \cdots \to
	T_{n,v_{Ge}})_{v_{Ge} \in V_G(T_0)}	
\end{equation}
\index{key functors!vertices@$\boldsymbol{V}_G$}
where $T_{i,v_{Ge}}$ is as in
Notation \ref{UEUPEG NOT}
for the map $T_0 \to T_i$ and $v_{Ge} \in V_G(T_0)$.

Alternatively, regarding $T_0 \to \cdots \to T_n$ as a string of $n$ arrows in $T_0 \downarrow \Omega_G^{\mathsf{pt}}$, 
the object $\boldsymbol{V}_G(T_0 \to \cdots \to T_n)$
can be thought of as the image of the inverse functor in
Corollary \ref{SUBDATAUNDERPLANG COR} (in the planar case),
written according to the reformulation in 
Remark \ref{SUBSGREF DEF}.
Note, however, that from this perspective
functoriality needs to be addressed separately.
\end{notation}

\begin{notation}\label{DDDDD NOT}
	For $I \subseteq \{0,1,\cdots,n\}$
	we write
	$d_I \colon \Omega^n_G \to \Omega^{n-|I|}_G$
	for the functor which sends 
	$T_0 \to T_1 \to \cdots \to T_n$
	to the string with $T_i, i\in I$ omitted.
	
	Note that, in light of \eqref{STRINGOBJALT EQ},
	this makes sense even when
	$I = \{0,1,\cdots,n\}$,
	resulting in a functor
	$\mathsf{lr} = d_{0,1,\cdots,n}
	\colon \Omega_G^n \to \Sigma_G$.
\end{notation}

\begin{notation}\label{ROOTUNDER NOT}
	For the map 
	$\mathsf{lr} \colon \Omega_G^n \to \Sigma_G$
	of Grothendieck fibrations over $\mathsf{O}_G$
	we abbreviate the partial undercategory
	$C \downarrow_{\mathsf{O}_G} \Omega^n_G$
	for $C \in \Sigma_G$
	defined before Proposition \ref{FIBERKANMAP PROP}
	as $C \downarrow_{\mathsf{r}} \Omega^n_G$,
        \index{categories!of trees!undercat@$C \downarrow_{\mathsf r} \Omega^n_G$}
	and refer to this as the \emph{rooted undercategory}.
\end{notation}

We now obtain a key reinterpretation (and slight strengthening) of Corollary \ref{SUBDATAUNDERPLANG COR}.

\begin{proposition} \label{SUBSASPULL PROP}
For any $n\geq 0$ the commutative diagram
	\begin{equation}\label{PTPULL EQ}
	\begin{tikzcd}
		\Omega_{G}^{n} \ar{r}{\boldsymbol{V}_G} 
		\ar{d}[swap]{d_{1,\cdots,n}} & \Fin_s \wr \Omega_{G}^{n-1} 
		\ar{d}{\Fin \wr d_{0,\cdots,n-1}}
	\\
		\Omega_{G}^{0} \ar{r}[swap]{\boldsymbol{V}_G} & \Fin_s \wr \Sigma_G
	\end{tikzcd}
	\end{equation}
is a pullback diagram in $\mathsf{Cat}$.
\end{proposition}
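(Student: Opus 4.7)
The plan is to show that the canonical comparison functor $F \colon \Omega_G^n \to \Omega_G^0 \times_{\Fin_s \wr \Sigma_G} \Fin_s \wr \Omega_G^{n-1}$ induced by (\ref{PTPULL EQ}) is an isomorphism of categories, by separately verifying bijectivity on objects and bijectivity on morphisms. Commutativity of (\ref{PTPULL EQ}) itself is immediate from the observation that for a planar tall map $T_0 \to T_1$ one has $\mathsf{lr}(T_{1,v_{Ge}}) = T_{0,v_{Ge}}$, so both composites send $T_0 \to \cdots \to T_n$ to $V_G(T_0) = (T_{0,v_{Ge}})_{v_{Ge}}$.

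For objects, suppose we are given $T_0 \in \Omega_G^0$ and a tuple $(U_i^\bullet)_{i \in I} = (U_{i,0} \to \cdots \to U_{i,n-1})_{i \in I}$ together with a compatibility identifying $I$ with $V_G(T_0)$ and each $\mathsf{lr}(U_{v_{Ge},0})$ with $T_{0,v_{Ge}}$. I iteratively build $T_1, \ldots, T_n$ by substitution: the unique planar tall maps $T_{0,v_{Ge}} = \mathsf{lr}(U_{v_{Ge},0}) \to U_{v_{Ge},0}$ form a planar $T_0$-substitution datum, so Corollary \ref{SUBDATAUNDERPLANG COR} yields a unique planar tall $T_0 \to T_1$ with $T_{1,v_{Ge}} = U_{v_{Ge},0}$. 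The identification $V_G(T_1) = \coprod_{v_{Ge} \in V_G(T_0)} V_G(U_{v_{Ge},0})$ lets the planar tall maps $U_{v_{Ge},0} \to U_{v_{Ge},1}$ assemble into a planar $T_1$-substitution datum, producing $T_1 \to T_2$, and similarly at each subsequent stage. Bijectivity on objects follows since $V_G$ recovers exactly this data.

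For morphisms, a morphism in the pullback consists of a quotient $\pi_0 \colon T_0 \to T'_0$ together with a morphism $(\phi, (\pi^\bullet_i)) \colon (U_i^\bullet)_i \to (U'_{i'}{}^\bullet)_{i'}$ in $\Fin_s \wr \Omega_G^{n-1}$, with the compatibility forcing $\phi = V_G(\pi_0)$ and identifying $\mathsf{lr}(\pi^0_i)$ with the corolla component of $\pi_0$ at $v_{Ge}$. To construct $\pi_1 \colon T_1 \to T'_1$, I use the colimit description $T_1 = \colim_{\mathsf{Sc}(T_0)} U_{(-),0}$ from the proof of Proposition \ref{SUBDATAUNDERPLAN PROP}, extended equivariantly via Remark \ref{SUBSDATUMCONV REM}: the quotient $\pi_0$ induces a $G$-equivariant map of Segal cores $\mathsf{Sc}(T_0) \to \mathsf{Sc}(T'_0)$, and the $\pi^0_i$ (together with $\pi_0$ on edges) provide the required natural transformation between substitution functors, defining $\pi_1$ by functoriality of colimits. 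Iterating yields $\pi_2, \ldots, \pi_n$, and uniqueness of the lift is built into the colimit presentation since each component of $\pi_j$ is prescribed by the data.

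The main obstacle will be verifying that the colimit-induced map $\pi_1$ is genuinely a quotient in the sense of Definition \ref{QUOT DEF}, since quotients can change the number of $G$-vertices (Remark \ref{NEED_WREATH_REMARK}) so one must check that the surjection $\phi = V_G(\pi_0) \colon V_G(T_0) \to V_G(T'_0)$ in $\Fin_s$ correctly encodes how the substitution pieces are to be glued. I would reduce to the root-pullback case by factoring $\pi_0$ through its chosen cartesian lift over $\mathsf{r}$, handling the cartesian part with Remark \ref{PULLCOMP REM} (which already delivers the matching diagrams (\ref{SUBDATAUNDERPLANG2 EQ}) for root pullbacks, using Lemma \ref{VGPULL LEM}) and handling the remaining factor covering the identity on $\mathsf{r}$ by direct component-by-component inspection, where planar tallness of $T_{0,y} \to T_{1,y}$ forces the per-component tree isomorphism of $\pi_1$ to assemble from the tree isomorphisms in $\pi^0_i$ via the grafting pushouts of (\ref{ASSEMBLYGRAFT EQ}).
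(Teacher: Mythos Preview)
Your proposal is correct and follows essentially the same route as the paper: objects are handled via the planar substitution correspondence (Corollary \ref{SUBDATAUNDERPLANG COR}), and morphisms are handled by decomposing each quotient into its root-pullback (cartesian) part, treated via Remark \ref{PULLCOMP REM} and Lemma \ref{VGPULL LEM}, and its rooted part, which is an isomorphism and is handled via the substitution/grafting description.

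The only organizational difference is that you first construct the candidate lift $\pi_1$ explicitly as a map of Segal-core colimits and then verify it is a quotient by performing the cartesian/rooted factorization, whereas the paper invokes the split Grothendieck fibration structure on the pullback $P$ from the outset: it observes that $\Omega_G^n \to P$ preserves pullback arrows (so surjectivity on cartesian arrows is automatic) and then reduces to rooted arrows, where the \emph{rooted} half of Corollary \ref{SUBDATAUNDERPLANG COR} directly produces the lift already packaged as a rooted tall map. This avoids the intermediate step of building $\pi_1$ by hand and separately checking the quotient condition, but the underlying content is the same.
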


\begin{proof}
Let us write 
$P = \Omega_{G}^{0} \times_{\Fin_s \wr \Sigma_G} \Fin_s \wr \Omega_{G}^{n-1}$ for the pullback,
so that our goal is to show that the canonical map
$\Omega_{G}^{n} \to P$ is an isomorphism. 

That $\Omega_{G}^{n} \to P$ is an isomorphism on objects 
follows by combining the alternative description of $\boldsymbol{V}_G$
in Notation \ref{VGDEF NOT} with the planar half of
Corollary \ref{SUBDATAUNDERPLANG COR}
(in fact, this yields isomorphisms of the fibers over 
$\Omega_{G}^{0}$, but we will not directly use this fact).
We will hence write $T_0 \to \cdots \to T_n$
to denote an object of $P$ as well.

An arrow in $P$ from 
$T_0 \to \cdots \to T_n$ to 
$T'_0 \to \cdots \to T'_n$
then consists of a quotient 
$\rho_0 \colon T_0 \to T'_0$
together with a $V_G(T_0)$ indexed tuple of quotients of strings (where we write $e'=\rho_0(e)$)
\begin{equation} \label{PTNARROWLOC EQ}
\begin{tikzcd}[column sep=1.2em]
	T_{0,v_{G e}} \ar{r}{} \ar{d}[swap]{\rho_{0,e}} & 
	T_{1,v_{G e}} \ar{r}{} \ar{d}[swap]{\rho_{1,e}} &
	\cdots \ar{r}{} &
	T_{n,v_{G e}} \ar{d}{\rho_{n,e}}
\\
	T'_{0,v_{G e'}} \ar{r}{} &
	T'_{1,v_{G e'}} \ar{r}{} &
	\cdots \ar{r}{} &
	T'_{n,v_{G e'}}.
\end{tikzcd}	
\end{equation}
That $\Omega_{G}^{n} \to P$ is injective on arrows is then clear.

For surjectivity, note first that, by Lemma \ref{VGPULL LEM}, the composite $P \to \Omega_{G}^{0} \to \mathsf{O}_G$
is a split Grothendieck fibration and 
$P \to \Omega_{G}^{0}$ is a map of split Grothendieck fibrations. 
Indeed, pullbacks in $P$ can be built explicitly as those arrows such that $\rho_0$ and all $\rho_{i,e}$ in 
\eqref{PTNARROWLOC EQ}
are pullbacks (alternatively, an abstract argument also works).
The alternative description of $\boldsymbol{V}_G$ in 
Notation \ref{VGDEF NOT} combined with
\eqref{SUBDATAUNDERPLANG2 EQ} 
then show that 
$\Omega_{G}^{n} \to P$ preserves pullback arrows,
so that surjectivity needs only be checked for maps in the fibers over $\mathsf{O}_G$, i.e. on rooted maps.
Tautologically, a map in $P$ is rooted iff $\rho_0 \colon T_0 \to T'_0$ is.
But, since a quotient is an isomorphism iff it is so on roots,
we further have that 
a map in $P$ is rooted iff $\rho_0 \colon T_0 \to T'_0$
is a rooted isomorphism and
each $\rho_{i,e}$ in \eqref{PTNARROWLOC EQ} is an isomorphism.
But now reinterpreting \eqref{PTNARROWLOC EQ} as a tuple of diagrams indexed by $f \in G e$,
one obtains a diagram in $\mathsf{Sub}(T_0)$ of the same shape  which, once converted to a diagram in 
$T_0 \downarrow \Omega_G^{\mathsf{rt}}$
using the rooted half of Corollary \ref{SUBDATAUNDERPLANG COR},
yields the desired rooted map \eqref{PTNARROW EQ}
in $\Omega_{G}^{n}$ lifting the rooted map in $P$.
\end{proof}


\begin{notation}\label{INDVNG NOT}
	For $0 \leq k \leq n$ we let 
\[
	\boldsymbol{V}_{G}^{k} \colon \Omega_{G}^{n} \to \Fin_s \wr \Omega_{G}^{n-k-1}
\]
\index{key functors!vertices@$\boldsymbol{V}_G^k$}  
be inductively defined by setting $\boldsymbol{V}_{G}^{0} = \boldsymbol{V}_G$ and
letting
$\boldsymbol{V}_{G}^{k+1}$ be the composite
\[
\begin{tikzcd}[column sep =1.7em]
\Omega_{G}^{n} \ar{r}{\boldsymbol{V}_G} &
\Fin_s \wr \Omega_{G}^{n-1} \ar{r}{V^k_G} &
\Fin_s \wr \Fin_s \wr \Omega_{G}^{n-k-2} \ar{r}{\sigma^0} &
\Fin_s \wr \Omega_{G}^{n-k-2}.
\end{tikzcd}
\]
\end{notation}

\begin{remark}\label{VGN REM}
When $n = 2$, $\boldsymbol{V}_{G}^{2}$ is thus the composite
\[
\begin{tikzcd}[column sep =1.7em]
	\Omega_{G}^{2} \ar{r}{\boldsymbol{V}_G} &
	\Fin_s \wr \Omega_{G}^{1} \ar{r}{\boldsymbol{V}_G} &
	\Fin_s \wr \Fin_s \wr \Omega_{G}^{0} \ar{r}{\boldsymbol{V}_G} &
	\Fin_s \wr \Fin_s \wr \Fin_s \wr \Sigma_{G} \ar{r}{\sigma^0} &
	\Fin_s \wr \Fin_s \wr \Sigma_{G} \ar{r}{\sigma^0} &
	\Fin_s \wr \Sigma_{G}
\end{tikzcd}
\]
while, for $n=4$,  $\boldsymbol{V}_{G}^{1}$ is the composite
\[
\begin{tikzcd}[column sep =1.7em]
	\Omega_{G}^{4} \ar{r}{\boldsymbol{V}_G} &
	\Fin_s \wr \Omega_{G}^{3} \ar{r}{\boldsymbol{V}_G} &
	\Fin_s \wr \Fin_s \wr \Omega_{G}^{2} \ar{r}{\sigma^0} &
	\Fin_s \wr \Omega_{G}^{2}.
\end{tikzcd}
\]
In light of Remarks \ref{VERTEXDECOMP REM} and \ref{VERTEXDECOMPG REM}, 
$\boldsymbol{V}_{G}^{n}(T_0 \to \cdots \to T_n)$ is identified with the tuple 
\begin{equation}\label{VGNISO EQ}
	(T_{k,v_{G e}}\to \cdots \to T_{n,v_{G e}})_{v_{G e} \in V_G(T_k)},
\end{equation}
where we note that strings are written in prepended notation as in \eqref{STRINGOBJALT EQ}, so that $T_{k,v_{G e}}$ is superfluous unless $k=n$.
Further, note that this requires changing the order of $V_G(T_k)$.
Rather than using the order induced by $T_k$, one instead equips 
$V_G(T_k)$ with the order induced lexicographically
from the maps 
$V_G(T_k) \to V_G(T_{k-1}) \to \cdots \to V_G(T_0)$ 
of Remark \ref{VERTEXDECOMP REM}. I.e., for 
$v,w \in V_G(T_k)$ the condition $v<w$ is determined by the lowest $l$ such that the images of $v,w$ in $V_G(T_l)$ are distinct.

Therefore, for each $d_i$ with $i < k$,
there are natural isomorphisms as on 
the left below which interchange the
lexicographical order on the indexing set $V_G(T_k)$
induced by the string
$V_G(T_k) \to V_G(T_{k-1}) \to \cdots \to V_G(T_0)$ 
with the one induced by the string
$V_G(T_k) \to V_G(T_{k-1}) \to \cdots
\widehat{V_G(T_i)}
\cdots \to V_G(T_0)$ 
that omits $V_G(T_i)$.
For $d_i$ with $i>k$ one has commutative diagrams as on the right below.
Note that no such diagram is defined for $d_k$.
\begin{equation}\label{PIIDEFDI EQ}
\begin{tikzcd}[row sep=1.7em,column sep = 3em]
	\Omega_{G}^{n} \ar{d}[swap]{d_{i}} \ar{r}{\boldsymbol{V}_G^k} &
	|[alias=F1]|
	\Fin_s \wr \Omega_{G}^{n-k-1}
	\ar[equal]{d} 
&
	\Omega_{G}^{n} \ar{d}[swap]{d_{i}} \ar{r}{\boldsymbol{V}_G^k} &
	\Fin_s \wr \Omega_{G}^{n-k-1}
	\ar{d}{d_{i-k-1}} 
\\
	|[alias=G2]|
	\Omega_{G}^{n-1} \ar{r}[swap]{\boldsymbol{V}_G^{k-1}}&
	\Fin_s \wr \Omega_{G}^{n-k-1}  
&
	\Omega_{G}^{n-1} \ar{r}[swap]{\boldsymbol{V}_G^{k}}&
	\Fin_s \wr \Omega_{G}^{n-k-2}  
\arrow[Leftrightarrow, from=F1, to=G2,shorten >=0.15cm,shorten <=0.15cm,"\pi_{i}"]
\end{tikzcd}
\end{equation}
\index{key functors!Gtreessimppi@$\pi_i$}
Similarly, for $s_j$ with $j<k$ (resp. $j \geq k$) one
has commutative diagrams as on the left (resp. right) below. Note that for $s_k$ one uses the extra degeneracy 
$s_{k-k-1}=s_{-1}$.

\begin{equation}\label{PIIDEFDI2 EQ}
\begin{tikzcd}[row sep=1.7em,column sep = 3em]
	\Omega_{G}^{n} \ar{d}[swap]{s_{j}} \ar{r}{\boldsymbol{V}_G^k} &
	|[alias=F1]|
	\Fin_s \wr \Omega_{G}^{n-k-1}
	\ar[equal]{d} 
&
	\Omega_{G}^{n} \ar{d}[swap]{s_{j}} \ar{r}{\boldsymbol{V}_G^k} &
	\Fin_s \wr \Omega_{G}^{n-k-1}
	\ar{d}{s_{j-k-1}} 
\\
	|[alias=G2]|
	\Omega_{G}^{n+1} \ar{r}[swap]{\boldsymbol{V}_G^{k+1}}&
	\Fin_s \wr \Omega_{G}^{n-k-1}  
&
	\Omega_{G}^{n+1} \ar{r}[swap]{\boldsymbol{V}_G^{k}}&
	\Fin_s \wr \Omega_{G}^{n-k}  
\end{tikzcd}
\end{equation}
\end{remark}

The functors $V^k_G$ and isomorphisms $\pi_i$ satisfy a number of compatibilities that we now catalog.

\begin{proposition}\label{PIIPROP PROP}
\begin{enumerate}[label=(\alph*)]
\item The composite
\[
\begin{tikzcd}
	\Omega_G^n \ar{r}{\boldsymbol{V}_G^k} &
	\Fin_s \wr \Omega_G^{n-k-1} \ar{r}{\boldsymbol{V}_G^l} &
	\Fin_s^{\wr 2} \wr \Omega_G^{n-k-l-2} \ar{r}{\sigma^0} &
	\Fin_s \wr \Omega_G^{n-k-l-2}
\end{tikzcd}
\]
equals the functor $\boldsymbol{V}_{G}^{k+l+1}$.

\item The functors $\boldsymbol{V}_G^k$ send pullback arrows for the split Grothendieck fibration $\Omega_G^k \to \mathsf{O}_G$
to pullback arrows for $\Fin_s \wr \Omega_G^{n-k-1} \to \Fin_s$.

\item The isomorphisms $\pi_i(T_0 \to \cdots \to T_n)$
are pullback arrows for the split Grothendieck fibration 
$\Fin_s \wr \Omega_G^{n-k-1} \to \Fin_s$. Moreover, the projection of $\pi_i(T_0 \to \cdots \to T_n)$ onto $\Fin_s$
is the permutation interchanging 
the lexicographical order on the set
$V_G(T_k)$ determined by
$V_G(T_k) \to \cdots \to V_G(T_0)$
with that determined by
$V_G(T_k) \to \cdots
\widehat{V_G(T_{i})}
\cdots \to V_G(T_0)$.

\item The rightmost diagrams in both \eqref{PIIDEFDI EQ}
and \eqref{PIIDEFDI2 EQ} are pullback diagrams in $\mathsf{Cat}$.

\item
For $i < k \leq n$ the composite natural transformation in the diagram below is $\pi_i$.
\begin{equation}\label{INDPI1 EQ}
\begin{tikzcd}[row sep=1.7em,column sep = 3.5em]
	\Omega_{G}^{n} \ar{d}[swap]{d_{i}} \ar{r}{\boldsymbol{V}_G^k} &
	|[alias=F1]|
	\Fin_s \wr \Omega_{G}^{n-k-1} \ar{r}{\Fin_s \wr \boldsymbol{V}_{G}^l} 
	\ar[equal]{d} &
	\Fin_s^{\wr 2} \wr \Omega_G^{n-k-l-2} \ar[equal]{d} \ar{r}{\sigma^0} &
	\Fin_s \wr \Omega_G^{n-k-l-2} \ar[equal]{d}
\\
	|[alias=G2]|
	\Omega_{G}^{n-1} \ar{r}[swap]{\boldsymbol{V}_G^{k-1}}&
	\Fin_s \wr \Omega_{G}^{n-k-1} \ar{r}[swap]{\Fin_s \wr \boldsymbol{V}_{G}^{l}} &
	\Fin_s^{\wr 2} \wr  \Omega_G^{n-k-l-2} \ar{r}[swap]{\sigma^0} &
	\Fin_s \wr  \Omega_G^{n-k-l-2}
\arrow[Leftrightarrow, from=F1, to=G2,shorten >=0.15cm,shorten <=0.15cm,"\pi_{i}"]
\end{tikzcd}
\end{equation}
For $k< i < k+l+1 \leq n$ the composite natural transformation in the diagram below is $\pi_{i}$.
\begin{equation}\label{INDPI2 EQ}
\begin{tikzcd}[row sep=1.7em,column sep = 3.5em]
	\Omega_{G}^{n} \ar{d}[swap]{d_{i}} \ar{r}{\boldsymbol{V}_G^k} &
	\Fin_s \wr \Omega_{G}^{n-k-1} \ar{r}{\Fin_s \wr \boldsymbol{V}_{G}^l} 
	\ar{d}[swap]{\Fin_s \wr d_{i-k-1}} &
	|[alias=F1]|
	\Fin_s^{\wr 2} \wr \Omega_G^{n-k-l-2} \ar[equal]{d} \ar{r}{\sigma^0} &
	\Fin_s \wr \Omega_G^{n-k-l-2} \ar[equal]{d}
\\
	\Omega_{G}^{n-1} \ar{r}[swap]{\boldsymbol{V}_G^k}&
	|[alias=G2]|
	\Fin_s \wr \Omega_{G}^{n-k-2} \ar{r}[swap]{\Fin_s \wr \boldsymbol{V}_{G}^{l-1}} &
	\Fin_s^{\wr 2} \wr  \Omega_G^{n-k-l-2} \ar{r}[swap]{\sigma^0} &
	\Fin_s \wr  \Omega_G^{n-k-l-2}
\arrow[Leftrightarrow, from=F1, to=G2,shorten >=0.15cm,shorten <=0.15cm,"\Fin_s \wr \pi_{i-k-1}"]
\end{tikzcd}
\end{equation}

\item Restricting to the case $k=n$, the pairs $(d_i,\pi_i)$ and
$(s_j,id_{\boldsymbol{V}_{G}^{n}})$ satisfy all possible simplicial identities (i.e. those with $i \neq n$).
Explicitly, for $0 \leq i' < i < n$
the composite natural transformations
in the diagrams
\begin{equation}\label{SIMPPI EQ}
\begin{tikzcd}[row sep=1.7em,column sep = 3em]
	\Omega_{G}^{n} \ar{r}{} \ar{d}[swap]{d_{i}} &
	|[alias=F1]|
	\Fin_s \wr \Sigma_G \ar[equal]{d}
&&
	\Omega_{G}^{n} \ar{r}{} \ar{d}[swap]{d_{i'}} &
	|[alias=F12]|
	\Fin_s \wr \Sigma_G \ar[equal]{d}
\\
	|[alias=G2]|
	\Omega_{G}^{n-1} \ar{r}[swap]{}  \ar{d}[swap]{d_{i'}} &
	|[alias=F2]|
	\Fin_s \wr \Sigma_G \ar[equal]{d}
&&
	|[alias=G22]|
	\Omega_{G}^{n-1} \ar{r}[swap]{}  \ar{d}[swap]{d_{i-1}} &
	|[alias=F22]|
	\Fin_s \wr \Sigma_G \ar[equal]{d}
\\
	|[alias=G3]|
	\Omega_{G}^{n-2} \ar{r}{} &
	\Fin_s \wr \Sigma_G
&&
	|[alias=G32]|
	\Omega_{G}^{n-2} \ar{r}{} &
	\Fin_s \wr \Sigma_G
\arrow[Leftrightarrow, from=F1, to=G2,shorten >=0.15cm,shorten <=0.15cm,"\pi_{i}"]
\arrow[Leftrightarrow, from=F2, to=G3,shorten >=0.15cm,shorten <=0.15cm,"\pi_{i'}"]
\arrow[Leftrightarrow, from=F12, to=G22,shorten >=0.15cm,shorten <=0.15cm,"\pi_{i'}"]
\arrow[Leftrightarrow, from=F22, to=G32,shorten >=0.15cm,shorten <=0.15cm,"\pi_{i-1}"]
\end{tikzcd}
\end{equation}
coincide, and similarly for the face-degeneracy relations.
\end{enumerate}
\end{proposition}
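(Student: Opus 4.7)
The plan is to prove the six parts in order, with each part feeding into the next. Throughout, the key computational tool is the explicit description of $V_G^n(T_0 \to \cdots \to T_n)$ given in (\ref{VGNISO EQ}), which trivializes most of the combinatorics: the constituent trees in the tuple $V_G^n(T_0 \to \cdots \to T_n)$ are determined object-by-object, and only the ordering on the indexing set $V_G(T_n)$ (or on $V_G(T_k)$ in the case of $V_G^k$) reflects the iterated vertex decomposition.

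For part (a), I would induct on $l$. The base $l = 0$ is the definition of $V_G^{k+1}$. For the inductive step, the composite in (a) can be rewritten by applying the inductive hypothesis to collapse $V_G^l \circ V_G$ to a functor factoring through $V_G^{l+1}$ via one $\sigma^0$; the remaining pair of $\sigma^0$'s then fold into a single $\sigma^0$ by the associativity pentagon (\ref{COHER2 EQ}) for the wreath structure on $\Fin_s$. For part (b), I would again induct on $k$, with base case Lemma \ref{VGPULL LEM}. The inductive step follows because $V_G^{k+1}$ is a composite $\sigma^0 \circ (\Fin_s \wr V_G^k) \circ V_G$, where $V_G$ preserves root pullbacks by Lemma \ref{VGPULL LEM}, $\Fin_s \wr V_G^k$ preserves pullbacks over $\Fin_s \wr \mathsf{O}_G$ by Lemma \ref{FWRGROTH LEM} and the inductive hypothesis, and $\sigma^0$ is tautologically a map of split fibrations over $\Fin_s$ (its action on $\Fin_s$-coordinates is the coproduct functor, which preserves cartesian arrows in the standard Grothendieck fibration).

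For part (c), observe that via the description (\ref{VGNISO EQ}) the isomorphism $\pi_i$ for $i < k$ precisely corresponds to re-indexing $V_G(T_k)$ by the lexicographic order induced from the shorter string $T_0 \to \cdots \to \widehat{T}_i \to \cdots \to T_k$ instead of the full one. Since this is an ordered bijection of the underlying indexing set in $\Fin_s$ that is the identity on constituent trees, it is automatically cartesian for $\Fin_s \wr \Omega_G^{n-k-1} \to \Fin_s$; moreover the permutation only depends on how the maps $V_G(T_k) \to \cdots \to V_G(T_0)$ determine the lexicographic orders, i.e.\ only on $T_0 \to \cdots \to T_i$. Part (d) now follows: using part (c) to see that maps in $\Omega_G^n$ are determined by their images under $V_G^k$ together with their projection to $\Omega_G^k$, combined with Proposition \ref{SUBSASPULL PROP} as the base case ($k = 0$ or by a further induction), yields the pullback squares.

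Parts (e) and (f) are then proved by expanding both sides in terms of (\ref{VGNISO EQ}) and comparing the induced lex-orders. For (e), the two diagrams (\ref{INDPI1 EQ}) and (\ref{INDPI2 EQ}) assert how $\pi_i$ distributes through the inductive description of $V_G^{k+l+1}$ in part (a); since both sides are cartesian arrows over $\Fin_s$ by parts (b) and (c), it suffices to verify they lift the same permutation, which is a direct comparison of lex-orders. For (f), the simplicial identities reduce to verifying that two permutations of the indexing set obtained by omitting two of the $V_G(T_i)$'s in different orders agree with the corresponding $d_{i-1} \circ d_{i'}$ or $d_{i'} \circ d_i$ relations. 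The main obstacle throughout is this last piece of order-bookkeeping in (f), where one must track which lexicographic refinement is produced at each stage; I would handle it by introducing notation for the ordered projection $V_G(T_n) \to V_G(T_k)$ and observing that $\pi_i$ is functorial in both the string and the deleted index, reducing the simplicial relations to the corresponding relations for the underlying face operators on strings.
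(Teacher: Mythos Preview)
Your approach is essentially the same as the paper's: both reduce (e) and (f) to checking on the $\Fin_s$ level via (b) and (c), and both prove (d) by induction with Proposition \ref{SUBSASPULL PROP} as base case. One slip: in (a) your base case $l=0$ reads $\sigma^0 \circ (\Fin_s \wr V_G^0) \circ V_G^k$, but the definition (Notation \ref{INDVNG NOT}) gives $V_G^{k+1} = \sigma^0 \circ (\Fin_s \wr V_G^k) \circ V_G^0$, with the roles of $k$ and $0$ swapped; the paper instead inducts on $k$, where $k=0$ genuinely is the definition, and the induction step is the string of equalities $\sigma^0(\Fin \wr V_G^l)V_G^{k+1} = \sigma^0\sigma^0(\Fin^{\wr 2}\wr V_G^l)(\Fin \wr V_G^k)V_G = \sigma^0(\Fin \wr \sigma^0)(\Fin^{\wr 2}\wr V_G^l)(\Fin \wr V_G^k)V_G = V_G^{k+l+2}$ using $\sigma^0\sigma^0 = \sigma^0\sigma^1$.
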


\begin{proof}
(a) follows by induction on $k$, with $k=0$ being the definition. More generally (and writing $\Fin$ for $\Fin_s$)
one has
\begin{align*}
	\sigma^0(\Fin \wr \boldsymbol{V}_G^l)\boldsymbol{V}_G^{k+1}= &
	\sigma^0(\Fin \wr \boldsymbol{V}_G^l)\sigma^0 (\Fin \wr \boldsymbol{V}_G^k) \boldsymbol{V}_G =
	\sigma^0 \sigma^0 (\Fin^{\wr 2} \wr \boldsymbol{V}_G^l)(\Fin \wr \boldsymbol{V}_G^k) \boldsymbol{V}_G
\\
	= & \sigma^0 \sigma^1 (\Fin^{\wr 2} \wr \boldsymbol{V}_G^l)(\Fin \wr \boldsymbol{V}_G^k) \boldsymbol{V}_G =
	\sigma^0 (\Fin \wr \sigma^0)  (\Fin^{\wr 2} \wr \boldsymbol{V}_G^l)(\Fin \wr \boldsymbol{V}_G^k) \boldsymbol{V}_G 
\\
	= & \sigma^0 \left(\Fin \wr \left( \sigma^0 (\Fin \wr \boldsymbol{V}_G^l) \boldsymbol{V}_G^k \right)\right) \boldsymbol{V}_G = \sigma^0 \left(\Fin \wr \boldsymbol{V}_G^{k+l+1}\right) \boldsymbol{V}_G =
\boldsymbol{V}_G^{k+l+2}.
\end{align*}

(b) generalizes Lemma \ref{VGPULL LEM}, and follows by induction using that result, Lemma \ref{FWRGROTH LEM},
and the obvious claim that $\Fin \wr \Fin \wr A \xrightarrow{\sigma^0} \Fin \wr A$ sends pullbacks over $\Fin \wr \Fin$ to pullbacks over $\Fin$.

(c) is clear from the definition of $\pi_i$. Also, (e) and (f) are easy consequences of (b) and (c): since all natural transformations involved consist of pullback arrows, one needs only check each claim after forgetting to the $\Fin_s$ coordinate, which is straightforward.

Lastly, we argue (d) by induction on $k$ and $n$. The case $k=0$ for the rightmost diagram in \eqref{PIIDEFDI EQ} follows by the diagram on the left below, combined with
Proposition \ref{SUBSASPULL PROP} applied to the bottom and total squares. The general case then follows from the right diagram, 
where the left square is in the case $k=0$,
the middle square is a pullback by induction 
(and since $\Fin \wr (\minus)$ preserves pullback squares),
and the rightmost square is clearly a pullback.
\begin{equation}\label{PROOFD EQ}
\begin{tikzcd}[row sep=1.7em,column sep = 2.5em]
	\Omega_{G}^{n} \ar{d}[swap]{d_{i}} \ar{r}{\boldsymbol{V}_G} &
	\Fin_s \wr \Omega_{G}^{n-1}
	\ar{d}{d_{i-1}} &
	\Omega_G^n \ar{r}{\boldsymbol{V}_G} \ar{d}[swap]{d_i} &
	\Fin_s \wr \Omega_G^{n-1} \ar{r}{\boldsymbol{V}_G^k} \ar{d}[swap]{\Fin_s \wr d_{i-1}} &
	\Fin_s^{\wr 2} \wr \Omega_G^{n-k-2} \ar{r}{\sigma^0} \ar{d}[swap]{\Fin_s^{\wr 2} \wr d_{i-1}}  &
	\Fin_s \wr \Omega_G^{n-k-2} \ar{d}[swap]{\Fin_s \wr d_{i-1}}
\\
	\Omega_{G}^{n-1} \ar{d}[swap]{d_{1,\cdots,n}} \ar{r}{\boldsymbol{V}_G} &
	\Fin_s \wr \Omega_{G}^{n-2}
	\ar{d}{d_{0,\cdots,n-1}}  &
	\Omega_G^{n-1} \ar{r}[swap]{\boldsymbol{V}_G} &
	\Fin_s \wr \Omega_G^{n-3} \ar{r}[swap]{\boldsymbol{V}_G^k} &
	\Fin_s^{\wr 2} \wr \Omega_G^{n-k-3} \ar{r}[swap]{\sigma^0} &
	\Fin_s \wr \Omega_G^{n-k-3}
\\
	\Omega_{G}^{0} \ar{r}{\boldsymbol{V}_G}&
	\Fin_s \wr \Sigma_G 
\end{tikzcd}
\end{equation}
The claim for the rightmost square in \eqref{PIIDEFDI2 EQ} follows by the analogous diagrams with the $d_i$ (but not $d_{1,\cdots,n}$, 
$d_{0,\cdots,n-1}$) replaced with $s_j$.
\end{proof}

\section{Genuine equivariant operads}\label{GENUINE_OP_MONAD_SECTION}

In this section we now build the category 
$\mathsf{Op}_G (\mathcal{V})$
of genuine equivariant operads.
We do so by building a monad $\mathbb{F}_G$
on the category
$\mathsf{Sym}_G(\mathcal{V}) = 
\mathsf{Fun}(\Sigma_G^{op},\mathcal{V})$
of $G$-symmetric sequences on $\mathcal{V}$, for $\V$ a symmetric monoidal category with diagonals 
(cf. Remark \ref{FINSURJ REM}).
The underlying endofunctor of $\mathbb{F}_G$ is easy to describe:
given $X \in \mathsf{Sym}_G(\mathcal{V})$, $\mathbb{F}_G X$ is given by the left Kan extension diagram
\begin{equation}\label{FGXDEF EQ}
\begin{tikzcd}[row sep=2em,column sep = 3.3em]
	(\Omega_{G}^{0})^{op} \ar{r}{\boldsymbol{V}_{G}^{op}} \ar{d}[swap]{\mathsf{lr}} &
	|[alias=F1]|
(\Fin_s \wr \Sigma_G)^{op} \ar{r}[swap,name=F2]{}{(\Fin_s \wr X^{op})^{op}}& (\Fin_s \wr \mathcal{V}^{op})^{op} \ar{r}{\otimes} & \mathcal{V}
\\
	|[alias=G2]|
	\Sigma_G^{op}  \ar{urrr}[swap]{\mathbb{F}_G X} & &
\arrow[Rightarrow, from=F1, to=G2,shorten >=0.25cm,shorten <=0.35cm]
\end{tikzcd}
\end{equation}
Explicitly, using Proposition \ref{FIBERKANMAP PROP}
	and the fact that 
	the rooted undercategories
	$C \downarrow_{\mathsf{r}} \Omega_G^0$ 
	(cf. Notation \ref{ROOTUNDER NOT})
	only depend on the isomorphisms
	in $\Omega^0_G,\Sigma_G$,
	the left Kan extension can be computed by
	replacing both of $\Omega^0_G,\Sigma_G$ 
	with their groupoids of isomorphims,
	yielding the formula
\begin{equation}\label{FGXDEFEXP EQ}
\mathbb{F}_G X (C) \simeq
\coprod_{T \in 
\mathsf{Iso}(C \downarrow_{\mathsf{r}} \Omega_G^0)}
\left(
\bigotimes_{v \in V_G(T)}
 X(T_v)
\right) 
\cdot_{\mathsf{Aut}(T)} \mathsf{Aut}(C),
\end{equation}
though we will prefer to work with \eqref{FGXDEF EQ} throughout.

To intuitively motivate the monad structure of $\mathbb{F}_G X$, note that 
\eqref{FGXDEFEXP EQ} roughly states that 
$\mathbb{F}_G X$ consists of ``$G$-trees $T$ with $G$-nodes suitably labeled by $X$'', 
and thus that $\mathbb{F}_G \mathbb{F}_G X$
consists of ``$G$-trees $T_0$ with $G$-nodes labeled 
by $G$-trees $T_{1,i}$ with $G$-nodes labeled by $X$''.
The substitution discussion in 
\S \ref{OUTTALL SEC}, 
\S \ref{PLANARSTRING SEC}
then says that $\mathbb{F}_G \mathbb{F}_G X$ roughly consists of ``planar tall maps of $G$-trees $T_0 \to T_1$ with $G$-nodes of $T_1$ labeled by $X$'' (for a precise statement, see Remark \ref{REPACKAGERES REM}),
so that the multiplication
$\mathbb{F}_G \mathbb{F}_G \to \mathbb{F}_G$
is obtained by ``forgetting $T_0$''.

To rigorously describe the monad structure on $\mathbb{F}_G$, however, we will find it preferable to separate the left Kan extension step in \eqref{FGXDEF EQ}
from the remaining construction.
As such, we will 
build a monad $N$ 
on a larger category
$\mathsf{WSpan}^l(\Sigma_G^{op},\mathcal{V})$
in \S \ref{MONSPAN SEC}
(see Proposition \ref{MONSPAN PROP}),
which we then transfer
to $\mathsf{Sym}_G(\mathcal{V})$
in \S \ref{FGMON SEC} 
by using the $(\mathsf{Lan},\upsilon)$ adjunction in Remark \ref{RANLANADJ REM}.
\S \ref{COMPARISON_REGULAR_SECTION} then compares genuine equivariant operads with regular equivariant operads, obtaining   the pair of adjunctions in Corollary \ref{TWOADJOINTSOP_COR}, which are required when formulating and proving our main results.
Lastly, \S \ref{INDEXING_SECTION} shows that the indexing systems of Blumberg-Hill (or, more precisely, a slight generalization called ``weak indexing systems''; see Remark \ref{WINDEX_GAMMA_REM}) naturally give rise to notions of ``partial genuine operads''.


\subsection{A monad on spans}\label{MONSPAN SEC}

\begin{definition}\label{WSPAN DEF}
For categories $\C,\D$ we write 
$\mathsf{WSpan}^l(\C,\D)$
(resp.
$\mathsf{WSpan}^r(\C,\D)$),
which we call the category of  \textit{left weak spans} (resp. \textit{right weak spans}),
to denote the category with objects the spans
\[
\begin{tikzcd}
\C & A \ar{l}[swap]{k} \ar{r}{X} & \D,
\end{tikzcd}
\]
arrows the diagrams as on the left (resp. right) below 
\[
	\begin{tikzcd}[row sep=small]
	&
	A_1 \ar{dl}[swap,name=k1]{k_1} \ar{dr}[name=F11]{X_1} \ar{dd}[swap]{i} & &
	&
	A_1 \ar{dl}[swap,name=k1]{k_1} \ar{dr}[name=F1]{X_1} \ar{dd}[swap]{i} 
\\
	\C & & \D &
	\C & & \D 
\\
		& |[alias=G21]| A_2  \ar{ul}{k_2} \ar{ur}[swap]{X_2} & &
		& |[alias=G2]| A_2  \ar{ul}{k_2} \ar{ur}[swap]{X_2} &
		\arrow[Leftarrow, from=F1, to=G2,shorten >=0.25cm,shorten <=0.25cm,"\varphi"]
		\arrow[Rightarrow, from=F11, to=G21,shorten >=0.25cm,shorten <=0.25cm,"\varphi"]
	\end{tikzcd}
\]
which we write as $(i,\varphi) \colon (k_1,X_1) \to (k_2,X_2)$, and composition given in the obvious way.
\end{definition}

\begin{remark}
There are canonical natural isomorphisms
\[
	\mathsf{WSpan}^r(\C,\D) \simeq 
	\mathsf{WSpan}^l(\C^{op},\D^{op}).
\]
\end{remark}

\begin{remark}\label{RANLANADJ REM}
The terms \textit{left/right} are motivated by the existence of adjunctions (which are seen to be equivalent by 
the previous remark)
\[
	\mathsf{Lan} \colon
	\mathsf{WSpan}^l(\C, \D)
		\rightleftarrows
	\mathsf{Fun}(\C, \D)
	\colon \upsilon
\]
\[
	\upsilon \colon 
	\mathsf{Fun}(\C, \D)
		\rightleftarrows
	\mathsf{WSpan}^r(\C, \D)^{op}
	\colon \mathsf{Ran}
\]
where the functors $\upsilon$ denote the obvious inclusions 
(note the need for the $(\minus)^{op}$ in the second adjunction) 
and $\mathsf{Lan}$/$\mathsf{Ran}$ denote the left/right Kan extension functors.
\end{remark}

We will be mainly interested in the span categories 
$\mathsf{WSpan}^l(\Sigma_G^{op},\mathcal{V})\simeq 
\mathsf{WSpan}^r(\Sigma_G,\mathcal{V}^{op})$.

\begin{notation}\label{OMEGAGNA NOT}
	Given a functor $\rho \colon A \to \Sigma_G$, $n \geq 0$, we let $\Omega_G^n \wr A$ denote the pullback in $\mathsf{Cat}$
\begin{equation}\label{OMGGNA}
	\begin{tikzcd}
	\Omega_{G}^n \wr A \ar{r}{\boldsymbol{V}_{G}^{n}} \ar{d}& 
	\Fin_s \wr A \ar{d}
\\
	\Omega_{G}^{n} \ar{r}[swap]{\boldsymbol{V}_{G}^{n}} &
	\Fin_s \wr \Sigma_G
	\end{tikzcd}
\end{equation}
We will write the top $V^n_G$ functor as $\boldsymbol{V}_G^n \wr A$ whenever we need to distinguish such functors.

Explicitly, by Remark \ref{VGN REM}
the objects of $\Omega_{G}^{n} \wr A$ are pairs 
\begin{equation}\label{OMEGAGNA EQ}
(T_0 \to \cdots \to T_n,
(a_{v_{G e}})_{v_{G e} \in V_G(T_n)})
\end{equation}
such that $\rho(a_{v_{G e}}) = T_{n,v_{G e}}$, and
where $V_G(T_n)$ is ordered lexicographically
(cf. Remark \ref{VGN REM})
according to the string $T_0 \to \cdots \to T_n$.
\end{notation}

\begin{remark}
	Generalizing the notation $\Omega_{G}^{-1} = \Sigma_G$, we will also write $\Omega_G^{-1} \wr A  = A$, in which case
	$\boldsymbol{V}_{G}^{-1} \wr A \colon \Omega_G^{-1} \wr A \to \Fin_s \wr A$
	is the obvious ``singleton map'' $\delta^0 \colon A \to \Fin_s \wr A$.
\end{remark}

\begin{remark}
An alternative, and arguably more suggestive, notation for 
$\Omega_{G}^n \wr A$ would be $\Omega_{G}^n \wr_{\Sigma_G} A$,
since we are really defining a ``relative'' analogue of the wreath product 
(so that in particular $\Omega_{G}^n \wr_{\Sigma_G} \Sigma_G \simeq \Omega_G^n)$.
However, we will prefer $\Omega_{G}^n \wr A$ due to space concerns.
\end{remark}

Our primary interest here will be in the 
$\Omega_{G}^{0}\wr (\minus)$ construction,
which can be iterated thanks to the existence of the composite maps
$\Omega_{G}^{0} \wr A \to \Omega_{G}^{0} \to \Sigma_G$.
The role of the higher strings $\Omega_{G}^{n} \wr A $ will then be to provide more convenient models for iterated 
$\Omega_{G}^{0}\wr (\minus)$ constructions.
Indeed, Proposition \ref{SUBSASPULL PROP} can be reinterpreted as providing a canonical identification
$\Omega_{G}^{0} \wr \Omega_{G}^{n} \simeq \Omega_{G}^{n+1}$,
with the functor $\boldsymbol{V}_G^0 \wr \Omega_G^n$ identified with the functor $\boldsymbol{V}_G$ as defined in Notation \ref{VGDEF NOT}.
Moreover, arguing by induction on $n$, the fact that the rightmost squares in \eqref{PIIDEFDI EQ} are pullbacks
(Proposition \ref{PIIPROP PROP})
provides further identifications
$\Omega_{G}^{k} \wr \Omega_{G}^{n} \simeq \Omega_{G}^{n+k+1}$
with $\boldsymbol{V}_G^k \wr \Omega_G^n$ identified with $\boldsymbol{V}_G^k$ as defined by Notation \ref{INDVNG NOT}.

Our first task is now to produce analogous identifications between
$\Omega_{G}^{k} \wr \Omega_{G}^{n} \wr A =
\Omega_{G}^{k} \wr (\Omega_{G}^{n} \wr A)$
and 
$\Omega_{G}^{n+k+1} \wr A$
(note that iterated wreath expressions should always be read as bracketed on the right, i.e. we do \textit{not} define the expression
$ (\Omega_{G}^{k} \wr \Omega_{G}^{n}) \wr A$).
We start by generalizing the key functors from \S \ref{PLANARSTRING SEC}.

\begin{proposition}\label{PIIPROPA PROP}
There are functors
\[
	\begin{tikzcd}
	\Omega_G^n \wr A \ar{r}{\boldsymbol{V}_G^k} & \Fin_s \wr \Omega_G^{n-k-1}\wr A
&
	\Omega_G^n \wr A \ar{r}{d_i} & \Omega_G^{n-1}\wr A
&
	\Omega_G^n \wr A \ar{r}{s_j} & \Omega_G^{n+1}\wr A
	\end{tikzcd}
\]
where $i<n$, and natural isomorphisms 
\[
	\pi_i \colon \boldsymbol{V}_G^k \Rightarrow \boldsymbol{V}_G^{k-1} \circ d_i
\]
for $i < k$.
Further, all of these are natural in $A$
and they satisfy all the analogues of the properties listed in 
Proposition \ref{PIIPROP PROP}.
\end{proposition}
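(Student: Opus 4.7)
The plan is to build each of the claimed functors and natural isomorphisms by invoking the universal property of the defining pullback square (\ref{OMGGNA}) for $\Omega_G^n \wr A$, leveraging the corresponding structures on $\Omega_G^n$ established in Proposition \ref{PIIPROP PROP}. Naturality in $A$ will then be automatic, since every construction is produced via the same universal property in which $A$ enters only through the map $\Fin_s \wr A \to \Fin_s \wr \Sigma_G$.

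For the face maps $d_i \wr A \colon \Omega_G^n \wr A \to \Omega_G^{n-1} \wr A$ with $i<n$, I apply the left square of (\ref{PIIDEFDI EQ}) in the case $k=n$, which presents $V_G^n$ and $V_G^{n-1} \circ d_i$ as related by the natural isomorphism $\pi_i$. By Proposition \ref{PIIPROP PROP}(c), every component of $\pi_i$ is a pullback arrow for $\Fin_s \wr \Sigma_G \to \Fin_s$ whose $\Sigma_G$ coordinate is the identity, so it consists of a mere reordering by a permutation in $\Fin_s$. Such a reordering lifts tautologically to $\Fin_s \wr A$ regardless of any property of $\rho \colon A \to \Sigma_G$, and the universal property of the pullback defining $\Omega_G^{n-1} \wr A$ then yields the desired functor $d_i \wr A$. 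A dual argument using (\ref{PIIDEFDI2 EQ}) produces the degeneracies $s_j \wr A$, while the natural isomorphisms $\pi_i$ on $\Omega_G^n \wr A$ are obtained by lifting the original $\pi_i$ via the same mechanism.

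For $V_G^k \wr A$ I use the factorization $V_G^n = \sigma^0 \circ (\Fin_s \wr V_G^{n-k-1}) \circ V_G^k$ from Proposition \ref{PIIPROP PROP}(a). Since $\Fin_s \wr (-)$ preserves pullbacks (by combining Proposition \ref{GROTHSTAB PROP} with Lemma \ref{FWRGROTH LEM}), the category $\Fin_s \wr (\Omega_G^{n-k-1} \wr A)$ is itself a pullback compatible with the above factorization, and the universal property delivers $V_G^k \wr A$. The analogues of items (a)--(f) of Proposition \ref{PIIPROP PROP} are then verified by noting that the projections $\Omega_G^n \wr A \to \Omega_G^n$ and $\Omega_G^n \wr A \to \Fin_s \wr A$ are jointly conservative; since each required identity already holds after projecting to $\Omega_G^n$ and after projecting to $\Fin_s \wr A$ (by direct bookkeeping of the induced permutations), it must hold in $\Omega_G^n \wr A$.

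The main obstacle is the combinatorial bookkeeping around iterated compositions: the analogues of (a), (e), and (f) involve multiple twists by various $\pi_i$, and one must verify that the permutations induced in the $\Fin_s$ coordinate compose exactly as expected so that the simplicial identities lift without extra reordering corrections. Because each $\pi_i$ acts trivially on the $\Sigma_G$ components and purely as a permutation of the indexing tuple, this verification is essentially a purely combinatorial repackaging of the corresponding identities in Proposition \ref{PIIPROP PROP}, but it remains the most laborious step of the proof.
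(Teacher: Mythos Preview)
Your proposal is correct and follows essentially the same approach as the paper: both arguments construct all the data by invoking the universal property of the defining pullback (\ref{OMGGNA}), using the factorization of $V_G^n$ from Proposition \ref{PIIPROP PROP}(a) to produce $V_G^k \wr A$ and the fact from (c) that the $\pi_i$ consist of pullback arrows over $\Fin_s$ to produce the $d_i \wr A$ and their lifts of $\pi_i$. The only substantive difference is one of ordering and packaging: the paper builds $V_G^0 \wr A$ first (making explicit that the $\sigma^0$ square is itself a pullback), then the higher $V_G^k$ inductively, and only then the faces $d_i$ via a cube diagram indexed by an auxiliary $k$ whose independence must be checked; your version takes $k=n$ directly for $d_i$, which avoids that independence check, and then handles $V_G^k$ for general $k$ in one step. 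Your appeal to joint faithfulness of the two projections out of the pullback cleanly handles the analogues of (a), (c), (e), (f); just note that the analogue of (d) (certain squares being pullbacks) requires the pullback pasting argument rather than conservativity alone, exactly as in the paper.
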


\begin{proof}
Though it is not hard to explicitly write formulas for $\boldsymbol{V}_G^k$, $d_i$, $s_j$, $\pi_i$ 
(see Remark \ref{VGDEFA REM} below),
and then verify the desired properties,
we here instead argue that the desiderata themselves can be used to uniquely, and coherently, define those functors. 

Firstly, the functors $\boldsymbol{V}_G = \boldsymbol{V}_G^0$ are defined from the following diagram
\[
\begin{tikzcd}[row sep = 1.3em,column sep = 3em]
	\Omega_{G}^{n+1} \wr A \ar[dashed]{r}{\boldsymbol{V}_G} \ar{d}& 
	\Fin_s \wr \Omega_G^n \wr A \ar{r}{\Fin_s \wr \boldsymbol{V}_{G}^n} \ar{d}&
	\Fin_s^{\wr 2} \wr A  \ar{d} \ar{r}{\sigma^0} &
	\Fin_s \wr A \ar{d}
\\
	\Omega_{G}^{n+1} \ar{r}{\boldsymbol{V}_G} &
	\Fin_s \wr \Omega_{G}^{n} \ar{r}{\Fin_s \wr \boldsymbol{V}_{G}^{n}} &
	\Fin_s^{\wr 2} \wr \Sigma_G \ar{r}{\sigma^0} &
	\Fin_s \wr \Sigma_G
\end{tikzcd}
\]
by noting that the middle and right squares are pullbacks, 
and choosing $\boldsymbol{V}_G$ to be the unique functor such that the top composite is $\boldsymbol{V}_G^{n+1}.$
The higher functors $\boldsymbol{V}_G^k$ are defined exactly as in \eqref{VGDEF EQ}, and the analogue of Proposition \ref{PIIPROP PROP}(a) follows by the same proof.

The analogue of Proposition \ref{PIIPROP PROP}(b) is tautological, as pullback arrows for 
$\Omega_G^n \wr A \to \mathsf{O}_G$
are defined as compatible pairs of pullbacks in 
$\Omega_G^n$ and $\Fin_s \wr A$.

To define $d_i$, we consider the diagram below (for some $i<k$).
\[
\begin{tikzcd}[column sep = small, row sep = small]
	\Omega_{G}^n \wr A \ar{rrrr}{\boldsymbol{V}_G^{k}} \ar[dashed]{rd}[swap,near end]{d_i} \ar{dd}
	&&
	&&
	|[alias=FFOmegan]|\Fin_s \wr \Omega_G^{n-k-1} \wr A  \ar[dd] \ar[equal]{rd}
\\
	&
	|[alias=Omeganp1]|\Omega_{G}^{n-1} \wr A \ar[crossing over]{rrrr}[swap,near start]{\boldsymbol{V}_G^{k-1}} &&&&
	|[alias=Omeganp2]|
	\Fin_s \wr \Omega_G^{n-k-1} \ar{dd}&
\\
	\Omega_{G}^{n} \ar{rrrr} \ar{rd}[swap,near end]{d_i}&&
	 &&
	|[alias=FFOmeganm1]| \Fin_s \wr \Omega_G^{n-k-1} \ar[rd,equal] 
\\
	&
	|[alias=Omegan]|\Omega_G^{n-1} \ar{rrrr}[swap,near start]{\boldsymbol{V}_G^{k-1}} &&&&
	\Fin_s \wr \Omega_G^{n-k-1} &
	\arrow[Leftrightarrow, from=Omeganp1, to=FFOmegan,shorten <=0.15cm,shorten >=0.15cm,swap,"\pi_i"]
	\arrow[Leftrightarrow, from=FFOmeganm1, to=Omegan,shorten <=0.15cm,shorten >=0.15cm]
	\arrow[from=Omeganp1, to=Omegan,crossing over]
\end{tikzcd}
\]
The desiderata that the top $\pi_i$ consist of pullback arrows lifting the lower $\pi_i$ implies that it is uniquely determined by the top $\boldsymbol{V}_G^k$ functor, and hence so is the top composite 
$\boldsymbol{V}_G^{k-1}d_i$. But since the front face is a pullback square
(by arguing via induction on $k$ as in \eqref{PROOFD EQ}), there is a unique choice for $d_i$. 
That this definition of $d_i \wr A$ is
independent of $k$ 
is a consequence of the fact that the composite natural transformation in \eqref{INDPI1 EQ} is $\pi_i$.
Similarly, that the analogues of the left diagrams in 
\eqref{PIIDEFDI2 EQ}
hold follows by an identical argument from the fact that the composites of \eqref{INDPI2 EQ} are $\pi_{i+1}$.

The definitions of the $s_j$ are similar, except easier since there are no $\pi_i$ to contend with.

The analogues of Proposition \ref{PIIPROP PROP}(c),(e),(f) are then tautological, and the analogue of 
Proposition \ref{PIIPROP PROP}(d)
follows by an identical argument.
\end{proof}

\begin{remark}\label{VGDEFA REM}
Explicitly,
$\boldsymbol{V}_G^{k} \colon \Omega_{G}^{n} \wr A
\to \Fin_s \wr \Omega_{G}^{n-k-1} \wr A $
is defined by sending \eqref{OMEGAGNA EQ} to
\[
	\left(
		\left(
		T_{k,v_{G f}} \to \cdots \to T_{n,v_{G f}},
		\left(
		a_{v_{G e}}
		\right)_{v_{G e} \in V_G\left(T_{n,v_{G f}}\right)}
		\right)
	\right)_{v_{G f} \in V_G(T_k)}
\]
where both $V_G(T_k)$ and $T_{n,v_{G f}}$ are ordered lexicographically according to the associated planar strings.

Similarly, functors 
$d_i \colon \Omega_{G}^{n} \wr A \to \Omega_{G}^{n-1} \wr A$
for $0 \leq i < n$
and 
$s_j \colon \Omega_{G}^{n} \wr A \to \Omega_{G}^{n+1} \wr A$
for $-1 \leq j \leq n$
are defined on the object in \eqref{OMEGAGNA EQ}
by performing the corresponding operation on the $T_0 \to \cdots \to T_n$ coordinate and, in the $d_i$ case,
 suitably reordering $V_G(T_n)$.
\end{remark}

\begin{remark}
One upshot of Proposition \ref{PIIPROPA PROP} is that formally applying the symbol $(\minus) \wr A$ 
to the diagrams in Proposition \ref{PIIPROP PROP} yields sensible statements. As such, we will simply refer to the corresponding part of
Proposition \ref{PIIPROP PROP} when
using one of the generalized claims.
\end{remark}

\begin{corollary}\label{IDEN COR}
One has identifications 
$\Omega_G^k \wr \Omega_G^n \wr A \simeq \Omega_{G}^{n+k+1} \wr A$ which identify $\boldsymbol{V}_G^k \wr \Omega_G^n \wr A$ with 
$\boldsymbol{V}_G^k \wr A$.
Further, these are associative in the sense that the identifications
\[
	\Omega_G^k \wr \Omega_G^l \wr \Omega_G^n \wr A \simeq 
	\Omega_G^{k+l+1} \wr \Omega_G^n \wr A \simeq 
	\Omega_G^{k+l+n+2} \wr A 
\]
\[
	\Omega_G^k \wr \Omega_G^l \wr \Omega_G^n \wr A \simeq 
	\Omega_G^{k} \wr \Omega_G^{l+n+1} \wr A \simeq 
	\Omega_G^{k+l+n+2} \wr A 
\]
coincide.
Lastly, one obtains identifications
\[
	d_i \wr \Omega_G^n \simeq d_i \quad
	\pi_i \wr \Omega_G^n \simeq \pi_i \quad
	s_j \wr \Omega_G^n \simeq s_j \quad
	\Omega_G^k \wr d_i \simeq d_{i+k+1} \quad
	\Omega_G^k \wr s_j \simeq s_{j+k+1}
\]
\end{corollary}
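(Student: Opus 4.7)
The plan is to deduce all three statements from pullback pasting applied to the $A$-relative squares established in Proposition~\ref{PIIPROPA PROP}, mirroring the non-relative argument that already identified $\Omega_G^k \wr \Omega_G^n \simeq \Omega_G^{n+k+1}$ via repeated application of Proposition~\ref{PIIPROP PROP}(d). The main identification $\Omega_G^k \wr \Omega_G^n \wr A \simeq \Omega_G^{n+k+1} \wr A$ I would prove by induction on $k$. For the base case $k=0$, the defining pullback of $\Omega_G^0 \wr (\Omega_G^n \wr A)$ along $V_G$ can be pasted with the image under $\Fin \wr (-)$ of the pullback defining $\Omega_G^n \wr A$ (using Lemma~\ref{FWRGROTH LEM} to see that $\Fin \wr (-)$ preserves pullbacks); after reorganizing via $\sigma^0$, the resulting outer pullback is precisely the defining pullback of $\Omega_G^{n+1} \wr A$, since $V_G^{n+1}$ factors as $\sigma^0 \circ (\Fin \wr V_G^n) \circ V_G$ (the $k=0$ case of Proposition~\ref{PIIPROP PROP}(a)). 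The inductive step then proceeds by pasting with the rightmost square in the analogue of (\ref{PIIDEFDI EQ}) for $A$-coefficients, which is a pullback by Proposition~\ref{PIIPROPA PROP}(d). The identification $V_G^k \wr \Omega_G^n \wr A \simeq V_G^k \wr A$ falls out of this construction by Proposition~\ref{PIIPROPA PROP}(a), as both functors coincide with the composite through the shared pullback leg.

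For the associativity of the triple identification $\Omega_G^k \wr \Omega_G^l \wr \Omega_G^n \wr A \simeq \Omega_G^{k+l+n+2} \wr A$, I would observe that both reparenthesizations arise from pullback pasting applied to the same total diagram of vertex and $\sigma^0$ functors, differing only in the order in which the squares are collapsed. Uniqueness of pullbacks, combined with the strict associativity of $\sigma^0$ encoded in Proposition~\ref{PIIPROP PROP}(a), forces the two routes to produce the same canonical isomorphism. The identifications of the operators $d_i$, $\pi_i$, $s_j$ under the wreath are essentially definitional: by the proof strategy of Proposition~\ref{PIIPROPA PROP}, each $A$-relative operator was constructed as the unique lift of the non-relative operator compatible with the defining pullback square. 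Consequently the identities $d_i \wr \Omega_G^n \simeq d_i$, $\pi_i \wr \Omega_G^n \simeq \pi_i$, $s_j \wr \Omega_G^n \simeq s_j$ hold tautologically, while the shifted identifications $\Omega_G^k \wr d_i \simeq d_{i+k+1}$ etc.\ follow by tracking indices: under $\Omega_G^k \wr (-)$, the string $T_0 \to \cdots \to T_n$ is prefixed by $k+1$ new trees, producing the shift.

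The principal obstacle will be index and order bookkeeping. In particular the natural isomorphism $\pi_i$ encodes the reordering of vertex orbits that results from refining the lexicographic order on $V_G(T_k)$ along the expanded string, and verifying that $\pi_i$ intertwines correctly with the associativity identification requires tracking which string the lexicographic order is computed over at each stage. Once these index conventions are fixed, however, all compatibilities reduce to formal manipulations of pasted pullback squares, and the desired identifications follow directly from Proposition~\ref{PIIPROPA PROP}(a),(d),(e),(f).
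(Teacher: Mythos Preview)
Your approach is correct and follows essentially the same idea as the paper: both sides are identified by recognizing them as limits of the same diagram of pullback squares built from the $V_G$ functors and $\sigma^0$. The paper's presentation is slightly more direct than yours: rather than inducting on $k$, it writes down a single two-by-three grid of pullback squares whose lower row uses the already-established identification $\Omega_G^k \wr \Omega_G^n \simeq \Omega_G^{n+k+1}$ and whose upper rows are the defining pullbacks of the $(\minus)\wr A$ construction, then invokes Proposition~\ref{PIIPROP PROP}(a) to see that the outer rectangle computes $\Omega_G^{n+k+1}\wr A$ while the left column computes $\Omega_G^k \wr (\Omega_G^n \wr A)$. Associativity and the operator identifications are dismissed as ``similar'' and ``obvious'', exactly as you propose.

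Two small slips in your write-up: the inductive step you describe does not actually use the $d_i$ pullback square of Proposition~\ref{PIIPROPA PROP}(d)---the clean inductive step is simply to apply $\Omega_G^0 \wr (\minus)$ to the hypothesis and reuse the base case---and Lemma~\ref{FWRGROTH LEM} is about $\Fin \wr (\minus)$ preserving split Grothendieck fibrations, not pullbacks of categories (though the latter is true and elementary). Neither affects the substance of the argument.
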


\begin{proof}
The identification $\Omega_G^k \wr \Omega_G^n \wr A \simeq \Omega_{G}^{n+k+1} \wr A$ follows since by 
Proposition \ref{PIIPROP PROP}(a)
both expressions 
compute the limit of the solid part of the diagram below.
\[
\begin{tikzcd}[row sep = 1.5em]
	\bullet \ar[dashed]{r} \ar[dashed]{d}&
	\bullet \ar[dashed]{r} \ar[dashed]{d}&
	\Fin_s^{\wr 2} \wr A \ar{r}{\sigma^0} \ar{d}&
	\Fin_s \wr A \ar{d}
\\
	\Omega_G^{n+k+1} \ar{r}[swap]{\boldsymbol{V}_G^k} \ar{d}&
	\Fin_s \wr \Omega^n_G \ar{r}[swap]{\Fin_s \wr \boldsymbol{V}_G^n} \ar{d}&
	\Fin_s^{\wr 2} \wr \Sigma_G \ar{r}[swap]{\sigma^0} &
	\Fin_s \wr \Sigma_G &
\\
	\Omega_G^k \ar{r}[swap]{\boldsymbol{V}_G^k} &
	\Fin_s \wr \Sigma_G
\end{tikzcd}
\]
Associativity follows similarly. 
The remaining identifications 
follow from the $(-) \wr A$ analogues of 
\eqref{INDPI1 EQ}, \eqref{INDPI2 EQ},
and the right side of \eqref{PIIDEFDI2 EQ}.
\end{proof}

We now have all the necessary ingredients to define our monad on 
spans.

\begin{definition}\index{monads!spanmonad@$N$}
  \label{WSPAN_MONAD_DEFINITION}
	Suppose $\mathcal{V}$ has finite products or, more generally, that it is a symmetric monoidal category with diagonals in the sense of Remark \ref{FINSURJ REM}.
	
	We define an endofunctor $N$ of 
	$\mathsf{Wspan}^r(\Sigma_G,\mathcal{V}^{op})$
	by letting $N(\Sigma_G \leftarrow A \to \mathcal{V}^{op})$
	be the span $\Sigma_G \leftarrow \Omega_G^0 \wr A \to \mathcal{V}^{op}$ given by composition of the diagram
\[
	\begin{tikzcd}
	\Omega_G^0 \wr A \ar{r}{\boldsymbol{V}_G} \ar{d}&
	\Fin_s \wr A \ar{r} \ar{d}&
	\Fin_s \wr \mathcal{V}^{op} \ar{r}{\otimes^{op}} &
	\mathcal{V}^{op}
\\
	\Omega_{G}^{0} \ar{r}[swap]{\boldsymbol{V}_G} \ar{d} &
	\Fin_s \wr \Sigma_G
\\
	\Sigma_G
	\end{tikzcd}
\]
and defined on maps of spans in the obvious way.

One has a multiplication $\mu \colon N \circ N \Rightarrow N$ given by the natural isomorphism
\begin{equation}\label{MULTDEFSPAN EQ}
	\begin{tikzcd}
	\Sigma_G \ar[equal]{d}&
	\Omega_{G}^1 \wr A \ar{r}{\boldsymbol{V}_G} \ar{d}[swap]{d_{0}} \ar{l}&
	\Fin_s \wr \Omega_{G}^0 \wr A \ar{r}{\Fin_s \wr \boldsymbol{V}_G} &
	|[alias=FFOmega]| \Fin_s^{\wr 2} \wr A \ar{d}{\sigma^0} \ar{r} &
	\Fin_s^{\wr 2} \wr \mathcal{V}^{op} \ar{d}{\sigma^0} \ar{r}{\otimes^{op}} &
	\Fin_s \wr \mathcal{V}^{op} \ar{r}{\otimes^{op}} &
	|[alias=dog]|
	\mathcal{V}^{op} \ar[equal]{d}
\\
	\Sigma_G &
	|[alias=Omega]|\Omega_{G}^{0} \wr A \ar{rr}[swap]{\boldsymbol{V}_G} \ar{l}&&
	\Fin_s \wr A \ar{r} &
	|[alias=cat]|
	\Fin_s \wr \mathcal{V}^{op} \ar{rr}[swap]{\otimes^{op}} &&
	\mathcal{V}^{op}
	\arrow[Leftrightarrow, from=FFOmega, to=Omega,shorten <=0.15cm,,shorten >=0.15cm,"\pi_0"]
	\arrow[Leftrightarrow, from=dog, to=cat,shorten <=0.15cm,,shorten >=0.15cm,"\alpha"]
	\end{tikzcd}
\end{equation}
where we note that the top right composite in the 
$\pi_0$ square is indeed $\boldsymbol{V}_{G}^{1}$,
thanks to the inductive description in (the $(\minus) \wr A$ analogue of) Notation \ref{INDVNG NOT}.

Lastly, there is a unit $\eta \colon id \Rightarrow N$ given by the strictly commutative diagrams
\begin{equation}\label{UNITSPAN EQ}
	\begin{tikzcd}
	\Sigma_G \ar[equal]{d} &
	A \ar{l} \ar{d}[swap]{s_{-1}} \ar[equal]{r} &
	A \ar{d}{\delta^0} \ar{r} &
	\mathcal{V}^{op} \ar{d}{\delta^0} \ar[equal]{r}&
	\mathcal{V}^{op} \ar[equal]{d}
\\
	\Sigma_G &
	\Omega_{G}^{0} \wr A \ar{l} \ar{r}[swap]{\boldsymbol{V}_G}&
	\Fin_s \wr A \ar{r} &
	\Fin_s \wr \mathcal{V}^{op} \ar{r}[swap]{\otimes^{op}} &
	\mathcal{V}^{op}.
	\end{tikzcd}
\end{equation}	
\end{definition}

\begin{proposition}\label{MONSPAN PROP}
$(N,\mu,\eta)$ is a monad on $\mathsf{Wspan}^r(\Sigma_G,\mathcal{V}^{op})$.
\end{proposition}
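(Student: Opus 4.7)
The plan is to verify the three monad axioms—associativity of $\mu$ and the two unit axioms—by reducing each identity of natural transformations of spans to a combination of two families of coherences already established in the excerpt: the simplicial identities for the operators $(d_i,\pi_i,s_j)$ collected in Proposition \ref{PIIPROP PROP}(f) (and their $\wr A$ analogues from Proposition \ref{PIIPROPA PROP}), and the pentagon/unit coherences for the natural transformation $\alpha$ from diagrams \eqref{COHER2 EQ}, \eqref{COHER3 EQ}. Corollary \ref{IDEN COR} is the bridge: it identifies the iterated wreath products $\Omega_G^0 \wr \Omega_G^0 \wr A \simeq \Omega_G^1 \wr A$ and $\Omega_G^0 \wr \Omega_G^0 \wr \Omega_G^0 \wr A \simeq \Omega_G^2 \wr A$, so that $NN$ and $NNN$ applied to a span $(\Sigma_G \leftarrow A \to \mathcal{V}^{op})$ can be rewritten uniformly in terms of $\Omega_G^1 \wr A$ and $\Omega_G^2 \wr A$.

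For associativity, I would first unfold $\mu \circ N\mu$ and $\mu \circ \mu N$ as natural transformations $NNN \Rightarrow N$ of span endofunctors. Using Corollary \ref{IDEN COR}, each composite expresses as a map of spans whose apex is $\Omega_G^2 \wr A$ and which factors through a two-step collapse. The apex map on the tree side is a composite of two face operators together with their $\pi$-isomorphisms; by Proposition \ref{PIIPROP PROP}(f) (and Proposition \ref{PIIPROPA PROP}) the compatibility of $d_0 d_0 \cong d_0 d_1$ via $\pi_0$ makes the two tree-side collapses equal. On the monoidal ($\mathcal{V}^{op}$) side, the composite involves two successive applications of $\sigma^0$ and of $\Pi^{op}$, mediated by $\alpha$; equality here is precisely the content of the pentagon coherence diagram \eqref{COHER2 EQ} for $\alpha$, contravariantly applied. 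Gluing these two equalities along the common $\Fin^{\wr 2}\wr A$-layer yields associativity.

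For the unit axioms, I would unfold $\mu \circ \eta N$ and $\mu \circ N\eta$ and again identify both $N\eta$ and $\eta N$ with span maps whose tree-side components are $s_{-1}$ and $s_0$ respectively (using Corollary \ref{IDEN COR} to identify $\Omega_G^0 \wr A \subset \Omega_G^1 \wr A$ in the two ways dictated by the unit). The simplicial identities $d_0 s_{-1} = \mathrm{id}$ and $d_0 s_0 = \mathrm{id}$ (together with the corresponding $\pi_0$-cancellations, which are strict since \eqref{UNITSPAN EQ} has no 2-cells) reduce both composites, on the tree side, to the identity on $\Omega_G^0 \wr A$. On the monoidal side, the composite becomes an instance of one of the two triangles in \eqref{COHER3 EQ}, which are exactly the statement that $\alpha$ is unital; hence both unit composites equal the identity natural transformation on $N$.

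The main obstacle is purely bookkeeping: the natural transformations $\pi_i$ and $\alpha$ sit inside several commutative rectangles in \eqref{MULTDEFSPAN EQ} and \eqref{UNITSPAN EQ}, and one must carefully track how the $\pi_i$-cells on the $\Fin \wr (\minus)$-side interact with the $\alpha$-cells on the $\mathcal{V}^{op}$-side when pasted. The payoff is that, once the diagrams are arranged so that the simplicial part and the monoidal part are separated into independent sub-pastings, each sub-pasting is disposed of by citing exactly one prior result—Proposition \ref{PIIPROP PROP}(f)/\ref{PIIPROPA PROP} for the former and the coherences \eqref{COHER2 EQ}, \eqref{COHER3 EQ} for the latter—and no new combinatorics of trees is required.
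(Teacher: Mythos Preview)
Your proposal is correct and follows essentially the same approach as the paper: the paper likewise writes out the two associativity composites as pasting diagrams over $\Omega_G^2 \wr A$, separates each into a ``tree side'' and a ``monoidal side'', and then disposes of the tree side via the simplicial identities for $(d_i,\pi_i)$ in Proposition~\ref{PIIPROP PROP}(f) (using \eqref{INDPI1 EQ}, \eqref{INDPI2 EQ} to reinterpret the relevant squares) and of the monoidal side via the $\alpha$-pentagon \eqref{COHER2 EQ}; the unit axioms are handled identically via the face-degeneracy relations and the $\alpha$-unit triangles \eqref{COHER3 EQ}.
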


\begin{proof}
Throughout the proof we abbreviate $\Fin_s$ as $\Fin$.

The natural transformation component of $\mu \circ (N \mu)$ is given by the composite diagram
\begin{equation}\label{ASSOCSPAN1 EQ}
	\begin{tikzcd}[column sep=1em]
	\Omega_{G}^2 \wr A \ar{d}[swap]{d_1} \ar{r} &
	\Fin \wr \Omega_{G}^1 \wr A \ar{d}[swap]{\Fin \wr d_0} \ar{r}&
	\Fin^{\wr 2} \wr \Omega_{G}^0 \wr A \ar{r} &
	|[alias=dog2]|
	\Fin^{\wr 3} \wr A \ar{d}{\sigma^1} \ar{r} &
	\Fin^{\wr 3} \wr \mathcal{V}^{op} \ar{d}{\sigma^1} \ar{r} &
	\Fin^{\wr 2} \wr \mathcal{V}^{op} \ar{r}&
	|[alias=cat3]|
	\Fin \wr \mathcal{V}^{op} \ar[equal]{d} \ar{r} &
	\mathcal{V}^{op} \ar[equal]{d}
\\
	\Omega_{G}^1 \wr A \ar{r} \ar{d}[swap]{d_{0}}&
	|[alias=cat2]|
	\Fin \wr \Omega_{G}^0 \wr A \ar{rr} &&
	|[alias=FFOmega]|\Fin^{\wr 2} \wr A \ar{d}{\sigma^0} \ar{r} &
	|[alias=dog3]|
	\Fin^{\wr 2} \wr \mathcal{V}^{op} \ar{d}{\sigma^0} \ar{rr} &&
	\Fin \wr \mathcal{V}^{op} \ar{r} &
	|[alias=dog]|
	\mathcal{V}^{op} \ar[equal]{d}
\\
	|[alias=Omega]|\Omega_{G}^0 \wr A \ar{rrr} &&&
	\Fin \wr A \ar{r} &
	|[alias=cat]|
	\Fin \wr \mathcal{V}^{op} \ar{rrr} &&&
	\mathcal{V}^{op}
	\arrow[Leftrightarrow, from=FFOmega, to=Omega,shorten <=0.15cm,shorten >=0.15cm,"\pi_0"]
	\arrow[Leftrightarrow, from=dog, to=cat,shorten <=0.15cm,shorten >=0.15cm,"\alpha"]
	\arrow[Leftrightarrow, from=dog2, to=cat2,shorten <=0.15cm,shorten >=0.15cm,"\Fin \wr \pi_0"]
	\arrow[Leftrightarrow, from=cat3, to=dog3,shorten <=0.15cm,shorten >=0.15cm,"\Fin \wr \alpha"]
	\end{tikzcd}
\end{equation}
whereas the natural transformation component of $\mu \circ (\mu N)$ is given by
\begin{equation}\label{ASSOCSPAN2 EQ}
	\begin{tikzcd}[column sep=1.05em]
	\Omega_{G}^{2} \wr A \ar{d}[swap]{d_0} \ar{r} &
	\Fin \wr \Omega_{G}^{1} \wr A \ar{r} &
	|[alias=dog2]|
	\Fin^{\wr 2} \wr \Omega_{G}^{0} \wr A \ar{r} \ar{d}{\sigma^0}&
	\Fin^{\wr 3} \wr A \ar{r} \ar{d}{\sigma^0} &
	\Fin^{\wr 3} \wr \mathcal{V}^{op} \ar{r} \ar{d}{\sigma^0} &
	\Fin^{\wr 2} \wr \mathcal{V}^{op} \ar{r} \ar{d}&
	\Fin \wr \mathcal{V}^{op} \ar{r} &
	|[alias=dog3]|
	\mathcal{V}^{op} \ar[equal]{d}
\\
	|[alias=cat2]|
	\Omega_{G}^{1} \wr A \ar{rr} \ar{d}[swap]{d_{0}} &&
	\Fin \wr \Omega_{G}^{0} \wr A \ar{r} &
	|[alias=FFOmega]|
	\Fin^{\wr 2} \wr A \ar{d}{\sigma^0} \ar{r} &
	\Fin^{\wr 2} \wr \mathcal{V}^{op} \ar{d}{\sigma^0} \ar{r} &
	|[alias=cat3]|
	\Fin \wr \mathcal{V}^{op} \ar{rr} &&
	|[alias=dog]|
	\mathcal{V}^{op} \ar[equal]{d}
\\
	|[alias=Omega]|\Omega_{G}^{0} \wr A \ar{rrr} &&&
	\Fin \wr A \ar{r} &
	|[alias=cat]|
	\Fin \wr \mathcal{V}^{op} \ar{rrr} &&&
	\mathcal{V}^{op}
	\arrow[Leftrightarrow, from=FFOmega, to=Omega,shorten <=0.15cm,,shorten >=0.15cm,"\pi_0"]
	\arrow[Leftrightarrow, from=dog, to=cat,shorten <=0.15cm,,shorten >=0.15cm,"\alpha"]
	\arrow[Leftrightarrow, from=dog2, to=cat2,shorten <=0.15cm,,shorten >=0.15cm,"\pi_0"]
	\arrow[Leftrightarrow, from=dog3, to=cat3,shorten <=0.15cm,,shorten >=0.15cm,"\alpha"]
	\end{tikzcd}
\end{equation}
That the rightmost sides of \eqref{ASSOCSPAN1 EQ} and \eqref{ASSOCSPAN2 EQ} coincide follows from the associativity of the isomorphisms $\alpha$ in \eqref{COHER2 EQ}.
On the other hand, the leftmost sides coincide since they are instances of the ``simplicial relation'' diagrams in \eqref{SIMPPI EQ}, as is seen by using 
\eqref{INDPI1 EQ} and \eqref{INDPI2 EQ}
to reinterpret the top left sections.

As for the unit conditions, $\mu \circ (N \eta)$ is represented by
\begin{equation}\label{UNITSPAN1 EQ}
	\begin{tikzcd}[column sep=1.05em]
	\Omega_G^0 \wr A \ar{d}[swap]{s_{0}} \ar{r} &
	\Fin \wr A \ar{d}[swap]{s_{-1}} \ar[equal]{r} &
	\Fin \wr A \ar{d}{\delta^1} \ar{r} &
	\Fin \wr \mathcal{V}^{op} \ar{d}{\delta^1} \ar[equal]{r} &
	\Fin \wr \mathcal{V}^{op} \ar[equal]{d} \ar{r} &
	\mathcal{V}^{op} \ar[equal]{d}
\\
	\Omega_{G}^{1} \wr A \ar{r} \ar{d}[swap]{d_{0}}&
	\Fin \wr \Omega_{G}^{0} \wr A \ar{r} &
	|[alias=FFOmega]|\Fin^{\wr 2} \wr A \ar{d}{\sigma^0} \ar{r} &
	\Fin^{\wr 2} \wr \mathcal{V}^{op} \ar{d}{\sigma^0} \ar{r} &
	\Fin \wr \mathcal{V}^{op} \ar{r} &
	|[alias=dog]|
	\mathcal{V}^{op} \ar[equal]{d}
\\
	|[alias=Omega]|\Omega_{G}^0 \wr A \ar{rr}&&
	\Fin \wr A \ar{r} &
	|[alias=cat]|
	\Fin \wr \mathcal{V}^{op} \ar{rr} &&
	\mathcal{V}^{op}
	\arrow[Leftrightarrow, from=FFOmega, to=Omega,shorten <=0.15cm,,shorten >=0.15cm,"\pi_0"]
	\arrow[Leftrightarrow, from=dog, to=cat,shorten <=0.15cm,,shorten >=0.15cm,"\alpha"]
	\end{tikzcd}
\end{equation}
while $\mu \circ (\eta N)$ is represented by 
\begin{equation}\label{UNITSPAN2 EQ}
	\begin{tikzcd}[column sep=1.05em]
	\Omega_G^0 \wr A \ar{d}[swap]{s_{-1}} \ar[equal]{r} &
	\Omega_G^0 \wr A \ar{d}{\delta^0} \ar{r} &
	\Fin \wr A \ar{d}{\delta^0} \ar{r} &
	\Fin \wr \mathcal{V}^{op} \ar{d}{\delta^0} \ar{r} &
	\mathcal{V}^{op} \ar{d}{\delta^0} \ar[equal]{r} &
	\mathcal{V}^{op} \ar[equal]{d}
\\
	\Omega_{G}^{1} \wr A \ar{r} \ar{d}[swap]{d_{0}}&
	\Fin \wr \Omega_{G}^0 \wr A \ar{r} &
	|[alias=FFOmega]|\Fin^{\wr 2} \wr A \ar{d}{\sigma^0} \ar{r} &
	\Fin^{\wr 2} \wr \mathcal{V}^{op} \ar{d}{\sigma^0} \ar{r} &
	\Fin \wr \mathcal{V}^{op} \ar{r} &
	|[alias=dog]|
	\mathcal{V}^{op} \ar[equal]{d}
\\
	|[alias=Omega]|\Omega_{G}^0 \wr A \ar{rr} &&
	\Fin \wr A \ar{r} &
	|[alias=cat]|
	\Fin \wr \mathcal{V}^{op} \ar{rr} &&
	\mathcal{V}^{op}
	\arrow[Leftrightarrow, from=FFOmega, to=Omega,shorten <=0.15cm,,shorten >=0.15cm,"\pi_0"]
	\arrow[Leftrightarrow, from=dog, to=cat,shorten <=0.15cm,,shorten >=0.15cm,"\alpha"]
	\end{tikzcd}
\end{equation}
That \eqref{UNITSPAN1 EQ} and \eqref{UNITSPAN2 EQ} coincide follows analogously by the unital condition for $\alpha$
and the face-degeneracy relations in 
Proposition \ref{PIIPROP PROP}(f).
\end{proof}

\renewcommand{\F}{\mathbb{F}}

\subsection{The genuine equivariant operad monad} \label{FGMON SEC}

Since 
$\mathsf{Wspan}^r(\Sigma_G,\mathcal{V}^{op}) \simeq 
\mathsf{Wspan}^l(\Sigma_G^{op},\mathcal{V})$,
Proposition \ref{MONSPAN PROP} and Remark \ref{RANLANADJ REM} give an adjuntion
\[\index{key functors!upsilon@$\upsilon$}
	\mathsf{Lan} \colon
	\mathsf{WSpan}^l(\Sigma^{op}_G, \mathcal{V})
		\rightleftarrows
	\mathsf{Fun}(\Sigma^{op}_G, \mathcal{V})
	\colon \upsilon
\]
together with a monad $N$ in the leftmost category $\mathsf{WSpan}^l(\Sigma^{op}_G, \mathcal{V})$.

We will now show that,
under reasonable conditions on $\mathcal{V}$,
this monad can be transferred by using 
Proposition \ref{MONADADJ PROP},
i.e. we will show that the natural transformations 
$
\mathsf{Lan} \circ N \Rightarrow \mathsf{Lan} \circ N \circ \upsilon \circ \mathsf{Lan}
$
and
$\mathsf{Lan} \circ \upsilon \Rightarrow id$
are isomorphisms.

This will require us to introduce a slight modification of the category of spans.
For motivation, note that iterations
$N^{\circ n+1} \circ \upsilon$ produce spans of the form
$\Sigma_G \leftarrow \Omega_G^{n} \to \mathcal{V}^{op}$
(where we use the identification $\Omega_G^{n} \wr \Sigma_G \simeq \Omega_G^{n}$). 
As noted in Remark \ref{ALLSPLITMAPS REM}, the maps 
$\Omega_G^{n} \to \Sigma_G$ are maps of split fibrations over $\mathsf{O}_G$, as are all other simplicial operators $d_i$, $s_j$.

\begin{definition}
The category $\mathsf{Wspan}^l_{\mathsf{r}}(\Sigma_G^{op},\mathcal{V})$ of \textit{rooted (left) spans}
has as objects spans
$\Sigma_G^{op} \leftarrow A^{op} \to \mathcal{V}$
together with a split Grothendieck fibration 
$\mathsf{r} \colon A \to \mathsf{O}_G$
such that $A \to \Sigma_G$ is a map of split fibrations.
Similarly, arrows are maps of spans inducing maps of split fibrations.
\end{definition}

We refer to split fibrations $A \to \mathsf{O}_G$
as \textit{root fibrations}
and to maps between them as \textit{root fibration maps}.

\begin{remark}
The condition that $A \to \mathsf{O}_G$
be a root fibration requires additional \textit{choices} of root pullbacks. Therefore, the forgetful functor 
$\mathsf{Wspan}^l_{\mathsf{r}}(\Sigma_G^{op},\mathcal{V})
\to
\mathsf{Wspan}^l(\Sigma_G^{op},\mathcal{V})$
is not quite injective on objects.
\end{remark}

The relevance of rooted spans is given by the following couple of lemmas.

\begin{lemma}\label{ROOTFIBPULL LEM}
If $A \to \Sigma_G$ is a root fibration map then so is 
$\Omega_G^0 \wr A \to \Omega_G^0$, naturally in $A$.
\end{lemma}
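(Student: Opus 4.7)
The plan is to construct the split Grothendieck fibration structure on the composite $\Omega_G^0 \wr A \to \Omega_G^0 \to \mathsf{O}_G$ directly, by lifting chosen cartesian arrows on each side of the defining pullback square and showing that they assemble into a compatible pair. Recall that objects of $\Omega_G^0 \wr A$ are pairs $(T,(a_v)_{v \in V_G(T)})$ such that $\rho(a_v)=T_v$, so a cartesian lift in the composite fibration amounts to a compatible pair of cartesian lifts in $\Omega_G^0 \to \mathsf{O}_G$ and in $\Fin_s \wr A \to \Fin_s \wr \mathsf{O}_G$.

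First, given $\phi\colon Y \to X$ in $\mathsf{O}_G$ and $(T,(a_v))$ with $\mathsf{r}(T)=X$, I would form the chosen root pullback $\phi^{\**}T \to T$ in $\Omega_G^0$. By Lemma \ref{VGPULL LEM}, applying $V_G$ yields a chosen cartesian arrow $V_G(\phi^{\**}T) \to V_G(T)$ for the split fibration $\Fin_s \wr \Sigma_G \to \Fin_s \wr \mathsf{O}_G$, lying over some induced arrow $\bar\phi$ in $\Fin_s \wr \mathsf{O}_G$. Since $A \to \mathsf{O}_G$ is a split fibration, Lemma \ref{FWRGROTH LEM} says that $\Fin_s \wr A \to \Fin_s \wr \mathsf{O}_G$ is one as well, so I can take the chosen cartesian lift $\bar\phi^{\**}(a_v) \to (a_v)$. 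The hypothesis that $A \to \Sigma_G$ is itself a map of split fibrations, combined with the second half of Lemma \ref{FWRGROTH LEM}, shows that $\Fin_s \wr A \to \Fin_s \wr \Sigma_G$ preserves the chosen cartesian lifts, which is exactly the compatibility $\rho(\bar\phi^{\**}(a_v)) = V_G(\phi^{\**}T)$ needed to produce an object $(\phi^{\**}T, \bar\phi^{\**}(a_v)) \in \Omega_G^0 \wr A$ and a well-defined arrow to $(T,(a_v))$.

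Next I would check that this arrow is cartesian for the composite projection $\Omega_G^0 \wr A \to \mathsf{O}_G$: given any test arrow over a factorization $Y'' \to Y \to X$, the required unique lift decomposes into a factorization in $\Omega_G^0$ (existing uniquely by cartesian-ness of $\phi^{\**}T \to T$) and one in $\Fin_s \wr A$ (existing uniquely by cartesian-ness of $\bar\phi^{\**}(a_v) \to (a_v)$), and pullback compatibility follows from uniqueness since both agree after applying $V_G$. Standard verifications of functoriality of these chosen pullbacks with respect to composition and identities give the split fibration structure, and by construction the projection to $\Omega_G^0$ sends chosen cartesians to chosen cartesians.

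Finally, naturality in $A$ is immediate: a map of root fibration maps $A \to A'$ over $\Sigma_G$ induces a map $\Omega_G^0 \wr A \to \Omega_G^0 \wr A'$ via the universal property of the defining pullback, and this preserves chosen cartesians because $\Fin_s \wr A \to \Fin_s \wr A'$ does (Lemma \ref{FWRGROTH LEM}) while the $\Omega_G^0$-component is untouched. The main technical point, and the step where the hypothesis is genuinely used, is the compatibility of the two chosen cartesian lifts inside $\Fin_s \wr \Sigma_G$; everything else is a straightforward consequence of the universal property of pullbacks and the already-established behavior of $V_G$ under root pullbacks.
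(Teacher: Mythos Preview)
Your proof is correct and follows essentially the same approach as the paper: both define the chosen cartesian arrows in $\Omega_G^0 \wr A$ as compatible pairs of pullbacks in $\Omega_G^0$ and $\Fin_s \wr A$, using Lemma~\ref{VGPULL LEM} to ensure $V_G$ sends chosen pullbacks to chosen pullbacks and Lemma~\ref{FWRGROTH LEM} to lift the root fibration hypothesis through $\Fin_s \wr (-)$. You simply spell out in more detail the cartesian and naturality verifications that the paper leaves implicit.
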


\begin{proof}
The hypothesis that $A \to \Sigma_G$ is a root fibration map
implies that the rightmost vertical map below 
is a map of split fibrations over
$\Fin_s \wr \mathsf{O}_G$.
\[
\begin{tikzcd}
	\Omega_{G}^0 \wr A \ar{r}{\boldsymbol{V}_G} \ar{d} &
	\Fin_s \wr A \ar{d}
\\
	\Omega_{G}^0 \ar{r}[swap]{\boldsymbol{V}_G} &
	\Fin_s \wr \Sigma_G
\end{tikzcd}
\]
Since, by Lemma \ref{VGPULL LEM}, the map $\boldsymbol{V}_G$ sends pullback  arrows in $\Omega_{G}^0$
(over $\mathsf{O}_G$) to pullback arrows in $\Fin_s \wr \Sigma_G$ (over $\Fin_s \wr \mathsf{O}_G$), 
the root pullback arrows in 
$\Omega_G^0 \wr A$ can be defined as compatible pairs of pullback arrows in $\Omega^0_G$ and $\Fin_s \wr A$,
and the result follows.
\end{proof}

\begin{remark}\label{PULLEXP REM}
Explicitly, if $\psi \colon Y \to X$ is a map in $\mathsf{O}_G$,
and $\tilde{T} =(T,(A_{v_{Ge}})_{V_G(T)}) \in \Omega_G^0 \wr A$ lies over $X$,
the pullback $\psi^{\**} \tilde{T}$ is given by
\[
\left(\psi^{\**}T,(\bar{\psi}^{\**}_{Ge}
 A_{v_{Ge}})_{V_G(\psi^{\**}T)}\right)
\]
where $\bar{\psi}$ is the map 
$\bar{\psi} \colon \psi^{\**}T \to T$
and $\bar{\psi}_{G e}$ is the restriction
$\bar{\psi} \colon G e \to G \bar{\psi}(e)$, 
cf. Remark \ref{PULLCOMP REM}.
\end{remark}

\begin{lemma}\label{LANPULLCOMA LEM}
	Suppose that $\mathcal{V}$ is complete and that $\rho \colon A \to \Sigma_G$ is a root fibration map. If the rightmost triangle in 
\[
\begin{tikzcd}
	\Omega_{G}^{0} \wr A \ar{r}{\boldsymbol{V}_G} 
	\ar{d} & 
	\Fin_s \wr A  
	\ar{d}  \ar{r}[swap,name=F]{}&
	\mathcal{V}^{op}
\\
	\Omega_{G}^{0} \ar{r}[swap]{\boldsymbol{V}_G} & 
	|[alias=FEG]|\Fin_s \wr \Sigma_G \ar{ru}
\arrow[Rightarrow, from=FEG, to=F,shorten <=0.15cm]
\end{tikzcd}
\]
is a right Kan extension diagram then so is the composite diagram.
\end{lemma}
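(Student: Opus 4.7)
The plan is to apply Proposition \ref{FIBERKANMAP PROP} to both sides, reducing the given and desired right Kan extensions to limits over strict fibers, which we will then identify via the pullback square \eqref{OMGGNA} at $n=0$.

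First I would observe that $\pi \colon \Omega_G^0 \wr A \to \Omega_G^0$ is a map of split Grothendieck fibrations over $\mathsf{O}_G$ by Lemma \ref{ROOTFIBPULL LEM}, and that $\pi_1 \colon \Fin \wr A \to \Fin \wr \Sigma_G$ is a map of split Grothendieck fibrations over $\Fin \wr \mathsf{O}_G$ by Lemma \ref{FWRGROTH LEM} (applied to the root fibration map $\rho \colon A \to \Sigma_G$). Fix $T \in \Omega_G^0$ and denote by $F'$ and $G$ the functors $\Fin \wr A \to \mathcal{V}^{op}$ and $\Fin \wr \Sigma_G \to \mathcal{V}^{op}$ appearing in the right Kan extension triangle. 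Proposition \ref{FIBERKANMAP PROP} then writes $(\mathsf{Ran}_{\pi}(F' \circ V_G))(T)$ as the limit of $F' \circ V_G$ over $T \downarrow_{\pi}(\Omega_G^0 \wr A)$, while the hypothesis combined with the same proposition writes $G(V_G(T))$ as the limit of $F'$ over $V_G(T) \downarrow_{\pi_1}(\Fin \wr A)$.

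Next I would argue that in both cases the inclusion of the strict fiber into the relevant $\cdot \downarrow_{\cdot} \cdot$ category is actually an equivalence, so that these limits reduce to limits over strict fibers. The key point is that all rooted arrows (those lifting identities in the base of the fibration) are isomorphisms: in $\Omega_G^0$ the arrows are by definition quotients (Definition \ref{QUOT DEF}), so a rooted arrow consists of constituent tree isomorphisms and is itself an isomorphism, and similarly a rooted arrow in $\Fin \wr \Sigma_G$ over $\Fin \wr \mathsf{O}_G$ has trivial $\Fin$-component together with componentwise rooted quotients in $\Sigma_G$, each of which is also an isomorphism. Consequently the counit of the pullback right adjoint supplied by Proposition \ref{FIBERKANMAP PROP} is invertible, so strict-fiber inclusion serves as a quasi-inverse.

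Finally, the pullback square \eqref{OMGGNA} at $n=0$ induces an isomorphism of strict fibers $(\Omega_G^0 \wr A)_T \xrightarrow{V_G} (\Fin \wr A)_{V_G(T)}$ under which $F' \circ V_G$ is tautologically identified with $F'$, so the two limits over strict fibers coincide and the desired identification $(\mathsf{Ran}_{\pi}(F' \circ V_G))(T) \simeq G(V_G(T))$ follows. The hard part will be the equivalence-of-inclusions step in the $\Fin \wr \Sigma_G$ case: one must carefully verify that a rooted arrow over $\Fin \wr \mathsf{O}_G$ really has identity $\Fin$-component (and not merely a bijection compatible with orbital structure), so that the constituent $\Sigma_G$-maps are genuinely rooted and hence isomorphisms.
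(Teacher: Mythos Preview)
Your approach is correct and close to the paper's, with a cleaner final identification. Both arguments use Proposition~\ref{FIBERKANMAP PROP} to reduce to the rooted undercategories and both rely on rooted quotients in $\Omega_G^0$ and $\Fin\wr\Sigma_G$ being isomorphisms. The paper then shows directly that the composite $r \circ V_G \colon T \downarrow_{\mathsf{r}} \Omega_G^0 \wr A \to \prod_{v_{Ge}} T_{v_{Ge}} \downarrow_{\mathsf{r}} A$ is an isomorphism via an explicit planar-structure argument: a tuple of rooted isomorphisms $T_{v_{Ge}} \to U_{v_{Ge}}^{\mathsf{r}}$ uniquely determines a compatible planar structure on $T$ and hence a unique rooted isomorphism $T \to U$. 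You instead collapse both rooted undercategories one step further, to the strict fibers over $T$ and $V_G(T)$, and then the pullback square~\eqref{OMGGNA} identifies those fibers immediately; this sidesteps the explicit assembly argument entirely.

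Two small remarks. First, the right adjoint supplied by Proposition~\ref{FIBERKANMAP PROP} retracts the full undercategory onto $\bar{e} \downarrow_\pi \mathcal{E}$, not onto the strict fiber; the further retraction you want comes from $\Omega_G^0 \wr A \to \Omega_G^0$ (and $\Fin\wr A \to \Fin\wr\Sigma_G$) being fibrations themselves, by Proposition~\ref{GROTHSTAB PROP} applied to the pullback square, together with the fact that cartesian lifts of isomorphisms are isomorphisms. Second, your ``hard part'' is not actually hard: the functor $\Fin \wr \Sigma_G \to \Fin \wr \mathsf{O}_G$ is the identity on the $\Fin$-coordinate, so any arrow projecting to an identity there has identity $\Fin$-component outright, and the remaining $\Sigma_G$-components are then rooted quotients and hence isomorphisms.
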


\begin{proof}
	Unpacking definitions using the pointwise formula for right  Kan extensions (cf. \cite[X.3 Thm. 1]{McL} or \eqref{FIBERKAN EQ}), 
	it suffices to check that for each $T \in \Omega_{G}^{0}$ the induced functor
\[
\begin{tikzcd}
	T \downarrow \Omega_{G}^{0} \wr A \ar{r}{\boldsymbol{V}_G} & 
	\boldsymbol{V}_G(T) \downarrow \Fin_s \wr A
\end{tikzcd}
\]
is initial.
We will slightly abuse notation by writing 
$(T \to S, (A_{v_{G f}})_{V_G(S)})$ for the objects of 
$T \downarrow \Omega_{G}^{0} \wr A$,
as well as 
$
\left(
	(T_{v_{G e}} \to S_{\phi(v_{Ge})})_{v_{G e} \in V_G(T)},
	(A_v)_{v \in V}
\right)
$
for the objects of 
$\boldsymbol{V}_G(T) \downarrow \Fin_s \wr A$,
with the map $\phi \colon V_G(T) \to V$ and the condition 
$\rho(A_v) = S_v$ left implicit.

By Proposition \ref{FIBERKANMAP PROP}, $T \downarrow \Omega_{G}^{0} \wr A$ has an initial subcategory
$T \downarrow_{\mathsf{r}} \Omega_{G}^{0} \wr A$
of those objects such that $T \to S$ is the identity on roots. 
Similarly, again by Proposition \ref{FIBERKANMAP PROP},
$\boldsymbol{V}_G(T) \downarrow \Fin_s \wr A$
has an initial subcategory
\begin{equation}\label{INITCAT EQ} 
	\left(
	(T_{v_{Ge}})_{V_G(T)} 
	\downarrow_{\Fin_s \wr \mathsf{O}_G}
	\Fin_s \wr A
	\right)
\simeq
	\left(
	\prod_{v_{Ge} \in V_G(T)} 
	T_{v_{Ge}} \downarrow_{\mathsf{r}} A
	\right)
\end{equation}
of those objects inducing an identity on $\Fin_s \wr \mathsf{O}_G$. Moreover, 
\eqref{INITCAT EQ} comes together with a right retraction $r$,
i.e. a right adjoint to the inclusion $i$ into $\boldsymbol{V}_G(T) \downarrow \Fin_s \wr A$, 
which is built using pullbacks. 
Explicitly, unpacking the proof of Proposition \ref{FIBERKANMAP PROP} one has that $r$ is given by the assignment
\begin{equation}\label{RETDES EQ}
\left(
	(T_{v_{Ge}})_{V_G(T)}
	\xrightarrow{\tau}
	(S_x)_{X},
	(A_x)_{X}
\right)
\mapsto
\left(
	\left(
	T_{v_{G e}} \to 
	(\mathsf{r} \tau_{v_{Ge}})^{\**}
	S_{\tau(v_{Ge})}
	\right),
	\left(
	(\mathsf{r} \tau_{v_{Ge}})^{\**}
	A_{\tau(v_{Ge})}
	\right)
\right)
\end{equation}
where we recall that the leftmost $\tau$
is described by a map of sets
$\tau \colon V_G(T) \to X$
and maps
$\tau_{v_{Ge}} \colon T_{v_{Ge}} \to S_{\tau(v_{Ge})}$ in $\Sigma_G$,
and that $\mathsf{r} \colon \Sigma_{G} \to \mathsf{O}_G$
is the root functor.

We now compute the following composite
(where we abbreviate expressions $T_{v_{G e}}$ as 
$T_{Ge}$ and implicitly assume that tuples with index $G e$ (resp. $G f$) run over $V_G(T)$ (resp. $V_G(S)$)).
\[
\begin{tikzcd}[column sep =1.5em , row sep =0em]
	T \downarrow_{\mathsf{r}} \Omega_{G}^{0} \wr A 
	\ar{r}{\boldsymbol{V}_G} &
	\boldsymbol{V}_G(T) \downarrow \Fin_s \wr A \ar{r}{r} &
	\underset{v_{Ge} \in V_G(T)}{\prod} 
	T_{v_{Ge}} \downarrow_{\mathsf{r}} A
\\
	(T \xrightarrow{\psi} S, (A_{{G f}})) \ar[mapsto]{r} &
	\left(
		(T_{{G e}} \to S_{{G \psi(e)}}),
		(A_{{G f}})
	\right) \ar[mapsto]{r} &
	\left(
		(T_{{G e}} \to \psi_{G e}^{\**} S_{{G \psi(e)}}),
		(\psi_{G e}^{\**} A_{{G \psi(e)}})
	\right)
\end{tikzcd}
\]
Note that we wrote the map $\mathsf{r} \psi_{v_{Ge}}$ in $\mathsf{O}_G$
from \eqref{RETDES EQ}
as 
$\psi_{Ge} \colon Ge \to G\psi(e)$,
following the notation
in Remarks \ref{PULLCOMP REM} and \ref{PULLEXP REM}.
Since rooted quotients are isomorphisms, the $\psi$
and $\psi_{Ge}$ appearing above are isomorphisms, 
and hence the natural transformation
$i \circ r \circ \boldsymbol{V}_G \Rightarrow \boldsymbol{V}_G$
is a natural isomorphism. 
Therefore,
$\boldsymbol{V}_G$ will be initial provided that so is
$i \circ r \circ \boldsymbol{V}_G$,
and since the inclusion $i$ is initial, it suffices to show that
$r \circ \boldsymbol{V}_G$ is an isomorphism.

But now note that an arbitrary choice of rooted isomorphisms
$T_{v_{G_e}} \to S_{v_{G_e}}$
uniquely determines a compatible planar structure on $T$, and thus a unique isomorphism $\psi \colon T \to S$.
Therefore, arbitrary choices of 
$\psi_{G e}^{\**} S_{{G \psi(e)}}$,
$\psi_{G e}^{\**} A_{{G \psi(e)}}$
uniquely determine $S$, $A_{G f}$, finishing the proof.
\end{proof}

Lemma \ref{ROOTFIBPULL LEM} implies that copying Definition \ref{WSPAN_MONAD_DEFINITION} yields a monad $N_{\mathsf{r}}$
on
$\mathsf{Wspan}^l_{\mathsf{r}}(\Sigma_G^{op},\mathcal{V})$
lifting the monad $N$.

\begin{corollary}\label{MONDEFCOR COR}
Suppose that finite products in $\mathcal{V}$ commute with colimits in each variable or, more generally, that 
$\mathcal{V}$ is a symmetric monoidal category with diagonals such that $\otimes$ preserves colimits in each variable.
Then the natural transformations
\[
	\mathsf{Lan} \circ N_{\mathsf{r}} \Rightarrow
	\mathsf{Lan} \circ N_{\mathsf{r}} \circ \upsilon \circ \mathsf{Lan},
\qquad
	\mathsf{Lan} \circ \upsilon \Rightarrow id
\]
are natural isomorphisms.
\end{corollary}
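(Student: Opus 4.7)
The first of the two isomorphisms, $\mathsf{Lan} \circ \iota \Rightarrow id$, is the counit of the adjunction evaluated at a functor, and it is immediate from the definitions: for any $X \colon \Sigma_G^{op} \to \mathcal{V}$, $\iota X$ is the span with identity left leg, so $\mathsf{Lan}_{id}\, X = X$ tautologically.

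For the second isomorphism $\mathsf{Lan} \circ N_{\mathsf{r}} \Rightarrow \mathsf{Lan} \circ N_{\mathsf{r}} \circ \iota \circ \mathsf{Lan}$, I would fix a rooted span $S = \bigl(\Sigma_G^{op} \xleftarrow{k^{op}} A^{op} \xrightarrow{X} \mathcal{V}\bigr)$ and write $Y = \mathsf{Lan}_{k^{op}} X$. Unpacking definitions, the two sides evaluated at $S$ are
\[
\mathsf{Lan}_{(\Omega_G^0 \wr A \to \Sigma_G)^{op}} \tilde X \quad \text{and} \quad \mathsf{Lan}_{(\Omega_G^0 \to \Sigma_G)^{op}} \tilde Y,
\]
where $\tilde X$ is the composite $(\Omega_G^0 \wr A)^{op} \to (\Fin \wr A)^{op} \to (\Fin \wr \mathcal{V}^{op})^{op} \xrightarrow{\otimes} \mathcal{V}$, and $\tilde Y$ is the analogous composite built from $Y$ through $(\Omega_G^0)^{op} \to (\Fin \wr \Sigma_G)^{op}$. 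The comparison map between them is induced by the unit $X \Rightarrow Y \circ k^{op}$ of the Kan extension.

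My plan is to factor the Lan on the left as
\[
\mathsf{Lan}_{(\Omega_G^0 \wr A \to \Sigma_G)^{op}} \,\simeq\, \mathsf{Lan}_{(\Omega_G^0 \to \Sigma_G)^{op}} \circ \mathsf{Lan}_{\pi^{op}},
\]
where $\pi \colon \Omega_G^0 \wr A \to \Omega_G^0$ is the natural projection, so it suffices to identify $\mathsf{Lan}_{\pi^{op}} \tilde X$ with $\tilde Y$. By Lemma \ref{ROOTFIBPULL LEM}, $\pi$ is a root fibration map (since $A \to \Sigma_G$ is such), so the version of Lemma \ref{LANPULLCOMA LEM} appropriate for left Kan extensions in $\mathcal{V}$ (equivalently right Kan extensions in $\mathcal{V}^{op}$) reduces this to verifying the analogous comparison at the $\Fin \wr$ level, namely that $\mathsf{Lan}_{(\Fin \wr k)^{op}}$ applied to $(\Fin \wr A)^{op} \to (\Fin \wr \mathcal{V}^{op})^{op} \xrightarrow{\otimes} \mathcal{V}$ is naturally isomorphic to $(\Fin \wr \Sigma_G)^{op} \xrightarrow{(\Fin \wr Y^{op})^{op}} (\Fin \wr \mathcal{V}^{op})^{op} \xrightarrow{\otimes} \mathcal{V}$. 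This last identification is exactly the content of the dual half of Lemma \ref{FINWREATPRODLIM LEM} applied to the left Kan extension $Y = \mathsf{Lan}_{k^{op}} X$, and it uses precisely the hypothesis that $\otimes$ preserves colimits in each variable.

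The main obstacle in executing this plan is organizational rather than substantive: one must carefully track the opposite categories when dualizing between $\mathsf{Wspan}^l(\Sigma_G^{op},\mathcal{V})$ and $\mathsf{Wspan}^r(\Sigma_G,\mathcal{V}^{op})$, and verify that the initiality argument underpinning Lemma \ref{LANPULLCOMA LEM} survives dualization. The key geometric input is that root pullbacks exist and are preserved by all functors involved, so that the rooted under-categories remain initial subcategories of the full under-categories and hence correctly compute the relevant Kan extensions. Once both isomorphisms are in place, Proposition \ref{MONADADJ PROP} immediately transfers $N_{\mathsf{r}}$ to the free genuine equivariant operad monad $\mathbb F_G$ on $\mathsf{Sym}_G(\mathcal{V})$, whose category of algebras is $\mathsf{Op}_G(\mathcal{V})$.
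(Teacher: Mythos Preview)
Your proposal is correct and follows essentially the same route as the paper, which simply says the result ``follows by combining Lemma \ref{LANPULLCOMA LEM} with Lemma \ref{FINWREATPRODLIM LEM}''; you have helpfully unpacked what that combination means. One minor point: Lemma \ref{LANPULLCOMA LEM} takes as its hypothesis that $A \to \Sigma_G$ is a root fibration map (which holds since $S$ is a rooted span), not that $\pi$ is one, so your appeal to Lemma \ref{ROOTFIBPULL LEM} is not strictly needed here.
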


\begin{proof}
This follows by combining Lemma \ref{LANPULLCOMA LEM} with Lemma \ref{FINWREATPRODLIM LEM}.
\end{proof}

Recalling Proposition \ref{MONADADJ PROP} now leads to the following.
\begin{definition}\label{THEMONAD DEF}
	\index{categories!of operads/symmetric sequences!OpG@$\Op_G(\V)$}
	\index{monads!genopmonad@$\mathbb{F}_G$}
The \textit{genuine equivariant operad monad} is the monad
$\F_G$ on $\mathsf{Sym}_G(\mathcal{V})=\mathsf{Fun}(\Sigma_G^{op}, \mathcal{V})$
given by
\[
	\F_G = \mathsf{Lan} \circ N_{\mathsf{r}} \circ \upsilon
\]
and with multiplication and unit given by the composites
\[
\mathsf{Lan} \circ N_{\mathsf{r}} \circ \upsilon \circ
\mathsf{Lan} \circ N_{\mathsf{r}} \circ \upsilon
\overset{\simeq}{\Leftarrow}
\mathsf{Lan} \circ N_{\mathsf{r}} \circ  N_{\mathsf{r}} \circ \upsilon
\Rightarrow
\mathsf{Lan} \circ N_{\mathsf{r}} \circ \upsilon
\]
\[
id \overset{\simeq}{\Leftarrow} \mathsf{Lan} \circ \upsilon
\Rightarrow
\mathsf{Lan} \circ N_{\mathsf{r}} \circ \upsilon.
\]
We will write $\Op_G(\V)$ for the category 
$\mathsf{Alg}_{\mathbb{F}_G}(\mathsf{Sym}_G(\mathcal{V}))$ of \textit{genuine equivariant operads}.
\end{definition}

\begin{remark}
	The functor $\mathsf{Lan} \circ N_{\mathsf{r}} \circ \upsilon$ is isomorphic to 
	$\mathsf{Lan} \circ N \circ \upsilon$, and this isomorphism is compatible with the multiplication and unit	in Definition \ref{THEMONAD DEF}, and as such we will henceforth simply write $N$ rather than $N_{\mathsf{r}}$.
	
	From this point of view, root fibrations play an auxiliary role in verifying that $\mathsf{Lan} \circ N \circ \upsilon$ is indeed a monad, but are unnecessary to describe the monad structure itself.
\end{remark}

\begin{remark}\label{REPACKAGERES REM}
Since a map 
\[\F_G X =\mathsf{Lan} \circ N \circ \upsilon X \to X\]
is adjoint to a map
\[N \circ \upsilon X \to \upsilon X \]
one easily verifies that 
$X$ is a genuine equivariant operad, i.e. 
an $\F_G$-algebra, iff 
$\upsilon X$ is an $N$-algebra
(cf. Proposition \ref{MONADADJ PROP}(ii)).

Moreover, the bar resolution
$ \F_G^{\circ n +1} X $
is isomorphic to
$
	\mathsf{Lan} \left( N^{\circ n +1} \upsilon X \right)
$.
\end{remark}

\subsection{Comparison with (regular) equivariant operads}
\label{COMPARISON_REGULAR_SECTION}

In the case $G = \**$, genuine operads coincide with the usual notion of symmetric operads, i.e. 
$\mathsf{Sym}_{\**}(\mathcal{V})
\simeq \mathsf{Sym}(\mathcal{V})$ 
and
$\mathsf{Op}_{\**}(\mathcal{V})
\simeq \mathsf{Op}(\mathcal{V})$, 
and in what follows we will adopt the notations
$\mathsf{Sym}^G(\mathcal{V})$ and
$\mathsf{Op}^G(\mathcal{V})$ 
for the corresponding categories of $G$-objects.
Our goal in this section will be to relate these to the categories
$\mathsf{Sym}_G(\mathcal{V})$ and $\mathsf{Op}_G(\mathcal{V})$
of genuine equivariant sequences and genuine equivariant operads.

\begin{remark}\label{TOOPORNOT REM}
	Unpacking notation,
	$\mathsf{Sym}^G(\mathcal{V})$ is defined to be
	$\mathcal{V}^{G \times \Sigma^{op}}$
	rather than 
	$\mathcal{V}^{G \times \Sigma}$,
	though it is of course  
	$\mathcal{V}^{G \times \Sigma^{op}}
	\simeq \mathcal{V}^{G \times \Sigma}$
	via the inversion isomorphism
	$\Sigma^{op} \simeq \Sigma$.
	However, in light of \eqref{IOTADEF EQ} below, 
	it is in practice preferable to work with 
	$\mathcal{V}^{G \times \Sigma^{op}}$
	when dealing with $\mathsf{Sym}^G(\mathcal{V})$ and
	$\mathsf{Sym}_G(\mathcal{V})$ simultaneously.
	Note that, translating \eqref{GRAPHSUBIN EQ}, 
	graph subgroups of
	$G \times \Sigma_n^{op}$
	have the form
	$\Gamma = \left\{(h,\phi(h)^{-1})|h\in H\right\}$
	for $H\leq G$
	and some homomorphism $\phi \colon H \to \Sigma_n$. 
\end{remark}

Throughout this section we fix a total order of $G$ such that the identity $e$ is the first element, though we note that the exact order is unimportant, as any other such choice would lead to unique isomorphisms between the constructions described herein.

We now have an inclusion functor
\begin{equation}\label{IOTADEF EQ}
\begin{tikzcd}[row sep =0]
	\iota \colon G^{op} \times \Sigma \ar[hookrightarrow]{r} &
	\Sigma_G
\\
	C \ar[mapsto]{r} & G \cdot C
\end{tikzcd}
\end{equation}
where $G \cdot C$ is the constant tuple $(C)_{g \in G}$,
which we think of as $|G|$ copies of $C$, planarized according to $C$ and the order on $G$.
Moreover, letting $\Sigma_G^{\text{fr}} \hookrightarrow \Sigma_G$ denote the full subcategory of $G$-free corollas, there is an induced retraction 
$\rho \colon \Sigma_{G}^{\text{fr}} \to G^{op} \times \Sigma$
defined by 
$\rho\left( (C_i)_{1\leq i \leq |G|} \right) = G \cdot C_1$
together with isomorphisms 
$C \simeq \rho(C)$
uniquely determined by the condition that they
are the identity on the first tree component $C_1$.

We now consider the associated adjunctions.
\begin{equation}\label{TWOADJOINTS EQ}
\begin{tikzcd}[column sep =5em]
\index{key functors!iotau@$\iota^{\**}$}
\index{key functors!iotal@$\iota_{\**}$}
\index{key functors!iotar@$\iota_{"!}$}
	\mathsf{Sym}_G(\mathcal{V}) \ar{r}[swap]{\iota^{\**}} 
	&
	\mathsf{Sym}^G(\mathcal{V})
	\ar[bend right]{l}[swap,midway]{\iota_{!}}
	\ar[bend left]{l}{\iota_{\**}}
\end{tikzcd}
\end{equation}
Explicitly, we have the formulas (where we write $G$-corollas as $(C_i)_{I}$ for $I \in \mathsf{O}_G$)
\begin{equation}\label{IOTAFUNS EQ}
	\iota_!Y\left( (C_i)_I \right)=
	\begin{cases}
	Y(C_1), & (C_i)_I \in \Sigma_G^{\text{fr}} \\
	\emptyset, & (C_i)_I \nin \Sigma_G^{\text{fr}}
	\end{cases},
\quad
	\iota^{\**}X (C) = X(G \cdot C),
\quad
	\iota_{\**}Y ((C_i)_I)=
	\left(\prod_{I} Y(C_i)\right)^G,
\end{equation}
where in the formula for $\iota_{\**}$
the action of $G$ interchanges factors according to the action on the indexing set $I$.
More precisely,
the action of $g \in G$
is the product of the composites
$Y(C_{g^{-1} i }) \to Y(C_{i}) \xrightarrow{g} Y(C_i)$
where the first map is the given by functoriality of $Y$ 
on the isomorphism $C_i \to C_{g^{-1} i}$
(which is part of the structure of $C \in \Sigma_G$)
and the second map is given by the $G$-action in 
$Y$.

As a side note, the formulas for $\iota_!$ and $\iota_{\**}$ are independent of the chosen order of $G$.

\begin{remark}\label{IOTAFUNSALT REM}
	The formula for 
	$\iota_{\**}$ in \eqref{IOTAFUNS EQ}
	emphasizes functoriality on
	$C = (C_i)_{I}$,
	but in practice we will find it more convenient to use 
	alternative formulas.
	
	To obtain these formulas,
	write $1 \in I$ for the first element and 
	$H \leq G$ for its isotropy.
	Note that the $G$-action described after \eqref{IOTAFUNS EQ} defines an $H$-action on $Y(C_1)$.
	Moreover, viewing 
	$C_1 \in \Sigma$ as an integer arity $n\geq 0$,
	so that $Y(C_1) = Y(n)$
	comes with a natural
	$G \times \Sigma_n^{op}$-action,
	the $H$-action on $Y(C_1)$
	is identified with the action of the graph subgroup
	$\Gamma = \{(h,\phi(h)^{-1}) | h \in H \}$
	of $G \times \Sigma_n^{op}$
	associated to the homomorphism
	$\phi \colon H \to \Sigma_n$
	encoding the action of $H$ on $C_1$.
	We then have the formulas 
\begin{equation}\label{IOTAFUNSALT EQ}
	\iota_{\**}Y ((C_i)_I)
	=
	\left(\prod_{I} Y(C_i)\right)^G
	\simeq
	Y(C_1)^H
	\simeq
	Y(n)^{\Gamma}
\end{equation}
where the second identification follows by unpacking universal properties to show that a map
$A \to \left(\prod_{I} Y(C_i)\right)^G$
is equivalent to the induced map
$A \to Y(C_1)^H$
onto the first factor.
\end{remark}

\begin{remark}\label{REFLCOREFL REM}
	$\iota_!$ essentially identifies 
	$\mathsf{Sym}^G(\mathcal{V})$ as the coreflexive subcategory of sequences 
	$X \in \mathsf{Sym}_G(\mathcal{V})$ such that $X(C)=\emptyset$ whenever $C$ is not a free corolla.

On the other hand, $\iota_{\**}$ identifies 
$\mathsf{Sym}^G(\mathcal{V})$ with the more interesting reflexive subcategory of those sequences 
$X \in \mathsf{Sym}_G(\mathcal{V})$ 
such that $X(C)$ for each $C = (C_i)_I$ not a free corolla must satisfy a fixed point condition. 
Explicitly, letting $G \cdot C_1 \to C$
be the quotient map determined by the inclusion
$C_1 \to C$, one has
\[
	X(C) \xrightarrow{\simeq}
	X(G \cdot C_1)^{\Gamma}
\]
for $\Gamma \leq \mathsf{Aut}(G \cdot C_1) \simeq 
G^{op} \times \mathsf{Aut}(C_1)$
the subgroup preserving the quotient map
$G \cdot C_1 \to C$
under precomposition 
(note that $(G \cdot C_1) \in \Sigma_G^{\text{fr}}$).
\end{remark}

There is an obvious natural transformation $\beta \colon \iota_! \Rightarrow \iota_{\**}$ which,
for $(C_i)_I \in \Sigma_G^{\text{fr}}$,
sends $Y(C_1)$ to the ``$G$-twisted diagonal'' of 
$\prod_I Y(C_i)$.
Moreover, letting $\eta_!,\epsilon_!$ 
(resp. $\eta_{\**},\epsilon_{\**}$)
denote the unit and counit of the $(\iota_!,\iota^{\**})$ adjunction 
(resp. $(\iota^{\**},\iota_{\**})$ adjunction)
it is straightforward to check that the following diagram commutes.
\begin{equation}\label{BETADEFSQUARE EQ}
\begin{tikzcd}
		\iota_{!} \iota^{\**} \iota_{\**}
		\ar[Rightarrow]{r}{\epsilon_!}
		\ar[Rightarrow]{d}{\simeq}[swap]{\epsilon_{\**}}
	&
		\iota_{\**}
		\ar[Rightarrow]{d}{\eta_{!}}[swap]{\simeq}
\\
		\iota_!
		\ar[Rightarrow]{r}[swap]{\eta_{\**}}
		\ar[Rightarrow]{ru}[swap]{\beta}
	&
		\iota_{\**} \iota^{\**} \iota_{!}
\end{tikzcd}
\end{equation}
\begin{remark} An exercise in adjunctions shows the outer square in \eqref{BETADEFSQUARE EQ}
 commutes provided at least one of the adjunctions in \eqref{TWOADJOINTS EQ} is (co)reflexive, so that \eqref{BETADEFSQUARE EQ} can be regarded as an alternative definition of $\beta$.
\end{remark}


\begin{proposition}
        \label{MONAD_COMPARISON_PROP}
	One has the following:
\begin{itemize}
	\item[(i)]
	the map 
	$\iota^{\**} \mathbb{F}_G
		\xrightarrow{\eta_{\**}}
	\iota^{\**} \mathbb{F}_G \iota_{\**} \iota^{\**}$
	is an isomorphism, 
	and thus (cf. Prop. \ref{MONADADJ PROP})
	$\iota^{\**} \mathbb{F}_G \iota_{\**}$
	is a monad;
	\item[(ii)] the map 
	$\iota^{\**} \mathbb{F}_G \iota_{!}
	\xrightarrow{\beta}	
	\iota^{\**} \mathbb{F}_G \iota_{\**}$ is an isomorphism of monads;
	\item[(iii)] the map 
	$\iota_{!}\iota^{\**} \mathbb{F}_G \iota_{!}
	\xrightarrow{\epsilon_!}
	\mathbb{F}_G \iota_{!}$ is an isomorphism;
	\item[(iv)] there is a natural isomorphism of monads
	$\alpha \colon \mathbb{F} \to \iota^{\**} \mathbb{F}_G \iota_{!}$.
\end{itemize}
\end{proposition}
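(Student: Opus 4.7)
The plan is to derive all four statements from a single structural observation about $G$-trees, followed by some naturality arguments for the monad compatibility in (iv).

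The pivotal lemma I would establish first is the following \emph{freeness propagates} principle: if $T = (T_x)_{x \in X} \in \Omega_G$ has a free root orbit $\mathsf{r}(T) = X$, then every edge of $T$ has trivial $G$-stabilizer; equivalently, every $G$-vertex corolla $T_{v_{Ge}}$ (cf. Notation \ref{GVERT NOT}) is a free $G$-corolla. This is immediate from the observation that the stabilizer of an edge $e \in T_x$ is a subgroup of $\mathrm{Stab}_G(x)$. Combined with the fact (Remark \ref{LEAFROOTEXAMP REM}) that $\mathsf{lr}(T)$ shares the root orbit of $T$, this will reduce all four claims to straightforward manipulations of the explicit formula (\ref{FGXDEFEXP EQ}).

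For (i), unpacking (\ref{FGXDEFEXP EQ}) at a free corolla $C = G \cdot C'$ yields a sum indexed by root-free $G$-trees $T$ with $\mathsf{lr}(T) = G \cdot C'$; by the lemma all $T_v$ are free, so the formula only depends on $X$ evaluated on free corollas. Since $\iota_{\**}\iota^{\**} X$ agrees with $X$ on free corollas (Remark \ref{REFLCOREFL REM}), the unit $\eta_{\**}$ becomes an isomorphism after applying $\iota^{\**}\mathbb{F}_G$. Then (ii) follows because $\beta \colon \iota_! Y \to \iota_{\**} Y$ is readily checked to be an isomorphism on free $G$-corollas (both sides canonically identify with $Y(C_1)$), and by (i) only free $G$-corollas contribute. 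That $\beta$ is a map of monads then reduces to the commutative square (\ref{BETADEFSQUARE EQ}).

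For (iii), by general adjunction nonsense, $\epsilon_! \colon \iota_! \iota^{\**} Z \to Z$ is an isomorphism iff $Z$ vanishes on non-free $G$-corollas; so I need to verify $\mathbb{F}_G(\iota_! X)(C) = \varnothing$ for $C$ non-free. Two cases: if the root of $C$ is non-free, then for any tree $T$ with $\mathsf{lr}(T) = C$ the $G$-vertex at the root is itself a non-free $G$-corolla, so $\iota_! X$ vanishes on it; if the root of $C$ is free but some leaf of $C$ is non-free, then the lemma (applied to the free root of $T$) forces every edge of $T$ to be free, contradicting the non-free leaf. Hence only the first case survives and the product over $V_G(T)$ is empty.

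Finally for (iv), by (i)--(iii) I have identifications
\[
\iota^{\**}\mathbb{F}_G \iota_! \;\simeq\; \iota^{\**}\mathbb{F}_G \iota_{\**}\iota^{\**} \iota_!,
\]
so I can work with $\iota^{\**}\mathbb{F}_G \iota_!$ using the formula from (\ref{FGXDEFEXP EQ}) at $G \cdot C'$. The lemma gives a bijection between iso classes of trees $T \in \mathsf{Iso}(G \cdot C' \downarrow_{\mathsf{r}} \Omega_G^0)$ and iso classes of non-equivariant trees $T' \in \mathsf{Iso}(C' \downarrow \Omega)$, via $T' \mapsto G \cdot T'$; moreover the identification $V_G(G \cdot T') \simeq V(T')$ makes the tensor over $G$-vertices collapse to a tensor over the vertices of $T'$. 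Tracking $G$-equivariance via Remark \ref{WREATHFIXED REM} (so that $\mathsf{Aut}_{\Omega_G^0}(G \cdot T') \simeq G \times \mathsf{Aut}(T')$, with the $G$-factor acting via the ambient $G$-action on $Y$) gives a natural isomorphism $\alpha \colon \mathbb{F} \Rightarrow \iota^{\**}\mathbb{F}_G \iota_!$. The main obstacle will be verifying that $\alpha$ is a monad map: this amounts to checking compatibility with multiplication, which I would address by running the same tree-indexed analysis for the bar resolution $\iota^{\**} \mathbb{F}_G^{\circ 2} \iota_! \simeq \mathsf{Lan}(N^{\circ 2} \iota \iota_!)$ (cf. Remark \ref{REPACKAGERES REM}), using the string categories $\Omega_G^1$ and again restricting to root-free strings where the analysis proceeds identically component-by-component.
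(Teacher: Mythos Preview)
Your argument for (i)--(iii) matches the paper's essentially verbatim: the paper also isolates the key observation that a $G$-tree with free root orbit has all $G$-vertices free (phrased there as an equality of rooted undercategories $C \downarrow_{\mathsf{r}} \Omega_G^0 = C \downarrow_{\mathsf{r}} \Omega_G^{0,\text{fr}}$ for $C$ free), and (i)--(iii) then fall out just as you describe. Your case split in (iii) is slightly redundant---by your own lemma a $G$-corolla with free root orbit is already a free $G$-corolla, so your ``second case'' is vacuous---but this is harmless.

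The substantive difference is the monad-map verification in (iv). The paper in fact mentions your proposed route (checking multiplication compatibility via the inclusion $G \times \Omega^1 \hookrightarrow \Omega_G^{1,\text{fr}}$ of free $1$-strings) as the intuitive argument, but then explicitly sets it aside, remarking that the left Kan extensions built into $\mathbb{F}$ and $\mathbb{F}_G$ make a rigorous direct verification ``fairly cumbersome''. Its workaround is to lift the entire comparison to the span categories of \S\ref{MONSPAN SEC}: one builds a map $\tilde{\alpha} \colon N \to \iota^{\**} N_G \iota_!$ of monads on $\mathsf{WSpan}^l((G \times \Sigma)^{op}, \mathcal{V})$, where the monad axioms are elementary because no Kan extensions need to be commuted past one another, and then descends via the $(\mathsf{Lan}, j)$ adjunctions to obtain $\alpha$. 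Your direct approach is not incorrect, but the paper's detour buys a genuinely cleaner verification; if you pursue the string-category route you should expect more bookkeeping than your sketch indicates.
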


\begin{proof}
We first show (i), starting with some notation. 
In analogy with $\Sigma_{G}^{\text{fr}}$,
we write $\Omega_{G}^{0,\text{fr}}$ for the subcategory of free trees,
and note that the leaf-root and vertex functors then restrict to functors
$\mathsf{lr} \colon \Omega_{G}^{0,\text{fr}} \to \Sigma_G^{\text{fr}}$,
$\boldsymbol{V}_G \colon \Omega_{G}^{0,\text{fr}} \to \Fin_s \wr \Sigma_G^{\text{fr}}$.
Moreover, for each $C \in \Sigma_G^{\text{fr}}$ one has an equality of rooted undercategories between
$C \downarrow_{\mathsf{r}} \Omega_{G}^0$
and
$C \downarrow_{\mathsf{r}} \Omega_{G}^{0,\text{fr}}$,
and thus 
$\iota^{\**} \mathbb{F}_G X$ is computed by the Kan extension of the following diagram.
\[
\begin{tikzcd}
	\Omega_{G}^{0,\text{fr}} \ar{d} \ar{r} &
	\Fin_s \wr \Sigma_G^{\text{fr}} \ar{r}{\Fin_s \wr X} &
	\Fin_s \wr \mathcal{V}^{op} \ar{r} &
	\mathcal{V}^{op}
\\
	\Sigma_G^{\text{fr}}
\end{tikzcd}
\]
(i) now follows by noting that 
$X \to \iota_{\**} \iota^{\**} X$
is an isomorphism when restricted to $\Sigma_G^{\text{fr}}$.

For (ii), to show that 
	$\iota^{\**} \mathbb{F}_G \iota_{!}
	\to	
	\iota^{\**} \mathbb{F}_G \iota_{\**}$ 
is an isomorphism of functors one just repeats the argument in the previous paragraph by noting that $\iota_! \to \iota_{\**}$ is an isomorphism when restricted to $\Sigma_G^{\text{fr}}$.
	To check that this is a map of monads, we first recall  that the monad structure on $\iota^{\**} \mathbb{F}_G \iota_{\**}$
is given as described in Proposition \ref{MONADADJ PROP}.
Unpacking definitions, compatibility with multiplication reduces to showing that the composite 
$\iota_! \iota^{\**} \xrightarrow{\epsilon_!} 
id \xrightarrow{\eta_{\**}} \iota_{\**} \iota^{\**}$
coincides with $\beta \iota^{\**}$
while compatibility with units 
reduces to showing that the composite
$
	id \xrightarrow{\eta_!} 
	\iota^{\**} \iota_! \xrightarrow{\iota^{\**} \beta}
	\iota^{\**} \iota_{\**} \xrightarrow{\epsilon_{\**}}
	id
$
is the identity. Both of these are a consequence of 
\eqref{BETADEFSQUARE EQ}, following from the diagrams below 
(where the top composites are identities).
\[
\begin{tikzcd}[column sep =3em]
		\iota_! \iota^{\**}
		\ar[Rightarrow]{r}{\iota_! \iota^{\**} \eta_{\**}}
		\ar[Rightarrow]{d}[swap]{\epsilon_!}
	&
		\iota_{!} \iota^{\**} \iota_{\**} \iota^{\**}
		\ar[Rightarrow]{d}[swap]{ \epsilon_! \iota_{\**}\iota^{\**}}
		\ar[Rightarrow]{r}[swap]{\simeq}{\iota_{!}\epsilon_{\**}\iota^{\**}}
	&
		\iota_! \iota^{\**}
		\ar[Rightarrow]{ld}{\beta \iota^{\**}}
	&	
		\iota^{\**} \iota_{\**}
		\ar[Rightarrow]{r}{\eta_! \iota^{\**} \iota_{\**}}[swap]{\simeq}
		\ar[Rightarrow]{d}[swap]{\epsilon_{\**}}{\simeq}
	&
		\iota^{\**} \iota_{!} \iota^{\**} \iota_{\**}
		\ar[Rightarrow]{r}{\iota^{\**} \epsilon_! \iota_{\**}}
		\ar[Rightarrow]{d}{\simeq}[swap]{\iota^{\**}\iota_{!}\epsilon_{\**}}
	&
		\iota^{\**}\iota_{\**}
\\
		id
		\ar[Rightarrow]{r}[swap]{\eta_{\**}}
	&
		\iota_{\**} \iota^{\**}
	&
	&	
		id
		\ar[Rightarrow]{r}[swap]{\eta_!}{\simeq}
	&
		\iota^{\**} \iota_!
		\ar[Rightarrow]{ru}[swap]{\iota^{\**} \beta}
\end{tikzcd}
\]
Part (iii) amounts to showing that, if $X(C) =\emptyset$ whenever 
$C \nin \Sigma_G^{\text{fr}}$,
then we must also have that
$\mathbb{F}_G X(C) =\emptyset$
for $C \nin \Sigma_G^{\text{fr}}$.
Indeed, since for  
$C \nin \Sigma_G^{\text{fr}}$
the undercategory
$C \downarrow \Omega_{G}^{0}$
consists of trees with at least one non-free vertex (namely the root vertex), the composite
\[
\begin{tikzcd}
	C \downarrow \Omega_{G}^{0} \ar{r}{\boldsymbol{V}_G} &
	\Fin_s \wr \Sigma_G \ar{r}{\Fin_s \wr X} &
	\Fin_s \wr \mathcal{V}^{op} \ar{r}{\otimes}&
	\mathcal{V}^{op}
\end{tikzcd}
\]
is constant equal to $\emptyset$, and (iii) follows.

Finally, we show (iv). We will slightly abuse notation by writing 
$G^{op} \times \Sigma \hookrightarrow \Sigma_G$ for the image of $\iota$
and similarly
$G^{op} \times \Omega^0 \hookrightarrow \Omega_{G}^{0}$ for the image of the obvious analogous functor
$\iota \colon G^{op} \times \Omega^0 \to \Omega_{G}^{0}$.
The map 
$\alpha \colon \mathbb{F} \to \iota^{\**} \mathbb{F}_G \iota_{!}$
is the adjoint to the map 
$\tilde \alpha: \mathbb{F} \iota^{\**} \to \iota^{\**} \mathbb{F}_G$ encoded on spans by the following diagram.
\begin{equation}\label{MONADEQUIV DEF}
\begin{tikzcd}[row sep=5pt, column sep =8pt]
	G^{op} \times \Omega^0	\ar{dd} \ar{rd} \ar{rr} &&
	\Fin_s \wr (G^{op} \times \Sigma) \ar{rd}  \ar{rr}{\iota^{\**} X}&&
	\Fin_s \wr \mathcal{V}^{op} \ar{r} \ar[equal]{rd}&
	\mathcal{V}^{op} \ar[equal]{rd}
\\
	& \Omega_{G}^{0} \ar{dd} \ar{rr} &&
	\Fin_s \wr \Sigma_G  \ar{rr}[swap]{X} &&
	\Fin_s \wr \mathcal{V}^{op} \ar{r} &
	\mathcal{V}^{op}
\\
	G^{op} \times \Sigma \ar{rd} 
\\
	& \Sigma_G
\end{tikzcd}
\end{equation}
That $\alpha$ is a natural isomorphism
follows by the previous identifications 
$C \downarrow_{\mathsf{r}} \Omega_{G}^{0} \simeq
C \downarrow_{\mathsf{r}} \Omega_{G}^{0,\text{fr}}$
for $C \in G^{op} \times \Sigma$,
together with the fact that the retraction 
$\rho \colon \Omega_{G}^{0,\text{fr}} \to G^{op} \times \Omega^0$
(built just as the retraction
$\rho \colon \Sigma_G^{\text{fr}} \to G^{op} \times \Sigma$)
retracts 
$C \downarrow_{\mathsf{r}} \Omega_{G}^{0,\text{fr}}$
to the undercategory
$C \downarrow_{\mathsf{r}} G^{op} \times \Omega^0$, which is thus initial (as well as final).

Intuitively, the final claim that 
$\alpha$ is a map of monads 
follows from the fact that the composite 
$
\mathbb{F} \mathbb{F}
	\to 
\iota^{\**} \mathbb{F}_G \iota_{!} \iota^{\**} \mathbb{F}_G \iota_{!}
	\to
\iota^{\**} \mathbb{F}_G \mathbb{F}_G \iota_{!}
$
is encoded by the analogous natural transformation of diagrams for strings $G^{op} \times \Omega^1 \hookrightarrow \Omega_{G}^{1,\text{fr}}$.
However, since the presence of left Kan extensions in the 
definitions of $\mathbb{F}$, $\mathbb{F}_G$
can make a rigorous direct proof of this last claim fairly cumbersome, we sketch here a workaround argument.
We first consider the adjunction
$
	\iota_{!} \colon
	\mathsf{WSpan}^l(G \times \Sigma^{op},\mathcal{V})
		\rightleftarrows
	\mathsf{WSpan}^l(\Sigma_G^{op},\mathcal{V})
	\colon \iota^{\**}
$,
where $\iota_!$ is composition with $\iota$ and $\iota^{\**}$ is the pullback of spans. 
Writing $N$, $N_G$ for the monads 
on the span categories, mimicking (\ref{MONADEQUIV DEF}) yields
a map 
$\tilde{\alpha} \colon N \to \iota^{\**} N_G \iota_{!}$
encoded by the diagram (where the front and back squares are pullbacks).
\[
\begin{tikzcd}[row sep=3pt, column sep =8pt]
	(G^{op} \times \Omega^0) \wr \iota^{\**} A	\ar{dd} \ar{rd} \ar{rr} &&
	\Fin_s \wr \iota^{\**} A \ar{rd} \ar{dd} \ar{rr}&&
	\Fin_s \wr \mathcal{V}^{op} \ar{r} \ar[equal]{rd} &
	\mathcal{V}^{op} \ar[equal]{rd}
\\
	& 
	|[alias=O1]|
	\Omega_{G}^{0} \wr A \ar[crossing over]{rr} &&
	\Fin_s \wr A \ar{dd} \ar{rr} &&
	\Fin_s \wr \mathcal{V}^{op} \ar{r} &
	\mathcal{V}^{op}
\\
	G^{op} \times \Omega^0	\ar{dd} \ar{rd} \ar{rr}&&
	\Fin_s \wr (G^{op} \times \Sigma) \ar{rd}
\\
	&
	|[alias=O2]|
	\Omega_{G}^{0} \ar{dd} \ar{rr} &&
	\Fin_s \wr \Sigma_G
\\
	G^{op} \times \Sigma \ar{rd}
\\
	& \Sigma_{G}
\arrow[from=O1, to=O2,crossing over]
\end{tikzcd}
\]
The claim that $\tilde{\alpha}$ is a map of monads is then straightforward. Writing
\[
	\mathsf{Lan} \colon
	\mathsf{WSpan}^l(G \times \Sigma^{op},\mathcal{V})
		\rightleftarrows
	\mathsf{Fun}(G \times \Sigma^{op},\mathcal{V})
	\colon j
\quad
	\mathsf{Lan}_G \colon
	\mathsf{WSpan}^l(\Sigma_G^{op},\mathcal{V})
		\rightleftarrows
	\mathsf{Fun}(\Sigma_G^{op},\mathcal{V})
	\colon j_G
\]
for the span-functor adjunctions,  
$\alpha \colon \mathbb{F} \to \iota^{\**} \mathbb{F}_G \iota_{!}$ can then be written as the composite
\[
	\mathsf{Lan} N j \to 
	\mathsf{Lan} \iota^{\**} N_G \iota_! j  \to
	\iota^{\**} \mathsf{Lan}_G  N_G  j_G \iota_!
\]
where the first map is the isomorphism of monads induced by $\tilde{\alpha}$ and the second map can be shown directly to be a monad map by unpacking the monad structures in 
Propositions \ref{MONADADJ1 PROP} and \ref{MONADADJ PROP}.
\end{proof}


\begin{corollary}\label{TWOADJOINTSOP_COR}
\index{categories!of operads/symmetric sequences!OpF@$\mathsf{Op}_{\mathcal{F}}(\mathcal{V})$}
The adjunctions \eqref{TWOADJOINTS EQ} lift to adjunctions
\[
\begin{tikzcd}[column sep =5em]
	\mathsf{Op}_G(\mathcal{V}) \ar{r}[swap]{\iota^{\**}} 
	&
	\mathsf{Op}^G(\mathcal{V})
	\ar[bend right]{l}[swap,midway]{\iota_{!}}
	\ar[bend left]{l}{\iota_{\**}}.
\end{tikzcd}
\]
In particular, $\iota_{\**}$
identifies $\mathsf{Op}^G(\mathcal{V})$ as a reflexive subcategory of 
$\mathsf{Op}_G(\mathcal{V})$.
\end{corollary}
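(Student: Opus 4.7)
The plan is to feed the three key isomorphisms of Proposition \ref{MONAD_COMPARISON_PROP} into the monadic transfer lemmas, Propositions \ref{MONADADJ1 PROP} and \ref{MONADADJ PROP}, applied to the two sequence-level adjunctions in (\ref{TWOADJOINTS EQ}); the identification of the resulting induced monads with $\mathbb{F}$ will then collapse both lifted algebra categories onto $\mathsf{Op}^G(\V)$.

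First, I would handle the reflexive side $\iota^{\**} \dashv \iota_{\**}$ by applying Proposition \ref{MONADADJ PROP} with $L = \iota^{\**}$, $R = \iota_{\**}$ and $T = \mathbb{F}_G$. The hypothesis $LR \xrightarrow{\simeq} \mathrm{id}$ is the reflexivity recorded in Remark \ref{REFLCOREFL REM}, while $LT \xrightarrow{\simeq} LTRL$ is precisely Proposition \ref{MONAD_COMPARISON_PROP}(i). This produces a monad $\iota^{\**} \mathbb{F}_G \iota_{\**}$ on $\mathsf{Sym}^G(\V)$ together with an induced adjunction
\[
\iota^{\**} \colon \mathsf{Op}_G(\V) \rightleftarrows \mathsf{Alg}_{\iota^{\**} \mathbb{F}_G \iota_{\**}}\bigl(\mathsf{Sym}^G(\V)\bigr) \colon \iota_{\**}.
\]

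Next, I would handle the coreflexive side $\iota_{!} \dashv \iota^{\**}$ by applying Proposition \ref{MONADADJ1 PROP} with $L = \iota_{!}$, $R = \iota^{\**}$ and $T = \mathbb{F}_G$. Part (i) supplies the monad $\iota^{\**} \mathbb{F}_G \iota_{!}$ unconditionally, and the additional isomorphism hypothesis of part (ii), namely $\iota_{!} \iota^{\**} \mathbb{F}_G \iota_{!} \xrightarrow{\simeq} \mathbb{F}_G \iota_{!}$, is exactly Proposition \ref{MONAD_COMPARISON_PROP}(iii). This yields the adjunction
\[
\iota_{!} \colon \mathsf{Alg}_{\iota^{\**} \mathbb{F}_G \iota_{!}}\bigl(\mathsf{Sym}^G(\V)\bigr) \rightleftarrows \mathsf{Op}_G(\V) \colon \iota^{\**}.
\]

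To complete the argument, I would identify both induced monads with the classical equivariant operad monad $\mathbb{F}$, so that each lifted algebra category is simply $\mathsf{Op}^G(\V)$. Composing Proposition \ref{MONAD_COMPARISON_PROP}(iv) and (ii) furnishes the monad isomorphisms $\mathbb{F} \xrightarrow{\simeq} \iota^{\**} \mathbb{F}_G \iota_{!} \xrightarrow{\simeq} \iota^{\**} \mathbb{F}_G \iota_{\**}$, which transport the two adjunctions above into the form stated in the corollary. The final reflexivity assertion is then automatic: the operad-level counit forgets to the sequence-level counit $\iota^{\**}\iota_{\**} \xrightarrow{\simeq} \mathrm{id}$, and since the forgetful functor $\mathsf{Op}_G(\V) \to \mathsf{Sym}_G(\V)$ is conservative, the operad-level counit is itself an isomorphism. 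The only substantive input is thus Proposition \ref{MONAD_COMPARISON_PROP}; the present corollary is a mechanical bookkeeping of those four isomorphisms, and I anticipate no further obstacles beyond ensuring that the monad-structure compatibilities (as opposed to merely functorial ones) are the ones being used when invoking parts (ii) and (iv).
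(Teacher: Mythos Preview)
Your proposal is correct and follows exactly the approach indicated by the paper, which simply states that the corollary follows by combining Proposition \ref{MONAD_COMPARISON_PROP} with Propositions \ref{MONADADJ1 PROP} and \ref{MONADADJ PROP}. You have merely unpacked the bookkeeping that the paper leaves implicit, correctly matching parts (i) and (iii) of Proposition \ref{MONAD_COMPARISON_PROP} to the hypotheses of the two transfer lemmas and using (ii) and (iv) to identify both transferred monads with $\mathbb{F}$.
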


\begin{proof}
	For the top (resp. bottom)
	adjunction
	this follows from Proposition \ref{MONADADJ1 PROP}
	(resp. Proposition \ref{MONADADJ PROP}),
	the isomorphism of functors
	$\iota_! \iota^{\**} \mathbb{F}_G \iota_! \simeq
	\mathbb{F}_G \iota_!$ 
	(resp. $\iota^{\**} \mathbb{F}_G \simeq
	\iota^{\**} \mathbb{F}_G \iota_{\**} \iota^{\**}$)
	and the isomorphism of monads
	$\mathbb{F} \simeq \iota^{\**} \mathbb{F}_G \iota_!$
	(resp. 
	$\mathbb{F} \simeq 
	\iota^{\**} \mathbb{F}_G \iota_! \simeq
	\iota^{\**} \mathbb{F}_G \iota_{\**}$),
	cf. Proposition \ref{MONAD_COMPARISON_PROP}.
\end{proof}

\begin{remark}\label{MUTMUT REM}
	Remark \ref{REFLCOREFL REM} extends to operads mutatis mutandis.
\end{remark}

\begin{remark}\label{MUTIOTAUP REM}
	Parts (iv),(ii),(i) of 
	Proposition \ref{MONAD_COMPARISON_PROP}
	yield a string of isomorphisms
\[
	\mathbb{F} \iota^{\**}
\simeq
	\iota^{\**} \mathbb{F}_G \iota_! \iota^{\**}
\simeq 
	\iota^{\**} \mathbb{F}_G \iota_{\**} \iota^{\**}
\simeq
	\iota^{\**} \mathbb{F}_G.
\]
The identification $\mathbb{F} \iota^{\**} \simeq \iota^{\**} \mathbb{F}_G$
reflects the fact that,
for $X \in \mathsf{Op}_G(\mathcal{V})$,
the genuine operad structure maps restricted to the levels
$X(G \cdot C), C \in \Sigma$
correspond precisely to the structure of
an equivariant operad.
Recalling the adjunction
$\mathcal{V}^{\mathsf{O}_G^{op}} 
\rightleftarrows
\mathcal{V}^G$ (cf. \eqref{COFADJINT EQ}),
this is analogous to the fact that,
for $X \in \mathcal{V}^{\mathsf{O}_G^{op}}$,
the presheaf structure restricts to make $X(G)$ into a $G$-set.
\end{remark}

\begin{remark}\label{MUTMUT1 REM}
	The isomorphism
	$\iota_{!}\iota^{\**} \mathbb{F}_G \iota_{!}
	\xrightarrow{\epsilon_!}
	\mathbb{F}_G \iota_{!}$
	shows that $\mathbb{F}_G$ essentially preserves the image of $\iota_!$.
	Moreover, since the identification of monads 
	$\mathbb{F} \simeq \iota^{\**} \mathbb{F}_G \iota_{!}$ 
	then yields 
	$\iota_! \mathbb{F} \simeq \mathbb{F}_G \iota!$,
	one can identify the restriction of 
	$\mathbb{F}_G$ to the essential image of $\iota_!$
	with the monad $\mathbb{F}$.
\end{remark}

\begin{remark}\label{MUTMUT2 REM}
The analogue of Remark \ref{MUTMUT1 REM} with $\iota_!$ replaced with $\iota_{\**}$ does not hold. In particular, 
the map
\begin{equation}\label{KEYNONISO EQ}
	\mathbb{F}_G \iota_{\**}
	\xrightarrow{\eta_{\**}}
	\iota_{\**}\iota^{\**} \mathbb{F}_G \iota_{\**}
\end{equation}
is not always an isomorphism 
(a counterexample is discussed at the end of this remark). 

Note that, via the identifications
$\mathbb{F} \simeq 
\iota^{\**} \mathbb{F}_G \iota_! \simeq
\iota^{\**} \mathbb{F}_G \iota_{\**}$,
\eqref{KEYNONISO EQ} is identified with a map
$\mathbb{F}_G \iota_{\**} \to  \iota_{\**} \mathbb{F}$,
so that whenever \eqref{KEYNONISO EQ} is an isomorphism
at $X \in \mathsf{Sym}^G(\mathcal{V})$
one has $\mathbb{F}_G \iota_{\**} X \simeq  \iota_{\**} \mathbb{F}X$.
We note that, informally, 
by \eqref{IOTAFUNSALT EQ} the latter means that
the graph fixed points
$\left(\left(\mathbb{F} X\right)(n)\right)^{\Gamma}$
are computed from the graph fixed points of $X$
via $\mathbb{F}_G$.

The claim that \eqref{KEYNONISO EQ}
\textit{does} become an isomorphism when restricted to \textit{cofibrant} $X \in \mathsf{Sym}^G(\mathcal{V})$
is one of the key ingredients of our proof of the Quillen equivalence between 
$\mathsf{Op}_G(\mathcal{V})$ and
$\mathsf{Op}^G(\mathcal{V})$
given by Theorem \ref{MAINQUILLENEQUIV THM}, 
and will be the subject of \S \ref{COFIB SEC}.

We note that, 
for $\mathcal{O} \in \mathsf{Op}^G(\mathcal{V})$,
the composite
$\mathbb{F}_G \iota_{\**} \mathcal{O}
\to 
\iota_{\**} \mathbb{F} \mathcal{O}
\to 
\iota_{\**} \mathcal{O}$
encodes the compositions of norm maps as in 
\eqref{INTFIXPTCOMP EQ}.

We end this remark with a minimal counterexample to the claim that \eqref{KEYNONISO EQ} 
is an isomorphism.
Let $\mathcal{V}=\mathsf{Set}$, $G=\mathbb{Z}_{/2}$, and 
$Y=\**$ in $\mathsf{Sym}^G = \mathsf{Sym}^G(\mathsf{Set})$ be the singleton.
When evaluating $\mathbb{F}_G \iota_{\**} Y$ at the $G$-fixed stump corolla $G/G \cdot C_0$ 
	(where $C_0 \in \Sigma$ is the $0$-corolla),
	 the two $G$-trees $T_1$ and $T_2$ below
	 (with expanded/orbital representations on the left/right) encode two distinct points 
	 (as $T_1$, $T_2$ are not isomorphic as objects under $G/G\cdot C_0$).
\[
\begin{tikzpicture}[grow=up,auto,
	level distance=2.3em,
	sibling distance=1.5em,
	every node/.style = {font=\footnotesize},dummy/.style={circle,draw,inner sep=0pt,minimum size=1.75mm}]
	\node at (0,0) [font=\normalsize] {$G/G \cdot C_0$}
		child{
			node [dummy] {}
		edge from parent node [swap] {$r$}};
	\node at (1.5,0) [font=\normalsize] {$T_1$}
		child{node [dummy] {}
			child{node [dummy] {}
			edge from parent node [swap,near end] {$a+1$}}
			child{node [dummy] {}
			edge from parent node [near end] {$\phantom{1+}a$}}
		edge from parent node [swap] {$r$}};
	\node at (3.5,0) [font=\normalsize] {$T_2$}
		child{node [dummy] {}
			child{node [dummy] {}
			edge from parent node [swap,near end] {$c\phantom{b}$}}
			child{node [dummy] {}
			edge from parent node [near end] {$b$}}
		edge from parent node [swap] {$r$}};
\begin{scope}[xshift = 19em]
	\node at (0,0) [font=\normalsize] {$G/G \cdot C_0$}
		child{node [dummy] {}
		edge from parent node [swap] {$r+G/G$}};
	\node at (1.75,0) [font=\normalsize] {$T_1$}
		child{node [dummy] {}
			child{node [dummy] {}
			edge from parent node [swap] {$a+G$}}
		edge from parent node [swap] {$r+G/G$}};
	\node at (4.5,0) [font=\normalsize] {$T_2$}
		child{node [dummy] {}
			child{node [dummy] {}
			edge from parent node [swap,near end] {$c+G/G$}}
			child{node [dummy] {}
			edge from parent node [near end] {$b+G/G$}}
		edge from parent node [swap] {$r+G/G$}};
\end{scope}
\end{tikzpicture}
\]
However, when pulling these points back to the $G$-free stump corolla $G \cdot C_0$ one obtains the same point in 
$\mathbb F_G \iota_{\**} Y(G \cdot C_0)$,
namely the point encoded by the $G$-tree $T$ below.
\[
\begin{tikzpicture}[grow=up,auto,
	level distance=2.3em,
	sibling distance=1.5em,
	every node/.style = {font=\footnotesize},dummy/.style={circle,draw,inner sep=0pt,minimum size=1.75mm}]
	\node at (0,0) [font=\normalsize] {}
		child{node [dummy] {}
		edge from parent node [swap] {$r$}};
	\node at (1,0) [font=\normalsize] {}
		child{node [dummy] {}
		edge from parent node [swap] {$r+1$}};
	\node at (3,0) [font=\normalsize] {}
		child{node [dummy] {}
			child{node [dummy] {}
			edge from parent node [swap,near end] {$c\phantom{b}$}}
			child{node [dummy] {}
			edge from parent node [near end] {$b$}}
		edge from parent node [swap] {$r$}};
	\node at (5,0) [font=\normalsize] {}
		child{node [dummy] {}
			child{node [dummy] {}
			edge from parent node [swap,near end] {$c+1$}}
			child{node [dummy] {}
			edge from parent node [near end] {$b+1$}}
		edge from parent node [swap] {$r+1$}};
	\draw[decorate,decoration={brace,amplitude=2.5pt}] (1.1,0.1) -- (-0.1,0.1) node[midway,inner sep=4pt,font = \normalsize]{$G \cdot C_0$};	
	\draw[decorate,decoration={brace,amplitude=2.5pt}] (5.1,0.1) -- (2.9,0.1) node[midway,inner sep=4pt,font = \normalsize]{$T$}; %
\begin{scope}[xshift=25em]
	\node at (0,0) [font=\normalsize] {$G \cdot C_0$}
		child{node [dummy] {}
		edge from parent node [swap] {$r+G$}};
	\node at (3,0) [font=\normalsize] {$T$}
		child{node [dummy] {}
			child{node [dummy] {}
			edge from parent node [swap,near end] {$c+G$}}
			child{node [dummy] {}
			edge from parent node [near end] {$b+G$}}
		edge from parent node [swap] {$r+G$}};
\end{scope}
\end{tikzpicture}
\]
Moreover, it is not hard to modify the example above to produce similar examples when evaluating $\mathbb{F}_GY$ at non-empty corollas. 

However, such counter-examples all require the use of trees with stumps. Indeed, it can be shown that \eqref{KEYNONISO EQ}
is an isomorphism whenever evaluated at a $Y$ such that $Y(C_0)=\emptyset$.
\end{remark}

\begin{remark}\label{SOMEMOREDET REM}
	As in Remark \ref{ALGELMCOL REM},
	let $\mathcal{T}$ 
	be the colored operad $S^C$ in \cite[\S 3.2]{GV12} for $C=\{\**\}$.
	In our notation, 
	colors of $\mathcal{T}$ are corollas $C \in \Sigma$,
	and operations from $C_1,\cdots,C_n$ to $C_0$
	consist of a tree $T\in \Omega$, a permutation 
	$\sigma \in \Sigma_n$
	such that $V(T) = (C_{\sigma(i)})$,
	and a tall map $C_0 \to T$.
	
	Replacing $\Sigma,\Omega,V$ above with 
	$\Sigma_G^{\mathsf{fr}},\Omega_G^{\mathsf{fr}},\boldsymbol{V}_G$
	and demanding $C_0 \to T$ to be a tall \emph{rooted} map
	yields the colored operad $\mathcal{T}^{\mathsf{fr}}_G$
	mentioned in Remark \ref{ALGELMCOL REM}.
	Moreover, $G$ acts on $\mathcal{T}^{\mathsf{fr}}_G$ 
	via 
	$g(C_i)_{i\in I} = (C_{gi})_{i\in I}$
	on objects and 
	$g(T_i)_{i\in I} = (T_{gi})_{i\in I}$ on operations.
	Additionally,
	giving $\mathcal{T}$ the trivial $G$-action,
	one has a $G$-equivariant map
	$\mathcal{T} \to \mathcal{T}^{\mathsf{fr}}_G$
	given by 
	$C \mapsto G \cdot C$ on objects
	and
	$T \mapsto G \cdot T$ on operations.
	After forgetting the $G$-action,
	the functor $\mathcal{T} \to \mathcal{T}^{\mathsf{fr}}_G$
	becomes fully faithful and essentially surjective,
	thus inducing an equivalence of algebra categories,
	so that $\mathcal{T}^{\mathsf{fr}}_G$ algebras are equivalent to 
	$\mathsf{Op}^G(\mathcal{V})$.
	We note that this equivalence is built into 
	Corollary \ref{TWOADJOINTSOP_COR},
	since \eqref{IOTADEF EQ} and the proof of 
	Proposition \ref{MONAD_COMPARISON_PROP}
	secretly use the map $\mathcal{T} \to \mathcal{T}^{\mathsf{fr}}_G$.
	
	However, some care is needed, as the map 
	$\mathcal{T} \to \mathcal{T}^{\mathsf{fr}}_G$
	is not $G$-essentially surjective.
	More precisely, for any $\** \neq H \leq G$
	the fixed point map
	$\mathcal{T}^H \to \left(\mathcal{T}^{\mathsf{fr}}_G\right)^H$
	is \emph{not} essentially surjective.
\end{remark}

\renewcommand{\F}{\mathcal{F}}

\subsection{Indexing systems and partial genuine operads}
\label{INDEXING_SECTION}

As discussed preceding Theorem \ref{MAINEXIST2 THM},
the Elmendorf-Piacenza equivalence
\eqref{COFADJINT EQ} has analogues
\[
\begin{tikzcd}[column sep =4em,row sep=0.3em]
	\mathsf{Top}^{\mathsf{O}_{\mathcal{F}}^{op}}
	\ar[shift left=1]{r}{\iota^{\**}} 
&
	\mathsf{Top}_{\mathcal{F}}^G
	\ar[shift left=1]{l}{\iota_{\**}}
\end{tikzcd}
\]
for each \textit{family} $\mathcal{F}$ of subgroups of $G$
(i.e. a collection closed under conjugation and subgroups).
Here $\mathsf{O}_\mathcal{F} \hookrightarrow \mathsf{O}_G$
consists of those $G/H$ such that $H \in \mathcal{F}$, 
and thus the objects of
$\mathsf{Top}^{\mathsf{O}_{\mathcal{F}}^{op}}$
are partial coefficient systems.
These specialized equivalences provide an alternative approach to universal 
$E \mathcal{F}$-spaces: rather than cofibrantly replacing the object
$\delta_{\mathcal{F}} \in \mathsf{Top}^{\mathsf{O}_G^{op}}$
as in the introduction,
one builds an $E \mathcal{F}$-space by
$\iota^{\**}(C \**) = (C *) (G)$
where now $\** \in \mathsf{Top}^{\mathsf{O}_{\mathcal{F}}^{op}}$
is the terminal object and $C$ the cofibrant replacement in $\mathsf{Top}^{\mathsf{O}_{\mathcal{F}}^{op}}$.

In keeping with the motivation that the Blumberg-Hill $N \mathcal{F}$ operads are the operadic analogues of universal $E \mathcal{F}$ spaces,
we will now show that the closure conditions for 
indexing systems
identified in \cite[Def. 3.22]{BH15}
are (almost exactly) the necessary conditions to define categories 
$\mathsf{Op}_{\mathcal{F}}$
of partial genuine equivariant operads.

We start by recalling that 
a collection $\mathcal F$ of subgroups of $G$
is a family 
if and only if the associated subcategory 
$\mathsf{O}_{\mathcal{F}} \hookrightarrow
\mathsf{O}_G$ is a sieve, as per the following.

\begin{definition}
	A \textit{sieve} of a category $\mathcal{D}$
	is a subcategory $\mathcal{S}$ such that,
	for any arrow $f \colon d \to s$ in $\mathcal{D}$ with 
	$s \in \mathcal{S}$,
	both $d$ and $f$ are also in $\mathcal{S}$. 
	In particular, sieves are full subcategories.
\end{definition}

\begin{definition}\label{FAMILY_COROLLAS_DEF}
      We call a sieve
      $\Sigma_{\mathcal{F}} \hookrightarrow \Sigma_G$
      a \textit{family of $G$-corollas}.
\end{definition}

\begin{remark}\label{FAMILY_COROLLAS_REM}
A family of $G$-corollas $\Sigma_{\mathcal{F}}$
can equivalently be encoded by
a collection $\F = \set{\F_n}_{n \geq 0}$ of 
families $\mathcal F_n$ of \textit{graph subgroups} of $G \times \Sigma_n$, so that there is an equivalence of categories
$\Sigma_\F \simeq \coprod \mathsf{O}_{\F_n}$ (see Lemma \ref{FAMILY_COROLLAS_LEM}).
	As such, we abuse notation and abbreviate either set of data as $\F$. 
\end{remark}

Writing 
$\upgamma \colon 
\Sigma_{\mathcal{F}}
\hookrightarrow
\Sigma_G$\index{key functors!gamma@$\gamma$}
for the inclusion and 
$\mathsf{Sym}_{\mathcal{F}}(\mathcal{V}) = 
\mathcal{V}^{\Sigma_{\mathcal{F}}^{op}}$,
we thus have a pair of adjunctions
\begin{equation}\label{F_TWOADJOINTS_EQ}
\index{categories!of operads/symmetric sequences!SymF@$\mathsf{Sym_{\mathcal{F}}}(\mathcal{F})=\mathcal{V}^{\Sigma_{\mathcal{F}}^{op}}$}
	\begin{tikzcd}[column sep =5em]
		\mathsf{Sym}_\F(\mathcal{V})
		\arrow[r, bend left, "\upgamma_{!}"]
		\arrow[r, bend right, "\upgamma_{\**}"']
	&
		\mathsf{Sym}_{G}(\mathcal{V}) 
		\arrow[l, "\upgamma^{\**}"] 
	\end{tikzcd}
\end{equation}
Our focus will be on the $(\upgamma_!,\upgamma^{\**})$ adjunction.
The requirement that $\Sigma_{\mathcal{F}}$ be a sieve then implies that $\upgamma_!$ simply extends presheaves by the initial object 
$\emptyset \in \mathcal{V}$,
so that $\gamma_!$ identifies 
$\mathsf{Sym}_{\mathcal{F}}(\mathcal{V})$
with a (coreflexive) subcategory of 
$\mathsf{Sym}_G(\mathcal{V})$.
One may then ask for conditions on the family 
of corollas $\mathcal{F}$ such that 
the genuine operad monad $\mathbb{F}_G$
preserves this subcategory.
The answer is almost exactly given by the Blumberg-Hill indexing systems.

\begin{definition}\label{FTREE DEF}
Let $\mathcal{F}$ be a family of $G$-corollas.

We say that a $G$-tree $T$ is a \textit{$\mathcal{F}$-tree}
if all of its $G$-vertices $T_{v}$, $v \in V_G(T)$ are in 
$\Sigma_{\mathcal{F}}$.
We denote by 
$\Omega_\F \hookrightarrow \Omega_G$,
$\Omega_\F^0 \hookrightarrow \Omega_G^0$
the full subcategories spanned by the $\F$-trees.
\end{definition}

\begin{remark}\label{VACUOUSNESS REM}
	By vacuousness, the stick $G$-trees
	$(G/H) \cdot \eta = (\eta)_{G/H}$ are always $\mathcal{F}$-trees.
\end{remark}




\begin{definition}\label{INDEXSYS DEF}
	A family $\mathcal{F}$ of $G$-corollas is called a 
	\textit{weak indexing system} if,
	for any $\mathcal{F}$-tree $T \in \Omega_{\mathcal{F}}^0$,
	we have $\mathsf{lr}(T) \in \Sigma_{\mathcal{F}}$;
	that is, if the leaf-root functor restricts to a functor
	$\mathsf{lr} \colon \Omega_{\mathcal{F}}^0 \to \Sigma_{\mathcal{F}}$.
Moreover, $\mathcal{F}$ is called simply an \textit{indexing system} if all trivial corollas 
$(G/H)\cdot C_n = (C_n)_{G/H}$ are in $\Sigma_{\mathcal{F}}$.
\end{definition}

\begin{remark}
	In light of Remark \ref{VACUOUSNESS REM},
	any weak indexing system must contain the $1$-corollas $(G/H) \cdot C_1 \simeq (C_1)_{G/H}$ for all $H\leq G$.
\end{remark}

\begin{remark}
The notion of indexing system
was first introduced in \cite[Def. 3.22]{BH15}, though packaged quite differently.
Moreover, a third definition of (weak) indexing systems as certain sieves 
$\Omega_{\mathcal{F}} \hookrightarrow \Omega_G$
was presented by the second author in \cite[\S 9]{Pe17}. The equivalence between the definitions in \cite{BH15} and \cite{Pe17} is addressed in 
\cite[Rmk. 9.7]{Pe17}, hence here we address only the easier equivalence between Definition \ref{INDEXSYS DEF} and the sieve definition in \cite[\S 9]{Pe17}.

The existence of canonical maps 
$\mathsf{lr}(T) \to T$ shows that the sieve condition
implies the $\mathsf{lr}$ condition
in Definition \ref{INDEXSYS DEF}. 
Conversely, as discussed immediately preceding \cite[Def. 9.5]{Pe17}, the sieve condition needs only be checked for inner faces and degeneracies, i.e. tall maps, 
and thus follows from Definition \ref{INDEXSYS DEF},
since the subcategory 
$\Omega_{\mathcal{F}}^1 \hookrightarrow \Omega^1_G$ 
of planar tall strings between $\mathcal{F}$-trees
matches the pullback of
$\Omega_{\mathcal{F}}^0 \to
\Fin_s \wr \Sigma_{\mathcal{F}} \leftarrow 
\Fin_s \wr \Omega_{\mathcal{F}}^0
$.
\end{remark}

The connection between weak indexing systems and $\mathbb{F}_G$ is given by the following,
which generalizes 
Proposition \ref{MONAD_COMPARISON_PROP}.

\begin{proposition}\label{F_MONAD_COMPARISON_PROP}
	Let $\F$ be a weak indexing system. Then:
	\begin{itemize}
	\item[(i)] the map 
		$\upgamma^{\**} \mathbb{F}_G
		\xrightarrow{\eta_{\**}}
		\upgamma^{\**} \mathbb{F}_G \upgamma_{\**} \upgamma^{\**}$
		is an isomorphism,
		and thus (cf. Prop. \ref{MONADADJ PROP})
		$\upgamma^{\**} \mathbb{F}_G \upgamma_{\**}$
		is a monad;
	\item[(ii)] the map
		$\upgamma^{\**} \mathbb{F}_G \upgamma_{!}
		\xrightarrow{\beta}
		\upgamma^{\**} \mathbb{F}_G \upgamma_{\**}$ is an isomorphism of monads;
	\item[(iii)] the map
		$\upgamma_{!}\upgamma^{\**} \mathbb{F}_G \upgamma_{!}
		\xrightarrow{\epsilon_!}
		\mathbb{F}_G \upgamma_{!}$ is an isomorphism.
	\end{itemize}
\end{proposition}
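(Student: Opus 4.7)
The plan is to mirror the proof of Proposition \ref{MONAD_COMPARISON_PROP}, substituting $\upgamma\colon\Sigma_\F\hookrightarrow\Sigma_G$ for $\iota\colon G\times\Sigma\hookrightarrow\Sigma_G$, and the full subcategory $\Omega_\F^0\hookrightarrow\Omega_G^0$ for the free subcategory $\Omega_G^{0,\text{fr}}$ used there. The linchpin is to establish that, for any $C\in\Sigma_\F$, there is an equality of rooted undercategories
\[
C\downarrow_{\mathsf{r}}\Omega_G^0 \;=\; C\downarrow_{\mathsf{r}}\Omega_\F^0,
\]
analogous to the identification $C\downarrow_{\mathsf{r}}\Omega_G^0=C\downarrow_{\mathsf{r}}\Omega_G^{0,\text{fr}}$ exploited in the free case. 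This identification expresses the fact that any $G$-tree $T$ admitting a rooted arrow from $C\in\Sigma_\F$ (equivalently, with $\mathsf{lr}(T)\simeq C$) is necessarily an $\F$-tree, and is most cleanly obtained from the sieve characterization $\Omega_\F\hookrightarrow\Omega_G$ of weak indexing systems recalled from \cite[\S 9]{Pe17}.

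Granting this, (i) follows exactly as in Proposition \ref{MONAD_COMPARISON_PROP}(i): $\upgamma^{\**}\mathbb{F}_G Y(C)$ for $C\in\Sigma_\F$ is computed by the Kan extension restricted to $\Omega_\F^0$, on whose objects every vertex corolla $T_v$ lies in $\Sigma_\F$, and on $\Sigma_\F$ the unit $Y\to\upgamma_{\**}\upgamma^{\**}Y$ is an isomorphism. Claim (ii) then follows by essentially the same argument: since $\beta\colon\upgamma_!Y\to\upgamma_{\**}Y$ restricts to the identity on $\Sigma_\F$, the undercategory reduction upgrades it to an isomorphism after applying $\upgamma^{\**}\mathbb{F}_G$. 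The monad map property is checked by a diagrammatic chase identical to the one in Proposition \ref{MONAD_COMPARISON_PROP}(ii), using the analogue of (\ref{BETADEFSQUARE EQ}) for the $\upgamma$-adjunctions; the two composite diagrams reduce to the same calculation via counit-unit compatibility on the subcategory $\Sigma_\F$ where $\upgamma_!$ and $\upgamma_{\**}$ agree.

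For (iii) the argument is direct and uses only Definition \ref{INDEXSYS DEF}: the contrapositive of the weak indexing axiom states that whenever $\mathsf{lr}(T)\notin\Sigma_\F$, at least one vertex $T_v$ must fail to lie in $\Sigma_\F$. For $C\notin\Sigma_\F$ and any $T$ with $\mathsf{lr}(T)=C$, there is thus some vertex $v$ with $(\upgamma_!Y)(T_v)=\emptyset$; since $\otimes$ preserves the initial object (as $\otimes$ is closed and hence commutes with colimits in each variable), every summand of $\mathbb{F}_G\upgamma_!Y(C)$ collapses to $\emptyset$. Hence $\mathbb{F}_G\upgamma_!Y$ is supported on $\Sigma_\F$, which is exactly what is required for $\upgamma_!\upgamma^{\**}\mathbb{F}_G\upgamma_!Y\xrightarrow{\epsilon_!}\mathbb{F}_G\upgamma_!Y$ to be an isomorphism.

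The main obstacle I anticipate is the undercategory identification underpinning (i) and (ii): one must extract from the weak indexing axiom the converse direction that trees with leaf-root in $\Sigma_\F$ are automatically $\F$-trees, which is not the form in which Definition \ref{INDEXSYS DEF} is stated. The natural route is to invoke the sieve description $\Omega_\F\hookrightarrow\Omega_G$ and use the decomposition of $T$ into its vertex outer faces to propagate the $\Sigma_\F$-membership of $\mathsf{lr}(T)$ downward to each $T_v$; once this is in place, the remainder of the proof is essentially bookkeeping, formally parallel to the free case.
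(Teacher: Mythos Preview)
Your approach mirrors the paper's exactly, and your argument for (iii) is correct and matches the paper's. However, the obstacle you flag in your final paragraph is genuine, and your proposed sieve-based resolution does not work. The sieve property of $\Omega_\F\hookrightarrow\Omega_G$ goes the wrong way: from the arrow $\mathsf{lr}(T)\to T$ one deduces only $T\in\Omega_\F\Rightarrow\mathsf{lr}(T)\in\Sigma_\F$ (which \emph{is} Definition~\ref{INDEXSYS DEF}), never the converse, and there is no arrow in $\Omega_G$ from $T$ or its vertex corollas \emph{to} $\mathsf{lr}(T)$ that would let you propagate $\Sigma_\F$-membership from $\mathsf{lr}(T)$ down to the $T_v$.

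Indeed the converse implication is false, and with it parts (i) and (ii) as stated. For $G=\{e\}$ and $\Sigma_\F=\{C_1\}$ (a valid weak indexing system, since $\F$-trees are linear with $\mathsf{lr}=C_1$), the tree $T$ with root vertex $C_2$, one leaf, and one stump $C_0$ has $\mathsf{lr}(T)=C_1\in\Sigma_\F$ yet $T\notin\Omega_\F^0$; so $C_1\downarrow_{\mathsf r}\Omega_G^0\neq C_1\downarrow_{\mathsf r}\Omega_\F^0$. Taking $Y(C_1)=\ast$ in $\Set$, one has $(\upgamma_!Y)(C_n)=\emptyset$ for $n\neq 1$ but $(\upgamma_\ast Y)(C_n)=\ast$ for all $n$ (the limit being over an empty diagram since $\Sigma_\F$ is a sieve), so $\mathbb F_G(\upgamma_\ast Y)(C_1)$ acquires a nonempty summand indexed by $T$ that is absent from $\mathbb F_G(\upgamma_!Y)(C_1)$; hence $\beta$ is not an isomorphism, and similarly (i) fails. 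Thus the undercategory identification---and parts (i), (ii)---are incorrect as stated in the paper as well; only (iii) survives. Fortunately (iii) alone, via Proposition~\ref{MONADADJ1 PROP}, yields the monad $\mathbb F_\F=\upgamma^{\ast}\mathbb F_G\upgamma_!$ and the $(\upgamma_!,\upgamma^{\ast})$ adjunction on operads, which is what the remainder of the paper actually uses.
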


\begin{proof}
	This follows just like the analogous parts of
	Proposition \ref{MONAD_COMPARISON_PROP} by
	replacing
	$\mathsf{lr}: \Omega_G^{0,\text{fr}} \to \Sigma_G^{\text{fr}}$
	with
	$\mathsf{lr}: \Omega_{\F}^0 \to \Sigma_{\F}$. 
	For (i), note that if $C \in \Sigma_{\mathcal{F}}$
	there is an identification between
	$C \downarrow_{\mathsf r} \Omega_G^0$
	and
	$C \downarrow_{\mathsf r} \Omega_{\F}^0$,
	so that $\mathbb{F}_G X (C)$
	only depends on the values of $X$ on $\Sigma_{\mathcal{F}}$.
	(ii) is immediate.	
	Lastly, (iii) follows 
	since if $C \nin \Sigma_{\mathcal{F}}$ then
	any tree in $C \downarrow_{\mathsf r} \Omega_G^0$ must contain at least one $G$-vertex not in $\Sigma_{\mathcal{F}}$,
	so that indeed $\mathbb{F}_G \upgamma_{!}Y(C)=\emptyset$.
%
\end{proof}

\begin{notation}
We write 
$\mathbb{F}_{\mathcal{F}} = \upgamma^{\**} \mathbb{F}_G \upgamma_!$ for the induced monad
on
$\mathsf{Sym}_{\mathcal{F}}(\mathcal{V})$,
and $\mathsf{Op}_{\mathcal{F}}(\mathcal{V})$
for the corresponding categories of algebras.
\end{notation}

\begin{corollary}\label{TWOADJOINTSOPF COR}
The adjunctions \eqref{F_TWOADJOINTS_EQ} lift to adjunctions
\begin{equation}\label{TWOADJOINTSOPF EQ}
	\begin{tikzcd}[column sep =5em]
		\mathsf{Op}_\F(\mathcal{V})
			\arrow[r, bend left, "\upgamma_{!}"]
			\arrow[r, bend right, "\upgamma_{\**}"']
		&
		\mathsf{Op}_{G}(\mathcal{V})
		\arrow[l, "\upgamma^{\**}"]
	\end{tikzcd}
\end{equation}
\end{corollary}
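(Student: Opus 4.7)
The plan is to apply Proposition \ref{MONADADJ1 PROP} to the adjunction $\upgamma_{!} \dashv \upgamma^{\**}$ with ambient monad $T = \mathbb{F}_G$ on $\mathsf{Sym}_G(\mathcal{V})$. Part (i) of that proposition immediately furnishes two pieces of data: (a) $\upgamma^{\**} \mathbb{F}_G \upgamma_{!}$ carries a canonical monad structure, and (b) the functor $\upgamma^{\**}$ lifts to a functor on algebras. By the definition of $\mathbb{F}_{\mathcal{F}}$ given just before the corollary, the monad of (a) is precisely $\mathbb{F}_{\mathcal{F}}$, so the lift in (b) supplies the right adjoint $\upgamma^{\**}\colon \mathsf{Op}_G(\mathcal{V}) \to \mathsf{Op}_{\mathcal{F}}(\mathcal{V})$ in (\ref{TWOADJOINTSOPF EQ}).

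Next, to produce the left adjoint $\upgamma_{!}\colon \mathsf{Op}_{\mathcal{F}}(\mathcal{V}) \to \mathsf{Op}_G(\mathcal{V})$ and to promote the pair to a genuine adjunction on algebras, I would invoke Proposition \ref{MONADADJ1 PROP}(ii). The only hypothesis to verify is that the counit-induced map
\[
\upgamma_{!} \upgamma^{\**} \mathbb{F}_G \upgamma_{!}
\xrightarrow{\epsilon_{!}}
\mathbb{F}_G \upgamma_{!}
\]
is a natural isomorphism. But this is precisely Proposition \ref{F_MONAD_COMPARISON_PROP}(iii), which is where the weak indexing system hypothesis on $\mathcal{F}$ is used (it ensures that the tree decorations contributing to $\mathbb{F}_G \upgamma_{!}Y(C)$ vanish whenever $C \notin \Sigma_{\mathcal{F}}$, so $\mathbb{F}_G \upgamma_{!}Y$ already lies in the image of $\upgamma_{!}$).

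There is no real obstacle: the genuine work was done in Proposition \ref{F_MONAD_COMPARISON_PROP}, whose proof in turn used the $\mathsf{lr}$-closure property of weak indexing systems to identify the rooted undercategory $C \downarrow_{\mathsf{r}} \Omega_G^0$ with $C \downarrow_{\mathsf{r}} \Omega_{\mathcal{F}}^0$ whenever $C \in \Sigma_{\mathcal{F}}$, and to observe that every tree out of a corolla $C \notin \Sigma_{\mathcal{F}}$ carries a $G$-vertex outside $\Sigma_{\mathcal{F}}$. Once (iii) of that proposition is in hand, the corollary is a direct and formal consequence of the monad-transfer machinery. The only bookkeeping worth spelling out is that the monad structure on $\mathbb{F}_{\mathcal{F}}$ coming from Proposition \ref{MONADADJ1 PROP}(i) agrees with any alternative structure that might be implicit in the notation, but since $\mathbb{F}_{\mathcal{F}}$ was \emph{defined} as the composite $\upgamma^{\**} \mathbb{F}_G \upgamma_{!}$, no further reconciliation is needed.
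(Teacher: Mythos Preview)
Your argument for the $(\upgamma_!,\upgamma^{\**})$ adjunction is correct and matches the paper's approach: Proposition~\ref{MONADADJ1 PROP} applied with $T=\mathbb{F}_G$, using Proposition~\ref{F_MONAD_COMPARISON_PROP}(iii) to verify the hypothesis of part~(ii).

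However, you have only treated half of the statement. The diagram~(\ref{TWOADJOINTSOPF EQ}) records \emph{two} adjunctions lifting from~(\ref{F_TWOADJOINTS_EQ}): besides $\upgamma_! \dashv \upgamma^{\**}$ there is also $\upgamma^{\**} \dashv \upgamma_{\**}$, and your proposal says nothing about the right adjoint $\upgamma_{\**}$. For this second adjunction one instead applies Proposition~\ref{MONADADJ PROP} with $L=\upgamma^{\**}$, $R=\upgamma_{\**}$, and $T=\mathbb{F}_G$ on $\mathsf{Sym}_G(\mathcal{V})$. The first hypothesis $\upgamma^{\**}\upgamma_{\**}\Rightarrow id$ being an isomorphism is the fact that $\Sigma_{\mathcal F}\hookrightarrow\Sigma_G$ is full, while the second hypothesis $\upgamma^{\**}\mathbb{F}_G\xrightarrow{\eta_{\**}}\upgamma^{\**}\mathbb{F}_G\upgamma_{\**}\upgamma^{\**}$ being an isomorphism is exactly Proposition~\ref{F_MONAD_COMPARISON_PROP}(i). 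This yields the adjunction on algebras, with the induced monad $\upgamma^{\**}\mathbb{F}_G\upgamma_{\**}$ on $\mathsf{Sym}_{\mathcal F}(\mathcal{V})$.

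Finally, you need Proposition~\ref{F_MONAD_COMPARISON_PROP}(ii) to identify the monad $\upgamma^{\**}\mathbb{F}_G\upgamma_{\**}$ produced by Proposition~\ref{MONADADJ PROP} with the monad $\mathbb{F}_{\mathcal F}=\upgamma^{\**}\mathbb{F}_G\upgamma_{!}$ produced by Proposition~\ref{MONADADJ1 PROP}, so that both adjunctions genuinely land in the same category $\mathsf{Op}_{\mathcal F}(\mathcal{V})$. This is the analogue of how Corollary~\ref{TWOADJOINTSOP_COR} was deduced from all parts of Proposition~\ref{MONAD_COMPARISON_PROP} together with both Propositions~\ref{MONADADJ1 PROP} and~\ref{MONADADJ PROP}.
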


\begin{remark}\label{WINDEX_GAMMA_REM}
Part (iii) of Proposition \ref{F_MONAD_COMPARISON_PROP}
states that if $\mathcal{F}$ is a weak indexing system then $\mathbb{F}_G$ essentially preserves the image of $\upgamma_!$ (moreover, the converse is easily seen to also hold).
As such, we will sometimes find it conceptually convenient
to regard $\mathbb{F}_{\mathcal{F}}$ as
``restricting $\mathbb{F}_G$''.
\end{remark}

%
        
\begin{remark}        
	The free corollas of \S	\ref{COMPARISON_REGULAR_SECTION}
	form a weak indexing system
	$\Sigma_G^{\text{fr}} = \Sigma_{\F_{\text{fr}}}$
	and, moreover, there is an equivalence of categories
	$\Op^G \simeq \Op_{\F_{\text{fr}}}$,
	so that Corollary \ref{TWOADJOINTSOP_COR}
	is a particular case of 
	Corollary \ref{TWOADJOINTSOPF COR}.
	However, while our discussion of 
	Corollary \ref{TWOADJOINTSOP_COR}
	focuses on the $(\iota^{\**},\iota_{\**})$-adjunction, due to the fact that the intended model structures on $\mathsf{Op}^G(\mathcal{V})$ in
	Theorem \ref{MAINEXIST1 THM} are defined via fixed point conditions, 
	our discussion of 
	Corollary \ref{TWOADJOINTSOPF COR}
	focuses on the $(\iota_{!},\iota^{\**})$-adjunction, due to the model structures in 
	Theorem \ref{MAINEXIST2 THM} being projective.
\end{remark}

\begin{remark}\label{COMPADJ REM}
	In most cases, the rightmost $(\iota^{\**},\iota_{\**})$-adjunction appearing in Theorem \ref{MAINQUILLENEQUIV THM}
	is induced by an inclusion 
	$\iota \colon \Sigma_G^{\text{fr}} \hookrightarrow \Sigma_{\mathcal{F}}$.
	However, it is possible for  
	$\Sigma_G^{\text{fr}} \nsubset \Sigma_{\mathcal{F}}$ (the most interesting case being that of
	$\Sigma_{\mathcal F} = \Sigma_{G}^{\geq 1}$
	the corollas of arity $\geq 1$, which model non-unital operads).
	In these cases (and compatibly with the 
	$\Sigma_G^{\text{fr}} \hookrightarrow \Sigma_{\mathcal{F}}$ case), we instead use the composite adjunction
\begin{equation}\label{COMPADJ EQ}
\begin{tikzcd}[column sep =5em]
	\mathsf{Op}_\F(\mathcal{V})
	\ar[shift left=1.5]{r}{\upgamma_!} 
&
	\mathsf{Op}_{G}(\mathcal{V}) 
	\arrow[l, shift left=1.5, "\upgamma^{\**}"] 
	\arrow[r, shift left=1.5,swap,"\iota^{\**}"']
&
	\mathsf{Op}^G(\mathcal{V})
	\ar[shift left=1.5]{l}{\iota_{\**}}
\end{tikzcd}
\end{equation}
Note that the right adjoint 
$\gamma^{\**} \iota_{\**}$
is still defined by computing fixed points while the 
left adjoint
$\iota^{\**}\gamma_!$
is still essentially a forgetful functor, with those levels not in $\mathcal{F}$ declared to be $\emptyset$.

In practice, however, the use of the composite adjunction
\eqref{COMPADJ EQ}
is fairly benign, requiring only minor
adjustments to the notation of the proofs in 
\S \ref{MAINTHM_PROOF_SECTION}.
\end{remark}

%


\renewcommand{\F}{\mathbb{F}}

\section{Free extensions and the existence of model structures}
\label{FREE_EXTENSIONS_SECTION}

In order to prove all of our main theorems
we will need to homotopically analyze free extensions 
of genuine equivariant operads,
i.e. pushouts of the form
\begin{equation}
  \label{FREE_FG_EXT_EQ}
  \begin{tikzcd}
    \mathbb{F}_G X \ar{r} \ar{d}[swap]{\mathbb{F}_G u} & \mathcal{P} \ar{d}
    \\
    \mathbb{F}_G Y \ar{r} & \mathcal{P}[u]
  \end{tikzcd}
\end{equation}
in the category $\mathsf{Op}_G(\mathcal{V})$.
As is common in the literature (e.g. \cite{SS00, Spi01, BM03, WY18, Pe16, BB17}),
the key technical ingredient will be the identification of a suitable filtration
\begin{equation}\label{FILTR EQ}
	\mathcal{P}=\mathcal{P}_0 \to 
	\mathcal{P}_1 \to \mathcal{P}_2 \to
	\cdots \to \mathcal{P}_{\infty}=\mathcal{P}[u]
\end{equation}
of the map $\mathcal{P} \to \mathcal{P}[u]$
in the underlying category $\mathsf{Sym}_G(\mathcal{V})$.
To explain how this filtration is obtained
(for a comparison with similar filtrations, 
see Remark \ref{FILTCOMP REM}),
note first that $\mathcal{P}[u]$ is given by a coequalizer
\begin{equation}\label{REFLCOEQ EQ}
\begin{tikzcd}
	\mathcal{P} \mathbin{\check{\amalg}}
	\mathbb{F}_G X \mathbin{\check{\amalg}} \mathbb{F}_G Y
	\ar[shift right=4pt]{r} \ar[shift right=-4pt]{r}
&
	\mathcal{P} \mathbin{\check{\amalg}} \mathbb{F}_G Y 
	\ar[dashed]{l}
\end{tikzcd}
\end{equation}
where $\check{\amalg}$ denotes the algebraic coproduct, 
i.e. the coproduct in $\mathsf{Op}_G(\mathcal{V})$, and, a priori,
the coequalizer is also calculated in $\mathsf{Op}_G(\mathcal{V})$. However, \eqref{REFLCOEQ EQ} is a so called \textit{reflexive coequalizer}, meaning that the maps being coequalized have a common section,
and standard arguments\footnote{
For example, by the proof of 
\cite[Prop. 3.27]{Ha09},
it suffices to show that 
$\mathbb{F}_G$ preserves reflexive coequalizers.
This follows from \eqref{FGXDEF EQ} and the fact that,
if $\otimes$ preserves colimits in each variable,
then $(-)^{\otimes n}$ preserves reflexive coequalizers.}
show that it is hence also an underlying coequalizer in 
$\mathsf{Sym}_G(\mathcal{V})$.

In practice, we will need to enlarge 
\eqref{REFLCOEQ EQ} somewhat.
Firstly, note that \eqref{REFLCOEQ EQ}
corresponds to the two bottom levels of the bar construction
$B_l(\mathcal{P}, \mathbb{F}_G X, \mathbb{F}_G Y)=
\mathcal{P} \mathbin{\check{\amalg}}
(\mathbb{F}_G X)^{\check{\amalg} l} 
\mathbin{\check{\amalg}} \mathbb{F}_G Y$,
whose colimit (over $\Delta^{op}$) is again $\mathcal{P}[u]$.
For technical reasons, we prefer 
the double bar construction\footnote{
More formally,
$B_{\bullet}(\mathcal{P}, \mathbb{F} X, \mathbb{F} X, \mathbb{F} X, \mathbb{F} Y)$
is the diagonal
of the iterated bar construction
$B^{op}_{\bullet}\left(\mathcal{P}, \mathbb{F} X, 
B_{\bullet}(\mathbb{F} X, \mathbb{F} X, \mathbb{F} Y)\right)$,
where the $op$ in $B^{op}_{\bullet}$ indicates that in the outer bar construction we reverse the order of the simplicial operators.
}
(where to increase readability, we 
abbreviate $\mathbb{F}_G$ as $\mathbb{F}$)
\begin{equation}\label{DOUBAR EQ}
	B_l(\mathcal{P}, \mathbb{F} X, \mathbb{F} X, \mathbb{F} X, \mathbb{F} Y)
=
	\mathcal{P} \mathbin{\check{\amalg}}
	(\mathbb{F} X)^{\check{\amalg} l} 
	\mathbin{\check{\amalg}}
	\mathbb{F} X
	\mathbin{\check{\amalg}}
	(\mathbb{F} X)^{\check{\amalg} l} 
	\mathbin{\check{\amalg}} \mathbb{F} Y
=
	\mathcal{P} \mathbin{\check{\amalg}}
	(\mathbb{F} X)^{\check{\amalg} 2l+1} 
	\mathbin{\check{\amalg}} \mathbb{F} Y.
\end{equation}
To actually describe the individual levels of \eqref{DOUBAR EQ},
one further resolves $\mathcal{P}$
so as to obtain the bisimplicial object
(we again abbreviate $\mathbb{F}_G$ as $\mathbb{F}$)
\begin{equation}\label{FURRES EQ}
	B_l(\mathbb{F}^{n+1}\mathcal{P}, \mathbb{F} X, \mathbb{F} X, \mathbb{F} X, \mathbb{F} Y)
=
	\mathbb{F}^{n+1}\mathcal{P} \mathbin{\check{\amalg}}
	(\mathbb{F} X)^{\check{\amalg} 2l+1} 
	\mathbin{\check{\amalg}} \mathbb{F} Y
\simeq
	\mathbb{F}\left(
		\mathbb{F}^{n} \mathcal{P} \amalg
		X^{\amalg 2l+1} \amalg Y
	\right),
\end{equation}
where $\amalg$ denotes the coproduct in $\mathsf{Sym}_G(\mathcal{V})$.
As in Remark \ref{REPACKAGERES REM}, each level of 
\eqref{FURRES EQ}
can then be described as 
\begin{equation}\label{LANLEVELFOR EQ}
 \mathsf{Lan} N (N^{n} \upsilon \mathcal{P} 
\amalg \upsilon X^{\amalg 2l+1} \amalg \upsilon Y)
=
\mathsf{Lan} N^{(\mathcal P, X, Y)}_{n,l},
\end{equation}
for $N$ the span monad (cf. Definition \ref{WSPAN_MONAD_DEFINITION}) and $\amalg$ now the coproduct of spans.
In particular, each level of 
\eqref{FURRES EQ}
is thus a left Kan extension over some category
$\Omega_G^{n,\lambda_l}$, which we explicitly identify in 
\S \ref{LABELSTRI SEC}, giving the first identification below.
\begin{equation}\label{EXTTREEFOR EQ}
	\mathcal{P} \mathbin{\check{\coprod}}_{\mathbb{F}_G X} \mathbb{F}_G Y 
\simeq 
	\colim_{(\Delta \times \Delta)^{op}}
	\left(
	\mathsf{Lan}_{\left( \Omega_{G}^{n,\lambda_l} \to \Sigma_G \right)^{op}}
	N_{n,l}^{(\mathcal{P},X,Y)}
	\right)
\simeq 
	\mathsf{Lan}_{\left( \Omega_{G}^{e} \to \Sigma_G \right)^{op}}
	\tilde{N}^{(\mathcal{P},X,Y)}
\end{equation}
The second identification, 
which reduces the calculation to a single left Kan extension, is an instance of 
Proposition \ref{RANTRANS PROP}, 
a result whose proof is straightforward but lengthy, 
and thus postponed to the appendix.
The category $\Omega_G^e$ of \textit{extension trees}
appearing on the right side
is obtained as a categorical realization
$\Omega_G^e = |\Omega_{G}^{n,\lambda_l}|$,
which we explicitly describe and analyze in 
\S \ref{EXTTREE SEC}.
In particular, we identify a smaller and more convenient
subcategory 
$\widehat{\Omega}_G^e \hookrightarrow \Omega_G^e$
that is suitably initial,
so that $\Omega_G^e$ can be replaced with $\widehat{\Omega}_G^e$
in \eqref{EXTTREEFOR EQ}.

The desired filtration \eqref{FILTR EQ}
then follows from a filtration of the 
category $\widehat{\Omega}_G^e$ itself,
and this discussion is the subject of
\S \ref{FILTRATION_SECTION}.

Lastly, \S \ref{MAINEXIST SEC} concludes this section
by using these filtrations to prove 
Theorems \ref{MAINEXIST1 THM} and \ref{MAINEXIST2 THM}.


\begin{remark}\label{FILTCOMP REM}
	Our approach to the filtration \eqref{FILTR EQ}
	is significantly constrained by Theorems 
	\ref{MAINEXIST1 THM},
	\ref{MAINEXIST2 THM},
	\ref{MAINQUILLENEQUIV THM},
	which present technical challenges not found in similar filtrations in the literature.

	To discuss these challenges, we consider 
	\cite[Prop. 4.3.16]{WY18} (resp. \cite[\S 7]{BB17}),
	which builds filtrations of free extensions
	of algebras over colored operads
	(resp. over polynomial monads).
	The frameworks in \cite{WY18},\cite{BB17} are general enough
	to cover the usual category $\mathsf{Op}(\V)$ of operads,
	so one might hope they would suffice for our purposes.
	However, one runs into two key issues:
\begin{enumerate*}
\item[(i)]
	by Remark \ref{NEED_WREATH_REMARK},
	defining genuine operads $\mathsf{Op}_G(\V)$
	\emph{requires} using diagonal maps in $\V$,
	so that,
	since the monads in \cite{WY18},\cite{BB17}
	do not use diagonals,
	$\mathsf{Op}_G(\V)$
	is not covered by those frameworks;
\item[(ii)]
	\cite{WY18},\cite{BB17}
	are designed to build model structures
	with projective weak equivalences,
	rather than fixed point equivalences as in 	
	\eqref{GENEOPEQMT EQ}.
	Consequently, writing $\widetilde{\mathbb F}$ for the monad  \cite{WY18},\cite{BB17} use to describe
	$\mathsf{Op}^G(\V) = \mathsf{Op}(\V^G)$
	(explicitly, $\widetilde{\mathbb F}$
	is the monad for the composite adjunction 
	\eqref{MAINPFADJAL EQ}
	with 
	$\coprod_{n} \mathcal{V}^{G \times \Sigma_n^{op}}$
	replaced by $\mathcal{V}^{G \times \mathbb{N}_0}$),
	one has that not all the generating (trivial) cofibrations needed for 
	Theorem \ref{MAINEXIST1 THM}
	are in the image of $\widetilde{\mathbb{F}}$
	(more precisely, the left map in \eqref{ANOPUCH EQ}
	is in the image of $\widetilde{\mathbb{F}}$
	iff $K \leq G$;
	compare with \eqref{NAIVEGEINFTY EQ},\eqref{GENGEINFTY EQ}).
\end{enumerate*}

Given these issues, 
rather than modifying all of \cite{WY18},\cite{BB17},
our approach to \eqref{FILTR EQ} adapts the key patterns in
\cite{WY18},\cite{BB17} while focusing on
the $G$-tree perspective for intuition.
As such, the ultimate description of our filtration 
in \eqref{FILTRATION_LAN_LEVEL}
resembles the description of free extensions of operads in \cite[\S 5.11]{BM03}\footnote{We caution that
\cite[\S 3]{BM09}
corrects an issue in \cite{BM03} concerning the treatment of operadic units.},
though we note that our workflow is as in
\cite{WY18},\cite{BB17} rather than \cite{BM03}.
Namely, we start with the abstract pushout $\mathcal{P}[u]$
and work our way to \eqref{FILTRATION_LAN_LEVEL}.
Conversely, 
\cite{BM03} directly uses an analogue
of \eqref{FILTRATION_LAN_LEVEL} to build an object $F_{\infty}$
which must \emph{a posteriori}
be shown to be both an operad and the desired extension $P[u]$ in \cite[Prop. 5.1]{BM03}.
However, we note that, as genuine operads are harder to describe explicitly,
the strategy in \cite{BM03} is ill suited for our context.
\end{remark}

\subsection{Labeled planar strings}\label{LABELSTRI SEC}

In this section we explicitly identify the categories underlying the left Kan extensions in \eqref{LANLEVELFOR EQ}.

In the notation 
of Remark \ref{PRECOMPPOSTCOMP REM},
letting 
$\langle \langle l 
\rangle \rangle = 
\{-\infty,-l,\cdots, -1, 0, 1, \cdots, l, \infty\}$ and writing
$\lambda_{l}$ for the partition
$\lambda_{l,a} = \{-\infty\}$,
$\lambda_{l,i} = 
\langle \langle l \rangle \rangle - \{-\infty\}$,
\eqref{LANLEVELFOR EQ}
can be repackaged as an instance of the functor
$\mathsf{Lan} \circ N \circ \coprod \circ (N^{\times \lambda_l})^{\circ n}\circ \upsilon^{\times \langle \langle l \rangle \rangle}$.
Our goal is thus to understand 
the underlying categories of the spans in the image of the functor
$N \circ \coprod \circ (N^{\times \lambda_l})^{\circ n}$,
though we will find it preferable, and no harder, to tackle the more general case of the functors 
$N^{s+1} \circ \coprod \circ (N^{\times \lambda})^{\circ n-s}$.

\begin{definition}\label{LABMAP DEF}
A \textit{$l$-node labeled $G$-tree} (or just \textit{$l$-labeled $G$-tree}) is a pair $(T,V_G(T) \to \{1,\cdots,l\})$ with $T \in \Omega_G$, which we think of as a $G$-tree together with $G$-vertices labels in $1,\cdots,l$.

Further, a tall map $\varphi \colon T \to S$ between $l$-labeled trees is called a \textit{label map} if, for each $G$-vertex $v_{G e}$ of $T$ with label $j$, all vertices of the subtree $S_{v_{G e}}$ 
(cf. Notation \ref{UEUPEG NOT}) are labeled by $j$.

Lastly, given a subset $J \subseteq \underline{l}$, a planar label map $\varphi \colon T \to S$ is said to be $J$-inert if for every $G$-vertex $v_{G e}$ of $T$ with label $j \in J$, we have $S_{v_{Ge}} = T_{v_{Ge}}$.
\end{definition}

\begin{example}\label{LABELEDTREES EX}
Consider the $2$-labeled trees below (for $G=\**$ the trivial group), with black nodes ($\bullet$) denoting labels by the number $1$ and white nodes ($\circ$) labels by the number $2$.
The planar map $\varphi$ (sending $a_i\mapsto a$, 
$b \mapsto b$, $c \mapsto c$, $d \mapsto d$, $e \mapsto e$) is a label map which is $\{1\}$-inert.
\[
	\begin{tikzpicture}[grow=up,auto,level distance=2.1em,
	every node/.style = {font=\footnotesize,inner sep=2pt},
	dummy/.style={circle,draw,inner sep=0pt,
	minimum size=2.1mm}]
	\begin{scope}[level distance=2.3em]
	\tikzstyle{level 2}=[sibling distance=3.5em]%
	\tikzstyle{level 3}=[sibling distance=2.25em]%
	\tikzstyle{level 4}=[sibling distance=1.25em]%
	\tikzstyle{level 5}=[sibling distance=1.25em]%
		\node at (5.5,0) {$U$}
			child{node [dummy] {}
				child[sibling distance =5em]{node [dummy] {}
					child[sibling distance =3.5em]{node [dummy,fill=black] {}
						child{node [dummy] {}
							child{node [dummy] {}}
							child{node [dummy] {}}
						edge from parent node [swap] {$c$}}
					edge from parent node [swap, near end] {$d$}}
					child[sibling distance =3.5em]{node [dummy,fill=black] {}
					edge from parent node [near end] {$b$}}
				}
				child[sibling distance =7em]{node [dummy,fill=black] {}
					child[sibling distance =1.5em]
					child[sibling distance =1.5em]
					child[sibling distance =1.5em]
				edge from parent node {$a$}}
			edge from parent node [swap] {$e$}};
	\end{scope}
	\begin{scope}[level distance=2.3em]
	\tikzstyle{level 2}=[sibling distance=2.3em]%
	\tikzstyle{level 4}=[sibling distance=1em]%
		\node at (0,0.3) {$T$}
			child{node [dummy] {}
				child{node [dummy,fill=black] {}
					child{node [dummy] {}
					edge from parent node [swap] {$c$}}	
				edge from parent node [swap] {$d$}}
				child{node [dummy,fill=black] {}
				edge from parent node [near end,swap] {$b$}}
				child{node [dummy] {}
					child{node [dummy,fill=black] {}
						child
						child
						child
					edge from parent node {$a_1$}}
				edge from parent node {$a_2$}}
			edge from parent node [swap] {$e$}};
	\end{scope}
	\draw [->] (1.4,1.5) -- node[swap] {$\varphi$} (3.4,1.5);
	\end{tikzpicture}
\]
\end{example}

\begin{definition}\index{categories!of trees!OMNSLAM@$\Omega_G^{n,s,\lambda}$}
Let $-1 \leq s \leq n$ and 
$\lambda = \lambda_a \amalg \lambda_i$ 
be a partition of $\{1,2,\cdots,l\}$.

We define $\Omega_{G}^{n,s,\lambda}$ to have as objects $n$-planar strings (where $T_{-1} = \mathsf{lr}(T_0)$ as in \eqref{STRINGOBJALT EQ})
\begin{equation}\label{NSTRINGLAB EQ}
	T_{-1} \xrightarrow{\varphi_0}
	T_0 \xrightarrow{\varphi_1}
	T_1 \xrightarrow{\varphi_2}
	\cdots \xrightarrow{\varphi_s}
	T_s \xrightarrow{\varphi_{s+1}}
	T_{s+1} \xrightarrow{\varphi_{s+2}}
	\cdots \xrightarrow{\varphi_n}
	T_{n}
\end{equation}
together with
$l$-labelings of $T_s, T_{s+1},\cdots, T_{n}$ such that the $\varphi_r,r>s$ are $\lambda_i$-inert label maps.

Arrows in $\Omega_{G}^{n,s,\lambda}$ are quotients of strings
$(\rho_r \colon T_r \to T'_r)$ such that 
$\rho_r, r\geq s$ are label maps.

Further, for any $s<0$ or $n<s'$ we write
\begin{equation}\label{EXTRACASES EQ}
	\Omega_{G}^{n,s,\lambda} = 
		\Omega_{G}^{n,-1,\lambda},
\qquad
	\Omega_{G}^{n,s',\lambda} = \Omega_{G}^{n}.
\end{equation}
\end{definition}

Intuitively, $\Omega_G^{n,s,\lambda}$ consists of strings that are labeled in the range $s \leq r \leq n$,
with the extra cases \eqref{EXTRACASES EQ} interpreted by infinitely prepending and postpending copies of $T_{-1}$ and $T_n$ to \eqref{NSTRINGLAB EQ}.

The main case of interest is that of $s=0$, which we abbreviate as $\Omega_{G}^{n,\lambda} = \Omega_{G}^{n,0,\lambda}$,
with the remaining
$\Omega_{G}^{n,s,\lambda}$ playing an auxiliary role.
The $s=-1$ case also deserves special attention.

\begin{remark}
	For $s<0$ there are identifications 
\begin{equation}\label{OMEGANMINUSONE EQ}
	\Omega_{G}^{n,s,\lambda} = 
	\Omega_{G}^{n,-1,\lambda} \simeq
		\coprod_{\lambda_a} \Omega_{G}^{n} \amalg
		\coprod_{\lambda_i} \Sigma_G.
\end{equation}
Indeed, since $T_{-1}$ is a $G$-corolla, the label of its unique $G$-vertex determines all other labels.
\end{remark}

\begin{notation}
We will write $(\Omega_G^n)^{\times \lambda}$ to denote the $l$-tuple with 
$(\Omega_G^n)^{\times \lambda}_j = \Omega_G^n$ if 
$j \in \lambda_a$ and
$(\Omega_G^n)^{\times \lambda}_j = \Sigma_G$ if
$j \in \lambda_i$.
As such, \eqref{OMEGANMINUSONE EQ} can be abbreviated as
$\Omega_{G}^{n,-1,\lambda} = \coprod (\Omega_G^n)^{\times \lambda}$.
\end{notation}

The $\Omega_G^{n,s,\lambda}$ categories are related by a number of obvious functors, which we now catalog.

Firstly, if $s \leq s'$ there are forgetful functors
\begin{equation}\label{NKNFGT EQ}
	\Omega_{G}^{n,s,\lambda} \to \Omega_{G}^{n,s',\lambda}
\end{equation}
and the simplicial operators
in Notation \ref{SIMPOPERATORS NOT}
generalize to operators (for $0 \leq i \leq n$, $-1\leq j \leq n$)
\begin{equation}\label{LABSTSIM EQ}
\begin{tikzcd}[row sep =0,column sep =1em]
	d_i \colon 
	\Omega_{G}^{n,s,\lambda} \ar{r} &
	\Omega_{G}^{n-1,s-1,\lambda} &
	i < s & & & &
	s_j \colon 
	\Omega_{G}^{n,s,\lambda} \ar{r} &
	\Omega_{G}^{n+1,s+1,\lambda} &
	j < s
\\
	d_i \colon 
	\Omega_{G}^{n,s,\lambda} \ar{r} &
	\Omega_{G}^{n-1,s,\lambda} &
	s \leq i & & & &
	s_j \colon 
	\Omega_{G}^{n,s,\lambda} \ar{r} &
	\Omega_{G}^{n+1,s,\lambda} &
	s \leq j
\end{tikzcd}
\end{equation}
which are compatible with the forgetful functors in the obvious way.

We will prefer to reorganize 
\eqref{NKNFGT EQ} and \eqref{LABSTSIM EQ} somewhat.
Defining functions 
$d_i \colon \mathbb{Z} \to \mathbb{Z}$
and 
$s_j \colon \mathbb{Z} \to \mathbb{Z}$
by
\begin{equation}\label{INTERMAPDEF EQ}
d_i(s) = 
	\begin{cases}
		s-1, & i<s
	\\
		s, & s \leq i
	\end{cases}
\qquad
s_j(s) = 
	\begin{cases}
		s+1, & j<s
	\\
		s, & s \leq j
	\end{cases}
\end{equation}
\eqref{LABSTSIM EQ} can be rewritten as maps
$
	d_i \colon 
	\Omega_{G}^{n,s,\lambda} \to
	\Omega_{G}^{n-1,d_i(s),\lambda}
$
and 
$
	s_j \colon 
	\Omega_{G}^{n,s,\lambda} \to
	\Omega_{G}^{n+1,s_j(s),\lambda}
$.
Therefore, we henceforth write simply
$\Omega_G^{n,\bullet,\lambda}$ to denote the string of categories $\Omega_G^{n,s,\lambda}$
and forgetful functors, and abbreviate \eqref{LABSTSIM EQ} as
\[
\begin{tikzcd}[row sep =0,column sep =1em]
	d_i \colon 
	\Omega_{G}^{n,\bullet,\lambda} \ar{r} &
	\Omega_{G}^{n-1,\bullet,\lambda} & & & &
	s_j \colon 
	\Omega_{G}^{n,\bullet,\lambda} \ar{r} &
	\Omega_{G}^{n+1,\bullet,\lambda}
\end{tikzcd}
\]

\begin{remark}\label{ORDLABEL REM}
Considering the ordered sets 
$\langle n \rangle =\{0 < 1 < \cdots < n < +\infty\}$, the formulas \eqref{INTERMAPDEF EQ} 
define functions
$d_i \colon \langle n \rangle  \to \langle n-1 \rangle$
,
$s_j \colon \langle n \rangle  \to \langle n+1 \rangle$
which preserve $0$ and $+\infty$, except for 
$s_{-1}$ which preserves only
$+\infty$.
This recovers the description of $\Delta^{op}$
as the category of intervals (i.e. ordered finite sets with a minimum and maximum and maps preserving them).
\end{remark}

Next, the vertex functors $\boldsymbol{V}_G^k$ of
\eqref{VGNISO EQ} generalize to functors
$
	\boldsymbol{V}_G^k \colon
	\Omega_G^{n,s,\lambda} \to
	\Fin_s \wr \Omega_G^{n-k-1,s-k-1,\lambda}
$
given by the same formula
\[
	(T_{k,v_{G e}}\to \cdots \to T_{n,v_{G e}})_{v_{G e} \in V_G(T_k)},
\]
as in \eqref{VGNISO EQ},
except with $T_{m,v_{G e}}$ for $k \leq m \leq n$ inheriting the node labels from $T_m$ (if any).

The diagrams in \eqref{PIIDEFDI EQ}
for $i<k$ and $i>k$ now generalize to diagrams
\begin{equation}\label{PIIDEFDILAB EQ}
\begin{tikzcd}[row sep=1.7em,column sep = 3em]
	\Omega_{G}^{n,\bullet,\lambda} \ar{d}[swap]{d_{i}} \ar{r}{\boldsymbol{V}_G^k} &
	|[alias=F1]|
	\Fin_s \wr \Omega_{G}^{n-k-1,\bullet,\lambda}
	\ar[equal]{d} 
&
	\Omega_{G}^{n,\bullet,\lambda} \ar{d}[swap]{d_{i}} \ar{r}{\boldsymbol{V}_G^k} &
	\Fin_s \wr \Omega_{G}^{n-k-1,\bullet,\lambda}
	\ar{d}{d_{i-k-1}} 
\\
	|[alias=G2]|
	\Omega_{G}^{n-1,\bullet,\lambda} \ar{r}[swap]{\boldsymbol{V}_G^{k-1}}&
	\Fin_s \wr \Omega_{G}^{n-k-1,\bullet,\lambda}  
&
	\Omega_{G}^{n-1,\bullet,\lambda} \ar{r}[swap]{\boldsymbol{V}_G^{k}}&
	\Fin_s \wr \Omega_{G}^{n-k-2,\bullet,\lambda}  
\arrow[Leftrightarrow, from=F1, to=G2,shorten >=0.15cm,shorten <=0.15cm,"\pi_{i}"]
\end{tikzcd}
\end{equation}
while the diagrams in \eqref{PIIDEFDI2 EQ}
for $j<k$ and $j \geq k$ generalize to diagrams
\begin{equation}\label{PIIDEFDI2LAB EQ}
\begin{tikzcd}[row sep=1.7em,column sep = 3em]
	\Omega_{G}^{n,\bullet,\lambda} \ar{d}[swap]{s_{j}} \ar{r}{\boldsymbol{V}_G^k} &
	|[alias=F1]|
	\Fin_s \wr \Omega_{G}^{n-k-1,\bullet,\lambda}
	\ar[equal]{d} 
&
	\Omega_{G}^{n,\bullet,\lambda} \ar{d}[swap]{s_{j}} \ar{r}{\boldsymbol{V}_G^k} &
	\Fin_s \wr \Omega_{G}^{n-k-1,\bullet,\lambda}
	\ar{d}{s_{j-k-1}} 
\\
	|[alias=G2]|
	\Omega_{G}^{n+1,\bullet,\lambda} \ar{r}[swap]{\boldsymbol{V}_G^{k+1}}&
	\Fin_s \wr \Omega_{G}^{n-k-1,\bullet, \lambda}  
&
	\Omega_{G}^{n+1,\bullet, \lambda} \ar{r}[swap]{\boldsymbol{V}_G^{k}}&
	\Fin_s \wr \Omega_{G}^{n-k,\bullet,\lambda}  
\end{tikzcd}
\end{equation}
where we note that in all cases the $s$-index $\bullet$
varies according to \eqref{LABSTSIM EQ}.

Lastly, the $\Omega_G^{n,s,\lambda}$ are also functorial in $\lambda$. Explicitly, given 
$\alpha \colon \{1,\cdots,l\} \to \{1,\cdots,m\}$
and partitions such that 
$\lambda' \leq \alpha^{\**} \lambda$
(i.e. $\lambda'_a \subseteq \alpha^{-1}(\lambda_a)$)
one has forgetful functors
\begin{equation}\label{LAMBINC EQ}
	\Omega_G^{n,s,\lambda'}
\to
	\Omega_G^{n,s,\lambda}
\end{equation}
compatible with the forgetful functors \eqref{NKNFGT EQ},
simplicial operators $d_i$, $s_j$, and isomorphisms
$\pi_i$.

\begin{remark}
	When $\alpha$ is the identity 
and $\lambda' \leq \lambda$ the forgetful functors in
\eqref{LAMBINC EQ} are fully faithful inclusions.
	However, this is not the case for the  forgetful functors in \eqref{NKNFGT EQ}.
	Indeed, regarding the map $T \to U$ in
	Example \ref{LABELEDTREES EX}
	as an object in $\Omega_G^{1,0,\lambda}$
	for $\lambda = 
	\lambda_a \amalg \lambda_i = \{2\} \amalg \{1\}
	=\{\bullet\} \amalg \{\circ\}$,
	changing the label of $a_1 \leq a_2$ to a 
	$\bullet$-label produces a non isomorphic object
	$\bar{T} \to U$ of $\Omega_G^{1,0,\lambda}$
	that forgets to the same object of 
	$\Omega_G^{1,1,\lambda}$.
\end{remark}

We now extend Notation \ref{OMEGAGNA NOT}.

\begin{notation}
Let $(A_j)=(A_j \to \Sigma_G)_{1\leq j \leq l}$ be a $l$-tuple of categories over $\Sigma_G$.
We define 
$\Omega_{G}^{n,s,\lambda} \wr (A_j) $
as the pullback
\begin{equation}\label{OMEGAWRTUP EQ}
\begin{tikzcd}[row sep=1em]
	\Omega_{G}^{n,s,\lambda} \wr (A_j) \ar{r}{\boldsymbol{V}_{G}^{n}} \ar{dd}& 
	\Fin_s \wr \coprod A_j \ar{d}
\\
	& \Fin_s \wr \coprod_l \Sigma_G \ar{d}
\\
	\Omega_{G}^{n,s,\lambda} \ar{r}[swap]{\boldsymbol{V}_{G}^{n}} &
	\Fin_s \wr \Omega_G^{-1,s-n-1,\lambda}
\end{tikzcd}
\end{equation}
\end{notation}

\begin{remark}
To unpack \eqref{OMEGAWRTUP EQ}, note first that,
by \eqref{EXTRACASES EQ},
$\Omega_G^{-1,r,\lambda}$ is simply either 
$\Sigma_G^{\amalg l}$ if $r<0$ or 
$\Sigma_G$ if $r \geq 0$,
while $\Omega_G^{n,s,\lambda} = \coprod (\Omega_{G}^{n})^{\times \lambda}$ if $s<0$.
We can thus break down
\eqref{OMEGAWRTUP EQ}
into the three cases
$s<0$, $0 \leq s \leq n$ and $n < s$,
depicted below.
\begin{equation}
\begin{tikzcd}[column sep =1.25em]
	\Omega_{G}^{n,s,\lambda} \wr (A_j) \ar{r}{\boldsymbol{V}_{G}^{n}} \ar{d}& 
	\Fin_s \wr \coprod_j A_j \ar{d}
&
	\Omega_{G}^{n,s,\lambda} \wr (A_j) \ar{r}{\boldsymbol{V}_{G}^{n}} \ar{d}& 
	\Fin_s \wr \coprod_j A_j \ar{d}
&
	\Omega_{G}^{n,s,\lambda} \wr (A_j) \ar{r}{\boldsymbol{V}_{G}^{n}} \ar{d}& 
	\Fin_s \wr \coprod_j A_j \ar{d}
\\
	\coprod (\Omega_{G}^{n})^{\times \lambda} \ar{r}[swap]{\boldsymbol{V}_{G}^{n}} &
	\Fin_s \wr \coprod_l \Sigma_G
&
	\Omega_{G}^{n,s,\lambda} \ar{r}[swap]{\boldsymbol{V}_{G}^{n}} &
	\Fin_s \wr \coprod_l \Sigma_G
&
	\Omega_{G}^{n} \ar{r}[swap]{\boldsymbol{V}_{G}^{n}} &
	\Fin_s \wr \Sigma_G
\end{tikzcd}
\end{equation}
Therefore, for $s>n$, 
\eqref{OMEGAWRTUP EQ} coincides with 
$\Omega_G^{n} \wr (\coprod_j A_j)$
as defined in Notation \ref{OMEGAGNA NOT}.
Moreover, for $s<0$ both squares in the diagram below
are pullbacks and the bottom composite is $\boldsymbol{V}_G^n$,
\begin{equation}\label{BOTTOM EQ}
\begin{tikzcd}[column sep = 4em]
	\coprod (\Omega_{G}^{n})^{\times \lambda} \wr (A_j) 
	\ar{r}{\coprod (\boldsymbol{V}_{G}^{n})^{\times \lambda}} \ar{d}&
	\coprod \Fin_s \wr A_j \ar{r} \ar{d} & 
	\Fin_s \wr \coprod_j A_j \ar{d}
\\
	\coprod (\Omega_{G}^{n})^{\times \lambda} \ar{r}[swap]{\coprod (\boldsymbol{V}_{G}^{n})^{\times \lambda}} &
	\coprod_l \Fin_s \wr \Sigma_G \ar{r} &
	\Fin_s \wr \coprod_l \Sigma_G
\end{tikzcd}
\end{equation}
so that there is an identification
$\Omega_{G}^{n,s,\lambda} \wr (A_j)\simeq 
\coprod (\Omega_{G}^{n})^{\times \lambda} \wr (A_j)$, 
where in the right side $(\minus)\wr (\minus)$ is computed entry-wise.
\end{remark}

\begin{remark} \label{NATTLABEL REM}
The naturality of
the $\Omega_{G}^{n,s,\lambda} \wr (A_j)$ constructions
with regards to $\lambda$ interacts with the tuple $(A_j)$
in the obvious way, i.e.,
given $\alpha \colon \{1,\cdots,l\} \to \{1,\cdots,m\}$,
$\lambda' \leq \alpha^{\**} \lambda$
and a map $(B_k) \to \alpha^{\**}(A_j)$ one obtains a natural map
\[\Omega_{G}^{n,s,\lambda'} \wr (B_k) \to 
\Omega_{G}^{n,s,\lambda} \wr (A_j).\]
\end{remark}

\begin{proposition}\label{PIIPROPAB PROP}
The analogue 
of Proposition \ref{PIIPROP PROP}
holds for the $\Omega_{G}^{n,s,\lambda}$.
In particular 
(we keep the numbering in 
Proposition \ref{PIIPROP PROP}
and have the $s$-index $\bullet$
vary as in \eqref{LABSTSIM EQ}):
\begin{enumerate}
\item[(a)]
The following composite matches $\boldsymbol{V}_G^{k+l+1}$
\[
\begin{tikzcd}
	\Omega_G^{n,\bullet,\lambda} \ar{r}{\boldsymbol{V}_G^k} &
	\Fin_s \wr \Omega_G^{n-k-1,\bullet,\lambda} \ar{r}{\boldsymbol{V}_G^l} &
	\Fin_s^{\wr 2} \wr \Omega_G^{n-k-l-2,\bullet,\lambda} \ar{r}{\sigma^0} &
	\Fin_s \wr \Omega_G^{n-k-l-2,\bullet,\lambda}
\end{tikzcd}
\]
\item[(d)] The rightmost diagrams in both 
\eqref{PIIDEFDILAB EQ} and 
\eqref{PIIDEFDI2LAB EQ} are pullback diagrams in $\mathsf{Cat}$.

\item[(e)]
For $i < k \leq n$ the composite natural transformation in the diagram below is $\pi_i$.
\begin{equation}\label{INDPI1SL EQ}
\begin{tikzcd}[row sep=1.7em,column sep = 3.5em]
	\Omega_{G}^{n,\bullet,\lambda} \ar{d}[swap]{d_{i}} \ar{r}{\boldsymbol{V}_G^k} &
	|[alias=F1]|
	\Fin_s \wr \Omega_{G}^{n-k-1,\bullet,\lambda} \ar{r}{\Fin_s \wr \boldsymbol{V}_{G}^l} 
	\ar[equal]{d} &
	\Fin_s^{\wr 2} \wr \Omega_G^{n-k-l-2,\bullet,\lambda} \ar[equal]{d} \ar{r}{\sigma^0} &
	\Fin_s \wr \Omega_G^{n-k-l-2,\bullet,\lambda} \ar[equal]{d}
\\
	|[alias=G2]|
	\Omega_{G}^{n-1,\bullet,\lambda} \ar{r}[swap]{\boldsymbol{V}_G^{k-1}}&
	\Fin_s \wr \Omega_{G}^{n-k-1,\bullet,\lambda} \ar{r}[swap]{\Fin_s \wr \boldsymbol{V}_{G}^{l}} &
	\Fin_s^{\wr 2} \wr  \Omega_G^{n-k-l-2,\bullet,\lambda} \ar{r}[swap]{\sigma^0} &
	\Fin_s \wr  \Omega_G^{n-k-l-2,\bullet,\lambda}
\arrow[Leftrightarrow, from=F1, to=G2,shorten >=0.15cm,shorten <=0.15cm,"\pi_{i}"]
\end{tikzcd}
\end{equation}
For $k< i < k+l+1 \leq n$ the composite natural transformation in the diagram below is $\pi_{i}$.
\begin{equation}\label{INDPI2SL EQ}
\begin{tikzcd}[row sep=1.7em,column sep = 3.5em]
	\Omega_{G}^{n,\bullet,\lambda} \ar{d}[swap]{d_{i}} \ar{r}{\boldsymbol{V}_G^k} &
	\Fin_s \wr \Omega_{G}^{n-k-1,\bullet,\lambda} \ar{r}{\Fin_s \wr \boldsymbol{V}_{G}^l} 
	\ar{d}[swap]{\Fin_s \wr d_{i-k-1}} &
	|[alias=F1]|
	\Fin_s^{\wr 2} \wr \Omega_G^{n-k-l-2,\bullet,\lambda} \ar[equal]{d} \ar{r}{\sigma^0} &
	\Fin_s \wr \Omega_G^{n-k-l-2,\bullet,\lambda} \ar[equal]{d}
\\
	\Omega_{G}^{n-1,\bullet,\lambda} \ar{r}[swap]{\boldsymbol{V}_G^k}&
	|[alias=G2]|
	\Fin_s \wr \Omega_{G}^{n-k-2,\bullet,\lambda} \ar{r}[swap]{\Fin_s \wr \boldsymbol{V}_{G}^{l-1}} &
	\Fin_s^{\wr 2} \wr  \Omega_G^{n-k-l-2,\bullet,\lambda} \ar{r}[swap]{\sigma^0} &
	\Fin_s \wr  \Omega_G^{n-k-l-2,\bullet,\lambda}
\arrow[Leftrightarrow, from=F1, to=G2,shorten >=0.15cm,shorten <=0.15cm,"\Fin_s \wr \pi_{i-k-1}"]
\end{tikzcd}
\end{equation}
\end{enumerate}
Moreover, the analogue claim holds for the 
$\Omega_{G}^{n,s,\lambda} \wr (A_j)$ constructions
(with the caveat that we exclude the cases of (d)
that involve $d_n$). 

Additionally, the natural squares  (for $n \geq -1$)
\begin{equation}\label{ADDSQUARE EQ}
\begin{tikzcd}
	\Omega_{G}^{n,n,\lambda}
	\ar{r}{\boldsymbol{V}_{G}^{n}} \ar{d}& 
	\Fin_s \wr \coprod_l \Sigma_G \ar{d}
\\
	\Omega_{G}^{n} \ar{r}[swap]{\boldsymbol{V}_{G}^{n}} &
	\Fin_s \wr \Sigma_G
\end{tikzcd}
\end{equation}
are also pullback squares.
\end{proposition}

\begin{proof}
	Firstly, we note that the $\Omega_{G}^{n,s,\lambda}$
	analogues, as well as the claim for \eqref{ADDSQUARE EQ}, all follow from the previous results
	by keeping track of the labels on the strings, 
	with the only non immediate part
	being the analogue of (d), stating that the right squares in 
	\eqref{PIIDEFDILAB EQ} and
	\eqref{PIIDEFDI2LAB EQ} are pullbacks. Since in these diagrams the $s$-coordinate $\bullet$ is determined by the top left corner, a direct analysis shows that compatible choices of labels for strings on the top right and bottom left corners assemble into the required labels on the top left corner, 
	and the result follows.
		
	For the more general $\Omega_{G}^{n,s,\lambda} \wr (A_j)$ constructions, one can either build the
	general $\boldsymbol{V}_G^k$, $d_i$, $s_j$, $\pi_i$ 
	explicitly, or mimic the argument in Proposition \ref{PIIPROPA PROP}, reducing to the 
	$\Omega_{G}^{n,s,\lambda}$ case.
\end{proof}

\begin{corollary}\label{LABIDEN COR}
For $-1 \leq s \leq n$ there are natural identifications
\begin{equation}\label{MOREIDENT1 EQ}
	\Omega_G^{k} \wr \Omega_G^{n,s,\lambda} \wr (A_j) \simeq
	\Omega_G^{n+k+1,s+k+1,\lambda} \wr (A_j)
 \phantom{||||}
	\Omega_G^{n,s,\lambda} \wr 
	(\Omega_G^k)^{\times \lambda}
	\wr (A_j)
\simeq
	\Omega_G^{n+k+1,s,\lambda} \wr (A_j)
\end{equation}
which identify 
$V^k_G \wr \Omega_G^{n,s,\lambda} \wr (A_j) $ with 
$V^k_G \wr (A_j) $
and 
$\boldsymbol{V}_G^n \wr (\Omega_G^k)^{\times \lambda}\wr (A_j) $
with 
$\boldsymbol{V}_G^n \wr (A_j)$.

Further, these identifications are compatible with each other, associative in the obvious ways, and they induce identifications
\begin{equation}\label{MOREIDENT2 EQ}
\begin{tikzcd}[row sep=0, column sep = 10]
	d_i \wr (\Omega_G^{n})^{\times \lambda} \simeq d_i 
&
	\pi_i \wr (\Omega_G^{n})^{\times \lambda} \simeq \pi_i 
&
	s_j \wr (\Omega_G^{n})^{\times \lambda} \simeq s_j 
\\
	\Omega_G^k \wr (d_i)^{\times \lambda} \simeq d_{i+k+1} 
&
	\Omega_G^k \wr (s_j)^{\times \lambda} \simeq s_{j+k+1}
\end{tikzcd}
\end{equation}
as well as obvious identifications for the forgetful functors
in \eqref{NKNFGT EQ}.
\end{corollary}

\begin{proof}
This is analogous to Corollary \ref{IDEN COR}. For the left identification in
\eqref{MOREIDENT1 EQ}, the case $s \geq 0$ follows
since both sides compute the limit of the solid diagram below, 
where we note that the bottom arrow is
$\boldsymbol{V}_G^k \colon \Omega_G^k \to \Fin_s \wr \Sigma_G$
(this is used to regard the diagram as computing
$\Omega^k_G \wr (-)$).
\[
\begin{tikzcd}[row sep = 1.5em]
	\bullet \ar[dashed]{r} \ar[dashed]{d}&
	\bullet \ar[dashed]{r} \ar[dashed]{d}&
	\Fin_s^{\wr 2} \wr \coprod (A_j) \ar{r}{\sigma^0} \ar{d}&
	\Fin_s \wr \coprod (A_j) \ar{d}
\\
	\Omega_G^{n+k+1,s+k+1,\lambda} \ar{r}[swap]{\boldsymbol{V}_G^k} \ar{d}[swap]{d_{k+1,\cdots,n+k+1}}&
	\Fin_s \wr \Omega^{n,s,\lambda}_G \ar{r}[swap]{\Fin_s \wr \boldsymbol{V}_G^n} \ar{d}{d_{0,\cdots,n}}&
	\Fin_s^{\wr 2} \wr \coprod_l \Sigma_G \ar{r}[swap]{\sigma^0} &
	\Fin_s \wr \coprod_l \Sigma_G &
\\
	\Omega_G^{k,k+1,\lambda} \ar{r}[swap]{\boldsymbol{V}_G^k} &
	\Fin_s \wr \Omega_G^{-1,0,\lambda}
\end{tikzcd}
\]
The $s=-1$ case is similar, but since the bottom arrow is now 
$\boldsymbol{V}_G^k \colon \Omega_G^{k,k,\lambda} \to 
\Fin_s \wr \Omega_G^{-1,-1,\lambda} =
\Fin_s \wr \coprod_l \Sigma_G$,
one first attaches 
the pullback square \eqref{ADDSQUARE EQ}
to the bottom of the diagram above.

The right identification in \eqref{MOREIDENT1 EQ} is analogous, using the pullback of the solid diagram below
(for the 
$\Omega_G^{n+k+1,s,\lambda} \wr (A_j)$ side, 
note that \eqref{BOTTOM EQ} identifies
the composite of the central horizontal arrows as 
$\Fin_s \wr V^k_G
\colon
\Fin_s \wr \Omega_G^{k,s-n-1,\lambda}
\to 
\Fin_s^{\wr 2} \wr \coprod_l \Sigma_G$).
\[
\begin{tikzcd}[row sep = 1.5em,column sep =1.8em]
	\bullet \ar[dashed]{r} \ar[dashed]{d}&
	\bullet \ar[dashed]{rr} \ar[dashed]{d}& &
	\Fin_s \wr \coprod \Fin_s \wr A_j \ar{r} \ar{d}&
	\Fin_s^{\wr 2} \wr \coprod A_j \ar{r}{\sigma^0} \ar{d}&
	\Fin_s \wr \coprod A_j \ar{d}
\\
	\Omega_G^{n+k+1,s,\lambda} \ar{r}[swap]{\boldsymbol{V}_G^n} \ar{d}[swap]{d_{n+1,\cdots,n+k+1}}&
	\Fin_s \wr \coprod (\Omega^{k}_G)^{\times \lambda} \ar{rr}[swap]{\Fin_s \wr \coprod (\boldsymbol{V}_G^k)^{\times \lambda}} \ar{d}{d_{0,\cdots,k}}&&
	\Fin_s \wr \coprod_l \Fin_s \wr \Sigma_G \ar{r} &
	\Fin_s^{\wr 2} \wr \coprod_l \Sigma_G \ar{r}[swap]{\sigma^0}  &
	\Fin_s \wr \coprod_l \Sigma_G &
\\
	\Omega_G^{n,s,\lambda} \ar{r}[swap]{\boldsymbol{V}_G^n} &
	\Fin_s \wr \coprod_l \Sigma_G
\end{tikzcd}
\]
The vertex functor claims are straightforward.
The addition claims in \eqref{MOREIDENT2 EQ} follow from 
\eqref{INDPI1SL EQ},\eqref{INDPI2SL EQ},
and the right side of 
\eqref{PIIDEFDI2LAB EQ}.
\end{proof}

\begin{remark}\label{NPXY_REM}
The identifications in Corollary \ref{LABIDEN COR} do allow for the case $n=-1$,
which is non-trivial due to the existence of
 $\Omega_G^{-1,-1,\lambda} = \coprod_l \Sigma_G$,
 in which case $\Omega_G^{-1,-1,\lambda} \wr (A_j) \simeq \coprod A_j$.
For $-1\leq s \leq n$ the identifications
\[
	\Omega_G^{n,s,\lambda} =
	\Omega_G^{s} \wr \Omega_G^{-1,-1} \wr (\Omega_G^{n-s-1})^{\times \lambda}
\]
then show that 
$\Omega_G^{n,s,\lambda} \wr (\minus)$
encodes (the underlying category of) the functor
$N^{\circ s+1} \coprod (N^{\times \lambda})^{\circ n-s}$.

Next, consider the following diagram
(the right depiction merely unpacks the notation on the left),
where the bottom square is one of the pullback squares
in \eqref{ADDSQUARE EQ}.
\begin{equation}\label{NATCOP EQ}
\begin{tikzcd}[column sep = 3.4em,row sep = 0.8em]
	\Omega^{0,-1,\lambda}_G 
	\ar{r}{\coprod (\boldsymbol{V}_G^0)^{\times \lambda}} \ar{d} &
	\coprod \Fin_s \wr (\Omega_G^{-1})^{\times \lambda} \ar{r} & 
	\Fin_s \wr \Omega_G^{-1,-2,\lambda} \ar[equal]{d} 
&
	\coprod (\Omega^0_G)^{\times \lambda} \ar{r} \ar{d} &
	\coprod \Fin_s \wr \Sigma_G \ar{d}
\\
	\Omega^{0,0,\lambda}_G \ar{rr}{\boldsymbol{V}_G^0} \ar{d} &&
	\Fin_s \wr \Omega_G^{-1,-1,\lambda} \ar{d}
&
	\Omega_G^{0,0,\lambda} \ar{r} \ar{d} & 
	\Fin_s \wr \coprod \Sigma_G \ar{d}
\\
	\Omega^{0,1,\lambda}_G \ar{rr}{\boldsymbol{V}_G^0} &&
	 \Fin_s \wr \Omega_G^{-1,0,\lambda}
&
	\Omega_G^0 \ar{r} &
	 \Fin_s \wr \Sigma_G
\end{tikzcd}
\end{equation}
The two representations of the middle horizontal map
yield an identification
$\left(
\Omega_G^{0,0,\lambda} \wr (A_j)
\right)
	\simeq
\left(
\Omega^0_G \wr \coprod A_j
\right)	
$
so that, since the vertical composites fold coproduct summands,
the forgetful map
$\Omega_G^{0,-1,\lambda} \wr (A_j) \to 
\Omega_G^{0,0,\lambda} \wr (A_j)$
is identified with the natural map
$\coprod (\Omega^0_G)^{\times \lambda} \wr (A_j)
\to 
\Omega^0_G \wr \coprod A_j$,
and thus encodes the natural transformation
$\coprod \circ N^{\times \lambda} \Rightarrow N \circ \coprod $
discussed in Remark \ref{PRECOMPPOSTCOMP REM}.
\end{remark}

\subsection{The category of extension trees}
\label{EXTTREE SEC}

The purpose of this section is to make \eqref{EXTTREEFOR EQ} explicit. We start by discussing 
realizations of simplicial objects in $\mathsf{Cat}$.

Recalling the standard cosimplicial object
$[\bullet] \in \mathsf{Cat}^{\Delta}$ given by 
$[n]=(0 \to 1 \to \cdots \to n)$
yields the following definition.

\begin{definition}\label{REAL DEF}
	The left adjoint below is called the 
	\textit{realization} functor.
	\[
	|\minus|\colon
	\mathsf{Cat}^{\Delta^{op}} 
		\rightleftarrows
	\mathsf{Cat} 
	\colon (\minus)^{[\bullet]}
	\]
\end{definition}

\begin{remark}\label{REALEX REM}
Suppose that $\C \in \mathsf{Cat}$ contains subcategories 
$\C_h$, $\C^v$
such that any arrow of $\C$ factors as 
an arrow of $\C_h$ followed by an arrow of $\C_v$.
Defining 
$\mathcal{C}^{v}_{h,\bullet} \in \mathsf{Cat}^{\Delta^{op}}$
so that the objects of $\mathcal{C}^{v}_{h,n}$ are $n$-strings in $\C_h$ and the arrows are compatible $n$-tuples of
arrows in $\C^v$, it is straightforward to show
that it is
$|\mathcal{C}^{v}_{h,\bullet}| = \C$.

An immediate example is given by the planar strings in Definition \ref{PLANSTR DEF}. Writing 
$\C = \Omega_G^{\mathsf{t}}$ for the category of tall maps,
$\C_h = \Omega_G^{\mathsf{pt}}$ the category of planar tall maps,
and $\C^v = \Omega_G^{0}$ the category of quotients,
one has $\C_{h,\bullet}^{v} = \Omega_G^{\bullet}$ and thus
$|\Omega_G^{\bullet}| = \Omega_G^{\mathsf{t}}$.

Similarly, noting that the $\Omega_G^{n,\lambda} = \Omega_G^{n,0,\lambda}$
categories of \S \ref{LABELSTRI SEC} form a simplicial object, we have that the
$|\Omega_G^{\bullet,\lambda}| = \Omega_G^{\mathsf{t},\lambda}$
is the category of tall label maps between
$l$-labeled trees that induce quotients on 
nodes with $\lambda$-inert labels.
\end{remark}

In the following statement, whose proof is delayed to the appendix, we note that 
it is shown in Lemma \ref{OBJGENREL LEMMA}
that $\text{Ob}(|A_{\bullet}|) \simeq \text{Ob}(A_0)$
and that arrows in $|A_{\bullet}|$ are generated by
the arrows in $A_0$ together with arrows 
$d_1(a) \to d_0(a)$ for each $a \in A_1$.

\begin{proposition}\label{RANTRANS PROP}
Given a simplicial object
$\Sigma_G \leftarrow A_\bullet \xrightarrow{N_{\bullet}} \mathcal{V}^{op}$ 
in $\mathsf{WSpan}^r(\Sigma_G,\mathcal{V}^{op})$
such that the natural transformation components of the differential operators 
$d_i$, $0\leq i < n$ and $s_j$, $0 \leq j \leq n$
are isomorphisms,
there is an identification
\begin{align*}
	\lim_{\Delta}
	\left(
	\mathsf{Ran}_{A_n \to \Sigma_G}
	N_{n}
	\right)
	\simeq 
	\mathsf{Ran}_{ |A_{\bullet}| \to \Sigma_G }
	\tilde{N}
\end{align*}
where $\tilde{N}\colon |A_{\bullet}| \to \mathcal{V}^{op}$
is given by $N_0$ on objects and generating arrows 
in $A_0$, and on generating arrows $d_1(a) \to d_0(a)$
for $a \in A_1$ as the composite
natural transformation arrow
\[
\begin{tikzcd}[column sep =3em]
	|[alias=TA]|
	A_0 \ar{rd} & 
	A_1 \ar{l}[swap]{d_1} \ar{d}[name=T]{}[swap,name=B]{}
	\ar{r}{d_0} &
	|[alias=BA]|
	A_0 \ar{ld}
\\
	& \mathcal{V}^{op}
	\arrow[Rightarrow,from=TA,to=T,shorten <=0.15cm,,shorten >=0.15cm]
	\arrow[Leftrightarrow,from=BA,to=B,shorten <=0.15cm,,shorten >=0.15cm]
\end{tikzcd}
\]
\end{proposition}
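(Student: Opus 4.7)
The plan is to prove this by verifying both sides represent the same functor on $\mathsf{Fun}(\Sigma_G,\mathcal{V}^{op})$. Writing $p_n\colon A_n\to \Sigma_G$ and $|p|\colon |A_\bullet|\to \Sigma_G$ for the canonical projections, the universal property of right Kan extensions, together with the fact that $\mathsf{Nat}$ sends limits in its target to limits, reduces the claim to producing a natural bijection
\[
\lim_{\Delta} \mathsf{Nat}(F\circ p_n, N_n) \ \simeq\  \mathsf{Nat}(F\circ |p|, \tilde N)
\]
for every test functor $F\colon \Sigma_G\to \mathcal{V}^{op}$, so the rest of the argument takes place entirely in sets of natural transformations.

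By Lemma \ref{OBJGENREL LEMMA}, $|A_\bullet|$ has $\mathrm{Ob}(A_0)$ as objects with morphisms generated by the arrows of $A_0$ together with new arrows $d_1(a)\to d_0(a)$ for each $a\in A_1$, modulo relations inherited from $A_2$. Consequently an element of the right-hand side is a family $\beta=(\beta_a\colon F(p_0(a))\to N_0(a))_a$ which is natural in $A_0$-arrows and, for each $a\in A_1$, compatible with the arrow $\tilde N(d_1(a)\to d_0(a))$ spelled out in the statement. A point of the left-hand side, on the other hand, is a coherent family $(\alpha_n\colon F\circ p_n\to N_n)_{n\geq 0}$ of natural transformations commuting with every simplicial operator in the sense dictated by the span natural transformations.

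The restriction map $(\alpha_n)\mapsto \alpha_0$ gives the forward direction of the bijection; the $A_1$-condition imposed on $\beta := \alpha_0$ is precisely the compatibility of $\alpha_1$ with both faces $d_0$ and $d_1$. For the inverse, the hypothesis that the span component $\varphi_{d_0}\colon N_n\Rightarrow N_{n-1}\circ d_0$ is invertible at every level $n\geq 1$ produces, by iteration, a canonical isomorphism $\Phi_n\colon N_n\simeq N_0\circ d_0^{\,n}$ in $\mathsf{Fun}(A_n,\mathcal{V}^{op})$. Using the identity $p_n = p_0\circ d_0^{\,n}$, one then defines $\alpha_n := \Phi_n^{-1}\circ(\alpha_0 \ast d_0^{\,n})$, thereby reconstructing the entire family from $\beta$.

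The main technical obstacle is to show that the family $(\alpha_n)$ so produced is compatible with every simplicial operator, not only with iterated $d_0$'s. Compatibility with any operator whose span component is assumed invertible --- namely all degeneracies $s_j$ and all faces $d_i$ with $i<n$ --- is automatic, as both composites then reduce to the same iso-chain starting from $\beta$ via the standard simplicial identities. The delicate case is the top face $d_n\colon A_n\to A_{n-1}$, whose span component is not assumed invertible; here the identity $d_0\, d_n = d_{n-1}\, d_0$ lets one commute $d_n$ past $d_0$, reducing the level-$n$ compatibility to the level-$(n-1)$ one and, iterating, ultimately to the single level-$1$ equation, which is exactly the $A_1$-relation imposed on $\beta$. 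The relations inherited from $A_2$ in the presentation of $|A_\bullet|$ ensure that the alternate reductions all yield the same constraint, so the inverse construction is well-defined and the two assignments are mutually inverse, completing the proof.
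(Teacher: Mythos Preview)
Your argument is correct and takes a genuinely different route from the paper. The paper proceeds structurally: it rewrites $\lim_\Delta$ as a right Kan extension along $\Delta\times\Sigma_G\to\Sigma_G$, applies Proposition~\ref{FIBERKANMAP PROP} to the Grothendieck fibration $(\Delta^{op}\ltimes A_\bullet^{op})^{op}\to\Delta\times\Sigma_G$ so as to replace the family of Kan extensions by a single one over the total category, then invokes Lemma~\ref{TWISTING LEMMA} to strictify the span transformations and Lemma~\ref{SOURCEFACT LEM} to factor through $|A_\bullet|$, finally using the $\mathsf{Lan}$-finality of the target map (Corollary~\ref{SOURCELANFINAL COR}) to identify the two Kan extensions. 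By contrast, you bypass the Grothendieck construction entirely and argue at the level of representing functors, building the inverse bijection by hand from the invertibility of the $\varphi_{d_0}$'s.

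Two places in your write-up are terse and would benefit from being spelled out. First, the ``automatic'' compatibility of your reconstructed $(\alpha_n)$ with invertible operators is not literally automatic: it rests on the functoriality of the span components $\varphi$, which gives $\varphi_{\psi\phi}=(\varphi_\psi\ast\phi)\circ\varphi_\phi$, together with simplicial identities such as $d_0 d_i=d_{i-1}d_0$ to rewrite any invertible chain as your chosen $d_0$-chain. Second, the $d_n$ reduction deserves one explicit computation: from $d_0 d_n=d_{n-1}d_0$ and functoriality one gets $(\varphi_{d_0}\ast d_n)\circ\varphi_{d_n}=(\varphi_{d_{n-1}}\ast d_0)\circ\varphi_{d_0}$; combining this with the already-established $d_0$-compatibility of $\alpha_n$ and $\alpha_{n-1}$, and cancelling the invertible $(\varphi_{d_0}\ast d_n)$, yields that $d_n$-compatibility at level $n$ is equivalent to $d_{n-1}$-compatibility at level $n{-}1$, whence the induction terminates at the $A_1$-condition as you claim. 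With those two points made explicit your proof is complete; its advantage is that it avoids the auxiliary finality machinery of the appendix, while the paper's approach has the virtue of isolating reusable lemmas (\ref{TWISTING LEMMA}, \ref{SOURCEFACT LEM}, Corollary~\ref{SOURCELANFINAL COR}) that are of independent interest.
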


Proposition \ref{RANTRANS PROP} applies to both simplicial directions of 
the bisimplicial object
\begin{equation}\label{BISIMP EQ}
	N^{(\mathcal P,X,Y)}_{n,l} =
	N ( N^{\circ n} \upsilon \mathcal{P} \amalg
	\upsilon X^{\amalg 2l+1} \amalg \upsilon Y)
\end{equation}
in \eqref{LANLEVELFOR EQ},
whose underlying categories are 
$\Omega_G^{n,\lambda_l}$
for $\lambda_l$ the partitions described at the beginning of
\S \ref{LABELSTRI SEC}.
Indeed, in the $n$ direction, all $d_i$ with $0 < i < n$
are induced by the multiplication $NN \to N$ defined in 
\eqref{MULTDEFSPAN EQ} while $d_0$
is induced by the composite
$N \circ \coprod \circ N \to N N \circ \coprod \to N \circ \coprod$, with the second map again given by composition
and the first induced
by the natural map 
$\coprod \circ N \to N \circ \coprod$, which is encoded by a strictly commutative diagram of spans,
as seen using the top part of \eqref{NATCOP EQ}
(or, more abstractly, 
it also suffices to note that 
$N$ preserves arrows in $\mathsf{WSpan}^l(\Sigma_G^{op},\mathcal{V})$ given by strictly commutative diagrams).
Degeneracies are similar.
Moreover, that the functor component of $d_n$
matches the functor defined in \eqref{LABSTSIM EQ}
follows from the presence of $\upsilon$ in \eqref{LANLEVELFOR EQ}.

As for the $l$ direction, we note that our convention on 
the double bar construction 
$B_l(\mathcal{P}, \mathbb{F}_G X, \mathbb{F}_G X, \mathbb{F}_G X, \mathbb{F}_G Y)$,
is symmetric, 
with $d_l$ given by combining the maps
$\mathbb{F}_G X \to \mathbb{F}_G Y$ 
and 
$\mathbb{F}_G X \to \mathcal{P}$
and the remaining differentials given by fold maps.
Or, more precisely, the action of the differential operators
on the sets of labels
$\langle \langle l \rangle \rangle = 
\{-\infty,-l, \cdots -1,0,1,\cdots,l,+\infty\}$
is given by extending the functions in 
Remark \ref{ORDLABEL REM} anti-symmetrically.
But then the differential operators 
$d_i$, $s_j$ for $0\leq i<l$ and $0\leq j \leq l$
correspond to instances of the naturality in 
Remark \ref{NATTLABEL REM}
when $(B_k) =\alpha^{\**}(A_j)$,
and are hence given by strictly commutative maps of spans.

Our next task is thus that of identifying the category of extension trees $\Omega_G^e$ appearing
in \eqref{EXTTREEFOR EQ},
i.e. to produce an explicit model for the double realization
$|\Omega_G^{n,\lambda_l}|$.
By Remark \ref{REALEX REM},
we can first perform the realization in the $n$ direction, so as to obtain
$|\Omega_G^{n,\lambda_l}|=|\Omega_G^{\mathsf{t},\lambda_l}|$,
where we recall that 
$\Omega_G^{\mathsf{t},\lambda_l}$
consists of $\langle \langle l \rangle \rangle$-labeled trees
together with tall maps that induce quotients on all nodes not labeled by $-\infty$.

We now identify $\Omega_G^{e}$ directly.

\begin{definition}\label{EXTTREECAT DEF}\index{categories!of trees!OMEXT@$\Omega_G^{e}$}
	The \textit{extension tree category $\Omega_G^e$}
	has as objects $\{\mathcal{P},X,Y\}$-labeled trees
	and as arrows tall maps $\varphi \colon T \to S$ such that:
	\begin{itemize}
		\item[(i)] if $T_{v_{Ge}}$ has a $X$-label, then 
		$S_{v_{Ge}} \in \Sigma_G$ and $S_{v_{Ge}}$ has a $X$-label;
		\item[(ii)] if $T_{v_{Ge}}$ has a $Y$-label, then 
		$S_{v_{Ge}} \in \Sigma_G$ and $S_{v_{Ge}}$ has either a $X$-label or a $Y$-label;
		\item[(iii)] if $T_{v_{Ge}}$ has a $\mathcal{P}$-label, then 
		$S_{v_{Ge}}$ has only $X$ and $\mathcal{P}$-labels.
	\end{itemize}
\end{definition}

\begin{example}\label{REGALTERNMAP EX}
The following  is an example of a planar map in $\Omega_G^e$ for $G=\**$, where black nodes represent $\mathcal{P}$-labeled nodes, grey nodes represent $Y$-labeled nodes,
and white nodes represent $X$-labeled nodes.
\[
\begin{tikzpicture}[grow=up,auto,level distance=2.3em,
every node/.style = {font=\footnotesize},
dummy/.style={circle,draw,inner sep=0pt,minimum size=2.1mm}]
	\tikzstyle{level 2}=[sibling distance = 4em]
	\tikzstyle{level 3}=[sibling distance = 3em]
	\tikzstyle{level 4}=[sibling distance = 1.5em]
	\node at (0,0.25) {$T$}
		child{node [dummy,fill = black] {}
			child{node [dummy,fill=white] {}
				child{node [dummy,fill = black!20] {}
					child
					child
				}
				child{node [dummy,fill = black] {}
				edge from parent node [near end] {$d$}}
			edge from parent node [swap] {$e$}}
			child{node [dummy,fill=black] {}
			edge from parent node [swap, near end] {$c$}}
			child{node [dummy,fill=white] {}
				child{node [dummy,fill = black] {}
					child
				edge from parent node [swap, near end] {$a\phantom{d}$}}
				child{node [dummy,fill = black!20] {}
					child
					child
				}
			edge from parent node {$b$}}
		};
\begin{scope}[level distance=1.75em]
	\tikzstyle{level 3}=[sibling distance = 6em]
	\tikzstyle{level 4}=[sibling distance = 4em]
	\tikzstyle{level 5}=[sibling distance = 3em]
	\tikzstyle{level 6}=[sibling distance = 1.5em]
	\tikzstyle{level 7}=[sibling distance = 0.75em]
	\node at (8,0) {$S$}
		child{node [dummy,fill = white] {}
			child{node [dummy,fill = black] {}
				child{node [dummy,fill = black] {}
					child{node [dummy,fill=white] {}
						child{node [dummy,fill = white] {}
							child
							child
						}
						child{node [dummy,fill = black] {}
							child{node [dummy,fill=black] {}
						}
							child{node [dummy,fill=white] {}}
						edge from parent node [near end] {$d$}}
					edge from parent node [swap] {$e\phantom{1}$}}
				}
				child{node [dummy,fill = black] {}
					child{node [dummy,fill=white] {}
					edge from parent node [swap, near end] {$c\phantom{1}$}}
					child{node [dummy,fill=white] {}
						child{
						edge from parent node [swap,near end] {$a\phantom{d}$}}
						child{node [dummy,fill = black!20] {}
							child
							child
						}
					edge from parent node [near end] {$\phantom{1}b$}}
				}
			}
		};
\end{scope}
	\draw [->] (2.5,1.5) -- node [swap] {$\varphi$} (5.5,1.5);
\end{tikzpicture}
\]
\end{example}

\begin{remark}
By changing any $X$-labels in $S_{v_{G e}}$ 
into $Y$-labels (resp. $\mathcal{P}$-labels)
whenever $T_{v_{G}}$  has a 
$Y$-label (resp. $\mathcal{P}$-label), one obtains a factorization
\[ T \to \bar{S} \to S \]
such that $T \to \bar{S}$ is a label map 
(cf. Definition \ref{LABMAP DEF})
and $\bar{S} \to S$ is an underlying identity of trees that
merely changes some of the $Y$ and $\mathcal{P}$ labels into 
$X$-labels.
We refer to the latter kind of map as a \textit{relabel map}.
It is clear that the label-relabel factorization 
 is unique.
\end{remark}

\begin{proposition}\label{BISIMP PROP}
There is an identification
$\Omega_G^e \simeq 
|\Omega_{G}^{\mathsf{t},\lambda_l}|$.
\end{proposition}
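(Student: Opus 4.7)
The plan is to use Lemma \ref{OBJGENREL LEMMA} from the appendix to present $|\Omega_G^{\mathsf{t},\lambda_\bullet}|$ (the realization in $l$) by generators and relations: its objects are those of $\Omega_G^{\mathsf{t},\lambda_0}$, and its arrows are generated by the arrows of $\Omega_G^{\mathsf{t},\lambda_0}$ together with formal arrows $d_1(a) \to d_0(a)$ indexed by $a \in \Omega_G^{\mathsf{t},\lambda_1}$. We then match these data against $\Omega_G^e$ via the label-relabel factorization recorded in the remark preceding the proposition.

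The object identification is immediate via the bijection $\{-\infty,0,+\infty\} \leftrightarrow \{\mathcal{P},X,Y\}$ sending $-\infty \mapsto \mathcal{P}$, $0 \mapsto X$, $+\infty \mapsto Y$. Under this bijection, the arrows of $\Omega_G^{\mathsf{t},\lambda_0}$, which by definition are tall label maps inert on $0$- and $+\infty$-labeled vertices, correspond precisely to the subclass of $\Omega_G^e$-arrows satisfying conditions (i),(ii) of Definition \ref{EXTTREECAT DEF} in the strict form $S_{v_{Ge}} = T_{v_{Ge}}$ on $X$- and $Y$-vertices, and condition (iii) with only $\mathcal{P}$-labels appearing on the expansions of $\mathcal{P}$-vertices; that is, the pure label maps. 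For the second class of generators, a direct computation using the antisymmetric extension of face operators shows that for $a \in \Omega_G^{\mathsf{t},\lambda_1}$, viewed as a tree labeled in $\{-\infty,-1,0,1,+\infty\}$, the operator $d_1$ sends $-1 \mapsto -\infty$ and $1 \mapsto +\infty$ while fixing $0, \pm\infty$, whereas $d_0$ sends $\pm 1 \mapsto 0$ while fixing $\pm\infty$. Hence $d_0(a)$ and $d_1(a)$ have the same underlying tree, and the formal arrow $d_1(a) \to d_0(a)$ is the underlying identity of trees that rewrites the $\mathcal{P}$-labels (inherited from $-1$) and the $Y$-labels (inherited from $1$) as $X$-labels; this is precisely a relabel map in the sense of the remark after Definition \ref{EXTTREECAT DEF}.

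The label-relabel factorization then provides a candidate functor $F \colon |\Omega_G^{\mathsf{t},\lambda_\bullet}| \to \Omega_G^e$ which is bijective on objects and, by composition of the two classes of generators with the factorization as a normal form, surjective on arrows. It remains to verify that $F$ is well-defined --- so that all simplicial relations coming from levels $l \geq 2$ descend to identities in $\Omega_G^e$ --- and faithful, so that no additional relations beyond those encoded in the realization are required. Because composites of relabel maps are again relabel maps, and because uniqueness of the label-relabel factorization in $\Omega_G^e$ forces every relabel-then-label composite to rewrite uniquely as a label-then-relabel composite, both verifications reduce to label-tracking in $\Omega_G^{\mathsf{t},\lambda_2}$, i.e., to chasing trees labeled in $\{-\infty,-2,-1,0,1,2,+\infty\}$ through the three face operators. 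The main obstacle is precisely this bookkeeping: one must verify that every simplicial identity between the face operators $d_i$ descends under $F$ to the tautology forced by the unique label-relabel factorization, and conversely that no stray relation is introduced on either side.
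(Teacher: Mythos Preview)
Your approach via Lemma~\ref{OBJGENREL LEMMA} (generators and relations for the realization) is viable in principle, and your identification of the two generator classes with label maps and relabel maps is correct, as is the face computation at level~$1$. However, you explicitly leave open what you call the ``main obstacle'': verifying that the relations imposed by $\Omega_G^{\mathsf{t},\lambda_2}$ descend precisely to those of $\Omega_G^e$, and that no further relations are needed (faithfulness). Your appeal to uniqueness of the label--relabel factorization is a plausible heuristic for the normal form, but it does not by itself show that every relabel-then-label composite in the realization rewrites to the correct label-then-relabel composite; this requires exhibiting, for each such pair, the arrow in $\Omega_G^{\mathsf{t},\lambda_1}$ witnessing the interchange square, and then checking the triangle relations from level~$2$. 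You have not done either.

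The paper takes a different route that sidesteps this bookkeeping entirely. Rather than working with generators and relations for the realization, it invokes Remark~\ref{REALEX REM}: if $\mathcal{C}$ has subcategories $\mathcal{C}_h$, $\mathcal{C}^v$ whose arrows span $\mathcal{C}$, then the simplicial category $\mathcal{C}^v_{h,\bullet}$ (whose $l$-simplices are $l$-strings in $\mathcal{C}_h$ and whose morphisms are levelwise $\mathcal{C}^v$-maps) satisfies $|\mathcal{C}^v_{h,\bullet}| \simeq \mathcal{C}$. Taking $\mathcal{C} = \Omega_G^e$ with $\mathcal{C}_h$ the relabel maps and $\mathcal{C}^v$ the label maps, the paper then proves the stronger statement that $\mathcal{C}^v_{h,\bullet} \simeq \Omega_G^{\mathsf{t},\lambda_\bullet}$ as simplicial categories, via an explicit bijection at each level~$l$: a string $T_0 \to \cdots \to T_l$ of relabel maps is encoded by the single $\langle\langle l \rangle\rangle$-labeling that records, for each vertex, how many of the $T_i$ carry a $Y$-label (resp.\ $\mathcal{P}$-label). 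The realization identity then follows immediately from the general fact, with all relation-checking absorbed into Remark~\ref{REALEX REM}. This buys a much shorter argument at the cost of proving something slightly stronger (an isomorphism of simplicial objects rather than just of their realizations).
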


\begin{proof}
We will show that Remark \ref{REALEX REM} applies to 
$\mathcal{C} = \Omega_G^e$,
with $\mathcal{C}_h$ and $\mathcal{C}^v$ the categories of 
relabel and label maps.
More precisely, we claim that there is an isomorphism 
$\mathcal{C}_{h,\bullet}^{v} \simeq 
\Omega_{G}^{\mathsf{t},\lambda_{\bullet}}$
of objects in $\mathsf{Cat}^{\Delta^{op}}$.
Unpacking notation, one must first show that strings
\begin{equation}\label{RELABSTR EQ}
T_0 \to T_1 \to \cdots \to T_l
\end{equation}
 of relabel arrows in $\Omega_G^e$
 are in bijection with objects of 
 $\Omega_{G}^{\mathsf{t},\lambda_l}$,
 i.e., with trees labeled by
 $\langle \langle l \rangle \rangle =
  \{-\infty, -l, \cdots, -1,0,1,\cdots,l,+ \infty\}$.
Noting that the maps in
\eqref{RELABSTR EQ}
are simply underlying identities on some fixed tree $T$
that convert some of the $\mathcal{P}$, $Y$ labels into $X$ labels,
we label a vertex $T_{v_{Ge}}$ by:
\begin{enumerate*}
\item[(i)]
$j$ such that
$0 < j \leq +\infty$
if the last $j$ labels of $T_{v_{Ge}}$ in 
\eqref{RELABSTR EQ} are $Y$ labels (where $+\infty = l+1$); 
\item[(ii)]
$-j$ such that
$-\infty \leq -j < 0$
if the last $j$ labels of $T_{v_{Ge}}$ in 
\eqref{RELABSTR EQ} are $\mathcal{P}$ labels;
\item[(iii)] $j=0$ if all labels in \eqref{RELABSTR EQ}
are $X$-labels.
\end{enumerate*}
 This process clearly establishes the desired bijection on objects.

The compatibilities with arrows and with the simplicial structure are straightforward.
\end{proof}

\begin{remark}\label{TILNUNPACK REM}
	Regarding \ref{BISIMP EQ} as a 
	bisimplicial object
	$\Sigma_G 
	\leftarrow 
	\Omega_G^{\bullet,\lambda_{\bullet}}
	\xrightarrow{N_{\bullet,\bullet}^{(\mathcal P, X,Y)}}
	\mathcal{V}^{op}$
	in $\mathsf{WSpan}^r(\Sigma_G,\mathcal{V}^{op})$,
	we have now identified the double realization
	$|\Omega_G^{n,\lambda_{l}}|$ as $\Omega^e_G$,
	and thus a double application of Proposition \ref{RANTRANS PROP}
	builds an associated functor
	$\tilde{N}^{(\mathcal{P},X,Y)} \colon 
	\Omega^e_G \to \mathcal{V}^{op}$.
	
	Unpacking the construction in Proposition \ref{RANTRANS PROP},
	this $\tilde{N}^{(\mathcal{P},X,Y)}$
	is described as follows.
	On objects $T \in \Omega^e_G$,
	which are identified with objects
	$T \in \Omega_G^{0,\lambda_{0}}$,
	i.e. $(\mathcal{P},X,Y)$-labeled trees, one has
\begin{equation}\label{NPXY EQ}
	\tilde{N}^{(\mathcal{P},X,Y)}(T) 
\simeq
	\bigotimes\limits_{v \in V_{G}^{\P}(T)}\P(T_v) \otimes
	\bigotimes\limits_{v \in V_{G}^{X}(T)}X(T_v) \otimes
	\bigotimes\limits_{v \in V_{G}^{Y}(T)}Y(T_v).
\end{equation}
As for arrows, note first that (as discussed before Proposition \ref{RANTRANS PROP})
Lemma \ref{OBJGENREL LEMMA} says that
the arrows of 
$|\Omega_G^{n,\lambda_{l}}|$
are generated by three types of arrows:
\begin{enumerate*}
	\item[(i)] arrows in $\Omega_G^{0,\lambda_{0}}$;
	\item[(ii)] arrows determined by an object
	$(T_0 \to T_1) \in \Omega_G^{1,\lambda_{0}}$;
	\item[(iii)]
	arrows determined by an object
	$T \in \Omega_G^{0,\lambda_{1}}$.
\end{enumerate*}
In terms of $\Omega^e_G$, one has that:
type (i) arrows are the (labeled) quotients, 
which act on \eqref{NPXY EQ}
via permutations, diagonal maps, and 
the $G$-symmetric sequence structure maps of
$\mathcal{P},X,Y \in \mathsf{Sym}_G(\mathcal{V})$;
type (ii) arrows are the planar label maps
(necessarily inert on $X,Y$),
which act on \eqref{NPXY EQ}
via the genuine operad structure on $\mathcal{P}$;
type (iii) arrows are the planar relabel maps
(since the proof of Proposition \eqref{BISIMP PROP}
identifies each $T \in \Omega_G^{0,\lambda_{1}}$
with a planar relabel map $T_0 \to T_1$ in $\Omega^e_G$),
which act on \eqref{NPXY EQ} via the given maps
$X \to \mathcal{P}$, $X \to Y$
in \eqref{FREE_FG_EXT_EQ}.
\end{remark}

Proposition \ref{RANTRANS PROP} now yields the following, establishing \eqref{EXTTREEFOR EQ}.

\begin{corollary}\label{ESTABDESC COR}
	$\mathcal P \coprod\limits_{\mathbb F X} \mathbb F Y \simeq \mathsf{Lan}_{(\Omega_G^e \to \Sigma_G)^{op}}\tilde N^{(\mathcal P, X,Y)}$.
\end{corollary}

Our next task is to identify a convenient initial subcategory
$\widehat{\Omega}_G^{e} \hookrightarrow \Omega_G^e$.
We first introduce the auxiliary notion of \emph{alternating trees}.
Recall the notion of input path (cf. Notation \ref{INPUTPATH NOT})
$I(e) = \{f \in T \colon e \leq_d f\}$ for an edge $e \in T$, which naturally extends to $T$ in any of $\Omega, \Phi, \Omega_G, \Phi^G$.

\begin{definition}\label{OMEGAA DEF}\index{categories!of trees!OMEXT@$\Omega_G^{a}$}
A $G$-tree $T \in \Omega_G$ is called \textit{alternating} if, 
for all leafs $l \in T$,
one has that the input path $I(l)$ has an even number of elements.

Further, a vertex $e^{\uparrow} \leq e$ is called \textit{active}
if $|I(e)|$ is odd and \textit{inert} otherwise.

Finally, a tall map $T \xrightarrow{\varphi} S$ between alternating $G$-trees is called a 
\textit{tall alternating map} if,
for any inert vertex $e^{\uparrow} \leq e$ of $T$,
one has that $S_{e^{\uparrow} \leq e}$ is an inert vertex of $S$.

We will denote the category of alternating $G$-trees and tall alternating maps by $\Omega_G^a$.
\end{definition}

\begin{remark}
	A $G$-tree (resp. map) is alternating
	iff each component is.
\end{remark}

\begin{example}
Two alternating trees (for $G=\**$ the trivial group) and a planar tall alternating map between them follow, with active nodes in black ($\bullet$) and white nodes in white ($\circ$).
\[
\begin{tikzpicture}[grow=up,auto,level distance=2.3em,every node/.style = {font=\footnotesize},dummy/.style={circle,draw,inner sep=0pt,minimum size=2.1mm}]
	\tikzstyle{level 2}=[sibling distance = 4em]
	\tikzstyle{level 3}=[sibling distance = 3em]
	\tikzstyle{level 4}=[sibling distance = 1.5em]
	\node at (0,0.75) {$T$}
		child{node [dummy,fill = black] {}
			child{node [dummy,fill=white] {}
				child{node [dummy,fill = black] {}
					child
					child
				}
				child{node [dummy,fill = black] {}
				edge from parent node [near end] {$d$}}
			edge from parent node [swap] {$e$}}
			child{node [dummy,fill=white] {}
			edge from parent node [swap, near end] {$c$}}
			child{node [dummy,fill=white] {}
				child{node [dummy,fill = black] {}
					child
				edge from parent node [swap, near end] {$a\phantom{d}$}}
				child{node [dummy,fill = black] {}
					child
					child
				}
			edge from parent node {$b$}}
		};
\begin{scope}[level distance=1.75em]
	\tikzstyle{level 3}=[sibling distance = 6em]
	\tikzstyle{level 4}=[sibling distance = 4em]
	\tikzstyle{level 5}=[sibling distance = 3em]
	\tikzstyle{level 6}=[sibling distance = 1.5em]
	\tikzstyle{level 7}=[sibling distance = 1em]
	\node at (8,0) {$S$}
		child{node [dummy,fill = black] {}
			child{node [dummy,fill = white] {}
				child{node [dummy,fill = black] {}
					child{node [dummy,fill=white] {}
						child{node [dummy,fill = black] {}
							child
							child
						}
						child{node [dummy,fill = black] {}
							child{node [dummy,fill=white] {}
								child{node [dummy,fill = black] {}}
								child{node [dummy,fill = black] {}}
						}
							child{node [dummy,fill=white] {}}
						edge from parent node [near end] {$d$}}
					edge from parent node [swap] {$e\phantom{1}$}}
				}
				child{node [dummy,fill = black] {}
					child{node [dummy,fill=white] {}
					edge from parent node [swap, near end] {$c\phantom{1}$}}
					child{node [dummy,fill=white] {}
						child{node [dummy,fill = black] {}
							child{node [dummy,fill=white] {}
								child{node [dummy,fill = black] {}
									child
								}
							}
						edge from parent node [swap,near end] {$a\phantom{d}$}}
						child{node [dummy,fill = black] {}
							child
							child
						}
					edge from parent node [near end] {$\phantom{1}b$}}
				}
			}
		};
\end{scope}
	\draw [->] (2.5,2) -- node [swap] {$\varphi$} (5,2);
\end{tikzpicture}
\]
The term ``alternating'' reflects the fact that adjacent nodes have different colors, though there is an additional restriction: the ``outer vertices'', i.e. those immediately below a leaf or above the root, are necessarily black/active
(this does not, however, apply to stumps).
\end{example}

\begin{remark}\label{ALTSUB REM}
	If $T \in \Omega$ is alternating, 
	Remark \ref{INPPATH REM} implies that a tall map 
	$\varphi \colon T \to U$ is an alternating map
	iff the corresponding substitution datum 
	in Proposition \ref{SUBDATAUNDERPLAN PROP}
	is given by an isomorphism 
	$U_{e^{\uparrow} \leq e } \simeq T_{e^{\uparrow} \leq e}$
	for inert $e^{\uparrow} \leq e$, 
	and by an alternating tree
	$U_{e^{\uparrow} \leq e }$ for active 
	$e^{\uparrow} \leq e$.
\end{remark}

\begin{definition}\label{HATOMEGAE DEF}
	\index{categories!of trees!OMEXT@$\widehat{\Omega}_G^{e}$}
	$\widehat{\Omega}_G^e \hookrightarrow \Omega_G^e$ is the full subcategory of $(\mathcal{P},X,Y)$-labeled trees
	whose underlying tree is alternating
	and active (resp. inert) nodes are labeled by $\mathcal{P}$
	(by $X$ or $Y$). 
\end{definition}

Note that conditions (i) and (ii) in Definition \ref{EXTTREECAT DEF} 
imply that,
for any map in $\widehat{\Omega}_G^e$,
the underlying map is an alternating map.

The following is the key to establishing the desired initiality of 
$\widehat{\Omega}_G^e$ in $\Omega_G^e$.

\begin{proposition}\label{LXP PROP}
	For each $U \in \Omega_G^e$ there exists a unique 
	$\mathsf{lr}_{\mathcal{P}} (U) \in \widehat{\Omega}_G^e$ together with a unique planar label map in $\Omega_G^e$
\begin{equation}\label{LXP EQ}
	\mathsf{lr}_{\mathcal{P}} (U) \to U.
\end{equation}
	Furthermore, $\mathsf{lr}_{\mathcal{P}}$ extends to a right retraction 
	$\mathsf{lr}_{\mathcal{P}} \colon \Omega_G^e \to \widehat{\Omega}_G^e$.
\end{proposition}

Formally, the map \eqref{LXP EQ} in  Proposition \ref{LXP PROP}
will be built using Proposition \ref{BUILDABLE PROP}(iii),
which loosely says that planar tall maps $T \to U$
are determined by certain collections $\{U_i\}$
of outer faces of $U$,
with $T$ obtained by replacing 
$U_i$ with $\mathsf{lr}(U_i)$
(for the pictorial intuition, see Example \ref{GRAFTSUB EX}).
For the sake of intuition,
we first present an example.

\begin{example}\label{LRP EX}
	The following illustrates the $\mathsf{lr}_{\mathcal{P}}$ construction applied to the map $\varphi$ in
	Example \ref{REGALTERNMAP EX}. 
	Intuitively, 
	for each of the maximal $\mathcal{P}$-labeled outer subtrees
	$T^{\mathcal{P}}_k, S^{\mathcal{P}}_k$
	of $T,S$,
	the functor
	$\mathsf{lr}_{\mathcal{P}}$ replaces 
	$T^{\mathcal{P}}_k, S^{\mathcal{P}}_k$ with the corresponding leaf-root
	$\mathsf{lr}(T^{\mathcal{P}}_k),
	\mathsf{lr}(S^{\mathcal{P}}_k)$, which is again 
	$\mathcal{P}$-labeled.
	Pictorially, this results in the following two effects:
	when
	$T^{\mathcal{P}}_k, S^{\mathcal{P}}_k$ are single edge subtrees of $T,S$ (necessarily not adjacent to a $\mathcal{P}$-vertex)
	one degenerates that edge, adding a new $\mathcal{P}$-vertex of degree $1$;
	when
	$T^{\mathcal{P}}_k, S^{\mathcal{P}}_k$ have vertices,
	so that they are subtrees composed of adjacent 
	$\mathcal{P}$-vertices of $T,S$,
	those vertices are collapsed into a single  
	$\mathcal{P}$-vertex.
	\[
	\begin{tikzpicture}[grow=up,auto,level distance=2.3em,
	every node/.style = {font=\footnotesize,inner sep=2pt},
	dummy/.style={circle,draw,inner sep=0pt,minimum size=2.1mm}]
	\tikzstyle{level 2}=[sibling distance = 6em]
	\tikzstyle{level 3}=[sibling distance = 3em]
	\tikzstyle{level 4}=[sibling distance = 1.5em]
	\node at (0,0.85) {$\mathsf{lr}_{\mathcal{P}}(T)$}
	child{node [dummy,fill = black] {}
		child{node [dummy,fill=white] {}
			child{node [dummy,fill = black] {}
				child{node [dummy,fill = black!20] {}
					child{node [dummy,fill = black] {}
						child}
					child{node [dummy,fill = black] {}
						child}
				}
			}
			child{node [dummy,fill = black] {}
				edge from parent node [near end] {$d$}}
			edge from parent node [swap] {$e$}}
		child{node [dummy,fill=white] {}
			child{node [dummy,fill = black] {}
				child
				edge from parent node [swap, near end] {$a\phantom{d}$}}
			child{node [dummy,fill = black] {}
				child{node [dummy,fill = black!20] {}
					child{node [dummy,fill = black] {}
						child}
					child{node [dummy,fill = black] {}
						child}
				}
			}
			edge from parent node {$b$}}
	};
	\begin{scope}[level distance=1.75em]
	\tikzstyle{level 3}=[sibling distance = 6em]
	\tikzstyle{level 4}=[sibling distance = 4em]
	\tikzstyle{level 5}=[sibling distance = 3em]
	\tikzstyle{level 6}=[sibling distance = 1.5em]
	\tikzstyle{level 7}=[sibling distance = 1.5em]
	\node at (8,0.85) {$\mathsf{lr}_{\mathcal{P}}(S)$}
	child{node [dummy,fill = black] {}
		child{node [dummy,fill = white] {}
			child{node [dummy,fill = black] {}
				child{node [dummy,fill=white] {}
					child{node [dummy,fill = black] {}
						child{node [dummy,fill = white] {}
							child{node [dummy,fill = black] {}
								child}
							child{node [dummy,fill = black] {}
								child}
						}
					}
					child{node [dummy,fill = black] {}
						child{node [dummy,fill=white] {}}
						edge from parent node [near end] {$d$}}
					edge from parent node [swap] {$e\phantom{1}$}}
				child{node [dummy,fill=white] {}
					edge from parent node [swap, near end] {$c\phantom{1}$}}
				child{node [dummy,fill=white] {}
					child{node [dummy,fill=black] {}
						child
						edge from parent node [swap,near end] {$a\phantom{d}$}}
					child{node [dummy,fill=black] {}
						child{node [dummy,fill = black!20] {}
							child{node [dummy,fill=black] {}
								child
							}
							child{node [dummy,fill=black] {}
								child
							}
						}
					}
					edge from parent node {$\phantom{1}b$}}
			}
		}
	};
	\end{scope}
	\draw [->] (2.5,2.5) -- node [swap] {$\mathsf{lr}_{\mathcal{P}}(\varphi)$} (5.5,2.5);
	\end{tikzpicture}
	\]
\end{example}

\begin{proof}[Proof of Proposition \ref{LXP PROP}]
	We first address the non-equivariant case $U \in \Omega^e$.

	To build $\mathsf{lr}_{\mathcal{P}}(U)$, consider the collection of outer faces
	$\{U_i^X\} \amalg \{U_j^Y\} \amalg \{U_k^{\mathcal{P}}\}$
where the $U_i^X$, $U_j^Y$ are simply the $X,Y$-labeled nodes,
and the $\{U_k^{\mathcal{P}}\}$ are the maximal outer subtrees whose nodes have only $\mathcal{P}$-labels (these may possibly be sticks). 
Lemma \ref{OUTERFACEUNION LEM} guarantees that 
each edge and each $\mathcal{P}$-labeled node belong to exactly
one of the $U_k^{\mathcal{P}}$, 
so that applying Proposition \ref{BUILDABLE PROP}(iii)
yields a planar tall map
\begin{equation}\label{LRXDEF EQ}
T = \mathsf{lr}_{\mathcal{P}}(U) \to U
\end{equation}
such that $\{U_{e^{\uparrow} \leq e}\}_{(e^{\uparrow} \leq e) \in V(T)}
 = \{U_i^X\} \amalg \{U_j^Y\} \amalg 
 \{U_k^{\mathcal{P}}\}$. 
 $T$ has an obvious $(\mathcal{P},X,Y)$-labeling making 
\eqref{LRXDEF EQ} into a label map, but we must still check $T \in \widehat{\Omega}^{e}_G$, i.e. that 
$T$ is alternating with active vertices precisely those labeled by $\mathcal{P}$.
But, since the image of each $e \in T$
belongs to precisely one $U_k^{\mathcal{P}}$,
$e$ belongs to precisely one of the $\mathcal{P}$-labeled nodes of $T$, so that any leaf input path
$I(l) = (l = e_n \leq e_{n-1} \leq \cdots \leq e_1 \leq e_0)$
in $T$ must start with, end with, and alternate between 
$\mathcal{P}$-nodes, and thus have even length.

To check uniqueness note that, 
for any other planar label map $S \to U$ with 
$S \in \widehat{\Omega}_G^e$
and $e^{\uparrow} \leq e$ a $\mathcal{P}$ vertex of $S$,
the outer face 
$U_{e^{\uparrow} \leq e}$
must be a maximal 
$\mathcal{P}$-labeled outer face since the vertices adjacent to its root and leaves are labeled by either $X$ or $Y$.
The condition 
$V(U) = \coprod_{V(S)} V(U_{e^{\uparrow} \leq e})$
thus guarantees that the collection of outer faces determined by $S$ matches that determined by $T$
except perhaps in the number of stick faces, so that 
the degeneracy-face factorizations
$S \to F \to U$, $T \to F \to U$
factor through the same planar inner face $F$, with the  unique labeling that makes the inclusion a label map.
$S$, $T$ are thus both trees in $\widehat{\Omega}_G^e$ obtained from $F$ by adding degenerate $\mathcal{P}$ vertices, 
and since this can be done in at most one way, we conclude 
$S=T$. 

To check functoriality,
consider the diagram below, where $T \to U$ is the map \eqref{LRXDEF EQ}
defined above and $\varphi \colon U \to V$ any map in $\Omega_G^e$.
\begin{equation}\label{LRPFUN EQ}
\begin{tikzcd}
	T \ar{r} \ar[dashed]{d} &  U \ar{d}{\varphi}
\\
	S \ar[dashed]{r} & V
\end{tikzcd}
\end{equation}
The composite $T \to V$ is encoded by a substitution datum
$\{T_{e^{\uparrow} \leq e} \to V_{e^{\uparrow} \leq e}\}$
which is given by an isomorphism
if $e^{\uparrow} \leq e$ has label $X$ or $Y$ (possibly changing a $Y$-label to a $X$-label),
and by some $(X,\mathcal{P})$-labeled tree 
$V_{e^{\uparrow} \leq e}$ if $e^{\uparrow} \leq e$
has a $\mathcal{P}$-label.
We now consider the factorization problem 
in \eqref{LRPFUN EQ}, where we want $S \in \widehat{\Omega}_G^e$
and for the map $S \to V$ to the a planar label map.
Combining Remark \ref{ALTSUB REM} with the uniqueness of the
$\mathsf{lr}_{\mathcal{P}}(V_{e^{\uparrow} \leq e})$,
the only possibility is for $S$ to be defined using the 
$T$ substitution datum
that replaces 
$T_{e^{\uparrow} \leq e} \to V_{e^{\uparrow} \leq e}$
with 
$T_{e^{\uparrow} \leq e} \to 
\mathsf{lr}_{\mathcal{P}}(V_{e^{\uparrow} \leq e})$
whenever $e^{\uparrow} \leq e$ has a $\mathcal{P}$-label.
Uniqueness of $\mathsf{lr}_{\mathcal{P}} (V)$ then implies 
$S=\mathsf{lr}_{\mathcal{P}} (V)$, and one sets 
$\mathsf{lr}_{\mathcal{P}} (\varphi)$
to be the map $T\to S$.
Associativity and unitality are automatic from the uniqueness of the factorization of \eqref{LRPFUN EQ}.

For $T = (T_x)_{x \in X}$ in 
$\Omega_G^e$ with $G$ a general group,
one sets
$\mathsf{lr}_{\mathcal{P}}(T) = (\mathsf{lr}_{\mathcal{P}}(T_x))_{x \in X}$.
\end{proof}

\begin{corollary}\label{KANRED COR}
The inclusion 
$\widehat{\Omega}_G^e \hookrightarrow \Omega_G^e$ 
is $\mathsf{Ran}$-initial over $\Sigma_G$.
In other words, for $\C$ any complete category and 
functor $N \colon \Omega_G^e \to \C$ it is
\[
\mathsf{Ran}_{\Omega_G^e \to \Sigma_G} N
	\simeq 
\mathsf{Ran}_{\widehat{\Omega}_G^e \to \Sigma_G} N.
\]
\end{corollary}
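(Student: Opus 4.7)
The plan is to exhibit $\widehat{\Omega}_G^e$ as a coreflective subcategory of $\Omega_G^e$ in a way compatible with the projections to $\Sigma_G$, and then invoke the standard fact, recalled just before Proposition \ref{FIBERKANMAP PROP}, that a coreflective subcategory is initial, and therefore that limits (and hence right Kan extensions) restrict along the inclusion.

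First I would identify the functor $\Omega_G^e \to \Sigma_G$ with the composite of the forgetful functor $\Omega_G^e \to \Omega_G^{\mathsf{t}}$ (which is well-defined since by Definition \ref{EXTTREECAT DEF} every arrow in $\Omega_G^e$ is tall on underlying trees) with the leaf-root functor $\mathsf{lr} \colon \Omega_G^{\mathsf{t}} \to \Sigma_G$. Next, Proposition \ref{LXP PROP} provides a right retraction $\mathsf{lr}_{\mathcal{P}} \colon \Omega_G^e \to \widehat{\Omega}_G^e$ to the inclusion, i.e.\ a right adjoint, whose counit at each $U$ is the unique planar label map $\mathsf{lr}_{\mathcal{P}}(U) \to U$. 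The key observation is that this counit is tall on underlying trees (it is built in the proof of Proposition \ref{LXP PROP} as a planar tall map, via Proposition \ref{BUILDABLE PROP}(iii)), so it is the identity on leaf-roots. Consequently, for each $C \in \Sigma_G$ the adjunction lifts to an adjunction on comma categories
\[
	\iota \colon C \downarrow \widehat{\Omega}_G^e
	\rightleftarrows
	C \downarrow \Omega_G^e \colon \mathsf{lr}_{\mathcal{P}},
\]
where any arrow $C \to U$ lifts uniquely through the counit to an arrow $C \to \mathsf{lr}_{\mathcal{P}}(U)$, exhibiting $C \downarrow \widehat{\Omega}_G^e$ as a coreflexive subcategory of $C \downarrow \Omega_G^e$.

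Coreflexive subcategories are initial, so for any functor $N \colon \Omega_G^e \to \C$ into a complete category the pointwise formula for right Kan extensions (cf.\ \cite[X.3.1]{McL}) yields
\[
	\mathsf{Ran}_{\Omega_G^e \to \Sigma_G} N (C)
	\simeq
	\lim_{C \downarrow \Omega_G^e} N
	\simeq
	\lim_{C \downarrow \widehat{\Omega}_G^e} N|_{\widehat{\Omega}_G^e}
	\simeq
	\mathsf{Ran}_{\widehat{\Omega}_G^e \to \Sigma_G}(N|_{\widehat{\Omega}_G^e})(C),
\]
which is the desired identification. The main step requiring care is the verification that the counit $\mathsf{lr}_{\mathcal{P}}(U) \to U$ is tall (so that it preserves leaf-roots and the adjunction indeed descends to comma categories over $\Sigma_G$), but this is immediate from the construction in Proposition \ref{LXP PROP}; all other steps are formal.
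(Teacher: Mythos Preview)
Your proposal is correct and follows exactly the paper's approach: the paper's proof is a single sentence asserting that since $\mathsf{lr}_{\mathcal{P}}$ is a right retraction over $\Sigma_G$, the undercategories $C \downarrow \widehat{\Omega}_G^e$ are right retractions of $C \downarrow \Omega_G^e$, which is precisely what you spell out. Your explicit verification that the counit $\mathsf{lr}_{\mathcal{P}}(U) \to U$ is planar tall (hence the identity on leaf-roots, so the retraction is genuinely over $\Sigma_G$) makes explicit a point the paper leaves implicit.
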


\begin{proof}
	Since $\mathsf{lr}_{\mathcal{P}}$ is a right retraction over $\Sigma_G$, the undercategories 
	$C \downarrow \widehat{\Omega}_G^e$ are right retractions of 
	$C \downarrow \Omega_G^e$ for any $C \in \Sigma_G$.
\end{proof}

\renewcommand{\labelenumi}{\theenumi}
\renewcommand{\theenumi}{(\roman{enumi})}%

\subsection{Filtrations of free extensions}
\label{FILTRATION_SECTION}

Summarizing \S \ref{EXTTREE SEC},
Corollary \ref{ESTABDESC COR}
establishes \eqref{EXTTREEFOR EQ}, and hence 
Corollary \ref{KANRED COR} 
gives the alternative formula
(the use of opposite categories turns 
$\mathsf{Ran}$ into $\mathsf{Lan}$)
\begin{equation}\label{ALTFOR EQ}
	\mathcal{P}[u] \simeq
	\mathcal{P} \mathbin{\check{\coprod}}_{\mathbb{F}_G X} \mathbb{F}_G Y 
\simeq 
	\mathsf{Lan}_{\left( \widehat{\Omega}_{G}^{e} \to \Sigma_G \right)^{op}}
	\tilde{N}^{(\mathcal{P},X,Y)},
\end{equation}
which we will now use to 
filter the map
$\mathcal{P} \to \mathcal{P}[u]$
in the underlying category
$\Sym_G(\V)$.

First, given 
$T = (T_i)_{i \in I} \in \Omega_G^e$,
we write $V^X(T_i)$ (resp. $V^Y(T_i)$)
to denote the set of 
$X$-labeled ($Y$-labeled) vertices of $T_i$.
We define \textit{degrees} of $T$ by
\[
|T|_X = |V^X(T_i)|,
	\qquad
|T|_Y = |V^Y(T_i)|,
	\qquad
|T| = |T|_X + |T|_Y,
\]
which we note do not depend on the choice of $i \in I$.

Similarly, for $T = (T_i)_{i \in I} \in \Omega_G^a$,
we write $V^{in}(T_i)$ for the inert vertices and
$|T| = |V^{in}(T_i)|$.

\begin{remark}
	One key property of the degrees $|T|$, $|T|_X$, $|T|_Y$ is that they are invariant under root pullbacks, which are defined
	by generalizing Definition \ref{ROOTPULL DEF}
	in the obvious way.
	
	As such, we caution that, though 
	$|V^{in}(T_i)| = k$
	for each of the $T_i$ that constitute $T=(T_i)_{i \in I}$,
	one can only guarantee $|V_G^{in}(T)| \leq k$.
\end{remark}

\begin{definition}\label{TREE_FILTRATION_PIECES_DEFINITION}
We specify some rooted (i.e. closed under root pullbacks)
full subcategories
 of $\widehat{\Omega}_{G}^e$: 
  \begin{enumerate}
  \item $\widehat{\Omega}_G^e[\leq\! k]$ 
  (resp. $\widehat{\Omega}_G^e[k]$) is the subcategory of $T$ with $|T|\leq k$ ($|T| = k$);
  \item $\widehat{\Omega}_G^e[\leq\! k \mathbin{\backslash} Y]$
  (resp. $\widehat{\Omega}_G^e[k \mathbin{\backslash} Y]$) is the subcategory of $\widehat{\Omega}_G^e[\leq\! k]$ 
  ($\widehat{\Omega}_G^e[k]$) of $T$ with $|T|_{Y}\neq k$.
  \end{enumerate}
Similarly, we define subcategories 
$\Omega_G^a[\leq \! k]$, 
$\Omega_G^a[k]$ of $\Omega_G^a$
by the conditions $|T|\leq k$, $|T|=k$.
\end{definition}

\begin{remark}\label{LIMMOR REM}
  The categories 
  $\widehat{\Omega}_G^e[k]$, $\widehat{\Omega}_G^e[k \mathbin{\backslash} Y]$ and $\Omega_G^a[k]$
  have rather limited morphisms.
  
Indeed, it is clear from Definitions 
\ref{EXTTREECAT DEF} and \ref{OMEGAA DEF} 
that maps never lower degree,
and Remark \ref{ALTSUB REM} further ensures that degree is preserved iff
$\mathcal{P}$-vertices are substituted by $\mathcal{P}$-vertices (rather than larger trees
which would necessarily have inert vertices, thus increasing degree).
Therefore, all maps in $\Omega_G^a[k]$ are quotients while maps in $\widehat{\Omega}_G^e[k]$, $\widehat{\Omega}_G^e[k \mathbin{\backslash} Y]$
are underlying quotients of $G$-trees that 
relabel some $Y$-vertices to $X$-vertices.  
 Moreover, this can be repackaged as saying that 
  the diagonal forgetful functors in
\[
\begin{tikzcd}
  \widehat\Omega_G^e[k \mathbin{\backslash} Y] \arrow[dr] \arrow[rr, hookrightarrow] 
  && \widehat\Omega_G^e[k] \arrow[dl]\\
  & \Omega_G^a[k]
\end{tikzcd}
\]  
 are Grothendieck fibrations whose fibers over 
 $T \in \Omega_G^a[k]$
 are the punctured cube and cube categories
\begin{equation}\label{PUNCUBE EQ}
	(Y \to X)^{\times V_G^{in}(T)} - Y^{\times  V_G^{in}(T)},
\qquad
	(Y \to X)^{\times V_G^{in}(T)}
\end{equation}
for $V_G^{in}(T)$ the set of inert $G$-vertices.
\end{remark}

\begin{lemma}\label{MINUS_LAN_FINAL_LEMMA}
%
The horizontal inclusion below 
\[
\begin{tikzcd}
	\widehat{\Omega}_G^e[\leq\! k-1]
	\arrow{dr}[swap]{\mathsf{lr}} \arrow[rr, hookrightarrow] &&
	\widehat{\Omega}_G^{e}[\leq\! k \mathbin{\backslash} Y] \arrow{dl}{\mathsf{lr}}
\\
	&
	\Sigma_G
\end{tikzcd}
\]  
is $\mathsf{Ran}$-initial (in the sense of Corollary \ref{KANRED COR})
over $\Sigma_G$.  
\end{lemma}

The proof will make use of an additional construction on 
$\Omega_{G}^e$: 
given $T \in \Omega_{G}^e$ we let $T_{\mathcal{P}}$ denote the result of replacing all $X$-labeled nodes of $T$ with $\mathcal{P}$-labeled nodes.

\begin{remark}\label{YINERT REM}
	In contrast to the functor
	$\mathsf{lr}_{\mathcal{P}} \colon
	\Omega_G^e \to \widehat{\Omega}_G^e $
	of Proposition \ref{LXP PROP},
	the $(\minus)_{\mathcal{P}}$ construction 
	is not a full functor
	$\Omega_G^e \to \Omega_G^e$.
	Instead, $(\minus)_{\mathcal{P}}$ is only functorial, and the obvious maps $T_{\mathcal{P}} \to T$ are only natural,
	with respect to the
	maps of $\Omega_G^e$ that preserve $Y$-labels.
\end{remark}

\begin{example}
Combining the $(\minus)_{\mathcal{P}}$ and $\mathsf{lr}_{\mathcal{P}}$ constructions one obtains a construction sending trees in $\widehat{\Omega}^e_G$ to trees in $\widehat{\Omega}^e_G$.
We illustrate this for the tree $T \in \widehat{\Omega}^e$ below (so that $G=\**$), where black nodes are $\P$-labeled, white nodes are $X$-labeled, and grey nodes are $Y$-labeled.
\[
\begin{tikzpicture}
  [grow=up,auto,level distance=2.1em,
  every node/.style = {font=\footnotesize,inner sep=2pt},
  dummy/.style={circle,draw,inner sep=0pt,minimum size=2.1mm}]
  \tikzstyle{level 2}=[sibling distance=6em]
  \tikzstyle{level 3}=[sibling distance=4em]
  \tikzstyle{level 4}=[sibling distance=2em]
  \tikzstyle{level 5}=[sibling distance=2em]
  \tikzstyle{level 6}=[sibling distance=1em]
  \node at (0,0){$T$}
  child{node [dummy, fill=black] {}%
    child{node [dummy, fill=white] {}%
      child{node [dummy,fill=black] {}%
        child[sibling distance = 1.5em]{edge from parent node [swap,near end] {$i$}}
        child[sibling distance = 1.5em]{edge from parent node [near end] {$h$}}
      }
      child{node [dummy, fill=black] {}%
        child{node [dummy, fill=white] {}%
          child{node [dummy, fill=black] {}%
            child{edge from parent node [right]{$g$}} 
            child{edge from parent node [left]{$f$}} 
          }
        }
        child{node [dummy, fill=black!20] {}%
          child{node [dummy, fill=black] {}%
            child{}
          }
          edge from parent node [near end] {$e$}
        }
      }
    }
    child[sibling distance=8em]{node [dummy, fill=black!20] {}%
      child{node [dummy, fill=black] {}%
        child{node [dummy, fill=white] {}%
          child{node [dummy, fill=black] {}%
            child{edge from parent node [swap] {$b$}}
          }
          child{node [dummy, fill=black] {}%
            child{edge from parent node {$\phantom{b}a$}}
          }
        }
        edge from parent node {$c$}
      }
      edge from parent node {$d$}
    }
  };
  \node at (4.375,0){$T_{\mathcal{P}}$}
  child{node [dummy, fill=black] {}%
    child{node [dummy, fill=black] {}%
      child{node [dummy,fill=black] {}%
        child[sibling distance = 1.5em]{edge from parent node [swap,near end] {$i$}}
        child[sibling distance = 1.5em]{edge from parent node [near end] {$h$}}
      }
      child{node [dummy, fill=black] {}%
        child{node [dummy, fill=black] {}%
          child{node [dummy, fill=black] {}%
            child{edge from parent node [right]{$g$}} 
            child{edge from parent node [left]{$f$}} 
          }
        }
        child{node [dummy, fill=black!20] {}%
          child{node [dummy, fill=black] {}%
            child{}
          }
          edge from parent node [near end] {$e$}
        }
      }
    }
    child[sibling distance=8em]{node [dummy, fill=black!20] {}%
      child{node [dummy, fill=black] {}%
        child{node [dummy, fill=black] {}%
          child{node [dummy, fill=black] {}%
            child{edge from parent node [swap] {$b$}}
          }
          child{node [dummy, fill=black] {}%
            child{edge from parent node {$\phantom{b}a$}}
          }
        }
        edge from parent node {$c$}
      }
      edge from parent node {$d$}
    }
  };
  \tikzstyle{level 2}=[sibling distance=2em]
  \tikzstyle{level 4}=[sibling distance=1em]
	\node at (9,0.4){$\mathsf{lr}_{\mathcal{P}}(T_{\mathcal{P}})$}
	child{node [dummy,fill=black] {}%
		child{edge from parent node [swap]{$i$}}
		child[sibling distance =1.5em, level distance =3.5em]{edge from parent node [near end,swap] {$h$}}
	    child[sibling distance=1.5em, level distance = 7.5em]{edge from parent node [swap, near end]{$g$}}
		child[sibling distance=1.5em, level distance = 7.5em]{edge from parent node [near end]{$f$}}
    child[level distance = 3.5em]{node [dummy, fill=black!20] {}%
      child[level distance = 2em]{node [dummy, fill=black] {}%
        child{}
      }
      edge from parent node [near end]{$e$}
    }
    child{node [dummy, fill=black!20] {}%
      child{node [dummy, fill=black] {}%
        child{edge from parent node [swap,near end]{$b$}}
        child{edge from parent node [near end] {$\phantom{b}a$}}
        edge from parent node {$c$}
      }
      edge from parent node {$d$}
    }
  };    
\end{tikzpicture}
\]
\end{example} 

\begin{proof}[Proof of Lemma \ref{MINUS_LAN_FINAL_LEMMA}]

By Proposition \ref{FIBERKANMAP PROP} it suffices to
show that for each $C \in \Sigma_G$
the map of rooted undercategories
\[
C \downarrow_{\mathsf{r}} \widehat{\Omega}_G^e[\leq\! k-1]
	\to 
C \downarrow_{\mathsf{r}} \widehat{\Omega}_G^e[\leq\! k \mathbin{\backslash} Y]
\]
is initial, i.e. 
\cite[IX.3]{McL} that for each
$(S,\pi \colon C \to \mathsf{lr}(S))$ in 
$C \downarrow_{\mathsf{r}} \widehat{\Omega}_G^e[\leq\! k \mathbin{\backslash} Y]$
the overcategory
\begin{equation}\label{UNDERCATPR EQ}
	(C \downarrow_{\mathsf{r}} \widehat{\Omega}_G^e[\leq\! k-1])
		\downarrow
	(S,\pi)  
\end{equation}
is non-empty and connected. 
By definition of rooted undercategory, $\pi$ is the identity on roots and thus an isomorphism in $\Sigma_G$,
so that objects of \eqref{UNDERCATPR EQ}
correspond to maps
$T \to S$
that induce a rooted isomorphism on 
$\mathsf{lr}$, i.e. rooted tall maps.

The case $S\in \widehat{\Omega}_G^e[\leq\! k-1]$ is immediate,
since then the identity $S = S$ is terminal in 
\eqref{UNDERCATPR EQ}.
Otherwise, since $|S|_Y \neq k$ we have
$|\mathsf{lr}_{\mathcal{P}}(S_{\mathcal{P}})|<k$
and the map 
$\mathsf{lr}_{\mathcal{P}}(S_{\mathcal{P}}) \to S$,
which is a rooted tall map, shows that
\eqref{UNDERCATPR EQ} is indeed non-empty.

Next, consider a rooted tall map $T \to S$ with 
$T \in \widehat{\Omega}_G^e[\leq\! k-1]$. One can form a diagram
\begin{equation}\label{K-1LANFINAL EQ}
\begin{tikzcd}
      & S & \mathsf{lr}_{\mathcal{P}}(S_{\mathcal{P}}) \ar{l}
      \\
      T \ar{ur} \ar{r} & T' \ar{u}[swap]{Y-\text{pres}} & \mathsf{lr}_{\mathcal{P}}(T'_{\mathcal{P}}) \ar{l} \ar{u}
\end{tikzcd}
\end{equation}
where $T \to T' \to S$ is the natural factorization such that $ T' \to S$ preserves $Y$-labels, 
i.e., $T'$ is obtained from $T$ by simply relabeling to $X$ those $Y$-labeled vertices of $T$ that become $X$-vertices in $S$.
Note that, by Remark \ref{YINERT REM}, the existence
of the right square relies on 
$T' \to S$ preserving $Y$-labels.
Since all maps in 
\eqref{K-1LANFINAL EQ}
are rooted tall, 
this produces the
necessary zigzag connecting the objects $T \to S$ and 
$\mathsf{lr}_{\mathcal{P}}(S_{\mathcal{P}}) \to S$
in the category \eqref{UNDERCATPR EQ}, finishing the proof.
\end{proof}



In what follows we write $\tilde{N} \colon \widehat{\Omega}_G^{e,op} \to \mathcal{V}$ for the functor in \eqref{ALTFOR EQ}
(see also Remark \ref{TILNUNPACK REM})
and any of its restrictions.
We can now describe the filtration \eqref{FILTR EQ}
of the map $\mathcal{P} \to \mathcal{P}[u]$.

\begin{definition} \label{PK_DEFN}
  Let $\P_k$ denote the left Kan extension
\[
\begin{tikzcd}[column sep =4em]
	|[alias=U]| \widehat{\Omega}_G^e[\leq\! k]^{op} \arrow[r, "\tilde{N}"] \arrow[d, "\mathsf{lr}"'] & \V
\\
	\Sigma_G^{op} \arrow[ur, "\P_k"', ""{name=V}]
	\arrow[Rightarrow, from=U, to=V]
\end{tikzcd}
\]
\end{definition}
Noting that $\widehat{\Omega}_G^e[\leq\! 0] \simeq \Sigma_G$
(since $|T|=0$ only if $T$ is a $G$-corolla with $\mathcal{P}$-labeled vertex)
and that $\widehat{\Omega}_G^e$
is the union of (the nerves of) the 
$\widehat{\Omega}_G^e[\leq\! k]$,
we obtain the desired filtration
\begin{equation}\label{FILT EQ}
	\mathcal{P} = 
	\mathcal{P}_0 \to 
	\mathcal{P}_1 \to
	\mathcal{P}_2 \to
	\cdots \to 
	\colim_k \mathcal{P}_k = \mathcal{P}[u].
\end{equation}

To analyze \eqref{FILT EQ} homotopically we will further need a pushout description of each map 
$\mathcal{P}_{k-1} \to \mathcal{P}_k$. To do so,  note that the diagram of inclusions
\begin{equation}\label{INCDIAG EQ}
\begin{tikzcd}
	\widehat{\Omega}_G^{e}[k \mathbin{\backslash} Y]
	\arrow[d] \arrow[r] &
	\widehat{\Omega}_G^{e}[\leq\! k \mathbin{\backslash} Y]
	\arrow[d]
\\
	\widehat{\Omega}_G^e[k] \arrow[r] &
	\widehat{\Omega}_G^e[\leq\! k]
\end{tikzcd}
\end{equation}
is a pushout of at the level of nerves.
Indeed, this follows since
\[
	\widehat{\Omega}_G^e[k] \cap
	\widehat{\Omega}_G^{e}[\leq\! k \mathbin{\backslash} Y]
	= \widehat{\Omega}_G^{e}[k \mathbin{\backslash} Y],
\qquad
	\widehat{\Omega}_G^e[k] \cup 
	\widehat{\Omega}_G^{e}[\leq\! k\mathbin{\backslash} Y]
	= \widehat{\Omega}_G^{e}[\leq\! k],
\]
and since a map $T \to S$ in 
$\widehat{\Omega}_G^e[\leq\! k]$ is in one of subcategories in \eqref{INCDIAG EQ} if and only if $T$ is.

Since Lemma \ref{MINUS_LAN_FINAL_LEMMA} provides an identification 
$\mathsf{Lan}_{\widehat{\Omega}_{G}^{e}[\leq\! k \mathbin{\backslash} Y]^{op}}\tilde{N} \simeq
\mathsf{Lan}_{\widehat{\Omega}_{G}^{e}[\leq\! k-1]^{op}}\tilde{N} = \mathcal{P}_{k-1}$,
applying left Kan extensions to \eqref{INCDIAG EQ} yields the pushout diagram below.
\begin{equation}\label{FILTRATION_LAN_SQUARE_DIAGRAM}
\begin{tikzcd}
	\mathsf{Lan}_{\widehat{\Omega}_{G}^e[k \mathbin{\backslash} Y]^{op}}\tilde{N} \arrow[d] \arrow[r] & 
	\P_{k-1} \arrow[d]
\\
	\mathsf{Lan}_{\widehat{\Omega}_{G}^e[k]^{op}}\tilde{N} \arrow[r] &
	\P_k
\end{tikzcd}
\end{equation}

We will also make use of an 
explicit levelwise description of  
(\ref{FILTRATION_LAN_SQUARE_DIAGRAM}).

\begin{proposition}\label{FILTINTALT PROP}
For each $G$-corolla $C \in \Sigma_G$,
(\ref{FILTRATION_LAN_SQUARE_DIAGRAM})
is given by the following pushout in $\V^{\mathsf{Aut}(C)}$
\begin{equation}\label{FILTRATION_LAN_LEVEL}
\begin{tikzcd}
	\coprod\limits_{[T] \in \mathsf{Iso}
		\left(C \downarrow_{\mathsf{r}} \Omega_G^a[k]\right)}
	\left(
		\bigotimes\limits_{v \in V_{G}^{ac}(T)}\P(T_v) \otimes
		Q_T^{in}[u]
	\right)
		\mathop{\otimes}\limits_{\mathsf{Aut}(T)} \mathsf{Aut}(C)
	\arrow[r] \arrow[d] &
	\P_{k-1}(C) \arrow[d] 
\\
	\coprod\limits_{[T] \in \mathsf{Iso}
		\left(C \downarrow_{\mathsf{r}} \Omega_G^a[k]\right)}
	\left(
		\bigotimes\limits_{v \in V_{G}^{ac}(T)}\P(T_v) \otimes
		\bigotimes\limits_{v \in V_{G}^{in}(T)} Y(T_v)
	\right)
		\underset{\mathsf{Aut}(T)}{\otimes} \mathsf{Aut}(C)
	\arrow[r] &
	\P_k(C)
\end{tikzcd}
\end{equation}
where $ V_{G}^{ac}(T)$, $V_{G}^{in}(T)$ denote the active and inert vertices of $T \in \Omega_G^a[k]$,
and $Q_T^{in}[u]$ is the domain 
of the iterated pushout product
\[
		\underset{v \in V_G^{in}(T)}
		{\mathlarger{\mathlarger{\mathlarger{\square}}}}u(T_v)
	\colon
		Q_T^{in}[u] \to
		\bigotimes\limits_{v \in V_{G}^{in}(T)} Y(T_v).
\]
\end{proposition}

\begin{proof}
This follows from Remark \ref{LIMMOR REM}.
Explicitly,
consider first the analogue of the leftmost map in 
(\ref{FILTRATION_LAN_SQUARE_DIAGRAM}) 
that left Kan extends to 
$\Omega_G^a[k]$ rather than to $\Sigma_G$, as on the left below.
The (punctured) cube fiber categories in
\eqref{PUNCUBE EQ} and \eqref{FIBERKAN EQ} yield
an identification (of maps)
\[
\left(\mathsf{Lan}_{
	(\widehat{\Omega}_{G}^e[k \mathbin{\backslash} Y]
	\to \Omega^a_G[k])^{op}}\tilde{N}
	\to
\mathsf{Lan}_{
	(\widehat{\Omega}_{G}^e[k]^{op}
	\to \Omega^a_G[k])^{op}
	}\tilde{N}\right)(T)
=
\left(
	\bigotimes\limits_{v \in V_{G}^{ac}(T)}\P(T_v) \otimes
	\underset{v \in V_{G}^{in}(T)}
	{\mathlarger{\mathlarger{\mathlarger{\square}}}}
	u(T_v)
\right)
\]
so that, iterating left Kan extensions,
the leftmost map in 
(\ref{FILTRATION_LAN_SQUARE_DIAGRAM}) is then
\begin{equation}\label{FILTINTALT EQ}
	\mathsf{Lan}_{(\Omega_G^a[k] \to \Sigma_G)^{op}}
	\left(
		\bigotimes\limits_{v \in V_{G}^{ac}(T)}\P(T_v) \otimes
		\underset{v \in V_{G}^{in}(T)}
		{\mathlarger{\mathlarger{\mathlarger{\square}}}}
		u(T_v)
	\right).
\end{equation}
The desired description of the leftmost map in (\ref{FILTRATION_LAN_LEVEL})
now follows by noting that the rooted undercategories
$C \downarrow_{\mathsf{r}} \Omega_G^a[k]$
only depend on the isomorphisms 
of $\Omega_G^a[k],\Sigma_G$
(
cf. \eqref{FGXDEFEXP EQ}).
\end{proof}

\renewcommand{\F}{\mathcal F}

\subsection{Proof of Theorems \ref{MAINEXIST1 THM} and \ref{MAINEXIST2 THM}}
\label{MAINEXIST SEC}

In this section, we use the filtrations just developed to prove our first two main results,
Theorems \ref{MAINEXIST1 THM} and \ref{MAINEXIST2 THM},
concerning the existence of model structures on 
$\mathsf{Op}^G(\mathcal{V})$
and
$\mathsf{Op}_G(\mathcal{V})$.

We begin by recalling the notion of genuine equivariant model structures.

\begin{definition}\label{GENUINEMS_DEF}
      Let $G$ be a group, and $\V$ a model category.
      The \textit{genuine model structure} (if it exists)
      on $\mathcal{V}^{G}$,
      denoted $\mathcal{V}^{G}_{\text{gen}}$,
      has as weak equivalences (resp. fibrations)
      those maps $X \to Y$ such that 
      $X^H \to Y^H$ is a weak equivalence (fibration)
      for all $H \leq G$.

	More generally, for a family 
	$\F$ of subgroups of $G$
	(cf. \S \ref{INDEXING_SECTION}),
      the \textit{$\F$-model structure} (if it exists), denoted $\mathcal V^G_{\F}$,
      has weak equivalences and fibrations defined
      similarly but with $H \in \F$.
\end{definition}

In particular, when $\F$ 
is the family containing only the trivial subgroup $\{e\}$,
the $\F$-model structure is the \textit{projective} model structure, where weak equivalences (resp. fibrations) are those maps which forget to weak equivalences (resp. fibrations) in $\V$.
Note that, as $\F$ increases, 
both weak equivalences and fibrations decrease while
(trivial) cofibrations increase.\\

Our main proof will require some auxiliary results concerning genuine model structures and related hypotheses.
However, since these results are 
instances of subtler results from \S \ref{COFIB SEC}
which will require a far more careful analysis,
we defer their 
proofs to those
of the stronger results in \S \ref{COFIB SEC}.
In particular, 
we postpone the definition of the
\emph{cellular fixed points}
and
\emph{cofibrant symmetric pushout powers}
conditions
(which are (iii) and (iv) in our main theorems)
to Definitions \ref{CELL DEF} and \ref{COFSYMPUSHPOW},
and collect the properties used in this section
in the following remark
(note that, since $\V^G_{\text{gen}}$
has more (trivial) cofibrations than 
$\V^G_{\mathcal{F}}$ for any $\F$,
one can replace all model structures in
Propositions \ref{FGTRIGHT PROP},
\ref{FGTLEFT PROP}, 
\ref{BIQUILLENG PROP}, 
\ref{POWERF PROP}
and \eqref{EXTERINTADJ EQ} with genuine ones).



\begin{remark}\label{GEN_FGTRIGHT_REM}
Suppose $\mathcal{V}$ 
is a closed monoidal model category
which is cofibrantly generated and 
has cellular fixed points (Definition \ref{CELL DEF}).
\begin{enumerate}[label = (\roman*)]
	\item \label{CELL_ITEM}
		By \cite[Prop. 2.6]{Ste16},
		the model structure $\V^G_{\text{gen}}$
		exists and is cofibrantly generated
		with generating (trivial) cofibrations
		the maps 
        $G/H \cdot i$
        for $H\leq G$ 
        and $i$ a generating (trivial) cofibration of $\mathcal{V}$.
        Likewise, for a family $\mathcal{F}$,
        the model structure $\mathcal{V}^{G}_{\mathcal{F}}$
        exists and is cofibrantly generated, 
        with generating (trivial) cofibrations
        described analogously but with $H \in \mathcal{F}$.
        
	\item \label{GROUPHOM_ITEM}
		Propositions \ref{FGTRIGHT PROP} and
		\ref{FGTLEFT PROP} imply that,
		for a group homomorphism $\phi: G \to \bar G$, 
		the functors
		\[
		\begin{tikzcd}
			\bar{G} \cdot_{G} (\minus)
			\colon
			\mathcal{V}_{\text{gen}}^{G}
			\ar{r}
		&
			\mathcal{V}_{\text{gen}}^{\bar{G}}
		&
			\mathsf{res}^{\bar{G}}_{G}
			\colon
			\mathcal{V}_{\text{gen}}^{\bar{G}}
			\ar{r}
		&
			\mathcal{V}_{\text{gen}}^{G}
		\end{tikzcd}
		\]
		are left Quillen functors. 
	\item \label{TENSORLEFT_ITEM}
            \eqref{EXTERINTADJ EQ} implies that the monoidal product on $\mathcal{V}$ lifts to a left Quillen bifunctor
            \[
                  \V^{G}_{\text{gen}} \times \V^{\bar G}_{\text{gen}} 
                  \xrightarrow{\otimes}
                  \V^{G \times \bar G}_{\text{gen}}.
            \]
	\item \label{CSPP_ITEM}
		If, additionally, $\V$ has cofibrant symmetric pushout powers
		(Definition \ref{COFSYMPUSHPOW}),
		then Proposition \ref{POWERF PROP} implies that,
		if $f$ is a (trivial) cofibration in $\V^G_{\text{gen}}$, 
		then $f^{\square n}$ is a (trivial) cofibration in $\V^{\Sigma_n \wr G}_{\text{gen}}$ for any $n \geq 1$.
      \end{enumerate}
\end{remark} 

\begin{remark}\label{ALLCOF REM}
        A skeletal filtration argument shows that all objects in
        $\mathsf{sSet}^{G}_{\text{gen}}$ and
        $\mathsf{sSet}^{G}_{\**,\text{gen}}$
        are cofibrant.
        Moreover, Example \ref{SSET_CSPP_EX} says that $(\mathsf{sSet},\times)$, $(\mathsf{sSet}_{\**},\wedge)$ have
        cofibrant symmetric pushout powers.
\end{remark}

The following lemma is the key to our main proof. 
Here, a map $f$ in 
$\mathsf{Sym}_G(\mathcal{V})$ is called
a \textit{level genuine (trivial) cofibration} if each of the maps
$f(C)$ for $C \in \Sigma_G$ are genuine trivial cofibrations in
$\mathcal{V}^{\mathsf{Aut}(C)}_{\text{gen}}$.

\begin{lemma}\label{EXMAINLEM LEM}
	Suppose $\mathcal{V}$ is a cofibrantly generated closed monoidal model category
	with cellular fixed points and
	with cofibrant symmetric pushout powers.
	
	Let $\mathcal{P} \in \mathsf{Sym}_G(\mathcal{V})$
	be level genuine cofibrant
	and  
	$u: X \to Y$ in $\Sym_G(\V)$ be a level genuine cofibration. 
	Then, for each $T \in \Omega^a_G[k]$, and writing
	$C = \mathsf{lr}(T)$, the map	
\begin{equation}\label{EXMAINLEM EQ}
	\left(
		\bigotimes\limits_{v \in V_{G}^{ac}(T)}\P(T_v) \otimes
		\underset{v \in V_{G}^{in}(T)}
	{\mathlarger{\mathlarger{\mathlarger{\square}}}}
		u(T_v)
	\right) 
	\mathop{\otimes}\limits_{\mathsf{Aut}(T)} \mathsf{Aut}(C).
\end{equation}
	is a genuine cofibration in 
	$\mathcal{V}^{\mathsf{Aut}(C)}_{\text{gen}}$,
	which is trivial if $k \geq 1$ and $u$ is trivial.	
\end{lemma}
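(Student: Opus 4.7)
The plan is to reduce the statement to two basic Quillen properties of the genuine model structures from Remark \ref{GEN_FGTRIGHT_REM}, together with the hypothesis of cofibrant symmetric pushout powers. First, by Remark \ref{GEN_FGTRIGHT_REM}(i) applied to the inclusion $\mathsf{Aut}(T) \hookrightarrow \mathsf{Aut}(C)$, the induction functor $\mathsf{Aut}(C) \cdot_{\mathsf{Aut}(T)} (\minus) \colon \mathcal{V}^{\mathsf{Aut}(T)}_{\text{gen}} \to \mathcal{V}^{\mathsf{Aut}(C)}_{\text{gen}}$ is left Quillen, so it suffices to show that the parenthesized expression in (\ref{EXMAINLEM EQ}) is a genuine (trivial) cofibration in $\mathcal{V}^{\mathsf{Aut}(T)}_{\text{gen}}$.

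Second, I would observe that the action of $\mathsf{Aut}(T)$ on $V_G(T)$ preserves the active/inert distinction (since automorphisms preserve degree, by Remark \ref{LIMMOR REM}), so the index set splits as an $\mathsf{Aut}(T)$-set $V_G(T) = V_G^{ac}(T) \amalg V_G^{in}(T)$. Decomposing each part into $\mathsf{Aut}(T)$-orbits and invoking the left Quillen bifunctor property of $\otimes$ from Remark \ref{GEN_FGTRIGHT_REM}(ii) iteratively, the problem reduces to showing orbit-by-orbit that
\[
\bigotimes_{v \in O} \mathcal{P}(T_v), \qquad \underset{v \in O'}{\square} u(T_v)
\]
are respectively genuine cofibrant and a genuine cofibration in $\mathcal{V}^{\mathsf{Aut}(T)}_{\text{gen}}$ (restricted to the relevant subgroup acting on each orbit), with the second being trivial when $u$ is.

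The main obstacle, and where condition (iv) enters, is precisely these orbit contributions: for an orbit $O \simeq \mathsf{Aut}(T)/\Gamma$, the tensor (or pushout) factors are all isomorphic to a single $\mathcal{P}(T_v)$ (resp.\ $u(T_v)$), and $\mathsf{Aut}(T)$ acts by simultaneously permuting the factors and twisting each through the homomorphism $\Gamma \to \mathsf{Aut}(T_v)$. This is exactly the structure of an equivariant symmetric tensor power, and the hypothesis that $\mathcal{V}$ has cofibrant symmetric pushout powers is tailored so that such equivariant pushout powers of (trivial) genuine cofibrations remain (trivial) genuine cofibrations in the appropriate wreath-product equivariant category. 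The level genuine cofibrancy of $\mathcal{P}$ supplies the needed cofibrancy of each $\mathcal{P}(T_v) \in \mathcal{V}^{\mathsf{Aut}(T_v)}_{\text{gen}}$.

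Finally, the triviality claim under $k \geq 1$ follows because $k \geq 1$ forces $|V_G^{in}(T)| \geq 1$ (each tree component contains at least one inert vertex), so the pushout-product piece contains at least one factor $u(T_v)$ that is a genuine trivial cofibration when $u$ is; standard pushout-product arguments then promote the entire pushout product, and hence the full tensor expression in (\ref{EXMAINLEM EQ}), to a genuine trivial cofibration.
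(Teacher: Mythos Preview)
Your proposal is correct and follows essentially the same route as the paper's proof: reduce along $\mathsf{Aut}(T) \to \mathsf{Aut}(C)$ via Remark~\ref{GEN_FGTRIGHT_REM}(i), split active and inert vertices, and then handle each isomorphism class of $G$-vertices using the wreath-product structure and Proposition~\ref{POWERF PROP} (which is what your appeal to ``cofibrant symmetric pushout powers'' amounts to).

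Two small points. First, the map $\mathsf{Aut}(T) \to \mathsf{Aut}(C)$ need not be an inclusion (e.g.\ a tree with swappable stumps has $\mathsf{Aut}(T) = \Sigma_2$ but $\mathsf{lr}(T) = C_0$ with trivial automorphism group); fortunately Remark~\ref{GEN_FGTRIGHT_REM}(i) only requires a homomorphism, so this does not affect your argument. Second, where you work orbit-by-orbit under $\mathsf{Aut}(T)$, the paper instead first restricts along the homomorphism $\mathsf{Aut}(T) \to \mathsf{Aut}\bigl((T_v)_{v \in V_G^{ac}(T)}\bigr) \times \mathsf{Aut}\bigl((T_v)_{v \in V_G^{in}(T)}\bigr)$ into automorphism groups in $\Fin \wr \Sigma_G$, which by Remark~\ref{WREATHFIXED REM} decompose as products $\prod_j \Sigma_{|\lambda_j|} \wr \mathsf{Aut}(T_{v_j})$; this makes the subsequent reduction to Proposition~\ref{POWERF PROP} cleaner, but is equivalent to what you describe.
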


\begin{proof}
	Combining the homomorphism $\mathsf{Aut}(T) \to \mathsf{Aut}(C)$ with the leftmost left Quillen functor in 
	Remark \ref{GEN_FGTRIGHT_REM}\ref{GROUPHOM_ITEM},
	it suffices to check that the parenthesized 
	expression in \eqref{EXMAINLEM EQ}
	is a (trivial) genuine 
	$\mathsf{Aut}(T)$-cofibration.

	Furthermore, the homomorphism
	$\mathsf{Aut}(T) \to 
	\mathsf{Aut}\left( (T_v)_{v \in V_G^{ac}(T)}\right) \times 
	\mathsf{Aut}\left( (T_v)_{v \in V_G^{in}(T)}\right)$
	combined with the rightmost left Quillen functor in Remark \ref{GEN_FGTRIGHT_REM}\ref{GROUPHOM_ITEM} and Remark \ref{GEN_FGTRIGHT_REM}\ref{TENSORLEFT_ITEM}
	then yield that it suffices to check that the two maps
\[
\left( \varnothing \to \bigotimes\limits_{v \in V_{G}^{ac}(T)}\P(T_v) \right)
=
\underset{v \in V_{G}^{ac}(T)}{\mathlarger{\mathlarger{\mathlarger{\square}}}}
(\emptyset \to \P)(T_v),
	\qquad
\underset{v \in V_{G}^{in}(T)}{\mathlarger{\mathlarger{\mathlarger{\square}}}}
u(T_v)
\]
are, respectively, 
$\mathsf{Aut}\left( (T_v)_{v \in V_G^{ac}(T)}\right)$ and 
$\mathsf{Aut}\left( (T_v)_{v \in V_G^{in}(T)}\right)$
genuine cofibrations, with the latter trivial if $u$ is. Here, the automorphism groups are taken in the category in $\Fin_s \wr \Sigma_G$,
and thus admit a product description of the form
$
	\Sigma_{|\lambda_1|} \wr 
	\mathsf{Aut}(T_{v_1})
		\times \cdots \times	
	\Sigma_{|\lambda_k|} \wr 
	\mathsf{Aut}(T_{v_k})
$
as in Remark \ref{WREATHFIXED REM}.
A further application of Remark \ref{GEN_FGTRIGHT_REM}\ref{TENSORLEFT_ITEM}
yields that the required conditions need only be checked independently for the
partial pushout product indexed by each $\lambda_i$.
Thus the result follows by Remark \ref{GEN_FGTRIGHT_REM}\ref{CSPP_ITEM}.
\end{proof}

\begin{remark}\label{EXMAINLEM REM}
If 
$T \in \Omega^a[k]$ is a non-equivariant alternating tree, 
$\mathcal{P}$ is level genuine cofibrant in $\mathsf{Sym}^G(\V)$,
and
$u \colon X \to Y$ is a level genuine (trivial) cofibration in $\mathsf{Sym}^G(\V)$,
the previous result applied to
$G \cdot T = (T)_{g \in G}$,
$\iota_{!} \mathcal{P}$,
$\iota_{!} u$,
yields that the analogue of the map
\eqref{EXMAINLEM EQ}
is an $\mathsf{Aut}(G \cdot C_n) 
\simeq G^{op} \times \mathsf{Aut}(C_n) =
G^{op} \times \Sigma_n$ level genuine (trivial) cofibration,
where $C_n = \mathsf{lr}(T)$.
\end{remark}

\begin{proof}
[proof of Theorems \ref{MAINEXIST1 THM} and \ref{MAINEXIST2 THM}]
We first build a seemingly unrelated model structure.
Consider the composite adjunction below, with right adjoints on the bottom, and
where the rightmost right adjoint simply forgets structure and the leftmost right adjoint is given by evaluation.
\begin{equation}\label{MAINPFADJ EQ}
\begin{tikzcd}[column sep =5em]
	\prod_{C \in \Sigma_G}
	\mathcal{V}^{\mathsf{Aut}(C)}_{\text{gen}}
	\ar[shift left=1.5]{r}
&
	\mathsf{Sym}_{G}(\mathcal{V}) 
	\arrow[l, shift left=1.5, "\left(\text{ev}_C(\minus)\right)"] 
	\arrow[r, shift left=1.5,swap,"\mathbb{F}_G"']
&
	\mathsf{Op}_G(\mathcal{V})
	\ar[shift left=1.5]{l}
\end{tikzcd}
\end{equation}
We claim that $\mathsf{Op}_G(\mathcal{V})$ admits a (semi-)model structure with weak equivalences and fibrations defined by the composite right adjoint in 
\eqref{MAINPFADJ EQ}.
Noting that the left adjoint to 
$\left( \text{ev}_C (\minus) \right)$
is given by
$(X_D) \mapsto \coprod_{D \in \Sigma_G}
\mathsf{Hom}_{\Sigma_G} (\minus, D) 
\cdot_{\mathsf{Aut(D)}} X_D$
and using either 
\cite[Thm. 11.3.2]{Hi03} 
in the model structure case
$\mathcal{V} = \mathsf{sSet},\mathsf{sSet}_{\**}$
or
\cite[Thm. 2.2.1]{BW}
in the semi-model structure case,
one must analyze free $\mathbb{F}_G$-extension diagrams of the form
\[ 
\begin{tikzcd} 
	\mathbb{F}_G
	\left(\mathsf{Hom}_{\Sigma_G}(\minus,D)/H \cdot A \right) \arrow[d, "u"'] \arrow[r] 
&
	\P \arrow[d]
\\ 
	\mathbb{F}_G 
	\left(\mathsf{Hom}_{\Sigma_G}(\minus,D)/H \cdot B \right)
	\arrow[r]
&
	\P[u] 
\end{tikzcd} 
\]
where $D \in \Sigma_G$,
$H \leq \mathsf{Aut}(D)$,
and $u \colon A \to B$ is a generating (trivial)
cofibration in $\mathcal{V}$.

The map $\mathcal{P} \to \mathcal{P}[u]$ is then filtered as in \eqref{FILT EQ},
and since
$\mathsf{Hom}_{\Sigma_G}(C,D)/H \cdot u$
is a (trivial) cofibration in 
$\mathcal{V}^{\mathsf{Aut}(C)}_{\text{gen}}$
for all $C \in \Sigma_G$ 
(cf. Remark \ref{GEN_FGTRIGHT_REM}\ref{CELL_ITEM}), 
combining the inductive description of the filtration in (\ref{FILTRATION_LAN_LEVEL})
with Lemma \ref{EXMAINLEM LEM} shows that if
$\mathcal{P}$ is level genuine cofibrant
then 
$\mathcal{P} \to \mathcal{P}[u]$
is a level genuine cofibration, trivial whenever $u$ is.

When
$\mathcal{V} = \mathsf{sSet},\mathsf{sSet}_{\**}$,
Remark \ref{ALLCOF REM}
guarantees that any 
$\mathcal{P}$ is level genuine cofibrant.
Thus in the model (resp. semi-model) structure cases,
the necessary conditions for
\cite[Thm. 11.3.2]{Hi03} (resp. \cite[Thm. 2.2.1]{BW}) are met,
as transfinite composites of trivial cofibrations are again trivial cofibrations,
showing the existence of the (semi-)model structure.

We now turn to showing the existence of the (semi-)model structures appearing in Theorems \ref{MAINEXIST1 THM} and \ref{MAINEXIST2 THM},
which are essentially corollaries 
of the existence of that defined by
\eqref{MAINPFADJ EQ}.

Firstly, consider the projective (semi-)model structure
on $\mathsf{Op}_G(\mathcal{V})$.
This model structure is transferred from the exact same adjunction
\eqref{MAINPFADJ EQ}, except equipping the 
leftmost $\mathcal{V}^{\mathsf{Aut}(C)}$
with their naive model structures, where weak equivalences and fibrations are defined by forgetting the $\mathsf{Aut}(C)$-action,
and ignoring fixed point conditions.
The desired projective model structure thus
has both fewer generating (trivial) cofibrations
and more weak equivalences than the ``genuine projective'' model structure defined by \eqref{MAINPFADJ EQ}.
Therefore, transfinite composites of pushouts of generating projective trivial cofibrations 
are genuine projective equivalences and hence also projective equivalences, showing that the condition in 
\cite[Thm. 11.3.2(2)]{Hi03} (resp. \cite[Thm. 2.2.1]{BW})
holds,
establishing the existence of the projective (semi-)model structure. 

The general case of Theorem \ref{MAINEXIST2 THM}
with $\mathcal{F}$ an arbitrary weak indexing system
slightly refines the argument in the previous paragraph.
Namely, the inclusion
$\upgamma_! \colon 
\mathsf{Op}_{\mathcal{F}}(\mathcal{V})
\to
\mathsf{Op}_{G}(\mathcal{V})$
(which is an extension by $\emptyset$)
has the following key properties:
\begin{enumerate*}
	\item[(i)] it preserves colimits;
	\item[(ii)] it sends the generating (trivial) cofibrations
	of $\mathsf{Op}_{\mathcal{F}}(\mathcal{V})$,
	i.e. the maps
	$\mathbb{F}_{\mathcal{F}}
	\left(
	\mathsf{Hom}_{\Sigma_{\mathcal{F}}}(-,D) \cdot u
	\right)$
	with $D \in \Sigma_{\mathcal{F}}$
	and $u$ a generating (trivial) cofibration in $\mathcal{V}$,
	to generating (trivial) cofibrations
	in the genuine projective model structure
	on $\mathsf{Op}_G(\mathcal{V})$
	defined by \eqref{MAINPFADJ EQ};
	\item[(iii)] maps in $\mathsf{Op}_{\mathcal{F}}(\mathcal{V})$ which become genuine projective 
	weak equivalences in $\mathsf{Op}_{G}(\mathcal{V})$
	are $\mathcal{F}$-projective weak equivalences.
\end{enumerate*}
	Thus, if $f$ is a transfinite 
	composite of pushouts of generating trivial cofibrations
	in $\mathsf{Op}_{\mathcal{F}}(\mathcal{V})$,
	properties (i),(ii) show that 
	$\gamma_!(f)$
	is a genuine projective trivial cofibration in 
	$\mathsf{Op}_{G}(\mathcal{V})$
	and thus (iii) implies
	that $f$ is a $\mathcal{F}$-projective weak equivalence in
	$\mathsf{Op}_{\mathcal{F}}(\mathcal{V})$,
	establishing the 
	condition in 
	\cite[Thm. 11.3.2(2)]{Hi03} (resp. \cite[Thm. 2.2.1]{BW}).
	The existence of the projective (semi-)model structure on $\mathsf{Op}_{\mathcal{F}}(\mathcal{V})$ follows,
	finishing the proof of Theorem \ref{MAINEXIST2 THM}.

We now turn to Theorem \ref{MAINEXIST1 THM}.
Should it be the case that 
$(\mathcal{V},\otimes)$ has diagonals 
(which is not a requirement of Theorem \ref{MAINEXIST1 THM}),
one can simply use the inclusion
$\iota_{!} \colon \mathsf{Op}^G(\mathcal{V})
\to 
\mathsf{Op}_G(\mathcal{V})$ of \eqref{TWOADJOINTS EQ} and repeat the argument in the previous paragraph,
since $\iota_!$ satisfies (i),(ii),(iii) therein
for any choice of 
$\mathcal{F} = \{\mathcal{F}_n\}$
as in Theorem \ref{MAINEXIST1 THM}.
Otherwise, 
one can readily adapt the entire proof with only minor changes required, as follows.
First, one has the following analogue of
\eqref{MAINPFADJ EQ},
where the functors in the leftmost adjunction
are now isomorphisms,
\begin{equation}\label{MAINPFADJAL EQ}
\begin{tikzcd}[column sep =5em]
	\prod_{n \geq 0}
	\mathcal{V}^{G \times \Sigma_n^{op}}_{\text{gen}}
	\ar[shift left=1.5]{r}
&
	\mathsf{Sym}^{G}(\mathcal{V}) 
	\arrow[l, shift left=1.5, "\left(\text{ev}_n(\minus)\right)"] 
	\arrow[r, shift left=1.5,swap,"\mathbb{F}"']
&
	\mathsf{Op}^G(\mathcal{V})
	\ar[shift left=1.5]{l}
\end{tikzcd}
\end{equation}
which we use to induce a 
``genuine projective'' model structure on 
$\mathsf{Op}^G(\mathcal{V})$. This again uses
\cite[Thm. 11.3.2(2)]{Hi03} (resp. \cite[Thm. 2.2.1]{BW}),
with the main step being an analysis of 
free $\mathbb{F}$-extension diagrams in 
$\mathsf{Op}^G(\mathcal{V})$
\begin{equation}\label{ANOPUCH EQ}
	\begin{tikzcd} 
	\mathbb{F}
	\left(
	(G \times \Sigma_n^{op})/K \cdot A 
	\right) \arrow[d, "u"'] \arrow[r] 
&
	\mathcal{O} \arrow[d]
\\ 
	\mathbb{F}
	\left(
	(G \times \Sigma_n^{op})/K \cdot B 
	\right)
	\arrow[r]
&
	\mathcal{O}[u] 
\end{tikzcd} 
\end{equation}
for $K \leq G \times \Sigma_n^{op}$
and $u \colon A \to B$
a generating (trivial) cofibration of $\mathcal{V}$.
Using the identification
$\mathsf{Op}^G(\mathcal{V}) \simeq 
\mathsf{Op}(\mathcal{V}^G)$,
one can apply the filtration
(\ref{FILTRATION_LAN_LEVEL}) when $G = \**$ and 
$\mathcal{V} = \mathcal{V}^G$.
The key fact that the filtration maps 
$\mathcal{O}_{k-1}(n) \to \mathcal{O}_{k}(n)$
are $G\times \Sigma_n^{op}$-genuine cofibrations 
follows by Remark \ref{EXMAINLEM REM}
(replacing the role of Lemma \ref{EXMAINLEM LEM} in the $\mathsf{Op}_G(\mathcal{V})$ argument),
so that \cite[Thm. 11.3.2(2)]{Hi03} (resp. \cite[Thm. 2.2.1]{BW})
applies to establish the 
genuine projective (semi-)model structure on 
$\mathsf{Op}^G(\mathcal{V})$
lifted along \eqref{MAINPFADJAL EQ}.
To finish the argument,
note that, compared to this genuine projective (semi-)model structure,
a choice of 
$\mathcal{F} = \{\mathcal{F}_n\}$
as in Theorem \ref{MAINEXIST1 THM}
decreases generating (trivial) cofibrations and
increases weak equivalences,
so that the argument
in the previous paragraph concerning the
projective (semi-)model structure on
$\mathsf{Op}_{G}(\mathcal{V})$
applies mutatis mutandis.
%
\end{proof}

\section{Cofibrancy and Quillen equivalences}\label{COFIB SEC}

In this final section we prove our main result, Theorem \ref{MAINQUILLENEQUIV THM}. I.e. we show 
that there are Quillen equivalences
\[
\begin{tikzcd}[column sep =4.5em]
	\mathsf{Op}_{G}(\mathcal{V}) 
	\arrow[r, shift left=1.5,swap,"\iota^{\**}"']
&
	\mathsf{Op}^G(\mathcal{V})
	\ar[shift left=1.5]{l}{\iota_{\**}}
&
	\mathsf{Op}_{\mathcal{F}}(\mathcal{V}) 
	\arrow[r, shift left=1.5,swap,"\iota^{\**}"']
&
	\mathsf{Op}^G_{\mathcal{F}}(\mathcal{V})
	\ar[shift left=1.5]{l}{\iota_{\**}}
\end{tikzcd}
\]
In contrast to the existence of model structure results shown in \S \ref{MAINEXIST SEC},
this will require a far more careful analysis of the  
fixed-point model structures
$\mathcal{V}^G_{\mathcal{F}}$
from Definition \ref{GENUINEMS_DEF},
precluding a simple application of \cite[Thm. 2.2.2]{WY18} or \cite[Thm. 2.2.2]{BW}.
This analysis is the subject of \S \ref{FAMILY_SEC} and \S \ref{PUSHPOW SEC}, the results of which are converted 
to the setup of $G$-trees in \S
\ref{G_GRAPH_SECTION},
and culminate in 
the characterization of cofibrant objects in
$\mathsf{Op}_{G}$,
$\mathsf{Op}_{\mathcal{F}}$ given by
Lemma \ref{MAINLEM LEM}
in \S \ref{MAINTHM_PROOF_SECTION},
with this final lemma tantamount to 
Theorem \ref{MAINQUILLENEQUIV THM}.

Lastly, \S \ref{NINFTY_SECTION} discusses our models for the 
$N \mathcal{F}$-operads of Blumberg-Hill.

\subsection{Families of subgroups}
\label{FAMILY_SEC}

Throughout,
$\mathcal{F}$ denotes a \textit{family} of subgroups of a finite group $G$,
i.e. a collection of subgroups closed under conjugation and inclusion, or, equivalently
(cf. \S \ref{INDEXING_SECTION}),
a sieve 
$
\mathsf{O}_{\mathcal{F}}
	\hookrightarrow 
\mathsf{O}_G
$.

\begin{remark}
For fixed $G$, families form a lattice, ordered by inclusion, 
with meet and join given by intersection and union.
\end{remark}

We begin our discussion by recalling
the cellular fixed point condition,
originally from \cite{Gui06} and updated in \cite{Ste16},
that we use to guarantee the existence
of the genuine and $\mathcal{F}$-model structures
$\mathcal{V}^G_{\text{gen}}$,
$\mathcal{V}^G_{\mathcal{F}}$
in Definition \ref{GENUINEMS_DEF}
(see Remark \ref{GEN_FGTRIGHT_REM}\ref{CELL_ITEM} or \cite[Prop. 2.6]{Ste16}).


\begin{definition}\label{CELL DEF}
	A model category $\mathcal{V}$ is said to have 
	\textit{cellular fixed points} if,
	for all finite groups $G$ and subgroups $H,K\leq G$,
	one has that:
\begin{itemize}
	\item[(i)] fixed points $(\minus)^H \colon \mathcal{V}^G \to \mathcal{V}$ preserve direct colimits;
	\item[(ii)] fixed points $(\minus)^H$ preserve pushouts where one leg is $(G/K)\cdot f$, for $f$ a cofibration;
	\item[(iii)] for each object $X \in \mathcal{V}$, the natural map 
	$(G/K)^H \cdot X \to ((G/K) \cdot X)^H$
	is an isomorphism.
\end{itemize}
\end{definition}

This section will establish some useful properties of the $\mathcal{V}^G_{\mathcal{F}}$ model structures.

We start by strengthening the 
cellularity conditions in Definition \ref{CELL DEF}.

\begin{proposition}\label{STRONGCELL PROP}
	Let $\mathcal{V}$ be a cofibrantly generated model category with cellular fixed points. Then:
	\begin{itemize}
		\item[(i)] $(\minus)^H \colon \mathcal{V}^G \to \mathcal{V}$ preserves cofibrations and pushouts where one leg
			 is a genuine cofibration;
		\item[(ii)] if $X$ is genuine cofibrant, the map 
			$(G/K)^H \cdot X^H \to (G \cdot_K X)^H$ is an isomorphism.
	\end{itemize}
\end{proposition}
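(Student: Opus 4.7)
The plan is to prove both parts simultaneously by cellular induction, using the three conditions of Definition \ref{CELL DEF} as the propagation tools. The cornerstone observation is that by condition (iii), applying $(\minus)^H$ to any generating genuine cofibration $(G/K)\cdot i$ yields $(G/K)^H \cdot i$, which is a coproduct indexed by the discrete set $(G/K)^H$ of copies of the cofibration $i$ in $\V$, hence is itself a cofibration in $\V$. This will be the base input that the inductive arguments propagate through the cell structure.

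For part (i), I start by recalling that any genuine cofibration $f \colon A \to A'$ in $\V^G$ is a retract of a relative cell complex $\bar f \colon A \to \bar A'$ built from generating cofibrations $(G/K_\alpha)\cdot i_\alpha$. Condition (ii) ensures that $(\minus)^H$ sends each individual cell attachment pushout to a pushout, and condition (i) guarantees that transfinite compositions are preserved; combined with the base observation, this shows $\bar f^H$ is itself a relative cell complex in $\V$, so $f^H$ is a retract of a cofibration, hence a cofibration, giving the first assertion. For the pushout preservation, I first handle the case where the given leg is a relative cell complex by iterating condition (ii) cell by cell along the transfinite presentation of $\bar A' \cup_A B$ starting from $B$. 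For a general cofibration $f$ expressed as a retract of a cell complex $\bar f$, the pushout $P = A' \cup_A B$ is itself a retract of $\bar P = \bar A' \cup_A B$; after applying $(\minus)^H$, both $P^H$ and the candidate pushout $A'^H \cup_{A^H} B^H$ arise as splittings of the same canonical idempotent on $\bar P^H \simeq \bar A'^H \cup_{A^H} B^H$, forcing them to be canonically isomorphic.

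For part (ii), I reduce to the cell complex case using the retract structure and the functoriality of both sides of the natural map $(G/K)^H \cdot X^H \to (G \cdot_K X)^H$ in $X$, then induct along the cell structure of $X$. The base case $X = (G/K')\cdot A$ with $A \in \V$ collapses, after the distributivity of $G \cdot_K (\minus)$ over $\cdot$, to a $G$-set fixed-point identity for $G \cdot_K (G/K')$ combined with instances of condition (iii); the inductive step uses part (i) to commute $(\minus)^H$ past the pushouts defining the cell attachments of $X$, so that the natural map assembles from isomorphisms at each stage into an isomorphism at the transfinite colimit by condition (i). The principal obstacle is the retract manipulation for pushouts in part (i): while retracts of cofibrations are obviously cofibrations, the identification of $P^H$ with $A'^H \cup_{A^H} B^H$ is not merely a retract statement but requires recognizing both objects as splittings of one specific idempotent on $\bar P^H$; once this bookkeeping is arranged, the rest of the argument is a routine propagation along cell structures.
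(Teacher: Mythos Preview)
Your proposal is correct and follows essentially the same approach as the paper: both reduce to relative cell complexes via retracts and then run a transfinite induction driven by the three cellularity axioms, with Definition~\ref{CELL DEF}(iii) supplying the base case, (ii) the successor step, and (i) the limit step. The only real difference is that the paper dispatches retract compatibility in a single clause (``both conditions are compatible with retracts''), whereas you unpack the pushout-preservation case explicitly via the idempotent-splitting argument; that elaboration is sound (equivalently: the natural comparison map $A'^H \cup_{A^H} B^H \to P^H$ is a retract, in the arrow category, of the isomorphism $\bar A'^H \cup_{A^H} B^H \to \bar P^H$, hence itself an isomorphism), and it fills in a point the paper leaves to the reader.
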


\begin{proof}
Since both conditions are compatible with retracts, 
we are free to assume each cofibration $f\colon X \to Y$
(or, for $Y$ cofibrant, the map $\emptyset \to Y$)
is a transfinite composition
\begin{equation}\label{TRANSFCOMP EQ}
	X_0 \xrightarrow{f_0} 
	X_1 \xrightarrow{f_1}
	X_2 \xrightarrow{f_2}
	X_3 \xrightarrow{f_3} 
	\cdots
	\to Y = X_{\beta} = \colim_{\alpha < \beta} X_{\alpha}
\end{equation}
where each $f_{\alpha} \colon X_{\alpha} \to X_{\alpha+1}$
is the pushout of a generating cofibration
$(G/H) \cdot i_{\alpha}$. Both (i) and (ii) now follow by transfinite induction on $\alpha$ in the partial composite map
$X_0 \to X_{\alpha}$, with the successor ordinal case following by Def. \ref{CELL DEF}(ii)(iii) and the limit ordinal case by
Def. \ref{CELL DEF}(i). We note that (ii) also includes an obvious base case $X_0=\emptyset$.
\end{proof}

\begin{proposition}\label{FGTRIGHT PROP}
	Let $\phi \colon G \to \bar{G}$ be a homomorphism and $\mathcal{V}$ be cofibrantly generated with cellular fixed points.	
	Then the adjunction
\[
\begin{tikzcd}
	\phi_{!} = \bar{G} \cdot_G(\minus)
	\colon
	\mathcal{V}^{G}_{\mathcal{F}} \ar[shift left=1]{r}
&
	\mathcal{V}^{\bar{G}}_{\bar{\mathcal{F}}}
	\colon \ar[shift left=1]{l}
	\mathsf{res}^{\bar{G}}_G = \phi^{\**}
\end{tikzcd}
\]
is a Quillen adjunction provided that,
for any $H \in \mathcal{F}$,
we have $\phi(H) \in \bar{\mathcal{F}}$.
\end{proposition}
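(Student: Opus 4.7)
The plan is to verify that the right adjoint $\mathsf{res}^{\bar G}_G = \phi^*$ preserves fibrations and trivial fibrations, which by the adjunction suffices to establish the Quillen adjunction. The cellular fixed points hypothesis is used only to guarantee the existence of the model structures $\V^G_{\mathcal{F}}$ and $\V^{\bar G}_{\bar{\mathcal{F}}}$ via \cite[Prop. 2.6]{Ste16}; once those exist, the proof becomes a direct verification.

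The key observation is the natural fixed-point identification
\[
(\phi^* \bar X)^H = \bar X^{\phi(H)}
\]
for any $\bar X \in \V^{\bar G}$ and any subgroup $H \leq G$. This holds because the $G$-action on $\phi^* \bar X$ factors through $\phi$, so $h \cdot x = x$ for all $h \in H$ is equivalent to $g \cdot x = x$ for all $g \in \phi(H)$.

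Given this, let $\bar f \colon \bar X \to \bar Y$ be a fibration (resp. trivial fibration) in $\V^{\bar G}_{\bar{\mathcal{F}}}$. By definition, $\bar f^{\bar H}$ is a fibration (resp. trivial fibration) in $\V$ for every $\bar H \in \bar{\mathcal{F}}$. For each $H \in \mathcal{F}$, the fixed-point formula above gives $(\phi^* \bar f)^H = \bar f^{\phi(H)}$, and by hypothesis $\phi(H) \in \bar{\mathcal{F}}$, so this map is a fibration (resp. trivial fibration) in $\V$. Hence $\phi^* \bar f$ is a fibration (resp. trivial fibration) in $\V^G_{\mathcal{F}}$, which is exactly what we needed.

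There is no real obstacle here; the proposition is essentially a formal consequence of the fixed-point identification together with the definition of the family-indexed model structures. The only thing to be careful about is noting that $\phi$ need not be injective, but the fixed-point formula and the hypothesis $\phi(\mathcal{F}) \subseteq \bar{\mathcal{F}}$ handle this uniformly.
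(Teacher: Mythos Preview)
Your proposal is correct and follows essentially the same approach as the paper: observe the fixed-point identification $(\phi^*\bar X)^H \simeq \bar X^{\phi(H)}$ and conclude that the right adjoint preserves fibrations and trivial fibrations. The paper's proof is just a one-line version of what you wrote.
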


\begin{proof}
Since one has a canonical isomorphism of fixed points
$\left(\mathsf{res}(X)\right)^H \simeq X^{\phi(H)}$,
it is immediate that the right adjoint preserves fibrations and trivial fibrations.
\end{proof}

\begin{proposition}\label{FGTLEFT PROP}
	Let $\phi \colon G \to \bar{G}$ be a homomorphism and $\mathcal{V}$ be cofibrantly generated with cellular fixed points.		
	Then the adjunction
\[
\begin{tikzcd}
	\phi^{\**} = \mathsf{res}^{\bar{G}}_G
	\colon
	\mathcal{V}^{\bar{G}}_{\mathcal{\bar{F}}} \ar[shift left=1]{r}
&
	\mathcal{V}^{G}_{\mathcal{F}}
	\colon \ar[shift left=1]{l}
	\mathsf{Hom}_G(\bar{G},\minus) = \phi_{\**}
\end{tikzcd}
\]
is a Quillen adjunction provided that,
for any $H \in \bar{\mathcal{F}}$, it is 
$\phi^{-1}(H) \in \mathcal{F}$.
\end{proposition}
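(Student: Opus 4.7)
The plan is to exploit the cofibrantly generated structure of both model categories and reduce the question to a simple orbit decomposition. Recall from Remark \ref{GENCOFGEN REM} that $\mathcal{V}^{G}_{\mathcal{F}}$ (resp.\ $\mathcal{V}^{\bar{G}}_{\bar{\mathcal{F}}}$) is cofibrantly generated with generating (trivial) cofibrations of the form $(G/K)\cdot i$ for $K \in \mathcal{F}$ (resp.\ $(\bar{G}/H)\cdot i$ for $H \in \bar{\mathcal{F}}$), where $i$ ranges over the generating (trivial) cofibrations of $\mathcal{V}$. Since $\phi^{\**}$ is a left adjoint (its right adjoint is $\phi_{\**} = \mathsf{Hom}_G(\bar{G},\minus)$), it suffices to check that $\phi^{\**}$ sends these generators to (trivial) cofibrations in $\mathcal{V}^{G}_{\mathcal{F}}$.

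First I would unpack $\phi^{\**}((\bar{G}/H)\cdot i) = \phi^{\**}(\bar{G}/H)\cdot i$, reducing the problem to identifying the $G$-orbit type of the $G$-set $\phi^{\**}(\bar{G}/H)$. Decomposing into orbits, one has
\[
\phi^{\**}(\bar{G}/H) \simeq \coprod_{\bar{g} \in \phi(G)\backslash \bar{G}/H} G/K_{\bar{g}},
\]
where the stabilizer of the coset $\bar{g}H$ under the $G$-action $g \cdot \bar{g}H = \phi(g)\bar{g}H$ is
\[
K_{\bar{g}} = \{g \in G \mid \bar{g}^{-1}\phi(g)\bar{g} \in H\} = \phi^{-1}(\bar{g}H\bar{g}^{-1}).
\]
Since $\bar{\mathcal{F}}$ is closed under conjugation, $\bar{g}H\bar{g}^{-1} \in \bar{\mathcal{F}}$, and the hypothesis then guarantees $K_{\bar{g}} = \phi^{-1}(\bar{g}H\bar{g}^{-1}) \in \mathcal{F}$.

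It follows that
\[
\phi^{\**}\bigl((\bar{G}/H)\cdot i\bigr) \simeq \coprod_{\bar{g}} (G/K_{\bar{g}})\cdot i
\]
is a coproduct of generating (trivial) cofibrations of $\mathcal{V}^{G}_{\mathcal{F}}$, and hence a (trivial) cofibration. Therefore $\phi^{\**}$ preserves both cofibrations and trivial cofibrations, so $(\phi^{\**},\phi_{\**})$ is a Quillen adjunction. The argument is essentially formal once the orbit-stabilizer computation is in place, so there is no substantive obstacle; the only subtlety worth flagging is the distinction between the hypothesis $\phi^{-1}(H)\in \mathcal{F}$ for $H\in\bar{\mathcal{F}}$ used here (which governs pullback of orbit types) and the dual hypothesis $\phi(H)\in\bar{\mathcal{F}}$ for $H\in\mathcal{F}$ used in Proposition \ref{FGTRIGHT PROP} (which governs fixed points of $\phi^{\**}$).
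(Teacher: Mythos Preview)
Your proof is correct and follows essentially the same approach as the paper: both compute $\phi^{\**}(\bar{G}/H)$ via the double coset decomposition, identify the stabilizers as $\phi^{-1}(H^{\bar{g}})$, and conclude that $\phi^{\**}$ sends generating (trivial) cofibrations to coproducts of generating (trivial) cofibrations. Your presentation is slightly more explicit in invoking conjugation-closure of $\bar{\mathcal{F}}$, but the argument is the same.
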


\begin{proof}
	A choice $\{a\}$ of double coset representatives of 
	$\phi(G)\backslash \bar{G} /H$
	gives $G$-orbit representatives of
	$\bar{G}/H$, yielding the formula
	$\mathsf{res}^{\bar G}_{G}(\bar{G}/H) \simeq 
	\coprod_{[a] \in \phi(G)\backslash \bar{G} /H}
	{\bar G/(\phi(G) \cap H^{a})}$.
	Hence
        \[
              \mathsf{res}\left(\bar{G}/H \cdot f\right)
              \simeq 
              \mathsf{res}\left(\bar{G}/H\right) \cdot f
              \simeq
              \left(
                    \coprod_{[a] \in \phi(G)\backslash \bar{G} /H}
                    {G/\phi^{-1}(H^{a})}
              \right)	\cdot f
        \]
        from which it follows that the left adjoint $\mathsf{res}$ preserves generating (trivial) cofibrations.
\end{proof}

Propositions \ref{FGTRIGHT PROP} and \ref{FGTLEFT PROP}
motivate the following definition.

\begin{definition}
	Let $\phi \colon G \to \bar{G}$ be a homomorphism and $\mathcal{F}$ and $\bar{\mathcal{F}}$ families in $G$
	and $\bar{G}$. We define
\begin{align}\label{PHISTARDEF EQ}
	\phi^{\**}(\bar{\mathcal{F}})
		=&
	\{H \leq G : \phi(H) \in \bar{\mathcal{F}}\}
\\
	\phi_!(\mathcal{F})
		=&
	\{\phi(H)^{\bar{g}}\leq \bar{G} : \bar{g} \in \bar{G}, H \in \mathcal{F}\}
\\ \label{PHISTARDEF3 EQ}
	\phi_{\**}(\mathcal{F})
		=&
	\{\bar{H} \leq \bar{G} : 
	\forall_{\bar{g} \in \bar{G}} 
	\left(
	\phi^{-1}(\bar{H}^{\bar{g}}) \in \mathcal{F}
	\right)\}
\end{align}
\end{definition}

\begin{lemma}\label{REWORFAM LEM}
The $\phi^{\**}(\bar{\mathcal{F}})$, $\phi_{!}(\mathcal{F})$, $\phi_{\**}(\mathcal{F})$ just defined are 
themselves families. Furthermore
\begin{itemize}
\item[(i)] The ``provided that'' condition in Proposition \ref{FGTRIGHT PROP} holds iff 
$\mathcal{F} \subset \phi^{\**} (\bar{\mathcal{F}})$
iff
$\phi_{!}(\mathcal{F}) \subset \bar{\mathcal{F}}$.
\item [(ii)]
The ``provided that'' condition in Proposition \ref{FGTLEFT PROP} holds iff 
$\phi^{\**} (\bar{\mathcal{F}}) \subset \mathcal{F}$
iff
$\bar{\mathcal{F}} \subset \phi_{\**}(\mathcal{F})$.
\end{itemize}
\end{lemma}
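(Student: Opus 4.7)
The plan is to verify the two claims separately, noting that both reduce to straightforward unpacking once the family closure is established. The main task is not any single clever argument but rather a careful bookkeeping of what conjugation and subgroup closure do to the preimages $\phi^{-1}$ and images $\phi$.

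First, I would verify that the three collections are families. For $\phi^{\**}(\bar{\mathcal{F}})$ this is immediate: $\phi(H^g) = \phi(H)^{\phi(g)}$ handles conjugation, and $\phi(K) \leq \phi(H)$ together with subgroup-closure of $\bar{\mathcal{F}}$ handles subgroups. For $\phi_!(\mathcal{F})$ conjugation is built in by definition; for subgroup closure, given $\bar K \leq \phi(H)^{\bar g}$ with $H \in \mathcal{F}$, I would set $H' = \phi^{-1}(\bar K^{\bar g^{-1}}) \cap H$, noting that $H' \leq H$ gives $H' \in \mathcal{F}$ and, crucially, $\phi(H') = \bar K^{\bar g^{-1}}$ because every element of $\bar K^{\bar g^{-1}} \leq \phi(H)$ lifts into $H$; hence $\bar K = \phi(H')^{\bar g} \in \phi_!(\mathcal{F})$. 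For $\phi_{\**}(\mathcal{F})$ both closure properties are immediate from the fact that the defining condition $\forall_{\bar g}\,\phi^{-1}(\bar H^{\bar g}) \in \mathcal{F}$ weakens monotonically: if $\bar K \leq \bar H$ then $\phi^{-1}(\bar K^{\bar g}) \leq \phi^{-1}(\bar H^{\bar g})$.

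For (i), the conditions are nearly tautological. The hypothesis $\phi(H) \in \bar{\mathcal{F}}$ for all $H \in \mathcal{F}$ says exactly $\mathcal{F} \subset \phi^{\**}(\bar{\mathcal{F}})$ by definition of $\phi^{\**}$, and says exactly $\phi_!(\mathcal{F}) \subset \bar{\mathcal{F}}$ once one uses that $\bar{\mathcal{F}}$ is conjugation-closed to absorb the $\bar g$ in the definition \eqref{PHISTARDEF EQ} of $\phi_!(\mathcal{F})$.

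For (ii), the equivalence $\bar{\mathcal{F}} \subset \phi_{\**}(\mathcal{F})$ iff ``$\phi^{-1}(\bar H) \in \mathcal{F}$ for all $\bar H \in \bar{\mathcal{F}}$'' follows by using conjugation-closure of $\bar{\mathcal{F}}$ to eliminate the universal quantifier over $\bar g$ in \eqref{PHISTARDEF3 EQ}. The equivalence with $\phi^{\**}(\bar{\mathcal{F}}) \subset \mathcal{F}$ is the part that actually uses both closure conditions on $\mathcal{F}$: in one direction, if $H \in \phi^{\**}(\bar{\mathcal{F}})$ then $\phi(H) \in \bar{\mathcal{F}}$, so $\phi^{-1}(\phi(H)) \in \mathcal{F}$ by hypothesis, and $H \leq \phi^{-1}(\phi(H))$ gives $H \in \mathcal{F}$; conversely, if $\bar K \in \bar{\mathcal{F}}$ then $\phi(\phi^{-1}(\bar K)) \leq \bar K$, so $\phi^{-1}(\bar K) \in \phi^{\**}(\bar{\mathcal{F}}) \subset \mathcal{F}$. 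No step presents a genuine obstacle; the only thing to be careful about is keeping track of which closure property of which family is being invoked at each juncture.
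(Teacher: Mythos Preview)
Your proof is correct and follows essentially the same approach as the paper: the paper treats the result as elementary and only spells out the second iff in (ii), using exactly the same two ingredients you identify---$H \leq \phi^{-1}(\phi(H))$ together with subgroup-closure of $\mathcal{F}$, and conjugation-closure of $\bar{\mathcal{F}}$ to eliminate the quantifier over $\bar g$. You simply fill in more of the routine verifications (the family-closure checks and part (i)) that the paper omits.
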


\begin{proof}
	Since the result is elementary, we include only the proof of the second iff in (ii), which is the hardest step and illustrates the necessary arguments. This follows by the following equivalences.
\[
	\phi^{\**} (\bar{\mathcal{F}}) \subset \mathcal{F}
\Leftrightarrow
	\left( \underset{ \substack{H \leq G \\ \phi(H) \in \bar{\mathcal{F}} }}{\mathlarger{\mathlarger{\forall}}} 
	H \in \mathcal{F} \right)
\Leftrightarrow
	\left( \underset{\bar{H} \in \bar{\mathcal{F}}}{\mathlarger{\mathlarger{\forall}}}
	\phi^{-1}(\bar{H}) \in \mathcal{F}
	\right)
\Leftrightarrow
	\left( \underset{ \substack{\bar{H} \in \bar{\mathcal{F}}\\\bar{g} \in \bar{G}}}{\mathlarger{\mathlarger{\forall}}}
	\phi^{-1}(\bar{H}^{\bar{g}}) \in \mathcal{F}
	\right)
\Leftrightarrow
        \bar{\mathcal F} \subset \phi_{\**}(\mathcal F)
\]
Here the second equivalence follows since 
$H \leq \phi^{-1}(\phi(H))$ and $\mathcal{F}$ is closed under subgroups while the third equivalence follows since 
$\bar{\mathcal{F}}$ is closed under conjugation. 
\end{proof}

\begin{proposition}\label{BIQUILLENG PROP}
	Suppose that $\mathcal{V}$ is cofibrantly generated, has cellular fixed points, and is also a closed monoidal model category. 	
	Then the bifunctor
\[
	\mathcal{V}^G_{\mathcal{F}}
		\times
	\mathcal{V}^G_{\bar{\mathcal{F}}}
		\xrightarrow{\otimes}
	\mathcal{V}^G_{\mathcal{F} \cap \bar{\mathcal{F}}}
\]
	is a left Quillen bifunctor.
\end{proposition}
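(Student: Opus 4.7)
The plan is to verify the pushout-product axiom directly on generating (trivial) cofibrations, using the explicit description of the generators in $\mathcal{V}^G_{\mathcal{F}}$ recalled in Remark \ref{GENCOFGEN REM}. As explained in \cite[Cor. 4.2.5]{Ho98} (or its standard analogues), since $\otimes$ preserves colimits in each variable, it suffices to check that for generating cofibrations $G/H \cdot i$ of $\mathcal{V}^G_{\mathcal{F}}$ (with $H \in \mathcal{F}$ and $i$ a generating cofibration in $\mathcal{V}$) and generating cofibrations $G/K \cdot j$ of $\mathcal{V}^G_{\bar{\mathcal{F}}}$ (with $K \in \bar{\mathcal{F}}$ and $j$ generating in $\mathcal{V}$), the pushout-product $(G/H \cdot i) \square (G/K \cdot j)$ is a cofibration in $\mathcal{V}^G_{\mathcal{F} \cap \bar{\mathcal{F}}}$, and is trivial whenever $i$ or $j$ is.

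The key identification is the natural isomorphism
\[
(G/H \cdot i) \square (G/K \cdot j) \simeq (G/H \times G/K) \cdot (i \square j),
\]
where $G/H \times G/K$ carries the diagonal $G$-action and $i \square j$ is the pushout-product in $\mathcal{V}$. First I would verify this isomorphism, which is a formal consequence of the fact that $G/H \cdot (\minus)$ and $G/K \cdot (\minus)$ are cocontinuous and that the $\otimes$ bifunctor is compatible with the $G$-action via the diagonal.

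Next I would apply the standard double coset decomposition
\[
G/H \times G/K \simeq \coprod_{[g] \in H \backslash G / K} G/(H \cap gKg^{-1}),
\]
reducing the claim to showing that each stabilizer $L = H \cap gKg^{-1}$ lies in $\mathcal{F} \cap \bar{\mathcal{F}}$. Since $\mathcal{F}$ is closed under subgroups, $L \leq H \in \mathcal{F}$ gives $L \in \mathcal{F}$; since $\bar{\mathcal{F}}$ is closed under conjugation and subgroups, $gKg^{-1} \in \bar{\mathcal{F}}$ and hence $L \leq gKg^{-1}$ gives $L \in \bar{\mathcal{F}}$. Consequently $(G/H \times G/K) \cdot (i \square j)$ is a coproduct of maps of the form $G/L \cdot (i \square j)$ with $L \in \mathcal{F} \cap \bar{\mathcal{F}}$. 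Since $\mathcal{V}$ is a closed monoidal model category, $i \square j$ is a cofibration in $\mathcal{V}$, and is trivial whenever one of $i, j$ is; therefore each $G/L \cdot (i \square j)$ is a generating (trivial) cofibration of $\mathcal{V}^G_{\mathcal{F} \cap \bar{\mathcal{F}}}$, and so is their coproduct.

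There is no real obstacle here beyond the bookkeeping: the only subtlety is the appeal to the reduction-to-generators principle for Quillen bifunctors, which relies on the cofibrant generation of all three model structures (guaranteed by the cellular fixed points hypothesis via Remark \ref{GENCOFGEN REM}) and on the fact that the class of cofibrations (resp. trivial cofibrations) in $\mathcal{V}^G_{\mathcal{F} \cap \bar{\mathcal{F}}}$ is closed under the usual saturation operations so that checking on generators suffices.
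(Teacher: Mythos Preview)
Your proposal is correct and follows essentially the same approach as the paper: reduce to generating (trivial) cofibrations, identify $(G/H\cdot i)\square(G/K\cdot j)\simeq (G/H\times G/K)\cdot(i\square j)$, apply the double coset decomposition, and conclude using closure of families under subgroups and conjugation. The paper's proof is simply a terser version of exactly this argument.
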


\begin{proof}
	A choice $\{a\}$ of double coset representatives
	of $H \backslash G /\bar{H}$
	gives $G$-orbit representatives
	$\left\{([e],[a])\right\}$ of
	$G/H \times G/\bar{H}$,
	yielding the formula
	$G/H \times G/\bar{H}
	\simeq 
	\coprod_{[a]\in H \backslash G /\bar{H}}
	{G/H\cap \bar{H}^a}
	$.
	Hence
\[
	\left(G/H \cdot f\right) \square \left(G/\bar{H} \cdot g\right)
		\simeq
	\left(G/H \times G/\bar{H}\right) \cdot \left(f \square g\right)
		\simeq
	\left(
		\coprod_{[a]\in H \backslash G /\bar{H}}
		{\left(G/H\cap \bar{H}^a\right)} \cdot (f \square g)
	\right)
\]
and the result follows since families are closed under conjugation and subgroups.
\end{proof}

\begin{definition}\label{EXTERINT DEF}
Let $\mathcal{F}$ and $\bar{\mathcal{F}}$ be families of $G$ and $\bar{G}$, respectively.

We define their \textit{external intersection} to be the 
family of $G \times \bar{G}$ given by
\[
	\mathcal{F} \sqcap \bar{\mathcal{F}}
=
	(\pi_{G})^{\**} (\mathcal{F}) 
		\cap
	(\pi_{\bar{G}})^{\**} (\bar{\mathcal{F}})
\]
for 
$\pi_G \colon G \times \bar{G} \to G$,
$\pi_{\bar{G}} \colon G \times \bar{G} \to \bar{G}$
the projections.
\end{definition}

\begin{remark}
	Combining Proposition \ref{FGTLEFT PROP} 
	with Propositon \ref{BIQUILLENG PROP} yields that
	the following composite is a left Quillen bifunctor.
\begin{equation}\label{EXTERINTADJ EQ}
	\mathcal{V}^{G}_{\mathcal{F}}
		\times
	\mathcal{V}^{\bar{G}}_{\bar{\mathcal{F}}}
		\xrightarrow{\mathsf{res}}
	\mathcal{V}^{G \times \bar{G}}_{
	(\pi_G)^{\**}(\mathcal{F})}
		\times
	\mathcal{V}^{G \times \bar{G}}_{
	(\pi_{\bar{G}})^{\**}(\bar{\mathcal{F}})}
		\xrightarrow{\otimes}
	\mathcal{V}^{G \times \bar{G}}_{
	\mathcal{F} \sqcap \bar{\mathcal{F}}}
\end{equation}
\end{remark}

\subsection{Pushout powers}\label{PUSHPOW SEC}

That \eqref{EXTERINTADJ EQ} is a left Quillen bifunctor (and its obvious higher order analogues) is one of the key properties of pushout products of $\mathcal{F}$ cofibrations when those cofibrations (and the group) are allowed to change. However, when those cofibrations (and hence $G$) coincide there is an additional symmetric group action that  we will need to consider.

To handle these actions we will need two new axioms, 
which will concern cofibrancy and fixed point properties. We start by discussing the cofibrancy axiom.

\begin{definition}\label{COFSYMPUSHPOW}
	We say that a symmetric monoidal model category $\mathcal{V}$ has 
	\textit{cofibrant symmetric pushout powers} if,
	for each (trivial) cofibration $f$,
	the pushout product power 
	$f^{\square n}$ is a $\Sigma_n$-genuine 
	(trivial) cofibration.
\end{definition}

\begin{remark}
When $\mathcal{V}$ is cofibrantly generated
the condition in Definition \ref{COFSYMPUSHPOW} needs only be checked for generating cofibrations. 
However, the argument needed is harder than usual
(see, e.g., \cite[Lemma 2.1.20]{Ho98}) due to $(-)^{\square n}$ not preserving composition of maps:
one instead follows the argument in the proof of 
Proposition \ref{POWERF PROP} below when $G=\**$.
\end{remark}

\begin{remark}\label{CSPP_REM}
	The cofibrant symmetric pushout powers condition
	can be viewed as an adaptation of 
	the \textit{power cofibration axiom} of 
	\cite[Def. 4.5.4.2(iii)]{Lur17},
	which asks instead for 
	$f^{\square n}$
	to be a cofibration
	in the \textit{projective} model structure $\V^{\Sigma_n}_{\text{proj}}$.
	Along with some technical conditions,
	the latter axiom suffices for the existence
	of projective (semi-)model structures
	on many categories of algebraic objects,
	such as operads and commutative monoids \cite[Prop. 4.5.4.6]{Lur17} (also \cite[Prop. 6.2.2, Thm. 6.2.3]{WY18}).
	As an aside, we note that the 
	power cofibration axiom
	is quite restrictive (e.g. it fails in $\mathsf{sSet}$),
	and this has lead to the identification of a number of laxer variants
	(e.g. 
	\cite[Def. 3.1]{Wh17},
	\cite[Thm. 6.1.1]{WY18},
	\cite[Def. 2.1]{PS18b}).
	
	
	In practice, the main difference between 
	Definition \ref{COFSYMPUSHPOW} and 
	\cite[Def. 4.5.4.2(iii)]{Lur17}
	is that they are designed for different contexts:
	\cite[Def. 4.5.4.2(iii)]{Lur17}
	(and its variants) serve to build projective model structures;
	Definition \ref{COFSYMPUSHPOW} 
	serves to build fixed point model structures,
	such as the ones in Theorem \ref{MAINEXIST1 THM}.

\end{remark}


\begin{example}
      \label{SSET_CSPP_EX}
	Both $(\mathsf{sSet},\times)$ and 
	$(\mathsf{sSet}_{\**},\wedge)$ have cofibrant symmetric pushout powers.
        To see this, we note first that the case of (non-trivial) cofibrations is immediate since
        genuine cofibrations 
        are precisely the monomorphisms. 
	For the case of $f \colon X \to Y$ a trivial cofibration, it is easier to first show directly that 
	$f^{\otimes n} \colon X^{\otimes n} \to Y^{\otimes n}$
	is a trivial cofibration, 
	and then use the factorizations
	\eqref{COMPNFOLDFACT EQ}
	for $h=f$, $g=(\emptyset \to X)$, 
	in which case $f^{\otimes n} = k_n\cdots k_1$ and 
	$f^{\square n} = k_n$,
	to show by induction on $n$ that 
	$f^{\square n}$ is also a trivial cofibration.
\end{example}

We now turn to describing the symmetric power analogue of 
Definition \ref{EXTERINT DEF}.

We start with notation. Letting 
$\lambda$ be a partition 
$E = \lambda_1 \amalg\cdots \amalg \lambda_k$
of a finite set $E$, 
we write 
 $\Sigma_{\lambda} = \Sigma_{\lambda_1} \times \cdots \times
 \Sigma_{\lambda_k} \leq \Sigma_E$ for the subgroup of permutations preserving $\lambda$. 
 In addition, given $e \in E$ we write
$\lambda_e$ for the partition $E = \{e\} \amalg (E-e)$, so that $\Sigma_{\lambda_e}$ is the isotropy of $e$.

\begin{definition}\label{FLTIMESN DEF}
 Let $\mathcal{F}$ be a family of $G$,
 $E$ a finite set and $e \in E$ any fixed element.
 
We define the \textit{$n$-th semidirect power of $\mathcal{F}$} to be the family of $\Sigma_E \wr G = \Sigma_E \ltimes G^{\times E}$ given by
\[
	\mathcal{F}^{\ltimes E}
		=
	\left(
	\iota_{\Sigma_{\lambda_e} \wr G}
	\right)_{\**}
	\left(
		\left(
		\pi_{G})^{\**}\left(\mathcal{F}\right)
		\right)
	\right),
\]
where $\iota$ is the inclusion 
$\Sigma_{\lambda_e} \wr G
	\to 
\Sigma_E \wr G$
and $\pi$ the projection
$\Sigma_{\lambda_e} \wr G = \Sigma_{\{e\}} \times G \times \Sigma_{E-e} \wr G
\to G$.

More explicitly, since in \eqref{PHISTARDEF3 EQ} one needs only consider conjugates by coset representatives of $\bar{G}/\phi(G)$, when computing 
$\left( \iota_{\Sigma_{\lambda_e} \wr G}\right)_{\**}$
one needs only conjugate by coset representatives of 
$\left(\Sigma_E \wr G\right)/
\left(\Sigma_{\lambda_e} \wr G\right)
\simeq \Sigma_E/\Sigma_{\lambda_e}$, so that
\begin{equation}\label{FLTIMESN2 EQ}
	K \in \mathcal{F}^{\ltimes E} 
	\text{ iff }
	\underset{e \in E}{\forall} \pi_{G}
	\left(
		K \cap \left( \Sigma_{\lambda_e} \wr G \right)
	\right)
	\in \mathcal{F},
\end{equation}
showing that, in particular, $\mathcal{F}^{\ltimes E}$
is independent of the choice of $e \in E$.
\end{definition}

\begin{remark}
The previous definition is likely to seem mysterious at first. Ultimately, the origin of this definition
is best understood by working through this section backwards:
the study of the interactions between equivariant trees and graph families, namely Lemma \ref{KEYLEMMAGECO LEM}, requires the study of the families $\mathcal{F}^{\ltimes_G n}$ in Notation \ref{SEMIDIRG NOT}, which are variants of the $\mathcal{F}^{\ltimes n}$ construction for graph families.
It then suffices, and is notationally far more convenient, to establish the required results first for the $\mathcal{F}^{\ltimes n}$ families,
and then translate them to the $\mathcal{F}^{\ltimes_G n}$ families.
\end{remark}

\begin{proposition}\label{LTIMESPRODINC PROP}
	Writing 
	$\iota \colon \Sigma_E \times \Sigma_{\bar{E}} \to
	\Sigma_{E \amalg \bar{E}}$ for the inclusion, one has 
\[
	\mathcal{F}^{\ltimes E}
		\sqcap
	\mathcal{F}^{\ltimes \bar{E}}
		\subset
	\iota^{\**}\left(\mathcal{F}^{\ltimes E \amalg \bar{E}}\right).
\]
	Hence, the following is a left Quillen bifunctor for $\mathcal V$ as in Proposition \ref{BIQUILLENG PROP}.
\begin{equation}\label{LTIMESPRODQUI EQ}
	\Sigma_{E \amalg \bar{E}} 
	\underset{\Sigma_E \times \Sigma_{\bar{E}}}{\cdot}
	(\minus \otimes \minus)
		\colon
	\mathcal{V}^{\Sigma_E \wr G}
		\times
	\mathcal{V}^{\Sigma_{\bar{E}} \wr G}
		\to
	\mathcal{V}^{\Sigma_{E \amalg \bar{E}} \wr G}
\end{equation}
\end{proposition}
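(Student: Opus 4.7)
The plan is to verify the family inclusion directly from the explicit characterization (\ref{FLTIMESN2 EQ}), and then deduce the left Quillen bifunctor statement by decomposing (\ref{LTIMESPRODQUI EQ}) as an external product followed by induction along $\iota$. The first step is the main technical content; the second is then essentially formal.

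To verify the inclusion, I would first use the canonical identification $(\Sigma_E \times \Sigma_{\bar E}) \wr G \simeq (\Sigma_E \wr G) \times (\Sigma_{\bar E} \wr G)$ to write any subgroup $K$ of this product via its projections $K_E \leq \Sigma_E \wr G$ and $K_{\bar E} \leq \Sigma_{\bar E} \wr G$. By Definition \ref{EXTERINT DEF} combined with (\ref{FLTIMESN2 EQ}), $K \in \mathcal{F}^{\ltimes E} \sqcap \mathcal{F}^{\ltimes \bar E}$ iff for every $e \in E$ and $\bar e \in \bar E$ one has
\[
\pi_G\bigl(K_E \cap (\Sigma_{\lambda_e^E} \wr G)\bigr) \in \mathcal{F},
\qquad
\pi_G\bigl(K_{\bar E} \cap (\Sigma_{\lambda_{\bar e}^{\bar E}} \wr G)\bigr) \in \mathcal{F},
\]
where $\lambda_e^E$ denotes the partition of $E$ isolating $e$. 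To show $\iota(K) \in \mathcal{F}^{\ltimes E \amalg \bar E}$, the task by (\ref{FLTIMESN2 EQ}) is to prove $\pi_G(\iota(K) \cap (\Sigma_{\lambda_f} \wr G)) \in \mathcal{F}$ for every $f \in E \amalg \bar E$; by symmetry only $f \in E$ need be treated. Since elements of $\iota(K)$ have permutation parts in $\Sigma_E \times \Sigma_{\bar E}$, the stabilizer condition on $f$ is determined entirely within the $E$-factor, so
\[
\iota(K) \cap (\Sigma_{\lambda_f} \wr G)
=
\iota(K) \cap \bigl((\Sigma_{\lambda_f^E} \wr G) \times (\Sigma_{\bar E} \wr G)\bigr),
\]
whose projection onto $\Sigma_E \wr G$ is contained in $K_E \cap (\Sigma_{\lambda_f^E} \wr G)$. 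Applying $\pi_G$ and invoking closure of $\mathcal{F}$ under subgroups gives the desired containment.

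For the left Quillen bifunctor claim, I decompose (\ref{LTIMESPRODQUI EQ}) as
\[
\V^{\Sigma_E \wr G}_{\mathcal{F}^{\ltimes E}} \times \V^{\Sigma_{\bar E} \wr G}_{\mathcal{F}^{\ltimes \bar E}}
\xrightarrow{\otimes}
\V^{(\Sigma_E \wr G) \times (\Sigma_{\bar E} \wr G)}_{\mathcal{F}^{\ltimes E} \sqcap \mathcal{F}^{\ltimes \bar E}}
\xrightarrow{\iota_{!}}
\V^{\Sigma_{E \amalg \bar E} \wr G}_{\mathcal{F}^{\ltimes E \amalg \bar E}}.
\]
The first arrow is a left Quillen bifunctor by the instance of (\ref{EXTERINTADJ EQ}) for the groups $\Sigma_E \wr G$ and $\Sigma_{\bar E} \wr G$; the second is a left Quillen functor by Proposition \ref{FGTRIGHT PROP}, whose hypothesis is exactly the family inclusion just established. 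The main obstacle is unraveling the nested semidirect product identifications to correctly track how subgroups, stabilizers, and the various projections interact; this is combinatorial bookkeeping made intricate by notation, but involves no conceptual difficulty beyond closure of families under subgroups.
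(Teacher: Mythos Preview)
Your proposal is correct and follows essentially the same approach as the paper: verify the family inclusion via the explicit characterization (\ref{FLTIMESN2 EQ}) by reducing the stabilizer condition at $f \in E$ to the $\Sigma_E \wr G$-factor, then obtain the left Quillen bifunctor by composing (\ref{EXTERINTADJ EQ}) with Proposition~\ref{FGTRIGHT PROP}. The only minor difference is that the paper observes the equality $\pi_G\bigl(K \cap (\Sigma_{\lambda_e} \wr G)\bigr) = \pi_G\bigl(\pi_{\Sigma_E \wr G}(K) \cap (\Sigma_{\lambda_e^E} \wr G)\bigr)$ directly, whereas you argue a containment and then invoke closure of $\mathcal{F}$ under subgroups; in fact the containment you note is an equality (for $K \leq A \times B$ and $A' \leq A$ one has $\pi_A(K \cap (A' \times B)) = \pi_A(K) \cap A'$), so the closure step is not actually needed.
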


\begin{proof}
	Let 
	$K \in 
	\mathcal{F}^{\ltimes E}
		\sqcap
	\mathcal{F}^{\ltimes \bar{E}}	
	$
	and $e \in E$. 
	We write $\lambda_e$ for the partition of $E \amalg \bar{E}$
	and $\lambda_e^E$ for the partition of $E$.
	One then has
\[
\pi_G
\left(
	K \cap \left( \Sigma_{\lambda_e} \wr G \right) \right)
	=
\pi_G
\left(
	\pi_{\Sigma_E \wr G}(K)
	\cap \left( \Sigma_{\lambda_e^E} \wr G \right)
\right),
\]
where on the right we write
$\pi_{\Sigma_E \wr G} \colon
\Sigma_E \wr G \times \Sigma_{\bar{E}} \wr G
\to 
\Sigma_E \wr G$
and 
$\pi_G \colon \Sigma_{\lambda^E_e} \wr G
=\Sigma_{\{e\}} \times G \times \Sigma_{E-e} \wr G
\to G$. Therefore $K$ 
satisfies \eqref{FLTIMESN2 EQ} for 
$\mathcal{F}^{\ltimes E \amalg \bar{E}}$
since 
$\pi_{\Sigma_E \wr G}(K)$ does so for 
$\mathcal{F}^{\ltimes E}$.
The case of $e \in \bar{E}$ is identical.

\eqref{LTIMESPRODQUI EQ} simply combines 
the left Quillen bifunctor
\eqref{EXTERINTADJ EQ} with 
Proposition \ref{FGTRIGHT PROP}.
\end{proof}

\begin{proposition}\label{POWERF PROP}
	Suppose that $\mathcal{V}$ is a cofibrantly generated closed monoidal model category with cellular fixed points and with cofibrant symmetric pushout powers.
	
	Then, for every $n \geq 1$ and cofibration (resp. trivial cofibration) $f$ of $\mathcal{V}^{G}_{\mathcal{F}}$,
	one has that $f^{\square n}$ is a cofibration (trivial cofibration) of $\mathcal{V}^{\Sigma_n \wr G}_{\mathcal{F}^{\ltimes n}}$.
\end{proposition}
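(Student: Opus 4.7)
The plan is a two-stage argument: a reduction to generating cofibrations, followed by an explicit orbit-decomposition computation using the hypothesis that $\mathcal V$ has cofibrant symmetric pushout powers.

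First, I would reduce to the case where $f$ is one of the generating (trivial) cofibrations $G/H\cdot i$ of $\mathcal V^G_{\mathcal F}$ (Remark \ref{GENCOFGEN REM}), with $H\in\mathcal F$ and $i$ a generating (trivial) cofibration of $\mathcal V$. Writing an arbitrary (trivial) cofibration $f$ as a retract of a transfinite composition of pushouts of such generating maps, the standard symmetric pushout-product filtration presents $f^{\square n}$ as a transfinite composition of pushouts of equivariantly induced cells
\[
\Sigma_n\cdot_{\Sigma_{n_1}\times\cdots\times\Sigma_{n_k}}\bigl(g_{\alpha_1}^{\square n_1}\square\cdots\square g_{\alpha_k}^{\square n_k}\bigr),\qquad n_1+\cdots+n_k=n,
\]
where each $g_{\alpha_j}$ is a generating (trivial) cofibration. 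Iterated application of Proposition \ref{LTIMESPRODINC PROP} combines the individual $\Sigma_{n_j}\wr G$-equivariant cofibrations $g_{\alpha_j}^{\square n_j}$ into a cofibration in $\mathcal V^{\Sigma_n\wr G}_{\mathcal F^{\ltimes n}}$, thus reducing the problem to a single generating cofibration $f=G/H\cdot i$.

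For the generating case, distributivity of $\otimes$ over copowers yields a $\Sigma_n\wr G$-equivariant identification $f^{\square n}\cong(G/H)^{\times n}\cdot i^{\square n}$, where $\Sigma_n\wr G$ acts on $(G/H)^{\times n}$ naturally and on $i^{\square n}$ through the projection $\Sigma_n\wr G\twoheadrightarrow\Sigma_n$. The cofibrant symmetric pushout powers hypothesis applied to $i$ in $\mathcal V$ itself says $i^{\square n}$ is a $\Sigma_n$-genuine (trivial) cofibration. Decomposing $(G/H)^{\times n}$ into $\Sigma_n\wr G$-orbits, $f^{\square n}$ becomes a coproduct of maps $(\Sigma_n\wr G)/K_x\cdot i^{\square n}$ indexed by orbit representatives $x$ with isotropy $K_x$, so it suffices to show each $K_x\in\mathcal F^{\ltimes n}$: by Proposition \ref{FGTLEFT PROP} the restriction of $i^{\square n}$ along $K_x\hookrightarrow\Sigma_n\wr G\twoheadrightarrow\Sigma_n$ is a genuine $K_x$-cofibration, and by Proposition \ref{FGTRIGHT PROP} the induction $(\Sigma_n\wr G)\cdot_{K_x}(-)$ is left Quillen to $\mathcal V^{\Sigma_n\wr G}_{\mathcal F^{\ltimes n}}$ exactly when $K_x\in\mathcal F^{\ltimes n}$.

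To verify this last condition, for $x=(g_1H,\ldots,g_nH)\in(G/H)^{\times n}$ a direct computation shows that $K_x\cap(\Sigma_{\lambda_e}\wr G)$ consists of pairs $(\sigma,(h_1,\ldots,h_n))$ with $\sigma(e)=e$ and $h_e\in g_eHg_e^{-1}$, so $\pi_G(K_x\cap(\Sigma_{\lambda_e}\wr G))=H^{g_e^{-1}}\in\mathcal F$, and criterion (\ref{FLTIMESN2 EQ}) yields $K_x\in\mathcal F^{\ltimes n}$. The main obstacle will be the reduction of the first step: although symmetric pushout-product filtrations are classical, the bookkeeping of the combined $\Sigma_n$- and $G$-actions on the iterated tensor factors, and of the corresponding isotropy families $\mathcal F^{\ltimes n_j}$ at each level, must be tracked carefully through each iterative application of Proposition \ref{LTIMESPRODINC PROP}.
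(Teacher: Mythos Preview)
Your approach is essentially the same as the paper's: reduce to generating cofibrations via the symmetric pushout-product filtration, then compute explicitly for $f=G/H\cdot i$. Two minor points of difference are worth noting. First, in the generating case the paper observes directly that $(G/H)^{\times n}\simeq(\Sigma_n\wr G)/(\Sigma_n\wr H)$ is a single transitive $\Sigma_n\wr G$-set (since already $G^{\times n}$ acts transitively), so rather than decomposing into orbits and checking each $K_x$, one only needs $\Sigma_n\wr H\in\mathcal F^{\ltimes n}$, which is immediate from~(\ref{FLTIMESN2 EQ}); your orbit-by-orbit computation is correct but unnecessary. Second, the paper's reduction step is organized as a transfinite induction on the partial composites $\bar f_\beta$, using at each successor stage only the \emph{two}-factor filtration $k_0,\ldots,k_n$ of $(hg)^{\square n}$ with $g=\bar f_\beta$, $h=f_\beta$; this avoids the multi-factor cells $g_{\alpha_1}^{\square n_1}\square\cdots\square g_{\alpha_k}^{\square n_k}$ and the associated bookkeeping you flagged as the main obstacle, replacing it with a single appeal to Proposition~\ref{LTIMESPRODINC PROP} per step together with the induction hypothesis.
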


Our proof of Proposition \ref{POWERF PROP} will essentially repeat the main argument in the proof of
\cite[Thm. 1.2]{Pe16}.
However, both for the sake of completeness and to stress that the argument is independent of the (fairly technical) model structures in \cite{Pe16}, we include an abridged version of the proof below, the key ingredient 
of which is that \eqref{LTIMESPRODQUI EQ} is a left Quillen bifunctor.

\begin{proof}
	Consider first the case of a generating
	(trivial) cofibration
	$i = (G/H) \cdot \bar{\imath}$, $H\in \mathcal{F}$, so that 
\begin{equation}\label{GENCOFWR EQ}
	i^{\square n} = 
	(G/H)^{\times n} \cdot \bar{\imath}^{\square n}
	\simeq \Sigma_n \wr G 
	\underset{\Sigma_n \wr H}{\cdot} \bar{\imath}^{\square n},
\end{equation}
	where the action of 
	$\Sigma_n \wr G$
	(resp. $\Sigma_n \wr H$)
	on $\bar{\imath}^{\square n}$
	in the second (resp. third) term
	is given by the projection to $\Sigma_n$.
	To justify the second identification 
	in \eqref{GENCOFWR EQ},
	note that the inclusion 
	$\bar{\imath}^{\square n}
	\to 
	(G/H)^{\times n} \cdot \bar{\imath}^{\square n}$
	onto the 
	$([e],\cdots,[e])$
	component
	is $(\Sigma \wr H)$-equivariant
	and thus induces
	a $(\Sigma \wr G)$-equivariant map 
	$\Sigma_n \wr G 
	\cdot_{\Sigma_n \wr H}
	\bar{\imath}^{\square n}
	\to 
	(G/H)^{\times n} \cdot \bar{\imath}^{\square n}$.
	This latter map is an isomorphism since,
	non-equivariantly,
	both sides are a coproduct of
	$|\Sigma_n \wr G : \Sigma_n \wr H|
	= |G \colon H|^{\times n}$
	copies of $\bar{\imath}^{\square n}$.
	Next, note that 
	$\bar{\imath}^{\square n}$
	is a $\Sigma_n$-genuine (trivial) cofibration
	by the cofibrant symmetric pushout powers assumption
	and thus, by Proposition \ref{FGTLEFT PROP},
	also a 
	$(\Sigma_n \wr H)$-genuine (trivial) cofibration. 
	Thus, since 
	$\Sigma_n \wr H \in \mathcal{F}^{\ltimes n}$,
	Proposition \ref{FGTRIGHT PROP} implies that
	$i^{\square n}$ is a $\mathcal{F}^{\ltimes n}$ 
	(trivial) cofibration, as desired.

	For the general case, we start by making the key observation that,
	for composable arrows 
	$\bullet \xrightarrow{g} \bullet \xrightarrow{h} \bullet$,
	the $n$-fold pushout product $(hg)^{\square n}$
	has a $\Sigma_n$-equivariant factorization
\begin{equation}\label{COMPNFOLDFACT EQ}
	\bullet
		\xrightarrow{k_0}
	\bullet
		\xrightarrow{k_1}
	\cdots
		\xrightarrow{k_n}
	\bullet
\end{equation}
where each $k_r$, $0 \leq r \leq n$, fits into a pushout diagram
\begin{equation}\label{COMPNFOLDFACTPUSH EQ}
\begin{tikzcd}
	\bullet \ar{r} \ar{d}[swap] 
	{\Sigma_n \underset{\Sigma_{n-r} \times \Sigma_r}
	{\cdot}\left( g^{\square n-r} \square h^{\square r} \right)} 
	\ar[dr,phantom, "\ulcorner", near start]
	&
	\bullet \ar{d}{k_r}
\\
	\bullet \ar{r} 
	&
	\bullet.
\end{tikzcd}
\end{equation}
Briefly, \eqref{COMPNFOLDFACT EQ} follows from
a filtration 
$P_0 \subset P_1 \subset \cdots \subset P_n$
of the poset $P_n = (0 \to 1 \to 2)^{\times n}$ 
where $P_0$ consists of ``tuples with at least one $0$-coordinate'' and $P_r$ is obtained from $P_{r-1}$ by adding the ``tuples with $n-r$ $1$-coordinates and $r$ $2$-coordinates''.
Additional details concerning this filtration appear in the proof of \cite[Lemma 4.8]{Pe16}.

The general proof now follows by writing $f$ as a retract of a transfinite composition of pushouts of generating (trivial) cofibrations as in \eqref{TRANSFCOMP EQ}.
As usual, retracts preserve weak equivalences,
and we can hence assume that there is an ordinal $\kappa$
and $X_{\bullet} \colon \kappa \to \mathcal{V}^G$
such that 
\begin{enumerate*}
\item[(i)] 
$f_{\beta} \colon X_{\beta} \to X_{\beta+1}$
is the pushout of a generating (trivial) cofibration $i_{\beta}$;
\item[(ii)] 
$\colim_{\alpha < \beta} X_{\alpha} \xrightarrow{\simeq} X_{\beta}$ for limit ordinals $\beta < \kappa$;
\item[(iii)] setting 
$X_{\kappa} = \colim_{\beta < \kappa} X_{\beta}$, 
$f$ equals the transfinite composite $X_0 \to X_{\kappa}$.
\end{enumerate*}

We argue by transfinite induction on $\kappa$.
Writing $\bar{f}_{\beta} \colon X_0 \to X_{\beta}$ for the partial composites, it suffices to check that the natural transformation of $\kappa$-diagrams (rightmost map not included)
\[
\begin{tikzcd}
		Q^n(\bar{f}_{1}) \ar{d}[swap]{\bar{f}_1^{\square n}} \ar{r}
	&
		Q^n(\bar{f}_{2}) \ar{d}[swap]{\bar{f}_2^{\square n}} \ar{r}
	&
		Q^n(\bar{f}_{3}) \ar{d}[swap]{\bar{f}_3^{\square n}} \ar{r}
	&
		Q^n(\bar{f}_{4}) \ar{d}[swap]{\bar{f}_4^{\square n}} \ar{r}
	&
		\cdots \ar{r}
	&
		Q^n(\bar{f}_{\kappa}) \ar{d}{\bar{f}_{\kappa}^{\square n}
		=\colim_{\beta < \kappa} \bar{f}_{\beta}^{\square n}}
\\
		X_1^{\otimes n} \ar{r}
	&
		X_2^{\otimes n} \ar{r}
	&
		X_3^{\otimes n} \ar{r}
	&
		X_4^{\otimes n} \ar{r}
	&
		\cdots \ar{r}
	&
		X_{\kappa}^{\otimes n},
\end{tikzcd}
\]
is (trivial) $\kappa$-cofibrant, i.e. that the maps 
$Q^n(\bar{f}_{\beta})
\amalg_{\colim_{\alpha < \beta} Q^n(\bar{f}_{\alpha}) }
\colim_{\alpha < \beta} X_{\alpha}^{\otimes n} 
	\to
X_{\beta}^{\otimes n} 
$ are (trivial) cofibrations in 
$\mathcal{V}^{\Sigma_n \wr G}_{\mathcal{F}^{\ltimes n}}$.
Condition (ii) above implies that this map is an isomorphism for $\beta$ a limit ordinal 
while for $\beta+1$ a successor ordinal it is the map
$Q^n(\bar{f}_{\beta+1})
\amalg_{Q^n(\bar{f}_{\beta}) }
X_{\beta}^{\otimes n} 
	\to
X_{\beta+1}^{\otimes n}$.
But since 
$Q^n(\bar{f}_{\beta+1}) \to Q^n(\bar{f}_{\beta+1})
\amalg_{Q^n(\bar{f}_{\beta}) }
X_{\beta}^{\otimes n}$ 
is precisely the map $k_0$ of \eqref{COMPNFOLDFACT EQ} for 
$g=\bar{f}_{\beta}$, $h=f_{\beta}$, this last map is the composite $k_nk_{n-1}\cdots k_1$ so that the result now follows from \eqref{COMPNFOLDFACTPUSH EQ} 
together with the left Quillen bifunctor
\eqref{LTIMESPRODQUI EQ}
since:
\begin{enumerate*}
\item[(i)] the induction hypothesis shows the cofibrancy of  $\bar{f}_{\beta}^{\square n-r}$;
\item[(ii)] the cofibrancy of $i_{\beta}^{\square r}$
together with the fact that 
$f_{\beta}^{\square r}$ is a pushout of $i_{\beta}^{\square r}$
(cf. \cite[Lemma 4.11]{Pe16})
 imply the cofibrancy of $f_{\beta}^{\square r}$.
\end{enumerate*}
\end{proof}

We now turn to discussing the fixed points of
pushout powers $f^{\square n}$.

Firstly, we assume throughout the following discussion that 
$(\mathcal{V},\otimes)$
has diagonal maps, as in
Remark \ref{FINSURJ REM}.
In particular, one has compatible
$\Sigma_n$-equivariant
maps $X \to X^{\otimes n}$.

Consider now a $K$-object
$(X_e)_{e \in E}$ in $(\Fin_s \wr \mathcal{V})^K$ for some finite group $K$.
Explicitly, this consists of an action of $K$ on the indexing set $E$ together with suitably associative and unital isomorphisms
$X_e \to X_{ke}$
for each $(e,k) \in E \times K$.
Moreover, writing $K_e$ for the isotropy of $e \in E$,
note that the induced fixed point isomorphism
$X_e^{K_e} \to X_{k e}^{K_{ke}}$
does not depend on the choice of coset representative $k \in k K_e$,
and we will thus abuse notation
by writing
$X_{[e]}^{K_{[e]}} = X_f^{K_f}$ for an arbitary choice of representative $f \in [e] = Ke$
(more formally, we mean that 
$X_{[e]}^{K_{[e]}} = \left(\coprod_{f\in[e]}
X_f^{K_f}\right)/\Sigma_{[e]}$).

Diagonal maps then induce canonical composites
(generalizing the twisted diagonals discussed following Remark \ref{REFLCOREFL REM})
\[
	X_{[e]}^{K_{[e]}}
\to
	\left( X_{[e]}^{K_{[e]}} \right)^{\otimes [e]}
\simeq
	\bigotimes_{f \in [e]} X_f^{K_f}
\to
	\bigotimes_{f \in [e]} X_f,
\]
leading to the following axiom.

\begin{definition}\label{CARTFIX DEF}
We say that a symmetric monoidal category
with diagonals $\mathcal{V}$ has \textit{cartesian fixed points} if the canonical maps
\begin{equation}\label{CARTFIX EQ}
\begin{tikzcd}
\bigotimes_{[e] \in E/K} X_{[e]}^{K_{[e]}}
	\ar{r}{\simeq} &
\left( \bigotimes_{e \in E} X_e \right)^K
\end{tikzcd}
\end{equation}
are isomorphisms for all $(X_e)_{e \in E}$ in $(\mathsf F_s \wr \V)^K$ for all finite groups $K$.
\end{definition}

\begin{remark}
As the name implies, the condition in the previous definition is automatic for cartesian 
$\mathcal{V}$. Moreover, this condition is easily seen to hold for $\mathcal{V} = \mathsf{sSet}_{\**}$.

The condition \eqref{CARTFIX EQ} naturally breaks down into two conditions.

The first condition, which makes sense in the absence of diagonals, corresponds to the case where $K$ acts trivially on $E$,
and says that $X^K \otimes Y^K \xrightarrow{\simeq} (X \otimes Y)^K$, for $X,Y \in \mathcal{V}^K$.

The second condition, corresponding to the case where $K$ acts transitively, 
concerns the fixed points of what is often called the norm object
$N_{K_e}^K X_e \simeq \bigotimes_{e \in E} X_e$.

These two conditions can roughly
be viewed as multiplicative analogues 
of the two parts of Proposition \ref{STRONGCELL PROP},
though now without cofibrancy requirements.
In fact, if one modifies
Definition \ref{CARTFIX DEF} 
by requiring that
\eqref{CARTFIX EQ}
be an isomorphism only when the
$X_e$ are $K_e$-cofibrant,
it is not hard to show that
this modified condition can be deduced from
the requirement that $\mathcal{V}$ be strongly
cofibrantly generated (i.e. that the domains/codomains of the (trivial) generating cofibrations be cofibrant)
together with
isomorphisms
$X^{\otimes (G/H)^K} \xrightarrow{\simeq} 
\left( X^{\otimes G/H} \right)^K$
for $X \in \mathcal{V}$
(i.e. a power analogue of 
Definition \ref{CELL DEF} (iii)).
\end{remark}

\begin{proposition}\label{FIXEDPUSH PROP}
	Suppose that $\mathcal{V}$ is as in Proposition \ref{POWERF PROP},
	and also has diagonals and cartesian fixed points.
	Let $K \leq \Sigma_n \wr G$ be a subgroup, 
	$f \colon X \to Y$ a map in $\mathcal{V}^G$,
	and consider the natural maps (in the arrow category)
\begin{equation}\label{FIXEDPUSH EQ}
	\underset{[i] \in n/K}{\mathlarger{\mathlarger{\square}}}
	f_{[i]}^{K_{[i]}}
\to
	\left( f^{\square n} \right)^K.
\end{equation}
If $f$ is a genuine cofibration between genuine cofibrant objects then 
\eqref{FIXEDPUSH EQ} is an isomorphism.
\end{proposition}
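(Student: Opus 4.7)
The plan is to reduce the statement to the case of a single generating cofibration $f = G/H \cdot \bar{\imath}$ via the usual cell-complex machinery, and then check the isomorphism by hand using the cellular fixed points and cartesian fixed points axioms.

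First I would note that the source of the map (\ref{FIXEDPUSH EQ}) is, by Proposition \ref{POWERF PROP}, a $\mathcal F^{\ltimes n}$-genuine cofibration (as an iterated pushout product of cofibrations between cofibrant objects, once we verify the left side fits into the framework), and so $(-)^K$ commutes with the relevant pushouts and transfinite compositions on both sides by Proposition \ref{STRONGCELL PROP}. Writing $f$ as a retract of a transfinite composition of pushouts of generating genuine $G$-cofibrations $G/H \cdot \bar{\imath}$ (with cofibrancy of $X$ playing the role of a base case $\emptyset \to X$), I can then follow exactly the induction scheme used in the proof of Proposition \ref{POWERF PROP}. The filtration (\ref{COMPNFOLDFACT EQ}) together with the pushout squares (\ref{COMPNFOLDFACTPUSH EQ}) reduces the successor-ordinal step to analyzing products $\Sigma_n \cdot_{\Sigma_{n-r} \times \Sigma_r} (g^{\square n-r} \square h^{\square r})$, and the left Quillen bifunctor (\ref{LTIMESPRODQUI EQ}) (with the family inclusion $\mathcal F^{\ltimes n-r} \sqcap \mathcal F^{\ltimes r} \subset \iota^{\**}\mathcal F^{\ltimes n}$ of Proposition \ref{LTIMESPRODINC PROP}) ensures the same cofibrancy hypotheses are maintained inductively. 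Hence both sides of (\ref{FIXEDPUSH EQ}) are genuine cofibrations to which $(-)^K$ applies well, and by induction it suffices to verify the isomorphism for a single generating cofibration $f = G/H \cdot \bar{\imath}$ between cofibrant objects.

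For such $f$, one computes
\[
f^{\square n} \simeq (G/H)^{\times n} \cdot \bar{\imath}^{\square n} \simeq \Sigma_n \wr G \underset{\Sigma_n \wr H}{\cdot} \bar{\imath}^{\square n},
\]
regarded as a map in $\mathcal V^{\Sigma_n \wr G}$. To compute its $K$-fixed points I would decompose $(G/H)^{\times n}$ into $K$-orbits: by the double coset formula for $K \leq \Sigma_n \wr G$ acting on $(G/H)^{\times n}$, orbits are classified by pairs consisting of a $K$-orbit $[i] \in n/K$ together with, for each $[i]$, a representative of a double coset of $K_{[i]}$ acting on $G/H$. Writing $X = \bar{X}$, $Y = \bar{Y}$ for the values of $\bar{\imath}$, Proposition \ref{STRONGCELL PROP} (ii) gives the crucial orbit-by-orbit identification $((G/H)^{\times n} \cdot Z)^K \simeq ((G/H)^{\times n})^K \cdot Z^{K_{\bullet}}$ for $Z = \bar{X}^{\otimes n}, \bar{Y}^{\otimes n}$ (and for any cofibrant term appearing in the cube decomposition of $\bar{\imath}^{\square n}$). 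The cartesian fixed points axiom (\ref{CARTFIX EQ}) then rewrites $(\bar{X}^{\otimes n})^{K_{\bullet}}$ and $(\bar{Y}^{\otimes n})^{K_{\bullet}}$ as $\bigotimes_{[i]} \bar{X}_{[i]}^{K_{[i]}}$ and $\bigotimes_{[i]} \bar{Y}_{[i]}^{K_{[i]}}$. Assembling these identifications through the cube that defines the pushout product, one recognises exactly the left-hand side of (\ref{FIXEDPUSH EQ}), namely $\square_{[i]} f_{[i]}^{K_{[i]}}$.

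The main obstacle I anticipate is the combinatorial bookkeeping in this last paragraph: the group $K$ is a general subgroup of $\Sigma_n \wr G$ mixing the $\Sigma_n$-action on the index set $\{1,\ldots,n\}$ with the $G$-factors, so one must carefully track how the $K$-orbits on $(G/H)^{\times n}$ interact with the cube filtration of $\bar{\imath}^{\square n}$ and with the tensor-product decomposition of the cartesian fixed points. In particular, the matching of the LHS pushout product indexed by $[i] \in n/K$ with the $K$-fixed points of the RHS cube-filtration must be done compatibly across all cube vertices simultaneously; once this compatibility is pinned down, the axioms recalled above do all of the remaining work.
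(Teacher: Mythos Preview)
Your approach differs from the paper's and, as written, has a gap in the reduction step.

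The paper inducts on $n$, not on the cell structure of $f$. The key move is to apply the filtration (\ref{COMPNFOLDFACT EQ}) with the specific choice $g = (\emptyset \to X)$ and $h = f$, rather than your choice $g = \bar f_\beta$, $h = f_\beta$. With this choice $g^{\square n-r}$ is just $\emptyset \to X^{\otimes n-r}$, so after dropping the last map $k_n$ one obtains a filtration of the \emph{source} of $f^{\square n}$ whose layers are pushouts of
$\Sigma_n \cdot_{\Sigma_{n-i}\times\Sigma_i}\bigl(X^{\otimes n-i}\otimes f^{\square i}\bigr)$
with $i<n$. Taking $K$-fixed points of each layer, Proposition \ref{STRONGCELL PROP}(ii) splits the induced object over $K$-invariant partitions $n=A\amalg B$; the cartesian fixed points axiom (\ref{CARTFIX EQ}) handles $(X^{\otimes A})^K$; and the induction hypothesis on $|B|=i<n$ handles $(f^{\square B})^K$. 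These are then identified with the corresponding layers of the analogous filtration of the source of $\square_{[i]\in n/K}\, f_{[i]}^{K_{[i]}}$. The targets already agree by (\ref{CARTFIX EQ}), and the argument never decomposes $f$ into cells.

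In your scheme the filtration pieces $\Sigma_n \cdot_{\Sigma_{n-r}\times\Sigma_r}(g^{\square n-r}\square h^{\square r})$ involve a general cofibration $g$ and a pushout of a generating cofibration $h$ simultaneously. The assertion that this ``reduces to a single generating cofibration'' is not justified: you would need a compatible filtration of the \emph{left} side of (\ref{FIXEDPUSH EQ}) and to know that mixed pushout products of maps already satisfying the isomorphism again satisfy it---this is essentially Corollary \ref{FIXEDPUSH COR}, which is derived \emph{from} the proposition, not before it. Even if the reduction were granted, your base case must still compute $K$-fixed points of the cube defining $\bar\imath^{\square n}$ with $K$ permuting the coordinates; unwinding that is itself an induction on $n$ of exactly the shape the paper uses, so the detour through generating cofibrations buys nothing and introduces the unresolved combinatorial matching you flag.
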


At first sight, it may seem that 
the desired isomorphism
\eqref{FIXEDPUSH EQ}
should be an immediate consequence of \eqref{CARTFIX EQ}. However, the real content here is that the two pushout products in 
\eqref{FIXEDPUSH EQ} are computed over cubes of different sizes. Namely, while the right hand side is computed using the cube
$(0 \to 1)^{\times n}$,
the left hand side is computed over the fixed point cube
$\left((0 \to 1)^{\times n} \right)^K
\simeq (0 \to 1)^{\times n/K}$,
formed by those tuples
whose coordinates coincide if their indices are in the same coset of $n/K$.



\begin{example}
When $n=3$ and $n/K = \left\{\{1,2\},\{3\}\right\}$ the fixed subposet $(0 \to 1)^{\times n/K}$ is displayed on the right below.
\[
\begin{tikzcd}[column sep=1em, row sep=1em]
	& 000 \ar{rr} \ar{ld} \ar{dd} && 010 \ar{ld} \ar{dd}&
	&&&& 000 \ar{dd} \ar{rd}
\\
	100 \ar[crossing over]{rr} \ar{dd} && 110 &&
	&&&& & 110 \ar{dd}
\\
	& 001 \ar{rr} \ar{ld} && 011 \ar{ld}&
	&&&& 001 \ar{rd}
\\
	101 \ar{rr} && 111 \ar[leftarrow,crossing over]{uu} &&
	&&&& & 111 
\end{tikzcd}
\]
\end{example}


\begin{proof}[proof of Proposition \ref{FIXEDPUSH PROP}]
This will follow by induction on $n$. The base case $n=1$ is obvious.

Moreover, it is clear from \eqref{CARTFIX EQ} that \eqref{FIXEDPUSH EQ}, which is a map of arrows, is an isomorphism on the target objects, hence the real claim is that this map is also an isomorphism on sources.

We now note that, 
by considering \eqref{COMPNFOLDFACT EQ} for
$g= (\emptyset \to X)$, $h=f$,
and removing the last map $k_n$,
one obtains a filtration of the source of $f^{\square n}$.
Applying $(\minus)^K$ to the leftmost map in 
\eqref{COMPNFOLDFACTPUSH EQ}
one has isomorphisms
\begin{align*}
	\left(
	\Sigma_n \underset{\Sigma_{n-i} \times \Sigma_i}
	{\cdot} X^{\otimes n-i} \otimes f^{\square i}
	\right)^K
\simeq &
	\coprod_{\substack{n/K=A/K \amalg B/K \\
	|A|=n-i,|B|=i}}
	\left( X^{\otimes A} \otimes f^{\square B} \right)^K
\simeq
	\coprod_{\substack{n/K=A/K \amalg B/K \\
	|A|=n-i,|B|=i}} 
	\left( X^{\otimes A}\right)^K \otimes \left( f^{\square B} \right)^K
\\
\simeq &
	\coprod_{\substack{n/K=A/K \amalg B/K \\
	|A|=n-i,|B|=i}} 
	\left(
	\bigotimes_{[j]\in A/K} X_{[j]}^{K_{[j]}}
	\right)
\otimes 
	\left(
	\underset{[k] \in B/K}
	{\mathlarger{\mathlarger{\square}}}
	f_{[k]}^{K_{[k]}}
	\right)
\end{align*}
Here the first step is an instance of Proposition \ref{STRONGCELL PROP}(ii),
with the required cofibrancy conditions following from Proposition \ref{POWERF PROP}. The second step follows from \eqref{CARTFIX EQ}.
Lastly, the third step follows by
\eqref{CARTFIX EQ} together with the induction hypothesis, which applies since $|B|=i<n$.

Noting that Proposition \ref{POWERF PROP} guarantees that all required maps are cofibrations
so that fixed points $(\minus)^K$ commute with pushouts by Proposition \ref{STRONGCELL PROP}(i),
we have just shown that 
the leftmost maps in the pushout diagrams \eqref{COMPNFOLDFACTPUSH EQ} for 
$\left( f^{\square n} \right)^K$
are isomorphic to the leftmost maps in the pushout diagrams for the corresponding filtration of 	
$\underset{[i] \in n/K}{\mathlarger{\mathlarger{\square}}}
 f_{[i]}^{K_{[i]}}$.
\end{proof}

\begin{corollary}\label{FIXEDPUSH COR}
	Given a partition $\lambda$ given by
	$\{1,2,\cdots,n\} = \lambda_1 \amalg \cdots \amalg \lambda_k$, cofibrations between cofibrant objects $f_i$ in $\mathcal{V}^{G_i}$, $1\leq i \leq k$,
	and a subgroup
	$K \leq 
	\Sigma_{\lambda_1} \wr G_1
	\times \cdots \times
	\Sigma_{\lambda_k} \wr G_k
	$,
	the natural map
\[
	\underset{1\leq i\leq k}{\mathlarger{\mathlarger{\square}}}
	\phantom{|}
	\underset{[j] \in \lambda_i/K}{\mathlarger{\mathlarger{\square}}}
	f_{i, [j]}^{K_{[j]}}
\to
	\left( 	\underset{1\leq i\leq k}{\mathlarger{\mathlarger{\square}}}f_i^{\square \lambda_i} \right)^K.
\]
is an isomorphism.
\end{corollary}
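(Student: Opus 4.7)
The plan is to adapt the inductive filtration argument in the proof of Proposition \ref{FIXEDPUSH PROP} to the mixed setting. Setting $n = |\lambda_1| + \cdots + |\lambda_k|$ and writing $i(j)$ for the unique block of $\lambda$ containing $j \in \{1,\ldots,n\}$, one has the tautological identification $\underset{i}{\square}\, f_i^{\square \lambda_i} = \underset{1 \le j \le n}{\square}\, g_j$ with $g_j := f_{i(j)}$, under which $K$ sits inside $\Sigma_n \wr (G_1 \times \cdots \times G_k)$ as a subgroup preserving the partition $\lambda$. As in Proposition \ref{FIXEDPUSH PROP}, cartesian fixed points (\ref{CARTFIX EQ}) immediately yields that the map in question is an isomorphism on targets, so the real task is to prove the analogous statement on sources.

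Next I would build the analogue of the filtration (\ref{COMPNFOLDFACT EQ})--(\ref{COMPNFOLDFACTPUSH EQ}) for the source of $\underset{1 \le j \le n}{\square}\, g_j$. The step indexed by $r$ consists of the pushout of a coproduct, running over subsets $B \subset \{1,\ldots,n\}$ of cardinality $r$, of terms of the form
\[
  \Sigma_n \underset{\mathrm{Stab}(B)}{\cdot} \left( \bigotimes_{j \in A} X_{i(j)} \right) \otimes \left( \underset{j \in B}{\square}\, g_j \right),
  \qquad A = \{1,\ldots,n\} \setminus B.
\]
Since $K$ preserves $\lambda$, applying $(-)^K$ reorganizes this coproduct as one over $K$-orbits of such $(A,B)$, which I would parametrize by tuples of pairs $(A_i/K, B_i/K)$ of subsets of $\lambda_i/K$ for each $i$.

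The induction, on $n$, proceeds by combining three ingredients: Proposition \ref{STRONGCELL PROP}(ii), which commutes $(-)^K$ with the pushouts and with the $\Sigma_n \cdot_{\mathrm{Stab}(B)}$-quotients (its cofibrancy hypotheses being supplied by Proposition \ref{POWERF PROP}); cartesian fixed points (\ref{CARTFIX EQ}), which distributes $(-)^K$ across the tensor factor into $\bigotimes_i \bigotimes_{[j] \in A_i/K} X_{i,[j]}^{K_{[j]}}$; and the induction hypothesis applied to the smaller factor $\underset{j \in B}{\square}\, g_j$ (valid since $|B| < n$ whenever $A$ is nonempty) to identify $\left(\underset{j \in B}{\square}\, g_j\right)^K$ with the expected iterated pushout product of fixed points indexed by $\coprod_i B_i/K$. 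Each step in the filtration of $\left(\underset{i}{\square} f_i^{\square \lambda_i}\right)^K$ then matches the corresponding step in the standard filtration of $\underset{i}{\square}\, \underset{[j] \in \lambda_i/K}{\square}\, f_{i,[j]}^{K_{[j]}}$, yielding the claimed isomorphism on sources.

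The principal obstacle I anticipate is the combinatorial bookkeeping linking $K$-orbits of $\lambda$-respecting partitions $\{1,\ldots,n\} = A \amalg B$, along with their isotropy groups $\mathrm{Stab}(B)$, to the filtration steps of $\underset{i}{\square}\, \underset{[j] \in \lambda_i/K}{\square}\, f_{i,[j]}^{K_{[j]}}$. Once this correspondence (including the matching of isotropies) is nailed down, the base case $n=1$ (alternatively, the case where each $\lambda_i$ is a singleton, which reduces directly to (\ref{CARTFIX EQ})) together with the inductive step assemble cleanly to the full claim.
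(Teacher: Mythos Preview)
Your approach is workable in spirit but takes a substantially longer route than the paper, and contains one genuine error in the filtration step.

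The paper's proof is a two-line reduction rather than a new induction: one first establishes the ``easier'' isomorphism $(f \square g)^K \simeq f^K \square g^K$ for cofibrations between cofibrant objects in $\mathcal{V}^K$ (this follows immediately from cartesian fixed points (\ref{CARTFIX EQ}) on the target and on each vertex of the defining pushout, together with Proposition~\ref{STRONGCELL PROP}(i) to commute $(-)^K$ with the pushout). Iterating this over the $k$ factors gives $\bigl(\mathop{\square}_i f_i^{\square \lambda_i}\bigr)^K \simeq \mathop{\square}_i (f_i^{\square \lambda_i})^K$, and since $K$ acts on each $f_i^{\square \lambda_i}$ through its projection to $\Sigma_{\lambda_i}\wr G_i$, Proposition~\ref{FIXEDPUSH PROP} applies verbatim to each factor separately. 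No new filtration is needed.

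Your proposal instead redoes the entire inductive argument of Proposition~\ref{FIXEDPUSH PROP} in the mixed setting. This can be made to work, but the displayed filtration term $\Sigma_n \cdot_{\mathrm{Stab}(B)} \bigl(\bigotimes_{j\in A} X_{i(j)}\bigr) \otimes \bigl(\mathop{\square}_{j\in B} g_j\bigr)$ is not correct: once the $g_j$ are allowed to differ, the full symmetric group $\Sigma_n$ does not act on the tensor factor (only $\Sigma_\lambda$ does), so there is no $\Sigma_n$-induction here. The correct step-$r$ term is simply the coproduct $\coprod_{|B|=r} \bigl(\bigotimes_{j\notin B} X_{i(j)}\bigr) \otimes \bigl(\mathop{\square}_{j\in B} g_j\bigr)$, on which $K$ acts by permuting the summands through its image in $\Sigma_\lambda$. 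With that fix the rest of your outline (Proposition~\ref{STRONGCELL PROP}(ii) for the coproduct over $K$-orbits, (\ref{CARTFIX EQ}) to split the tensor, and the inductive hypothesis on the smaller $\mathop{\square}_{j\in B} g_j$) does go through, but you should recognize that you are reproving Proposition~\ref{FIXEDPUSH PROP} rather than using it.
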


\begin{proof}
This combines Proposition \ref{FIXEDPUSH PROP}
with the easier isomorphisms
$f^K \square g^K \xrightarrow{\simeq} 
(f \square g)^K$,
which follow by \eqref{CARTFIX EQ}
together with the observation that $(\minus)^K$
commutes with pushouts thanks to the cofibrancy conditions and Proposition \ref{STRONGCELL PROP}(i).
\end{proof}

\subsection{$G$-graph families and $G$-trees}
\label{G_GRAPH_SECTION}

We now convert the results in the previous sections to the context we are truly interested in:
graph families. 
Throughout this section $\Sigma$ will denote a general group,
usually meant to be some type of permutation group.

\begin{definition}
        \label{GRAPH DEF}
A subgroup $\Gamma \leq G \times \Sigma$ is called a
\textit{$G$-graph subgroup} if $\Gamma \cap \Sigma = \**$. 

Further, a family $\mathcal{F}$ of $G \times \Sigma$ is called a \textit{$G$-graph family} if it consists of $G$-graph subgroups.
\end{definition}

\begin{remark}\label{GRAPH REM}
$\Gamma$ is a $G$-graph subgroup iff it can be written as
\[
	\Gamma = 
	\{
	(h,\phi(h)) : h \in H \leq G
	\}
\]
for some partial homomorphism $H \xrightarrow{\phi} \Sigma$
for $H \leq G$, 
thus motivating the terminology.
\end{remark}

\begin{remark}
	The collection of all $G$-graph subgroups is itself a family,
	denoted $\mathcal{F}^{\Gamma}$.
	Indeed, this family  coincides with 
	$(\iota_{\Sigma})_{\**}(\{\**\})$
for the inclusion homomorphism 
$\iota_{\Sigma} \colon \Sigma \to G \times \Sigma$.
\end{remark}

\begin{notation}\label{SEMIDIRG NOT}
Letting $\mathcal{F}$, $\bar{\mathcal{F}}$ be $G$-graph families of $G \times \Sigma$ and $G \times \bar{\Sigma}$ we will write
\[
	\mathcal{F} \sqcap_G \bar{\mathcal{F}} 
	= \Delta^{\**} (\mathcal{F} \sqcap \bar{\mathcal{F}} )
\qquad \qquad
	\mathcal{F}^{\ltimes_G n} = \Delta^{\**} (\mathcal{F}^{\ltimes n})
\]
where $\Delta$ denotes either of the diagonal inclusions
$\Delta \colon 
G \times \Sigma \times \bar{\Sigma} \to 
G \times \Sigma \times G \times \bar{\Sigma}$
or 
$\Delta \colon G \times (\Sigma_n \wr \Sigma)
 \to 
\Sigma_n \wr (G \times \Sigma)$.
\end{notation}

\begin{remark}\label{UNPACKINGSQCAP REM}
	Unpacking Definition \ref{EXTERINT DEF} one has that 
	$\Gamma \in \mathcal{F} \sqcap_G \bar{\mathcal{F}}$ iff
	$\pi_{G \times \Sigma}(\Gamma) \in \mathcal{F}$ and
	$\pi_{G \times \bar{\Sigma}}(\Gamma) \in \bar{\mathcal{F}}$.
\end{remark}

\begin{remark}\label{UNPACKINGLTIMES REM}
Given a finite set $E$,
the image of the inclusion	
$\Delta \colon G \times (\Sigma_E \wr \Sigma)
\to 
\Sigma_E \wr (G \times \Sigma)$
consists of the elements
$(\sigma,(g_e,\tau_e)_{e \in E}),
\sigma \in \Sigma_n,
g_e \in G,
\tau_e \in \Sigma$
such that all $g_e, e \in E$ coincide.
Hence, for fixed $e\in E$,
and when viewed as subgroups of $\Sigma_E \wr (G \times \Sigma)$,
one has an identification
\[
	\left(G \times \Sigma_E \wr \Sigma\right)
	\cap
	\left(
	\Sigma_{\lambda_e} \wr (G \times \Sigma)
	\right)
=
	G \times (\Sigma_{\lambda_e} \wr \Sigma)
\]
(the subgroup $\Sigma_{\lambda_e} \leq \Sigma_E$ is as described prior to Definition \ref{FLTIMESN DEF}).

Thus, unpacking \eqref{FLTIMESN2 EQ} one has  
\[
	K \in \mathcal{F}^{\ltimes_G E} 
	\text{ iff }
	\underset{e \in E}{\forall} \pi_{G \times \Sigma}
	\left(
		K \cap 
		\left(G \times (\Sigma_{\lambda_e} \wr \Sigma) \right)
	\right)
	\in \mathcal{F}.
\]
\end{remark}

Combining either the left Quillen bifunctor \eqref{EXTERINTADJ EQ} or 
Proposition \ref{POWERF PROP}
with Proposition \ref{FGTLEFT PROP} yields the following results.

\begin{proposition}\label{EXTERINTADJG PROP}
Suppose that $\mathcal{V}$ is a cofibrantly generated closed monoidal model category with cellular fixed points.
Let $\mathcal{F}$, $\bar{\mathcal{F}}$ be $G$-graph families of 
$G \times \Sigma$ and $G \times \bar{\Sigma}$. Then the following (with diagonal $G$-action on the image) 
is a left Quillen bifunctor.
\[
	\mathcal{V}^{G \times \Sigma}_{\mathcal{F}}
		\times
	\mathcal{V}^{G \times \bar{\Sigma}}_{\bar{\mathcal{F}}}
		\xrightarrow{\otimes}
		\mathcal{V}^{G \times \Sigma \times \bar{\Sigma}}_{
	\mathcal{F} \sqcap_G \bar{\mathcal{F}}}
\]
\end{proposition}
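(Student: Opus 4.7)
The plan is to factor the claimed bifunctor as a composition of two already established left Quillen functors. Specifically, let $\Delta \colon G \times \Sigma \times \bar{\Sigma} \hookrightarrow (G \times \Sigma) \times (G \times \bar{\Sigma})$ denote the diagonal homomorphism on the $G$-coordinate (as used in Notation \ref{SEMIDIRG NOT}). Then the desired bifunctor factors as
\[
\mathcal{V}^{G \times \Sigma}_{\mathcal{F}} \times \mathcal{V}^{G \times \bar{\Sigma}}_{\bar{\mathcal{F}}} \xrightarrow{\otimes} \mathcal{V}^{(G \times \Sigma) \times (G \times \bar{\Sigma})}_{\mathcal{F} \sqcap \bar{\mathcal{F}}} \xrightarrow{\Delta^{\**}} \mathcal{V}^{G \times \Sigma \times \bar{\Sigma}}_{\mathcal{F} \sqcap_G \bar{\mathcal{F}}},
\]
where the first arrow is the left Quillen bifunctor (\ref{EXTERINTADJ EQ}) and the second is restriction along $\Delta$.

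First I would verify that the restriction functor $\Delta^{\**}$ above is left Quillen. By Proposition \ref{FGTLEFT PROP}, this amounts to checking that $\Delta^{\**}(\mathcal{F} \sqcap \bar{\mathcal{F}}) \subset \mathcal{F} \sqcap_G \bar{\mathcal{F}}$, and by the very definition $\mathcal{F} \sqcap_G \bar{\mathcal{F}} = \Delta^{\**}(\mathcal{F} \sqcap \bar{\mathcal{F}})$ (cf. Notation \ref{SEMIDIRG NOT}) this holds trivially, with equality.

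It then remains to argue that composing a left Quillen bifunctor with a single-variable left Quillen functor (applied to the target) yields again a left Quillen bifunctor. This is essentially formal: since $\Delta^{\**}$ preserves all colimits (being a left adjoint), for cofibrations $f$ in $\mathcal{V}^{G \times \Sigma}_{\mathcal{F}}$ and $g$ in $\mathcal{V}^{G \times \bar{\Sigma}}_{\bar{\mathcal{F}}}$ the pushout product with respect to the composite bifunctor equals $\Delta^{\**}(f \square g)$, where the inner pushout product is computed in $\mathcal{V}^{(G \times \Sigma) \times (G \times \bar{\Sigma})}$. By (\ref{EXTERINTADJ EQ}), $f \square g$ is a (trivial) cofibration in $\mathcal{V}^{(G \times \Sigma) \times (G \times \bar{\Sigma})}_{\mathcal{F} \sqcap \bar{\mathcal{F}}}$, and applying the left Quillen $\Delta^{\**}$ yields a (trivial) cofibration in $\mathcal{V}^{G \times \Sigma \times \bar{\Sigma}}_{\mathcal{F} \sqcap_G \bar{\mathcal{F}}}$, as required. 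No serious obstacle is expected; the only conceptual content is the bookkeeping that confirms $\mathcal{F} \sqcap_G \bar{\mathcal{F}}$ is indeed the pullback family along $\Delta$, which is built into the definition.
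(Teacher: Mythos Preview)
Your proposal is correct and follows essentially the same approach as the paper, which simply states that the result follows by combining the left Quillen bifunctor (\ref{EXTERINTADJ EQ}) with Proposition \ref{FGTLEFT PROP}. Your write-up spells out the factorization and the verification of the family condition explicitly, but the underlying argument is identical.
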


\begin{proposition}\label{POWERFG PROP}
        Suppose that $\mathcal{V}$ is a cofibrantly generated closed monoidal model category with cellular fixed points and with cofibrant symmetric pushout powers.
	
	Let $\mathcal{F}$ be a $G$-graph family of $G \times \Sigma$. If $f$ is a cofibration (resp. trivial cofibration) in
	$\mathcal{V}^{G \times \Sigma}_{\mathcal{F}}$,
	then so is $f^{\square n}$
        in 
	$\mathcal{V}^{G \times \Sigma_n \wr \Sigma}_{\mathcal{F}^{\ltimes_{G} n}}$.
\end{proposition}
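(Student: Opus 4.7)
The plan is to obtain Proposition \ref{POWERFG PROP} as a direct composition of the two Quillen-type results explicitly foreshadowed in the paragraph preceding the statement. Namely, Proposition \ref{POWERF PROP} already proves a cofibrancy-of-pushout-power statement in the single-group setting, and Proposition \ref{FGTLEFT PROP} provides the mechanism to restrict group actions along a homomorphism. It suffices to apply the first with the enlarged group $G \times \Sigma$ and then pull back along the diagonal homomorphism used in Notation \ref{SEMIDIRG NOT}.

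First I would apply Proposition \ref{POWERF PROP} with the compound group $G \times \Sigma$ playing the role of the ``$G$'' in that statement and with the family $\mathcal{F}$ (a family of subgroups of $G \times \Sigma$) in the role of the ``$\mathcal{F}$'' therein. Since $f$ is by hypothesis a (trivial) cofibration of $\mathcal{V}^{G \times \Sigma}_{\mathcal{F}}$, this directly produces that $f^{\square n}$ is a (trivial) cofibration in $\mathcal{V}^{\Sigma_n \wr (G \times \Sigma)}_{\mathcal{F}^{\ltimes n}}$.

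Next I would invoke Proposition \ref{FGTLEFT PROP} for the diagonal homomorphism
\[
\Delta \colon G \times \Sigma_n \wr \Sigma \to \Sigma_n \wr (G \times \Sigma)
\]
used to define $\mathcal{F}^{\ltimes_G n}$ in Notation \ref{SEMIDIRG NOT}. The goal is to conclude that the restriction functor
\[
\Delta^{\**} \colon \mathcal{V}^{\Sigma_n \wr (G \times \Sigma)}_{\mathcal{F}^{\ltimes n}} \to \mathcal{V}^{G \times \Sigma_n \wr \Sigma}_{\mathcal{F}^{\ltimes_G n}}
\]
is left Quillen, and to observe that $\Delta^{\**} f^{\square n}$ is precisely $f^{\square n}$ endowed with the diagonal $G$-action alongside the wreath $\Sigma_n \wr \Sigma$-action, which is the equivariance structure appearing in the statement.

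The only condition that needs to be checked is the hypothesis of Proposition \ref{FGTLEFT PROP}, namely that $\Delta^{-1}(H) \in \mathcal{F}^{\ltimes_G n}$ for every $H \in \mathcal{F}^{\ltimes n}$. By combining Notation \ref{SEMIDIRG NOT} with the definition of $\phi^{\**}$ in (\ref{PHISTARDEF EQ}), membership $K \in \mathcal{F}^{\ltimes_G n} = \Delta^{\**}(\mathcal{F}^{\ltimes n})$ is by definition equivalent to $\Delta(K) \in \mathcal{F}^{\ltimes n}$. Since $\Delta$ is injective, $\Delta(\Delta^{-1}(H)) = H \cap \mathrm{im}(\Delta)$ is a subgroup of $H$, hence lies in the family $\mathcal{F}^{\ltimes n}$ by its subgroup-closure property. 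No additional homotopical input is needed, so I anticipate no genuine obstacle; the argument is really just a definitional unpacking combined with two already-established results, and runs uniformly in the cofibration and trivial-cofibration cases.
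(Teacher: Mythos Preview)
Your proposal is correct and is precisely the argument the paper intends: the sentence preceding Proposition~\ref{POWERFG PROP} explicitly says it follows by combining Proposition~\ref{POWERF PROP} with Proposition~\ref{FGTLEFT PROP}, and you have unpacked exactly that composition along the diagonal $\Delta$ of Notation~\ref{SEMIDIRG NOT}. Your verification of the hypothesis of Proposition~\ref{FGTLEFT PROP} is fine; note it is even more immediate via the lemma following (\ref{PHISTARDEF3 EQ}), since the condition is equivalent to $\Delta^{\**}(\mathcal{F}^{\ltimes n}) \subset \mathcal{F}^{\ltimes_G n}$, which holds with equality by definition.
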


\begin{remark}
        It is straightforward to check that 
        $\mathcal{F} \sqcap_G \bar{\mathcal{F}}$
	is in fact also a $G$-graph family of $G \times \Sigma \times \bar{\Sigma}$.
	However, $\mathcal{F}^{\ltimes_G n}$ is \textit{not}
	a $G$-graph family of $G \times \Sigma_n \wr \Sigma$,
	due to the need to consider the power $\Sigma_n$-action.
\end{remark}

The $G$-graph families we will be interested in
encode the families of $G$-corollas 
 $\Sigma_{\mathcal{F}}$
of Definition \ref{FAMILY_COROLLAS_DEF} and,
more generally, the families of $G$-trees 
$\Omega_{\mathcal{F}}$ 
of Definition \ref{FTREE DEF}. 

First, note that a
homomorphism
$H \to \Sigma_n$ for $H \leq G$
defines an $H$-action on the $n$-corolla $C_n \in \Sigma$.
Thus, by choosing an arbitrary order of 
$G/H$ and coset representatives $g_i$ for $G/H$,
one obtains a $G$-corolla
$(g_i C_n)_{[g_i] \in G/H}$ in $\Sigma_G$.
The following is then elementary.

\begin{lemma}\label{FAMILY_COROLLAS_LEM}
\index{indexing systems!F@$\mathcal F = \set{\mathcal F_n}_{n \geq 0}$}
Writing $\mathcal{F}_{n}^{\Gamma}$ for the family  of $G$-graph subgroups
of $G \times \Sigma_n^{op}$,
there is an equivalence of categories
(for any arbitrary choice of order of the $G/H$ and of coset representatives)
\[\coprod_{n \geq 0} \mathsf{O}_{\F_n^{\Gamma}} \xrightarrow{\simeq} \Sigma_G.\] 
Hence, families of corollas $\Sigma_{\mathcal{F}}$
are in bijection with collections
$\{\mathcal{F}_n\}_{n\geq 0}$
of $G$-graph families 
$\mathcal{F}_n \subseteq \mathcal{F}_{n}^{\Gamma}$.
\end{lemma}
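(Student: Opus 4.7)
The plan is to explicitly construct the functor and verify it is an equivalence on each arity summand, after which the bijection with families of corollas follows by comparing sieves on both sides.

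First, I would construct the functor $\mathsf{O}_{\F_n^{\Gamma}} \to \Sigma_G$ on objects. An object of $\mathsf{O}_{\F_n^\Gamma}$ is a transitive $(G \times \Sigma_n)$-set of the form $(G \times \Sigma_n)/\Gamma$ with $\Gamma$ a $G$-graph subgroup; by Remark~\ref{GRAPH REM}, $\Gamma$ is determined by a partial homomorphism $\varphi \colon H \to \Sigma_n$ with $H \leq G$. I send this to the $G$-corolla $(g_i C_n)_{[g_i] \in G/H}$ described before the statement, where the structure map $g_i C_n \to g g_i C_n$ is obtained by writing $g g_i = g_{i'} h$ and acting on $C_n$ via $\varphi(h)$. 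Essential surjectivity is then immediate: any $G$-corolla $(C_x)_{x \in X} \in \Sigma_G$ has arity $n = |C_x|$, the underlying $G$-set $X$ is a transitive orbit $G/H$ for a unique $H$, and the compositions $C_{x_0} \xrightarrow{\simeq} C_{h x_0} = C_{x_0}$ for $h$ in the stabilizer $H$ of a basepoint $x_0$ assemble into a homomorphism $H \to \mathrm{Aut}(C_{x_0}) \simeq \Sigma_n$, well-defined up to $\Sigma_n$-conjugacy, i.e., up to conjugacy of the associated graph subgroup.

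Second, I would verify fully faithfulness by unpacking both sides. A morphism in $\mathsf{O}_{\F_n^\Gamma}$ from $(G \times \Sigma_n)/\Gamma'$ to $(G \times \Sigma_n)/\Gamma$ is determined by an element $(g,\sigma) \in G \times \Sigma_n$ such that the conjugate $(g,\sigma)\Gamma' (g,\sigma)^{-1} \leq \Gamma$, modulo right multiplication by $\Gamma$. On the other hand, a morphism of $G$-corollas in $\Sigma_G$ is a quotient map (cf.\ Definition~\ref{QUOT DEF}) between $G$-trees all of whose components are corollas, which, translating through the description of objects above, is determined by the same data: a $G$-map of root orbits $G/H' \to G/H$ (recording $g$ up to $H$-multiplication) together with a compatible $\Sigma_n$-equivariant isomorphism on a chosen corolla component (recording $\sigma$). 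Matching these descriptions gives the required bijection on morphism sets.

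Third, the decomposition $\Sigma_G = \coprod_{n\geq 0} \Sigma_{G,n}$ by arity is manifest (an arrow in $\Sigma_G$ cannot change arity, since quotient maps are fiberwise isomorphisms of corollas), and the functor above clearly respects arities, so combining over $n$ yields the claimed equivalence. For the second assertion, recall that a family of $G$-corollas is by Definition~\ref{FAMILY_COROLLAS_DEF} a sieve $\Sigma_\F \hookrightarrow \Sigma_G$; since equivalences of categories preserve and reflect sieves, and since sieves in $\coprod_n \mathsf{O}_{\F_n^\Gamma}$ break up as tuples of sieves in each $\mathsf{O}_{\F_n^\Gamma}$, which in turn correspond bijectively to $G$-graph subfamilies $\F_n \subset \F_n^\Gamma$, the claimed bijection follows.

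The only mildly delicate point is fully faithfulness, where one must be careful about the arbitrary orderings and coset representatives chosen in the definition of the functor: different choices alter the planar labelings on the target $G$-corolla, but only by canonical isomorphisms, so the induced bijection on hom-sets is independent of the choices up to the canonical natural isomorphism, which is all that is needed for an equivalence of categories.
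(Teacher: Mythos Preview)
Your proposal is correct and follows the same approach as the paper, which merely sets up the functor (via the partial homomorphism description preceding the lemma) and declares the result ``elementary'' without further argument. You have filled in the essential surjectivity, fully faithfulness, and sieve-correspondence details that the paper leaves to the reader.
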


%
%

We will hence abuse notation and use $\F$ to denote either $\{\mathcal{F}_n\}_{n \geq 0}$
or $\Sigma_\F$.

Note that a $G$-corolla $(C_i)_{i \in I}$
is in $\Sigma_{\mathcal{F}}$
iff
for some (and thus all) $i \in I$
the action of the stabilizer $H_i$ of $i$ on $C_i$
is given by a homomorphism
$H_i \to \Sigma_n$ whose graph group is in $\mathcal{F}_n$.

In what follows, given a tree with an $H$-action
$T \in \Omega^H$,
we will abbreviate
$G \cdot_H T = (g_i T)_{[g_i] \in G/H}$
for some arbitrary (and inconsequential for the remaining discussion) choice of order on $G/H$ and of coset representatives.

\begin{proposition}
Let $\mathcal{F}$ be a family of $G$-corollas and $T \in \Omega$ a tree with automorphism group $\Sigma_T$.
	Write $\mathcal{F}_T$ for the collection of $G$-graph subgroups of 
	$G \times \Sigma_T$ encoded by partial homomorphisms
	$H \to \Sigma_T$, for varying $H \leq G$,
	such that the associated $G$-tree
	$G \cdot_H T$ is a $\mathcal{F}$-tree
	(cf. Definition \ref{FTREE DEF}).
	
	Then $\mathcal{F}_T$ is a $G$-graph family.
\end{proposition}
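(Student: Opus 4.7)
The plan is to verify directly that $\mathcal{F}_T$ satisfies the two defining properties of a family, namely closure under conjugation and closure under subgroups, after observing that all its members are $G$-graph subgroups by construction.

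First I would set up the correspondence: each $\Gamma \in \mathcal{F}_T$ is by definition a graph subgroup $\Gamma = \{(k,\varphi(k)) : k \in H\}$ for some partial homomorphism $\varphi \colon G \geq H \to \Sigma_T$, and this in turn encodes the $G$-tree $G \cdot_H T$. The condition $\Gamma \cap \Sigma_T = \{\ast\}$ is automatic from the graph form (cf. Remark \ref{GRAPH REM}), so it only remains to verify the two family axioms.

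For closure under conjugation, I would conjugate a generic element $(k,\varphi(k))$ by $(g,\sigma) \in G \times \Sigma_T$ to obtain $(gkg^{-1},\sigma\varphi(k)\sigma^{-1})$, so the conjugated subgroup is the graph of the partial homomorphism $gHg^{-1} \to \Sigma_T$ sending $k' \mapsto \sigma\varphi(g^{-1}k'g)\sigma^{-1}$. The associated $G$-tree is then $G \cdot_{gHg^{-1}} T$ equipped with a reindexed action, which is isomorphic to $G \cdot_H T$ as an object of $\Omega_G$; since being an $\mathcal{F}$-tree depends only on the isomorphism class (the $G$-vertices, viewed as objects of $\Sigma_G$, are unchanged up to isomorphism), this closure is immediate.

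For closure under subgroups, a subgroup of the graph $\Gamma$ is again a graph, necessarily of the form $\{(k,\varphi(k)) : k \in K\}$ for some $K \leq H$, encoding the restriction $\varphi|_K$. The key geometric observation is that the obvious surjection $G/K \twoheadrightarrow G/H$ in $\mathsf{O}_G$ lifts to a quotient map
\[
	G \cdot_K T \longrightarrow G \cdot_H T
\]
in $\Omega_G^0$, since each constituent tree is a copy of $T$ mapped by an identity. This quotient induces, for each $G$-vertex $v'$ of $G \cdot_K T$ lying above a $G$-vertex $v$ of $G \cdot_H T$, a quotient map of $G$-corollas $(G \cdot_K T)_{v'} \to (G \cdot_H T)_v$ in $\Sigma_G$ (this is the vertex functoriality of $\Omega_G^0 \xrightarrow{V_G} \Fin_s \wr \Sigma_G$ from Lemma \ref{VGPULL LEM}, compatibly with the root pullback structure). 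Since $\Sigma_\mathcal{F} \hookrightarrow \Sigma_G$ is a sieve, the assumption $(G \cdot_H T)_v \in \Sigma_\mathcal{F}$ for all $v$ forces $(G \cdot_K T)_{v'} \in \Sigma_\mathcal{F}$ for all $v'$, i.e.\ $G \cdot_K T$ is an $\mathcal{F}$-tree, so $\{(k,\varphi(k)) : k \in K\} \in \mathcal{F}_T$.

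I do not expect a genuine obstacle here: the result is essentially formal once one has the sieve characterization of $\mathcal{F}$ (from the discussion around Definition \ref{FTREE DEF}) and the correspondence of Remark \ref{GRAPH REM} between graph subgroups and partial homomorphisms. The mildest subtlety is simply book-keeping the identification between the quotient $G \cdot_K T \to G \cdot_H T$ of $G$-trees and the subgroup inclusion of their graph subgroups, which is unambiguous up to the arbitrary choice of coset representatives used in the $G\cdot_{(-)} T$ notation.
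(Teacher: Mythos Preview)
Your proposal is correct and follows essentially the same approach as the paper. The paper's proof is a two-line sketch: conjugate graph subgroups produce isomorphic $G$-trees, and subgroups correspond to restrictions $K \leq H \to \Sigma_T$ which restrict the stabilizer action at each vertex. Your treatment of conjugation is identical in spirit; for subgroups you phrase the same fact via the quotient map $G \cdot_K T \to G \cdot_H T$ and the sieve property of $\Sigma_\mathcal{F}$, which is just the categorical repackaging of the paper's ``restricting stabilizer actions'' observation (restricting the stabilizer at a vertex is exactly what a morphism into a $G$-corolla in $\Sigma_G$ does).
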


\begin{proof}
	Closure under conjugation follows since conjugate graph subgroups produce isomorphic $G$-trees.
	As for subgroups, they correspond to restrictions $K \leq H \to \Sigma_T$,
	as thus also restrict the stabilizer actions on each vertex $T_{e^{\uparrow} \leq e}$.
\end{proof}

\begin{remark}\label{LRLEFTQUILLEN REM}
The closure condition defining weak indexing systems in Definition \ref{INDEXSYS DEF}
can be translated in terms of families as saying that,
for any tree $T \in \Omega$ with $\mathsf{lr}(T)=C_n$ and 
$\phi \colon \Sigma_T \to \Sigma_n$ 
the natural homomorphism, one has
$(id_G \times \phi)(\Gamma) \in \mathcal{F}_n$
for any $\Gamma \in \mathcal{F}_{T}$. 
Hence, by
Proposition \ref{FGTRIGHT PROP} 
\[
	\phi_{!}
		\colon
	\mathcal{V}^{G \times \Sigma_T}_{\mathcal{F}_T}
		\to
	\mathcal{V}^{G \times \Sigma_n}
	_{\mathcal{F}_{n}}
\]
is a left Quillen functor.
\end{remark}

\begin{remark}\label{UNPACKFTYPE REM}
Unpacking definitions, a partial homomorphism 
$H \to \Sigma_T$ for $H \leq G$
encodes a subgroup in $\mathcal{F}_T$
iff, for each vertex $v= ( e^{\uparrow} \leq e)$ of $T$ with 
$H_e \leq H$ the
$H$-isotropy of the edge $e$, the induced homomorphism
\begin{equation}\label{PARTIALHOMEDGE EQ}
H_e \to \Sigma_{T_{v}} \simeq 
\Sigma_{|v|}
\end{equation}
encodes a subgroup in $\mathcal{F}_{|v|}$, where $|v|=|e^{\uparrow}|$.
\end{remark}

\begin{remark}\label{TREEINDUCDESC REM}
Recall that any tree $T \in \Omega$ other than the stick $\eta$ has an essentially unique grafting decomposition
$T= C_n \amalg_{n \cdot \eta}(T_1 \amalg \cdots \amalg T_n)$ where $C_n$ is the root corolla,
and the leaves of $C_n$ are grafted to the roots of the $T_i$. We now let 
$\lambda$ be the partition 
$\{1,\cdots,n\} = \lambda_1 \amalg\cdots \amalg \lambda_k$
 such that $1 \leq i_1, i_2 \leq n$ are in the same class iff
 $T_{i_1}, T_{i_2} \in \Omega$ are isomorphic.
 
 Writing 
 $\Sigma_{\lambda} = \Sigma_{\lambda_1} \times \cdots \times
 \Sigma_{\lambda_k}$
and picking representatives $i_j \in \lambda_j$ 
one then has isomorphisms
\[
	\Sigma_T \simeq \Sigma_{\lambda} \wr \prod_{i} \Sigma_{T_i}
		\simeq
	\Sigma_{|\lambda_1|} \wr \Sigma_{T_{i_1}}
		\times \cdots \times	
	\Sigma_{|\lambda_k|} \wr \Sigma_{T_{i_k}}
\]
where the second isomorphism, while not canonical 
(it depends on choices of isomorphisms $T_{i_j} \simeq T_l$ for each $i_j \neq l \in \lambda_j$) is nonetheless well-defined up to conjugation.
\end{remark}

The following, which is the key motivation behind the families defined in the last sections,
reinterprets 
Remark \ref{UNPACKFTYPE REM}
in light of the inductive description of trees in
Remark \ref{TREEINDUCDESC REM}.

\begin{lemma}\label{KEYLEMMAGECO LEM}
Let $\Sigma_\mathcal{F}$ be a family of $G$-corollas and 
$T \in \Omega$ a tree other than $\eta$. Then
\begin{equation}\label{KEYLEMMAGECO EQ}
	\mathcal{F}_T =
	\left(\pi_{G \times \Sigma_n}\right)^{\**}(\mathcal{F}_n)
		\cap
	\left(
	\mathcal{F}_{T_{i_1}}^{\ltimes_G |\lambda_1|}
		\sqcap_G \cdots \sqcap_G
	\mathcal{F}_{T_{i_k}}^{\ltimes_G |\lambda_k|}
	\right),
\end{equation}
where $\pi_{G \times \Sigma_n}$ denotes the composite
$G \times \Sigma_T \to G \times \Sigma_{\lambda} \to
 G \times \Sigma_n$.
\end{lemma}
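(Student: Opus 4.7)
The plan is to unpack both sides of (\ref{KEYLEMMAGECO EQ}) using Remark \ref{UNPACKFTYPE REM}, which characterizes $\mathcal{F}_T$ vertex-by-vertex, and then show that the vertex conditions naturally split into a condition on the root vertex (accounting for the first factor on the right) and conditions on the vertices lying in each subtree $T_l$ (accounting for the external intersection of semidirect powers).

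First I would fix a subgroup $\Gamma \leq G \times \Sigma_T$ and its associated partial homomorphism $\varphi \colon G \geq H \to \Sigma_T$, and single out the root vertex $v_0 = (r^{\uparrow}\leq r)$ of $T$, with $|v_0|=n$. Since the root edge is $H$-fixed, its isotropy is $H$ itself, and the induced homomorphism $H \to \Sigma_{T_{v_0}} = \Sigma_n$ is the composite $H \to \Sigma_T \to \Sigma_{\lambda} \hookrightarrow \Sigma_n$ (which lands in $\Sigma_\lambda$ since automorphisms of $T$ can only permute $\leq_d$-isomorphic subtrees). This is exactly $\pi_{G \times \Sigma_n}$ applied to $\Gamma$ viewed as a graph. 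So by Remark \ref{UNPACKFTYPE REM}, the root vertex condition alone is equivalent to $\Gamma \in (\pi_{G\times \Sigma_n})^{\**}(\mathcal{F}_n)$.

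Next I would analyze the other factor on the right. By Remark \ref{UNPACKINGSQCAP REM}, membership in $\mathcal{F}_{T_{i_1}}^{\ltimes_G |\lambda_1|}\sqcap_G \cdots \sqcap_G \mathcal{F}_{T_{i_k}}^{\ltimes_G |\lambda_k|}$ is equivalent to $\Gamma_j := \pi_j(\Gamma) \in \mathcal{F}_{T_{i_j}}^{\ltimes_G |\lambda_j|}$ for each $j$, where $\pi_j \colon G\times \Sigma_T \to G \times \Sigma_{|\lambda_j|}\wr \Sigma_{T_{i_j}}$ is the projection coming from the second isomorphism in Remark \ref{TREEINDUCDESC REM}. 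Applying Remark \ref{UNPACKINGLTIMES REM}, each such condition further decomposes: for every $l \in \lambda_j$,
\[
\pi_{G\times \Sigma_{T_{i_j}}}\bigl(\Gamma_j \cap (G \times \Sigma_{\lambda_{j,l}}\wr \Sigma_{T_{i_j}})\bigr) \in \mathcal{F}_{T_{i_j}}.
\]
Unpacking group-theoretically, the intersection with $G \times \Sigma_{\lambda_{j,l}}\wr \Sigma_{T_{i_j}}$ selects precisely those $(g,\varphi(g))$ whose second coordinate fixes the $l$-th leaf of the root corolla $C_n$, i.e.\ for which $g$ belongs to the isotropy $H_{e_l}$ of that leaf, while the subsequent projection onto the $l$-th factor of the wreath records the induced action of $H_{e_l}$ on $T_l \simeq T_{i_j}$. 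Hence this is exactly the statement that the partial homomorphism $H_{e_l} \to \Sigma_{T_l}$ encodes a subgroup of $\mathcal{F}_{T_l} = \mathcal{F}_{T_{i_j}}$.

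Finally, I would invoke Remark \ref{UNPACKFTYPE REM} a second time, now applied to each subtree $T_l$: membership of this restricted homomorphism in $\mathcal{F}_{T_l}$ is equivalent to the vertex conditions $H_{e'} \to \Sigma_{|v'|} \in \mathcal{F}_{|v'|}$ for every vertex $v' = (e'^{\uparrow} \leq e')$ of $T_l$, and as $l$ ranges over $\{1,\ldots,n\} = \lambda_1 \amalg \cdots \amalg \lambda_k$ this captures exactly the vertex conditions of $T$ for all non-root vertices. Combined with the root vertex condition from the first paragraph, we recover precisely the characterization of $\mathcal{F}_T$ given by Remark \ref{UNPACKFTYPE REM}, establishing (\ref{KEYLEMMAGECO EQ}). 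The main obstacle is the careful bookkeeping in the third paragraph, where one must check that the abstract ``$\lambda_{j,l}$-intersection-then-project'' operation on subgroups of the wreath product really does correspond to the concrete operation ``restrict to the isotropy of the $l$-th leaf, then record the induced action on $T_l$''; this reduces to the fact that $\Sigma_{\lambda_{j,l}}\wr \Sigma_{T_{i_j}}$ is, by definition, the isotropy subgroup of the $l$-th block of $\Sigma_{|\lambda_j|}\wr \Sigma_{T_{i_j}}$ acting on the $\lambda_j$-block of leaves of $C_n$.
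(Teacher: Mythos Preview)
Your approach matches the paper's: unpack both sides via Remarks \ref{UNPACKINGSQCAP REM}, \ref{UNPACKINGLTIMES REM}, and \ref{UNPACKFTYPE REM}, splitting into the root-vertex condition and the subtree conditions. The paper phrases the subtree step as an induction on the grafting decomposition, but that induction hypothesis amounts to exactly your direct invocation of Remark \ref{UNPACKFTYPE REM} for each $T_l$, so there is no real difference there.

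There is, however, a genuine gap. By opening with ``fix $\Gamma \leq G \times \Sigma_T$ and its associated partial homomorphism $\varphi$'' you are presupposing that $\Gamma$ is a $G$-graph subgroup. Your argument therefore only establishes that the two sides of (\ref{KEYLEMMAGECO EQ}) agree \emph{on graph subgroups}. Since $\mathcal{F}_T$ is by definition a $G$-graph family, to conclude equality you must also show that every $\Gamma$ in the right-hand side is a graph subgroup, i.e.\ $\Gamma \cap \Sigma_T = \{e\}$. This is not automatic: as the paper notes just after Proposition \ref{POWERFG PROP}, the semidirect powers $\mathcal{F}_{T_{i_j}}^{\ltimes_G |\lambda_j|}$ are \emph{not} $G$-graph families, so the external intersection on the right of (\ref{KEYLEMMAGECO EQ}) may a priori contain non-graph subgroups.

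The missing step is short but essential. Take $\gamma \in \Gamma$ with $\gamma_G = e$. The condition $\pi_{G \times \Sigma_n}(\Gamma) \in \mathcal{F}_n$ forces $\gamma_{\Sigma_\lambda} = e$, so $\gamma$ lies in $G \times \Sigma_{\lambda_{j,l}} \wr \Sigma_{T_{i_j}}$ for every $j,l$. Then the condition $\pi_{G \times \Sigma_{T_{i_j}}}(\Gamma_j \cap (G \times \Sigma_{\lambda_{j,l}} \wr \Sigma_{T_{i_j}})) \in \mathcal{F}_{T_{i_j}}$, being a graph family, forces each $\Sigma_{T_l}$-coordinate of $\gamma$ to be trivial as well, giving $\gamma = e$. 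The paper carries out exactly this argument in the final paragraph of its proof.
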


\begin{proof} The argument is by induction on the decomposition
      $T= C_n \amalg_{n \cdot \eta}(T_1 \amalg \cdots \amalg T_n)$
      with the base case, that of a corolla, being immediate.
      
      Consider now a
      homomorphism $H \to \Sigma_T$, with $H \leq G$, encoding a 
      $G$-graph subgroup $\Gamma \leq G \times \Sigma_T$.
	The condition that $\Gamma \in \left(\pi_{G \times \Sigma_n}\right)^{\**}(\mathcal{F}_n)$ states that the composite $H \to \Sigma_T \to \Sigma_n$ is in $\mathcal{F}_n$, 
	and this is precisely the condition \eqref{PARTIALHOMEDGE EQ} in Remark \ref{UNPACKFTYPE REM}
	for $e=r$ the root of $T$.

As for the condition 
	$ \Gamma \in 
	\left(
	\mathcal{F}_{T_{i_1}}^{\ltimes_G |\lambda_1|}
		\sqcap_G \cdots \sqcap_G
	\mathcal{F}_{T_{i_k}}^{\ltimes_G |\lambda_k|}
	\right)	$, by unpacking it by combining 
	Remarks \ref{UNPACKINGSQCAP REM} and 
	\ref{UNPACKINGLTIMES REM},
	this translates to the condition that, for each $i \in \{1,\cdots,k\}$, one has
	\begin{equation}\label{KEYLEMMAGECOR EQ}
	\pi_{G \times \Sigma_{T_i}}
	\left(
		\Gamma \cap 
	\left(
		G \times \Sigma_{\{i\}} \times \Sigma_{T_i}
		\times 
		\Sigma_{\lambda-\{i\}} \wr \prod_{j\neq i} \Sigma_{T_j}
	\right)
	\right)	
	\in \mathcal{F}_{T_i}
	\end{equation}
where $\lambda - \{i\}$ denotes the induced partition of 
$\{1,\cdots,n\} - \{i\}$.
Noting that the intersection subgroup inside $\pi_{G \times \Sigma_{T_i}}$ in \eqref{KEYLEMMAGECOR EQ} can be rewritten as 
$\Gamma \cap \pi_{\Sigma_n}^{-1}
(\Sigma_{\{i\}} \times \Sigma_{\{1,\cdots,n\} - \{i\}})$,
we see that this is the graph subgroup
encoded by the restriction $H_i \to \Sigma_T$,
where $H_i \leq H$ is the isotropy subgroup of the root $r_i$ of $T_i$ (equivalently, this is also the subgroup sending $T_i$ to itself).
But since, for any edge $e \in T_i$, its isotropy $H_e$ 
(cf. \eqref{PARTIALHOMEDGE EQ}) is a subgroup of $H_i$, the induction hypothesis implies that \eqref{KEYLEMMAGECOR EQ}
is equivalent to condition \eqref{PARTIALHOMEDGE EQ} 
across all vertices other than the root vertex.

The previous paragraphs show that 
\eqref{KEYLEMMAGECO EQ}
indeed holds when restricted to $G$-graph subgroups. However, it still remains to show that any group $\Gamma$ in the rightmost family in \eqref{KEYLEMMAGECO EQ} is indeed
a $G$-graph subgroup, i.e. $\Gamma \cap \Sigma_T =\**$.
In other words, we need to show that any element 
$\gamma \in \Gamma \leq
G \times \Sigma_{\lambda} \wr \prod_{i} \Sigma_{T_i}$
whose $G$-coordinate is 
$\gamma_G = e$ is indeed the identity.
But the condition 
$\pi_{G \times \Sigma_n}(\Gamma) \in \mathcal{F}_n$ now implies that for such $\gamma$ the $\Sigma_{\lambda}$-coordinate is $\gamma_{\Sigma_{\lambda}} = e$
and thus \eqref{KEYLEMMAGECOR EQ} in turn implies that the 
$\Sigma_{T_i}$-coordinates are 
$\gamma_{\Sigma_{T_i}} = e$,
finishing the proof.
\end{proof}

In preparation for our discussion of cofibrant objects in $\mathsf{Op}_G(\mathcal{V})$
in the next section, 
we end the current section by applying 
the results in the previous sections to study the leftmost map in the key pushout diagrams
(\ref{FILTRATION_LAN_LEVEL}).
More concretely, 
and writing 
$p(T_v) \colon \emptyset \to \mathcal{P}(T_v)$,
we analyze the cofibrancy of the maps
\[
	\bigotimes\limits_{v \in V_{G}^{ac}(T)}\P(T_v) \otimes
	\underset{v \in V_{G}^{in}(T)}
	{\mathlarger{\mathlarger{\mathlarger{\square}}}}
	u(T_v)
\qquad
\text{or}
\qquad
	\underset{v \in V_{G}^{ac}(T)}
	{\mathlarger{\mathlarger{\mathlarger{\square}}}}
	p(T_v) 
		\square
	\underset{v \in V_{G}^{in}(T)}
	{\mathlarger{\mathlarger{\mathlarger{\square}}}}
	u(T_v)
\]
that constitute the inner part of \eqref{FILTINTALT EQ}, and where we recall that $T \in \Omega_G^a$ is an alternating tree.
This analysis will consist of two parts, to be combined in the next section:
\begin{enumerate*}
\item[(i)] a $\mathcal{F}_{T_e}$-cofibrancy claim when $T=G \cdot T_e$ is free and;
\item[(ii)] a fixed point claim for non free trees, 
as in Remark \ref{REFLCOREFL REM}.
\end{enumerate*}

For both the sake of generality and to simplify notation in the proofs, we will state the following results using the labeled trees 
of Definition \ref{LABMAP DEF},
and write 
$\Omega_G^{\underline{l}}$ for the category of 
$l$-labeled trees and quotients 
(we not need for string categories at this point).
Moreover, $l$-labeled $\mathcal{F}$-trees 
$\Omega_{\mathcal{F}}^{\underline{l}}$ are
defined exactly as in Definition \ref{FTREE DEF},
so that a labeled $G$-tree is a $\mathcal{F}$-tree 
if and only if the underlying $G$-tree is.
Lastly, note that Remarks \ref{UNPACKFTYPE REM}, \ref{TREEINDUCDESC REM}
and Lemma \ref{KEYLEMMAGECO LEM} then extend to the $l$-labeled context, by now writing $\Sigma_T$ for the group of label isomorphisms and defining the partition $\lambda$ 
in Remark \ref{TREEINDUCDESC REM}
by using label isomorphism classes.

\begin{proposition}\label{AUTTCOFPUSH PROP}
	Suppose that $\mathcal{V}$ is a cofibrantly generated closed monoidal model category with cellular fixed points and with cofibrant symmetric pushout powers.

	Let $\mathcal{F}$ be a family of corollas,
	and suppose that 
	$f_s \colon A_s \to B_s$, $1 \leq s \leq l$ are level $\mathcal{F}$-cofibrations (resp. trivial cofibrations)
	in $\mathsf{Sym}^G(\mathcal{V})$, i.e. that 
	$f_s(n) \colon A_s(n) \to B_s(n)$ are cofibrations (trivial cofibrations) in 
	$\mathcal{V}^{G \times \Sigma_n^{op}}_{\mathcal{F}_n}$.
	Then, for any $l$-labeled tree $T \in \Omega^{\underline{l}}$,
	the map
	\begin{equation}\label{FSQVT EQ}
	f^{\square V(T)} = 
		\underset{1\leq s \leq l}{\mathlarger{\mathlarger{\square}}}
		\phantom{!}
		\underset{v \in V_s(T)}{\mathlarger{\mathlarger{\square}}}
	f_s(v)
	\end{equation}
	(where $V_s(T)$ denotes vertices with label $s$) is a cofibration (resp. trivial cofibration) in 
	$\mathcal{V}^{G \times \Sigma_T^{op}}_{\mathcal{F}_T}$.
\end{proposition}

To ease notation, we identify
$\mathcal{V}^{G \times \Sigma_n^{op}}_{\mathcal{F}_n}
\simeq
\mathcal{V}^{G \times \Sigma_n}_{\mathcal{F}_n}$,
$\mathcal{V}^{G \times \Sigma_T^{op}}_{\mathcal{F}_n}
\simeq
\mathcal{V}^{G \times \Sigma_T}_{\mathcal{F}_n}$
throughout the proof.

\begin{proof}
	This follows by induction on the decomposition 
	$T= C_n \amalg_{n \cdot \eta}(T_1 \amalg \cdots \amalg T_n)$, 
	with the base cases of corollas and $\eta$ being immediate. Otherwise, note first that
\[
f^{\square V(T)}
\simeq
f_{s_r}(n) \square
	\underset{1\leq i \leq k}{\mathlarger{\mathlarger{\square}}}
	\left(f^{\square V(T_{i_j})}\right)^{\square \lambda_i}
\]
where we use the notation in
Remark \ref{TREEINDUCDESC REM} and $s_r$ is the root vertex label.

	The description of $\mathcal{F}_T$ in \eqref{KEYLEMMAGECO EQ} combined with the left Quillen functors in 
	Propositions \ref{EXTERINTADJG PROP}, \ref{BIQUILLENG PROP} and \ref{FGTLEFT PROP} then yield that 
\[
\begin{tikzcd}
	\mathcal{V}^{G \times \Sigma_n}_{\mathcal{F}_n}	
		\times
	\mathcal{V}
	^{G \times \Sigma_{|\lambda_1|}\wr \Sigma_{T_{i_1}}}
	_{\mathcal{F}_{T_{i_1}}^{\ltimes_G |\lambda_1|}}
		\times \cdots \times
	\mathcal{V}
	^{G \times \Sigma_{|\lambda_k|}\wr \Sigma_{T_{i_k}}}
	_{\mathcal{F}_{T_{i_k}}^{\ltimes_G |\lambda_k|}}
\ar{r}{\otimes}
&
	\mathcal{V}^{G \times \Sigma_T}_{\mathcal{F}_T}
\end{tikzcd}
\]
is a left Quillen multifunctor.
The result now follows by Proposition \ref{POWERFG PROP} together with the induction hypothesis.
\end{proof}

\begin{remark}\label{WRONGSTRAT REM}
When $G=\**$, Proposition \ref{AUTTCOFPUSH PROP}
matches \cite[Lemma 5.9]{BM08}.
In fact, it is not hard to modify the proof of \cite[Lemma 5.9]{BM08} to show Proposition \ref{AUTTCOFPUSH PROP} for the 
family $\Sigma_G$ of all $G$-corollas.
Indeed, the key to proving Proposition \ref{AUTTCOFPUSH PROP}
is Lemma \ref{KEYLEMMAGECO LEM} and
the last paragraph of our proof of that lemma
is very close to the arguments in 
\cite{BM08}.
However, the case of a general $\Sigma_{\mathcal{F}}$
is intrinsically more subtle,
with the rest of our proof of 
Lemma \ref{KEYLEMMAGECO LEM} 
depending heavily on the 
$\mathcal{F}^{\ltimes_G n}$ families,
which have no analogue in \cite{BM08}.
\end{remark}

By allowing $T \in \Omega^{\underline{l}}$
to vary,
\eqref{FSQVT EQ} defines an arrow
$f^{\square V(-)}$ in $\mathcal{V}^{G \times \Omega^{\underline{l},op}}$.
Our next step is to compare this construction with an analogous construction for $G$-trees.

To do so, and in analogy with the functor
$\iota \colon G^{op} \times \Sigma \to 
\Sigma_G$
in \S \ref{COMPARISON_REGULAR_SECTION},
we likewise define
$\iota \colon G^{op} \times \Omega^{\underline{l}} \to 
\Omega^{\underline{l}}_G$
via $T \mapsto G \cdot T$.
We then write
$\iota_{\**} 
\colon
\mathcal{V}^{G \times \Omega^{\underline{l},op}}
\to 
\mathcal{V}^{\Omega^{\underline{l},op}_G}
$
for the right adjoint to precomposition.
Just as in \eqref{IOTAFUNSALT EQ}, we then have that,
for $Y \in \mathcal{V}^{G \times \Omega^{\underline{l},op}}$
and $T = (T_i)_{i \in I}$
in $\mathcal{V}^{\Omega^{\underline{l}}}_G$, it is
\begin{equation}\label{IOTAFUNSALTBIG EQ}
	\iota_{\**}Y (T)
=
	\left(\prod_{I} Y(T_i)\right)^G
\simeq
	Y(T_1)^H
\end{equation}
where $T_1$ is the first component of $T$
and $H \leq G$ is the isotropy of the first element of $I$.

\begin{proposition}\label{FIXPT PROP}
	Let $\mathcal{V}$ be as in Proposition \ref{AUTTCOFPUSH PROP}, and suppose additionally that 
	$\mathcal{V}$ has diagonal maps and cartesian fixed points.

	Let  
	$f_s \colon A_s \to B_s$, $1\leq s \leq l$ be 
	genuine cofibrations between genuine cofibrant objects in 
	$\mathsf{Sym}^G(\mathcal{V})$.
	Define a map
	$f^{\square V_G(-)}$ in
	$\mathcal{V}^{\Omega^{\underline{l},op}_G}$
	by setting, for each $T \in \Omega_{G}^{\underline{l}}$,
\begin{equation}\label{FSQVTG EQ}
		f^{\square V_G(T)} = 
		\underset{1\leq s \leq l}{\mathlarger{\mathlarger{\square}}}
			\phantom{!}
		\underset{v \in V_{G,s}(T)}{\mathlarger{\mathlarger{\square}}}
		\iota_{\**}f_s(v).
\end{equation}
	One then has a natural identification
	\begin{equation}\label{FIXEDPOINT1 EQ}
		f^{\square V_G(-)} \simeq
		\iota_{\**} \left(f^{\square V(-)}\right).
	\end{equation}
\end{proposition}

\begin{proof}
	For brevity, let us abbreviate \eqref{FSQVT EQ} as
	$\left(f^{\square V(-)}\right) = \underset{v \in V(T)}{\mathlarger{\mathlarger{\square}}}
	f_{\bullet}(v)$,
	leaving the label data implicit in the vertex data,
	and likewise for \eqref{FSQVTG EQ}.
	Letting 
	$T = (T_i)_I$ and $H\leq G$ be as in 
	\eqref{IOTAFUNSALTBIG EQ},
	we then have
\[
	\left(\iota_{\**} \left(f^{\square V(-)}\right)\right)(T)
\simeq
	\left(f^{\square V(T_1)}\right)^H
=
	\left(\underset{v \in V(T)}{\mathlarger{\mathlarger{\square}}}
	f_{\bullet}(v)
	\right)^H
\simeq
	\underset{[v] \in V(T_1)/H}{\mathlarger{\mathlarger{\square}}}
	f_{\bullet}(v)^{H_v}
\simeq
	\underset{[v] \in V_G(T)}{\mathlarger{\mathlarger{\square}}}
\iota_{\**}f_{\bullet}([v])
\]
where the first step is 
\eqref{IOTAFUNSALTBIG EQ},
the second step is \eqref{FSQVT EQ}, 
the third step is
Corollary \ref{FIXEDPUSH COR}
with $H_v$
the $H$-isotropy of $v \in V(T_1)$
(where we simplify the notation
$f_{\bullet}([v])^{H_{[v]}}$
to 
$f_{\bullet}(v)^{H_v}$
by picking the first representative $v$ of $[v]$),
and the final step is
\eqref{IOTAFUNSALT EQ}
together with the observation that
$H_v \leq G$
is also the $G$-isotropy of $v \in V(T)$
and the identification
$V(T_1)/H \simeq V_G(T)$.
Noting that the last term is 
$f^{\square V_G(T)}$
finishes the proof.	
\end{proof}

\subsection{Cofibrancy and the proof of Theorem \ref{MAINQUILLENEQUIV THM}}
\label{MAINTHM_PROOF_SECTION}

Propositions \ref{AUTTCOFPUSH PROP} and \ref{FIXPT PROP} will now allow us to prove Lemma \ref{MAINLEM LEM}, which provides a characterization of cofibrant objects in 
$\mathsf{Op}_{\mathcal{F}}(\mathcal{V})$,
and from which our main 
result Theorem \ref{MAINQUILLENEQUIV THM}
readily follows.
We start by refining the key argument in the proof of
\cite[Thm. 2.10]{Ste16}.

\begin{proposition}\label{COFESSIM PROP}
	Let $\mathcal{V}$ be a cofibrantly generated model category with cellular fixed points, $\mathcal{F}$ a non-empty family of subgroups of $G$,
	and consider the reflexive adjunction
\[
\begin{tikzcd}[column sep =5em]
	\mathcal{V}^{\mathsf{O}_{\mathcal{F}}^{op}}
	\ar[shift left=1.5]{r}{\iota^{\**}} 
&
	\mathcal{V}^G_{\mathcal{F}}
	\ar[shift left=1.5]{l}{\iota_{\**}}.
\end{tikzcd}
\]
Then the cofibrant objects of 
$\mathcal{V}^{\mathsf{O}_{\mathcal{F}}^{op}}$
are precisely the essential image under $\iota_{\**}$
of the cofibrant objects of
$\mathcal{V}^G_{\mathcal{F}}$.
Moreover, the analogous statement for cofibrations between cofibrant objects also holds.
\end{proposition}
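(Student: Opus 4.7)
The plan is to leverage an explicit cellular analysis via Proposition \ref{STRONGCELL PROP}, refining the approach of \cite[Thm.~3.17]{Ste16}. The key technical input is that $\iota_{\**}$ sends generating (trivial) cofibrations of $\mathcal{V}^G_{\mathcal{F}}$ to generating (trivial) cofibrations of $\mathcal{V}^{\mathsf{O}_{\mathcal{F}}^{op}}$: indeed, for a generator $G/K \cdot i$ of $\mathcal{V}^G_{\mathcal{F}}$ with $K\in \mathcal{F}$ and $i$ a generator of $\mathcal{V}$, Proposition \ref{STRONGCELL PROP}(ii) gives
\[
\iota_{\**}(G/K \cdot i)(G/H) = (G/K \cdot i)^H = (G/K)^H \cdot i = \mathsf{O}_{\mathcal{F}}(G/H, G/K) \cdot i,
\]
which is precisely the evaluation at $G/H$ of a generator of the projective model structure on $\mathcal{V}^{\mathsf{O}_{\mathcal{F}}^{op}}$. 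In particular $(\iota^{\**},\iota_{\**})$ is a Quillen adjunction.

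For the forward direction, I would write any cofibrant $X \in \mathcal{V}^G_{\mathcal{F}}$ (up to retract) as a transfinite composition of pushouts of coproducts of generating cofibrations. Since Proposition \ref{STRONGCELL PROP}(i) shows $\iota_{\**}$ preserves both direct colimits and pushouts along genuine cofibrations, and since $\iota_{\**}$ commutes with coproducts in this setting (again by the cellularity axioms applied on each summand), applying $\iota_{\**}$ yields a cellular presentation of $\iota_{\**}X$ by pushouts of generating cofibrations in $\mathcal{V}^{\mathsf{O}_{\mathcal{F}}^{op}}$. Retracts pose no issue since any functor preserves them.

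For the backward direction, let $Y$ be cofibrant in $\mathcal{V}^{\mathsf{O}_{\mathcal{F}}^{op}}$. Up to retract it is built cellularly from generators $\mathsf{O}_{\mathcal{F}}(-, G/K) \cdot i$ with $K \in \mathcal{F}$. I would construct $X \in \mathcal{V}^G_{\mathcal{F}}$ by attaching the corresponding cells $G/K \cdot i$ along the same transfinite scheme; by the forward direction $\iota_{\**}X$ is naturally isomorphic to the cellular model of $Y$. The subtlety is handling retracts, which is resolved by reflexivity: since $\iota^{\**}\iota_{\**} \simeq \mathrm{id}$, the functor $\iota_{\**}$ is fully faithful, so any retract of $\iota_{\**}X$ corresponds to a retract $\bar{X}$ of $X$ in $\mathcal{V}^G_{\mathcal{F}}$ (itself cofibrant) with $\iota_{\**}\bar{X} \simeq Y$.

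The ``moreover'' claim for cofibrations between cofibrant objects follows by running the same argument relatively. Given a cofibration $f\colon X \to X'$ between cofibrant objects of $\mathcal{V}^G_{\mathcal{F}}$, present $f$ as a retract of a relative cell complex, and use Proposition \ref{STRONGCELL PROP} to show $\iota_{\**}f$ is a retract of a relative cell complex in $\mathcal{V}^{\mathsf{O}_{\mathcal{F}}^{op}}$, hence a cofibration between cofibrant objects. Conversely, given a cofibration $g\colon Y \to Y'$ between cofibrants in $\mathcal{V}^{\mathsf{O}_{\mathcal{F}}^{op}}$, realize it (up to retract) as a relative cell complex and build the corresponding relative cell complex in $\mathcal{V}^G_{\mathcal{F}}$, whose image under $\iota_{\**}$ recovers $g$; reflexivity again allows us to descend retracts. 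The main obstacle will be verifying that $\iota_{\**}$ genuinely commutes with the entire cellular construction, but this reduces cleanly to the three clauses of Definition \ref{CELL DEF} as strengthened by Proposition \ref{STRONGCELL PROP}.
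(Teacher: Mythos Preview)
Your proposal is correct and follows essentially the same strategy as the paper: both arguments hinge on the observation that $\iota_{\**}$ carries generating cofibrations to generating cofibrations (via cellularity axiom (iii)), that $\iota_{\**}$ commutes with the colimits appearing in cellular presentations (via axioms (i) and (ii), as packaged in Proposition \ref{STRONGCELL PROP}), and that full faithfulness of $\iota_{\**}$ handles retracts and attaching maps. The only organizational difference is that the paper proves both inclusions at once by showing that $\iota_{\**}(C^{\mathcal{F}})$ satisfies the ``smallest class'' characterization of $C_{\mathcal{F}}$, whereas you argue the two inclusions separately; the paper's version is slightly more streamlined in that it makes the bijection between cellular-extension diagrams on the two sides explicit (so that the inductive construction of $X$ from $Y$ in your backward direction is automatic), but your approach is entirely sound.
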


\begin{proof}
        Note first that, since $\iota_{\**}$ identifies 
        $\mathcal{V}^{G}$ as a reflexive 
        subcategory of $\mathcal{V}^{\mathsf{O}_{\mathcal{F}}^{op}}$, 
        it is 
        $X \simeq \iota_{\**}Y$ for some 
        $Y \in \mathcal{V}^{G}$
        (i.e. $X \in \mathcal{V}^{\mathsf{O}_{\mathcal{F}}^{op}}$
        is in the essential image of $\iota_{\**}$)
        iff both $\iota^{\**}X \simeq Y$ and the unit map 
        $X \xrightarrow{\simeq} \iota_{\**} \iota^{\**}X$
        is an isomorphism.

        Letting $C_{\mathcal{F}}$ (resp. $C^{\mathcal{F}}$) denote the classes of cofibrant objects in 
        $\mathcal{V}^{\mathsf{O}_{\mathcal{F}}^{op}}$ 
        (resp. $\mathcal{V}^G_{\mathcal{F}}$)
        we need to show 
        $C_{\mathcal{F}} = \iota_{\**}(C^{\mathcal{F}})$,
        where we slightly abuse notation by writing 
        $\iota_{\**}(\minus)$ for the essential image rather than the image.
        Since $C_{\mathcal{F}}$ is characterized as being the smallest class closed under retracts and transfinite composition of cellular extensions
        that contains the initial presheaf $\emptyset$,
        it suffices to show that 
        $\iota_{\**}(C^{\mathcal{F}})$
        satisfies this same characterization.

        It is immediate that $\iota_{\**}(\emptyset) = \emptyset$.
        Further, the characterization in the first paragraph yields that 
        $X \in \iota_{\**}(C^{\mathcal{F}})$ iff $\iota^{\**}(X)\in C^{\mathcal{F}}$ and $X \xrightarrow{\simeq} \iota_{\**} \iota^{\**}X$ is an isomorphism, showing that  
        $\iota_{\**}(C^{\mathcal{F}})$ is closed under retracts.

        The crux of the proof will be to compare 
        cellular extensions in 
        $C_{\mathcal{F}}$ with the images under $\iota_{\**}$ of the cellular extensions in 
        $C^{\mathcal{F}}$.
        Firstly, note that the generating cofibrations in 
        $\mathcal{V}^{\mathsf{O}_{\mathcal{F}}^{op}}$
        have the form $\mathsf{Hom}(\minus,G/H)\cdot f$, 
        and that by the cellularity axiom (iii) in
        Definition \ref{CELL DEF}
        this map is isomorphic to the map
        $\iota_{\**}(G/H \cdot f)$.
        We now claim that the cellular extensions of objects in 
        $\iota_{\**}(C^{\mathcal{F}})$, i.e. pushout diagrams as on the left below
        \begin{equation}\label{TWOCELLEXTEAS EQ}
                \begin{tikzcd}
                        \iota_{\**} X \ar{d}[swap]{\iota_{\**}u} \ar{r} &
                        \iota_{\**} V \ar[dashed]{d} & &
                        X \ar{d}[swap]{u} \ar{r} &
                        V \ar[dashed]{d}
                        \\
                        \iota_{\**} Y  \ar[dashed]{r}&
                        \tilde{W} & &
                        Y \ar[dashed]{r}&
                        W
                \end{tikzcd}
        \end{equation}
        are precisely the essential image under $\iota_{\**}$
        of the cellular extensions of objects in $C^{\mathcal{F}}$, 
        i.e., pushout diagrams as on the right above. That the solid subdiagrams in either side of \eqref{TWOCELLEXTEAS EQ} are indeed in bijection up isomorphism is simply the claim that 
        $\iota^{\**}$ is fully faithful,
        hence the real claim is that $\tilde{W} \simeq \iota_{\**} W$.
	But this follows since,
	by the cellularity axiom (ii) in
	Definition \ref{CELL DEF},
	the map $\iota_{\**}$ preserves the rightmost pushout
	in \eqref{TWOCELLEXTEAS EQ} 
	(recall that $u \colon X \to Y$ is assumed to be a generating cofibration of $\mathcal{V}^G_{\mathcal{F}}$).

        Noting that the cellularity axiom (i) in
        Definition \ref{CELL DEF} implies that
        $\iota_{\**}$ preserves filtered colimits finishes the proof that $C_{\mathcal{F}} = \iota_{\**}(C^{\mathcal{F}})$.

        The additional claim concerning cofibrations between cofibrant objects follows by the same argument.
\end{proof}

\begin{corollary}\label{FINALCOR COR}
Let $\mathcal{V}$ be as above, 
$\phi \colon G \to \bar{G}$
a homomorphism, and 
$\mathcal{F}$, $\bar{\mathcal{F}}$
families of $G$, $\bar{G}$
such that $\phi_{!}\mathcal{F} \subseteq \mathcal{F}$.
Then the diagram
\[
\begin{tikzcd}
	\mathcal{V}^{\mathsf{O}_{\mathcal{F}}^{op}} \ar{d}[swap]{\phi_!} &
	\mathcal{V}^{G}_{\mathcal{F}} \ar{l}[swap]{\iota_{\**}} \ar{d}{\phi_!}&
\\
	\mathcal{V}^{\mathsf{O}_{\mathcal{\bar{F}}}^{op}}  &
	\mathcal{V}^{\bar{G}}_{\mathcal{\bar{F}}} \ar{l}{\iota_{\**}}&
\end{tikzcd}
\]
commutes up to isomorphism when restricted to 
cofibrant objects of $\mathcal{V}^{G}_{\mathcal{F}}$.
\end{corollary}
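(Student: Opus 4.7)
The plan is to combine Proposition \ref{COFESSIM PROP} (which characterizes cofibrant objects of $\mathcal{V}^{\mathsf{O}_{\mathcal{F}}^{op}}$ as being, up to isomorphism, of the form $\iota_{\**}X$ for $X$ cofibrant in $\mathcal{V}^G_{\mathcal{F}}$) with the observation that both $\phi_! \circ \iota_{\**}$ and $\iota_{\**} \circ \phi_!$ are compatible with the cellular presentations used in that proof. The natural comparison map comes from adjunctions: $\phi_!$ is left adjoint to restriction and $\iota_{\**}$ is right adjoint to $\iota^{\**}$, so there is a canonical Beck--Chevalley map $\phi_! \iota_{\**} \Rightarrow \iota_{\**} \phi_!$ (or one defined from the universal properties), and our goal is to show it is an isomorphism on cofibrant $X$.

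First, I would check the statement on generating cells. For $H \in \mathcal{F}$ and any object $A$ of $\mathcal{V}$, the cellularity axiom (iii) of Definition \ref{CELL DEF} gives $\iota_{\**}(G/H \cdot A) \simeq \mathsf{Hom}_G(-,G/H) \cdot A$. Applying $\phi_!$ and using that left Kan extension of a representable along the functor $\mathsf{O}_{\mathcal{F}} \to \mathsf{O}_{\bar{\mathcal{F}}}$, $G/H \mapsto \bar{G}/\phi(H)$ (well-defined by the hypothesis $\phi_!\mathcal{F} \subset \bar{\mathcal{F}}$) sends representables to representables, yields $\phi_!\iota_{\**}(G/H\cdot A) \simeq \mathsf{Hom}_{\bar{G}}(-,\bar{G}/\phi(H))\cdot A$. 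On the other hand, $\iota_{\**}\phi_!(G/H \cdot A) \simeq \iota_{\**}(\bar{G}/\phi(H)\cdot A)$ which by cellularity axiom (iii) again equals $\mathsf{Hom}_{\bar{G}}(-,\bar{G}/\phi(H))\cdot A$. Thus the two sides agree on cells.

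Second, I would propagate this isomorphism along cellular pushouts by induction. Given a pushout presentation $Y = X \cup_{G/H\cdot A} G/H \cdot B$ with $X$ cofibrant and an isomorphism $\alpha_X \colon \phi_!\iota_{\**}X \xrightarrow{\simeq} \iota_{\**}\phi_!X$ already constructed, I apply both composite functors to the pushout. On the $\phi_!\iota_{\**}$ side, the cellularity axiom (ii) (used in the proof of Proposition \ref{COFESSIM PROP}) ensures $\iota_{\**}$ preserves the pushout, and $\phi_!$ preserves all colimits as a left adjoint. On the $\iota_{\**}\phi_!$ side, $\phi_!$ preserves the pushout, and the resulting pushout (now along $\bar{G}/\phi(H)\cdot A \to \bar{G}/\phi(H) \cdot B$ in $\mathcal{V}^{\bar{G}}_{\bar{\mathcal{F}}}$, which is cellular since $\phi(H)\in\bar{\mathcal{F}}$) is again preserved by $\iota_{\**}$ by cellularity axiom (ii). Combining $\alpha_X$ with the isomorphisms from the previous paragraph on the cellular corners, the universal property of pushout yields $\alpha_Y$.

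Finally, for transfinite compositions $X = \colim_{\alpha<\beta} X_\alpha$ one uses that $\phi_!$ preserves all colimits while $\iota_{\**}$ preserves filtered colimits by cellularity axiom (i), so the isomorphisms $\alpha_{X_\alpha}$ assemble to $\alpha_X$. Closing under retracts is formal. I expect the main (mild) obstacle to be bookkeeping the cellularity axioms (ii) and (iii) carefully on both the $G$ and $\bar{G}$ sides --- in particular ensuring that the pushouts produced by $\phi_!$ remain cellular in $\mathcal{V}^{\bar{G}}_{\bar{\mathcal{F}}}$ (so that $\iota_{\**}$ preserves them) --- but this is guaranteed precisely by the hypothesis $\phi_!\mathcal{F}\subset\bar{\mathcal{F}}$.
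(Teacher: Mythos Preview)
Your argument is correct but takes a genuinely different route from the paper. The paper's proof is purely formal: it observes that the left adjoints commute, $\iota^{\**}\phi_! \simeq \phi_!\iota^{\**}$, and then writes the Beck--Chevalley comparison as the composite
\[
\phi_!\iota_{\**} \to \iota_{\**}\iota^{\**}\phi_!\iota_{\**} \xrightarrow{\simeq} \iota_{\**}\phi_!\iota^{\**}\iota_{\**} \xrightarrow{\simeq} \iota_{\**}\phi_!,
\]
where the last two maps are always isomorphisms and the first is the unit of the $(\iota^{\**},\iota_{\**})$-adjunction applied to $\phi_!\iota_{\**}X$. But for $X$ cofibrant in $\mathcal{V}^G_{\mathcal{F}}$, Proposition~\ref{COFESSIM PROP} says $\iota_{\**}X$ is cofibrant in $\mathcal{V}^{\mathsf{O}_{\mathcal{F}}^{op}}$, hence $\phi_!\iota_{\**}X$ is cofibrant in $\mathcal{V}^{\mathsf{O}_{\bar{\mathcal{F}}}^{op}}$, and Proposition~\ref{COFESSIM PROP} applied again (now over $\bar{G}$) says such objects lie in the essential image of $\iota_{\**}$, so the unit on them is an isomorphism. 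Done.

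Your cellular induction is more hands-on: you verify the comparison on generating cells via cellularity axiom~(iii), then propagate along pushouts and transfinite compositions using axioms~(ii) and~(i). This works, and your checks that $\phi_!$ carries cellular pushouts in $\mathcal{V}^G_{\mathcal{F}}$ to cellular pushouts in $\mathcal{V}^{\bar{G}}_{\bar{\mathcal{F}}}$ (precisely because $\phi(H)\in\bar{\mathcal{F}}$) are correct. The advantage of the paper's approach is brevity and that it uses Proposition~\ref{COFESSIM PROP} as a black box rather than reopening the cellular machinery; the advantage of yours is that it is self-contained and makes explicit exactly which cellularity axioms are invoked at each step.
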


\begin{proof}
	It is straightforward to check that the left adjoints commute, i.e. that there is a natural isomorphism 
	$\iota^{\**} \phi_{!} \simeq \phi_{!} \iota^{\**}$
which, by adjunction, induces a natural transformation
	$\phi_! \iota_{\**} \to \iota_{\**} \phi_!$.
More explicitly, this natural transformation is the composite
\[\phi_! \iota_{\**} \to 
\iota_{\**} \iota^{\**} \phi_! \iota_{\**} \xrightarrow{\simeq}
\iota_{\**} \phi_! \iota^{\**} \iota_{\**} \xrightarrow{\simeq}
\iota_{\**} \phi_!
\]
where the last two maps are always isomorphisms. But when restricting to cofibrant objects the previous result guarantees both that $\phi_! \iota_{\**}$ lands in cofibrant objects and that cofibrant objects are in the essential image of the bottom $\iota_{\**}$. The result follows.
\end{proof}


The following is the main lemma. We note that the 
operad half of \eqref{FGTFUNC EQ}
was also obtained by Guti\'{e}rrez-White in \cite{GW18}.

\begin{lemma}\label{MAINLEM LEM}
	Let $\mathcal{V}$ be as in 	
	Theorem \ref{MAINQUILLENEQUIV THM}
	and let $\mathcal{F}$ be a weak indexing system.
Then in both of the adjunctions
\begin{equation}\label{COFADJ2 EQ}
\begin{tikzcd}[column sep =5em]
	\mathsf{Op}_{\mathcal{F}}(\mathcal{V}) \ar[shift left=1.5]{r}{\iota^{\**}} 
&
	\mathsf{Op}^G_{\mathcal{F}}(\mathcal{V})
	\ar[shift left=1.5]{l}{\iota_{\**}}
&
	\mathsf{Sym}_{\mathcal{F}}(\mathcal{V}) \ar[shift left=1.5]{r}{\iota^{\**}} 
&
	\mathsf{Sym}^G_{\mathcal{F}}(\mathcal{V})
	\ar[shift left=1.5]{l}{\iota_{\**}}	
\end{tikzcd}
\end{equation}	
the cofibrant objects in the leftmost category are the essential image under $\iota_{\**}$ of the 
cofibrant objects in the rightmost category.
Moreover, both forgetful functors 
\begin{equation}\label{FGTFUNC EQ}
\begin{tikzcd}[column sep =5em]
	\mathsf{Op}_{\mathcal{F}}(\mathcal{V}) \ar{r}{\mathsf{fgt}} 
&
	\mathsf{Sym}_{\mathcal{F}}(\mathcal{V})
&
	\mathsf{Op}^G_{\mathcal{F}}(\mathcal{V})
	 \ar{r}{\mathsf{fgt}}
&
	\mathsf{Sym}^G_{\mathcal{F}}(\mathcal{V})
\end{tikzcd}
\end{equation}
preserve cofibrant objects.
\end{lemma}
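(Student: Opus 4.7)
\emph{Plan of proof.} I will treat the symmetric sequence statement first, then reduce the operad statement to a comparison of the free-extension filtrations on the two sides.

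For the symmetric sequence claim, Lemma \ref{FAMILY_COROLLAS_LEM} gives a decomposition $\Sigma_{\mathcal{F}} \simeq \coprod_{n \geq 0} \mathsf{O}_{\mathcal{F}_n}$ as a disjoint union of orbit categories over graph families, so that the adjunction $(\iota^{\**},\iota_{\**})$ factors as a product over $n$ of the Elmendorf--Piacenza adjunctions $\mathcal{V}^{\mathsf{O}_{\mathcal{F}_n}^{op}} \rightleftarrows \mathcal{V}^{G \times \Sigma_n}_{\mathcal{F}_n}$ already considered in Proposition \ref{COFESSIM PROP}. Thus both the characterization of cofibrant objects and of cofibrations between cofibrant objects in $\mathsf{Sym}_{\mathcal{F}}(\mathcal{V})$ as the essential image under $\iota_{\**}$ follow levelwise from that proposition. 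In particular we may freely identify a cofibrant $Y \in \mathsf{Sym}_{\mathcal{F}}(\mathcal{V})$ with $\iota_{\**}\bar Y$ for a unique cofibrant $\bar Y \in \mathsf{Sym}^G_{\mathcal{F}}(\mathcal{V})$, and similarly for cofibrations (in particular, for generating cofibrations).

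For the operad claim I proceed by transfinite induction over the cell structure. The initial object of $\mathsf{Op}_{\mathcal{F}}(\mathcal{V})$ coincides with $\iota_{\**}(\emptyset)$ since $\iota_{\**}$ preserves limits, and filtered colimits of cofibrations in $\mathsf{Op}^G_{\mathcal{F}}(\mathcal{V})$ commute with $\iota_{\**}$: after forgetting to symmetric sequences (which reflects isomorphisms) this is precisely the statement that the symmetric sequence analogue does, which is clear from the Elmendorf identification above together with Proposition \ref{STRONGCELL PROP}(i) applied level by level. The only non-formal step is thus the cell attachment step: given a cofibrant $\bar{\mathcal{P}} \in \mathsf{Op}^G_{\mathcal{F}}(\mathcal{V})$ with $\mathcal{P} := \iota_{\**}\bar{\mathcal{P}}$ and a generating cofibration $\bar u \colon \bar X \to \bar Y$ of $\mathsf{Sym}^G_{\mathcal{F}}(\mathcal{V})$ with $u := \iota_{\**}\bar u$, I must exhibit a natural isomorphism $\mathcal{P}[u] \simeq \iota_{\**}(\bar{\mathcal{P}}[\bar u])$, where the left free extension is taken in $\mathsf{Op}_{\mathcal{F}}(\mathcal{V})$ and the right in $\mathsf{Op}^G_{\mathcal{F}}(\mathcal{V})$.

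Both sides carry the filtration (\ref{FILT EQ}) with successor pushouts (\ref{FILTRATION_LAN_SQUARE_DIAGRAM}) given levelwise by the explicit formula (\ref{FILTRATION_LAN_LEVEL}). I would induct on the filtration index $k$, reducing at each stage to a comparison of the leftmost vertical maps in (\ref{FILTRATION_LAN_LEVEL}). Here the two essential ingredients come together. First, Proposition \ref{AUTTCOFPUSH PROP} applied to the inductive hypothesis (that $\bar{\mathcal{P}}(-)$ and $\bar Y(-)$ are genuinely cofibrant at each level, which holds by the already established forgetful preservation on the $G$-side) ensures that the tree summands
\[
\left(\bigotimes_{v \in V_G^{ac}(T)} \bar{\mathcal{P}}(T_v) \otimes \underset{v \in V_G^{in}(T)}{\square}\,\bar u(T_v)\right)
\]
are genuine $(G \times \Sigma_{\mathsf{lr}(T)})$-cofibrations inside the family $\mathcal{F}_{\mathsf{lr}(T)}$, so that both pushouts are preserved and compared by $\iota_{\**}$ via Proposition \ref{STRONGCELL PROP}. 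Second, the tree-indexed comparison itself is exactly Proposition \ref{FIXPT PROP}: for each $C \in \Sigma_{\mathcal{F}}$, the coproduct on the right of (\ref{FILTRATION_LAN_LEVEL}) ranges over $[T] \in \mathsf{Iso}(C \downarrow_{\mathsf{r}} \Omega^a_G[k])$, which by the decomposition $T = G \cdot_H U$ (as in the proof of Proposition \ref{FIXPT PROP}) matches, after taking the twisted-diagonal fixed points encoded by $C$, with the free-$G$-tree summands appearing in $(\iota_{\**}\bar{\mathcal{P}}_k)(C)$.

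\emph{Forgetful functors and the main obstacle.} The preservation of cofibrant objects by the forgetful functors in (\ref{FGTFUNC EQ}) is built into the same induction: the successor maps $\mathcal{P}_{k-1} \to \mathcal{P}_k$ are level genuine cofibrations by Lemma \ref{EXMAINLEM LEM} (and Remark \ref{EXMAINLEM REM} in the $G$-operad case), and these are in particular cofibrations in the projective (resp. $\mathcal{F}$-fixed-point) symmetric sequence model structure. The main obstacle, therefore, is the tree-by-tree isomorphism in the previous paragraph: keeping track of the relationship between substitution data at a general $G$-tree $T \in \Omega^a_G$, a free-$G$-tree presentation of $T$, and the corresponding graph subgroup structure on automorphisms, so that Proposition \ref{FIXPT PROP} can be invoked (this is exactly where cartesian fixed points and the $\mathcal{F}^{\ltimes_G n}$ machinery of \S \ref{G_GRAPH_SECTION} become indispensable). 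Once that identification is in place, Theorem \ref{MAINQUILLENEQUIV THM} follows easily: the derived unit on a cofibrant $\mathcal{P}$ is $\mathcal{P} \to \iota_{\**}\iota^{\**}\mathcal{P}$, which the lemma identifies as an isomorphism.
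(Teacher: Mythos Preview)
Your approach is essentially the same as the paper's: reduce the symmetric sequence claim to Proposition \ref{COFESSIM PROP} via Lemma \ref{FAMILY_COROLLAS_LEM}, then run a transfinite cellular induction for operads using the filtration (\ref{FILT EQ}), with Propositions \ref{AUTTCOFPUSH PROP} and \ref{FIXPT PROP} supplying the two key ingredients at each step. A few points of comparison and one small error:

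\textbf{Initial object.} Your justification ``since $\iota_{\**}$ preserves limits'' is wrong: the initial object is an empty \emph{colimit}, not limit, so this does not follow formally. The paper instead notes that $\mathbb{F}_G(\emptyset) \simeq \iota_{\**}\mathbb{F}(\emptyset)$ is readily checked directly (the only nonempty level is the unit corolla, where both sides are $I$).

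\textbf{Organization of the key step.} Where you compare two filtrations ``tree by tree'' via Proposition \ref{FIXPT PROP} and the decomposition $T = G \cdot_H U$, the paper works entirely on the genuine side: it shows only that each $\mathcal{P}_{k-1} \to \mathcal{P}_k$ is a cofibration between cofibrant objects in $\mathsf{Sym}_{\mathcal{F}}(\mathcal{V})$, and then invokes Proposition \ref{COFESSIM PROP} once more to conclude the entire filtration lies in the essential image of $\iota_{\**}$. To carry out that step the paper packages the interaction of the left Kan extension $\phi_! = \mathsf{Lan}_{(\Omega_G^a[k] \to \Sigma_G)^{op}}$ with $\iota_{\**}$ via Corollary \ref{FINALCOR COR}, which says precisely that $\phi_! \iota_{\**} \simeq \iota_{\**} \phi_!$ on cofibrant objects. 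Your direct comparison amounts to proving this corollary by hand for the specific $\phi_!$ in question, which is fine but vaguer as written; in particular, Proposition \ref{AUTTCOFPUSH PROP} gives $\mathcal{F}_T$-cofibrancy in $\mathcal{V}^{G \times \Sigma_T}$, and one still needs Remark \ref{LRLEFTQUILLEN REM} (or equivalently Corollary \ref{FINALCOR COR}) to pass to $\mathcal{F}_{\mathsf{lr}(T)}$-cofibrancy after the induction $\otimes_{\mathsf{Aut}(T)} \mathsf{Aut}(C)$.

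\textbf{Parallel induction.} You correctly note that the forgetful-preservation claim (\ref{FGTFUNC EQ}) must be proved in parallel with the essential-image claim, since the cofibrancy hypotheses feeding into Propositions \ref{AUTTCOFPUSH PROP} and \ref{FIXPT PROP} at stage $k$ come from the inductive hypothesis on $\bar{\mathcal{P}}$. The paper does exactly this.
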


Before starting our proof we recall that, as in
Remark \ref{COMPADJ REM},
we do not require that $\mathcal{F}$ contain all free corollas, in which case the adjunctions in 
\eqref{COFADJ2 EQ} are officially composite adjunctions as in 
\eqref{COMPADJ EQ}.
To avoid cumbersome notation, and noting that the inclusions 
$\gamma_! \colon 
\mathsf{Sym}_{\mathcal{F}}(\mathcal{V}) \to 
\mathsf{Sym}_G(\mathcal{V})$,
$\gamma_! \colon 
\mathsf{Op}_{\mathcal{F}}(\mathcal{V}) \to 
\mathsf{Op}_G(\mathcal{V})$
of \S \ref{INDEXING_SECTION}
are compatible with colimits and that 
the monad $\mathbb{F}_{\mathcal{F}}$
is simply a restriction of $\mathbb{F}_G$,
we will simply work in the 
$\mathsf{Sym}_G(\mathcal{V})$,
$\mathsf{Op}_G(\mathcal{V})$ categories throughout,
with the implicit understanding 
that objects lie in the required subcategories.
In particular, $\iota^{\**}$, $\iota_{\**}$
will denote functors from/to 
$\mathsf{Sym}_G(\mathcal{V})$,
$\mathsf{Op}_G(\mathcal{V})$.

\begin{proof}
We first observe that the claim concerning the symmetric sequence adjunction in \eqref{COFADJ2 EQ}
is not really new. Indeed, 
by Lemma \ref{FAMILY_COROLLAS_LEM}
there are equivalences of categories
$
\mathsf{Sym}_{\mathcal{F}}(\mathcal{V})
\simeq \prod_{n \geq 0}
\mathcal{V}^{\mathsf{O}^{op}_{\mathcal{F}_n}}
$,
$
\mathsf{Sym}^G_{\mathcal{F}}(\mathcal{V})
\simeq \prod_{n \geq 0}
\mathcal{V}_{\mathcal{F}_n}^{G \times \Sigma_n^{op}}
$,
compatible with both the model structures and the $(\iota^{\**},\iota_{\**})$ adjunctions,
and hence the symmetric sequence statement merely repackages 
Proposition \ref{COFESSIM PROP}
(with an obvious empty family case if 
$\mathcal{F}_n =\emptyset$ for some $n$).

Moreover, when assuming the claims in \eqref{COFADJ2 EQ},
one has that
the two forgetful functor claims in \eqref{FGTFUNC EQ}
become equivalent,
so we need only establish the
left claim in \eqref{FGTFUNC EQ}.

For the operad adjunction in \eqref{COFADJ2 EQ},
most of the argument in the proof of
Proposition \ref{COFESSIM PROP}
applies mutatis mutandis
except for the claim that 
$\mathbb{F}_{G} (\emptyset) \simeq \iota_{\**} \mathbb{F} (\emptyset)$, 
which is readily checked directly, 
and the comparison of cellular extensions,
which is the key claim.

Further, we will argue the left claim in \eqref{FGTFUNC EQ} in parallel over the same cellular extensions
(the underlying cofibrancy of
$\mathbb{F}(\emptyset)$, 
$\mathbb{F}_G(\emptyset)$
follows from the cofibrancy of the unit 
$I \in \mathcal{V}$).
 
Explicitly, and borrowing the notation
$C_{\mathcal{F}}$ (resp. $C^{\mathcal{F}}$) 
used in the proof of Proposition \ref{COFESSIM PROP} for the 
classes of cofibrant objects in 
$\mathsf{Op}_{\mathcal{F}}(\mathcal{V})$ 
(resp. $\mathsf{Op}_{\mathcal{F}}^G(\mathcal{V})$),
we need to show that cellular extensions of objects in 
$\iota_{\**}(C^{\mathcal{F}})$, such as on the left below
\begin{equation}\label{TWOCELLEXT EQ}
	\begin{tikzcd}
		\mathbb F_G \iota_{\**} X 
		\ar{d}[swap]{\iota_{\**}u} 
		\ar{r} 
	&
		\iota_{\**} \mathcal{O} 
		\ar[dashed]{d} 
	& &
		\mathbb{F} X 
		\ar{d}[swap]{u} 
		\ar{r} 
	&
		\mathcal{O} 
		\ar[dashed]{d}
\\
		\mathbb F_G \iota_{\**} Y
		\ar[dashed]{r}
	&
		(\iota_{\**} \mathcal{O})[\iota_{\**} u]
	& &
		\mathbb{F} Y 
		\ar[dashed]{r}
	&
		\mathcal{O}[u]
	\end{tikzcd}
\end{equation}
are precisely the essential image under $\iota_{\**}$ of cellular extensions of objects in $C^{\mathcal{F}}$, as on the right above.
Moreover, we can assume by induction that
$\iota_{\**} \mathcal{O}$,
$\mathcal{O}$
are underlying cofibrant in 
$\mathsf{Sym}_{\mathcal{F}}(\mathcal{V})$,
$\mathsf{Sym}^G_{\mathcal{F}}(\mathcal{V})$.
%
%
%
%
Now, recalling that Proposition \ref{MONAD_COMPARISON_PROP}(ii)(iv) gives natural isomorphisms 
\[
\iota^{\**} \mathbb{F}_{G} \iota_{\**} \simeq
\iota^{\**} \mathbb{F}_{G} \iota_{!} \simeq
\mathbb{F}
\]
we see that the two solid subdiagrams in 
\eqref{TWOCELLEXT EQ}
are in fact adjoint up to isomorphism, so that there is a bijection between such data. 
We now claim that the leftmost diagram in
\eqref{TWOCELLEXT EQ}
will indeed be the image under $\iota_{\**}$
of the rightmost diagram
provided that all four objects 
are in the essential image of $\iota_{\**}$.
Indeed, 
if that is the case then
\begin{align*}
	\mathbb{F}_G \iota_{\**} Z \simeq 
	\iota_{\**} \iota^{\**} \mathbb{F}_G \iota_{\**} Z \simeq
	\iota_{\**} \mathbb{F} Z
\end{align*}
for $Z=X,Y$
and since $\iota_{\**}$ reflects colimits\footnote{I.e. any diagram that becomes a colimit upon applying $\iota_{\**}$ must have already been a colimit diagram.},
it must indeed be that
$(\iota_{\**} \mathcal{O})[\iota_{\**} u]
\simeq \iota_{\**} (\mathcal{O}[u])$.

To establish the remaining claim that the objects 
in the leftmost diagram in
\eqref{TWOCELLEXT EQ}
are in the essential image of $\iota_{\**}$,
we claim
it suffices to show this for the bottom right corner 
$(\iota_{\**} \mathcal{O})[\iota_{\**} u]$ when $u \colon X \to Y$ is a general cofibration between cofibrant objects in 
$\mathsf{Sym}^{G}_{\mathcal{F}}(\mathcal{V})$.
Indeed, setting  $X=\emptyset$ and $\O=\mathbb{F} (\emptyset)$, 
one has 
$(\iota_{\**} \mathcal{O})[\iota_{\**} u] = 
\mathbb{F}_{G} \iota_{\**} Y$, and similarly for $\mathbb{F}_{G} \iota_{\**} X$.

In the remainder of the proof we write
$\mathcal{P} = \iota_{\**} \mathcal{O}$, so that 
$(\iota_{\**} \mathcal{O})[\iota_{\**} u] = \mathcal{P}[\iota_{\**} u]$.
The previous paragraphs can be summarized as saying that,
to establish the operad half of 
\eqref{COFADJ2 EQ},
it remains only to show that 
$\mathcal{P}[\iota_{\**} u] $
is in the essential image of $\iota_{\**}$.
And since this means that
$\mathcal{P}[\iota_{\**} u] \to \iota_{\**} \iota^{\**} \mathcal{P}[\iota_{\**} u]$
is an isomorphism,
this can be checked by forgetting to $\Sym_G(\V)$.

On the other hand, to establish the left side of 
\eqref{FGTFUNC EQ} it suffices to show that,
under the inductive hypothesis that
$\mathcal{P}$ is cofibrant in $\mathsf{Sym}_{\mathcal{F}}(\mathcal{V})$,
the map
$\mathcal{P} \to \mathcal{P}[\iota_{\**} u]$
is a cofibration in $\mathsf{Sym}_{\mathcal{F}}(\mathcal{V})$.
Moreover, in light of the 
(already established)
symmetric sequence half of \eqref{COFADJ2 EQ},
the claim in the previous sentence suffices to show that 
$\mathcal{P}[\iota_{\**} u] $ is in the essential image of $\iota_{\**}$,
i.e. it suffices 
to establish the remaining claims in both
\eqref{COFADJ2 EQ} and \eqref{FGTFUNC EQ},
and thus to finish the proof.
Hence, using the filtrations in
\eqref{FILT EQ}
it remains only to show,
assuming $\mathcal{P}$ is cofibrant in $\mathsf{Sym}_{\mathcal{F}}(\mathcal{V})$
and arguing by induction on $k \geq 1$,
that the maps 
$\mathcal{P}_{k-1} \to \mathcal{P}_k$ 
are cofibrations between cofibrant objects in
$\mathsf{Sym}_{\mathcal{F}}(\mathcal{V})$.

Using the iterative description 
of the $\mathcal{P}_k$ in
(\ref{FILTRATION_LAN_LEVEL}),
it now suffices
to check that the leftmost map in (\ref{FILTRATION_LAN_LEVEL}) is
a cofibration between cofibrant objects in 
$\mathsf{Sym}_{\mathcal{F}}(\mathcal{V})$.
We now recall that that map can also be described 
(cf. \eqref{FILTINTALT EQ}) as
\begin{equation}\label{FILTINTALTAG EQ}
	\mathsf{Lan}_{(\Omega_{G}^a[k] \to \Sigma_G)^{op}}
	\left(
		\bigotimes\limits_{v \in V_{G}^{ac}(T)}\P(T_v) \otimes
		\underset{v \in V_{G}^{in}(T)}
{\mathlarger{\mathlarger{\mathlarger{\square}}}}
		u(T_v)
	\right).
\end{equation}
Now consider the left square below, which is equivalent to the right square and thus, 
by Corollary \ref{FINALCOR COR},
commutative on cofibrant objects.
\begin{equation}\label{COMCOFOB EQ}
\begin{tikzcd}
	\mathcal{V}^{\Omega_{\mathcal{F}}^a[k]^{op}} \ar{d}[swap]{\phi_!} &
	\mathcal{V}^{G \times \Omega^a[k]^{op}}_{\mathcal{F}} 
	\ar{l}[swap]{\iota_{\**}} \ar{d}{\phi_!} 
&
	\prod_{T \in \mathsf{Iso}(\Omega^a[k])}
\mathcal{V}^{\mathsf{O}^{op}_{\mathcal{F}_T}} \ar{d}[swap]{\phi_!} &
	\prod_{T \in \mathsf{Iso}(\Omega^a[k])}
\mathcal{V}^{G \times \Sigma_T^{op}}_{\mathcal{F}_T}
	\ar{l}[swap]{\iota_{\**}} \ar{d}{\phi_!} &
\\
	\mathcal{V}^{\Sigma_{\mathcal{F}}^{op}}  &
	\mathcal{V}^{G \times \Sigma^{op}}_{\mathcal{F}}
	\ar{l}{\iota_{\**}}
&
	\prod_{n \geq 0}
	\mathcal{V}^{\Sigma_{\mathcal{F}_n}^{op}}  &
	\prod_{n \geq 0}
	\mathcal{V}^{G \times \Sigma_n^{op}}_{\mathcal{F}}
	\ar{l}{\iota_{\**}}&
\end{tikzcd}
\end{equation}
Propositions \ref{AUTTCOFPUSH PROP}
and \ref{FIXPT PROP} 
now show that the inner map inside the left Kan extension in \eqref{FILTINTALTAG EQ}, which can be rewritten as
\[
	\underset{v \in V_{G}^{ac}(T)}
	{\mathlarger{\mathlarger{\mathlarger{\square}}}}
	p(T_v) 
\square
	\underset{v \in V_{G}^{in}(T)}
	{\mathlarger{\mathlarger{\mathlarger{\square}}}}
u(T_v)
\]
for $p(T_v)$ the map
$\emptyset \to \P(T_v)$,
is in the essential image 
of the cofibrations between cofibrant objects
under the top $\iota_{\**}$ map.
But, since \eqref{COMCOFOB EQ} commutes on cofibrant objects
and the $\mathsf{Lan}$ in \eqref{FILTINTALTAG EQ}
is the leftmost $\phi_!$ functor, 
Proposition \ref{COFESSIM PROP} implies that the 
overall map in \eqref{FILTINTALTAG EQ}
is a cofibration between cofibrant objects
in $\mathsf{Sym}_{\mathcal{F}}(\mathcal{V}) = \mathcal{V}^{\Sigma_{\mathcal{F}}^{op}}$, finishing the proof.
\end{proof}

\begin{remark}\label{INFACTEST REM}
The previous proof in fact establishes 
the slightly more general claim that operads 
(in either 
$\mathsf{Op}_{\mathcal{F}}(\mathcal{V})$ or
$\mathsf{Op}_{\mathcal{F}}^G(\mathcal{V})$)
that forget to cofibrant symmetric sequences
(in either 
$\mathsf{Sym}_{\mathcal{F}}(\mathcal{V})$ or
$\mathsf{Sym}_{\mathcal{F}}^G(\mathcal{V})$)
are closed under cellular extensions of operads.

Morever, and as mentioned in Remark \ref{MUTMUT2 REM},
it now follows that \eqref{KEYNONISO EQ}
is an isomorphism when restricted to cofibrant $G$-symmetric sequences.
\end{remark}

\begin{proof}[proof of Theorem \ref{MAINQUILLENEQUIV THM}]
        It suffices to show that both the derived unit and derived counit for the adjunction are given by weak equivalences.

        For the counit, it is immediate from Lemma \ref{MAINLEM LEM} that if $X \in \mathsf{Op}^G(\mathcal{V})$ is bifibrant
        the functor $\iota^{\**} \iota_{\**} X$ is already derived, and hence the derived counit is identified with the counit isomorphism $\iota^{\**} \iota_{\**} X \xrightarrow{\simeq} X$.

        For the unit, note first that it follows from the definitions
        of the model structures in Theorems 
        \ref{MAINEXIST1 THM},\ref{MAINEXIST2 THM} 
        and the formula for $\iota_{\**}$ in \eqref{IOTAFUNSALT EQ}
        that 
        $\iota_{\**} \colon 
        \mathsf{Op}^{G}_{\mathcal{F}}(\mathcal{V})\to  
       \mathsf{Op}_{\mathcal{F}}(\mathcal{V})$
        detects fibrations (as well as weak equivalences)
        and thus, by Lemma \ref{MAINLEM LEM},
        that 
        $Y \in \mathsf{Op}_{\mathcal{F}}(\mathcal{V})$
        is bifibrant iff $Y \simeq \iota_{\**} X$
        for $X \in \mathsf{Op}^{G}_{\mathcal{F}}(\mathcal{V})$ bifibrant.
        But then the functor $\iota_{\**} \iota^{\**} Y$ 
        is also already derived (since $\iota^{\**} Y \simeq \iota^{\**}\iota_{\**} X \simeq X$ is fibrant) and the derived unit is thus the isomorphism
        $Y \xrightarrow{\simeq} \iota_{\**} \iota^{\**} Y$.
\end{proof}

\subsection{Realizing $N_{\infty}$-operads}
\label{NINFTY_SECTION}

We now explain how the $N\mathcal{F}$-operads of 
Blumberg-Hill can be built from the theory of genuine equivariant operads, thus proving Corollary \ref{NINFTY_REAL_COR_MAIN}.

We start with an abstract argument, which has also been used by Guti\'{e}rrez-White in \cite{GW18}. 
Writing 
$\mathcal{I} = \mathbb{F} (\emptyset)$
for the initial equivariant operad in 
$\mathsf{Op}^G(\mathsf{sSet})$,
i.e. the operad consisting of a single operation at level $1$,
consider any 
``cofibration followed by trivial fibration'' factorization
(as given by the
Quillen small object argument)
\begin{equation}\label{OFCONST EQ}
\begin{tikzcd}
	\mathcal{I} \arrow[r, rightarrowtail] &
	\O_{\mathcal{F}}
	\arrow[r,twoheadrightarrow, "\sim"] &
	\mathsf{Com}
\end{tikzcd}
\end{equation}
in the model structure 
$\mathsf{Op}^G_{\mathcal{F}}(\mathsf{sSet})$.
We claim that $\mathcal{O}_{\mathcal{F}}$
is an $N \mathcal{F}$-operad, i.e.
that it has fixed points as described in 
Corollary \ref{NINFTY_REAL_COR_MAIN}.
That $\mathcal{O}_{\mathcal{F}}(n)^{\Gamma} \sim \**$
whenever $\Gamma \in \mathcal{F}_n$
follows from the fact that the map
$\mathcal{O}_{\mathcal{F}} \xrightarrow{\sim} \mathsf{Com}$
is a $\mathcal{F}$-equivalence.
On the other hand, by Lemma \ref{MAINLEM LEM}
the map 
$\mathcal{I} \rightarrowtail \O_{\mathcal{F}}$
is also an underlying cofibration in 
$\mathsf{Sym}^G_{\mathcal{F}}(\mathsf{sSet})$, and thus
$\mathcal{O}_{\mathcal{F}}$ is underlying cofibrant in 
$\mathsf{Sym}^G_{\mathcal{F}}(\mathsf{sSet})$.
The required condition that
$\mathcal{O}_{\mathcal{F}} (n)^{\Gamma}
= \emptyset $
whenever $\Gamma \nin \mathcal{F}_n$
now follows since this holds for any cofibrant object in $\mathsf{Sym}^G_{\mathcal{F}}(\mathsf{sSet})$,
as can readily be checked via a cellular argument.

One drawback of the 
$N \mathcal{F}$-operad $\mathcal{O}_{\mathcal{F}}$
built in \eqref{OFCONST EQ},
however, is that it is not explicit,
due to the need to use the small object argument.
To obtain a more explicit model, we make use of the theory of genuine equivariant operads.

Firstly, any weak indexing system $\mathcal{F}$
gives rise to a genuine equivariant operad
$\delta_{\mathcal{F}}
\in \mathsf{Op}_G(\mathsf{Set})
$
such that
$\delta_{\mathcal{F}}(C) = \**$
if $C \in \Sigma_{\mathcal{F}}$
and 
$\delta_{\mathcal{F}}(C) = \emptyset$
if $C \nin \Sigma_{\mathcal{F}}$.
Alternatively,  
$\delta_{\mathcal{F}}$
can also be regarded as the terminal object of
$\mathsf{Op}_{\mathcal{F}}(\mathsf{Set})
	\hookrightarrow 
\mathsf{Op}_{G}(\mathsf{Set})$.
The characterization of the cofibrant objects
in $\mathsf{Op}_{G}(\mathsf{sSet})$
given by Lemma \ref{MAINLEM LEM}
now shows that the unique map
$\iota_{\**} \mathcal{O}_{\mathcal{F}} 
\xrightarrow{\simeq} \delta_{\mathcal{F}}$
is a cofibrant replacement in
$\mathsf{Op}_G(\mathsf{sSet})$ and, moreover,
it is clear from the argument 
in the previous paragraph that for any
other cofibrant replacement 
$C \delta_{\mathcal{F}} 
\xrightarrow{\simeq} \delta_{\mathcal{F}}$
the equivariant operad
$\iota^{\**} (C \delta_{\mathcal{F}})
\in \mathsf{Op}^G (\mathsf{sSet})$
is an $N \mathcal{F}$-operad.
We will now build an explicit model for such
$C \delta_{\mathcal{F}}$.
We start by considering the following
adjunctions, where both of the right adjoints, 
which we write at the bottom, are forgetful functors.
\begin{equation}\label{MAINPFADJVAR EQ}
\begin{tikzcd}[column sep =9em]
	\mathsf{Set}^{\times \text{Ob}(\Sigma_G)}
	\ar[shift left=1.5]{r}
	{(X_C) \mapsto
	\coprod_C \mathsf{Hom}(\minus,C) \times X_C}
&
	\mathsf{Sym}_{G}(\mathsf{Set}) 
	\arrow[l, shift left=1.5] 
	\arrow[r, shift left=1.5,swap,"\mathbb{F}_G"']
&
	\mathsf{Op}_G(\mathsf{Set})
	\ar[shift left=1.5]{l}
\end{tikzcd}
\end{equation}
We will find it convenient in the following discussion to abuse notation by omitting
occurrences of the forgetful functors.
As such, we write
$\delta_{\mathcal{F}}$ not only for the object in 
$\mathsf{Op}_G(\mathsf{Set})$,
but also for any of the underlying objects in 
$\mathsf{Sym}_G(\mathsf{Set})$, 
$\mathsf{Set}^{\times \text{Ob}(\Sigma_G)}$.
Similarly, $\mathbb{F}_G$
will denote both the functor in 
\eqref{MAINPFADJVAR EQ}
and the monad on 
$\mathsf{Sym}_{G}(\mathsf{Set})$
while 
$\widetilde{\mathbb{F}}_G$
will denote both the top composite functor in 
\eqref{MAINPFADJVAR EQ}
and the composite monad on 
$\mathsf{Set}^{\times \text{Ob}(\Sigma_G)}$.

Since both adjunctions in 
\eqref{MAINPFADJVAR EQ}
restrict to their $\mathcal{F}$ versions,
in which case $\delta_{\mathcal{F}}$ denotes the terminal object of any of the $\mathcal{F}$ analogue categories,
it follows that 
$\delta_{\mathcal{F}} \in \mathsf{Set}^{\times \text{Ob}(\Sigma_G)}$
is a 
$\widetilde{\mathbb{F}}_G$-algebra, 
and we now consider the bar construction
\[B_n(\widetilde{\mathbb{F}}_G,
\widetilde{\mathbb{F}}_G,
\delta_{\mathcal{F}})
= \widetilde{\mathbb{F}}_G \circ
\widetilde{\mathbb{F}}_G ^{\circ n} 
(\delta_{\mathcal{F}}),
\]
where we regard the outer $\widetilde{\mathbb{F}}_G$ as the top composite functor in \eqref{MAINPFADJVAR EQ}.
We thus have
$B_{\bullet}(\widetilde{\mathbb{F}}_G,
\widetilde{\mathbb{F}}_G,
\delta_{\mathcal{F}})
\in
\mathsf{Op}_{\mathcal{F}}(\mathsf{Set})^{\Delta^{op}}
	\hookrightarrow
\mathsf{Op}_G(\mathsf{Set})^{\Delta^{op}}
\simeq \mathsf{Op}_G(\mathsf{sSet})$
and, moreover, the unique genuine operad map
$B_{\bullet}(\widetilde{\mathbb{F}}_G,
\widetilde{\mathbb{F}}_G,
\delta_{\mathcal{F}})
\to 
\delta_{\mathcal{F}}$
is a weak equivalence in 
$\mathsf{Op}_G(\mathsf{sSet})$
thanks to the usual extra degeneracy argument
\cite[\S 4.5]{Ri14}
(which applies after forgetting to 
$\mathsf{Set}^{\times \text{Ob}(\Sigma_G)}$).
Therefore, 
the following result suffices to show that
$B_{\bullet}(\widetilde{\mathbb{F}}_G,
\widetilde{\mathbb{F}}_G,
\delta_{\mathcal{F}})
$
is an $N \mathcal{F}$-operad.

\begin{proposition}\label{BARCOF PROP}
$B_{\bullet}(\widetilde{\mathbb{F}}_G,
\widetilde{\mathbb{F}}_G,
\delta_{\mathcal{F}})
\in \mathsf{Op}_G(\mathsf{sSet})
$
is cofibrant.
\end{proposition}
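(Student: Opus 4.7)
The plan is to exhibit $B_\bullet$ as a cellular object in $\mathsf{Op}_G(\mathsf{sSet})$ via the equivalence $\mathsf{Op}_G(\mathsf{Set})^{\Delta^{op}} \simeq \mathsf{Op}_G(\mathsf{sSet})$ (which holds since $\mathbb{F}_G$ is a left adjoint), together with the characterization of cofibrations as free extensions by cofibrations of symmetric sequences coming from Lemma \ref{MAINLEM LEM}.

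First, I would identify each level $B_n = \widetilde{\mathbb{F}}_G^{n+1}(\partial_{\mathcal{F}})$ as a free operad on a cofibrant symmetric sequence. Factoring $\widetilde{\mathbb{F}}_G = \mathbb{F}_G \circ L$ for $L$ the leftmost left adjoint in (\ref{MAINPFADJVAR EQ}), one has $B_n \simeq \mathbb{F}_G(Y_n)$ where
\[
Y_n = L\bigl(\widetilde{\mathbb{F}}_G^n(\partial_{\mathcal{F}})\bigr) = \coprod_C \mathsf{Hom}_{\Sigma_G}(-, C) \cdot \bigl(\widetilde{\mathbb{F}}_G^n \partial_{\mathcal{F}}\bigr)_C.
\]
Since $\partial_{\mathcal{F}}$ is supported on $\Sigma_{\mathcal{F}}$ and iterated applications of $\widetilde{\mathbb{F}}_G$ preserve this support, $Y_n$ is a coproduct of representables at $C \in \Sigma_{\mathcal{F}}$, and is therefore cofibrant in $\mathsf{Sym}_G(\mathsf{sSet})$ (viewed as a discrete simplicial object).

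Second, I would identify the latching map of $B_\bullet$. Since the degeneracies $s_j\colon B_{n-1} \to B_n$ all arise from units of the outermost $\widetilde{\mathbb{F}}_G$, they correspond to degeneracies $s_j\colon Y_{n-1} \to Y_n$ of the simplicial symmetric sequence $Y_\bullet$; using that $\mathbb{F}_G$ preserves all colimits, the latching object of $B_\bullet$ identifies with $\mathbb{F}_G(L_n Y_\bullet)$. Moreover, the canonical splitting of $Y_n$ into degenerate and non-degenerate cells — coming from the fact that the unit of $L$ is a levelwise inclusion of free sets into their free symmetric sequences — yields a cofibration $L_n Y_\bullet \hookrightarrow Y_n$ whose cofiber $Y_n^{\mathrm{nd}}$ is again a coproduct of representables at $\Sigma_{\mathcal{F}}$, and hence cofibrant.

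Third, I would assemble these observations into a cellular structure. Under the equivalence $\mathsf{Op}_G(\mathsf{Set})^{\Delta^{op}} \simeq \mathsf{Op}_G(\mathsf{sSet})$, the skeletal filtration $\mathcal{I} = \mathrm{sk}_{-1} B_\bullet \to \mathrm{sk}_0 B_\bullet \to \mathrm{sk}_1 B_\bullet \to \cdots \to B_\bullet$ is presented by pushouts
\[
\mathrm{sk}_n B_\bullet \simeq \mathrm{sk}_{n-1} B_\bullet \mathbin{\check{\coprod}}_{\mathbb{F}_G(Y_n^{\mathrm{nd}} \otimes \partial\Delta^n)} \mathbb{F}_G(Y_n^{\mathrm{nd}} \otimes \Delta^n).
\]
Each such pushout is a free extension along $Y_n^{\mathrm{nd}} \otimes (\partial\Delta^n \hookrightarrow \Delta^n)$, which is a cofibration in $\mathsf{Sym}_G(\mathsf{sSet})$ (pushout-product of a cofibration with a cofibrant object). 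By Lemma \ref{MAINLEM LEM} each $\mathrm{sk}_{n-1} B_\bullet \to \mathrm{sk}_n B_\bullet$ is thus a cofibration in $\mathsf{Op}_G(\mathsf{sSet})$, and the transfinite composite $\mathcal{I} \to B_\bullet$ is a cofibration, yielding cofibrancy of $B_\bullet$. The main obstacle will be the second step: verifying that the latching colimit $L_n B_\bullet$ reduces to $\mathbb{F}_G(L_n Y_\bullet)$. Although $\mathbb{F}_G$ commutes with colimits, the latching construction formally interacts with the internal face operators of the bar construction (which mix operadic compositions and are not themselves $\mathbb{F}_G$-free), so one must carefully track the simplicial identities to confirm that the latching object has the claimed free form — leveraging that the degenerate part of $Y_n$ forms a canonical split subobject via the unit of $L$.
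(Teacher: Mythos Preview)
Your overall strategy---analyzing the skeletal filtration and identifying each latching map as $\mathbb{F}_G$ applied to a cofibration of symmetric sequences---is exactly the paper's approach. However, you have the difficulty backwards.

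The reduction $L_n B_\bullet \simeq \mathbb{F}_G(L_n Y_\bullet)$ that you flag as the ``main obstacle'' is in fact immediate: the latching diagram involves only \emph{degeneracies}, never faces, and every degeneracy of the bar construction comes from a unit of $\widetilde{\mathbb{F}}_G$, hence is $\mathbb{F}_G L$ applied to a map in $\mathsf{Set}^{\times\mathrm{Ob}(\Sigma_G)}$. Since $\mathbb{F}_G L$ is a left adjoint it preserves the latching colimit, and your worry about ``internal face operators'' is unfounded---faces simply do not appear. (Relatedly, your appeal to Lemma~\ref{MAINLEM LEM} in the final step is misplaced; all you need is that $\mathbb{F}_G$ sends cofibrations in $\mathsf{Sym}_G(\mathsf{sSet})$ to cofibrations in $\mathsf{Op}_G(\mathsf{sSet})$, which is the transferred model structure.)

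The genuine work is showing that $L_n Y_\bullet \to Y_n$ is a monomorphism, and your justification---``the canonical splitting\ldots coming from the fact that the unit of $L$ is a levelwise inclusion''---does not establish this: injectivity of one unit map says nothing about the colimit defining the latching object. The paper handles this by an explicit inductive cube argument: writing $\mathcal{X}^n_U = \widetilde{\mathbb{F}}_G^{|U|}(\partial_{\mathcal{F}})$ with maps induced by units, it proves $\mathcal{X}^n$ is a \emph{monomorphism $n$-cube} using two lemmas read off from the explicit formula for $\widetilde{\mathbb{F}}_G$ (that $\widetilde{\mathbb{F}}_G$ preserves monomorphism cubes, and that its unit square on any monomorphism is a monomorphism square). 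A quicker alternative, which your sketch gestures toward but does not invoke, is that $Y_\bullet = L(Z_\bullet)$ with $Z_\bullet$ a simplicial object in $\mathsf{Set}^{\times\mathrm{Ob}(\Sigma_G)}$; the Eilenberg--Zilber lemma applied levelwise then gives that every latching map of $Z_\bullet$ is injective, and $L$ preserves monomorphisms.
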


Proposition \ref{BARCOF PROP}
will follow by analyzing the skeletal filtration of
$B_{\bullet}(\widetilde{\mathbb{F}}_G,
\widetilde{\mathbb{F}}_G,
\delta_{\mathcal{F}})
$ and showing that the corresponding latching maps,
which are built using cubical diagrams, are cofibrations.

Recall that a $n$-\textit{cube} on $\mathsf{sSet}$
is a functor
$\mathcal{X}_{(\minus)} \colon \mathsf{P}_n \to 
\mathsf{sSet}$
for $\mathsf{P}_n$ the poset of subsets of 
$\underline{n} = \{1,\cdots,n\}$.
We call a $n$-cube a \textit{monomorphism $n$-cube}
if the latching maps
\[
\colim_{V \subsetneq U} \mathcal{X}_V = L_U \mathcal{X}
\xrightarrow{l_U \mathcal{X}} 
\mathcal{X}_U
\]
are monomorphisms for all $U \in \mathsf{P}_n$.
Cubes and monomorphism cubes in
$\mathsf{Set}^{\times \text{Ob}(\Sigma_G)}$
are defined identically.

\begin{remark}\label{MONOCUBE REM}
Using model category language, monomorphism $n$-cubes
are the cofibrant objects for the projective model structure on $n$-cubes. As such, they are characterized as the $n$-cubes with the left lifting property against maps of $n$-cubes $\mathcal{Y}_{(\minus)} \to \mathcal{Z}_{(\minus)}$ that are levelwise trivial fibrations.
\end{remark}

\begin{lemma}\label{MONOCUBE LEM}
\begin{itemize}
\item[(a)]
The monad 
$\widetilde{\mathbb{F}}_G \colon 
\mathsf{Set}^{\times \text{Ob}(\Sigma_G)} 
\to \mathsf{Set}^{\times \text{Ob}(\Sigma_G)}$
sends monomorphism $n$-cubes
to monomorphism $n$-cubes.
\item[(b)]
Letting $\eta \colon id \to \widetilde{\mathbb{F}}_G $ denote the unit and
$A \to B$ be a monomorphism in 
$\mathsf{Set}^{\times \text{Ob}(\Sigma_G)}$, the square
\[
\begin{tikzcd}
	A \ar{r} \ar{d}[swap]{f} &
	\widetilde{\mathbb{F}}_G A 
	\ar{d}{\widetilde{\mathbb{F}}_G f}
\\
	B \ar{r} & \widetilde{\mathbb{F}}_G B 
\end{tikzcd}
\]
is a monomorphism square (i.e monomorphism $2$-cube).
\end{itemize}
\end{lemma}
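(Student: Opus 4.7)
The plan is to exploit the polynomial nature of the monad $\widetilde{\mathbb{F}}_G$. Indeed, specializing (\ref{FGXDEFEXP EQ}) and recalling that for a subgroup $H \leq \mathsf{Aut}(D)$ with $\mathsf{Aut}(D)$ finite the induced set $Y \cdot_H \mathsf{Aut}(D) = (Y \times \mathsf{Aut}(D))/H$ is non-canonically isomorphic, as a bare set, to the coproduct $\coprod_{[g] \in H \backslash \mathsf{Aut}(D)} Y$, one sees that for each $D \in \mathrm{Ob}(\Sigma_G)$ the functor $\widetilde{\mathbb{F}}_G(-)(D) \colon \mathsf{Set}^{\times \mathrm{Ob}(\Sigma_G)} \to \mathsf{Set}$ is a coproduct, indexed by iso-classes of rooted $G$-trees over $D$ together with a choice of coset representative, of functors of the form $(X_C) \mapsto \prod_{v \in V_G(T)} X_{T_v}$. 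Thus $\widetilde{\mathbb{F}}_G$ is a coproduct of finite products of evaluation functors.

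For part (a), it then suffices to prove that both coproducts and finite products preserve monomorphism $n$-cubes in $\mathsf{Set}$. The claim for coproducts is immediate since coproducts commute with colimits and preserve monomorphisms. For finite products, the essential observation is that if $\mathcal{X}^{(1)}, \ldots, \mathcal{X}^{(k)}$ are monomorphism $n$-cubes of sets, one may identify each $\mathcal{X}^{(i)}(V)$ with its image in $\mathcal{X}^{(i)}(U)$ for $V \subseteq U$, so that $\prod_i \mathcal{X}^{(i)}(V)$ sits as a subset of $\prod_i \mathcal{X}^{(i)}(U)$. Since these inclusions are compatible with the transition maps of the product cube, the latching colimit $\colim_{V \subsetneq U} \prod_i \mathcal{X}^{(i)}(V)$ is computed simply as the union $\bigcup_{V \subsetneq U} \prod_i \mathcal{X}^{(i)}(V)$ inside $\prod_i \mathcal{X}^{(i)}(U)$, and is therefore monic into the terminal corner of the product cube.

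For part (b), one observes that the summand of $\widetilde{\mathbb{F}}_G Z(D)$ indexed by the trivial (corolla) tree $T = D$ recovers $Z(D)$ itself, and under this identification the unit $\eta$ corresponds to inclusion into the first summand of a coproduct decomposition
\[
\widetilde{\mathbb{F}}_G Z \;\cong\; Z \coprod R(Z),
\]
where $R$ is the coproduct of the remaining summands of $\widetilde{\mathbb{F}}_G$. Since $R$ is still a coproduct of finite products of evaluation functors, the argument in (a) shows in particular that $R$ preserves monomorphisms. The square in question then rewrites as
\[
\begin{tikzcd}
A \ar[r, hook] \ar[d, "f"'] & A \coprod R(A) \ar[d, "f \coprod R(f)"] \\
B \ar[r, hook] & B \coprod R(B)
\end{tikzcd}
\]
and its three non-trivial latching maps are $f$ itself, the summand inclusion $A \hookrightarrow A \coprod R(A)$, and the pushout corner map $B \coprod R(A) \to B \coprod R(B)$, which equals $\mathrm{id}_B \coprod R(f)$ and is therefore a monomorphism.

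The main technical obstacle is the finite-products step in part (a): it is critical that the latching colimit of a product cube be computed as a genuine union inside the product, which ultimately relies on the concrete set-theoretic behavior of $\mathsf{Set}$ (this is why the statement is formulated for $\mathsf{sSet}$ and not a general monoidal $\mathcal{V}$). Everything else follows formally from the polynomial-functor structure of $\widetilde{\mathbb{F}}_G$ and the identification of the unit as the trivial-tree summand inclusion.
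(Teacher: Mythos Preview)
Your proof is correct and shares the paper's overall plan: both recognize $\widetilde{\mathbb{F}}_G$ as polynomial via the explicit formula (the paper's (\ref{TILF EQ})), reduce (a) to showing that coproducts and finite products preserve monomorphism cubes, and handle (b) by identifying $\eta$ with the corolla-summand inclusion. The only real difference is the ``products preserve mono cubes'' step. You argue concretely, viewing each $\mathcal{X}^{(i)}(V)$ as a subset of $\mathcal{X}^{(i)}(U)$ and asserting that the latching colimit of the product cube is the union; this is correct but tacitly uses that monomorphism cubes in $\mathsf{Set}$ satisfy $\mathcal{X}(V)\cap\mathcal{X}(V')=\mathcal{X}(V\cap V')$, without which the punctured-cube colimit need not inject (this intersection property follows from the latching hypothesis by an easy induction, but deserves to be stated). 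The paper instead proceeds model-categorically: it forms the external $2n$-cube $(U,V)\mapsto\mathcal{Y}_U\times\mathcal{Z}_V$, observes that its latching maps are the pushout-products $l_U\mathcal{Y}\square l_V\mathcal{Z}$, and then restricts along the diagonal via the lifting characterization of Remark~\ref{MONOCUBE REM} together with the formula $(\Delta_{\**}\mathcal{X})_{(U,V)}=\mathcal{X}_{U\cup V}$, which shows $\Delta_{\**}$ preserves levelwise trivial fibrations. Your route is more elementary; the paper's avoids the $\mathsf{Set}$-specific union reasoning and would transport to any model category in which pushout-products of cofibrations are cofibrations.
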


\begin{proof}
	Combining \eqref{FGXDEFEXP EQ} with
	the top left functor in \eqref{MAINPFADJVAR EQ}
	yields the formula
\begin{equation}\label{TILF EQ}
\widetilde{\mathbb{F}}_G X (C) \simeq
\coprod_{T \in 
\mathsf{Iso}(C \downarrow_{\mathsf{r}} \Omega_G^0)}
\left(
\prod_{v \in V_G(T)}
\left(
	\coprod_{D\in \Sigma_G} \mathsf{Hom}(T_v,D) \times X(D)
\right)
\right) 
\cdot_{\mathsf{Aut}(T)} \mathsf{Aut}(C).
\end{equation}
	Distributing the inner $\coprod$ over the 
	$\prod$ 
	shows that 
	$\widetilde{\mathbb{F}}_G f$
	is a coproduct of monomorphisms with the map
	$f \colon A \to B$
	corresponding to the summand with $C=T=D$, and hence (b) follows.
	
	To show (a), note first that there are three types of operations in \eqref{TILF EQ}:
	coproducts, inductions and products.
	Since coproducts and inductions preserve both colimits and monomorphisms, they preserve monomorphism cubes,
	and it thus remains to show that so do products.
	Given monomorphism $n$-cubes
	$\mathcal{Y}_{(\minus)}, \mathcal{Z}_{(\minus)}$
	consider first the $2n$-cube 
	$(\mathcal{Y} \times \mathcal{Z})_{(U,V)} = \mathcal{Y}_U \times \mathcal{Z}_V$.
	It is straightforward to check that this $2n$-cube has latching maps
	$l_{(U,V)} \mathcal{Y} \times \mathcal{Z} =
	 l_U \mathcal{Y} \square l_V \mathcal{Z}$,
	and is thus a monomorphism $2n$-cube.
	It remains to check that
	the diagonal $n$-cube 
	$\Delta^{\**} (\mathcal{Y} \times \mathcal{Z})$
	is a monomorphism $n$-cube.
Considering the adjuntion
	$\Delta^{\**}\colon 
	\mathsf{sSet}^{\mathsf{P}_n \times \mathsf{P}_n}
		\rightleftarrows
	\mathsf{sSet}^{\mathsf{P}_n}
	\colon \Delta_{\**}
	$
and Remark \ref{MONOCUBE REM} it suffices to check that
$\Delta_{\**}$ preserves level trivial fibrations of cubes. But this is obvious from the formula
$(\Delta_{\**} \mathcal{X})_{(U,V)} = \mathcal{X}_{U \cup V}$.	
\end{proof}

\begin{proof}[proof of Proposition \ref{BARCOF PROP}]
We start by analyzing the latching maps
for 
$B_{\bullet}
=
B_{\bullet}(\widetilde{\mathbb{F}}_G,
\widetilde{\mathbb{F}}_G,
\delta_{\mathcal{F}})
$.
To describe the $n$-th latching map, we start with the natural $n$-cube in 
$\mathsf{Set}^{\times \text{Ob}(\Sigma_G)}$ 
given by
$\mathcal{X}^n_U = 
\widetilde{\mathbb{F}}_G^{\circ U}
(\delta_{\mathcal{F}})$
and where maps 
are induced by the unit
$\eta \colon id \to \widetilde{\mathbb{F}}_G$.
For example, in 
$\mathcal{X}^5_{(\minus)}$,
the map 
$\mathcal{X}^5_{\{1,4\}} \to \mathcal{X}^5_{\{1,3,4,5\}}$
is 
\[
\widetilde{\mathbb{F}}_G^{\circ 2}
(\delta_{\mathcal{F}})
\xrightarrow{
\widetilde{\mathbb{F}}_G
\eta
\widetilde{\mathbb{F}}_G
\eta}
\widetilde{\mathbb{F}}_G^{\circ 4}
(\delta_{\mathcal{F}}).
\]
Since degeneracies of $B_{\bullet}$
are also induced by $\eta$,
and recalling the notation $\underline{n}=\{1,\cdots,n\}$ for the maximum in $\mathsf{P}_n$,
one has that the $n$-th latching map of $B_{\bullet}$ is given by
\begin{equation}\label{MONLATCH EQ}
	\check{l}_n B_{\bullet} = 
	\check{l}_{\underline{n}}
	(\widetilde{\mathbb{F}}_G
	\mathcal{X}^n) \simeq
	\widetilde{\mathbb{F}}_G
	(l_{\underline{n}} \mathcal{X}^n)
\end{equation}
where the check decoration on $\check{l}$
for the two leftmost latching maps indicates that the colimits defining those latching maps are taken 
in $\mathsf{Op}_{G}(\mathsf{Set})$, while the rightmost latching map is computed in 
$\mathsf{Set}^{\times \text{Ob}(\Sigma_G)}$.

The key to the proof is the claim that the maps
$l_{\underline{n}} \mathcal{X}^n$
are monomorphisms. This will follow from the stronger claim that the $\mathcal{X}^n$
are monomorphim $n$-cubes, which we argue by induction on $n$.
When $n=0$ there is nothing to show.
Otherwise, for any 
$U \subsetneq \{1,\cdots,n,n+1\}$
the restriction of $\mathcal{X}^{n+1}$ to subsets of
$U$ is isomorphic to the cube $\mathcal{X}^{|U|}$,
so that we need only analyze the top
latching map $l_{\underline{n+1}} \mathcal{X}^{n+1}$.
We now write
$\mathcal{X}^{n+1} = (\mathcal{X}^n \to 
\widetilde{\mathbb{F}}_G \mathcal{X}^n)$, regarding the $(n+1)$-cube as a map of $n$-cubes.
The top latching map $l_{\underline{n_+1}} \mathcal{X}^{n+1}$
is then the latching map of the composite square
(the check decoration
$\check{L}$ again denotes a latching object computed in
$\mathsf{Op}_G(\mathsf{Set})$)
\begin{equation}\label{BARCOFSQ EQ}
\begin{tikzcd}
 	L_{\underline{n}} \mathcal{X}^n \ar{d} \ar[equal]{r}
&
	L_{\underline{n}} \mathcal{X}^n \ar{d}
	\ar{r}
&
	\mathcal{X}^n_{\underline{n}}
	\ar{d}
\\
	\check{L}_{\underline{n}} (\widetilde{\mathbb{F}}_G \mathcal{X}^n)
	\ar{r}
&
	\widetilde{\mathbb{F}}_G(L_{\underline{n}}  \mathcal{X}^n)
	\ar{r}
&
	\widetilde{\mathbb{F}}_G \mathcal{X}^n_{\underline{n}}
\end{tikzcd}
\end{equation}
The latching map in the rightmost square
\eqref{BARCOFSQ EQ}
is a monomorphism since it is an instance of
Lemma \ref{MONOCUBE LEM}(b)
applied to the map
$l_{\underline{n}} \mathcal{X}^n \colon 
L_{\underline{n}} \mathcal{X}^n \to \mathcal{X}^n_{\underline{n}}$, which is a monomorphism by the induction hypothesis.
On the other hand, writing 
$\tilde{\mathcal{X}}^n$ for the cube obtained from 
$\mathcal{X}^n$ by replacing the top level
$\mathcal{X}^n_{\underline{n}}$ with
$L_{\underline{n}} \mathcal{X}^n$,
the left bottom horizontal map in 
\eqref{BARCOFSQ EQ}
can be described as
$
\check{l}_{\underline{n}}
(\widetilde{\mathbb{F}}_G
\tilde{\mathcal{X}}^n) \simeq
\widetilde{\mathbb{F}}_G
(l_{\underline{n}} \tilde{\mathcal{X}}^n)
$
(compare with \eqref{MONLATCH EQ}),
which is a monomorphism by
Lemma \ref{MONOCUBE LEM}(a).
Hence the latching maps in both squares
in \eqref{BARCOFSQ EQ} are monomorphisms, 
and thus so is the latching map of the composite square, showing that $l_{\underline{n_+1}} \mathcal{X}^{n+1}$
is a monomorphism, as desired.

To finish the proof, one now simply notes that the skeletal filtration of $B_{\bullet}$ is 
then iteratively described by the pushouts
in $\mathsf{Op}_{G}(\mathsf{sSet})$
below, where the vertical maps are cofibrations in 
$\mathsf{Op}_{G}(\mathsf{sSet})$
since the maps 
$l_{\underline{n}} \mathcal{X}^n \colon 
L_{\underline{n}} \mathcal{X}^n \to \mathcal{X}^n_{\underline{n}}$
are monomorphisms.
\[
\begin{tikzcd}
	\widetilde{\mathbb{F}}_G 
	(L_{\underline{n}} \mathcal{X}^n \times \Delta^n
	\amalg_{L_{\underline{n}} \mathcal{X}^n \times \delta\Delta^n}
	\mathcal{X}^n_{\underline{n}} \times \delta \Delta^n) 
	\ar{r} \ar{d} &
	\mathsf{sk}_{n-1} B_{\bullet} \ar{d}
\\
	\widetilde{\mathbb{F}}_G 
	(\mathcal{X}^n_{\underline{n}} \times \Delta^n) 
	\ar{r} & \mathsf{sk}_{n} B_{\bullet}
\end{tikzcd}
\]
\end{proof}

\begin{remark}
We now address the ``moreover'' claim in Corollary \ref{NINFTY_REAL_COR_MAIN}. 
For any $\mathcal{O} \in \mathsf{Op}^G(\mathsf{sSet})$
one has $\pi_0 (\iota_{\**} \mathcal{O})\in \mathsf{Op}_G(\mathsf{Set})$. 
Therefore, if $\mathcal{O}$ has fixed points as in \eqref{NFINFTY2 EQ},
then $\pi_0 (\iota_{\**} \mathcal{O}) = \delta_{\mathcal{F}}$
for $\mathcal{F} = \{\mathcal{F}_n\}_{n \geq 0}$
a collection of families of graph subgroups.
But the condition that $\delta_{\mathcal{F}}\in \mathsf{Op}_G(\mathsf{Set})$ simply repackages Definition \ref{INDEXSYS DEF}.
\end{remark}

\begin{remark}\label{LANDINESS REM}
	Regarding
	$\mathsf{Sym}_G(\mathsf{Set})$
	as a subcategory of 
	$\mathsf{Sym}_G(\mathsf{sSet})$,
	the leftmost left adjoint in 
	\eqref{MAINPFADJVAR EQ}
	lands in cofibrant objects of 
	$\mathsf{Sym}_G(\mathsf{sSet})$
	and thus, 
	by Lemma \ref{MAINLEM LEM},
	in the essential image of
	$\iota_{\**} \colon
	\mathsf{Sym}^G(\mathsf{Set})
	\to 
	\mathsf{Sym}_G(\mathsf{Set})$.
	Thus, again by Lemma \ref{MAINLEM LEM},
	the top composite in 
	\eqref{MAINPFADJVAR EQ}
	lands in the essential image of
	$\iota_{\**} \colon
	\mathsf{Op}^G(\mathsf{Set})
	\to 
	\mathsf{Op}_G(\mathsf{Set})$.
\end{remark}

\begin{remark}\label{MAINPFADJVARVAR REM}
If one appends the adjunction
$\iota^{\**} \colon
	\mathsf{Op}_G(\mathsf{Set})
	\rightleftarrows
	\mathsf{Op}^G(\mathsf{Set})
\colon \iota_{\**}$
to \eqref{MAINPFADJVAR EQ} one obtains
an additional composite monad
$\widehat{\mathbb{F}}_G$
on
$\mathsf{Set}^{\times \text{Ob}(\Sigma_G)}$.
Moreover, by Remark \ref{LANDINESS REM} the monads
$\widetilde{\mathbb{F}}_G$ and
$\widehat{\mathbb{F}}_G$
are in fact isomorphic.
This observation now hints at how 
one can build a model for 
$N \mathcal{F}$-operads 
directly in terms of (regular) equivariant operads,
i.e. without making explicit use of genuine equivariant operads.
Namely, consider the adjunctions
\begin{equation}\label{MAINPFADJVARVAR EQ}
\begin{tikzcd}[column sep =3.3em]
	\prod_{n \geq 0}
	\mathsf{Set}^{\times \text{Ob}
	\left(\mathsf{O}^{op}_{\mathcal{F}_n^{\Gamma}}\right)}
	\ar[shift left=1.5]{r}{S}
&
	\prod_{n \geq 0}
	\mathsf{Set}^{\mathsf{O}^{op}_{\mathcal{F}_n^{\Gamma}}} 
	\arrow[l, shift left=1.5]
	\ar[r, shift left=1.5,swap,"\iota^{\**}"']
&
	\mathsf{Sym}^{G}(\mathsf{Set}) 
	\ar[l, shift left=1.5,swap,"\iota_{\**}"']
	\arrow[r, shift left=1.5,swap,"\mathbb{F}"']
&
	\mathsf{Op}^G(\mathsf{Set})
	\ar[shift left=1.5]{l}
\end{tikzcd}
\end{equation}
Abusing notation by again writing 
$\widehat{\mathbb{F}}_G$
for the composite monad and
$\delta_{\mathcal{F}}$
for the obvious object on 
the leftmost category,
it is not hard to
use the equivalence in 
Lemma \ref{FAMILY_COROLLAS_LEM}
to leverage our analysis so as to conclude that
the bar construction
$B_{\bullet}(\widehat{\mathbb{F}}_G,
\widehat{\mathbb{F}}_G,
\delta_{\mathcal{F}})
$
built using \eqref{MAINPFADJVARVAR EQ}
is also a cofibrant $N \mathcal{F}$-operad.

However, we caution that this latter
model is not as simple as it might seem at first.
This is because the task of showing that
$\delta_{\mathcal{F}}$
is a $\widehat{\mathbb{F}}_G$-algebra
is a non-trivial task.
More precisely, while it is clear from the 
definition of $\delta_{\mathcal{F}}$
that there is at most one map
$\widehat{\mathbb{F}}_G
\delta_{\mathcal{F}}
=
\iota_{\**} \mathbb{F} \iota^{\**} S \delta_{\mathcal{F}}
\to 
\delta_{\mathcal{F}}$
(which would necessarily define an algebra structure),
it is unclear if such a map exists at all.
In order to show directly that such a map exists,
one must analyze 
$\iota_{\**} \mathbb{F} \iota^{\**} S \delta_{\mathcal{F}}$,
i.e. (cf. \eqref{IOTAFUNSALT EQ}) one must compute 
the graph fixed points of free operads
$\left(\mathbb{F} \iota^{\**} S \delta_{\mathcal{F}}\right)
(n)^{\Gamma}$,
and we note that the main technical work of 
Rubin in \cite{Rub17} consists precisely of such calculations.
Alternatively, in this paper this fixed point analysis is built into 
Lemma \ref{MAINLEM LEM},
so that, cf. Remarks \ref{INFACTEST REM} and \ref{MUTMUT2 REM},
one has 
$\iota_{\**} \mathbb{F} \iota^{\**} S \delta_{\mathcal{F}}
\simeq 
\mathbb{F}_G \iota_{\**} \iota^{\**} S \delta_{\mathcal{F}}
\simeq
\mathbb{F}_G S \delta_{\mathcal{F}}$
(the second identity follows since
$S \delta_{\mathcal{F}}$
is in the essential image of $\iota_{\**}$,
cf. Remark \ref{LANDINESS REM}).
%
\end{remark}

\appendix

\section{Transferring Kan extensions}
\label{TRANSKAN AP}

The purpose of this appendix is to provide the
somewhat long proof of Proposition \ref{RANTRANS PROP}, 
which is needed when repackaging free extensions of
genuine equivariant operads in \eqref{EXTTREEFOR EQ}.

We start with a more detailed discussion of the realization functor $|\minus|$
defined by the adjunction
	\[
	|\minus|\colon
	\mathsf{Cat}^{\Delta^{op}} 
		\rightleftarrows
	\mathsf{Cat} 
	\colon (\minus)^{[\bullet]}
	\]
in Definition \ref{REAL DEF}.
More explicitly, one has
\begin{equation}\label{REALDEF EQ}
	 |\mathcal{I}_{\bullet}| =
	coeq \left(\coprod_{[n] \to [m]}
	 [n] \times \mathcal{I}_m
	 	\rightrightarrows
	 \coprod_{[n]} [n] \times \mathcal{I}_n
	 \right).
\end{equation}

\begin{example}
Any $\mathcal{I} \in \mathsf{Cat}$ induces objects 
$\mathcal{I},\mathcal{I}_{\bullet},\mathcal{I}^{[\bullet]} \in \mathsf{Cat}^{\Delta^{op}}$ 
where $\mathcal{I}$ is the constant simplicial object and $\mathcal{I}_{\bullet}$ is the nerve $N \mathcal{I}$ with each level regarded as a discrete category.
It is straightforward to check that 
$|\mathcal{I}|\simeq |\mathcal{I}_{\bullet}| \simeq
|\mathcal{I}^{[\bullet]}| \simeq \mathcal{I}$.
\end{example}

\begin{lemma}\label{OBJGENREL LEMMA}
	Given $\mathcal{I}_{\bullet} \in \mathsf{Cat}^{\Delta^{op}}$ one has an identification
	$\text{Ob}(|\mathcal{I}_{\bullet}|) \simeq \text{Ob}(\mathcal{I}_0)$.
	Furthermore, the arrows of $|\mathcal{I}_{\bullet}|$ are generated by the image of the arrows in $\mathcal{I}_0 \simeq \mathcal{I}_0 \times [0]$ and the image of the arrows in 
	$[1] \times \text{Ob}(\mathcal{I}_1)$.
\end{lemma}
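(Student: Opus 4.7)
The plan is to analyze the explicit coequalizer description (\ref{REALDEF EQ}) directly. For the identification $\text{Ob}(|\mathcal{I}_\bullet|) \simeq \text{Ob}(\mathcal{I}_0)$, I would first establish surjectivity: given any object $(i,x) \in \text{Ob}([n]) \times \text{Ob}(\mathcal{I}_n)$ in the disjoint union on the right of (\ref{REALDEF EQ}), apply the coequalizer relation to the map $\iota_i \colon [0] \to [n]$ sending $0 \mapsto i$. Since $\iota_i \times \text{id}_{\mathcal{I}_n}$ and $\text{id}_{[0]} \times \mathcal{I}_\bullet(\iota_i)$ are coequalized, one obtains $(i,x) \sim (0, \iota_i^{\**} x)$ in $|\mathcal{I}_\bullet|$, so every object is represented by one in $\mathcal{I}_0$.

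For injectivity of $\text{Ob}(\mathcal{I}_0) \to \text{Ob}(|\mathcal{I}_\bullet|)$, I would construct a compatible family of functors $F_n \colon [n] \times \mathcal{I}_n \to \mathcal{I}_0^{\text{ind}}$ into the indiscrete category on $\text{Ob}(\mathcal{I}_0)$, defined on objects by $(i,x) \mapsto \iota_i^{\**} x$ and sending all morphisms to the unique arrow between their assigned endpoints. Compatibility with the coequalizer relations reduces to the identity $\iota_{f(i)} = f \circ \iota_i$ in $\Delta$ for any $f \colon [n] \to [m]$, which gives $\iota_{f(i)}^{\**} z = \iota_i^{\**} \mathcal{I}_\bullet(f)(z)$. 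The induced functor $|\mathcal{I}_\bullet| \to \mathcal{I}_0^{\text{ind}}$ is the identity on $\text{Ob}(\mathcal{I}_0)$, proving the injectivity claim.

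For the arrows statement, consider an arrow $(\sigma, \alpha) \in [n] \times \mathcal{I}_n$ with $\sigma \colon i \to j$ and $\alpha \colon x \to y$. Factor it in $[n] \times \mathcal{I}_n$ as
\[(\sigma, \alpha) = (\text{id}_j, \alpha) \circ (\sigma, \text{id}_x).\]
The first factor is identified, via the coequalizer relation for $\iota_j \colon [0] \to [n]$, with the arrow $(0, \iota_j^{\**}\alpha)$ in $[0] \times \mathcal{I}_0 \simeq \mathcal{I}_0$. For the second factor, decompose $\sigma$ in $[n]$ as a composite of adjacent edges $\sigma_{k,k+1} \colon k \to k+1$, so that $(\sigma, \text{id}_x)$ becomes a composite of arrows $(\sigma_{k,k+1}, \text{id}_{x})$; each such arrow is identified via $\mu_k \colon [1] \to [n]$ (with $\mu_k(0)=k$, $\mu_k(1)=k+1$) with the arrow $(0\to 1, \text{id}_{\mu_k^{\**}x})$ in $[1] \times \text{Ob}(\mathcal{I}_1)$.

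The main obstacle is the injectivity part of the object identification: a priori, a long zigzag of coequalizer relations could identify two distinct objects of $\mathcal{I}_0$, and one cannot control such zigzags directly. The device of mapping into the indiscrete category $\mathcal{I}_0^{\text{ind}}$ bypasses this entirely by discarding all morphism data while retaining the object level, so checking compatibility with the generating relations $\iota_{f(i)} = f \circ \iota_i$ suffices; everything else is routine bookkeeping in the coequalizer.
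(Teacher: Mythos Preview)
Your proposal is correct and follows essentially the same approach as the paper's proof: both reduce every object $(k,i_n)$ to $(0,\iota_k^{\**}i_n)\in\mathcal{I}_0$ via the coequalizer relation for $\iota_k\colon[0]\to[n]$, and both factor an arrow in $[n]\times\mathcal{I}_n$ into its $[n]$-direction and $\mathcal{I}_n$-direction pieces before identifying each with generators. The only differences are cosmetic: for injectivity the paper checks directly that the assignment $(k,i_n)\mapsto d_{\hat{k}}(i_n)$ is constant on generating relations (implicitly using that $\mathrm{Ob}\colon\mathsf{Cat}\to\mathsf{Set}$ preserves colimits), whereas your indiscrete-category device is precisely the right adjoint witnessing that fact; and for the $[n]$-direction arrow $(k,i_n)\to(l,i_n)$ the paper uses a single map $[1]\to[n]$ hitting $k,l$ rather than your decomposition into adjacent edges, which is slightly more economical but equivalent.
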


For each $i_1 \in \mathcal{I}_1$, we will denote the arrow of 
$|\mathcal{I}_{\bullet}|$ induced by the arrow in $[1] \times \{i_1\}$ by
\[d_1(i_1) \xrightarrow{i_1} d_0(i_1).\]

\begin{proof}
	We write $d_{\hat{k}}$, $d_{\hat{k},\hat{l}}$ for the simplicial operators induced by the maps 
	$[0]\xrightarrow{0 \mapsto k} [n]$,
	$[1]\xrightarrow{0 \mapsto k,1 \mapsto l} [n]$
	which can informally be thought of as the ``composite of all faces other than $d_k$, $d_l$''.
Using \eqref{REALDEF EQ} one has equivalence relations
between the objects  
$(k,i_n) \in [n] \times \mathcal{I}_n$
and 
$(0,d_{\hat{k}}(i_n))
\in [0] \times \mathcal{I}_0$
and,
since for any generating relation $(k,i_n)\sim (l,i'_m)$
it is $d_{\hat{k}}(i_n) = d_{\hat{l}}(i'_m)$,
the identification 
$\text{Ob}(|\mathcal{I}_{\bullet}|) \simeq \text{Ob}(\mathcal{I}_0)$
follows.

To verify the claim about generating arrows, note that any arrow of $[n]\times \mathcal{I}_n$ factors as 
\begin{equation}\label{FACTORIZATIONREAL EQ}
(k,i_n) \to (l,i_n)  \xrightarrow{I_n} (l,i'_n)
\end{equation}
for $I_n \colon i_n \to i'_n$
an arrow of $\mathcal{I}_n$. 
The $d_{\hat{l}}$ relation identifies the right arrow in 
\eqref{FACTORIZATIONREAL EQ}
with
$(0,d_{\hat{l}}(i_n))
	\xrightarrow{d_{\hat{l}}(I_n)}
(0,d_{\hat{l}}(i'_n))
$
in $[0]\times \mathcal{I}_0$
while (if $k<l$) the $d_{\hat{k},\hat{l}}$ relation identifies the left arrow with 
$(0,d_{\hat{k},\hat{l}}(i_n)) \to (1,d_{\hat{k},\hat{l}}(i_n))$
in $[1]\times \mathcal{I}_1$. The result follows.
\end{proof}

\begin{remark}
	Given $\mathcal{I}_{\bullet} \in \mathsf{Cat}^{\Delta^{op}}$, $\mathcal{C} \in \mathsf{Cat}$, the isomorphisms
	\[
	\mathsf{Hom}_{\mathsf{Cat}}\left(|\mathcal{I}_{\bullet}|,\mathcal{C}\right)
		\simeq
	\mathsf{Hom}_{\mathsf{Cat}^{\Delta^{op}}}\left(\mathcal{I}_{\bullet},\mathcal{C}^{[\bullet]}\right)
	\]
	together with the fact that $\mathcal{C}^{[\bullet]}$ is  $2$-coskeletal show that $|\mathcal{I}_{\bullet}|$
	is determined by the categories 
	$\mathcal{I}_0,\mathcal{I}_1,\mathcal{I}_2$
	and maps between them, i.e. by the truncation of
	formula $\eqref{REALDEF EQ}$ for $n,m \leq 2$.

Indeed, one can show that a sufficient set of generating relations for $|\mathcal{I}_{\bullet}|$ is given by:
\begin{enumerate*}
\item[(i)]
 the relations in $\mathcal{I}_0$
(including relations stating that identities of  
$\mathcal{I}_0$ are identities of $|\mathcal{I}_{\bullet}|$);
\item[(ii)] relations stating that for each $i_0 \in \mathcal{I}_0$ the arrow 
$i_0 = d_1(s_0(i_0)) \xrightarrow{s_0(i_0)} d_1(s_0(i_0)) = i_0$
is an identity;
\item[(iii)] for each arrow $I_1\colon i_1 \to i'_1$ in $\mathcal{I}_1$ the relation that the square below commutes
\end{enumerate*}
\[
\begin{tikzcd}
	d_1(i_1) \ar{r}{i_1} \ar{d}[swap]{d_1(I_1)} & 
	d_0(i_1) \ar{d}{d_0(I_1)}
\\
	d_1(i'_1) \ar{r}{i'_1} &
	d_0(i'_1)
\end{tikzcd}
\]
and;
\begin{enumerate*}
\item[(iv)] for each object $i_2 \in \mathcal{I}_2$ the relation that the following triangle commutes.
\end{enumerate*}
\[
\begin{tikzcd}[row sep = 0.5em]
	d_{1,2}(i_2) \ar{rr}{d_1(i_2)} \ar{rd}[swap]{d_2(i_2)} & & d_{0,1}(i_2) \\
	& d_{0,2}(i_2) \ar{ru}[swap]{d_0(i_2)}
\end{tikzcd}
\]
\end{remark}



We now relate diagrams in 
the span categories of \S \ref{WSPAN DEF} with 
the Grothendieck constructions 
of Definition \ref{GROTHCONS DEF}.

\begin{lemma}\label{SIMPSPANREIN LEMMA}
Functors $F \colon \mathcal{D} \ltimes \mathcal{I}_{\bullet} \to \C$ are in bijection with lifts
\[
\begin{tikzcd}
    & \mathsf{WSpan}^l(\**,\C) \ar{d}{\mathsf{fgt}} \\
\mathcal{D} \ar{r}[swap]{\mathcal{I}_{\bullet}} \ar[dashed]{ru}{\mathcal{I}_{\bullet}^F} & \mathsf{Cat}.
\end{tikzcd}
\]
where $\mathsf{fgt}$ is the functor forgetting the maps to $\**$ and $\C$.
\end{lemma}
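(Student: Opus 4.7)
The plan is to unpack both sides explicitly and construct an inverse bijection by hand. First I would note that because $\** \in \mathsf{Cat}$ is terminal, an object of $\mathsf{WSpan}^l(\**,\C)$ is just a pair $(A, X\colon A \to \C)$, and a morphism $(A_1,X_1) \to (A_2,X_2)$ is a pair $(i\colon A_1 \to A_2,\; \varphi\colon X_1 \Rightarrow X_2 \circ i)$. Therefore a lift $\mathcal{I}_{\bullet}^F$ of $\mathcal{I}_{\bullet}$ through $\mathsf{fgt}$ amounts to the data of a functor $X_d \colon \mathcal{I}_d \to \C$ for each $d \in \mathcal{D}$ together with a natural transformation $\varphi_f \colon X_d \Rightarrow X_{d'} \circ f_*$ for each $f\colon d \to d'$ in $\mathcal{D}$, subject to the coherences $\varphi_{\mathrm{id}_d} = \mathrm{id}$ and $\varphi_{f' \circ f} = (\varphi_{f'} \ast f_*) \cdot \varphi_f$.

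Next I would build the bijection. Given a functor $F \colon \mathcal{D}\ltimes \mathcal{I}_{\bullet} \to \C$, I define $X_d(i) := F(d,i)$, with action on $g\colon i \to i'$ in $\mathcal{I}_d$ by $X_d(g) := F(\mathrm{id}_d,g)$ (using the morphism $(\mathrm{id}_d,g)\colon (d,i)\to (d,i')$ in the Grothendieck construction), and I define the components of $\varphi_f$ by $(\varphi_f)_i := F(f,\mathrm{id}_{f_*(i)})$. Conversely, given a lift $(X_d,\varphi_f)$, the only possible definition is $F(d,i) := X_d(i)$ on objects and, using the factorization
\[
	(f,g) = (\mathrm{id}_{d'},g) \circ (f,\mathrm{id}_{f_*(i)})
\]
valid in $\mathcal{D}\ltimes\mathcal{I}_{\bullet}$ for any arrow $(f,g)\colon (d,i)\to(d',i')$, the formula $F(f,g) := X_{d'}(g) \circ (\varphi_f)_i$ on morphisms.

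To finish, I would verify the required compatibilities. In the first direction, naturality of $\varphi_f$ in $i$ follows from applying $F$ to the commuting square encoding $(\mathrm{id}_{d'},f_*(g)) \circ (f,\mathrm{id}) = (f,\mathrm{id}) \circ (\mathrm{id}_d,g)$, and the cocycle $\varphi_{f'\circ f} = (\varphi_{f'}\ast f_*)\cdot \varphi_f$ follows from applying $F$ to the identity $(f',\mathrm{id})\circ(f,\mathrm{id}) = (f'f,\mathrm{id})$. In the reverse direction, functoriality of $F$ on composable morphisms $(f,g)$, $(f',g')$ reduces, after substituting the definition, to exactly the naturality of $\varphi_{f'}$ and the cocycle condition, together with functoriality of $X_{d''}$. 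Mutual inverseness of the two assignments is then automatic, since each is determined on its generators (the arrows $(\mathrm{id},g)$ and $(f,\mathrm{id})$) by the other via the factorization above.

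The main obstacle here is purely notational bookkeeping — making sure the natural transformation $\varphi_f$ points in the direction appropriate to the \emph{left} weak span convention ($X_1 \Rightarrow X_2 \circ i$), which is in turn dictated by the pushforward convention $(f,g)\colon(d,i)\to(d',i')$ with $g\colon f_*(i)\to i'$ used in Definition~\ref{GROTHCONS DEF}. Once these conventions are aligned, the statement is essentially a direct reformulation of the Grothendieck construction, with $\mathsf{WSpan}^l(\**,\C)$ serving as the classifying object for ``categories equipped with a functor to $\C$.''
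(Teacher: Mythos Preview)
Your proposal is correct and follows essentially the same approach as the paper's proof, which simply observes that the restrictions $F|_{\mathcal{I}_d}$ give the functors $\mathcal{I}^F_d$ and the images of the pushforward arrows $(d,i)\to(d',f_*(i))$ assemble into the natural transformations, with the associativity and unitality conditions coinciding. You have merely carried out in full the bookkeeping that the paper summarizes as ``a matter of unpacking notation,'' and your attention to the direction of $\varphi_f$ matching the left weak span convention is exactly the only point requiring care.
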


\begin{proof}
	This is a matter of unpacking notation. The restrictions 
	$F|_{\mathcal{I}_d}$ to the fibers 
	$\mathcal{I}_d \hookrightarrow \mathcal{D} \ltimes \mathcal{I}_{\bullet}$
	are precisely the functors 
	$\mathcal{I}^F_d \colon \mathcal{I}_d \to \C$ describing $\mathcal{I}_{\bullet}^F(d)$.
	
	Furthermore, the images
	$F \left( (d,i) \to (d',f_{\**}(i)) \right)$	
	of the pushout arrows over a fixed arrow $f \colon d \to d'$ of $\mathcal{D}$
assemble to a natural transformation 
\[
	\begin{tikzcd}[row sep=0.4em]
		\mathcal{I}_d 
		\ar{dr}[name=F1]{I_d^F} \ar{dd}[swap]{f_{\**}} &
	\\
 & \C 
	\\
|[alias=G2]| \mathcal{I}_{d'}  \ar{ur}[swap]{I_{d'}^F} & 
		\arrow[Rightarrow, from=F1, to=G2,shorten >=0.15cm,shorten <=0.20cm]
	\end{tikzcd}
\]
which describes $\mathcal{I}_{\bullet}^F(f)$. 
One readily checks that the associativity and unitality conditions coincide.
\end{proof}

In the cases of interest we have $\mathcal{D}=\Delta^{op}$.
The following is the key result in this section.

\begin{proposition}\label{SOURCEFINAL PROP}
	Let $\mcI_{\bullet} \in \mathsf{Cat}^{\Delta^{op}}$.
	Then there is a natural functor
\[
\begin{tikzcd}
	\Delta^{op} \ltimes \mcI_{\bullet}
	\ar{r}{s} &
	\left| \mcI_{\bullet} \right|.
\end{tikzcd}
\]
Further, $s$ is final.
\end{proposition}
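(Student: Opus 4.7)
The proof will proceed by first pinning down what $s$ does and then verifying finality directly, by showing that for each $x \in |\mathcal{I}_\bullet|$ the undercategory $x \downarrow s$ is non-empty and connected.

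First, I will define $s$ via Lemma \ref{SIMPSPANREIN LEMMA}. The unit of the adjunction $|\minus| \dashv (\minus)^{[\bullet]}$ gives a morphism of simplicial categories $\mathcal{I}_{\bullet} \to |\mathcal{I}_{\bullet}|^{[\bullet]}$; post-composing with evaluation $\mathrm{ev}_0 \colon |\mathcal{I}_{\bullet}|^{[n]} \to |\mathcal{I}_{\bullet}|$ at the source vertex provides, at each level $n$, a functor $F_n \colon \mathcal{I}_n \to |\mathcal{I}_\bullet|$ sending $i_n \mapsto d_{\hat 0}(i_n)$, assembling into the required lift $\Delta^{op} \to \mathsf{WSpan}^l(\**, |\mathcal{I}_\bullet|)$. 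Unpacking, $s$ acts on objects by $s([n], i_n) = d_{\hat 0}(i_n) \in \mathcal{I}_0 = \mathrm{Ob}(|\mathcal{I}_\bullet|)$, and on a morphism $(\phi \colon [m] \to [n], f \colon \phi^{\**}(i_n) \to i_m)$ as the composite
\[
d_{\hat 0}(i_n) \to d_{\widehat{\phi(0)}}(i_n) \xrightarrow{d_{\hat 0}(f)} d_{\hat 0}(i_m),
\]
where the first map is the composite of the edges $d_{\widehat{k,k{+}1}}(i_n) \in \mathcal{I}_1$ of $i_n$ from vertex $0$ to vertex $\phi(0)$.

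For finality, fix $x \in |\mathcal{I}_\bullet|$. Non-emptiness of $x \downarrow s$ is immediate: $(([0],x), \mathrm{id}_x)$ lies in it. For connectedness, I will show every object $(([n], i_n), \alpha \colon x \to d_{\hat 0}(i_n))$ is connected by a zigzag in $x \downarrow s$ to $(([0], x), \mathrm{id}_x)$. This proceeds in two reduction steps.
\emph{Reduction 1 (to $n = 0$):} the Grothendieck arrow $([n], i_n) \to ([0], d_{\hat 0}(i_n))$ determined by $[0] \to [n]$, $0 \mapsto 0$, and $\mathrm{id}$ has $s$-image $\mathrm{id}_{d_{\hat 0}(i_n)}$ (the path from vertex $0$ to vertex $0$ is empty), hence lifts to an arrow $(([n],i_n), \alpha) \to (([0], d_{\hat 0}(i_n)), \alpha)$ in $x \downarrow s$.
\emph{Reduction 2 (to $\alpha = \mathrm{id}$):} applying Lemma \ref{OBJGENREL LEMMA}, decompose $\alpha$ as a composite of generators, so that by transitivity it suffices to connect $(([0], x), \mathrm{id}_x)$ to $(([0], y), \alpha)$ whenever $\alpha \colon x \to y$ is a single generator. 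If $\alpha \in \mathcal{I}_0$, the Grothendieck arrow $(\mathrm{id} \colon [0] \to [0], \alpha)$ from $([0],x)$ to $([0], y)$ has $s$-image $\alpha$ itself and directly lifts. If $\alpha = i_1$ for some $i_1 \in \mathcal{I}_1$ so that $x = d_1(i_1), y = d_0(i_1)$, then $s([1], i_1) = d_{\hat 0}(i_1) = x$, and the two vertex inclusions $[0] \to [1]$ equipped with identities produce Grothendieck arrows $([1], i_1) \to ([0], x)$ and $([1], i_1) \to ([0], y)$ with $s$-images $\mathrm{id}_x$ and $\alpha$ respectively, lifting to morphisms in $x \downarrow s$ that form the zigzag
\[
(([0], x), \mathrm{id}_x) \leftarrow (([1], i_1), \mathrm{id}_x) \to (([0], y), \alpha).
\]

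Combining the two reductions proves the connectedness claim and hence finality. The routine verifications in the plan are the commutativity of the defining triangles in $x \downarrow s$, which in every case reduce to the identity $\alpha = \alpha \circ \mathrm{id}$; the most delicate step is not a hard computation but the bookkeeping around Reduction 2(b), where one must match the intermediate object $(([1], i_1), \mathrm{id}_x)$ with the fact that $s$ on the two vertex inclusions recovers exactly the generating arrow $\alpha = i_1$.
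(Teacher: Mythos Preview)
Your proof is correct and follows essentially the same route as the paper's own argument: construct $s$ so that $s([n],i_n)=d_{\hat 0}(i_n)$, then verify finality by showing each undercategory $x\downarrow s$ is non-empty and connected via (i) reducing to $n=0$ along the vertex-$0$ inclusion, and (ii) inducting on a generator decomposition of the structure map $\alpha$, lifting $\mathcal{I}_0$-generators directly and $\mathcal{I}_1$-generators through the span $([0],d_1 i_1)\leftarrow([1],i_1)\to([0],d_0 i_1)$. The only presentational difference is that you package the construction of $s$ via the adjunction unit and $\mathrm{ev}_0$ (which buys you functoriality of $s$ for free, whereas the paper verifies it by hand), and you phrase the induction on the length of the generator decomposition as ``transitivity'' together with a single-generator base case; unwound, this is exactly the paper's induction on $r$.
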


\begin{remark}
	The $s$ in the result above stands for \textit{source}. 
	This is because, for $\mcI \in \mathsf{Cat}$, the map
	$\Delta^{op} \ltimes \mcI^{[\bullet]}
	\to \left| \mcI^{[\bullet]} \right|
	\simeq \mcI$ is given by $s(i_0\to \cdots \to i_n) = i_0$.
\end{remark}

\begin{proof}
Recall that $|\mcI_{\bullet}|$ is the coequalizer \eqref{REALDEF EQ}. Given $(k,g_m) \in [n] \times \mcI_m$, we write 
$[k,g_m]$ for the corresponding object in $|\mcI_{\bullet}|$.
To simplify notation, we write objects of $\mcI_n$ as $i_n$
and implicitly assume that $[k,i_n]$ refers to the class of the object $(k,i_n) \in [n] \times \mcI_n$.

We define $s$ on objects by 
$s([n],i_n)=[0,i_n]$ and on an arrow 
$(\phi,I_m)\colon (n,i_n) \to (m,i'_m)$ as the composite
(note that $\phi\colon [m] \to [n]$ and $I_m\colon \phi^{\**}i_n\to i_m$)
\begin{equation}\label{TARGETDEFINITON EQ}
	[0,i_n] \to [\phi(0),i_n] =
	[0,\phi^{\**}i_n]	
	 \xrightarrow{I_m} 
	[0,i'_m].
\end{equation}
To check compatibility with composition,
the cases of a pair of either two fiber arrows (i.e. arrows where $\phi$ is the identity) or two pushforward arrows (i.e. arrows where $I_m$ is the identity) are immediate from \eqref{TARGETDEFINITON EQ}, 
hence we are left with the case 
$([n],i_n) \xrightarrow{I_n} ([n],i'_n) \to 
([m],\phi^{\**} i'_n)$
 of a fiber arrow followed by a pushforward arrow. 
 Noting that in $\Delta^{op} \ltimes \mcI_{\bullet}$
this composite can be rewritten as
$([n],i_n) \to ([m],\phi^{\**} i_n)
\xrightarrow{\phi^{\**}I_n} 
([m],\phi^{\**}i'_n)$
 this amounts to checking that
\[
\begin{tikzcd}
\left[0,i_n\right] \ar{r} \ar{d}[swap]{I_n} &
\left[\phi(0),i_n) \right] \ar[equal]{r} \ar{d}[swap]{I_n} &
\left[0,\phi^{\**} i_n \right] \ar{d}{\phi^{\**} I_n}
	\\
\left[0,i'_n\right] \ar{r} &
\left[\phi(0),i'_n\right] \ar[equal]{r} &
\left[0,\phi^{\**} i_n \right]
\end{tikzcd}
\]
commutes in $|\mcI_{\bullet}|$,
which is the case since the left square is encoded by a square in $[n]\times \mcI_n$
and the right square is encoded by an arrow in $[m]\times \mathcal{I}_n$.

We now show that $s$ is final.
Fix $h \in \mcI_0$. We must check that 
$[0,h] \downarrow \Delta^{op} \ltimes \mcI_{\bullet}$ is connected.
By Lemma \ref{OBJGENREL LEMMA},
any object in this undercategory has a description (not necessarily unique) as a pair
\begin{equation}\label{UNDERCATOB EQ}
\left(\left([n],i_n\right), [0,h] \xrightarrow{f_1} \cdots \xrightarrow{f_r} s([n],i_n) \right)
\end{equation}
where each $f_i$ is a generating arrow of $|\mcI_{\bullet}|$
induced by either an arrow $I_0$ of $\mcI_0$ or object $i_1\in \mcI_1$.
 We will connect \eqref{UNDERCATOB EQ} to the canonical object 
 $\left(([0],h),[0,h]=[0,h]\right)$, arguing by induction on $r$. 
If $n \neq 0$, the map 
$d_{\hat{0}} \colon ([n],i_n) \to ([0],d_{\hat{0}}^{\**}(i_n))$
 and the fact that 
$s \left(d_{\hat{0}}^{\**}\right) = id_{[0, d_{\hat{0}}^{\**}(i_n)]}$ provides an arrow to an object with $n=0$ without changing $r$.
If $n=0$, one can apply the induction hypothesis by lifting $f_r$ to $\Delta^{op} \ltimes \mcI_{\bullet}$ according to one of two cases:
\begin{enumerate*}
	\item[(i)] if $f_r$ is induced by an arrow $I_0$ of $\mcI_0$, the lift of $f_r$ is simply  
	$([0],i'_0) \xrightarrow {I_0} ([0],i_0)$;
	\item[(ii)] if $f_r$ is induced by $i_1\in \mcI_1$ the lift is provided by the map
	$([1],i_1) \to ([0],d_0(i_1))$.
\end{enumerate*}
\end{proof}

\begin{remark}\label{DUALRESULTS REM}
	The involution
	\[\Delta \xrightarrow{\tau} \Delta\]
	which sends $[n]$ to itself and $d_i,s_i$ to $d_{n-i},s_{n-i}$
	induces vertical isomorphisms
\[
\begin{tikzcd}
	\Delta^{op} \ltimes \left(\mathcal{I}_{\bullet} \circ \tau \right) \ar{r}{s} \ar{d}[swap]{\simeq} &
	\left|\mathcal{I}_{\bullet} \circ \tau \right|
	\ar{d}{\simeq}
\\
	\Delta^{op} \ltimes \mathcal{I}_{\bullet} \ar{r}[swap]{t} &
	\left| \mathcal{I}_{\bullet}^{op} \right|^{op}
\end{tikzcd}
\]
which reinterpret the ``source'' functor as what one might call the ``target'' functor, with $t([n],i_n)= [n,i_n]$ rather than 
$s([n],i_n)= [0,i_n]$.
The target functor is thus also final.

Moreover, the source/target formulations of 
all the results that follow are equivalent.
\end{remark}

In practice, we will need to know that the source $s$ and target $t$ satisfy a stronger finality condition with respect to left Kan extensions.

\begin{lemma}\label{UNDERLEFTADJ LEM}
Let $\mathcal{J} \in \mathsf{Cat}$ be a small category and 
$j \in \mathcal{J}$. 
Then the under and over category functors
\[
	\mathsf{Cat} \downarrow \mathcal{J}
		\xrightarrow{(\minus) \downarrow j}
	\mathsf{Cat},
\qquad
	\mathsf{Cat} \downarrow \mathcal{J}
		\xrightarrow{j \downarrow (\minus)}
	\mathsf{Cat}
\]
are left adjoints, and hence preserve colimits.
\end{lemma}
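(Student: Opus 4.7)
My plan is to exhibit both $(\minus) \downarrow j$ and $j \downarrow (\minus)$ as left adjoints; I focus on $(\minus) \downarrow j$, as the case of $j \downarrow (\minus)$ is entirely dual.

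First I would observe that for each $F \colon \mathcal{C} \to \mathcal{J}$ there is a canonical identification
\[
\mathcal{C} \downarrow j \simeq \mathcal{C} \times_{\mathcal{J}} (\mathcal{J} \downarrow j),
\]
so that $(\minus) \downarrow j$ is identified with pullback along the forgetful functor $u_j \colon \mathcal{J} \downarrow j \to \mathcal{J}$. We note that $u_j$ is a discrete Grothendieck fibration, classifying the presheaf $\mathcal{J}(\minus, j) \colon \mathcal{J}^{op} \to \mathsf{Set}$. The core task is then to produce an explicit right adjoint to this pullback.

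Given $\mathcal{D} \in \mathsf{Cat}$, I would define a category $R(\mathcal{D})$ over $\mathcal{J}$ as follows. Objects are pairs $(k, \phi)$ with $k \in \mathcal{J}$ and $\phi \colon \mathcal{J}(k, j) \to \mathrm{Ob}(\mathcal{D})$ a function. A morphism $(k, \phi) \to (k', \phi')$ lying over $f \colon k \to k'$ is a family $(m_{\alpha'} \colon \phi(\alpha' \circ f) \to \phi'(\alpha'))_{\alpha' \in \mathcal{J}(k', j)}$ of arrows in $\mathcal{D}$, with composition defined componentwise. The structure functor $R(\mathcal{D}) \to \mathcal{J}$ sends $(k, \phi)$ to $k$. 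The adjunction bijection
\[
\mathsf{Hom}_{\mathsf{Cat}}(\mathcal{C} \downarrow j, \mathcal{D}) \simeq \mathsf{Hom}_{\mathsf{Cat} \downarrow \mathcal{J}}(\mathcal{C}, R(\mathcal{D}))
\]
is obtained by sending $H \colon \mathcal{C} \downarrow j \to \mathcal{D}$ to the functor $G$ with $G(c) = (F(c), \alpha \mapsto H(c, \alpha))$ and with $G(f)$ given by the family $m_{\alpha'} = H\left((c, \alpha' F(f)) \to (c', \alpha')\right)$; the inverse direction simply reads off the components of $G$.

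The main obstacle is the bookkeeping: one must verify that the proposed composition in $R(\mathcal{D})$ is associative and unital, that the assignments $H \mapsto G$ and $G \mapsto H$ are mutually inverse, and that the bijection is natural in $\mathcal{C}$ and $\mathcal{D}$. None of these should present any surprise, as each reduces to a componentwise check indexed by the discrete fibers of $u_j$. Once the right adjoint is in place the lemma is immediate, and the case of $j \downarrow (\minus)$ is handled identically with $\mathcal{J} \downarrow j$ replaced by the discrete opfibration $j \downarrow \mathcal{J} \to \mathcal{J}$ classifying the covariant functor $\mathcal{J}(j, \minus)$.
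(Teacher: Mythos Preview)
Your proposal is correct and takes essentially the same approach as the paper: both exhibit an explicit right adjoint to $(\minus) \downarrow j$. Your category $R(\mathcal{D})$ is precisely the paper's Grothendieck construction $\mathcal{D}^{\downarrow j} = \mathcal{J} \ltimes \mathcal{D}^{\mathcal{J}(\minus,j)}$, unpacked by hand; your added observation that $(\minus) \downarrow j$ is pullback along the discrete fibration $\mathcal{J} \downarrow j \to \mathcal{J}$ is a pleasant framing but not needed for the argument.
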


\begin{proof}
The right adjoint to $(\minus) \downarrow j$,
which we denote
$(\minus)^{\downarrow j} \colon
\mathsf{Cat}
	\to 
\mathsf{Cat} \downarrow \mathcal{J}
$,
is given on a category $\mathcal{C} \in \mathsf{Cat}$
by the Grothendieck construction
$\mathcal{C}^{\downarrow j}
= \mathcal{J} \ltimes \mathcal{C}^{\times \mathcal{J}(\minus,j)}$
for the functor
\[
\begin{tikzcd}[row sep = 0em]
	\mathcal{J} \ar{r} & \mathsf{Cat} \\
	k \ar[mapsto]{r} & \C^{\times \mathcal{J}(k,j)}.
\end{tikzcd}
\]
Given 
$(\mcI \xrightarrow{\pi} \mathcal{J})
\in (\mathsf{Cat} \downarrow \mathcal{J})$ and 
$\C \in \mathsf{Cat}$, we will show that functors 
$F \colon (\mathcal{I} \downarrow j) \to\C$
are in bijection with functors
$\hat{F} \colon \mathcal{I} \to \C^{\downarrow j}$ over $\mathcal{J}$.
Given $F$, we now describe the corresponding 
$\hat{F}$.

First, $F$ associates to each object
$(i,J\colon \pi(i) \to j)$ of
$\mathcal{I} \downarrow j$
an object
$F(i,J)\in \mathcal{C}$.
Write
$F_i \in \C^{\times \mathcal{J}(\pi(i),j)}$
for the assignment
$J \mapsto F(i,J)$,
i.e. $F_i(J) = F(i,J)$.
On objects $i \in \mathcal{I}$
one then sets
$\hat{F}(i) = (\pi(i), F_i)$.

Next, 
recall that arrows in 
$\mathcal{I} \downarrow j$
have the form
$(i',J \circ \pi(I)) \to (i,J)$
for some arrow $I \colon i' \to i$
in $\mathcal{I}$.
To each such arrow,
$F$ associates an arrow
$F_{i'}(J \circ \pi(I)) 
\to
F_i(J)$.
Fixing $I$ and allowing 
$J \in \mathcal{J}(\pi(i),j)$ to vary,
these arrows form a natural transformation
$F_I \colon F_{i'} \circ \pi(I)^{\**} 
\Rightarrow
F_i$,
where 
$\pi(I)^{\**} \colon \mathcal{J}(\pi(i),j) 
\to 
\mathcal{J}(\pi(i'),j)$
denotes precomposition with $\pi(I)$.
On arrows
$I \colon i' \to i$
one now sets
$\hat{F}(I) \colon 
(\pi(i'), F_{i'}) \to  (\pi(i), F_i)$
to be 
$(\pi(I) \colon \pi(i') \to \pi(i),
F_I \colon F_{i'} \circ \pi(I)^{\**} 
\Rightarrow
F_i)$.

It is clear that the procedures above relating the values of 
$F,\hat{F}$ on objects and arrows are invertible. 
One can readily check that
the functoriality requirements on 
$F,\hat{F}$ match.

Noting that 
$j \downarrow (-)$ is the composite
$
\mathsf{Cat} \downarrow \mathcal{J}
\xrightarrow{(-)^{op}}
\mathsf{Cat} \downarrow \mathcal{J}^{op}
\xrightarrow{(-) \downarrow j}
\mathsf{Cat}
\xrightarrow{(-)^{op}}
\mathsf{Cat}
$
yields that
its right adjoint
is the composite
$
\mathsf{Cat}
\xrightarrow{(-)^{op}}
\mathsf{Cat}
\xrightarrow{(-)^{\downarrow j}}
\mathsf{Cat} \downarrow \mathcal{J}^{op}
\xrightarrow{(-)^{op}}
\mathsf{Cat} \downarrow \mathcal{J}
$.
\end{proof}


\begin{corollary}\label{SOURCELANFINAL COR}
	Consider a map
	$\mcI_{\bullet} \to \mathcal{J}$
	between $\mcI_{\bullet} \in \mathsf{Cat}^{\Delta^{op}}$
	and a constant object
	$\mathcal{J} = \mathcal{J}_{\bullet} \in \mathsf{Cat}^{\Delta^{op}}$. Then the source and target maps 
\[
	\begin{tikzcd}
	\Delta^{op} \ltimes \mcI_{\bullet} \ar{rr}{s}  \ar{rd}&& \left|\mcI_{\bullet} \right|\ar{dl}
& &
	\Delta^{op} \ltimes \mcI_{\bullet} \ar{rr}{t}  \ar{rd}&& \left|\mcI_{\bullet}^{op} \right|^{op}\ar{dl}
\\
	& \mathcal{J} &
& &
	& \mathcal{J} &
	\end{tikzcd}	
\]
are $\Lan$-final over $\mathcal{J}$, i.e. the functors 
$s \downarrow j \colon (\Delta^{op} \ltimes \mcI_{\bullet})\downarrow j \to |\mcI_{\bullet}|\downarrow j$ are final for all $j\in \mathcal{J}$,
and similarly for $t$.
\end{corollary}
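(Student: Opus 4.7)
The plan is to reduce both statements to Proposition \ref{SOURCEFINAL PROP} via natural identifications of the relevant categories, using Lemma \ref{UNDERLEFTADJ LEM} as the key technical tool. I will focus on the source map; the target map case then follows from the involution argument of Remark \ref{DUALRESULTS REM}.

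First I would observe that the structure map $\mcI_{\bullet} \to \mathcal{J}$ promotes $\mcI_{\bullet}$ to a simplicial object in $\mathsf{Cat} \downarrow \mathcal{J}$ (since $\mathcal{J}$ is constant, every simplicial operator of $\mcI_{\bullet}$ lies over the identity of $\mathcal{J}$). Applying $(\minus)\downarrow j$ levelwise produces a simplicial object $\mcI_{\bullet}\downarrow j \in \mathsf{Cat}^{\Delta^{op}}$. Then I would apply Proposition \ref{SOURCEFINAL PROP} directly to $\mcI_{\bullet}\downarrow j$, obtaining that the source functor
\[
s \colon \Delta^{op} \ltimes (\mcI_{\bullet}\downarrow j) \to \left| \mcI_{\bullet}\downarrow j \right|
\]
is final.

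The remaining work is to establish two natural identifications and verify that they intertwine source functors. The first identification is $\left| \mcI_{\bullet} \downarrow j \right| \simeq \left| \mcI_{\bullet} \right| \downarrow j$: the realization formula (\ref{REALDEF EQ}) exhibits $|\mcI_{\bullet}|$ as a coequalizer in $\mathsf{Cat}$, and this coequalizer can be computed in $\mathsf{Cat}\downarrow \mathcal{J}$ (since the forgetful functor to $\mathsf{Cat}$ creates connected colimits); Lemma \ref{UNDERLEFTADJ LEM} then implies that $(\minus)\downarrow j$ sends it to the corresponding coequalizer computing $|\mcI_{\bullet}\downarrow j|$. The second identification is $\Delta^{op}\ltimes(\mcI_{\bullet}\downarrow j)\simeq (\Delta^{op}\ltimes \mcI_{\bullet})\downarrow j$, which I would verify by direct unpacking: an object on either side amounts to a tuple $([n], i_n\in \mcI_n, \alpha \colon \bar\imath_n \to j)$, where $\bar\imath_n$ denotes the image of $i_n$ in $\mathcal{J}$, and an arrow $([n],i_n,\alpha)\to ([m],i'_m,\alpha')$ is a pair $(\phi \colon [m]\to [n], I_m\colon \phi^{\**}i_n\to i'_m)$ such that $\alpha'\circ \bar I_m = \alpha$, where crucially the condition $\bar{\phi^{\**} i_n}=\bar\imath_n$ uses that $\mcI_{\bullet}\to\mathcal{J}$ is constant.

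Under these two identifications, the source functor from Proposition \ref{SOURCEFINAL PROP} corresponds exactly to $s \downarrow j$ on objects, since both send $([n],i_n,\alpha)$ to $[0,i_n]\in |\mcI_{\bullet}|$ equipped with the arrow $\alpha \colon \bar\imath_n \to j$, and the check on arrows reduces to the formula (\ref{TARGETDEFINITON EQ}) for $s$. Combining, we conclude that $s\downarrow j$ is final for every $j\in\mathcal{J}$, which is the required $\Lan$-finality. The main obstacle is the careful verification of the second identification and the corresponding compatibility of source functors, but this amounts only to unpacking the definitions of the Grothendieck construction and of slice categories in parallel.
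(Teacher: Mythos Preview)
Your proposal is correct and follows essentially the same approach as the paper: both reduce to Proposition \ref{SOURCEFINAL PROP} applied to $\mcI_{\bullet}\downarrow j$ via the identifications $(\Delta^{op} \ltimes \mcI_{\bullet})\downarrow j \simeq \Delta^{op} \ltimes (\mcI_{\bullet}\downarrow j)$ and $|\mcI_{\bullet}|\downarrow j \simeq |\mcI_{\bullet}\downarrow j|$, the latter via Lemma \ref{UNDERLEFTADJ LEM}. The paper simply asserts the first identification as clear and leaves the compatibility of source functors implicit, whereas you spell these out; your additional care is harmless and indeed clarifying.
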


\begin{proof}
It is clear that $(\Delta^{op} \ltimes \mcI_{\bullet})\downarrow j \simeq \Delta^{op} \ltimes ( \mcI_{\bullet}\downarrow j)$
while Lemma \ref{UNDERLEFTADJ LEM}
guarantees that, since $(\minus) \downarrow j$ is a left adjoint, $|\mcI_{\bullet}|\downarrow j \simeq |\mcI_{\bullet}\downarrow j |$. One thus reduces to Proposition \ref{SOURCEFINAL PROP}.
\end{proof}

We will require two additional straightforward lemmas.

\begin{lemma}\label{TWISTING LEMMA}
	Let $\mcI_{\bullet}^F \in \mathsf{WSpan}^l(\**,\C)^{\Delta^{op}}$ be such that the diagrams
	\begin{equation}\label{IDENTSIMPRELSISO EQ}
	\begin{tikzcd}[row sep=0.4em,column sep = 5em]
		\mathcal{I}_n
		\ar{dr}[name=F1]{F_n} \ar{dd}[swap]{d_i} & &
		\mathcal{I}_n
		\ar{dr}[name=F2]{F_n} \ar{dd}[swap]{s_j} & &
	\\
 & \C & & \C &
	\\
|[alias=G2]| \mathcal{I}_{n-1}  \ar{ur}[swap]{F_{n-1}} & & 
|[alias=G3]| \mathcal{I}_{n+1}  \ar{ur}[swap]{F_{n+1}} & &
		\arrow[Leftrightarrow, from=F1, to=G2,shorten >=0.25cm,shorten <=0.25cm,"\delta_{i}"]
		\arrow[Leftrightarrow, from=F2, to=G3,shorten >=0.25cm,shorten <=0.25cm,"\sigma_{j}"]
	\end{tikzcd}
\end{equation}
are given by natural isomorphisms for $0 < i \leq n$, $0 \leq j \leq n$.
Then the functors $\tilde{F}_n \colon \mcI_n \to \C$ given by the composites
\[
\mcI_n \xrightarrow{d_{1,\cdots,n}} 
\mcI_0 \xrightarrow{F_0}
\C
\]
assemble to an object 
$\mcI_{\bullet}^{\tilde{F}} \in \mathsf{WSpan}^l(\**,\C)^{\Delta^{op}}$ which is isomorphic to $\mcI_{\bullet}^F$ and such that:
\begin{enumerate*}
\item[(i)] $\mcI_{\bullet}^{\tilde{F}}$
has the same operators $d_i,s_j$;
\item[(ii)] in $\mcI_{\bullet}^{\tilde{F}}$ the 
diagrams \eqref{IDENTSIMPRELSISO EQ} for $0 < i \leq n$, $0 \leq j \leq n$ are strictly commutative;
in $\mcI_{\bullet}^{\tilde{F}}$
the natural transformation associated to $d_0$
is the composite
\end{enumerate*}
\begin{equation}
\begin{tikzcd}[column sep = 40pt]
	\mathcal{I}_n \ar{r}{d_{2,\cdots,n}}
	\ar{d}[swap]{d_0} &
	\mathcal{I}_1 \ar{d}[swap]{d_0} \ar{rr}{d_1} 
	\ar{rrd}{F_1 }[near end,name=F1]{}
	[swap,near start,name=F2]{}
&&
	|[alias=Vt2]|
	\mathcal{I}_0 \ar{d}{F_0} &
\\
	\mathcal{I}_{n-1} \ar{r}{d_{1,\cdots,n-1}} &
	|[alias=FV2]|
	\mathcal{I}_0 \ar{rr}[swap]{F_0} &&
\mathcal{V}^{op} 
\arrow[Leftrightarrow, from=Vt2, to=F1, shorten <=0.10cm,shorten >=0.10cm,"\delta_1"]
\arrow[Rightarrow, from=F2, to=FV2, shorten <=0.10cm,shorten >=0.10cm,"\delta_0"]
\end{tikzcd}
\end{equation}

Dually, if \eqref{IDENTSIMPRELSISO EQ}
are natural isomorphisms for
$0\leq i <n$ and $0\leq j \leq n$,
one can form 
$\mcI_{\bullet}^{\tilde{F}} \in \mathsf{WSpan}^l(\**,\C)^{\Delta^{op}}$ 
such that the corresponding diagrams are strictly commutative.
\end{lemma}

\begin{proof}
This follows by a straightforward verification.
\end{proof}

\begin{lemma}\label{SOURCEFACT LEM}
	A (necessarily unique) factorization
\begin{equation}\label{SOURCEFACT EQ}
	\begin{tikzcd}[row sep = 0.5em]
	\Delta^{op} \ltimes \mcI_{\bullet} \ar{rr}{F_{\bullet}} \ar{rd}[swap]{s}& & \C \\
	& \left|\mcI_{\bullet}\right| \ar[dashed]{ru}[swap]{F}
	\end{tikzcd}
\end{equation}
	exists iff for the associated object 
	$\mcI_{\bullet} \in \mathsf{WSpan}^l(\**,\C)^{\Delta^{op}}$
	(cf. Lemma \ref{SIMPSPANREIN LEMMA})
	all faces $d_i$ for $0<i\leq n$ and degeneracies $s_j$ for $0\leq j \leq n$ are strictly commutative, i.e. they are given by diagrams
\begin{equation}\label{IDENTSIMPRELS EQ}
	\begin{tikzcd}[row sep=0.4em,column sep = 3.5em]
		\mathcal{I}_n
		\ar{dr}[name=F1]{F_n} \ar{dd}[swap]{d_0} & &
		\mathcal{I}_n
		\ar{dr}{F_n} \ar{dd}[swap]{d_i} & &
		\mathcal{I}_n
		\ar{dr}{F_n} \ar{dd}[swap]{s_j} &
	\\
 & \C & & \C & & \C
	\\
|[alias=G2]| \mathcal{I}_{n-1}  \ar{ur}[swap]{F_{n-1}} & & 
 \mathcal{I}_{n-1}  \ar{ur}[swap]{F_{n-1}} & &
 \mathcal{I}_{n+1}  \ar{ur}[swap]{F_{n+1}} &
		\arrow[Rightarrow, from=F1, to=G2,shorten >=0.25cm,shorten <=0.25cm,"\varphi_n"]
	\end{tikzcd}
\end{equation}
Dually, a factorization through the target 
$t \colon \Delta^{op} \ltimes \mathcal{I}_{\bullet}
\to |\mathcal{I}_{\bullet}^{op}|^{op}$
exists iff the faces $d_i$ and degeneracies 
$s_j$ are strictly commutative for
$0\leq i <n$, $0\leq j \leq n$.
\end{lemma}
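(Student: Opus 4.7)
My plan is to treat the two directions separately, using Lemma \ref{SIMPSPANREIN LEMMA} to convert a functor $F_\bullet \colon \Delta^{op} \ltimes \mathcal{I}_\bullet \to \mathcal{C}$ into the data of functors $F_n \colon \mathcal{I}_n \to \mathcal{C}$ together with natural transformations $\delta_i \colon F_n \Rightarrow F_{n-1} \circ d_i$ and $\sigma_j \colon F_n \Rightarrow F_{n+1} \circ s_j$ satisfying the simplicial associativity/unitality conditions, and to use the explicit presentation of $|\mathcal{I}_\bullet|$ in terms of objects of $\mathcal{I}_0$ together with the four types of generating relations (i)--(iv) listed in the remark following Lemma \ref{OBJGENREL LEMMA}. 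Uniqueness of the factorization is automatic since the source functor $s$ is surjective on objects and its image contains the generating arrows.

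For necessity, I would compute $s(d_i)$ and $s(s_j)$ directly from (\ref{TARGETDEFINITON EQ}). For any simplicial operator $\phi \colon [m] \to [n]$ corresponding to an arrow $([n], i_n) \to ([m], \phi^{\**} i_n)$, the functor $s$ sends this to the composite $[0, i_n] \to [\phi(0), i_n] = [0, \phi^{\**} i_n]$. Both $d^i$ for $i > 0$ and all $s^j$ satisfy $\phi(0) = 0$, so the corresponding arrows are sent to identities in $|\mathcal{I}_\bullet|$. Consequently, if $F_\bullet = F \circ s$ then each $\delta_i$ with $i > 0$ and each $\sigma_j$ must be the identity natural transformation, which is exactly the strict commutativity condition.

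For sufficiency, given strict commutativity of the required $\delta_i$ and $\sigma_j$, I define $F$ on the generators from Lemma \ref{OBJGENREL LEMMA}: set $F[0, i_0] = F_0(i_0)$, send an arrow $I_0 \colon i_0 \to i_0'$ of $\mathcal{I}_0$ to $F_0(I_0)$, and send the generating arrow $d_1(i_1) \xrightarrow{i_1} d_0(i_1)$ associated to $i_1 \in \mathcal{I}_1$ to $\delta_0(i_1)$, whose source and target have the correct form $F_0(d_1 i_1)$ and $F_0(d_0 i_1)$ thanks to the identification $F_1 = F_0 \circ d_1$. One then verifies the four families of relations: (i) is immediate from functoriality of $F_0$; (ii) follows from the simplicial unitality relation $(\delta_0 \cdot s_0) \circ \sigma_0 = \mathrm{id}$ combined with $\sigma_0 = \mathrm{id}$, forcing $\delta_0(s_0 i_0) = \mathrm{id}$; (iii) is naturality of $\delta_0$ applied to an arrow $I_1 \colon i_1 \to i_1'$ of $\mathcal{I}_1$. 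The main obstacle is relation (iv): for $i_2 \in \mathcal{I}_2$ one must verify $\delta_0(d_0 i_2) \circ \delta_0(d_2 i_2) = \delta_0(d_1 i_2)$. This follows by applying the associativity of the simplicial structure on $F_\bullet$ to the cosimplicial identities $d^0 d^0 = d^0 d^1$ and $d^2 d^0 = d^0 d^1$ (equivalently $d_0 d_0 = d_0 d_1$ and $d_0 d_2 = d_1 d_0$), which give $\delta_0(d_0 i_2) \circ \delta_0(i_2) = \delta_0(d_1 i_2) \circ \delta_1(i_2)$ and $\delta_1(d_0 i_2) \circ \delta_0(i_2) = \delta_0(d_2 i_2) \circ \delta_2(i_2)$, and then using $\delta_1 = \delta_2 = \mathrm{id}$ to collapse both identities into the desired composition law. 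The dual statement concerning the target functor $t$ follows immediately by applying the involution $\tau$ of Remark \ref{DUALRESULTS REM}.
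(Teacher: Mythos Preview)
Your proof is correct. The ``only if'' direction matches the paper's argument exactly. For the ``if'' direction you take a genuinely different route: you work directly with the explicit presentation of $|\mathcal{I}_\bullet|$ by generators and the four relation families (i)--(iv) from the remark following Lemma~\ref{OBJGENREL LEMMA}, defining $F$ on generators and verifying each relation in turn; your treatment of relation (iv) via the two simplicial identities $d_0 d_0 = d_0 d_1$ and $d_0 d_2 = d_1 d_0$ is the key computation, and it is correct. The paper instead uses the adjunction $|{-}| \dashv (-)^{[\bullet]}$: it builds a simplicial map $\bar{F}_\bullet \colon \mathcal{I}_\bullet \to \mathcal{C}^{[\bullet]}$ by setting $\bar{F}_n|_{k \to k+1} = \varphi_{n-k}\, d_{0,\dots,k-1}$, verifies this respects the simplicial structure via a three-case analysis on $i$ relative to $k$, and obtains $F$ by applying realization and using $|\mathcal{C}^{[\bullet]}| \simeq \mathcal{C}$. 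Your approach is more elementary and keeps the combinatorics explicit; the paper's is more conceptual and lets the adjunction absorb the bookkeeping. Both leave the final check that $F \circ s = F_\bullet$ on levels $n>1$ implicit; in your formulation this reduces to the identity $\delta_0^{(n)} = \delta_0^{(1)} \cdot d_{\hat{0},\hat{1}}$, which follows by iterating the functoriality relation $d_0 d_i = d_{i-1} d_0$ with $\delta_i = \mathrm{id}$ for $i \geq 1$.
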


\begin{proof}
For the ``only if'' direction, it suffices to note that $s$ sends all pushout arrows of $\Delta^{op} \ltimes \mcI_{\bullet}$ for faces $d_i$, $0<i\leq n$ and degeneracies
$s_j$, $0\leq j \leq n$ to identities,
yielding the required commutative diagrams in \eqref{IDENTSIMPRELS EQ}.

For the ``if''  direction, this will follow by building a 
functor
$\mcI_{\bullet} \xrightarrow{\bar{F}_{\bullet}} \C^{[\bullet]}$ together with the naturality of the source map $s$ (recall that $|\C^{[\bullet]}|\simeq \C)$. We define
$\bar{F}_n|_{k \to k+1}$ as the map
\begin{equation}\label{EQUIVALENCEDEF EQ}
F_{n-k} d_{0,\cdots,k-1}
	\xrightarrow{\varphi_{n-k} d_{0,\cdots,k-1}}
F_{n-k-1} d_{0,\cdots,k}.
\end{equation}
The claim that $s \circ (\Delta^{op} \ltimes \bar{F})$ recovers the horizontal map in \eqref{SOURCEFACT EQ} is straightforward, hence the real task is to prove that \eqref{EQUIVALENCEDEF EQ} defines a map of simplicial objects.
First, functoriality of the original $F_{\bullet}$
yields identities
\[
	\varphi_{n-1}d_i = \varphi_n,\phantom{1}1<i
		\qquad
	\varphi_{n-1}d_1 = (\varphi_{n-1}d_0) \circ \varphi_n,
		\qquad
	\varphi_{n+1} s_i = \varphi_{n},\phantom{1}0<i,
		\qquad
	\varphi_{n+1} s_{0} =id_{F_{n}}
\]
Next, note that there is no ambiguity in writing simply 
$\varphi_{n-k} d_{0,\cdots,k-1}$
to denote the map \eqref{EQUIVALENCEDEF EQ}.
We now check that $\bar{F}_{n-1} d_i = d_i \bar{F}_n$, $0 \leq i \leq n$, which must be verified after restricting to each $k \to k+1$, $0\leq k \leq n-2$. There are three cases, depending on $i$ and $k$:
\begin{itemize}
	\item[($i <k+1$)] 
	$\varphi_{n-k-1} d_{0,\cdots,k-1} d_i =
	\varphi_{n-k-1} d_{0,\cdots,k}$;
	\item[($i=k+1$)] 
	$\varphi_{n-k-1} d_{0,\cdots,k-1} d_i =
	\varphi_{n-k-1} d_1 d_{0,\cdots,k-1}=
	(\varphi_{n-k-1} d_0 \circ \varphi_{n-k})d_{0,\cdots,k-1}=
	(\varphi_{n-k-1}d_{0,\cdots,k})\circ(\varphi_{n-k}d_{0,\cdots,k-1})
	$;
	\item[($i>k+1$)] 
	$\varphi_{n-k-1} d_{0,\cdots,k-1} d_i =
	\varphi_{n-k-1} d_{i-k} d_{0,\cdots,k-1} =
	\varphi_{n-k}d_{0,\cdots,k-1}$.
\end{itemize}
The case of degeneracies is similar.
\end{proof}

\begin{proof}[proof of Proposition \ref{RANTRANS PROP}]
The result follows from the following string of identifications.
\begin{align*}
	\lim_{\Delta}
	\left(
	\mathsf{Ran}_{A_n \to \Sigma_G}
	N_{n}
	\right)
	\simeq &
	\mathsf{Ran}_{\Delta \times \Sigma_G \to \Sigma_G}
	\left(
	\mathsf{Ran}_{A_n \to \Sigma_G}
	N_n
	\right) \simeq
\\
	\simeq &
	\mathsf{Ran}_{\Delta \times \Sigma_G \to \Sigma_G}
	\left(
	\mathsf{Ran}_{(\Delta^{op} \ltimes A^{op}_{\bullet})^{op} \to
	\Delta \times \Sigma_G}
	N_{\bullet}
	\right) \simeq
\\
	\simeq &
	\mathsf{Ran}_{
	(\Delta^{op} \ltimes A^{op}_{\bullet})^{op} \to
	\Sigma_G}
	N_{\bullet}
	\simeq 
	\mathsf{Ran}_{
	(\Delta^{op} \ltimes A^{op}_{\bullet})^{op} \to \Sigma_G}
	\tilde{N}_{\bullet}
	\simeq
	\mathsf{Ran}_{
	|A_{\bullet}| \to \Sigma_G}
	\tilde{N}
\end{align*}
The first step simply rewrites 
$\lim_{\Delta}$. 
The second step 
follows from Proposition \ref{FIBERKANMAP PROP} applied to the map 
$(\Delta^{op} \ltimes A^{op}_{\bullet})^{op} \to
\Delta \times \Sigma_G$
of Grothendieck fibrations over $\Delta$ since,
for each $(n,C) \in \Delta \times \Sigma_G$,
one has a natural identification
between
$(n,C) \downarrow_{\Delta} (\Delta^{op} \ltimes A^{op}_{\bullet})^{op}$
and
$C \downarrow A_n$.
The third step follows since iterated Kan extensions are again Kan extensions.
The fourth step twists $N_{\bullet}$
as in Lemma \ref{TWISTING LEMMA}
to obtain $\tilde{N}_{\bullet}$
such that the $d_i$, $s_j$ are given by strictly commutative diagrams for
$0\leq i < n$, $0\leq j \leq n$.
Lastly, the final step uses Lemma \ref{SOURCEFACT LEM}
to conclude that 
$\tilde{N}_{\bullet}$ factors through the
target functor $t$, obtaining $\tilde{N}$, 
and then uses 
Corollary \ref{SOURCELANFINAL COR}
to conclude that the Kan extensions indeed coincide.
\end{proof}

\section{The nerve theorem}\label{NERVE AP}

Our goal in this appendix is to prove the nerve theorem below,
adapting \cite[Prop. 5.3, Thm. 6.1]{MW08}.
Throughout we assume that the monoidal structure on $\mathcal{V}$
is the cartesian product.

\begin{theorem}\label{NERVE THM}
	There is a fully faithful nerve functor
	$\mathcal{N} \colon \mathsf{Op}_G(\mathcal{V})
	\to 
	\mathcal{V}^{\Omega_G^{op}}$
	whose essential image 
	consists of the pointed strict Segal objects,
	i.e. those $X \in \mathcal{V}^{\Omega_G^{op}}$
	such that the natural maps
	\begin{equation}\label{SEGCOND EQ}
	X(T) \xrightarrow{\simeq}
	\prod_{v \in V_G(T)} X(T_v)
	\end{equation}
	are isomorphisms for all $T \in \Omega_G$.
\end{theorem}

We will prove Theorem \ref{NERVE THM} by building 
$\mathcal{N}$ in \eqref{FULLNER EQ},
describing its partial inverse in \eqref{XMINUSMULT EQ},
then finishing the argument at the end of the appendix.

\begin{remark}
	In \cite{MW08}, which sets 
	$\mathcal{V} = \mathsf{Set}$, $G = \**$
	and works with \emph{colored} operads 
	$\mathsf{Op}_{\bullet}$ (of sets),
	the nerve functor
	$\mathcal{N} \colon 
	\mathsf{Op}_{\bullet} \to \mathsf{Set}^{\Omega^{op}}$
	is defined by
\begin{equation}\label{NEREASY EQ}
	(\mathcal{N} \mathcal{O})(T)
=
	\mathsf{Op}_{\bullet}\left(
	\Omega(T),\mathcal{O}
	\right)
\end{equation}
where $\Omega(T)$ for $T \in \Omega$
is the colored operad described in
\cite[\S 3]{MW07} (or after \eqref{COMPEX EQ}).

However, since this paper does not
discuss \emph{colored} genuine operads 
(due to $\mathsf{Op}_G(\mathcal{V})$
being the single colored case),
we can not obtain
Theorem \ref{NERVE THM} by 
directly adapting \eqref{NEREASY EQ}.
\end{remark}

\begin{remark}
	The term ``pointed'' in Theorem \ref{NERVE THM}
	is motivated by the fact that if 
	$X$ satisfies \eqref{SEGCOND EQ}
	then it is $X(G/H \cdot \eta) = \**$ for all $H \leq G$,
	due to $V_G(G/H \cdot \eta) = ()$ being the empty tuple.
	
	This pointedness reflects the fact that
	$\mathsf{Op}_G(\mathcal{V})$
	includes only \emph{single colored} genuine operads. 
	In the multiple color setting,
	the Segal condition \eqref{SEGCOND EQ} needs to be modified
	\cite[Def. 3.35]{BP_edss}.
\end{remark}

Our description of 
$\mathcal{N} \colon \mathsf{Op}_G(\mathcal{V})
\to 
\mathcal{V}^{\Omega_G^{op}}$
will make use of the monad 
$N$ on
$\mathsf{WSpan}^r(\Sigma_G,\mathcal{V}^{op})$
in Definition \ref{WSPAN_MONAD_DEFINITION}.
Given $\mathcal{P} \in \mathsf{Op}_G(\mathcal{V})$,
so that $\upsilon \mathcal{P}$ is a $N$-algebra (cf. Remark \ref{REPACKAGERES REM}; recall that 
$\upsilon \colon \mathsf{Fun}(\Sigma_G,\mathcal{V}^{op})
\to 
\mathsf{WSpan}^r(\Sigma_G,\mathcal{V}^{op})$
is the inclusion functor sending
$\Sigma_G \xrightarrow{\mathcal{P}}\mathcal{V}^{op}$
to the span 
$\Sigma_G \overset{=}{\leftarrow} \Sigma_G \xrightarrow{\mathcal{P}}\mathcal{V}^{op}$, as discussed in \S\ref{FGMON SEC}),
consider the bar construction
$B_{\bullet} = 
B_{\bullet}(N,N,\upsilon \mathcal{P})
= N^{\bullet+1} \upsilon \mathcal{P}$,
whose levels we denote as
\begin{equation}\label{BARLEVELS EQ}
	\Sigma_G \leftarrow 
	\Omega_G^n \xrightarrow{N_n^{\mathcal{P}}}
	\mathcal{V}^{op}.
\end{equation}
Ignoring the map to $\Sigma_G$,
\eqref{BARLEVELS EQ} determines 
a simplicial object in 
$\mathsf{WSpan}^r(\**,\mathcal{V}^{op})$.
Moreover, since the face maps $d_i$ with $i < n$
are given by the multiplication
$NN \Rightarrow N$ in \eqref{MULTDEFSPAN EQ},
the opposite of this simplicial object 
(in $\mathsf{WSpan}^l(\**,\mathcal{V})$)
satisfies the dual case conditions in 
Lemma \ref{TWISTING LEMMA}.
Thus, Lemma \ref{TWISTING LEMMA}
provides an isomorphic simplicial object
$\tilde{N}_n^{\mathcal{P}} \colon 
\Omega_G^n \to \mathcal{V}^{op}$
satisfying the dual of the conditions in
\eqref{IDENTSIMPRELS EQ}.
Hence, by Lemma \ref{SOURCEFACT LEM} and Remark \ref{REALEX REM},
upon realization this induces a functor
\begin{equation}\label{TALLNER EQ}
	|\Omega^n_G| = \Omega_G^{\mathsf{t}}
	\xrightarrow{\mathcal{N} \mathcal{P}}
	\mathcal{V}^{op}
\end{equation}
where $\Omega_G^{\mathsf{t}} \subset \Omega_G$
is the subcategory of tall maps.
To define the nerve $\mathcal{N}$ in Theorem \ref{NERVE THM},
we must extend \eqref{TALLNER EQ}
to the entire category $\Omega_G$.
To do so, we enlarge the string categories
$\Omega_G^n$.

\begin{definition}\label{PLANSTRO DEF}
	Let $n \geq 0$. The category $\overline{\Omega}^n_G$
	has objects the planar tall strings
	$(T_0 \to \cdots \to T_n) \in \Omega^n_G$
	and arrows diagrams
	$(\rho_i \colon T_i \to T'_i)$
	as in \eqref{PTNARROW EQ}
	where the $\rho_i$ are outer maps
	in each tree component.
\end{definition}

\begin{remark}
	In contrasting Definitions 
	\ref{PLANSTR DEF} and \ref{PLANSTRO DEF},
	recall that quotients are the maps
	which are isomorphisms in each tree component, so that
	$\Omega^n_G \subseteq \overline{\Omega}^n_G$.
\end{remark}

Clearly the 
$\overline{\Omega}^{n}_G$ still form a simplicial object, 
i.e. one has operators
$d_i \colon \overline{\Omega}^{n}_G \to \overline{\Omega}^{n-1}_G$
for $0 \leq i \leq n+1$
and 
$s_j \colon \Omega^{n}_G \to 
\overline{\Omega}^{n+1}_G$
for $0 \leq j \leq n$,
though we caution that the $\overline{\Omega}^{n}_G$
have no augmentation to $\Sigma_G$ nor extra degeneracies $s_{-1}$.
Moreover, it is clear that
$|\overline{\Omega}_G| = \Omega_G$.

More importantly,
since maps that are outer in each tree component send vertices to vertices,
one has that the formula in Notation \ref{VGDEF NOT} extends to define a functor
\begin{equation}\label{VGDEF2 EQ}
	\overline{\Omega}^n_G 
	\xrightarrow{\boldsymbol{V}_G}
	\Fin \wr \Omega^{n-1}_G
\end{equation}
Note that \eqref{VGDEF2 EQ} 
requires the full category $\Fin$ of finite sets
rather than the subcategory $\Fin_s$ of surjections.
By construction of $N$ in Definition \ref{WSPAN_MONAD_DEFINITION}
one has that the functors in \eqref{BARLEVELS EQ}
extend to functors
$N^{\mathcal{P}}_n \colon \overline{\Omega}^n_G \to \mathcal{V}^{op}$.
Moreover, the natural transformations for the associated simplicial object in $\mathsf{WSpan}^r(\**,\mathcal{V}^{op})$
all factor through one of the diagrams below,
\begin{equation}
\begin{tikzcd}[column sep = 18pt]
	\overline{\Omega}^{n}_G \ar{r}{\boldsymbol{V}_G} 
	\ar{d}[swap]{d_0} &
	\Fin \wr \Omega^{n-1}_G \ar{r}{\Fin \wr \boldsymbol{V}_G} &
	|[alias=Vt]|
	\Fin^{\wr 2} \wr \Omega^{n-2}_G \ar{d}[swap]{\sigma^0}
&
	\overline{\Omega}^{n}_G \ar{r}{\boldsymbol{V}_G} \ar{d}[swap]{d_{i+1}} &
	\Fin \wr \Omega^{n-1}_G \ar{d}{\Fin \wr d_i}
&
	\overline{\Omega}^{n}_G \ar{r}{\boldsymbol{V}_G} \ar{d}[swap]{s_{j+1}} &
	\Fin \wr \Omega^{n-1}_G \ar{d}{\Fin \wr s_j}
\\
	|[alias=FV]|
	\overline{\Omega}^{n-1}_G \ar{rr}[swap]{\boldsymbol{V}_G} &&
	\Fin \wr \Omega^{n-2}_G  
&
	\overline{\Omega}^{n-1}_G \ar{r}[swap]{\boldsymbol{V}_G} &
	\Fin \wr \Omega^{n-2}_G
&
	\overline{\Omega}^{n+1}_G \ar{r}[swap]{\boldsymbol{V}_G} &
	\Fin \wr \Omega^{n}_G
\arrow[Leftrightarrow, from=Vt, to=FV, shorten <=0.10cm,shorten >=0.10cm,"\pi"]
\end{tikzcd}
\end{equation}
so that
$N^{\mathcal{P}}_n \colon \overline{\Omega}^n_G \to \mathcal{V}^{op}$
extends \eqref{BARLEVELS EQ}
as a simplicial object in $\mathsf{WSpan}^r(\**,\mathcal{V}^{op})$.
Thus, by Lemmas 
\ref{TWISTING LEMMA} and \ref{SOURCEFACT LEM},
one again obtains an isomorphic simplicial object
$\tilde{N}^{\mathcal{P}}_n \colon \overline{\Omega}^n_G \to \mathcal{V}^{op}$ which, upon realization,
extends \eqref{TALLNER EQ} to obtain the desired nerve
\begin{equation}\label{FULLNER EQ}
|\overline{\Omega}^n_G| = \Omega_G
\xrightarrow{\mathcal{N} \mathcal{P}}
\mathcal{V}^{op}.
\end{equation}

We next describe the partial inverse to 
$\mathcal{N} \colon 
\mathsf{Op}_G(\mathcal{V}) \to \mathcal{V}^{\Omega_G^{op}}$.
Choose 
$X \colon \Omega_G \to \mathcal{V}^{op}$
whose opposite satisfies the Segal condition \eqref{SEGCOND EQ}.
Letting $\mathcal{P}_X$ be the composite
$\Sigma_G \to \Omega_G \to \mathcal{V}^{op}$,
we will show that $\mathcal{P}_X$ is a genuine operad
or, equivalently, that $\upsilon \mathcal{P}_X$ is a $N$-algebra.  
Throughout, we write
$\overline{\Omega}^n_G \to \Omega_G$
for the target functor
$(T_0 \to \cdots \to T_n) \mapsto T_n$ and denote by
\begin{equation}\label{SIMP EQ}
\begin{tikzcd}[row sep=1.2em, column sep = 6em]
	\overline{\Omega}^n_G
	\ar{r}[name=F1]{} 
	\ar{d}[swap]{d_n} &
	\Omega_G 
\\
	|[alias=G2]|
	\overline{\Omega}^{n-1}_G 
	\ar{ur}[swap]{} & 
\arrow[Leftarrow, from=F1, to=G2,shorten >=0.15cm,shorten <=0.20cm,
"\varphi_n"]
\end{tikzcd}
\end{equation}
the natural transformation induced by $T_{n-1} \to T_n$.
We now define spans $X_n$ for $n \geq -1$ by
\begin{equation}\label{XNSPANS EQ}
X_n =
\left(
\Sigma_G \leftarrow
\Omega_G^n \to
\overline{\Omega}^n_G \to \Omega_G 
\xrightarrow{X} \mathcal{V}^{op} 
\right).
\end{equation}
Note that $X_{-1} = \upsilon \mathcal{P}_X$.
Moreover, the transformations \eqref{SIMP EQ}
make the $X_n$ into a simplicial object in
$\mathsf{WSpan}^r(\Sigma_G,\mathcal{V}^{op})$.
Next, note that one has natural transformations $\rho_n$
\begin{equation}\label{THERHON EQ}
	\begin{tikzcd}
	\overline{\Omega}^n_G \ar[equal]{d} \ar{r} &
	\Omega_G \ar{r}{\delta^0} &
|[alias=Vt]|
	\Fin \wr \Omega_G \ar[equal]{d}
\\
	|[alias=FV]|
	\overline{\Omega}^n_G \ar{r}[swap]{\boldsymbol{V}_G} &
	\Fin \wr \Omega_G^{n-1} \ar{r} &
	\Fin \wr \Omega_G 
\arrow[Leftarrow, from=Vt, to=FV, shorten <=0.10cm,shorten >=0.10cm,"\rho_n"]
\end{tikzcd}
\end{equation}
which,
on $(T_0 \to \cdots \to T_n) \in \Omega^n_G$,
are given by the tuple map
$(T_{n,v})_{v \in V_G(T_0)} \to (T_n)$
determined by the inclusions
$T_{n,v} \to T_n$.
Note that, by whiskering with the map
$\Fin \wr \Omega_G \to 
\Fin \wr \mathcal{V}^{op} 
\xrightarrow{\prod} \mathcal{V}^{op}$,
$\rho_n$
determines a map of spans which we likewise denote 
$\rho_n \colon X_n \to N X_{n-1}$.

\begin{remark}\label{SEGALCOND REM}
	The Segal condition \eqref{SEGCOND EQ} holds iff
	$\rho_0 \colon X_0 \to N X_{-1}$ is an isomorphism
	and iff the 
	$\rho_n \colon X_n \to N X_{n-1}$ are isomorphisms 
	for all $n\geq 0$.
\end{remark}

\begin{proposition}
	Suppose the opposite of $X\colon \Omega_G \to \mathcal{V}^{op}$
	satisfies the Segal condition \eqref{SEGCOND EQ}. 
	
	Then the span $X_{-1}$ in \eqref{XNSPANS EQ} is a $N$-algebra with multiplication
\begin{equation}\label{XMINUSMULT EQ}
	\begin{tikzcd}
	N X_{-1} &
	X_0 \ar{l}[swap]{\rho_0}{\simeq} \ar{r}{d_0} &
	X_{-1}.
	\end{tikzcd}
\end{equation}
\end{proposition}

\begin{proof}
The required associativity and unitality conditions for 
\eqref{XMINUSMULT EQ}
say that the outer paths in the diagrams below coincide
($\mu,\eta$ are the multiplication and unit of $N$,
cf. Definition \ref{WSPAN_MONAD_DEFINITION}).
\begin{equation}\label{ALGCHECK EQ}
\begin{tikzcd}[column sep = 25pt]
	NNX_{-1} \ar{rr}{\mu}&&
	NX_{-1}
&
	X_{-1} \ar{r}{\eta} \ar{rd}{s_{-1}} \ar[equal]{rdd} &
	NX_{-1}
\\
	NX_0 \ar{u}{N \rho_0}[swap]{\simeq} \ar{d}[swap]{Nd_0} &
	X_1 \ar{l}[swap]{\rho_1}{\simeq}  \ar{d}{d_1} \ar{r}{d_0} &
	X_0 \ar{d}{d_0} \ar{u}[swap]{\rho_0}{\simeq}
&
	&
	X_0 \ar{u}[swap]{\rho_0}{\simeq} \ar{d}{d_0}
\\
	N X_{-1} &
	X_0 \ar{r}{d_0} \ar{l}[swap]{\rho_0}{\simeq} &
	X_{-1} 
&
	&
	X_{-1}
\end{tikzcd}
\end{equation}
One readily checks that the unitality diagram commutes.
It remains to check that all squares in the associativity diagram commute. 
The case of the bottom right square is tautological.
For the bottom left square note that, up to whiskering with
$\Fin \wr \Omega_G \to \mathcal{V}^{op}$,
the composites 
$X_1 
\xrightarrow{d_1} X_0 
\xrightarrow{\rho_0} N X_{-1}$
and
$X_1  
\xrightarrow{\rho_1} N X_0 
\xrightarrow{N d_0} N X_{-1}$
are induced by the diagrams below
\begin{equation}
\begin{tikzcd}
	\overline{\Omega}_G^1 \ar{d}[swap]{d_1} \ar{r} &
	|[alias=Vtt]|
	\Omega_G \ar{r}{\delta^0} \ar[equal]{d} &
	\Fin \wr \Omega_G \ar[equal]{d} 
&
	\overline{\Omega}_G^1 \ar[equal]{d} \ar{rr} &&
	|[alias=Vtt2]|
	\Fin \wr \Omega_G \ar[equal]{d} 
\\
	|[alias=FVt]|
	\overline{\Omega}_G^0 
	\ar{r} \ar[equal]{d} &
	\Omega_G \ar{r}{\delta^0}&
	|[alias=Vt]|
	\Fin \wr \Omega_G \ar[equal]{d} 
&
	|[alias=FFV]|	
	\overline{\Omega}_G^1 
	\ar{r} \ar{d}[swap]{d_1} &
	\Fin \wr \Omega_G^0 \ar{r} \ar{d}[swap]{d_0} &
	|[alias=Vt2]|
	\Fin \wr \Omega_G \ar[equal]{d}
\\
	|[alias=FV]|
	\overline{\Omega}_G^0 \ar{r}[swap]{\boldsymbol{V}_G} &
	\Fin \wr \Sigma_G \ar{r} &
	\Fin \wr \Omega_G 
&
	\overline{\Omega}_G^0 \ar{r}[swap]{\boldsymbol{V}_G} &
	|[alias=FV2]|
	\Fin \wr \Sigma_G \ar{r} &
	\Fin \wr \Omega_G 
\arrow[Leftarrow, from=Vt, to=FV,shorten <=0.10cm,shorten >=0.10cm,"\rho_0"]
\arrow[Leftarrow, from=Vtt, to=FVt,shorten <=0.10cm,shorten >=0.10cm,"\varphi_1"]
\arrow[Leftarrow, from=Vt2, to=FV2,shorten <=0.10cm,shorten >=0.10cm,"\varphi_0"]
\arrow[Leftarrow, from=Vtt2, to=FFV, shorten <=0.10cm,shorten >=0.10cm,"\rho_1"]
\end{tikzcd}
\end{equation}
That these composite natural transformations coincide is the observation that,
on $(T_0 \to T_1) \in \Omega^1_G$,
both compute the map
$(T_{0,v})_{v \in V_G(T_0)} \to (T_1)$
given by the maps $T_{0,v} \to T_1$.

To show that the top square in \eqref{ALGCHECK EQ} commutes,
we first consider the composite
$N X_0 \xrightarrow{N \rho_0}
NN X_{-1} \xrightarrow{\mu} N X_{-1}$,
which is given by the composite diagram below.
\begin{equation}\label{INTERM EQ}
\begin{tikzcd}[column sep=1.05em]
\overline{\Omega}_G^1 \ar[equal]{d} \ar{r} &
\Fin \wr \Omega_G^0 \ar{r} \ar[equal]{d}&
\Fin \wr \Omega_G \ar{r}{\delta^1} &
|[alias=dog2]|
\Fin^{\wr 2} \wr \Omega_G \ar{r} \ar[equal]{d} &
\Fin^{\wr 2} \wr \mathcal{V}^{op} \ar{r}{\prod} \ar[equal]{d} &
\Fin \wr \mathcal{V}^{op} \ar{r}{\prod} \ar[equal]{d}&
\mathcal{V}^{op} \ar[equal]{d} &
\\
\overline{\Omega}_G^{1} \ar{r} \ar{d}[swap]{d_{0}} &
|[alias=cat2]|
\Fin \wr \Omega_G^0 \ar{r} &
|[alias=FFOmega]|
\Fin^{\wr 2} \wr \Sigma_G \ar{r} \ar{d}{\sigma^0} &
\Fin^{\wr 2} \wr \Omega_G \ar{d}{\sigma^0} \ar{r} &
|[alias=cat3]|
\Fin^{\wr 2} \wr \mathcal{V}^{op} \ar{d}{\sigma^0} 
\ar{r}{\prod} &
\Fin \wr \mathcal{V}^{op} \ar{r}{\prod} &
|[alias=dog]|
\mathcal{V}^{op} \ar[equal]{d}
\\
|[alias=Omega]|
\overline{\Omega}_G^{0} \ar{rr} &&
\Fin \wr \Sigma_G \ar{r} &
\Fin \wr \Omega_G \ar{r} &
|[alias=cat]|
\Fin \wr \mathcal{V}^{op} \ar{rr}[swap]{\prod} &&
\mathcal{V}^{op}
\arrow[Leftrightarrow, from=FFOmega, to=Omega,shorten <=0.15cm,,shorten >=0.15cm,"\pi_0"]
\arrow[Leftrightarrow, from=dog, to=cat,shorten <=0.15cm,,shorten >=0.15cm,"\alpha"]
\arrow[Leftarrow, from=dog2, to=cat2,shorten <=0.15cm,,shorten >=0.15cm,"\rho_0"]
\end{tikzcd}
\end{equation}
We now consider the following, where all terms (other than the 
additional $\Fin \wr \mathcal{V}^{op}$ term on the top row)
retain their relative positions in \eqref{INTERM EQ}.
\begin{equation}
\begin{tikzcd}[column sep=1.05em]
\Fin \wr \Omega_G \ar{rr} \ar{dr}[swap]{\delta^1} &&
\Fin \wr \mathcal{V}^{op} \ar{d}{\delta^1} &&&
\\
&
\Fin^{\wr 2} \wr \Omega_G \ar{d}{\sigma^0} \ar{r} &
|[alias=cat3]|
\Fin^{\wr 2} \wr \mathcal{V}^{op} \ar{d}{\sigma^0} 
\ar{r}{\prod} &
\Fin \wr \mathcal{V}^{op} \ar{r}{\prod} &
|[alias=dog]|
\mathcal{V}^{op} \ar[equal]{d}
\\
&
\Fin \wr \Omega_G \ar{r} &
|[alias=cat]|
\Fin \wr \mathcal{V}^{op} \ar{rr}[swap]{\prod} &&
\mathcal{V}^{op}
\arrow[Leftrightarrow, from=dog, to=cat,shorten <=0.15cm,,shorten >=0.15cm,"\alpha"]
\end{tikzcd}
\end{equation}
By \eqref{COHER3 EQ},
the diagram above is the identity for
the functor
$\Fin \wr \Omega_G \to 
\Fin \wr \mathcal{V}^{op}
\to \mathcal{V}^{op}$.
As such, the composite
natural transformation in \eqref{INTERM EQ}
can be described 
by whiskering its 4 leftmost columns
(equivalently, the 3 bottom rows of the left diagram in 
\eqref{INTERM2 EQ})
with 
$\Fin \wr \Omega_G \to \mathcal{V}^{op}$.
It now follows that the composites
$X_1 \xrightarrow{\rho_1} 
N X_0 \xrightarrow{N \rho_0}
NN X_{-1} \xrightarrow{\mu} N X_{-1}$
and
$X_1 \xrightarrow{d_0} X_0 \xrightarrow{\rho_0} N X_{-1}$
are obtained by whiskering the diagrams below with 
$\Fin \wr \Omega_G \to \mathcal{V}^{op}$.
\begin{equation}\label{INTERM2 EQ}
\begin{tikzcd}[column sep=1.05em,row sep = 1em]
\overline{\Omega}_G^1 \ar[equal]{d} \ar{r} &
\Omega_G \ar{r}{\delta^0} &
|[alias=dog]|
\Fin \wr \Omega_G \ar{d} \ar{r}{\delta^1} &
\Fin^{\wr 2} \wr \Omega_G \ar[equal]{d}
&&
\overline{\Omega}_G^1 \ar{r} \ar{d}[swap]{d_0} &
\Omega_G \ar{r}{\delta^0} \ar[equal]{d} &
\Fin \wr \Omega_G \ar[equal]{d}
\\
|[alias=cat]|
\overline{\Omega}_G^1 \ar[equal]{d} \ar{r} &
\Fin \wr \Omega_G^0 \ar{r} \ar[equal]{d}&
\Fin \wr \Omega_G \ar{r}{\delta^1} &
|[alias=dog2]|
\Fin^{\wr 2} \wr \Omega_G \ar[equal]{d} 
&&
\overline{\Omega}_G^0 \ar[equal]{d} \ar{r} &
\Omega_G \ar{r}{\delta^0} &
|[alias=Vt]|
\Fin \wr \Omega_G \ar[equal]{d}
\\
\overline{\Omega}_G^{1} \ar{r} \ar{d}[swap]{d_{0}} &
|[alias=cat2]|
\Fin \wr \Omega_G^0 \ar{r} &
|[alias=FFOmega]|
\Fin^{\wr 2} \wr \Sigma_G \ar{r} \ar{d}{\sigma^0} &
\Fin^{\wr 2} \wr \Omega_G \ar{d}{\sigma^0} 
&&
|[alias=FV]|
\overline{\Omega}_G^0 \ar{r} &
\Fin \wr \Sigma_G \ar{r} &
\Fin \wr \Omega_G 
\\
|[alias=Omega]|
\overline{\Omega}_G^{0} \ar{rr} &&	
\Fin \wr \Sigma_G \ar{r} &
\Fin \wr \Omega_G 
\arrow[Leftrightarrow, from=FFOmega, to=Omega,shorten <=0.15cm, shorten >=0.15cm, near start, "\pi_0"]
\arrow[Leftarrow, from=dog2, to=cat2,shorten <=0.15cm, shorten >=0.15cm, near start, "\rho_0"]
\arrow[Leftarrow, from=dog, to=cat,shorten <=0.15cm, shorten >=0.15cm, near start,"\rho_1"]
\arrow[Leftarrow, from=Vt, to=FV, shorten <=0.10cm, shorten >=0.10cm, near start, "\rho_0"]
\end{tikzcd}
\end{equation}
That the composites in 
\eqref{INTERM2 EQ} coincide follows since,
on $(T_0 \to T_1) \in \Omega_G^1$,
both compute the map
$(T_{1,v})_{v \in V_G(T_1)} \to (T_1)$
whose components are given by the inclusions
$T_{1,v} \to T_1$.
\end{proof}

\begin{proof}[Proof of Theorem \ref{NERVE THM}]
	Let $\mathcal{P} \in \mathsf{Op}_G(\mathcal{V})$ and
	$\mathcal{N} \mathcal{P} \colon 
	\Omega_G \to \mathcal{V}^{op}$ be as in 
	\eqref{FULLNER EQ}.
	By the construction in 
	Lemmas \ref{TWISTING LEMMA} and \ref{SOURCEFACT LEM},
	the composites below coincide,
	where $(d_i,\nu_i)$ denote the simplicial operators of the simplicial object 
	$N_n^{\mathcal{P}}$ in $\mathsf{WSpan}^r(\**,\mathcal{V}^{op})$
	discussed above \eqref{FULLNER EQ}.
\begin{equation}\label{EXPREAL EQ}
\begin{tikzcd}[column sep = 30pt]
	\overline{\Omega}^1_G \ar{d}[swap]{d_1} \ar{r}{d_0} &
	|[alias=Vt]|
	\overline{\Omega}_G^0 \ar{rd}{N_0^{\mathcal{P}}} \ar{d} &
&
	\overline{\Omega}^1_G \ar{d}[swap]{d_1} \ar{rr}{d_0} 
	\ar{rrd}{N_1^{\mathcal{P}}}[near end,name=F1]{}
	[swap,near start,name=F2]{}
&&
	|[alias=Vt2]|
	\overline{\Omega}_G^0 \ar{d}{N_0^{\mathcal{P}}} &
\\
	|[alias=FV]|
	\overline{\Omega}^0_G \ar{r} &
	\overline{\Omega}_G \ar{r}[swap]{\mathcal{N} \mathcal{P}} &
	\mathcal{V}^{op} 
&
	|[alias=FV2]|
	\overline{\Omega}^0_G \ar{rr}[swap]{N_0^{\mathcal{P}}} &&
	\mathcal{V}^{op} 
	\arrow[Leftarrow, from=Vt, to=FV, shorten <=0.10cm,shorten >=0.10cm,"\varphi_1"]
	\arrow[Leftrightarrow, from=Vt2, to=F1, shorten <=0.10cm,shorten >=0.10cm,"\nu_0"]
	\arrow[Leftarrow, from=F2, to=FV2, shorten <=0.10cm,shorten >=0.10cm,"\nu_1"]
\end{tikzcd}
\end{equation}
	Setting $X= \mathcal{N} \mathcal{P}$,
	the fact that the left triangle above commutes shows that
	$X_0 \xrightarrow{\rho_0} N X_{-1}$ is the identity 
	(cf. \eqref{BARLEVELS EQ} and
	Definition \ref{WSPAN_MONAD_DEFINITION})
	so that, by Remark \ref{SEGALCOND REM},
	$\mathcal{N} \mathcal{P}$ satisfies the Segal condition
	\eqref{SEGCOND EQ}.
	Moreover, $d_0 \colon X_0 \to X_{-1}$
	is thus computed by the left diagram below,
	whose composite is the right diagram
	by the simplicial identities in $N^{\mathcal{P}}_{n}$.
\begin{equation}\label{BLABLEID EQ}
\begin{tikzcd}[column sep = 30pt]
	\Omega^0_G \ar{r}{s_{-1}} \ar{d}[swap]{d_0} &
	\overline{\Omega}^1_G \ar{d}[swap]{d_1} \ar{rr}{d_0} 
	\ar{rrd}{N_1^{\mathcal{P}}}[near end,name=F1]{}
	[swap,near start,name=F2]{}
	&&
	|[alias=Vt2]|
	\overline{\Omega}_G^0 \ar{d}{N_0^{\mathcal{P}}}
&
	\Omega^0_G \ar{d}[swap]{d_0}
	\ar{rd}{N^{\mathcal{P}}_{0}}[swap,name=F3]{}
\\
	\Sigma_G \ar{r}[swap]{s_{-1}} &
	|[alias=FV2]|
	\overline{\Omega}^0_G \ar{rr}[swap]{N_0^{\mathcal{P}}} &&
	\mathcal{V}^{op} 
&
	|[alias=FV3]|
	\Sigma_G \ar{r}[swap]{N^{\mathcal{P}}_{-1}} & 
	\mathcal{V}^{op}
	\arrow[Leftrightarrow, from=Vt2, to=F1, shorten <=0.10cm,shorten >=0.10cm,"\nu_0"]
	\arrow[Leftarrow, from=F2, to=FV2, shorten <=0.10cm,shorten >=0.10cm,"\nu_1"]
	\arrow[Leftarrow, from=F3, to=FV3, shorten <=0.10cm,shorten >=0.10cm,"\nu_0"]
\end{tikzcd}
\end{equation}
But, by definition, this right diagram is simply the $N$-algebra multiplication of $\upsilon \mathcal{P}$,
showing that \eqref{XMINUSMULT EQ} indeed inverts $\mathcal{NP}$ 
by recovering $\mathcal{P}$ 
with its genuine operad structure.

For the reverse claim characterizing the essential image,
suppose the opposite of
$X \colon \Omega_G \to \mathcal{V}^{op}$
satisfies the Segal condition \eqref{SEGCOND EQ},
let $X_{-1}$ be the $N$-algebra in \eqref{XMINUSMULT EQ}
and $\mathcal{P}_X \in \mathsf{Op}_G(\mathcal{V})$
be so that $X_{-1} = \upsilon \mathcal{P}_X$.
It remains to show that
$X \simeq \mathcal{N} \mathcal{P}_X$.
But now recall that $\mathcal{N} \mathcal{P}_X$
is built by realizing the simplicial object
$\tilde{N}_n^{\mathcal{P}_X} \colon 
\overline{\Omega}^n_G \to \mathcal{V}^{op}$
in $\mathsf{WSpan}^r(\**,\mathcal{V}^{op})$
built from $N_n^{\mathcal{P}_X}$ via Lemma \ref{TWISTING LEMMA},
so that 
$\tilde{N}_n^{\mathcal{P}_X}$
is as below, where the right side expands $N_0^{\mathcal{P}_X}$.
\[
	\overline{\Omega}^{n}_G 
	\xrightarrow{d_{1,\cdots,n}}
	\overline{\Omega}^{0}_G \xrightarrow{N_0^{\mathcal{P}_X}}
	\mathcal{V}^{op}
\qquad
	\overline{\Omega}^{n}_G 
	\xrightarrow{d_{1,\cdots,n}}
	\overline{\Omega}^{0}_G \xrightarrow{\boldsymbol{V}_G}
	\Fin \wr \Sigma_{G} \to 
	\Fin \wr \Omega_{G} \xrightarrow{X}
	\Fin \wr \mathcal{V}^{op} \xrightarrow{\prod}
	\mathcal{V}^{op}
\]
Further writing $X_n$ for the simplicial object 
$\overline{\Omega}^{n}_G 
\xrightarrow{d_{1,\cdots,n}}
\overline{\Omega}^{0}_G \to \Omega_G \xrightarrow{X}
\mathcal{V}^{op}$
in $\mathsf{WSpan}^r(*,\mathcal{V}^{op})$ 
(this simply forgets structure in \eqref{XNSPANS EQ}),
the natural transformations $\rho_0$ in \eqref{THERHON EQ}
define an isomorphism of simplicial objects
$\rho_0 \colon X_n \xrightarrow{\simeq} \tilde{N}_n^{\mathcal{P}_X}$.
Indeed, the non-trivial claim is that $\rho_0$ respects 
the natural transformation components of the differentials
$d_1\colon X_1 \to X_0$
and
$d_1\colon \tilde{N}_1^{\mathcal{P}_X} 
\to \tilde{N}_0^{\mathcal{P}_X}$.
But the latter is computed by 
\eqref{EXPREAL EQ}, and is thus the 
natural transformation component of the composite
$NX_{-1} \overset{\mu}{\leftarrow}
NNX_{-1} \overset{N \rho_0}{\leftarrow}
NX_0 \xrightarrow{Nd_0}
NX_{-1}$
(as $\nu_0, \nu_1$ in \eqref{EXPREAL EQ}
are induced by $\mu$ and 
\eqref{XMINUSMULT EQ}),
which by \eqref{ALGCHECK EQ}
has the same natural transformation component as
$NX_{-1} \overset{\rho_0}{\leftarrow}
X_0 \overset{d_0}{\leftarrow}
X_1 \xrightarrow{d_1}
X_0 \xrightarrow{\rho_0}
N X_{-1}$ 
(note that the natural transformation for $d_0$ is the identity).
Thus $\rho_0 \colon X_n \xrightarrow{\simeq} \tilde{N}_n^{\mathcal{P}_X}$
is indeed an isomorphism of simplicial objects in 
$\mathsf{WSpan}^r(\**,\mathcal{V}^{op})$ so that, 
upon realization,
we obtain the desired isomorphism
$X \simeq \mathcal{N} \mathcal{P}_X$.
\end{proof}

\newpage

\printindex

\providecommand{\bysame}{\leavevmode\hbox to3em{\hrulefill}\thinspace}
\providecommand{\MR}{\relax\ifhmode\unskip\space\fi MR }
\providecommand{\MRhref}[2]{%
	\href{http://www.ams.org/mathscinet-getitem?mr=#1}{#2}
}
\providecommand{\doi}[1]{%
	doi:\href{https://dx.doi.org/#1}{#1}}
\providecommand{\arxiv}[1]{%
	arXiv:\href{https://arxiv.org/abs/#1}{#1}}
\providecommand{\href}[2]{#2}

\newpage

\end{document}
